\title{Fully exact and fully dualizable module categories}
\date{\today}
\author{Azat M. Gainutdinov}
\address{Institut Denis Poisson, CNRS, Université de Tours, Parc de Grandmont, 37200 Tours, France}
\email{azat.gainutdinov@cnrs.fr}
\author{Robert Laugwitz}
\address{School of Mathematical Sciences,
University of Nottingham, University Park, Nottingham, NG7 2RD, UK}
\email{robert.laugwitz@nottingham.ac.uk}
\newcommand{\superimpose}[2]{%
  {\ooalign{$#1\@firstoftwo#2$\cr\hfil$#1\@secondoftwo#2$\hfil\cr}}}
\newcommand{\longrightleftarrows}{\resizebox{18pt}{7pt}{$\rightleftarrows$}}
\newcommand{\adj}[4]{#1\colon #2~\longrightleftarrows~ #3\colon #4}
\newcommand{\op}[1]{\operatorname{#1}}
\newcommand{\oop}{\mathrm{op}}
\newcommand{\bimod}[1]{#1\text{-}\mathbf{bimod}}
\newcommand{\bimodint}[3]{#1\text{-}\mathbf{bimod}_{#2}\text{-}#3}
\newcommand{\bimodints}[2]{#1\text{-}\mathbf{bimod}_{#2}}
\newcommand{\Cat}{\mathbf{Cat}}
\newcommand{\lmod}[1]{#1\text{-}\mathbf{mod}}
\newcommand{\rmod}[1]{\mathbf{mod}\text{-}#1}
\newcommand{\lMod}[1]{#1\text{-}\mathbf{Mod}}
\newcommand{\Mod}{\mathbf{mod}}
\newcommand{\lemod}[1]{#1\text{-}\mathbf{ex}}
\newcommand{\fexmod}[1]{#1\text{-}\mathbf{fex}}
\newcommand{\sepmod}[1]{#1\text{-}\mathbf{sep}}
\newcommand{\invmod}[1]{#1\text{-}\mathbf{inv}}
\newcommand{\perf}[1]{#1\text{-}\mathbf{perf}}
\newcommand{\opfexmod}[1]{#1\text{-}\mathbf{opfex}}
\newcommand{\lmodint}[2]{#1\text{-}\mathbf{mod}_{#2}}
\newcommand{\rmodint}[2]{\mathbf{mod}_{#1}\text{-}#2}
\newcommand{\lcomod}[1]{#1\text{-}\mathbf{comod} }
\newcommand{\lact}{\triangleright}
\newcommand{\ract}{\triangleleft}
\newcommand{\inner}[1]{\langle#1\rangle}
\newcommand{\rev}{\mathrm{rev}}
\newcommand{\cha}{\operatorname{char}}
\newcommand{\coev}{\operatorname{coev}}
\newcommand{\Drin}{D}
\newcommand{\ev}{\operatorname{ev}}
\newcommand{\End}{\operatorname{End}}
\newcommand{\Fun}{\operatorname{Fun}}
\newcommand{\Hom}{\operatorname{Hom}}
\newcommand{\iHom}{\underline{\operatorname{Hom}}}
\newcommand{\iEnd}{\underline{\operatorname{End}}}
\newcommand\id{{\operatorname{id}}}
\newcommand\Img{\operatorname{Im}}
\newcommand{\isomorph}{\stackrel{\sim}{\longrightarrow}}
\newcommand{\Mor}{\mathbf{Mor}}
\newcommand{\one}{\mathds{1}}
\newcommand{\Pic}{\operatorname{Pic}}
\newcommand{\bPic}{\mathbf{Pic}}
\newcommand{\Res}{\operatorname{Res}}
\newcommand{\Set}[1]{\left\lbrace #1\right\rbrace}
\newcommand{\Vect}{\mathbf{vect}}
\newcommand{\sVect}{\mathbf{svect}}
\newcommand{\action}{centralizer }
\newcommand{\Btwist}[3]{{}^{#1}{#2}^{#3}}
\newcommand{\Dtwist}[4]{{}^{\vphantom{+1}}_{#1}{#2}^{\vphantom{+1}#4}_{#3}}
\newcommand{\sF}{\check{F}}
\newcommand{\smu}{\check{\mu}}
\newcommand{\sR}{\check{R}}
\providecommand{\fr}[1]{\mathfrak{#1}}
\providecommand{\op}[1]{\operatorname{#1}}
\newcommand{\mC}{\mathbb{C}}
\newcommand{\mD}{\mathbb{D}}
\newcommand{\mZ}{\mathbb{Z}}
\newcommand{\cC}{\mathcal{C}}
\newcommand{\otimesop}{\otimes\oop}
\newcommand{\cCop}{\cC^{\otimesop}}
\newcommand{\cCrev}{\cC^{\mathrm{rev}}}
\newcommand{\cD}{\mathcal{D}}
\newcommand{\cO}{\mathcal{O}}
\newcommand{\cV}{\mathcal{V}}
\newcommand{\cM}{\mathcal{M}}
\newcommand{\cN}{\mathcal{N}}
\newcommand{\cQ}{\mathcal{Q}}
\newcommand{\cP}{\mathcal{P}}
\newcommand{\cR}{\mathcal{R}}
\newcommand{\cS}{\mathcal{S}}
\newcommand{\cZ}{\mathcal{Z}}
\newcommand{\bfC}{\mathbf{C}}
\newcommand{\bfD}{\mathbf{D}}
\newcommand{\bfOne}{\mathbf{1}}
\newcommand{\rD}{\mathrm{D}}
\newcommand{\rP}{\mathrm{P}}
\newcommand{\rR}{\mathrm{R}}
\newcommand{\rS}{\mathrm{S}}
\newcommand{\rB}{\mathrm{B}}
\newcommand{\qbin}[2]{\mathchoice%
  {\qbinm{#1}{#2}}{\qbinmm{#1}{#2}}%
  {\qbinmm{#1}{#2}}{\qbinmm{#1}{#2}}}
\newcommand{\qbinm}[2]{\mbox{\footnotesize$\displaystyle
    \genfrac{[}{]}{0pt}{}{#1}{#2}$}}
\newcommand{\qbinmm}[2]{\genfrac{[}{]}{0pt}{}{#1}{#2}}
\Crefname{theorem}{Thm.\!}{Thms.\!}
\Crefname{lemma}{Lem.\!}{Lems.\!}
\Crefname{proposition}{Prop.\!}{Props.\!}
\Crefname{corollary}{Cor.\!}{Cors.\!}
\Crefname{definition}{Def.\!}{Defs.\!}
\Crefname{section}{Sec.\!}{Secs.\!}
\Crefname{exercise}{Ex.\!}{Exs.\!}
\Crefname{example}{Ex.\!}{Exs.\!}
\Crefname{remark}{Rem.\!}{Rems.\!}
\newtheoremstyle{mystyle}
  {0.5cm}                   
  {0.5cm}                   
  {\normalfont}           
  {}                      
  {\itfont\bfseries} 
  {:}                     
  {0.3cm}              
  {\thmname{#1}}
\newtheoremstyle{defstyle}
  {0.5cm}                   
  {0.5cm}                   
  {\normalfont}           
  {}     
  {\normalfont\bfseries}  
  {:}                     
  {0.3cm}              
  {\thmname{#1}\thmnumber{ #2}\thmnote{ (#3)}}
\newtheorem*{rep@theorem}{\rep@title}
\newcommand{\newreptheorem}[2]{%
\newenvironment{rep#1}[1]{%
 \def\rep@title{#2 \ref{##1}}%
 \begin{rep@theorem}}%
 {\end{rep@theorem}}}
\newtheorem{theorem}{Theorem}[section]
\newtheorem{proposition}[theorem]{Proposition}
\newtheorem{corollary}[theorem]{Corollary}
\newtheorem{lemma}[theorem]{Lemma}
\newtheorem{conjecture}[theorem]{Conjecture}
\newtheorem*{theorem*}{Theorem}
\theoremstyle{definition}
\newtheorem{definition}[theorem]{Definition}
\theoremstyle{remark}
\newtheorem{example}[theorem]{Example}
\newtheorem{remark}[theorem]{Remark}
\newtheorem{question}[theorem]{Question}
\theoremstyle{plain}
\newtheorem{introtheorem}{Theorem}
\newtheorem{introproposition}[introtheorem]{Proposition}
\numberwithin{equation}{section}
\newcommand{\cmark}{\ding{51}}%
\newcommand{\xmark}{\ding{55}}%
\subjclass[2020]{Primary 18M15; Secondary 18N10, 16T05}
\keywords{Braided tensor category, exact module category, relative Deligne product, monoidal 2-category, fully dualizable object}
\begin{document}

\begin{abstract}
Let $\cC$ be a finite braided tensor category over a field $\Bbbk$. We define \emph{fully exact} $\cC$-module categories, a subclass of exact $\cC$-module categories that is stable under the relative Deligne product over~$\cC$. 
In contrast, we demonstrate with examples in both zero and non-zero characteristic of $\Bbbk$ that  the class of exact $\cC$-module categories is not stable under this product.  
We also observe in examples
that fully exact module categories form a dense subset in the class of exact ones.

The monoidal 2-category of fully exact $\cC$-module categories strictly contains those of invertible and separable $\cC$-module categories. 
In fact, we show that each internal algebra of a fully exact $\cC$-module category
is projectively separable, a generalization of separable algebras involving projective objects of $\cC$.
 If $\cC$ is semisimple, a $\cC$-module category is fully exact  if and only if it is separable.

In general, fully exact $\cC$-module categories are not dualizable inside their class, but if they are, they are fully dualizable objects in the monoidal 2-category $\lmod{\cC}$ of finite $\cC$-module categories. We call such module categories \emph{perfect}.
We show that perfect $\cC$-module categories form a rigid monoidal 2-subcategory $\perf{\cC}$ of $\lmod{\cC}$ containing all  fully dualizable objects. Therefore, we propose $\perf{\cC}$ as a model for finite tensor $2$-categories.
If $\cC$ is symmetric, $\cC$-module categories are fully exact if and only if they are perfect.

As a detailed example, we classify fully exact, and hence perfect, module categories over the symmetric tensor category of modules over Sweedler's four-dimensional Hopf algebra and compute their relative Deligne products, and the categories of $1$-morphisms.  In this case, $\perf{\cC}$ has infinitely  many 
(a continuum of) isoclasses of indecomposable objects, 
with non-semisimple finite tensor categories of $1$-endomorphisms, and only finitely many isoclasses of indecomposable objects correspond to separable module categories. For a general quasi-triangular Hopf algebra, we analyze when the category of finite-dimensional vector spaces is fully exact. We show that this is not the case for both Sweedler's Hopf algebra  and Lusztig's factorizable  small quantum group of $\mathfrak{sl}_2$ at an odd root of unity. 
We conjecture a similar statement for small quantum groups of any simple Lie algebra.
\end{abstract}

\maketitle

\setcounter{tocdepth}{1}
\tableofcontents

\section{Introduction}

Tensor categories have a wealth of applications in topology, particularly through the construction of  invariants of links, $3$D manifolds, and $3$D topological field theories (TFTs) \cites{Tur,BK,TV,GPT,CGP,DGGPR}. Monoidal $2$-categories with appropriate dualizability conditions are expected to extend these constructions to dimension $4$ \cites{CF,Mac1,Mac2,Cui}. A special class of semisimple monoidal $2$-categories, called \emph{fusion $2$-categories}, has been introduced in \cite{DR} and used to construct state-sum invariants of $4$D manifolds. Fusion $2$-categories satisfy the strong dualizability properties that objects have left and right duals and all $1$-morphisms have left and right adjoints. The study of fusion $2$-categories has gained significant attention in the literature, see e.g. \cites{DR,KTZ,JR,DHJNPPRY}. 

It was, however, observed in \cite{Reu} that $4$D manifold invariants obtained from semisimple $2$-categories cannot detect exotic smooth structures. Similarly, for $3$D manifolds, invariants constructed from fusion $1$-categories, i.e.\,finite semisimple tensor categories, cannot distinguish certain homotopically equivalent $3$D manifolds (e.g.\ certain lens spaces). However, more recent constructions of invariants from \emph{non-semisimple} tensor categories can distinguish such manifolds \cites{CGP,BGR}. This motivates the search for examples of non-semisimple monoidal $2$-categories that still satisfy strong finiteness and dualizablity conditions similar to those of fusion $2$-categories. 

A large source  of fusion $2$-categories is provided by monoidal $2$-categories of finite \textsl{semisimple} module categories over a braided fusion category. These fusion $2$-categories are sufficient to characterize connected fusion $2$-categories up to equivalence \cites{Dec3,DecS} and general fusion $2$-categories up to Morita equivalence \cite{Dec1}. Beyond the semisimple case, to our knowledge, no examples of monoidal $2$-categories with strong dualizability properties, similar to fusion $2$-categories, appear to be known.

In this paper, we construct a new class of non-semisimple monoidal $2$-categories based on the notions of \emph{fully exact} and \emph{perfect} module categories introduced here. These are subclasses of the class of \emph{exact} module categories of \cites{EO1,EGNO}. In these monoidal $2$-categories, all Hom categories have left and right duals, i.e.\ all $1$-morphisms are dualizable, while the perfect module categories, moreover, have dual objects, and are thus fully dualizable. We provide examples where we can classify fully exact and perfect module categories.

\subsection{Tensor categories and their module categories}

The study of finite module categories over finite tensor categories is a natural categorical analogue of the classical theory of finite-dimensional modules over finite-dimensional $\Bbbk$-algebras \cites{Ost1,EO1,ENO,ENO2,EGNO}. This theory is best understood when $\cC$ is a fusion category, where it is natural to consider semisimple module categories. However, several tensor categories of particular interest, such as representations of quantum groups at roots of unity, are not semisimple. 

It was shown in \cites{Ost1,EO1,EGNO,DSS1} that classifying finite module categories is equivalent to classifying algebras in $\cC$ up to Morita equivalence. Therefore, classifying all finite $\cC$-module categories is a wild problem. When $\cC=\Vect$, this corresponds to the problem of classifying all finite-dimensional $\Bbbk$-algebras up to Morita equivalence. For a fusion category $\cC$ over a field $\Bbbk$ of characteristic zero, one usually restricts to finite semisimple $\cC$-module categories where there are only finitely many indecomposables up to equivalence \cite{EGNO}*{Cor.\,9.1.6}. For a field of arbitrary characteristic, semisimple module categories are replaced by \emph{separable} module categories \cites{DSS2,DR} and again, there are only finitely many indecomposables.

The class of \emph{exact} module categories of \cites{EO1} (see also \cite{EGNO}*{Sec.\,7.5}) has homological properties similar to that of semisimple module categories. In particular, all additive $\cC$-module functors between exact module categories have left and right adjoints. If $\cC$ is a fusion category, exact module categories correspond to semisimple module categories. Another highlight of the theory is that when $\cM$ is an exact $\cC$-module category, the double centralizer theorem holds and implies Morita equivalence of the tensor category $\cC$ and its centralizer $\cC^*_\cM=\Fun_\cC(\cM,\cM)$. 

The theory of exact module categories is much richer in the non-semisimple case, where, at least in examples, a continuous spectrum of indecomposable exact module categories emerges \cite{Mom1}. For example, when $\cC=\lmod{S}$, finite-dimensional modules over the four-dimensional Hopf algebras $S$ of Sweedler, it follows from results of \cites{EO1,Mom1,Will} that indecomposable exact $\cC$-module categories fall into two one-parameter families
$$
\Set{\Vect^\lambda ~|~ \lambda \in \mC} \quad \text{and} \quad \Set{\sVect^\lambda ~|~ \lambda \in  \mC},
$$
on the category $\Vect$ or $\sVect$ of finite-dimensional $\mC$-vector spaces or $\mC$-super vector spaces, plus two non-semisimple categories $\cC$ and $\cD$.  Here, the parameter $\lambda$ characterizes the module category associator; in particular $\lambda=0$ corresponds to the trivial associator.

\medskip
To construct monoidal $2$-categories we start with the  monoidal $2$-category $\lmod{\cC}$, for a braided finite tensor category $\cC$. The monoidal product of $\lmod{\cC}$ is the relative Deligne product $\boxtimes_\cC$ \cites{ENO,GreThe,DSS1,FSS}. We observe that, contrary to claims in the literature,\footnote{See \cite{DN}*{Prop.\,2.10} for the corresponding statement for bimodule categories as well as \cite{SY} (Version 1) and \cite{JY25}*{Lem.\,2.1(ii)} (Version 2).} the $2$-subcategory of \emph{exact} $\cC$-module categories is not closed under $\boxtimes_\cC$ and we provide examples showing this, see \Cref{sec:rel-Del-exact}.  In finite characteristic of $\Bbbk$ dividing the order of an abelian group $G$, $\Vect\boxtimes_\cC\Vect$ is not exact for the symmetric monoidal category $\cC=\Vect_G$ of $G$-graded vector spaces, see \Cref{ex:first}.\footnote{A similar example appears in \cite{CSZ}*{Ex.\,7.6} and can be derived from \cite{DSS2}*{Sec.\,2.5}.} We also provide examples in characteristic zero. For example,  $\sVect\boxtimes_\cC\sVect$ is not exact, for the exact module categories $\sVect$ with trivial associators over $\cC=\lmod{S}$ for Sweedler's four-dimensional Hopf algebra, see \Cref{ex:Sweedler-first}. 

The above failure of $\boxtimes_\cC$ in preserving exact module categories motivates  us to introduce the  full $2$-subcategory $\fexmod{\cC}$ of $\lmod{\cC}$ of \emph{fully exact} module categories, the largest class of module categories that preserves the class of exact module categories by tensoring with $\boxtimes_\cC$ on the right.

\subsection{Fully exact module categories}

For a braided finite tensor category $\cC$, we say that a left $\cC$-module  $\cM$  is \emph{fully exact} if for any exact left $\cC$-module $\cN$, the relative Deligne product $\cN\boxtimes_\cC \cM$ is exact. With this definition, closure under $\boxtimes_\cC$ is easy to show.

\begin{introproposition}[See \Cref{prop:fully-exact-tensor}]
     The full $2$-subcategory $\fexmod{\cC}$ on fully exact $\cC$-modules is a monoidal $2$-subcategory of $\lmod{\cC}$.
\end{introproposition}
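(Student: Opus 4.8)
The plan is to verify the two defining conditions of a monoidal $2$-subcategory: that the monoidal unit of $\lmod{\cC}$ lies in $\fexmod{\cC}$, and that $\fexmod{\cC}$ is closed under the relative Deligne product $\boxtimes_\cC$. Since $\fexmod{\cC}$ is by definition the \emph{full} $2$-subcategory on the fully exact objects, nothing needs checking at the level of $1$- and $2$-morphisms beyond the observation that $\boxtimes_\cC$ of $1$-morphisms between fully exact module categories again takes values in a Hom-category between fully exact module categories --- which is immediate once the relevant objects are known to be fully exact --- and that the associativity and unit constraints of $\lmod{\cC}$, together with their coherence data, restrict without change. Thus the essential content is entirely at the level of objects.

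First I would record that the monoidal unit of $(\lmod{\cC}, \boxtimes_\cC)$ is $\cC$ itself, viewed as a $\cC$-module category by left multiplication, and that for any left $\cC$-module $\cN$ there is a canonical $\cC$-module equivalence $\cN \boxtimes_\cC \cC \simeq \cN$. Hence if $\cN$ is exact so is $\cN \boxtimes_\cC \cC$, and $\cC$ is fully exact. Next, let $\cM$ and $\cM'$ be fully exact and let $\cN$ be an arbitrary exact left $\cC$-module. The associativity constraint of $\boxtimes_\cC$ gives a $\cC$-module equivalence
\[
\cN \boxtimes_\cC (\cM \boxtimes_\cC \cM') \;\simeq\; (\cN \boxtimes_\cC \cM) \boxtimes_\cC \cM'.
\]
Since $\cM$ is fully exact and $\cN$ is exact, $\cN \boxtimes_\cC \cM$ is exact; since $\cM'$ is fully exact and $\cN \boxtimes_\cC \cM$ is exact, the right-hand side is exact; hence so is the left-hand side. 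As $\cN$ was an arbitrary exact $\cC$-module, $\cM \boxtimes_\cC \cM'$ is fully exact.

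The only point requiring care --- and the sole place where the braiding on $\cC$ intervenes --- is to ensure that $\boxtimes_\cC$ really does equip $\lmod{\cC}$ with a monoidal $2$-category structure (so that the associativity equivalence above exists as a $\cC$-module equivalence) and that the relative Deligne product of two left $\cC$-modules carries a natural left $\cC$-module structure; both are standard in the braided setting and recalled earlier. Granting these, no exactness argument is needed beyond iterating the definition of fully exact, so I do not anticipate a genuine obstacle: the definition was arranged precisely so that this closure is formal.
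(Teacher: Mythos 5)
Your proof is correct and follows essentially the same route as the paper: the paper's proof of \Cref{prop:fully-exact-tensor} likewise uses the associator equivalence $\cN\boxtimes_\cC (\cM_1\boxtimes_\cC \cM_2)\simeq (\cN\boxtimes_\cC \cM_1)\boxtimes_\cC \cM_2$ and applies full exactness of the two factors in succession, with full exactness of the unit $\cC$ recorded separately (in \Cref{ex:fully-exact-first}). The additional remarks on fullness and on the coherence data restricting unchanged match the paper's implicit treatment.
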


To give a characterization of fully exact module categories, we consider the \action functor $A_\cM\colon\cC \to \Fun_{\cC}(\cM,\cM)$, $X\mapsto X\lact (-)$, where the braiding is used to make $X\lact (-)$
a left $\cC$-module functor.
This \action functor is one of the \emph{$\alpha$-induction} functors, denoted by $\alpha^+$ in \cite{DN2}, inspired by constructions in the theory of subfactors, see also \cites{LR,BEK,Ost1,FFRS}.

\begin{introtheorem}[See \Cref{prop:fully-exact-equiv}]\label{thm:intro-A}
Let $\cC$ be a finite braided tensor category over any field $\Bbbk$.
The following are equivalent for a finite left $\cC$-module category $\cM$:
\begin{enumerate}[(i)]
 \item     $\cM$ is fully exact.
  \item For any exact $\cC$-module category $\cN$, the left $\cC$-module category $\Fun_\cC(\cN,\cM)$ is exact.
    \item 
    The centralizer $\cC^*_\cM=\Fun_\cC(\cM,\cM)$ is exact as a left $\cC$-module category.
    \item The \action functor $A_\cM\colon \cC\to \cC^*_\cM$ preserves projective objects. 
\end{enumerate}
\end{introtheorem}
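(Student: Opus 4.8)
The plan is to establish the cycle (i) $\Leftrightarrow$ (ii) $\Rightarrow$ (iii) $\Rightarrow$ (iv) $\Rightarrow$ (ii), and to extract along the way that each of the four conditions forces $\cM$ to be exact. The structural observation that ties everything together is that, for any finite left $\cC$-module category $\cN$, the natural left $\cC$-module structure on $\Fun_\cC(\cN,\cM)$ is obtained by restricting the left $\cC^*_\cM$-action by post-composition along the functor $A_\cM\colon\cC\to\cC^*_\cM$; concretely $X\cdot F=(X\lact-)\circ F=A_\cM(X)\circ F$, where the braiding is precisely what makes $X\lact-$ a $\cC$-module endofunctor of $\cM$. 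This applies in particular to $\cC^*_\cM=\Fun_\cC(\cM,\cM)$ itself.

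For (i) $\Leftrightarrow$ (ii) I would invoke the standard equivalence of left $\cC$-module categories $\cN\boxtimes_\cC\cM\simeq\Fun_\cC(\overline\cN,\cM)$ valid for $\cN$ exact, where $\overline\cN$ denotes $\cN$ regarded as a module category on the opposite side via the braiding. Since $\cN\mapsto\overline\cN$ is an involution on the class of exact $\cC$-module categories, the defining condition of fully exact is exactly that $\Fun_\cC(\cL,\cM)$ be exact for every exact $\cC$-module category $\cL$. Taking $\cL=\cC$ (the regular module category, which is exact) gives $\cM\simeq\Fun_\cC(\cC,\cM)$ exact, so both (i) and (ii) force $\cM$ to be exact; then $\cL=\cM$ in (ii) yields (iii). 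For (iii) $\Rightarrow$ (iv): if $\cC^*_\cM$ is exact as a $\cC$-module category, then for every projective $P\in\cC$ the object $A_\cM(P)=A_\cM(P)\circ\id_\cM=P\cdot\id_\cM$ is projective in $\cC^*_\cM$, i.e.\ $A_\cM$ preserves projectives.

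The implication that needs a genuine idea is (iv) $\Rightarrow$ ``$\cM$ is exact''. I would derive it from the claim that, for every $m\in\cM$, the evaluation functor $\operatorname{ev}_m\colon\cC^*_\cM\to\cM$, $F\mapsto F(m)$, preserves projective objects; granting this, $P\lact m=\operatorname{ev}_m(A_\cM(P))$ is projective for every $m$ and every projective $P\in\cC$, which is exactly the exactness of $\cM$. To prove the claim, choose a projective generator $g$ of $\cM$ and pass to the standard presentations $\cM\simeq\mathbf{mod}\text{-}B$ and $\cC^*_\cM\simeq B\text{-}\mathbf{bimod}$ over $\cC$, where $B=\iEnd(g)$; under these identifications the projective objects of $\cC^*_\cM$ are the direct summands of the free $B$-bimodules $B\otimes Y\otimes B$ with $Y$ projective in $\cC$, and $\operatorname{ev}_m$ sends such a bimodule to the free right $B$-module on $N\otimes Y$, with $N\in\mathbf{mod}\text{-}B$ corresponding to $m$. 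Since $Y$ is projective, $N\otimes Y$ is projective in $\cC$ (projectives form a tensor ideal, by rigidity of $\cC$), so the free $B$-module on it is projective in $\mathbf{mod}\text{-}B$; additivity of $\operatorname{ev}_m$ then takes care of the summands.

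Finally, (iv) $\Rightarrow$ (ii): let $\cN$ be exact, so that $\overline\cN$ is exact too; since $\cM$ is now known to be exact, $\cC^*_\cM$ is a finite tensor category and $\Fun_\cC(\overline\cN,\cM)$ is exact as a $\cC^*_\cM$-module category, by the standard fact that $\Fun_\cC$ of two exact $\cC$-module categories is exact over the appropriate dual tensor category (see \cite{EGNO}*{Ch.\,7}). As the $\cC$-action on $\Fun_\cC(\overline\cN,\cM)$ is the restriction along $A_\cM$ of this $\cC^*_\cM$-action and $A_\cM$ preserves projectives by (iv), for every projective $P\in\cC$ and every $F$ the object $P\cdot F=A_\cM(P)\circ F$ is projective in $\Fun_\cC(\overline\cN,\cM)$ (an exact module category absorbs the projectives of its base tensor category). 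Hence $\Fun_\cC(\overline\cN,\cM)$ is exact as a left $\cC$-module category; letting $\cN$ range over exact module categories and using (i) $\Leftrightarrow$ (ii) closes the cycle. The main obstacles are the two structural facts about exact module categories used here — that $\operatorname{ev}_m$ preserves projectives, and that $\Fun_\cC$ of exact module categories is exact over the dual tensor category — together with carefully aligning the $\cC$- and $\cC^*_\cM$-module structures via $A_\cM$; everything else is formal.
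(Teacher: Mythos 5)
Your proposal is correct and follows essentially the same route as the paper's proof of \Cref{prop:fully-exact-equiv}: the identification $\cN\boxtimes_\cC\cM\simeq\Fun_\cC(\cN^*,\cM)$ plus exactness-invariance of duals for (i)$\Leftrightarrow$(ii), your ``$\operatorname{ev}_m$ preserves projectives'' claim is exactly the paper's \Cref{lem:Fun-projectives} (proved the same way via $A$-$B$-bimodules and induced modules), and (iv)$\Rightarrow$(ii) is the same restriction-along-$A_\cM$ argument using \cite{EGNO}*{Prop.\,7.12.14}. The only nitpick is that $\cN\mapsto\cN^*$ is not an involution in general (only $\cM\mapsto{}^*(\cM^*)\simeq\cM$ holds, by \Cref{cor:stars-inverses}), but the weaker fact that duality is a bijection on equivalence classes of exact module categories is all your argument actually needs.
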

We note that a fully exact $\cC$-module is in particular exact. However, the converse does not hold in general,
as we show in several examples.

Any finite left $\cC$-module $\cM$ is equivalent to $\rmodint{\cC}{A}$, for an algebra $A$ in $\cC$, with the action given by the tensor product of $\cC$ \cites{EO1,EGNO,DSS2}. Such an algebra $A$ is \emph{exact} if $\rmodint{\cC}{A}$ is exact as a $\cC$-module.
Exact algebras in a finite tensor category $\cC$ were conjectured to correspond to semisimple algebras in $\cC$ \cite{EO3}*{Con.\,B.6}.  This conjecture was verified for $\cC=\lmod{H}$, where $H$ is a finite-dimensional Hopf algebra over an algebraically closed field $\Bbbk$ in the same paper. 
A general proof of this conjecture for tensor categories over any field $\Bbbk$ has appeared in \cite{CSZ}. The present paper provides examples of semisimple algebras in braided finite tensor categories  whose tensor product is not semisimple, in both positive and zero characteristic. We also provide examples of such algebras that are commutative. 

The class of fully exact algebras has the property that tensoring with any semisimple algebra will result in a semisimple algebra in $\cC$.
An intrinsic condition of $A$ to be fully exact is that, for at least one projective object $P$ in $\cC$, 
the $A$-bimodule $P\lact A$ is a direct summand of the induced $A$ bimodule $A\otimes P \otimes A$, see \Cref{lem:A-fully-exact-equiv}.

Another intrinsic characterization of a fully exact algebra $A$ is that the algebra 
$A^\psi\otimes A$, where $A^\psi$ is the braided opposite algebra of $A$, is exact, or by~\cite{CSZ}, a semisimple algebra in $\cC$.
This characterization is analogous to the classical result from ring theory that a finitely generated algebra $A$ over a commutative ring $R$ is separable if and only if $A^{\oop}\otimes_R A$ is semisimple over $R$ \cite{Hat}*{Thm.\,2.5}.

\subsubsection*{A non-semisimple example} 
Let $\cC=\lmod{S}$, for Sweedler's Hopf algebra $S$, with symmetric braiding given by that of $\sVect$. We first re-parametrize the indecomposable $\cC$-module categories by two copies of $\mC\rP^1$ by setting, for $\lambda \neq 0$,
$$\cV_\lambda:=\Vect^{1/\lambda},\quad \cS_\lambda:=\sVect^{1/\lambda}, \quad \cV_\infty:=\Vect^0, \quad \cS_\infty:=\sVect^0, \quad \cV_0:=\cC,\quad \cS_0:=\cD.$$ 
As we show in \Cref{prop:S-mod-fully-exact-class}, $\cV_\lambda$ and $\cS_\lambda$ are fully exact if and only if $\lambda \neq \infty$, and we can compute relative Deligne products and functor categories of the fully exact $\cC$-modules.

\begin{introproposition}[See \Cref{prop:Sweedler-products}]
For $\lambda, \mu \in \mC$ we have equivalences of left $\cC$-module categories
\begin{align*}
\cV_\lambda\boxtimes_\cC \cV_\mu &\simeq \cV_{\lambda+\mu}& \cS_\lambda\boxtimes_\cC \cV_\mu&\simeq \cS_{\lambda+\mu}\simeq \cV_\lambda\boxtimes_\cC \cS_\mu,&\cS_\lambda\boxtimes_\cC \cS_\mu &\simeq \cV_{\lambda+\mu},\\
\Fun_\cC(\cV_\lambda,\cV_\mu)&\simeq \cV_{\mu-\lambda},& \Fun_\cC(\cS_\lambda,\cV_\mu)&\simeq \cS_{\mu-\lambda}\simeq 
     \Fun_\cC(\cV_\lambda,\cS_\mu),&    \Fun_\cC(\cS_\lambda,\cS_\mu)&\simeq \cV_{\mu-\lambda}.
\end{align*}
\end{introproposition}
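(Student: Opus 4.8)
The plan is to reduce everything to explicit computations with module algebras in $\cC=\lmod{S}$, building on the explicit models from \Cref{prop:S-mod-fully-exact-class}. First I would write each fully exact indecomposable $\cC$-module as $\rmodint{\cC}{A}$ for an explicit algebra $A$ in $\cC$, say $\cV_\lambda\simeq\rmodint{\cC}{A_\lambda}$ and $\cS_\lambda\simeq\rmodint{\cC}{B_\lambda}$ for $\lambda\in\mC$, with $A_0=\one$ so that $\cV_0=\cC$. Then I would invoke the two standard facts that make $\boxtimes_\cC$ and $\Fun_\cC$ computable at the level of algebras: (1) for algebras $A,B$ in $\cC$ there is an equivalence of left $\cC$-module categories $\rmodint{\cC}{A}\boxtimes_\cC\rmodint{\cC}{B}\simeq\rmodint{\cC}{A\underline\otimes B}$, where $A\underline\otimes B$ is the braided tensor product algebra on $A\otimes B$ (its multiplication twisted by the braiding of $\cC$) \cites{ENO,DSS1}; and (2) $\Fun_\cC(\rmodint{\cC}{A},\rmodint{\cC}{B})$ is the category of $A$-$B$-bimodules in $\cC$, which for braided $\cC$ is again of the form $\rmodint{\cC}{D}$ for $D$ an explicit braided tensor product built from the braided-opposite algebra $A^\psi$ and $B$. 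Given these, the whole proposition is a list of algebra identifications.

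The core step is to identify the relevant braided tensor products and braided opposites of the explicit algebras: I would establish isomorphisms (of algebras in $\cC$, or at least Morita equivalences compatible with the $\cC$-module structure) $A_\lambda\underline\otimes A_\mu\cong A_{\lambda+\mu}$, $B_\lambda\underline\otimes A_\mu\cong B_{\lambda+\mu}\cong A_\lambda\underline\otimes B_\mu$, $B_\lambda\underline\otimes B_\mu\cong A_{\lambda+\mu}$, together with $A_\lambda^\psi\cong A_{-\lambda}$ and $B_\lambda^\psi\cong B_{-\lambda}$; the displayed equivalences for $\boxtimes_\cC$ and $\Fun_\cC$ then follow by combining (1) and (2). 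The additivity of the parameter should follow from the fact that the deformation datum distinguishing $A_\lambda$ from $A_0$ enters the multiplication linearly in $\lambda$, so that it simply adds under $\underline\otimes$; the $\cV$-versus-$\cS$ ``type'' behaves like the group $\mZ/2$ precisely because $\underline\otimes$ uses the symmetric braiding of $\cC$, which here is that of $\sVect$ --- the extra sign it introduces on the odd summands makes the two super-twists in $B_\lambda\underline\otimes B_\mu$ cancel (landing back in the $\cV$-family), while a single one survives in $B_\lambda\underline\otimes A_\mu$. Reversing the order of multiplication and applying the braiding in $(-)^\psi$ negates the linear parameter, which accounts for the shift $\mu-\lambda$ in the $\Fun_\cC$ formulas.

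Finally I would check compatibility with the left $\cC$-action and with the parametrisation. The left action on $\rmodint{\cC}{-}$ is always $X\lact(-)=X\otimes(-)$, which is manifestly compatible with (1) and (2), so the module structure needs no separate verification; what does need care is confirming that the algebra produced by $\underline\otimes$ (or $(-)^\psi$) carries exactly the claimed parameter --- i.e.\ that whatever invariant \Cref{prop:S-mod-fully-exact-class} uses to read $\lambda$ off an algebra (a structure constant, or the induced action of the nilpotent generator $x\in S$ on $\Hom$-spaces) is genuinely additive under $\underline\otimes$ and negated under $(-)^\psi$. The degenerate values $\lambda=0$ or $\mu=0$, where $\cV_0=\cC$ and $\cS_0=\cD$ have two simple objects and the algebras $A_0=\one$, $B_0$ are of a different shape than the $\lambda\neq0$ ones, I would handle as separate easy checks; in particular $\cV_0\boxtimes_\cC\cM\simeq\cM$ is just the fact that $\cV_0=\cC$ is the monoidal unit of $\lmod{\cC}$. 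As a corollary one reads off, a posteriori, that every $\cV_\lambda$ and $\cS_\lambda$ with $\lambda\in\mC$ is an invertible $\cC$-module category, the isoclasses forming the group $\mC\times\mZ/2$ under $\boxtimes_\cC$.

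The main obstacle is the middle step: computing the braided tensor product algebras $A_\lambda\underline\otimes A_\mu$, $B_\lambda\underline\otimes B_\mu$, etc., and their braided opposites in $\lmod{S}$, and then proving that $\rmodint{\cC}{A_\lambda\underline\otimes A_\mu}\simeq\cV_{\lambda+\mu}$ as $\cC$-module categories. The delicate point is that $A_\lambda\underline\otimes A_\mu$ and $A_{\lambda+\mu}$ need not be literally isomorphic --- in general one expects only a Morita equivalence --- so one must exhibit that equivalence (typically via an explicit Morita context, possibly after rescaling generators by functions of $\lambda$ and $\mu$) and check that it preserves the module-category associator. Tracking the $\sVect$-signs carefully enough to be certain that $\cS_\lambda\boxtimes_\cC\cS_\mu$ lands in the $\cV$-family rather than the $\cS$-family (and not vice versa) is the other place where an error could slip in.
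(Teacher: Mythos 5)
Your overall skeleton is sound and rests on exactly the reductions the paper establishes (\Cref{prop:rel-Del-modules}, \Cref{prop:Funrex-modules}, \Cref{prop:rel-tensor-as-hom}), but your route through the key computation differs from the paper's. The paper does not realize $\cV_\lambda$ as $\rmodint{\cC}{A_\lambda}$ for an explicit internal algebra $A_\lambda$ in $\cC$ and compute braided tensor products of such algebras. Instead it works on the dual side: $\cV_\lambda=\Vect^{\lambda^{-1}}$ is a cocycle-twisted fiber-functor module category, and $\Fun_\cC(\Vect^J,\Vect^L)$ is computed directly via \Cref{prop:Fun-H*-mod-twisted} as $\lmod{\Dtwist{L}{H}{J}{*}\,}$ for a twisted dual algebra, which is then matched against the comodule algebras $A_\xi$ in Williams's classification of exact $\lmod{S}$-module categories; the parameter of the answer is read off from the classification invariant there. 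The relative Deligne products are then deduced from the functor categories via $\cM\boxtimes_\cC\cN\simeq\Fun_\cC(\cM^*,\cN)$ together with $\cV_\lambda^*\simeq\cV_{-\lambda}$, rather than computed independently, and the diagonal case $\lambda=\mu$ is handled separately using invertibility. Your approach would buy a more uniform algebraic picture, but it requires first producing the internal algebras $A_\lambda$, $B_\lambda$ explicitly, which the paper never does.

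The genuine gap is the claim that the deformation datum ``enters the multiplication linearly in $\lambda$, so that it simply adds under $\underline{\otimes}$''. The natural deformation parameter here is the cocycle parameter $t=\lambda^{-1}$ of $R_t$, not $\lambda$; the additivity in $\lambda$ is an emergent feature of the $\mC\rP^1$ reparametrization $\cV_\lambda=\Vect^{\lambda^{-1}}$ and is not visible as linearity of any structure constant. Concretely, in the paper's computation the algebra underlying $\Fun_\cC(\cV_\lambda,\cV_\mu)$ has structure constant $d=\lambda^{-1}-\mu^{-1}$ and sits over a module associator twisted by $R_{\mu^{-1}}$, and one must evaluate the invariant $\mu^{-1}-\tfrac14\xi^2$ with $\xi=-2\mu^{-1}i/\sqrt{d}$ to land on $(\mu-\lambda)^{-1}$; nothing ``simply adds''. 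So the middle step you defer is not routine bookkeeping: without an explicit invariant that detects $\lambda$ on the algebra side (the paper imports this from \cite{Will}) and a genuine computation showing it transforms as claimed under $\underline{\otimes}$ and $(-)^\psi$, the proof is incomplete. The same caveat applies, with additional sign bookkeeping, to the $\cS$-family, which even the paper defers to the forthcoming work \cite{GL}.
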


In general, whether a given $\cC$-module category is fully exact is sensitive to the choice of braiding. For example, for $\cC=\lmod{S}$ there is a one-parameter family of braidings corresponding to universal R-matrices $R_t$, with $t\in \mC$. The braiding of $\sVect$ corresponds to the case when $t=0$. The $\cC$-module $\Vect=\cV_\infty$ is only fully exact when $t\neq 0$. For braidings obtained from $R_t$, with $t\neq 0$, instead $\cV_{-2/t}$ fails to be fully exact, see \Cref{rem:vect-Sweedler}.

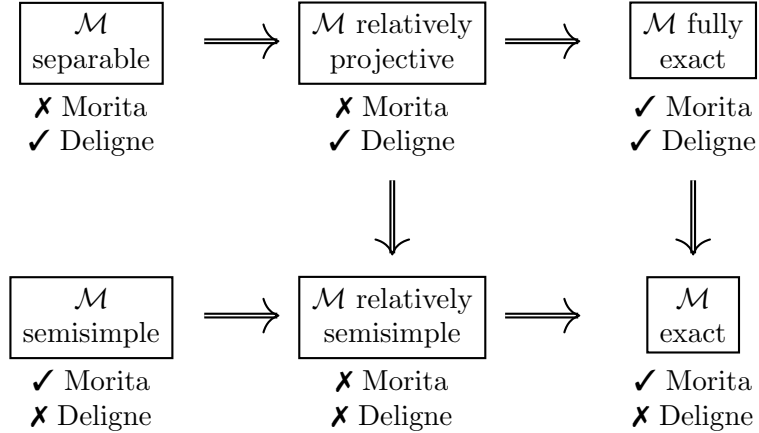
\begin{figure}[ht]
    \centering
\begin{tikzcd}[arrows=Rightarrow]
\node(A)[draw, thick, align=center] at (0,0) {$\cM$\\separable};
\node(B)[draw,thick,align=center] at (4,0) {$\cM$ relatively \\ projective};
\node(C)[draw,thick,align=center] at (8,0) {$\cM$ fully \\ exact};
\node(D)[draw,thick,align=center] at (8,-3.65) {$\cM$ \\ exact};
\node(A2) [align=center ]at (0,-1.1)
{\xmark ~Morita\\
\cmark ~Deligne};
\node(B2) [align=center ]at (4,-1.1)
{\xmark ~Morita\\
\cmark ~Deligne};
\node(C2) [align=center ]at (8,-1.1)
{\cmark ~Morita\\
\cmark ~Deligne};
\node(D2) [align=center ]at (8,-4.75)
{\cmark ~Morita\\
\xmark ~Deligne};
\draw[double,thick,->] (1.5,0.35) to (2.5,0.35);
\draw[double,thick,->] (5.5,0.35) -- (6.5,0.35);
\draw[double,thick,->] (4,-1.5) -- (4,-2.5);
\draw[double,thick,->] (8,-1.5) -- (8,-2.5);
\node(E)[draw, thick, align=center] at (0,-3.65) {$\cM$\\semisimple};
\node(F)[draw,thick,align=center] at (4,-3.65) {$\cM$ relatively \\ semisimple};
\node(E2) [align=center ]at (0,-4.75)
{\cmark ~Morita\\
\xmark ~Deligne};
\node(F2) [align=center ]at (4,-4.75)
{\xmark ~Morita\\
\xmark ~Deligne};
\draw[double,thick,->] (1.5,-3.3) to (2.5,-3.3);
\draw[double,thick,->] (5.5,-3.3) -- (6.5,-3.3);
\end{tikzcd}
    \caption{Different notions of exact $\cC$-module categories and their properties, closure under Morita equivalence (i.e. does not depend on choice of algebra object) or relative Deligne product $\boxtimes_\cC$.}
    \label{fig:overview}
\end{figure}

\Cref{fig:overview} compares the 
class of fully exact module categories to other classes of module categories. Here, \emph{separable} module categories are module categories of the form $\rmodint{\cC}{A}$, for a separable algebra $A$ in $\cC$, i.e.\,where the multiplication map of $A$ has a section as an $A$-bimodule map. Separable algebras are stable under Morita equivalence when $\cC$ is fusion \cite{DSS2}*{Sec.\,2.5}. We show in \Cref{ex:sep-not-Morita} that this fails in the non-semisimple case. Moreover, in the above example of $\cC=\lmod{S}$, only $\cV_0$ and $\cS_0$ are separable, showing that the class of fully exact module categories is significantly larger. In addition, $\cV_0$ and $\cS_0$ are also the only \emph{relatively projective} $\cC$-module categories, i.e.\,which can be represented as modules over an algebra $A$ in $\cC$ such that $A$ is relatively projective as an $A$-bimodule in $\cC$, see a precise definition in \Cref{def:rel-proj-bimodule}. The lower row of implications of \Cref{fig:overview} displays sufficient criteria for a $\cC$-module category $\cM$ to be exact. Here, $\cM$ is \emph{relatively semisimple} if there exists  $M$ in $\cM$ such that any object of $\cM$ is a direct summand of $X\lact M$, for some $X$ in $\cC$, see \Cref{def:M-rel-ss}. We finally note that the class of fully exact module categories is the only class in \Cref{fig:overview}  that is  stable under both Morita equivalence and the relative Deligne product.

\subsection{Relation to separable algebras} 
\label{secIntro:sep}
Applying results of \cite{DSS2}*{Sec.\,2.5}, for a $\Bbbk$-linear fusion category $\cC$, a finite left $\cC$-module $\cM$ is fully exact if and only if it is separable as both notions are equivalent to $\cC^*_\cM$ being semisimple, see \Cref{cor:semisimple-fully-exact}.  
If, in addition, $\Bbbk$ is an algebraically closed field of characteristic zero, all six notions in \Cref{fig:overview} are equivalent, see the second part of \Cref{cor:semisimple-fully-exact}. Elsewhere, we make no assumptions on the field $\Bbbk$ unless stated otherwise.

The equivalence of (i) and (iii) in \Cref{thm:intro-A} shows that fully exact $\cC$-module categories provide a more natural analog of separable module categories for a \emph{non-semisimple} finite braided tensor category $\cC$,  while considering
usual separable module categories 
in this generality appears too restrictive. 
Indeed, by \Cref{lem:A-fully-exact-equiv}, an algebra $A$ in $\cC$ is fully exact 
if and only if $P_{\one}\otimes A$ is a direct summand of $A\otimes P_{\one} \otimes A$ as an $A$-bimodule, for the projective cover $P_\one$ of the tensor unit $\one$.
 When $\cC$ is semisimple, $A$ being fully exact
 is equivalent to this direct summand condition for $P_\one=\one$, 
  i.e.\ to $A$ being a separable algebra. One could thus also call fully exact $\cC$-module categories \textit{projectively separable}. Another natural generalization of separable algebras could be ``$A$ is projective as an $A$-bimodule". However, by 
 \Cref{cor:A-proj-sep} existence of such a non-zero $A$ in $\cha\Bbbk=0$ would then imply that $\cC$ is necessarily semisimple and thus $A$ is separable in the usual sense.

\subsubsection*{When is $\Vect$ fully exact?}
\mbox{}
Let $\cC=\lmod{H}$ be the braided tensor category of finite-dimensional modules over a finite-dimensional quasitriangular Hopf algebra $H$ with universal R-matrix $R=R^{(1)}\otimes R^{(2)}$. The canonical fiber functor $F\colon \cC\to \Vect$ makes the category $\Vect$ a  left $\cC$-module category. To describe when the corresponding $\cC$-action is fully exact,  consider the Hopf algebra homomorphism 
\begin{equation*}
\phi_R\colon H^*\to H,\quad  f\mapsto R^{(1)}f(R^{(2)}).
\end{equation*}

\begin{introproposition}[See \Cref{lem:Vect-fully-exact}]\label{prop:D}
Let $H$ be a finite-dimensional Hopf algebra over any field $\Bbbk$.
    The following conditions are equivalent.
\begin{enumerate}
    \item[(i)] $\Vect$ is fully exact.
    \item[(ii)] The restriction functor $\Res_{\phi_R}\colon \lmod{H}\to \lmod{H^*}$ preserves projective objects. 
\end{enumerate}
\end{introproposition}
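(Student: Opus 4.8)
The plan is to deduce the equivalence directly from the characterization of fully exact module categories in terms of the \action functor, \Cref{prop:fully-exact-equiv} (stated in the introduction as \Cref{thm:intro-A}): a finite left $\cC$-module category $\cM$ is fully exact if and only if $A_\cM\colon\cC\to\cC^*_\cM$ preserves projective objects. Specializing to $\cM=\Vect$ with the $\cC=\lmod{H}$-module structure $X\lact V=F(X)\otimes V$ coming from the fiber functor $F$, it then suffices to produce a tensor equivalence $E\colon\cC^*_\cM\isomorph\lmod{H^*}$ under which $A_\cM$ is carried to $\Res_{\phi_R}$. Given this, (i)$\Leftrightarrow$(ii) follows at once: $A_\cM$ preserves projectives if and only if $E\circ A_\cM\cong\Res_{\phi_R}$ does, because $E$ is an equivalence.

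First I would recall the reconstruction description of the dual category $\cC^*_\cM=\Fun_\cC(\Vect,\Vect)$. Evaluation at the unit, $G\mapsto G(\one)$, gives a $\Bbbk$-linear equivalence onto $\lmod{H^*}$ (equivalently, onto finite-dimensional right $H$-comodules): the $H^*$-action on $G(\one)$ is the one obtained from the $\cC$-module-functor constraint $s_{Y,V}\colon Y\lact G(V)\isomorph G(Y\lact V)$ by naturality in $Y\in\cC$, using the identification $\cC\simeq\lmod{H}$ afforded by $F$; the pentagon and hexagon axioms for $s$ unwind precisely into the comodule axioms, and composition of functors corresponds to the tensor product of $\lmod{H^*}$. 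This is standard, see e.g.\ \cite{EGNO}.

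The computational core is to evaluate $E$ on $G=A_\cM(X)=\bigl(V\mapsto X\lact V\bigr)$. Its module-functor constraint $s_{Y,V}$ is, up to the (trivial) associativity isomorphisms of the $\cC$-module category $\Vect$, the braiding $c_{Y,X}\otimes\id_V$ of $\cC$, and its value at $\one$ is the vector space $F(X)$. Hence the $H^*$-action on $E(A_\cM(X))=F(X)$ is the one read off from the natural transformation $c_{-,X}\colon F(-)\otimes F(X)\to F(X)\otimes F(-)$. Writing $c_{Y,X}$ in terms of the universal R-matrix $R=R^{(1)}\otimes R^{(2)}$ and unwinding the recipe of the previous step, one finds exactly the $H^*$-action $f\cdot x=\phi_R(f)\cdot x$ with $\phi_R(f)=R^{(1)}f(R^{(2)})$, naturally in $X$; that is, $E\circ A_\cM\cong\Res_{\phi_R}$. (Compatibility with the monoidal structures can be checked using that $\phi_R$ is a morphism of Hopf algebras, but only the isomorphism of underlying additive functors enters the argument above.)

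The main obstacle I anticipate is bookkeeping of conventions: the choice of hexagon defining the braiding $c$, the left-versus-right convention for $H$-comodules in the reconstruction, and the possible antipode-, $\mathrm{op}$-, or $\mathrm{cop}$-twists in the identification $\cC^*_\cM\simeq\lmod{H^*}$ all have to be fixed coherently so that $A_\cM$ lands on restriction along exactly $\phi_R$, rather than along a transposed or antipode-twisted variant. An alternative, more computational route avoids the reconstruction theorem: write $\Vect\simeq\rmodint{\cC}{A}$ for the internal endomorphism algebra $A=\iEnd_{\Vect}(\Bbbk)\cong H^*$ in $\cC$, identify $\cC^*_\cM$ with the category of $A$-$A$-bimodules in $\cC$ with composition $\otimes_A$, trace $A_\cM$ through this description, and combine with the intrinsic criterion of \Cref{lem:A-fully-exact-equiv}; this again produces condition (ii), but passing through \Cref{prop:fully-exact-equiv} is the most economical.
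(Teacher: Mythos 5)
Your proposal is correct and follows essentially the same route as the paper: reduce via \Cref{prop:fully-exact-equiv} to the statement that $A_{\Vect}$ preserves projectives, then identify $\cC^*_{\Vect}$ with $\lmod{H^*}$ (through $\lcomod{H}$, by evaluating a module functor at $\one$ and reading the $H^*$-action off the module-functor constraint) and check that under this equivalence $A_{\Vect}$ becomes $\Res_{\phi_R}$. The paper carries out exactly the convention bookkeeping you flag, in \Cref{lem:FunC-H*-mod}, \Cref{lem:H-com-alg}, and \Cref{prop:Fun-H*-mod}, where the composite equivalence $D$ is shown to be $\cC$-linear for the $\cC$-action on $\lmod{H^*}$ induced by the comodule-algebra structure $\delta_{H^*}(f)=\phi_R(f_{(1)})\otimes f_{(2)}$, so that $D(X\lact\id_{\Vect})\cong X\lact\Bbbk\cong\Res_{\phi_R}(X)$.
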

If $H^*$ is semisimple, $\Vect$ is always fully exact.
In the case of the factorizable Hopf algebra $H=u_q(\mathfrak{sl}_2)$,  Lusztig's finite-dimensional quantum $\mathfrak{sl}_2$ at any odd order of $q$, we show that $\Vect$ is not fully exact, see \Cref{sec:uqsl2}.  We conjecture  that this observation extends to general small quantum groups $u_q(\mathfrak{g})$.

\subsection{Perfect module categories are fully dualizable}

Dualizability in higher monoidal categories is a topic of considerable interest in the study of TFTs due to the cobordism hypothesis  \cites{BD,Lur,DSS2,BMS,DR,BJS}. In this context, an object of a monoidal $2$-category is called \emph{fully dualizable} if it is contained in a monoidal $2$-subcategory where all objects are dualizable, i.e.\ have left and right duals, and all $1$-morphisms have left and right adjoints \cites{DSS2,Pst}.

We say that $\cM$ is \emph{op-fully exact} if, for any exact $\cN$, we have that $\cM\boxtimes_\cC \cN$ is exact. The monoidal $2$-category $\opfexmod{\cC}$ 
of op-fully exact left $\cC$-modules is in fact equivalent to $\fexmod{\cC^{\otimesop}}$, where $\cC^{\otimesop}$ is the opposite tensor category. The key reason for considering op-fully exact module categories is that $\cM$ is op-fully exact if and only if the right dual ${}^*\cM=\Fun_\cC(\cM,\cC)$, or equivalently, the left dual $\cM^*$ is fully exact, see  \Cref{prop:duality-fully-exact}. We also show that $\cM$ is op-fully exact if and only if it is fully exact as a $\cC^\rev$-module category, where $\cC^\rev$ is the tensor category $\cC$ equipped with the inverse braiding. Hence, $\cM$ is op-fully exact if and only if the \action functor $A_\cM^\rev$, defined using the inverse braiding of~$\cC$, preserves projective objects.\footnote{This functor is the other $\alpha$-induction functor $\alpha^-$ in \cite{DN2}.} 
In particular, 
similarly to the above \Cref{prop:D}, $\Vect$ is op-fully exact if and only if the  functor $\Res(\phi_{R^{-1}_{21}})$ preserves projective objects.

The module category $\Vect$ over the Drinfeld center $\cZ(\cC)$ of $\cC=\lmod{\Bbbk G}$, where $G$ is a finite group and $\cha \Bbbk$ divides $|G|$, provides an example of a fully exact module category that is not op-fully exact, see \Cref{ex:double-group}. This shows that, in general, the monoidal $2$-category $\fexmod{\cC}$ is not closed under duals. However, when $\cC$ is symmetric, then $\fexmod{\cC}$ has duals. 

We call a $\cC$-module category $\cM$ \emph{perfect} if both $\cM$ and its dual $\cM^*$ are fully exact. By  \Cref{prop:duality-fully-exact}, $\cM$ is perfect if and only if $\cM$ is fully exact and op-fully exact. Hence, the perfect $\cC$-module categories are the dualizable objects in $\fexmod{\cC}$. What is more, fully exact $\cC$-modules are, in particular, exact and hence have the property that all $1$-morphisms have left and right adjoints. This shows that  perfect $\cC$-modules are \emph{fully dualizable} objects in $\lmod{\cC}$ in the sense of the cobordism hypothesis \cites{Lur,DSS2}. We show that the converse also holds.

\begin{introtheorem}[See \Cref{prop:dualizability}]
Let $\cC$ be a finite braided tensor category over a field $\Bbbk$.
 An object $\cM$ in $\lmod{\cC}$ is fully dualizable if and only if $\cM$ is perfect, i.e.\,$\cM$ and $\cM^*$ are fully exact.\end{introtheorem}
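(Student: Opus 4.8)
The plan is to prove each implication using the characterization \Cref{thm:intro-A} of fully exact module categories via the \action functor, together with the description of duals in $\lmod{\cC}$. For the implication ``perfect $\Rightarrow$ fully dualizable'' I would take as witnessing monoidal $2$-subcategory the full $2$-subcategory $\perf{\cC}$ of $\lmod{\cC}$ on the perfect module categories. One first checks that it is a monoidal $2$-subcategory: the unit $\cC$ is fully exact and so is $\cC^*\simeq\cC$, hence $\cC$ is perfect; and if $\cM,\cN$ are perfect then $\cM\boxtimes_\cC\cN$ is fully exact because $\fexmod{\cC}$ is a monoidal $2$-subcategory (\Cref{prop:fully-exact-tensor}), while $(\cM\boxtimes_\cC\cN)^*$ is a relative Deligne product of the fully exact module categories $\cM^*$ and $\cN^*$ and is therefore fully exact as well, so $\cM\boxtimes_\cC\cN$ is perfect. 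Every object $\cM$ of $\perf{\cC}$ is dualizable with dual $\cM^*$, which is again perfect since $(\cM^*)^*\simeq\cM$, and the evaluation and coevaluation lie in $\perf{\cC}$ because it is full on objects. Finally, perfect module categories are in particular exact, so every $1$-morphism of $\perf{\cC}$ is a $\cC$-module functor between exact module categories and therefore has both a left and a right adjoint, again lying in $\perf{\cC}$. Thus $\perf{\cC}$ is a monoidal $2$-subcategory with all objects dualizable and all $1$-morphisms adjointable, exhibiting every perfect $\cM$ as a fully dualizable object of $\lmod{\cC}$.

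For the converse, suppose $\cM$ is fully dualizable, witnessed by a monoidal $2$-subcategory $\cK\subseteq\lmod{\cC}$ in which all objects have left and right duals and all $1$-morphisms have left and right adjoints, all of these duals and adjoints again lying in $\cK$. Then $\cM$ has a dual $\cM^*\in\cK$, and I would use the identification $\cM\boxtimes_\cC\cM^*\simeq\cC^*_\cM$ of $\cC$-module categories, under which the coevaluation $\coev_\cM\colon\cC\to\cM\boxtimes_\cC\cM^*$, i.e.\ the unit of the internal endomorphism algebra $\cC^*_\cM$, sending $X$ in $\cC$ to $X\lact\id_\cM$, corresponds to the \action functor $A_\cM\colon\cC\to\cC^*_\cM$. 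Since $\coev_\cM$ is a $1$-morphism of $\cK$ it has a right adjoint $\coev_\cM^R\in\cK$, and $\coev_\cM^R$, being itself a $1$-morphism of $\cK$, again has a right adjoint; hence $\coev_\cM^R$ is left exact (being a right adjoint) and right exact (having a right adjoint), so it is exact, and therefore $\coev_\cM\simeq A_\cM$, as a left adjoint of an exact functor, preserves projective objects. By \Cref{thm:intro-A} this means $\cM$ is fully exact. Since $\cM^*$ is itself fully dualizable, witnessed by the same $\cK$, the identical argument shows that $\cM^*$ is fully exact; hence $\cM$ and $\cM^*$ are both fully exact, i.e.\ $\cM$ is perfect.

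I expect the main obstacle to lie in the converse, and within it in the identification of the coevaluation with the \action functor $A_\cM$: one must check that the internal endomorphism object $\cM\boxtimes_\cC\cM^*\simeq\cC^*_\cM$ and its $\cC$-module structure are available already from dualizability of $\cM$, without presupposing exactness, and that the coevaluation really is the algebra unit $X\mapsto X\lact\id_\cM$. Once this is settled, the crucial observation that inside $\cK$ the right adjoint of a $1$-morphism again admits a right adjoint, and is hence exact, combines with \Cref{thm:intro-A} to conclude. For the first implication the only subtleties are that $\perf{\cC}$ is closed under $\boxtimes_\cC$ and under adjoints of $1$-morphisms, which both reduce to the already-established stability properties of fully exact and of exact module categories.
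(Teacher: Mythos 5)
Your proposal is correct and follows essentially the same route as the paper: the forward direction exhibits $\perf{\cC}$ as a rigid monoidal $2$-subcategory, and the converse extracts exactness of the right adjoint of the coevaluation from the existence of a further right adjoint, identifies the coevaluation with the \action functor, and concludes via \Cref{thm:intro-A}, repeating the argument for $\cM^*$. The only slip is the identification $\cM\boxtimes_\cC\cM^*\simeq\cC^*_\cM$, which should read ${}^*\cM\boxtimes_\cC\cM\simeq\cC^*_\cM$ (so that the \emph{right} coevaluation $\cC\to{}^*\cM\boxtimes_\cC\cM$ is $A_\cM$ up to equivalence, while $\cM\boxtimes_\cC\cM^*\simeq\cC^*_{\cM^*}$); this is harmless here since you run the argument for both $\cM$ and $\cM^*$ anyway.
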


The terminology of perfect $\cC$-modules is motivated by that of perfect complexes of modules over a commutative ring (or, more generally, of quasi-coherent sheaves over a scheme $X$). Perfect complexes are precisely the dualizable objects in the respective categories with the derived version of  the relative tensor product over $R$, cf.\,\Cref{rem:perf-complex}.

Given a monoidal $2$-category, one can also consider the largest \emph{full} $2$-subcategory where \emph{all} $1$-morphisms have left and right adjoint. In the case of $\lmod{\cC}$, for $\cC$ a finite braided tensor category, this $2$-subcategory is $\lemod{\cC}$, the $2$-subcategory of exact left  $\cC$-modules. The results of this paper show that this $2$-subcategory  is not a monoidal $2$-subcategory.
However, by definition, $\lemod{\cC}$ is a right module $2$-category over the monoidal $2$-category $\fexmod{\cC}$ in the sense that the monoidal product $\boxtimes_\cC$ of $\lmod{\cC}$ restricts to a $2$-functor 
$$
\xymatrix{
\lemod{\cC}\times \fexmod{\cC} \ar[r]^-{\boxtimes_\cC}& \lemod{\cC}.
}
$$
In fact, by definition, $\fexmod{\cC}$ is the largest full monoidal $2$-subcategory such that $\lemod{\cC}$ becomes a right module $2$-category by restriction of $\boxtimes_\cC$. 
Similarly, $\lemod{\cC}$ is a left module $2$-category over $\opfexmod{\cC}$. Combining these observations, $\lemod{\cC}$ is a 
bimodule $2$-category over the monoidal $2$-category $\perf{\cC}$ of perfect $\cC$-modules.

\subsection{A diagram of monoidal 2-subcategories}

We will now summarize the relationship of various monoidal $2$-categories considered in this paper.  For this, we denote by $\invmod{\cC}$ the monoidal $2$-subcategory of invertible objects in $\lmod{\cC}$ studied, e.g., in \cites{ENO,DN2}. We extend the characterization of invertible $\cC$-module categories from \cite{ENO}*{Prop.\,4.2} in terms of the \action functor $A_\cM\colon \cC\to \cC^*_\cM$ being an equivalence to the generality of this paper, see \Cref{prop:Minv-AM-equiv}.
We further denote by 
$\sepmod{\cC}$ the full monoidal $2$-subcategory of \emph{separable} left $\cC$-modules which was studied, in the semisimple case, in \cite{DSS2}.

\begin{introtheorem}\label{thm:fully-exact-mon-2-cat}
Let $\cC$ be a finite braided tensor category over a field $\Bbbk$.
We have the following inclusions of monoidal $2$-subcategories 
$$
\xymatrix@R=10pt@C=10pt@M=5pt@W=5pt{
\sepmod{\cC}\ar@{^{(}->}[rd]&&\fexmod{\cC}\ar@{^{(}->}[rd]&\\
&\perf{\cC} \ar@{^{(}->}[ru]\ar@{^{(}->}[rd] && \lmod{\cC}\\
\invmod{\cC}\ar@{^{(}->}[ru]&&\opfexmod{\cC}\ar@{^{(}->}[ru]&
}$$
which are full on $1$ and $2$-morphisms. Here, the monoidal $2$-categories $\invmod{\cC}$, $\sepmod{\cC}$, $\perf{\cC}$ and $\lmod{\cC}$ are rigid in the sense of \Cref{def:2rigid}.
\end{introtheorem}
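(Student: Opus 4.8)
The plan is to treat the diagram in three stages: fullness of the inclusions, the object-level containments together with closure under $\boxtimes_\cC$, and finally rigidity. Since each of $\invmod{\cC}$, $\sepmod{\cC}$, $\perf{\cC}$, $\fexmod{\cC}$, $\opfexmod{\cC}$ is defined by a condition on objects, it is a \emph{full} $2$-subcategory of $\lmod{\cC}$; hence its $1$-morphisms and $2$-morphisms are exactly the module functors and module transformations of $\lmod{\cC}$, and every displayed inclusion is automatically full on $1$- and $2$-morphisms. Commutativity of the diagram is likewise automatic, since all arrows are inclusions of full $2$-subcategories of the one ambient $2$-category.

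The organizing observation is that $\invmod{\cC}$, $\sepmod{\cC}$ and $\perf{\cC}$ are each closed under the duality $\cM\mapsto\cM^*$ and each contained in $\fexmod{\cC}$. For $\perf{\cC}$ this is immediate: $\cM$ is perfect iff $\cM$ and $\cM^*$ are fully exact, which by \Cref{prop:duality-fully-exact} is the same as $\cM$ being fully exact and op-fully exact; this symmetric description yields both $\perf{\cC}\hookrightarrow\fexmod{\cC}$ and $\perf{\cC}\hookrightarrow\opfexmod{\cC}$, and shows that $\cM^*$ is perfect whenever $\cM$ is. For $\invmod{\cC}$, an invertible $\cM$ has its inverse as dual, so the class is closed under duals, and $A_\cM$ is an equivalence by \Cref{prop:Minv-AM-equiv}, hence preserves projectives, so $\cM$ is fully exact by \Cref{prop:fully-exact-equiv}(iv). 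For $\sepmod{\cC}$, the implication ``separable $\Rightarrow$ fully exact'' is recorded in \Cref{fig:overview}, and the dual of $\cM=\rmodint{\cC}{A}$ is presented by the braided-opposite algebra $A^\psi$, which is separable exactly when $A$ is; thus $\sepmod{\cC}$ too is closed under duals and contained in $\fexmod{\cC}$. Combining these: if $\cM$ lies in $\invmod{\cC}$ or $\sepmod{\cC}$, then both $\cM$ and $\cM^*$ are fully exact, so $\cM$ is perfect, giving $\invmod{\cC}\hookrightarrow\perf{\cC}$ and $\sepmod{\cC}\hookrightarrow\perf{\cC}$. The two rightmost inclusions into $\lmod{\cC}$ hold because (op-)fully exact modules are in particular finite.

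For monoidality, $\fexmod{\cC}$ is a monoidal $2$-subcategory by \Cref{prop:fully-exact-tensor}, and the mirror-image argument — factoring $(\cM\boxtimes_\cC\cM')\boxtimes_\cC\cN\simeq\cM\boxtimes_\cC(\cM'\boxtimes_\cC\cN)$ by associativity of $\boxtimes_\cC$ — shows $\opfexmod{\cC}$ is closed under $\boxtimes_\cC$; hence so is $\perf{\cC}=\fexmod{\cC}\cap\opfexmod{\cC}$. Invertible objects are closed under the monoidal product in any monoidal $2$-category, and $\sepmod{\cC}$ is closed under $\boxtimes_\cC$ because $\rmodint{\cC}{A}\boxtimes_\cC\rmodint{\cC}{B}$ is presented by the braided tensor-product algebra of $A$ and $B$, whose separability section is the tensor of those of $A$ and $B$. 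For rigidity, I read \Cref{def:2rigid} as the requirement that every object admit a left and a right dual inside the $2$-subcategory. The ambient $\lmod{\cC}$ is rigid because every finite $\cC$-module $\cM$ is dualizable with dual $\Fun_\cC(\cM,\cC)$ on either side. For the three remaining marked nodes it then suffices that they are closed under $\cM\mapsto\cM^*$, which was established above; this is precisely why $\fexmod{\cC}$ and $\opfexmod{\cC}$ fail to be rigid (their duals land in the opposite class) and are not among the four.

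The step I expect to be the main obstacle is the rigidity of the ambient $\lmod{\cC}$: constructing the evaluation and coevaluation $1$-morphisms exhibiting $\Fun_\cC(\cM,\cC)$ as a dual of an \emph{arbitrary} finite, possibly non-exact, $\cC$-module $\cM$, and verifying the two zigzag identities up to coherent invertible $2$-morphisms, without the homological control that exactness would provide. A secondary subtlety, specific to the non-semisimple setting, is confirming that $\sepmod{\cC}$ is genuinely stable under both $\boxtimes_\cC$ and duality \emph{at the level of module categories}: since separability is not Morita-invariant here (\Cref{ex:sep-not-Morita}), one must check that the presenting algebras $A\otimes B$ and $A^\psi$ are themselves separable and that the resulting module categories lie in $\sepmod{\cC}$, rather than merely being Morita-equivalent to separable ones.
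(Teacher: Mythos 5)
Your handling of everything except rigidity tracks the paper's own assembly of this theorem: fullness of the inclusions is automatic for full $2$-subcategories; $\invmod{\cC}\hookrightarrow\perf{\cC}$ is exactly \Cref{cor:inv-fully-exact} (via \Cref{prop:Minv-AM-equiv} plus invertibility of $\cM^*$); $\sepmod{\cC}\hookrightarrow\perf{\cC}$ is \Cref{cor:sep-dualizable} via separability of $A^\psi$ (\Cref{lem:separable-duals}); $\perf{\cC}\hookrightarrow\fexmod{\cC},\opfexmod{\cC}$ is \Cref{prop:duality-fully-exact}; and monoidality follows from \Cref{prop:fully-exact-tensor}, its mirror for $\opfexmod{\cC}$, and the tensor-of-sections argument as in \Cref{prop:rel-proj-AB}. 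Your caution that one must work with separable \emph{presentations} rather than Morita classes (because of \Cref{ex:sep-not-Morita}) is also exactly how the paper proceeds. The genuine gap is in the rigidity claim: you announce that you ``read \Cref{def:2rigid} as the requirement that every object admit a left and a right dual inside the $2$-subcategory,'' but \Cref{def:2rigid} has a second clause — \emph{all $1$-morphisms must have left and right adjoints inside the $2$-category} — and your proof never addresses it. This clause is the substantive half, and it is precisely where exactness enters the paper's argument: the objects of $\invmod{\cC}$, $\sepmod{\cC}$, $\perf{\cC}$ are all \emph{exact} module categories (\Cref{rem:fully-exact-exact}), so by \Cref{prop:exact-adjoints} every additive $\cC$-module functor between them is exact, by \Cref{lem:exact-vs-adjoints} it has left and right adjoints, and those adjoints are again exact module functors between exact module categories, hence again $1$-morphisms of the subcategory. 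Closure under $\cM\mapsto\cM^*$ alone does not give rigidity in the paper's sense.

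For the ambient $\lmod{\cC}$ your argument cannot be completed at all under the actual \Cref{def:2rigid}, because the adjoint clause genuinely fails there: already for $\cC=\Vect$ and $A=\Bbbk[x]/(x^2)$, the right exact module endofunctor $(-)\otimes_A\Bbbk$ of $\rmodint{\Vect}{A}$ has a categorical right adjoint that is not right exact, hence no right adjoint among the $1$-morphisms of $\lmod{\cC}$. The paper itself notes that the largest full $2$-subcategory of $\lmod{\cC}$ in which all $1$-morphisms have adjoints is $\lemod{\cC}$, not $\lmod{\cC}$; and if $\lmod{\cC}$ satisfied \Cref{def:2rigid}, then \Cref{def:fully-dualizable} and \Cref{prop:dualizability} would force every finite $\cC$-module category to be perfect, contradicting the paper's examples. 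So for $\lmod{\cC}$ the only provable half of the assertion is object-level dualizability, which is \Cref{lem:duals-fun} — established in \Cref{sec:duals-finite} for arbitrary finite modules with no exactness hypothesis, so this is not the ``main obstacle'' you anticipated. In short: your weakened reading of rigidity accidentally matches the only sense in which the claim can hold for $\lmod{\cC}$, but it means your proof establishes the wrong (weaker) statement for $\invmod{\cC}$, $\sepmod{\cC}$, and $\perf{\cC}$, where the adjoint condition is both true and in need of the exactness argument above.
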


 The inclusion of separable and invertible module categories in $\perf{\cC}$ follows from \Cref{cor:sep-dualizable}, respectively, \Cref{cor:inv-fully-exact}. We know that, in general, all inclusions in \Cref{thm:fully-exact-mon-2-cat} are strict and that $\sepmod{\cC}$ and $\invmod{\cC}$ are not related by inclusion. Indeed, if $\cC$ is a fusion category over an algebraically closed field of characteristic zero, separability is equivalent to semisimplicity of a module category, see \Cref{cor:semisimple-fully-exact} or \cite{DSS2}*{2.5.1}, and not all semisimple module categories are invertible. For $\cC=\lmod{\Bbbk G}$, where $G$ is a non-trivial group and $\Bbbk$ is any field, $\Vect$ is not invertible but perfect, see \Cref{ex:counterexample}. Conversely, in \Cref{sec:Sweedler}, the module categories $\Vect^\lambda$ are invertible but not separable, see \Cref{lem:separable-Sweedler}.

Invertible left $\cC$-module categories were studied in \cite{DN2} and used in the definition of the higher 
Picard group and in constructing  $G$-crossed braided extensions of braided tensor categories for a finite group $G$, see \cite{DN2}*{Sec.\,8.2}. 
The Picard $1$-group $\Pic(\cC)$ for $\cC=\lmod{S}$, where $S$ is Sweedler's Hopf algebra, is known \cite{Car}. It is given by $\mC\times \mZ_2$. 
Our classification of indecomposable fully exact module categories in \Cref{sec:Sweedler} matches this result, as all indecomposable fully exact $\cC$-module  categories are invertible in this example. Moreover, we provide concrete module categories as representatives for each element of this Picard group. In fact, the $2$-categorical Picard group $\bPic(\cC)$ of \cite{DN2}*{Eq.\,(1.4)} is obtained from $\fexmod{\cC}$ (or, equivalently, from $\perf{\cC}$) by restricting to invertible objects,  i.e.\ all indecomposable objects of $\fexmod{\cC}$ in our Sweedler example, and to invertible $1$- and $2$-morphisms. We note that $\bPic(\cC)$ does not contain non-simple $1$-morphisms so it does not see the non-semisimple part of $\fexmod{\cC}$.
Indeed, for general non-semisimple $\cC$, autoequivalences of each object of $\invmod{\cC}\subset\fexmod{\cC}$ correspond to invertible objects of $\cC$, as in $\bPic(\cC)$, while their projective covers live in the $1$-endomorphism categories of 
$\invmod{\cC}$ only.

\subsection{Outlook}

We list here some interesting directions not fully explored in this paper.

\begin{enumerate}
\item \label{item:idemp-comp}
The work \cite{DR}*{Sec.\,A.5} introduces an idempotent completion of a $2$-category, and fusion $2$-categories are required to be idempotent complete. The fusion $2$-categories are also required to be locally semisimple,  i.e.\ $\Fun_\cC(\cM,\cN)$ would need to be semisimple for all objects $\cM,\cN$, which is clearly not the case for a non-semisimple finite tensor category $\cC$. Nevertheless, it would be interesting and important to see whether the monoidal $2$-categories $\fexmod{\cC}$ and $\perf{\cC}$ are idempotent complete in this sense.
\vskip1mm
\item 
As a first step towards \eqref{item:idemp-comp}, it is a natural question to ask what the idempotent completion of the delooping $2$-category $\rB\cC$ with one object and $1$-endomorphism tensor category $\cC$ is. In the fusion case, this is given by $\sepmod{\cC}$ \cite{DR}*{Prop.\,1.3.13} and we expect that this result holds for non-semisimple finite tensor categories.
\vskip1mm
\item  It is of fundamental importance to find examples of finite non-semisimple braided tensor categories~$\cC$, if they exist, for which both classes of exact and fully exact $\cC$-modules agree, i.e.\ $\lemod{\cC}=\fexmod{\cC}$, and thus $\lemod{\cC}=\opfexmod{\cC}=\perf{\cC}$. So far, the examples we have rather suggest that, for $\cha \Bbbk =0$, it might be true that $\lemod{\cC}=\fexmod{\cC}$ if and only if $\cC$ is semisimple. 
\vskip1mm
\item 
It is natural to ask for a definition of fully exact (or perfect) \emph{bimodule} categories over a finite tensor category $\cC$. In \cite{GreThe}*{Thm.\,7.14}, it is outlined how to upgrade the biequivalence $\bimod{\cC}\to \lmod{\cZ(\cC)}$ of \cites{EO1,EGNO} to 
a biequivalence
of monoidal $2$-categories, where the monoidal structure on $\bimod{\cC}$ is the relative Deligne product $\boxtimes_\cC$ and that on $\lmod{\cZ(\cC)}$ is $\boxtimes_{\cZ(\cC)}$, using the braiding of the Drinfeld center $\cZ(\cC)$. As this biequivalence preserves exact module categories, fully exact (or perfect) $\cC$-bimodules correspond to fully exact (respectively, perfect) $\cZ(\cC)$-modules.

With this correspondence, several of our results directly generalize to such fully exact $\cC$-bimodules. 
First, as the biequivalence is monoidal, the relative Deligne product $\boxtimes_\cC$ of fully exact $\cC$-bimodules is again a fully exact $\cC$-bimodule. 
Then, in \Cref{thm:intro-A} (\Cref{prop:fully-exact-equiv}) applied for $\cZ(\cC)$ with $\cC$ not necessarily braided, one would need to replace everywhere exact $\cZ(\cC)$-modules by exact $\cC$-bimodules, and $\Fun_{\cZ(\cC)}$ by $\Fun_\cC$ in (ii) and (iii) because the relative Deligne product in $\bimod{\cC}$ is defined over $\cC$ instead of $\cZ(\cC)$.  Part (iv) has to be replaced by the requirement that both functors $A^l_\cM$ and $A^r_\cM$, as defined in \eqref{eq:Ar}--\eqref{eq:C-linear-Al}, preserve projective objects.

\vskip1mm
\item 
When $\cC$ is a finite ribbon category, then there are equivalences $\cM\simeq \cM^{**}$ for any finite module category $\cM$, see \Cref{rem:double-dual-equiv}. We expect that, in this case, $\perf{\cC}$ and $\lmod{\cC}$ are \emph{pivotal} monoidal $2$-categories in an appropriate sense based on work of \cites{BMS,DR}, see \Cref{rem:double-dual-equiv}. 
This pivotality structure should not be confused with the concept of \emph{pivotal $\cC$-module categories} \cites{SchP,ShiP}.
In  \Cref{rem:pivotal-mod-cat}, we compare the notions of fully exact 
and pivotal module categories  over $\cC=\lmod{S}$, for Sweedler's Hopf algebra $S$.
\vskip1mm
\item 
If $G$ is a finite group such that $\cha \Bbbk$ divides $|G|$, then $\cC=\lmod{\Bbbk G}$ is a non-semisimple symmetric tensor category. It would be interesting to investigate the monoidal $2$-category $\perf{\cC}$ further, i.e.\,to classify its indecomposable objects, based on the classification of exact $\cC$-module categories from \cite{EGNO}*{Cor.\,7.12.20}, and compute their relative Deligne products.
If $G$ is an abelian group, one can further define other non-symmetric braidings on $\cC$ based on roots of unity, see e.g.\,\cite{Maj}*{Ex.\,2.1.6}, which will lead to different monoidal $2$-categories.
\vskip1mm
\item Finally, recalling that the Picard $1$-group $\Pic(\cC) = \mC\times \mZ_2$  for our main non-semisimple example $\cC=\lmod{S}$, where $S$ is 
Sweedler's Hopf algebra, it would be  interesting to identify symmetric  $\mC\times \mZ_2$-graded extensions of $\cC$. Indeed, as noted in~\cite{DN2}*{Sec.\,5.4}, the symmetric $2$-categorical Picard group $\mathbf{Pic}_{\mathrm{sym}}(\cC)$, for symmetric $\cC$, of symmetric $\cC$-module categories is equivalent to the usual Picard $2$-group $\mathbf{Pic}(\cC)$. Moreover, by~\cite{DN2}*{Sec.\,8.4}, all symmetric graded extensions of $\cC$ are controlled by $\mathbf{Pic}_{\mathrm{sym}}(\cC) = \mathbf{Pic}(\cC)$. A symmetric $\mathbb{C}$-graded  extension of $\cC$ is readily provided  by the  category of finite-dimensional modules over the quotient of the infinite-dimensional Sweedler Hopf algebra by $g^2=1$, where $g$ is group-like, and such that the central element $x^2$, where $x$ is the skew primitive generator, acts diagonalizably. This category inherits the symmetric braiding from $\sVect$. We are thus interested in symmetric $\mathbb{Z}_2$-graded extensions of this symmetric $\mathbb{C}$-graded extension of~$\cC$. Our explicit realization of $\mathbf{Pic}_{\mathrm{sym}}(\cC)$ as the $2$-categorical group of the monoidal $2$-category $\fexmod{\cC}=\perf{\cC}$ might provide a clue for such interesting symmetric extensions. 

\end{enumerate}

\subsection{Summary of content}

We start, in \Cref{sec:rel-Del}, by reviewing the monoidal $2$-category $\lmod{\cC}$ for a finite braided tensor category $\cC$. Here, we also discuss $\cC$-module category structures on categories of left $\cC$-module functors.  In \Cref{sec:int-alg}, we work with the monoidal Morita $2$-category $\Mor_\cC$ of internal algebras, bimodules, and bimodule morphisms, which is biequivalent to $\lmod{\cC}$, and describe all required constructions using internal algebras. \Cref{sec:exact-mod-cat} contains the results concerning exact module categories and exact algebras needed in this paper, some of which are likely known but not readily available in the literature. In \Cref{sec:rel-Del-exact}, we explain how, in general, exact module categories are not closed under relative Deligne products and provide examples.

The main definitions and fundamental results on the new class of fully exact module categories are contained in \Cref{sec:fully-exact}. Duality of fully exact module categories is analyzed in the following \Cref{sec:dual-fully-exact}, and perfect modules categories are introduced and studied in \Cref{sec:fully-dualizable}. Next, \Cref{sec:invertible} contains a characterization of invertible module categories and compares this notion to perfect module categories. Finally, \Cref{sec:Hopf} contains classes of examples of fully exact module categories over braided tensor categories of the form $\cC=\lmod{H}$, for $H$ a quasi-triangular Hopf algebra. This includes the result that $\Vect$ is not fully exact for $H=u_q(\fr{sl}_2)$ at an odd root of unity $q$ and a conjecture concerning the general case of the factorizable small quantum groups $u_q(\fr{g})$. We conclude with the detailed study of the  monoidal $2$-category $\fexmod{\cC}$ for the example where $\cC=\lmod{S}$, for $H=S$ the four-dimensional Hopf algebra of Sweedler.  
We have included three appendices for basic notions concerning monoidal categories, module and bimodule categories, and some details on the construction of $\lmod{\cC}$ (or $\bimod{\cC}$) as monoidal $2$-categories, see \Crefrange{sec:basics}{appendix:mod-2-cat}.

\subsection{Conventions}
Throughout this paper, we work over a general field $\Bbbk$. 
Assumptions on the field $\Bbbk$ will be stated when needed. All categories considered will be abelian and $\Bbbk$-linear. 
We call a category \emph{finite} if it is equivalent to a category of finite-dimensional modules over a finite-dimensional $\Bbbk$-algebra. In particular, a finite category is $\Bbbk$-linear, abelian, additive, and has enough projective objects. All functors are at least $\Bbbk$-linear, additive, and right exact. We denote the category of finite-dimensional $\Bbbk$-vector spaces by $\Vect$.

A \emph{finite tensor category} is a finite category $\cC$ which is monoidal with a $\Bbbk$-bilinear monoidal product, left and right duals, and a simple tensor unit $\one=\one_\cC$. In particular, the tensor product is exact in both variables \cite{EGNO}*{Section~4.2} and $\End_\cC(\one)$ is a finite field extension\footnote{Note that $\End_\cC(\one)$ is commutative by \cite{TV}*{Lem.\,1.1} and Schur's Lemma.} of $\Bbbk$. Unless otherwise stated, we assume that the tensor categories considered in this paper are finite tensor categories. If, in addition, $\cC$ is semisimple, we say that $\cC$ is a \emph{fusion category}. The only difference here to definitions in \cite{EGNO} lies in the assumptions on the field $\Bbbk$ in the sense that we do not require $\Bbbk$ to be algebraically closed. We will refer to statements in \cite{EGNO} or other sources that assume that $\Bbbk$ is algebraically closed when the proof does not use this assumption. See also \cite{DSS2}*{Thm.\,2.3.5} where several key results of \cite{EGNO} are reviewed without assumptions on the field $\Bbbk$. 

We note that \cites{DSS1,DSS2} use a different definition of
a finite tensor category where $\one$ is not required to be simple,
but with rigidity it's necessary semisimple,
i.e.\ what is called a \emph{multitensor} category in  \cite{EGNO}. 
Our results (in particular, \Cref{prop:fully-exact-equiv}) can be generalized to multitensor categories.
Such a generalization is straightforward 
 except for the discussion on invertible module categories in \Cref{sec:invertible},
 \Cref{lem:F-faithful} and \Cref{lem:AM-exact-faithful}, as well as \Cref{cor:A-proj-sep}, where the simplicity of the tensor unit was used. We however believe that these statements equally hold for \textsl{indecomposable} multitensor categories.

For algebras $A,B$ in $\cC$, we denote by $\rmodint{\cC}{A}$, $\lmodint{A}{\cC}$, and $\bimodint{A}{\cC}{B}$ the categories of right $A$-modules, left $A$-modules, and $A$-$B$-bimodules in $\cC$. When $A=B$, we simply refer to $A$-$A$-bimodules as $A$-bimodules and simply write $\bimodints{A}{\cC}=\bimodint{A}{\cC}{A}$.

By a \emph{$2$-category} $\bfC$ we mean a bicategory where composition of $1$-morphisms is strictly associative\footnote{Note that the \emph{$2$-categories} we consider are usually also \emph{monoidal $2$-categories} and the monoidal structure might not be strict, see \Cref{appendix:mod-2-cat} for conventions on monoidal $2$-categories.} All $2$-categories considered are $\Bbbk$-linear. That is, a $2$-category is a category enriched over the category $\Cat_\Bbbk$ of $\Bbbk$-linear categories and $\Bbbk$-linear functors. In particular, for any object $X$ in $\bfC$, we have that $\Hom_\bfC(X,X)$ is a strict monoidal category. 
Similarly, a $2$-functor is a functor of categories enriched over $\Cat_\Bbbk$, i.e.\ it strictly preserves composition of $1$-morphisms.
We say that a $2$-subcategory $\bfD$ of $\bfC$ is \emph{full} if  
$\Hom_\bfD(X,Y)=\Hom_\bfC(X,Y)$, for any objects $X,Y$ in $\bfD$.

\subsection{Acknowledgments}
The authors thank A.~Davydov and D.~Nikshych for valuable comments. We also thank D.~Jaklitsch, I.~Runkel, K.~Shimizu, M.~Stroi\'nski, and H.~Yadav for interesting discussions and comments on the subject of this paper. This research was funded by the Royal Society International Exchanges Grant ES\textbackslash R1\textbackslash 231027. Part of this  article was written during the program `CFT: Algebraic, Topological and Probabilistic approaches in Conformal Field Theory' at the Institute Pascal, Orsay, Paris. AMG is supported by C.N.R.S.\ and also partially supported by
the ANR grant NASQI3D ANR-24-CE40-7252.

\section{Module categories and relative Deligne products}
\label{sec:rel-Del}

This section reviews the monoidal $2$-category $\lmod{\cC}$ of finite module categories over a finite braided tensor category $\cC$. 
We also discuss $\cC$-module category structures on categories of module functors, \action functors, and opposite module categories in  \Crefrange{sec:modulecat-rex}{sec:opposite-mod-cats}. 
For background material on (braided) monoidal categories, (bi)module categories and their functors, and monoidal $2$-categories we refer to \Cref{sec:basics,appendix:mod-bimod,appendix:mod-2-cat}, respectively.

\subsection{Monoidal 2-categories of module categories}\label{sec:C-mod}

For a finite tensor category $\cC$,  $\lmod{\cC}$ denotes the $2$-category of finite left $\cC$-module categories $\cM=(\cM,\lact\colon \cC\times \cM \to \cM)$, right exact $\cC$-module functors $(G, s^G)\colon \cM\to \cN$, and $\cC$-module natural transformations,  see \Cref{sec:mod-cats} for definitions. Similarly, $\bimod{\cC}$ denotes the $2$-category of $\cC$-bimodule categories, see \Cref{sec:bimod}.

\smallskip

Recall the Deligne product $\cN\boxtimes\cM$ of finite ($\Bbbk$-linear abelian) categories from  \cite{Del}*{Prop.\,5.13}.
We will now review the \emph{relative Deligne product} $\cN\boxtimes_\cC\cM$, defined, in general, for a right $\cC$-module $\cN$ and a left $\cC$-module $\cM$, see  \cites{ENO,GreThe,Gre,DSS1,DSS2,FSS,DN2}. We require the following definitions.

\begin{definition}\label{def:balancing}
A \emph{$\cC$-balanced functor}
$B\colon \cN\times \cM\to \cP$
is a functor with target category $\cP$ together with a natural isomorphism, called a \emph{$\cC$-balancing}
$$\beta_{N,X,M} \colon B((N\ract X), M)\isomorph B(N, (X\lact M))$$ 
satisfying the coherence condition that the diagram 
\begin{equation}\label{eq:balancing-coherence}
\vcenter{\hbox{
\xymatrix@C=35pt{B(N\ract X\otimes Y, M)\ar[rr]^-{\beta_{N,X\otimes Y,M}}\ar[d]^{B(r^{-1}_{N,X,Y}, \id_M)}&&B(N, X\otimes Y\lact M)\\
B((N\ract X)\ract Y, M)\ar[r]^{\beta_{N\ract X,Y,M}}&B(N\ract X, Y\lact M)\ar[r]^{\beta_{N,X,Y\lact M}}&B(N, X\lact (Y\lact M))\ar[u]^{B(\id_N, l_{X,Y,M})}
}}}
\end{equation}
commutes, and the triangle identity which implies that $\beta_{N,\one,M}=\id$ with our assumptions, cf.\,\cite{ENO}*{Def.\,3.1}, \cite{DSS1}*{Def.\,3.1} or \cite{FSS}*{Def.\,2.7(i)}.
A \emph{$\cC$-balanced natural transformation} $\eta\colon B\to C$, where $(B, \beta^B)$, $(C,\beta^C)\colon \cN\times \cM\to \cP$ are $\cC$-balanced functors is a natural transformation such that the diagram
$$
\xymatrix{B(N\ract X, M)\ar[rr]^-{\beta^B_{N,X,M}}\ar[d]^{\eta_{N\ract X,M}}&&B(N, X\lact M)\ar[d]^{\eta_{N,X\lact M}}\\
C(N\ract X, M)\ar[rr]^-{\beta^C_{N,X,M}}&&C(N, X\lact M)
}
$$
commutes for all objects $N$ in $\cN$, $X$ in $\cC$, $M$ in $\cM$,
 cf.\ \cite{DSS1}*{Def.\,3.1} or \cite{FSS}*{Def.\,2.7(ii)}.

We denote the category of $\cC$-balanced functors $\cN\times \cM\to \cP$ which are right exact in each component, together with $\cC$-balanced natural transformations,  by $\Fun^{\mathrm{bal}}_\cC(\cN\times\cM,\cP)$.
\end{definition}

\begin{definition}\label{def:rel-Del}
Let $\cN$ be a right and $\cM$ be a left $\cC$-module. Then the \emph{relative Deligne product} of $\cN$ and $\cM$ over $\cC$ is a category $\cN\boxtimes_\cC\cM$ together with a $\cC$-balanced functor 
\begin{equation}\label{eq:balanced-P}
    P_{\cN,\cM}=(P,\beta^P)\colon \cN\times \cM\to \cN\boxtimes_\cC \cM,
\end{equation}
right exact in each variable, 
such that for any category $\cP$, the functor
\begin{equation}\label{eq:universal-Del}
(-)\circ P_{\cN,\cM} \colon \Fun (\cN\boxtimes_\cC \cM,\cP)\to  \Fun^{\mathrm{bal}}_\cC(\cN\times\cM,\cP),
\end{equation}
induced by pre-composition with $P_{\cN,\cM}$ is an equivalence of categories. 
\end{definition}

It has been proved in \cite{DSS1}*{Thm.\,3.3} (see also \cite{ENO}*{Prop.\,3.5} in the semisimple case) that the relative Deligne product $\cN\boxtimes_\cC\cM$ exists as a finite abelian category, for $\cC$ a finite tensor category and $\cN$ and $\cM$ finite abelian right (respectively, left) $\cC$-module categories. The proof in \cite{DSS1} relies on the internal algebra realization of module categories which we review in \Cref{sec:MorC-C-mod-monoidal}. 

\begin{proposition}\label{prop:rel-Del-bimod}
If $\cN$, $\cM$ are $\cC$-bimodules, then $\cN\boxtimes_\cC\cM$ is a $\cC$-bimodule.    
\end{proposition}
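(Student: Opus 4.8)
The plan is to show that the left and right $\cC$-actions on a relative Deligne product $\cN\boxtimes_\cC\cM$ are well-defined using the universal property of \Cref{def:rel-Del}, and then to verify the compatibilities making $\cN\boxtimes_\cC\cM$ a $\cC$-bimodule. First I would recall that since $\cN$ is a $\cC$-bimodule it carries a left $\cC$-action $\lact'$ commuting with the right action $\ract$ used to form the balanced functor $P_{\cN,\cM}$; similarly $\cM$ carries a right $\cC$-action $\ract'$. For each object $Y$ in $\cC$, consider the composite functor $P_{\cN,\cM}\circ((Y\lact' -)\times \id_\cM)\colon \cN\times\cM\to\cN\boxtimes_\cC\cM$. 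I would equip it with a $\cC$-balancing by transporting the balancing $\beta^P$ along the natural isomorphism $(Y\lact' N)\ract X\cong Y\lact'(N\ract X)$ coming from the bimodule structure of $\cN$ (the middle associativity constraint). One checks the balancing-coherence hexagon \eqref{eq:balancing-coherence} commutes for this new balancing; then by the universal property \eqref{eq:universal-Del} there is a right-exact functor $Y\lact''(-)\colon\cN\boxtimes_\cC\cM\to\cN\boxtimes_\cC\cM$, unique up to unique isomorphism, with $(Y\lact''(-))\circ P_{\cN,\cM}\cong P_{\cN,\cM}\circ((Y\lact'-)\times\id)$ as balanced functors. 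Functoriality in $Y$ and right-exactness in $Y$ are obtained the same way, since for a morphism $f\colon Y\to Y'$ or an exact sequence in $\cC$ one compares the corresponding balanced functors and invokes the uniqueness clause of the universal property. Symmetrically, one constructs the right $\cC$-action using $\ract'$ on $\cM$.

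Next I would assemble the coherence data. The module associativity constraint for the left action, $l''_{Y,Z,Q}\colon Y\lact''(Z\lact'' Q)\cong (Y\otimes Z)\lact'' Q$ for $Q$ in $\cN\boxtimes_\cC\cM$, is again produced from the universal property: both sides, precomposed with $P_{\cN,\cM}$, are naturally isomorphic to $P_{\cN,\cM}\circ((Y\otimes Z\lact' -)\times\id)$ via the associativity constraint $l'$ of $\cN$, and these isomorphisms of balanced functors are compared; uniqueness in \eqref{eq:universal-Del} then both produces $l''$ and forces the pentagon axiom, since the corresponding pentagon for $l'$ holds in $\cN$. The unit constraint is handled identically using $\one\lact' N\cong N$. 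The same argument, applied on the $\cM$ side, gives the right $\cC$-module structure. Finally, the middle associativity constraint relating the left and right actions on $\cN\boxtimes_\cC\cM$ comes from comparing $(Y\lact''(-))\circ((-)\ract'' Z)$ and $((-)\ract'' Z)\circ(Y\lact''(-))$ after precomposition with $P_{\cN,\cM}$: both are naturally isomorphic to $P_{\cN,\cM}\circ((Y\lact'-)\times(-\ract'Z))$ — here one uses that on $\cN$ the left action $\lact'$ commutes with $\ract$, and that $\ract'$ on $\cM$ is what feeds into the right action, with no interaction between $\lact'$ on $\cN$ and $\ract'$ on $\cM$ since they act on different factors — and uniqueness gives the required invertible $2$-cell together with its coherence (the bimodule pentagons).

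I expect the main obstacle to be bookkeeping rather than a conceptual difficulty: the balancings one must write down involve simultaneously the right $\cC$-action $\ract$ (used in $P_{\cN,\cM}$) and the other module structure maps, and verifying that the modified balancings satisfy the hexagon \eqref{eq:balancing-coherence} requires carefully combining the balancing-coherence of $\beta^P$ with the bimodule coherence axioms of $\cN$ (or $\cM$). The cleanest route is to observe that the construction $(\cN,\cM)\mapsto\cN\boxtimes_\cC\cM$ is a $2$-functor on suitable $2$-categories of module categories, so that applying it to the action $2$-functors $\rB\cC\to\lmod{\cC}$ (realizing the bimodule structures as actions of the delooping $\rB\cC$) automatically yields the bimodule structure on the product together with all coherences; this is essentially the approach used in \cite{DSS1} and in \Cref{appendix:mod-2-cat}, and I would phrase the proof as an instance of that functoriality, relegating the hexagon checks to the established universal property. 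Alternatively, one can invoke the internal-algebra description reviewed in \Cref{sec:MorC-C-mod-monoidal}: writing $\cN\simeq\bimodint{\cC}{}{A}$-type categories and $\cM$ via an algebra $B$, the product $\cN\boxtimes_\cC\cM$ is a category of $(A,B)$-bimodules-in-$\cC$ on which the residual $\cC$-bimodule structure is manifest, and one transports it along the equivalence.
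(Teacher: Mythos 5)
Your argument is correct, but it is not the route the paper takes: the paper's entire proof is a one-line deferral to \cite{DSS2}*{Thm.\,2.5.5}, together with the observation that the separability and semisimplicity hypotheses appearing there are never used in that argument. The proof being cited proceeds essentially by your \emph{second} suggested route: one realizes the factors via internal algebras, identifies $\cN\boxtimes_\cC\cM$ with a category of modules over a monad $T=(-)\ract B$ (equivalently with internal bimodules), and reads off the residual left and right $\cC$-actions there — this is exactly what the paper later spells out in \Cref{rem:rel-Del-bimodule-realization}. Your primary route — descending the extra actions, their associators, and the middle associativity constraint through the universal property \eqref{eq:universal-Del}, checking the hexagon \eqref{eq:balancing-coherence} for the modified balancings against the bimodule coherences of $\cN$ — is a self-contained alternative that avoids choosing algebra realizations, at the cost of the coherence bookkeeping you acknowledge. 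One small point to tighten if you write it out in full: right exactness of the descended action in the $\cC$-variable does not follow from the uniqueness clause alone; it holds on objects in the image of $P_{\cN,\cM}$ because $\lact'$ and $P_{\cN,\cM}$ are right exact there, and extends to all of $\cN\boxtimes_\cC\cM$ by a presentation argument (or is automatic a posteriori by rigidity of $\cC$, as in the footnote to the definition of a module category). Both approaches buy the same statement; the paper's choice minimizes work by reusing an existing proof verbatim, while yours makes the construction explicit without passing through Morita-theoretic realizations.
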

\begin{proof}
This is proved in \cite{DSS2}*{Thm.\,2.5.5} for separable bimodules when $\cC$ is semisimple, but the proof does not rely on these assumptions.   
\end{proof}

\begin{remark}\label{rem:factorization-canonical-iso}
From the equivalence \eqref{eq:universal-Del} one obtains that for a given $\cC$-balanced functor $F\colon\cN\times\cM\to \cP$, there exists a functor $F'\colon \cN\boxtimes_\cC\cM\to\cP$ with a $\cC$-balanced natural isomorphism 
$e'\colon F'\circ P_{\cN,\cM}\isomorph F$, for the universal $\cC$-balanced functor from \eqref{eq:balanced-P}.
The pair $(F',e')$ is unique up to canonical isomorphism in the following sense. If $F''$ is another functor together with a natural $\cC$-balanced isomorphism $e''\colon F''\circ P_{\cN,\cM}\isomorph F$, then there exists a canonical isomorphism 
$\phi\colon F'\to F''$ such that $\phi\circ P_{\cN,\cM}= (e'')^{-1}\circ e'$.
\end{remark}

\begin{remark}\label{rem:rel-Del-unique}
The relative Deligne product is unique up to equivalence in the following sense. Assume both $\cN\boxtimes_\cC \cM$ and $\cN\boxtimes_\cC'\cM$ have the universal property of the relative Deligne product with respect to balanced functors 
$P_{\cN,\cM}$ and $P'_{\cN,\cM}$, respectively. Then, by applying \eqref{eq:universal-Del} with $\cP=\cN\boxtimes'_\cC \cM$, there exists an equivalence $E\colon \cN\boxtimes_\cC \cM\to \cN\boxtimes_\cC' \cM$ which satisfies that 
the diagram 
$$
\xymatrix@R=10pt{
&\cN\times \cM\ar[dl]_{P_{\cN,\cM}} \ar[dr]^{P_{\cN,\cM}'}&\\
\cN\boxtimes_\cC \cM
\ar[rr]^E&&
\cN\boxtimes_\cC' \cM
}
$$
commutes up to a natural isomorphism of $\cC$-balanced functors 
$e\colon E\circ P_{\cN,\cM}\isomorph P_{\cN,\cM}'.$
If $E'$ is another functor together with an isomorphism of $\cC$-balanced functors $e'\colon E'\circ P_{\cN,\cM}\to P_{\cN,\cM}$, then there is a  canonical natural isomorphism $\phi\colon E\isomorph E'$ in the sense of \Cref{rem:factorization-canonical-iso}.
\end{remark}

It is widely established in the literature that $\bimod{\cC}$ is a monoidal $2$-category with respect to relative Deligne product $\boxtimes_\cC$, see e.g.\ \cite{Gre}*{Thm.\,1.1} or \cite{GreThe}*{Thm.\,0.2.1}  and \cite{DSS2}*{Thm.\,2.2.18}. 
The monoidal $2$-category $\bimod{\cC}$ comes with \emph{associators} 
\begin{equation}\label{eq:ass-Sec2}
A_{\cM,\cN,\cP}\colon \cM\boxtimes_\cC (\cN\boxtimes_\cC \cP)\isomorph (\cM\boxtimes_\cC \cN)\boxtimes_\cC \cP,
\end{equation}
and \emph{unitors}
\begin{equation}\label{eq:unitors-Sec2}
L_\cM\colon \cC\boxtimes_\cC \cM\isomorph \cM, \qquad R_\cM\colon \cM\boxtimes_\cC \cC\isomorph\cM.
\end{equation}
These equivalences are natural up to a choice of coherent natural $2$-isomorphisms\footnote{Called \emph{strong transformations} in \cite{JY21}*{Def.\,4.2.1}.}. For the associator, these natural $2$-isomorphisms are denoted by
\begin{equation}\label{eq:2associator-natural-Sec2}
    (G\boxtimes_\cC H)\boxtimes_\cC K)\circ A_{\cM,\cN,\cP} \xrightarrow{a_{G,H,K}} A_{\cM',\cN',\cP'}\circ (G\boxtimes_\cC (H\boxtimes_\cC K)),
\end{equation}
for  a triple of $\cC$-linear functors $\cM\xrightarrow{G} \cM'$, $\cN\xrightarrow{H} \cN'$, and $\cP\xrightarrow{K} \cP'$.
 The associators and unitors further come with canonical isomorphisms up to which the pentagon and triangle diagrams commute as well as an \emph{interchange isomorphism} 
\begin{equation}
\label{eq:interchange-C-mod}
(H\boxtimes_\cC H')\circ (G\boxtimes_\cC G')\xrightarrow{\phi_{H\boxtimes_\cC H',G\boxtimes_\cC G'}}  (H\circ G)\boxtimes_\cC (H'\circ G'),
\end{equation}
for $\cC$-bimodule functors $\cM\xrightarrow{G}\cN\xrightarrow{H}\cP$ and $\cM'\xrightarrow{G'}\cN'\xrightarrow{H'}\cP'$,
encoding the compatibility of relative Deligne product and composition of $1$-morphisms. The construction of these structural elements of the monoidal $2$-category is explained in \Cref{sec:rel-Del-props}. A list of the structural data of a monoidal $2$-category can be found in \eqref{eq:interchange-general}--\eqref{eq:triangulator-left-right}.

\begin{remark}\label{rem:univ-prop-diff}
    A proof that $\bimod{\cC}$ with tensor product given by $\boxtimes_\cC$ is a monoidal $2$-category is provided in \cite{GreThe}. We note that in this proof, a stricter version of the universal property of the relative Deligne product is used where the equivalence \eqref{eq:universal-Del} is replaced by an isomorphism of categories. The existence of the relative Deligne product in \cite{DSS1} is proved with \eqref{eq:universal-Del} being an equivalence, not an isomorphism.\footnote{In the terminology of $2$-categorical colimits, the universal properties used here and in \cites{Del,DSS1} are those of a \emph{bicolimit}, while the stricter ones used in \cite{GreThe}*{Rem.\,2.1.6} or \cite{ENO}*{Rem.\,3.4} are those of a \emph{pseudo-colimit} in \cite{KelLim}*{(5.7)}. Existence proofs such as the one in \cite{DSS1} are given for the bicolimit object.} Therefore, as we could not find it in the literature, we sketch the proof that $\bimod{\cC}$ is a monoidal $2$-category based on the universal property of \Cref{def:rel-Del} in \Cref{sec:rel-Del-props}.
\end{remark}

Assume now that $\cC$ is braided. Given a left $\cC$-module $\cM$, we can give $\cM$ the structure of a right $\cC$-module, denoted by $\rS_{lr}(\cM)$, with right $\cC$-action defined by $M\ract X:=X\lact M$ and coherence defined in components by
\begin{align}\label{eq:right-coherence}
\vcenter{\hbox{
\xymatrix@R=10pt@C=30pt{
(M\ract X)\ract Y\ar[rr]^{r_{M,X,Y}}\ar@{=}[d]  && M\ract (X\otimes Y)\ar@{=}[d]\\
Y\lact (X\lact M)\ar[r]^{l_{Y,X,M}}&(Y \otimes X)\lact M\ar[r]^{\psi_{Y,X}\lact M}& (X\otimes Y)\lact M.
}}}
\end{align} 
Moreover, we can give $\cM$ the structure of a $\cC$-bimodule, denoted by $\rB(\cM)$ with the same left $\cC$-module structure, the right $\cC$-module structure $\rS_{lr}(\cM)$, and the bimodule coherence defined by 
\begin{align}\label{eq:bimod-coherence}
\vcenter{\hbox{
\xymatrix@C=10pt{
Y\lact( X\lact M)\ar[d]_{l_{Y,X,M}}&(X\lact M)\ract Y\ar@{=}[l] \ar[rrr]^{b_{X,M,Y}}& &&X\lact (M\ract Y)\ar@{=}[r]&X\lact (Y \lact M)\\
Y\otimes X\lact M\ar[rrrrr]^{\psi_{Y,X}\lact M}&&&&& X\otimes Y \lact M.\ar[u]_{l^{-1}_{X,Y,M}}
}}}
\end{align} Indeed, these constructions extend to $2$-functors 
\begin{equation}\label{eq:Slr-B}
    \rS_{lr}\colon \lmod{\cC}\to \rmod{\cC}, \qquad \text{and}\qquad \rB\colon \lmod{\cC}\to \bimod{\cC}.
\end{equation}
Proofs are provided in \Cref{sec:left-to-bimod}.

\begin{definition}\label{def:rel-Del-Cmod}
For two finite left $\cC$-module categories $\cN$, $\cM$, we define $\cN\boxtimes_\cC \cM$ as the relative Deligne product $\rB(\cN)\boxtimes_\cC\rB(\cM)$ of $\cC$-bimodules viewed as a left $\cC$-module.
\end{definition}

\begin{remark}
We make a choice using the braiding in the right $\cC$-module coherence of \eqref{eq:right-coherence}. Alternatively, we could use the inverse braiding but this choice corresponds to replacing $\cC$ by $\cC^{\mathrm{rev}}$, the same tensor category with reversed braiding. With any given choice, we could also use inverse braiding for defining the bimodule coherence in \eqref{eq:bimod-coherence}. These alternative choices give a priori different monoidal structures for $\lmod{\cC}$. In this paper we focus on the chosen one.
\end{remark}

The results of this paper concern the monoidal $2$-category $\lmod{\cC}$ of left $\cC$-module categories described in the following widely accepted result (cf.\,\cites{DN2,DR}).

\begin{theorem}\label{thm:CMod-monoidal-2-cat}
The relative Deligne product makes the $2$-category $\lmod{\cC}$ a monoidal $2$-category.  It is a monoidal $2$-subcategory of $\bimod{\cC}$ via the $2$-functor $\rB$ from \eqref{eq:Slr-B}. 
\end{theorem}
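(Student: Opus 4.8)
The plan is to deduce the statement from the already-established fact that $(\bimod{\cC},\boxtimes_\cC)$ is a monoidal $2$-category (see \Cref{sec:rel-Del-props}): I will show that the $2$-functor $\rB$ of \eqref{eq:Slr-B} exhibits $\lmod{\cC}$ as a monoidal $2$-subcategory, full on $1$- and $2$-morphisms, and then transport the monoidal structure along it. First, I would check that $\rB$ is a $2$-embedding. It is injective on objects, since it only equips the unchanged underlying category and left $\cC$-action with the extra right action $\rS_{lr}$ and the bimodule coherence \eqref{eq:bimod-coherence}. It also induces isomorphisms of Hom-categories $\Hom_{\lmod\cC}(\cM,\cN)\isomorph\Hom_{\bimod\cC}(\rB\cM,\rB\cN)$: a $\cC$-bimodule functor between $\rB$-images restricts to a left $\cC$-module functor, while conversely a left module functor $(G,s^G)$ extends uniquely to a bimodule functor with right constraint $G(M\ract X)=G(X\lact M)\xrightarrow{s^G}X\lact G(M)=G(M)\ract X$; using naturality of $s^G$ and of the braiding, one verifies the right-module-functor axiom and the bimodule compatibility relative to \eqref{eq:right-coherence}--\eqref{eq:bimod-coherence}, and that the two assignments are mutually inverse. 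The identical argument for natural transformations shows that $\cC$-bimodule natural transformations between $\rB$-images are precisely $\cC$-module natural transformations; these are the verifications underlying \Cref{sec:left-to-bimod}.

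Second, I would show that the image of $\rB$ is closed under $\boxtimes_\cC$ and contains the unit. Applying $\rB$ to $\cC$ with its left regular action yields, via the braiding, a $\cC$-bimodule equivalent to the regular bimodule $\cC$, which is the unit of $\bimod\cC$. For closure, let $\cM_1,\cM_2$ be left $\cC$-modules; by \Cref{prop:rel-Del-bimod}, $\rB(\cM_1)\boxtimes_\cC\rB(\cM_2)$ is a $\cC$-bimodule, and I claim it equals $\rB$ applied to its underlying left $\cC$-module. Writing $P$ for the universal balanced functor \eqref{eq:balanced-P}, the left action is given by $X\lact P(m_1,m_2)=P(X\lact m_1,m_2)$ and the right action by $P(m_1,m_2)\ract X=P(m_1,X\lact m_2)$; the balancing $\beta^P$ provides an isomorphism $P(m_1\ract X,m_2)\isomorph P(m_1,X\lact m_2)$, which, after the identification $m_1\ract X=X\lact m_1$, exhibits the right action as $\rS_{lr}$ of the left one. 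Matching the bimodule coherence of $\rB(\cM_1)\boxtimes_\cC\rB(\cM_2)$ against the balancing hexagon \eqref{eq:balancing-coherence} and the right-module coherence \eqref{eq:right-coherence} of $\rS_{lr}$ then identifies it with \eqref{eq:bimod-coherence}. By \Cref{def:rel-Del-Cmod} this says exactly $\rB(\cM_1)\boxtimes_\cC\rB(\cM_2)=\rB(\cM_1\boxtimes_\cC\cM_2)$; closure on $1$- and $2$-morphisms then follows from the first step and functoriality of $\boxtimes_\cC$ on $\bimod\cC$.

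Finally, a full sub-$2$-category of a monoidal $2$-category that is closed under the tensor product and contains an object equivalent to the unit inherits a monoidal $2$-category structure: the associators \eqref{eq:ass-Sec2}, unitors \eqref{eq:unitors-Sec2}, pentagonator, triangulators, interchangers and the remaining coherence modifications all restrict to it. Transporting this structure along the isomorphism of $2$-categories between $\lmod\cC$ and its image under $\rB$ from the first step makes $\lmod\cC$ a monoidal $2$-category whose tensor product is the one of \Cref{def:rel-Del-Cmod} (by the second step) and for which $\rB$ is a monoidal $2$-functor, strict on objects and $1$-morphisms and an isomorphism on Hom-categories. This is exactly the asserted monoidal $2$-subcategory inclusion.

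I expect the main obstacle to be the coherence bookkeeping in the second step: confirming that the bimodule constraint that the $\bimod\cC$-construction places on $\rB(\cM_1)\boxtimes_\cC\rB(\cM_2)$ is \emph{literally} the braiding-twisted constraint \eqref{eq:bimod-coherence}, not merely isomorphic to it --- which amounts to chasing \eqref{eq:balancing-coherence} against \eqref{eq:right-coherence} and \eqref{eq:bimod-coherence}. One should also be mindful that the relative Deligne products have to be chosen compatibly so that $\rB$ comes out strict on objects, consistent with the strictness conventions of \Cref{sec:rel-Del-props}. Everything else is formal $2$-categorical transport together with the inputs that $\bimod\cC$ is monoidal and that $\rB$ is a $2$-functor.
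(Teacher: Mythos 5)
Your overall strategy---transporting the monoidal structure of $\bimod{\cC}$ along $\rB$---is the same as the paper's, but two of your intermediate claims fail as stated. The central one is the asserted literal equality $\rB(\cM_1)\boxtimes_\cC\rB(\cM_2)=\rB(\cM_1\boxtimes_\cC\cM_2)$. By \Cref{def:rel-Del-Cmod} the two sides do share the same underlying category and the same \emph{left} $\cC$-action, but their right actions are genuinely different functors: $\rB(\cM_1)\boxtimes_\cC\rB(\cM_2)$ carries the right action $(m_1\boxtimes m_2)\ract X=m_1\boxtimes(X\lact m_2)$ inherited from the second factor, whereas $\rB(\cM_1\boxtimes_\cC\cM_2)$ carries $(m_1\boxtimes m_2)\ract X=(X\lact m_1)\boxtimes m_2$, i.e.\ $\rS_{lr}$ of the left action of the product. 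The balancing $\beta^P$ is an \emph{isomorphism} between these, not an identity, so the obstacle you flag in your final paragraph is real and the strict equality does not hold. The paper's resolution is to take as the monoidal coherence datum of $\rB$ the identity functor $I_{\cM_1,\cM_2}\colon\rB(\cM_1\boxtimes_\cC\cM_2)\to\rB(\cM_1)\boxtimes_\cC\rB(\cM_2)$ equipped with $\beta^P$ as a nontrivial right $\cC$-linear structure; $\rB$ is then a monoidal $2$-functor, but not strict on objects, and your third step should be rephrased as equipping $\lmod{\cC}$ directly with the associators and unitors of $\bimod{\cC}$ with their right $\cC$-linear structures forgotten, rather than as transport along an isomorphism onto a strictly closed image.

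The second problem is the claim that a left $\cC$-module functor ``extends uniquely'' to a bimodule functor between $\rB$-images, so that $\rB$ induces isomorphisms of Hom-categories. This is false: already for $\cM=\cN=\cC$ regular, any monoidal natural automorphism $\theta$ of $\id_\cC$ gives a bimodule endofunctor of $\rB(\cC)$ with trivial left structure and right structure $s^r_{M,Y}=\theta_Y\otimes\id_M$, which satisfies \eqref{eq:bimodule-functor-condition} by naturality of the braiding but is not isomorphic to $\rB(\id_\cC)$ unless $\theta=\id$. (Conceptually, bimodule endofunctors of the regular bimodule form the Drinfeld center $\cZ(\cC)$, while $\Fun_\cC(\cC,\cC)\simeq\cC$.) Fortunately the theorem does not assert that $\lmod{\cC}$ is a \emph{full} monoidal $2$-subcategory, and fullness is not needed: it suffices that $\rB$ is a $2$-functor and that the structural $1$- and $2$-morphisms of $\bimod{\cC}$ between objects in its image forget to the corresponding data of $\lmod{\cC}$.
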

\begin{proof}

We will now sketch the proof that the $2$-functor $\rB\colon \lmod{\cC}\to \bimod{\cC}$ is one of monoidal $2$-categories. 
For the relative Deligne product $\cN\boxtimes_\cC\cM$ of two left $\cC$-module categories $\cM,\cN$, we have that $\rB(\cN\boxtimes_\cC\cM)= \rB(\cN)\boxtimes_\cC\rB(\cM)$ are, by \Cref{def:rel-Del-Cmod}, the same left $\cC$-module categories. We can hence equip $\lmod{\cC}$ with the same associators and unitors as $\bimod{\cC}$, forgetting the right $\cC$-linear structures on their component functors $A_{\cM,\cN,\cP}$, $L_\cM$, and $R_\cM$. This way, $\lmod{\cC}$ becomes a monoidal $2$-category. 

We will equip the identity functor on $\cN\boxtimes_\cC\cM$ with the structure of a right $\cC$-module functor $ I_{\cN,\cM}\colon \rB(\cN\boxtimes_\cC\cM)\to \rB(\cN)\boxtimes_\cC\rB(\cM)$. One checks that universal balancing of \Cref{def:rel-Del} provides a right $\cC$-linear structure
     $$
     s^{I_{\cN,\cM}}_{N\boxtimes_\cC M, X}\colon (N\ract X)\boxtimes_\cC M=(N\boxtimes_\cC M)\ract X\xrightarrow{\beta^P_{N,X,M}}N\boxtimes_\cC( M\ract X)=N\boxtimes_\cC( X\lact M).
     $$
Indeed, this gives  $I_{\cN,\cM}$ a $\cC$-bimodule structure. 
     Now, the equivalences $I_{\cN,\cM}$ equip the $2$-functor 
 $\rB\colon \lmod{\cC}\to \bimod{\cC}$ with the structure of a monoidal $2$-functor. Hence, $\lmod{\cC}$ is a monoidal $2$-subcategory of $\bimod{\cC}$.
\end{proof}

\subsection{Module categories of module functors}
\label{sec:modulecat-rex}

Let $\cC$ be a braided tensor category.
We fix conventions on how to turn categories of (right exact) $\cC$-module functors  $\Fun_\cC(\cM,\cN)$ into $\cC$-module categories where the braiding enters the coherence data. 

\begin{definition}\label{def:Fun-action}
Let $X,Y\in \cC$ and $(G,s^G)\colon \cM\to \cN$ be a $\cC$-module functor. We define
$$
(X\lact G)(M):= X\lact G(M)$$
    and 
    $$s^{X\lact G}_{Y,M}\colon (X\lact G)(Y\lact M)\to (Y\lact (X\lact G))(M)$$
via the commutative diagram
    \begin{align}
    \vcenter{\hbox{
        \xymatrix@R=12pt{
        (X\lact G)(Y\lact M)\ar@{=}[d]\ar[rrrr]^{s^{X\lact G}_{Y,M}}&&&&(Y\lact (X\lact G))(M)\ar@{=}[d]\\
        X\lact G(Y\lact M)\ar[dd]^{X\lact s^G_{Y,M}}\ar[rr]^{(s^G_{X,Y\lact M})^{-1}}&& G(X\lact (Y\lact M))\ar[d]^{G(l^\cM_{X,Y,M})}&&Y\lact (X\lact G(M))\\
&&G(X\otimes Y\lact M)\ar[d]^{s^G_{X\otimes Y, M}}&&       
        \\
        X\lact (Y\lact G(M))\ar[rr]^{l^\cN_{X,Y,G(M)}}&& X\otimes Y\lact G(M)\ar[rr]^{\psi_{X,Y}\lact \id_{G(M)}}&&Y\otimes X\lact G(M)\ar[uu]^{(l^{\cN}_{Y,X,G(M)})^{-1}}.
        }}}\label{eq:FunC-coherence}
    \end{align}
\end{definition}
The family of isomorphism $s^{X\lact G}_{Y,M}$ is natural in $Y$ and $M$ by construction. Note that we will later  require the above coherence $s^{X\lact G}$ in case the module categories $\cM$ and $\cN$ are not strict (see \Cref{sec:Sweedler} where we consider module categories obtained from non-strict monoidal functors) and thus include the module associators in the formula.

\begin{lemma}\label{lem:Hom-C-mod}
\Cref{def:Fun-action} provides a left $\cC$-module structure on  $\Fun_\cC(\cM,\cN)$ with the module associator written in terms of components as:
    $$(l_{X,Y,G})_{M}\colon X\lact(Y\lact G(M))=(X\lact(Y\lact G))(M) \xrightarrow{l^\cN_{X,Y,G(M)}} (X\otimes Y\lact G)(M)=X\otimes Y\lact G(M),$$
    for $G\in \Fun_\cC(\cM,\cN)$ and $X,Y\in \cC$. 
\end{lemma}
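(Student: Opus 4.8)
The plan is to verify that \Cref{def:Fun-action} indeed equips $\Fun_\cC(\cM,\cN)$ with a left $\cC$-module structure by checking the three pieces of data and the two coherence axioms. First I would confirm that for each $X\in\cC$ the assignment $G\mapsto X\lact G$ is a well-defined functor $\Fun_\cC(\cM,\cN)\to\Fun_\cC(\cM,\cN)$: the underlying functor $M\mapsto X\lact G(M)$ is right exact since $X\lact(-)$ is exact on $\cN$ (as $\cC$ is a finite tensor category), and the family $s^{X\lact G}$ defined by \eqref{eq:FunC-coherence} is natural in $Y$ and $M$ (noted in the excerpt) and satisfies the hexagon axiom for a $\cC$-module functor. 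The latter is the first genuine computation: one must paste the defining diagram \eqref{eq:FunC-coherence} for $s^{X\lact G}$ twice and use the module-functor hexagon for $s^G$ together with the hexagon axiom for the braiding $\psi$ of $\cC$ — this is where the braiding coherence is really consumed. I would also record that $X\lact(-)$ acts on $2$-morphisms (module natural transformations) by whiskering with $\id_X\lact(-)$, which is automatically a module natural transformation.

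Next I would produce the bifunctoriality/tensoriality data. The module associator component is the prescribed map $(l_{X,Y,G})_M = l^\cN_{X,Y,G(M)}$; since each $l^\cN_{X,Y,-}$ is a natural isomorphism in $\cN$, this is an isomorphism, and I must check it is a morphism of $\cC$-module functors $X\lact(Y\lact G)\to (X\otimes Y)\lact G$, i.e.\ that it intertwines $s^{X\lact(Y\lact G)}$ and $s^{(X\otimes Y)\lact G}$. Unwinding both sides via \eqref{eq:FunC-coherence}, this reduces to a diagram in $\cN$ built from the module associator $l^\cN$ (its pentagon), the module associator $l^\cM$, and two instances of the braiding $\psi$; the relevant identity is precisely the hexagon/naturality relation between $l^\cN$ and $\psi$ that already underlies the fact that $\rS_{lr}$ and $\rB$ in \eqref{eq:Slr-B} are well defined, so I would invoke \Cref{sec:left-to-bimod} where those checks are made rather than redo them. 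The unit constraint $\one\lact G\cong G$ is given by $l^\cN_{\one}$-type unitors of $\cN$, and I would note $\beta$/triangle-type identities make it compatible.

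Finally I would verify the two module-category axioms: the pentagon for $l_{X,Y,G}$ and the triangle with the unit. Both are checked componentwise at each $M\in\cM$, where $(l_{X,Y,G})_M=l^\cN_{X,Y,G(M)}$, so they follow immediately from the pentagon and triangle axioms for the $\cC$-module category $\cN$ itself — no new input is needed once the previous naturality checks are in place.

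The main obstacle is the verification that $s^{X\lact G}$ satisfies the module-functor hexagon and, relatedly, that $l_{X,Y,G}$ is $\cC$-module-natural: both are sizable diagram chases in which the braiding hexagons for $\psi$ must be combined correctly with the module associators of $\cM$ and $\cN$, and the placement of $\psi_{X,Y}$ versus $\psi_{Y,X}$ in \eqref{eq:FunC-coherence} must be tracked carefully. Everything else is formal or reduces to axioms of $\cN$ as a $\cC$-module category, so I would present those parts tersely and spend the bulk of the proof (or a reference to \Cref{sec:left-to-bimod}) on this one coherence computation.
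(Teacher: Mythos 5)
Your proposal is correct and follows essentially the same route as the paper: verify that $s^{X\lact G}$ satisfies the module-functor coherence \eqref{eq:C-linear-coherence} using the braiding hexagons together with the coherence of $s^G$, verify that the components $l^\cN_{X,Y,G(M)}$ define a morphism of $\cC$-module functors (again via the hexagon axioms for $\psi$), and deduce the pentagon and unit axioms componentwise from those of $\cN$. The only presentational difference is that the paper first strictifies $\cM$ and $\cN$ via \Cref{rem:strictification}, so the associator check collapses to the identity $s^{X\lact(Y\lact G)}=s^{X\otimes Y\lact G}$ proved by a short direct computation, whereas you work in the non-strict setting and outsource that step to \Cref{sec:left-to-bimod} (which is not quite where it is done, though the mathematical input you identify --- the braiding hexagons combined with the module associators --- is the right one).
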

\begin{proof}
By \Cref{rem:strictification}, both module categories $\cM$ and $\cN$ are equivalent to strict module categories. To streamline the proof, we will display the proof of the claim that the module associator on the target category $\cN$ induces an associator on $\Fun_\cC(\cM,\cN)$ only for $\cM, \cN$ being strict left $\cC$-module categories, i.e.\ we assume that $l^\cM$ and $l^\cN$ have identity components. 

We first check that the pair $(X\lact G, s^{X\lact G})$ indeed defines a $\cC$-linear functor, i.e.\ satisfies the coherence \eqref{eq:C-linear-coherence}. This follows from commutativity of the outer diagram in: 
$$
\xymatrix{
(X\lact G)(Y\otimes Z\lact M)\ar[rr]^-{s^{X\lact G}_{Y\otimes Z,M}}\ar[rd]|{X\lact s^{G}_{Y\otimes Z,M}} \ar[dd]|{X\lact s^G_{Y,Z\lact M}}
\ar `l/8pt[d] `[dddr] [dddr]_-{s^{X\lact G}_{Y,Z\lact M}} && Y\otimes Z \lact (X\lact G)(M)&\\
&X\otimes Y\otimes Z\lact G(M)\ar[ur]|{\psi_{X,Y\otimes Z}\lact G(M)}\ar[dr]|{\psi_{X,Y}\otimes Z\lact G(M)}&\\
X\otimes Y\lact G(Z\lact M)\ar[ru]|{X\otimes Y\lact s^G_{Z,M}}\ar[dr]|{\psi_{X,Y}\lact G(Z\lact M)}&&Y\otimes X\otimes Z\lact G(M)\ar[uu]|{Y\otimes \psi_{X,Z}\lact G(M)}&\\
&Y\otimes X\lact G(Z\lact M)\ar[ru]|{Y\otimes X\lact s^G_{Z,M}}
\ar@{->}`r[rru]_-{Y\lact s^{X\lact G}_{Z, M}}`u[uuu][uuur]
&&
}
$$
In this diagram, we use that all module associators are trivial. The triangles commute by definition of $s^{X\lact G}$, coherence of $s^G$, and the hexagon axiom for $\psi_{X,Y\otimes Z}$ in the strict case. The inner square clearly commutes. Note that $X\lact G$ is right exact provided that $G$ is right exact using that $\lact$ is exact in both components. This shows well-definedness of the $\cC$-action on objects. It is clear that this action extends to morphisms of $\cC$-linear functors. 

Moreover, the proposed module associator is indeed a morphism in $\Fun_\cC(\cM,\cN)$. By construction, it is natural in all arguments. In the strict case, the module associator is given by identity morphisms in $\cM$, this only requires checking that $s^{X\lact (Y\lact G)}$ and $s^{X\otimes Y\lact G}$ are the same morphism. This follows from the following calculation:
\begin{align*}
s^{X\lact (Y\lact G)}_{Z,M}&=(\psi_{X,Z}\otimes Y\lact G(M))(X\lact s^{Y\lact G}_{Z,M})\\
    &=(\psi_{X,Z}\otimes Y\lact G(M))(X\otimes \psi_{Y,Z}\lact G(M))(X\otimes Y \lact s^G_{Z,M})\\
    &=(\psi_{X\otimes Y,Z}\lact G(M))(X\otimes Y \lact s^G_{Z,M})
    =s_{Z,M}^{X\otimes Y\lact G}\ .
\end{align*}
Here, the first, second and last step use \Cref{def:Fun-action} while the third step uses the hexagon axioms for the braiding $\psi$.
\end{proof}

\begin{remark}\label{rem:FunC-bimod-compare}
    For the $\cC$-bimodule categories $\rB(\cM), \rB(\cN)$ from \Cref{lem:S-bimod},  $\Fun_\cC(\rB(\cM),\rB(\cN))$ has a $\cC$-$\cC$-bimodule structure by \cite{DSS2}*{Section~2.4}. It turns out that this $\cC$-bimodule structure is equivalent to $\rB(\Fun_\cC(\cM,\cN))$ for the left $\cC$-module structure on $\Fun_\cC(\cM,\cN)$ from \Cref{def:Fun-action}. An equivalence of left $\cC$-modules from $\Fun_\cC(\rB(\cM),\rB(\cN))$ to $\rB(\Fun_\cC(\cM,\cN))$ 
   is given by the identity functor $\id\colon \Fun_\cC(\cM,\cN)\to \Fun_\cC(\cM,\cN)$ with $\cC$-linear structure $(s^{\id}_{X,G})_M:=s^G_{X,M}$ for any $\cC$-linear functor $(G,s^G)\colon \cM\to \cN$ and $X$ in $\cC$.
\end{remark}

\subsection{Centralizer functors} \label{sec:action-functor}

Following \cites{EGNO,EO1} we define the \emph{centralizer} of a finite left $\cC$-module category $\cM$ to be $$\cC_\cM^*:=\Fun_\cC(\cM,\cM),$$
which is a strict monoidal  category with $G\otimes F=G\circ F$,
for $F,G\in \cC_\cM^*$, with the $\cC$-linear structure defined in \eqref{eq:compose-C-lin}.
We will equip the action functor $\lact \colon \cC\to \End(\cM)$ with the structure of a monoidal functor valued in the centralizer of $\cM$. In the context of $\cC$-bimodule categories, two such monoidal functors appear in \cite{ENO}*{Section~4.1} and are reviewed in \Cref{sec:bimod}.

\begin{definition}\label{def:AC-C-mod-functor}
Let $\cM$ be a left $\cC$-module of a braided tensor category $\cC$. For an object $X$ in $\cC$, we define the \emph{\action functor} to be the functor
$$A_\cM\colon \cC\to \cC^*_\cM, \quad X\mapsto X\lact \id_\cM,$$
with the left $\cC$-linear structure for $X\lact \id_\cM$ introduced in \Cref{def:Fun-action}.
\end{definition}

\begin{lemma}\label{lem:AC-braided}
Given a $\cC$-module category $\cM$, the functor 
$A_\cM$
is a monoidal functor. The action of $\cC$ on $\cC_\cM^*$ of \Cref{lem:Hom-C-mod} is induced by this functor in the sense of \Cref{ex:tensor-act}.
\end{lemma}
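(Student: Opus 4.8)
The plan is to verify the two claims of \Cref{lem:AC-braided} directly from the coherence diagram \eqref{eq:FunC-coherence} defining the $\cC$-linear structure on $X \lact \id_\cM$. First I would construct the monoidal structure isomorphisms for $A_\cM$. The unit constraint $A_\cM(\one) = \one \lact \id_\cM \isomorph \id_\cM$ is supplied by the left unitor of the module category $\cM$, and one checks it is a morphism in $\cC^*_\cM$, i.e.\ compatible with $\cC$-linear structures, which in the strict case is immediate. The tensor constraint is the family of isomorphisms $A_\cM(X) \otimes A_\cM(Y) = (X \lact \id_\cM) \circ (Y \lact \id_\cM) \isomorph (X \otimes Y) \lact \id_\cM = A_\cM(X \otimes Y)$, whose component at $M \in \cM$ is the module associator $l^\cM_{X,Y,M}$. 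Here the composition $\otimes$ in $\cC^*_\cM$ is $\circ$ with the $\cC$-linear structure from \eqref{eq:compose-C-lin}, so I must check that $l^\cM_{X,Y,-}$ intertwines the composed $\cC$-linear structure of $(X\lact \id_\cM)\circ(Y\lact \id_\cM)$ with $s^{(X\otimes Y)\lact \id_\cM}$. Unwinding \eqref{eq:FunC-coherence} for $G = Y \lact \id_\cM$ and then for $G = \id_\cM$, this compatibility reduces, in the strict case, precisely to the hexagon axioms for the braiding $\psi$ — the same computation already carried out at the end of the proof of \Cref{lem:Hom-C-mod}.

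Next I would verify the two coherence axioms for $A_\cM$ to be a monoidal functor: the associativity pentagon relating the tensor constraints and the associator of $\cC$ to the (strict) associator of $\cC^*_\cM$, and the two unit triangles. Both follow from the pentagon and triangle axioms of the module category $\cM$ — i.e.\ the coherence of $l^\cM$ — since the structure isomorphisms of $A_\cM$ are built entirely out of $l^\cM$ and the unitor. I would present this by reducing to strict $\cM$ via \Cref{rem:strictification} (exactly as in \Cref{lem:Hom-C-mod}), where the verification becomes a short diagram chase, and then noting that the general case follows since the equivalences of module categories are monoidal.

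For the second assertion, that the $\cC$-action on $\cC^*_\cM$ from \Cref{lem:Hom-C-mod} is the one induced by $A_\cM$ in the sense of \Cref{ex:tensor-act}: the induced action of $\cC$ on $\cC^*_\cM$ via the monoidal functor $A_\cM$ is given on objects by $X \lact F := A_\cM(X) \otimes F = (X \lact \id_\cM) \circ F$, which on underlying functors sends $M \mapsto X \lact F(M)$, matching \Cref{def:Fun-action} on the level of underlying functors. It then remains to check that the $\cC$-linear structures agree. The $\cC$-linear structure on $A_\cM(X) \circ F$ coming from \eqref{eq:compose-C-lin} is the composite of $s^F$ with $s^{X \lact \id_\cM}$, and by \eqref{eq:FunC-coherence} (applied with $G = \id_\cM$, so $s^G$ is trivial in the strict case) this composite unwinds to exactly the diagram \eqref{eq:FunC-coherence} defining $s^{X \lact F}$. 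The module associator of the induced action, namely $l^{\cC^*_\cM}_{X,Y,F} = A_\cM$'s tensor constraint composed with $\circ F$, then agrees with the one recorded in \Cref{lem:Hom-C-mod} by the same unwinding. I expect the main obstacle to be bookkeeping rather than anything conceptual: carefully matching the several nested applications of \eqref{eq:FunC-coherence} and \eqref{eq:compose-C-lin} without sign or order errors, and tracking the non-strict module associators $l^\cM, l^\cN$ if one does not pass to the strict case. As in \Cref{lem:Hom-C-mod}, the cleanest route is to strictify $\cM$ first, verify the (then purely braiding-theoretic) identities, and invoke coherence to conclude in general.
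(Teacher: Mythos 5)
Your proposal is correct and follows essentially the same route as the paper: the monoidal structure on $A_\cM$ is the module associator $l^\cM_{X,Y,M}$, its coherence axioms reduce to the module pentagon \eqref{eq:mod-pentagon-l} and the strict unitality \eqref{eq:mod-fun-unit}, and the second claim follows by unwinding $(X\lact G)(M)=(A_\cM(X)\circ G)(M)$ and comparing the $\cC$-linear structures from \eqref{eq:FunC-coherence} and \eqref{eq:compose-C-lin}. The only difference is that you make explicit the check (via the hexagon, as at the end of the proof of \Cref{lem:Hom-C-mod}) that the tensor constraint is a morphism in $\cC^*_\cM$, which the paper leaves implicit since it was already established there.
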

\begin{proof}
We use the left module coherence 
$$\mu^{A_\cM}:=l_{X,Y,M}\colon A_\cM(X)\circ A_\cM(Y)(M)\isomorph A_\cM(X\otimes Y)(M)$$
to define the monoidal structure on the functor $A_\cM$. The compatibility \eqref{eq:mu-coherence} for $\mu^{A_\cM}$ is equivalent to the left module coherence \eqref{eq:mod-pentagon-l} and the unit axiom for $\mu^{A_\cM}$ holds by \eqref{eq:mod-fun-unit}.
Thus, $A_\cM$ is a monoidal functor. 

We now compare the $\cC$-actions. For $G\in \Fun_\cC(\cM,\cM)$ we have 
$$(X\lact G)(M)=X\lact G(M)=((X\lact \id_\cM)\circ G)(M)= (A_\cM(X)\circ G)(M).$$
By comparison, it follows that the $\cC$-module structure on $A_\cM(X)\circ G$, obtained by composition, coincides with the $\cC$-module structure of $X\lact G$ of \eqref{eq:FunC-coherence}.
\end{proof}

\begin{remark}\label{rem:alpha-induction}
    The monoidal functor $A_\cM$ is also called \emph{$\alpha$-induction} in the literature \cites{Ost1,FFRS,DN2} inspired by constructions in the theory of subfactors \cite{BEK}. Comparing to the conventions of \cite{DN2}*{(4.4)} we find that $\alpha_\cM^+=A_\cM\circ I^+$, for the monoidal equivalence $I^+$ from \eqref{eq:I+-}. 
\end{remark}

The following lemma is proven in \cite{EGNO}*{Rem.\,4.3.10}.\footnote{Our assumptions on the target $\cD$ are more general than the assumptions in \cite{EGNO}. The source $\cC$ does not need to be finite.}
\begin{lemma}\label{lem:F-faithful}
    Let $F\colon \cC \to \cD$ be an exact monoidal functor where $\cC$ is
    a tensor category and $\cD$ is a $\Bbbk$-linear abelian monoidal category. Then $F$ is faithful. In particular, for an object $X$ in $\cC$, if $F(X)=0$ then $X=0$ 
\end{lemma}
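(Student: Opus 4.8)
The plan is to deduce faithfulness from the weaker-looking ``in particular'' clause, and then to establish that clause using the rigidity of $\cC$ together with the simplicity of the tensor unit. First I would record the elementary fact that an \emph{exact} functor $F$ between abelian categories is faithful if and only if it reflects zero objects, i.e.\ $F(X)=0$ implies $X=0$. The forward implication is clear ($F(\id_X)=\id_{F(X)}=0$ forces $\id_X=0$), and for the converse, given $f\colon X\to Y$ with $F(f)=0$, exactness of $F$ gives $F(\Img f)\cong \Img F(f)=0$, whence $\Img f=0$ and $f=0$. Thus it suffices to prove the second assertion, that $F(X)=0$ forces $X=0$.

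Next comes the core argument. I would suppose $X\neq 0$ in $\cC$ and fix a right dual $X^*$, with coevaluation $\coev_X\colon \one\to X\otimes X^*$ and evaluation $\ev_X\colon X^*\otimes X\to \one$ subject to the triangle identity $(\id_X\otimes \ev_X)\circ(\coev_X\otimes\id_X)=\id_X$. I would first observe that $\coev_X\neq 0$: were it zero, bi-additivity of $\otimes$ would give $\coev_X\otimes\id_X=0$ and hence $\id_X=0$ by the triangle identity, contradicting $X\neq 0$. Since $\one=\one_\cC$ is simple, any nonzero morphism out of $\one$ is a monomorphism (its kernel is a proper subobject of $\one$, hence $0$); therefore $\coev_X$ is a monomorphism, exhibiting $\one$ as a subobject of $X\otimes X^*$.

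Finally I would apply $F$. As $F$ is exact it preserves monomorphisms, so $F(\coev_X)$ is a monomorphism; as $F$ is (strong) monoidal, its target is identified with $F(X)\otimes F(X^*)$ and its source with $F(\one_\cC)\cong\one_\cD$. If $F(X)=0$, then $F(X)\otimes F(X^*)=0$ by bi-additivity of the tensor product of $\cD$, so the monomorphism $F(\coev_X)$ has zero target, forcing its source $\one_\cD$ to vanish; this is the desired contradiction, since $\one_\cD\neq 0$. The step I expect to require the most care is precisely this nonvanishing of $\one_\cD$ together with the identification $F(X\otimes X^*)\cong F(X)\otimes F(X^*)$ via the monoidal structure and the identity $0\otimes(-)=0$ in $\cD$. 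For the former, note that $\one_\cD=0$ would, via the unitor and bi-additivity, force $Y\cong\one_\cD\otimes Y=0$ for every object $Y$, collapsing $\cD$ to the zero category; since $\one_\cD\cong F(\one_\cC)$ and the statement is only nontrivial for $\cD\neq 0$, we may assume $\one_\cD\neq 0$. Everything else in the argument is formal.
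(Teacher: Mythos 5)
Your proposal is correct and follows essentially the same route as the paper's proof: both exhibit $\coev_X\colon \one_\cC\to X\otimes X^*$ as a monomorphism using simplicity of the tensor unit, push it through $F$ using (left) exactness and the monoidal structure to conclude $F(X)\neq 0$, and then deduce faithfulness from the image argument $\Img F(f)\cong F(\Img f)$. The only differences are cosmetic: you additionally justify $\coev_X\neq 0$ via the triangle identity and address the degenerate case $\one_\cD=0$, details the paper leaves implicit, and what you call a ``right dual'' is what the paper's conventions call the left dual $X^*$ --- a harmless convention swap since the morphisms you write down are exactly the paper's $\coev^l_X$ and $\ev^l_X$.
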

\begin{proof}
    Let $X$ be a non-zero object in $\cC$. The non-zero morphism $\coev_X\colon \one_\cC \to X\otimes X^*$ is a monomorphism since $\one_\cC$ is a simple object. Now 
    $$\one_\cD\cong F(\one_\cC)\xrightarrow{F(\coev_X)}F(X\otimes X^*)\cong F(X)\otimes F(X^*) $$
    is a monomorphism since $F$ is left exact. Hence, $F(X)$ is non-zero. Therefore, $F$ is faithful since if $F(f)=0$, then $\Img F(f)\cong F(\Img f)=0$, using left and right exactness of $F$. Hence, $\Img f=0$ and $f=0$.
\end{proof}

\begin{corollary}
\label{lem:AM-exact-faithful}
    The \action functor $A_\cM$ is exact and faithful. In particular, if  
    for an object $X$ in $\cC$, $A_\cM(X)=0$ then $X=0$.   
\end{corollary}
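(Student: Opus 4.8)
The plan is to obtain both claims at once from \Cref{lem:F-faithful}, for which it suffices to verify that the \action functor $A_\cM\colon\cC\to\cC^*_\cM$ is an \emph{exact} monoidal functor into a $\Bbbk$-linear abelian monoidal category. Monoidality of $A_\cM$ is exactly \Cref{lem:AC-braided}, and the target $\cC^*_\cM=\Fun_\cC(\cM,\cM)$ is a finite, hence $\Bbbk$-linear abelian, (strict) monoidal category; so the only point requiring argument is exactness of $A_\cM$, which is a genuine additional assertion since by our conventions $A_\cM$ is a priori only right exact.

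The key step is to reduce exactness of $A_\cM$ to exactness of the module action in its first variable. Fix $M\in\cM$ and consider the evaluation $\cC^*_\cM\to\cM$, $F\mapsto F(M)$: composing with $A_\cM$ yields the functor $(-)\lact M\colon\cC\to\cM$, which is exact because the action bifunctor of a finite module category over a finite tensor category $\cC$ is exact in each variable, the first-variable exactness being a standard consequence of the rigidity of $\cC$. Given a short exact sequence $0\to X'\to X\to X''\to 0$ in $\cC$, the endofunctors $X'\lact(-)$, $X\lact(-)$, $X''\lact(-)$ of $\cM$ are right exact, the sequence $0\to X'\lact(-)\to X\lact(-)\to X''\lact(-)\to 0$ is exact objectwise on $\cM$ by the preceding sentence, and its objectwise kernel $X'\lact(-)$ and cokernel $X''\lact(-)$ are themselves right exact; hence they coincide with the kernel and cokernel formed in $\cC^*_\cM$. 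Thus $A_\cM$ sends short exact sequences to short exact sequences, i.e.\ $A_\cM$ is exact.

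Once $A_\cM$ is known to be an exact monoidal functor from the tensor category $\cC$ to the $\Bbbk$-linear abelian monoidal category $\cC^*_\cM$, \Cref{lem:F-faithful} applies verbatim and gives faithfulness of $A_\cM$ together with its final assertion, namely that $A_\cM(X)=0$ implies $X=0$. I expect the only nontrivial ingredient to be the exactness of the action in its first variable; this is well known for finite tensor categories, so no real obstacle remains, the rest being formal consequences of \Cref{lem:AC-braided} and \Cref{lem:F-faithful}.
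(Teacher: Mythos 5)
Your proof is correct and follows essentially the same route as the paper: the paper likewise deduces exactness of $A_\cM$ from exactness of $\lact$ in both variables and then invokes \Cref{lem:F-faithful}. The only difference is that you spell out why objectwise exactness of the action in the $\cC$-variable yields exactness in the functor category $\cC^*_\cM$ (by checking that the objectwise kernel and cokernel are again right exact module functors), a step the paper leaves implicit.
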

\begin{proof}
First, $A_\cM$ is exact by assumption that for any module category $\cM$, $\lact$ is exact in both variables. Thus, the claim follows from \Cref{lem:F-faithful}.
\end{proof}

\subsection{Opposite module categories}
\label{sec:opposite-mod-cats}

We will now recall opposite module categories following \cite{DSS2}*{Def.\,2.4.4} to associate right module categories to left module categories.

\begin{definition}\label{def:*M}
Let $\cN$ be a right $\cC$-module category. Define the \emph{right opposite} ${}^\#\cN$ to be the opposite category $\cN^\oop$ with the structure of a left $\cC$-module category given by 
\begin{align}\label{eq:act-left-*M}
    X\lact N=N\ract {}^*X,
\end{align}
where ${}^*X$ is the \emph{right} dual of $X$.\footnote{\cite{DSS2} denote the right dual of $X$ by $X^*$. Our conventions on duality are defined in \Cref{sec:duality}.} The module coherence is given by the morphism 
$${}^{\#}l_{X,Y,N}\in \Hom_{\cN^\oop}(X\lact (Y\lact N),X\otimes Y\lact N)$$
corresponding to the following morphism in $\cN$:
\begin{align}\label{eq:opposite-coherence}
\vcenter{\hbox{
    \xymatrix@R=10pt{
    X\otimes Y\lact N\ar@{=}[d]\ar[rrrr]^{{}^\#l_{X,Y,N}} &&&&X\lact (Y\lact N)\ar@{=}[d]\\
    N\ract {}^*(X\otimes Y) \ar@{=}[rr]&&N\ract {}^*Y\otimes {}^*X\ar[rr]^{r^{-1}_{N,{}^*Y,{}^*X}} &&  (N\ract {}^*Y)\ract {}^*X.
    }}}
\end{align}
Here, $r_{N,X,Y}$ is the given coherence of the right $\cC$-module structure on $\cN$. Similarly, given a left $\cC$-module category $\cM$, we can define a right $\cC$-module category ${}^\#\cM$ with the action 
\begin{align}\label{eq:act-right-*M}
    M\ract X={}^*X\lact M.
\end{align}
\end{definition}

\begin{definition}[{\cite{DSS2}*{Def.\,2.4.4}}]\label{def:M*}
For right $\cC$-module category $\cN$, we define the \emph{left opposite} $\cN^\#$ to be the opposite category $\cN^\oop$ with the structure of a left $\cC$-module category given by 
\begin{align}\label{eq:act-left-M*}
    X\lact N=N\ract X^*,
\end{align}
where $X^*$ is the left dual of $X$. The module coherence $r^\#_{N,X,Y}$ is obtained from $r^{-1}_{N,X,Y}$ similarly to how ${}^\#l_{X,Y,N}$ is obtained from $r^{-1}_{N,X,Y}$ in \Cref{def:*M} but using left duals instead. Analogously, we define a left opposite right $\cC$-module $\cM^\#$ for a given right $\cC$-module $\cM$ with action
\begin{align}\label{eq:act-right-M*}
    M\ract X=X^*\lact M.
\end{align}
\end{definition}
See also \cite{FSS}*{Section~2.4} where these opposites are used in the context of bimodule categories.

\section{Internal algebras and constructions of module categories}\label{sec:int-alg}

In this section, we describe all required constructions in the monoidal $2$-category $\lmod{\cC}$, for $\cC$ a finite braided tensor category, via internal algebras in $\cC$. 
In \Cref{sec:fun-bimodules}, we recall how left $\cC$-modules and their functor categories are realized by algebras and bimodules in $\cC$, cumulating in the biequivalence of $\lmod{\cC}$ with the Morita bicategory $\Mor_\cC$ in \Cref{thm:morita}. In \Cref{sec:MorC-monoidal} we recall the monoidal structure on $\Mor_\cC$, which in \Cref{sec:MorC-C-mod-monoidal} is linked to the monoidal structure of $\lmod{\cC}$, see \Cref{prop:rel-Del-modules}. Finally, duality in $\lmod{\cC}$ is discussed in \Cref{sec:duals}.

\subsection{Module categories and module functors via internal algebras}
\label{sec:fun-bimodules}

We will now explain the key tool of using internal algebras to study module categories following \cites{Ost1,EO1,EGNO,DSS2}.

    Let $A$ be an algebra in $\cC$. Then $\rmodint{\cC}{A}$ denotes the left $\cC$-module category of right $A$-modules in $\cC$ with left $\cC$-action of $X$ in $\cC$ on a right $A$-module $(V,a^r_V)$  given by 
$$X\lact (V, a^r_V)=(X\otimes V, a_{X\otimes V}^r=\id_X\otimes a_V^r).$$
We will often denote an $A$-module $(V,a^r_V)$ simply by $V$. In particular, the regular $A$-module $(A, m_A)$ is  simply denoted by $A$. 
The module coherence is given by the associator of $\otimes$, which we assume to be given by identities as $\cC$ is strict. 
Similarly, one defines the \emph{right} $\cC$-module category $\lmodint{A}{\cC}$ of \emph{left} $A$-modules in $\cC$.

Recall that if $\cC$ is a finite tensor category, then any finite left $\cC$-module category $\cM$ is equivalent to  $\rmodint{\cC}{A}$ for some algebra $A$ in $\cC$, see \cite{EGNO}*{Section~7.10} or \cite{DSS1}*{Thm.\,2.18} where no assumptions on $\Bbbk$ are made. 

\begin{remark}\label{rem:strictification}
If $\cC$ is strict monoidal, then $\rmodint{\cC}{A}$ is strict as a $\cC$-module category.
Hence, the equivalence $\cM\simeq \rmodint{\cC}{A}$ can be viewed as a \emph{strictification} of the finite left $\cC$-module category $\cM$. 
\end{remark}

We will now describe $\Fun_{\cC}(\cM,\cN)$ in terms of internal algebras. For this, define a left $\cC$-action on the category $\bimodint{A}{\cC}{B}$ of internal $A$-$B$-bimodules in $\cC$. For an internal $A$-$B$-bimodule $V$ and $X$ in $\cC$, define 
$X\lact V$ as the object $X\otimes V$
 with the right $B$-action
\begin{align}\label{eq:bimod1}
a^r_{X\otimes V}\colon X\otimes V\otimes B\xrightarrow{\id_X\otimes a^r_{V}}X\otimes V
\end{align}
and left $A$-action
\begin{align}\label{eq:left-A-bimodule}
a^l_{X\otimes V}\colon A\otimes X\otimes V\xrightarrow{\psi^{-1}_{A,X}\otimes \id_V}X\otimes A\otimes V\xrightarrow{\id_X\otimes a^l_{V}} X\otimes V.
\end{align}
One checks that this is a left $A$-action using naturality of the inverse braiding and the hexagon axioms.
The $\cC$-module coherence for $\bimodint{A}{\cC}{B}$ is given by the associativity constraint of $\cC$, i.e.\ trivial by our assumption of strictness. 

The following proposition upgrades the equivalence of categories from \cite{EGNO}*{Prop.\,7.11.1} or \cite{DSS2}*{Prop.\,2.4.10} to one of $\cC$-module categories.
\begin{proposition} \label{prop:fun-A-bimod-B}
Let $A,B$ be algebras in $\cC$ and $\cM=\rmodint{\cC}{A}$, $\cN=\rmodint{\cC}{B}$. We have the following  equivalence of left $\cC$-module categories
    $$E\colon \Fun_\cC(\cM,\cN)\to \bimodint{A}{\cC}{B}, \quad (G,s^G)\mapsto G(A),$$
where the right $B$-module $G(A)$ is equipped with the left $A$-action
\begin{align} \label{eq:EW-left-act}a^l_{E(G)}:=\left(A\otimes G(A)=A\lact G(A)\xrightarrow{(s^{G}_{A,A})^{-1}}G(A\lact A)\xrightarrow{G(m_A)} G(A)\right).
\end{align}
The left $\cC$-linear structure is given by the identity morphisms 
$$s^E_{X,G}=\id\colon E(X\lact G)=X\otimes G(A)\to X\otimes G(A)=X\lact E(G).$$
\end{proposition}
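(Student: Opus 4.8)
The plan is to verify directly that the assignment $G \mapsto G(A)$ is a well-defined $\cC$-module functor, that its prescribed $\cC$-linear structure $s^E$ satisfies the coherence axiom, and that the functor is an equivalence because its underlying functor of $\Bbbk$-linear categories already is by \cite{EGNO}*{Prop.\,7.11.1} (or \cite{DSS2}*{Prop.\,2.4.10}). So I would begin by recalling that underlying equivalence of abelian categories, reducing the task to upgrading it to the $\cC$-module level. By \Cref{rem:strictification} I may and will assume $\cC$ is strict, so all module associators on $\rmodint{\cC}{A}$ and $\bimodint{A}{\cC}{B}$ are identities; this removes most of the bookkeeping.

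The first substantive step is to check that the left $A$-action \eqref{eq:EW-left-act} on $E(G) = G(A)$ is genuinely an associative, unital left $A$-action and commutes with the given right $B$-action, so that $E(G)$ really is an object of $\bimodint{A}{\cC}{B}$. Associativity follows from the coherence axiom \eqref{eq:C-linear-coherence} for $s^G$ applied to $X=Y=A$ together with associativity of $m_A$; unitality follows from \eqref{eq:mod-fun-unit} and the unit axiom for $A$; compatibility with the right $B$-action is automatic since $s^G$ is a morphism of right $B$-modules (being built from the module functor structure, which is right $\cC$-linear in the sense that produces $B$-module maps after applying $G$). The second step is to confirm that $E$ is functorial: a $\cC$-module natural transformation $\eta\colon G \to G'$ is sent to $\eta_A\colon G(A)\to G'(A)$, which is a right $B$-module map by right $\cC$-linearity and a left $A$-module map because the naturality square of $\eta$ against $m_A$ together with the compatibility of $\eta$ with $s^G, s^{G'}$ gives exactly the intertwining condition.

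The key step specific to this proposition is verifying that $s^E_{X,G} = \id_{X\otimes G(A)}$ is a legitimate $\cC$-linear structure on $E$. Here I must check two things. First, that $\id_{X\otimes G(A)}$ is actually a morphism $E(X\lact G) \to X\lact E(G)$ in $\bimodint{A}{\cC}{B}$, not merely in $\cC$: both sides have underlying object $X\otimes G(A)$; the right $B$-actions agree trivially by \eqref{eq:bimod1}; the subtle point is that the left $A$-action on $E(X\lact G)$, computed via \eqref{eq:EW-left-act} using the coherence $s^{X\lact G}$ of \Cref{def:Fun-action}, must equal the left $A$-action on $X\lact E(G)$ given by \eqref{eq:left-A-bimodule}. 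This is a diagram chase unwinding the defining square \eqref{eq:FunC-coherence} for $s^{X\lact G}_{A,A}$ against $G(m_A)$, where the braiding $\psi_{A,X}$ that appears in \eqref{eq:FunC-coherence} is exactly the one appearing in \eqref{eq:left-A-bimodule}, and the left module associators (trivial here) and $s^G$-coherences cancel. I expect \textbf{this identification of the two $A$-actions to be the main obstacle}, since it is where the braiding-dependence of the $\cC$-module structure on $\Fun_\cC(\cM,\cN)$ must match the braiding used in the $\cC$-action on bimodules. Second, I must check the $\cC$-linear coherence axiom for $s^E$ itself, i.e.\ that the two ways of identifying $E((X\otimes Y)\lact G)$ with $(X\otimes Y)\lact E(G)$ agree; since all components $s^E_{X,G}$ are identities and the module associators on both sides are identities (strictness), this reduces to the observation already made in the proof of \Cref{lem:Hom-C-mod} that $s^{X\otimes Y \lact G} = s^{X\lact(Y\lact G)}$, so the axiom holds on the nose. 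Finally, since $E$ is an equivalence of underlying abelian categories and is now a $\cC$-module functor whose $\cC$-linear structure consists of isomorphisms, it is an equivalence of $\cC$-module categories, completing the proof.
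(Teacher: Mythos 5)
Your proposal is correct and follows essentially the same route as the paper: cite the known underlying equivalence of abelian categories, and then upgrade it by checking that the identity components $s^E_{X,G}$ are genuine bimodule morphisms, which amounts to the diagram chase matching the left $A$-action on $E(X\lact G)$ (built from $s^{X\lact G}$ of \Cref{def:Fun-action}) with the action \eqref{eq:left-A-bimodule} on $X\lact E(G)$ via the braiding $\psi^{-1}_{A,X}$ — exactly the central computation in the paper's proof. The extra verifications you include (associativity and unitality of $a^l_{E(G)}$, functoriality on $2$-morphisms, and the reduction of the coherence axiom for $s^E$ to $s^{X\otimes Y\lact G}=s^{X\lact(Y\lact G)}$ from \Cref{lem:Hom-C-mod}) are correct and only make explicit what the paper leaves to the cited references.
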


\begin{proof}
For a $\cC$-module functor $(G,s^G)$, note that $G(A)\in \cN=\rmodint{\cC}{B}$ and that $a^l_{E(G)}$ commutes with the right $B$-action since $a^l_{E(G)}$ is, by construction, a morphism in $\rmodint{\cC}{B}$. Thus, $E(G)$ is a well-defined $A$-$B$-bimodule. It is well-known that $E$ gives an equivalence of categories, see \cite{EGNO}*{Prop.\,7.11.1} or \cite{DSS2}*{Prop.\,2.4.10}. An inverse equivalence to $E$ sends an internal $A$-$B$-bimodule $V$ to the functor 
$$G_V= (-)\otimes_A V\colon \cM\to \cN, \quad M\mapsto M\otimes_A V.$$
Using that $\cC$ is strict monoidal, both right $B$-modules $(X\lact M)\otimes_A V$ and $X\lact (M\otimes_A V)$ are coequalizers of the same diagram. 
Thus, $G_V$ becomes a $\cC$-linear functor with coherences induced by identity morphisms.

It remains to show that $E$ is a functor of left $\cC$-modules. Recall the left $\cC$-module structure on $\bimodint{A}{\cC}{B}$ defined above in \eqref{eq:bimod1}--\eqref{eq:left-A-bimodule} and the action on $\Fun_\cC(\cM,\cN)$ from \Cref{def:Fun-action}. Note that 
$$
E(X\lact G)=(X\lact G)(A)= X\otimes G(A)=X\lact E(G)$$
as objects in $\cN=\rmodint{\cC}{B}$. That the left $A$-actions coincide follows from commutativity of the outer diagram in
\begin{align*}
\vcenter{\hbox{
\xymatrix@C=10pt{
A\lact (X\lact G)(A)\ar[d]_{(s^{X\lact G}_{A,A})^{-1}}
\ar@{=}[rr] 
\ar `l/15pt[d] `[dd]_-{a^l_{E(X\lact G)}} [dd]
&&A\otimes X\otimes G(A)\ar[d]^{\psi^{-1}_{A,X}\otimes A}\ar@{=}[r]&A\otimes (X\lact G(A))\ar[dd]^{a^l_{X\otimes G(A)}}\\
(X\lact G)(A\lact A)\ar[d]_{(X\lact G)(m_A)}^{=X\otimes G(m_A)}\ar[rr]^{X\otimes s^G_{A,A}} &&X\otimes A\otimes G(A)\ar[d]^{X \otimes a^l_{E(G)}} \\
(X\lact G)(A)\ar@{=}[rr] &&X\otimes G(A)\ar@{=}[r]&X\lact G(A)
}}}
\end{align*}
The leftmost vertical arrow is the left $A$-action defined in \eqref{eq:EW-left-act}, with $s^{X\lact G}$ defined in \Cref{def:Fun-action}, and the rightmost vertical arrow is the $A$-action from \eqref{eq:left-A-bimodule}. The middle lower square also commutes by \eqref{eq:EW-left-act}.
As the $\cC$-linear structure of $E$ is given by identities, the coherence \eqref{eq:C-linear-coherence} is trivially satisfied and $E$ is an equivalence of $\cC$-module categories.
\end{proof}

\begin{remark}\label{rem:convention1}
    We fixed a choice of convention in  \Cref{def:Fun-action} by using the braiding $\psi$ in the definition of the $\cC$-linear structure $s^{X\lact G}$. This choice corresponds to  using the inverse braiding $\psi^{-1}$ in \eqref{eq:left-A-bimodule}. Instead, one can use the inverse braiding in \Cref{def:Fun-action} to obtain an alternative left $\cC$-module structure $\Fun'_\cC(\cM,\cN)$ on the same category. Correspondingly, the inverse braiding in \eqref{eq:left-A-bimodule} would be replaced with the braiding to produce an analogue of \Cref{prop:fun-A-bimod-B} with this convention. Our choice of convention is justified later in \Cref{rem:convention2}.
\end{remark}

\begin{definition}\label{def:MorC}
For an abelian monoidal category $\cC$ with right exact tensor product, we define the \emph{Morita bicategory} $\Mor_\cC$ to be the $2$-category whose objects are algebras in $\cC$, morphisms $A\to B$ are $A$-$B$-bimodules, and with $2$-morphisms given by bimodule morphisms.
The composition of $1$-morphisms in $\Mor_\cC$ is defined by the relative tensor product 
$$M\otimes_A N=\operatorname{coker} \Big(\xymatrix{M\otimes A \otimes N\ar[rrr]^{a^r_M\otimes N - M\otimes a^l_N}&&&M\otimes N}\Big).$$
This relative tensor product is not strictly associative. However, there are canonical isomorphisms 
$$M\otimes_A (N\otimes_A P)\isomorph (M\otimes_A N) \otimes_A P$$
compatible with the colimit diagrams. Moreover, there are canonical isomorphisms 
$$M\otimes_A A\cong A, \qquad A\otimes_A M\cong M.$$
\end{definition}

\Cref{prop:fun-A-bimod-B}, together with  \cite{EGNO}*{Section~7.10} or \cite{DSS1}*{Thm.\,2.18}, gives the following well-established result, which enables describing the $2$-category $\lmod{\cC}$ of finite $\cC$-modules.

\begin{proposition}\label{thm:morita}
Let $\cC$ be a finite tensor category. The assignment$$\Mod\colon \Mor_\cC\to \lmod{\cC}, \quad A\mapsto \rmodint{\cC}{A}$$
extends to an equivalence of bicategories.
\end{proposition}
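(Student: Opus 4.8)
The plan is to assemble the equivalence $\Mod$ from the two ingredients already in place: on objects it is defined by $A\mapsto \rmodint{\cC}{A}$, and on Hom-categories it is given by the inverse of the equivalence $E$ from \Cref{prop:fun-A-bimod-B}, which sends an $A$-$B$-bimodule $V$ to the $\cC$-module functor $G_V=(-)\otimes_A V\colon \rmodint{\cC}{A}\to \rmodint{\cC}{B}$. So the first step is to record that, for every pair of algebras $A,B$, the assignment $V\mapsto G_V$ is an equivalence of categories $\bimodint{A}{\cC}{B}\xrightarrow{\sim}\Fun_\cC(\rmodint{\cC}{A},\rmodint{\cC}{B})$; this is precisely (the quasi-inverse of) \Cref{prop:fun-A-bimod-B}, so nothing new is needed here.

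Next I would promote this family of equivalences on Hom-categories to a pseudofunctor $\Mor_\cC\to\lmod{\cC}$. This requires producing, for composable bimodules $A\xrightarrow{V}B\xrightarrow{W}C$, a natural isomorphism $G_{V\otimes_B W}\cong G_W\circ G_V$, i.e.\ a coherent identification $(-)\otimes_A(V\otimes_B W)\cong((-)\otimes_A V)\otimes_B W$, together with unit isomorphisms $G_A\cong \id_{\rmodint{\cC}{A}}$ coming from $M\otimes_A A\cong M$. These are the canonical associativity and unit isomorphisms for the relative tensor product already listed in \Cref{def:MorC}; what must be checked is that they satisfy the associativity (pentagon) and unit coherence axioms of a pseudofunctor, which follows from the universal property of the relevant cokernels/coequalizers, together with compatibility of the $\cC$-linear structures (all of which were arranged to be identity morphisms in the proof of \Cref{prop:fun-A-bimod-B}, so the coherences reduce to statements about $\otimes_A$ in $\cC$).

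Finally I would invoke the two surjectivity statements: every finite left $\cC$-module category is equivalent to some $\rmodint{\cC}{A}$ (by \cite{EGNO}*{Section~7.10} or \cite{DSS1}*{Thm.\,2.18}), so $\Mod$ is essentially surjective on objects; and on each Hom-category it is an equivalence by \Cref{prop:fun-A-bimod-B}. A pseudofunctor which is essentially surjective and a local equivalence is a biequivalence, which gives the claim. The main obstacle I anticipate is purely bookkeeping: verifying the pseudofunctor coherence axioms for $\Mod$, i.e.\ that the canonical associators and unitors for $\otimes_A$ are compatible with composition of $\cC$-module functors and their $\cC$-linear structures in the sense required. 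Since the paper works with the bicolimit (rather than strict) universal property of $\boxtimes_\cC$ and of relative tensor products, one has to be a little careful, but no genuinely new input beyond \Cref{prop:fun-A-bimod-B} and the universal property of cokernels is needed; this is why the result is labelled ``well-established''.
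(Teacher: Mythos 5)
Your proposal is correct and takes essentially the same route as the paper, which proves this proposition simply by combining \Cref{prop:fun-A-bimod-B} (the local equivalences on Hom-categories) with essential surjectivity on objects from \cite{EGNO}*{Section~7.10} or \cite{DSS1}*{Thm.\,2.18}. The extra care you take over the pseudofunctor coherences for $\otimes_A$ is exactly the bookkeeping the paper leaves implicit in calling the result ``well-established''.
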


\subsection{Operations on internal algebras in a braided tensor category}\label{sec:MorC-monoidal}

If $\cC$ is braided, we can define opposite algebras and tensor products of algebras in $\cC$. We use the following notation for braided opposite and tensor product algebras.

\begin{definition}[$A^\psi$]\label{def:opp-algebras}
Given an algebra $A$ in $\cC$ with product $m_A\colon A\otimes A \to A$, we denote by $A^\psi$ the \emph{$\psi$-opposite algebra} of $A$. It is defined on the same object $A$ with $\psi$-opposite product
$$m^{\psi}_{A}:=m_A\psi_{A,A}\colon A\otimes A\to A.$$
\end{definition}

\begin{definition}[$A\otimes^\psi B$]\label{def:tensor-algebras}
Given two algebras $A,B$ in $\cC$ with products $m_A,m_B$, respectively, we denote by $A\otimes^\psi B$ the algebra given by $A\otimes B$ with product 
$$m_{A\otimes^\psi B}:= (m_A\otimes m_B)(\id_A\otimes \psi_{B,A}\otimes \id_B)\colon (A\otimes B)\otimes (A\otimes B)\to A\otimes B.$$
\end{definition}
The same way, we define $A^{\psi^{-1}}$ and $A\otimes^{\psi^{-1}}B$, using the inverse braiding $\psi^{-1}$ instead of the braiding. For simplicity, we will often denote $A\otimes^\psi B$ simply by $A\otimes B$.
This tensor of algebras gives $\Mor_\cC$ a monoidal structure.

\begin{proposition}
For $\cC$ braided, the tensor product $A\otimes B$ of internal algebras as in \Cref{def:tensor-algebras}, with the corresponding tensor product of bimodules and bimodule morphisms makes $\Mor_\cC$ a  monoidal bicategory. 
\end{proposition}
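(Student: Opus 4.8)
The plan is to equip $\Mor_\cC$ with all the data of a monoidal bicategory and to verify its coherence axioms by reducing each of them to an identity already available in the braided tensor category $\cC$. The tensor product on objects is $(A,B)\mapsto A\otimes B$ of \Cref{def:tensor-algebras}. On $1$-morphisms, given an $A$-$B$-bimodule $M$ and a $C$-$D$-bimodule $N$, I would make the object $M\otimes N$ of $\cC$ into an $(A\otimes C)$-$(B\otimes D)$-bimodule by braiding $C$ past $M$ on the left and $N$ past $B$ on the right, i.e.\ with left action
$$A\otimes C\otimes M\otimes N\xrightarrow{\id_A\otimes\psi_{C,M}\otimes\id_N}A\otimes M\otimes C\otimes N\xrightarrow{a^l_M\otimes a^l_N}M\otimes N$$
and, symmetrically, with the right action of $B\otimes D$ obtained by first braiding $N$ past $B$ and then applying $a^r_M\otimes a^r_N$. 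That these are commuting module actions is checked using naturality of $\psi$ and the hexagon axioms, exactly as for the $\cC$-module structure on $\bimodint{A}{\cC}{B}$ in \eqref{eq:bimod1}--\eqref{eq:left-A-bimodule}; the choice of the braiding rather than its inverse is precisely what matches the $\psi$-twisted product of \Cref{def:tensor-algebras}. On $2$-morphisms the pseudofunctor is $\otimes$ of morphisms of $\cC$, and functoriality is immediate.

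The main structural datum is then the compositor (tensorator) of this pseudofunctor: for composable bimodules $M\colon A\to B$, $M'\colon B\to E$ and $N\colon C\to D$, $N'\colon D\to F$, a natural isomorphism
$$(M\otimes N)\otimes_{B\otimes D}(M'\otimes N')\ \isomorph\ (M\otimes_B M')\otimes (N\otimes_D N'),$$
together with the isomorphisms expressing that $\otimes$ preserves identity $1$-morphisms. I would construct the compositor from the universal property of the defining cokernels in \Cref{def:MorC}: the composite of braidings of $\cC$ that reorganizes $M\otimes N\otimes B\otimes D\otimes M'\otimes N'$ into $M\otimes B\otimes M'\otimes N\otimes D\otimes N'$ identifies the two coequalizer presentations of the two sides, hence descends to the asserted isomorphism, and naturality and invertibility follow formally. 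The monoidal unit is $\one$ viewed as the trivial algebra, together with the canonical algebra identifications $\one\otimes A\cong A\cong A\otimes\one$ coming from the unit isomorphisms of $\cC$, and, on $1$-morphisms, the identifications $\one\otimes M\cong M\cong M\otimes\one$.

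Next I would supply the associator and unitor $1$-morphisms. The associativity constraint of $\cC$, together with the two hexagon axioms, makes $(A\otimes B)\otimes C$ and $A\otimes(B\otimes C)$ into isomorphic algebras; the associator $1$-morphism $a_{A,B,C}$ is the regular bimodule transported along this algebra isomorphism, and it is automatically invertible in $\Mor_\cC$ since an algebra isomorphism is a Morita equivalence with an explicit inverse. The left and right unitor $1$-morphisms are, similarly, the regular bimodules along $\one\otimes A\cong A\cong A\otimes\one$. The pseudo-naturality $2$-isomorphisms of the associator and of the unitors are then assembled from the associativity and unit isomorphisms of $\cC$, the braidings appearing in the bimodule structures above, and the canonical associativity and unit isomorphisms of relative tensor products recorded in \Cref{def:MorC}.

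It remains to verify the monoidal bicategory axioms: the pentagonator equation, the three unit (triangulator) equations, and the interchange/hexagon-type coherences --- that is, the identities collected in \eqref{eq:interchange-general}--\eqref{eq:triangulator-left-right}. Each of these is an equality of pasting diagrams built from $\otimes$ in $\cC$, the braiding $\psi$, and the relative-tensor-product coequalizers, and after invoking the universal property of the cokernels it reduces to the pentagon, the triangle, and the two hexagon axioms of $\cC$, together with the coherence of the relative tensor product itself. I expect the principal obstacle to be purely organizational: there is a long list of coherence cells to pin down and a correspondingly long list of diagrams to check, but no genuinely new ingredient beyond coherence in the braided tensor category $\cC$; this is the internal-algebra analogue of the classical fact that the bicategory of algebras and bimodules over a braided category carries a monoidal structure.
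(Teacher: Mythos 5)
Your outline is correct, but note that the paper offers no proof of this proposition at all: it is stated as a standard fact, followed only by the remark that the tensor product of $\Mor_\cC$ is \emph{strictly} associative because $\cC$ is strict, making $\Mor_\cC$ semi-strict. Measured against that, your argument is the expected one and the individual constructions check out: the left action of $A\otimes^\psi C$ on $M\otimes N$ via $\id_A\otimes\psi_{C,M}\otimes\id_N$ and the right action of $B\otimes^\psi D$ via $\psi_{N,B}$ are exactly the ones compatible with the $\psi$-twisted products of \Cref{def:tensor-algebras} (the verification is naturality of $\psi$ plus the hexagons, as you say). The one place where you do more work than necessary is the associator and unitors: since $\cC$ is strict, $(A\otimes^\psi B)\otimes^\psi C$ and $A\otimes^\psi(B\otimes^\psi C)$ are \emph{equal} as algebras (the two products coincide by the hexagon axioms), and $\one\otimes^\psi A=A=A\otimes^\psi\one$ on the nose, so there is no algebra isomorphism to transport along --- the associator and unitor $1$-morphisms are identities and the pentagonator and triangulators collapse. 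The only genuinely nontrivial coherence datum is therefore the interchange/compositor $(M\otimes N)\otimes_{B\otimes D}(M'\otimes N')\isomorph(M\otimes_B M')\otimes(N\otimes_D N')$, which you handle correctly; just be explicit that identifying the right-hand side (a tensor product of two cokernels) with a single coequalizer uses right-exactness of $\otimes$ in each variable, and that the reorganizing braid $\id_M\otimes\psi_{N,M'}\otimes\id_{N'}$ intertwines the two balancing maps. A final cosmetic point: \eqref{eq:interchange-general}--\eqref{eq:triangulator-left-right} list the \emph{data} of a monoidal $2$-category, not its axioms, so the coherences you need to verify are the (unlabelled) pentagonator and triangulator equations, which here reduce to the universal property of the relative tensor product exactly as you describe.
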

Note that the tensor product of $\Mor_\cC$ is strictly associative. This follows from strict associativity of the tensor product of $\cC$. Thus, we can say that $\Mor_\cC$ is a \emph{semi-strict} monoidal bicategory, cf.\ \Cref{rem:2-strictification}.

Next, we collect some facts on tensor products on internal algebras. These are not difficult to check and we omit the proofs here.

\begin{lemma}\label{lem:tensor-ops}
    Let $A,B$ be algebras in $\cC$ then 
    \begin{align*}
    \psi^{-1}_{A,B}\colon (A\otimes^\psi B)^\psi &\to B^\psi\otimes^\psi A^\psi, && \text{and} &&
    \psi_{A,B}\colon (A\otimes^\psi B)^{\psi^{-1}}\to B^{\psi^{-1}}\otimes^\psi A^{\psi^{-1}}
    \end{align*}
    define isomorphisms of algebras in $\cC$.
\end{lemma}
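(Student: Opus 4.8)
The plan is to verify directly that each of the two maps is an algebra homomorphism; since each is already an isomorphism in $\cC$, this suffices. We may take $\cC$ strict, so the only data to track are the multiplications $m_A,m_B$ and the braidings among tensor powers of $A$ and $B$. Recall that $f\colon(R,m_R,u_R)\to(S,m_S,u_S)$ is an algebra isomorphism precisely when $f$ is invertible in $\cC$, $f\circ m_R=m_S\circ(f\otimes f)$, and $f\circ u_R=u_S$. So I would split the check into the unit condition and the multiplicativity condition, in that order, for the first map, and then deduce the second map formally.

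The unit condition is immediate: passing to a $\psi$- or $\psi^{-1}$-opposite does not change the unit, because $\psi_{R,R}\circ(u_R\otimes\id_R)=\id_R\otimes u_R$ by naturality of the braiding together with $\psi_{\one,R}=\id_R$. Hence the units of $(A\otimes^\psi B)^\psi$ and of $B^\psi\otimes^\psi A^\psi$ are $u_A\otimes u_B$ and $u_B\otimes u_A$ respectively, and naturality of $\psi^{-1}$ applied to $u_A\colon\one\to A$ and $u_B\colon\one\to B$ gives $\psi^{-1}_{A,B}(u_A\otimes u_B)=u_B\otimes u_A$; the same argument works for $\psi_{A,B}$.

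The multiplicativity of $\psi^{-1}_{A,B}$ is the heart of the matter. I would expand $m_{(A\otimes^\psi B)^\psi}=m_{A\otimes^\psi B}\circ\psi_{A\otimes B,\,A\otimes B}$ and $m_{B^\psi\otimes^\psi A^\psi}$ using \Cref{def:tensor-algebras} and \Cref{def:opp-algebras}, decompose the braiding $\psi_{A\otimes B,\,A\otimes B}$ into a composite of the four braidings $\psi_{A,A},\psi_{A,B},\psi_{B,A},\psi_{B,B}$ via the two hexagon axioms, and then use naturality of the braiding to commute $m_A$ and $m_B$ past the remaining braidings. After these reductions, both $\psi^{-1}_{A,B}\circ m_{(A\otimes^\psi B)^\psi}$ and $m_{B^\psi\otimes^\psi A^\psi}\circ(\psi^{-1}_{A,B}\otimes\psi^{-1}_{A,B})$ appear as $(m_B\otimes m_A)$ precomposed with a braiding of the four strands $A,B,A,B$, and one checks that these two braidings coincide. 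This is a finite braid computation enforced by naturality and the hexagon axioms (equivalently, by the Yang--Baxter relation and coherence for braided monoidal categories), and a string-diagram drawing makes it transparent. I expect the only delicate point to be the bookkeeping here: matching the decomposition of $\psi_{A\otimes B,\,A\otimes B}$ against the inner braiding $\psi_{B,A}$ hidden inside $m_{A\otimes^\psi B}$ and against the two braidings $\psi_{A,A},\psi_{B,B}$ produced by the two $\psi$-opposites on the target, keeping all crossings straight.

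The statement for $\psi_{A,B}$ then follows formally, so I would not repeat the computation. Applying the first statement with $A,B$ replaced by $A^{\psi^{-1}},B^{\psi^{-1}}$ (which have the same underlying objects, hence the same braidings) and using $(C^{\psi^{-1}})^\psi=C$ for every algebra $C$ gives an algebra isomorphism $\psi^{-1}_{A,B}\colon(A^{\psi^{-1}}\otimes^\psi B^{\psi^{-1}})^\psi\isomorph B\otimes^\psi A$. Now $(-)^{\psi^{-1}}$ is a functor from algebras to algebras in $\cC$ that is an isomorphism of categories with inverse $(-)^\psi$ (it sends algebra morphisms to algebra morphisms by naturality of $\psi^{-1}$), so applying it and using $((-)^\psi)^{\psi^{-1}}=\id$ yields $\psi^{-1}_{A,B}\colon A^{\psi^{-1}}\otimes^\psi B^{\psi^{-1}}\isomorph(B\otimes^\psi A)^{\psi^{-1}}$; taking the inverse isomorphism and interchanging the roles of $A$ and $B$ gives exactly $\psi_{A,B}\colon(A\otimes^\psi B)^{\psi^{-1}}\isomorph B^{\psi^{-1}}\otimes^\psi A^{\psi^{-1}}$. (Alternatively, the second map can be checked by a direct string-diagram computation entirely analogous to the previous paragraph.)
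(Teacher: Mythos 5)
Your proposal is correct. Note that the paper itself offers no proof of this lemma — it states only that the claims ``are not difficult to check'' and omits the verification — so there is no argument of record to compare against; your outline supplies exactly the standard check one would write down. The unit condition is handled correctly, and the multiplicativity of $\psi^{-1}_{A,B}$ does reduce, as you say, to matching two braidings of the four strands $A,B,A,B$ in front of $m_B\otimes m_A$, which follows from naturality of the braiding and the hexagon axioms (i.e.\ coherence for braided categories); the only real work is the bookkeeping of crossings you flag. Your formal deduction of the second isomorphism from the first is also sound and is a nice touch: it uses only that $(-)^{\psi^{-1}}$ and $(-)^\psi$ are mutually inverse identity-on-objects functors on algebras (algebra morphisms are preserved by naturality of the braiding), together with $(C^{\psi^{-1}})^\psi=C$, and the inverse of $\psi^{-1}_{A,B}=(\psi_{B,A})^{-1}$ is $\psi_{B,A}$, which after interchanging $A$ and $B$ gives precisely the stated map $\psi_{A,B}$. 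This saves repeating the braid computation and makes transparent why the two statements are equivalent rather than independent.
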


\begin{lemma}\label{lem:tensor-ops2}
    Let $A,B$ be algebras in $\cC$ then 
$\psi^{-1}_{A,B}\colon A\otimes^\psi B \to B\otimes^{\psi^{-1}} A$
    defines an isomorphism of algebras in $\cC$.
\end{lemma}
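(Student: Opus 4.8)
The plan is to verify directly that the map $\psi^{-1}_{A,B}\colon A\otimes B\to B\otimes A$ intertwines the product $m_{A\otimes^\psi B}$ with the product $m_{B\otimes^{\psi^{-1}}A}$, and separately that it sends the unit $\one_A\otimes\one_B$ to $\one_B\otimes\one_A$; it is an isomorphism of objects already since $\psi$ is invertible, so it only remains to check it is an algebra map. The unit compatibility is immediate from naturality of the braiding applied to $\one\colon \one_\cC\to A$ and $\one\colon\one_\cC\to B$, together with the fact that $\psi_{\one,\one}=\id$.

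For multiplicativity, first I would write out both products explicitly. Recall $m_{A\otimes^\psi B}=(m_A\otimes m_B)(\id_A\otimes\psi_{B,A}\otimes\id_B)$ on $(A\otimes B)^{\otimes 2}$, while $m_{B\otimes^{\psi^{-1}}A}=(m_B\otimes m_A)(\id_B\otimes\psi^{-1}_{A,B}\otimes\id_A)$ on $(B\otimes A)^{\otimes 2}$. The claim is then the commutativity of the square
\[
\xymatrix@C=50pt{
(A\otimes B)\otimes(A\otimes B)\ar[r]^-{m_{A\otimes^\psi B}}\ar[d]_{\psi^{-1}_{A,B}\otimes\psi^{-1}_{A,B}} & A\otimes B\ar[d]^{\psi^{-1}_{A,B}}\\
(B\otimes A)\otimes(B\otimes A)\ar[r]^-{m_{B\otimes^{\psi^{-1}}A}} & B\otimes A.
}
\]
I would establish this by rewriting the top-right composite $\psi^{-1}_{A,B}\circ m_{A\otimes^\psi B}$ as $(m_B\otimes m_A)$ precomposed with a permutation of the four tensor factors $A,B,A,B\mapsto B,A,B,A$ built from braiding isomorphisms, using naturality of $\psi^{-1}$ to slide the products $m_A,m_B$ past the braidings, and do the same for the left-bottom composite $m_{B\otimes^{\psi^{-1}}A}\circ(\psi^{-1}_{A,B}\otimes\psi^{-1}_{A,B})$. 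Both reduce to $(m_B\otimes m_A)$ composed with the \emph{same} endomorphism of $B\otimes A\otimes B\otimes A$ obtained by braiding the middle two strands $A,B$ past each other appropriately; matching them up is where the hexagon axioms for $\psi$ enter. Concretely, after using naturality to move $m_A$ and $m_B$ to the outside, one is left comparing two ways of braiding a single strand past a pair of strands, which is exactly a hexagon identity (and its inverse).

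The routine but slightly delicate step — and the one I would expect to be the main obstacle — is the strand-by-strand bookkeeping: keeping track of which of the four tensor factors each braiding acts on, and ensuring the hexagon axioms are applied to the correct triples of objects with the correct orientations (braiding vs. inverse braiding). This is conceptually the same computation that shows $\Mor_\cC$ is monoidal and that $(A\otimes^\psi B)\otimes^\psi C\cong A\otimes^\psi(B\otimes^\psi C)$ compatibly with the braiding, and it is essentially the internal-algebra shadow of \Cref{lem:tensor-ops}; indeed one can alternatively deduce the statement from \Cref{lem:tensor-ops} by noting $B\otimes^{\psi^{-1}}A=(A^{\psi^{-1}}\otimes^\psi B^{\psi^{-1}})^{\dots}$ — but the cleanest presentation is the direct diagrammatic check, which is why the proof can reasonably be omitted in the text as the authors indicate.
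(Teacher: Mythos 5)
The paper gives no proof of this lemma (the text explicitly says the checks are omitted), so there is nothing to compare against; your direct verification is the intended routine check, and it does go through. Concretely, after using naturality of $\psi^{-1}$ to push $m_A\otimes m_B$ past $\psi^{-1}_{A,B}$, one expands $\psi^{-1}_{A\otimes A,\,B\otimes B}$ via the hexagons into four elementary inverse crossings; the first of these cancels the crossing $\psi_{B,A}$ occurring in $m_{A\otimes^\psi B}$, and the remaining three (two of which act on disjoint strands and hence commute) assemble into exactly $(\id_B\otimes\psi^{-1}_{A,B}\otimes\id_A)\circ(\psi^{-1}_{A,B}\otimes\psi^{-1}_{A,B})$, which is the other side of your square. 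The only slip is cosmetic: the common morphism to which both composites reduce is a map $A\otimes B\otimes A\otimes B\to B\otimes B\otimes A\otimes A$, not an endomorphism of $B\otimes A\otimes B\otimes A$.
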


\subsection{Relative Deligne products}
\label{sec:MorC-C-mod-monoidal}

We will now describe the relative Deligne product using internal algebras in a finite braided tensor category $\cC$. 
 For algebras $A,B$ in $\cC$, consider the right $\cC$-module $\cN=\lmodint{A}{\cC}$ and the left $\cC$-module $\cM=\rmodint{\cC}{B}$.
In the following, we will identify the relative Deligne product $\cN\boxtimes_\cC\cM$ of \Cref{def:rel-Del} with the  realization
\begin{equation}\label{eq:rel-Del-realization}
\lmodint{A}{\cC}\boxtimes_\cC\rmodint{\cC}{B}=\bimodint{A}{\cC}{B}
\end{equation}
provided in \cite{DSS1}*{Thm.\,3.3}.
The canonical balanced functor is given explicitly by 
\begin{equation}\label{eq:balancing-AB-bimod}
P_{\cN,\cM}\colon \lmodint{A}{\cC}\times \rmodint{\cC}{B}\to \bimodint{A}{\cC}{B}, \quad (V,W) \mapsto V\otimes W.
\end{equation}
The left $A$-action and right $B$-action are given by 
$$a^l_{V\otimes W}=a^l_{V}\otimes W, \qquad a^r_{V\otimes W}=V\otimes a^r_{W},$$
where $a^l_V$ and $a^r_W$ denote the left $A$-action on $V$ and the right $B$-action on $W$, respectively. The $\cC$-balancing is given by identities.

Under the identification $\lmodint{A}{\cC}=\bimodint{A}{\cC}{\one}$, we have a left $\cC$-module structure on $\lmodint{A}{\cC}$ from \eqref{eq:bimod1}--\eqref{eq:left-A-bimodule}. The left $\cC$-action is given by
\begin{equation}\label{eq:left-A-action}
X\lact (V,a^l_V)=(X\otimes V,a^l_{X\otimes V}), \quad \text{where}\quad  a^l_{X\otimes V}=(X\otimes a^l_V)\circ (\psi^{-1}_{A,X}\otimes V)\colon A\otimes X\otimes V\to X\otimes V,
\end{equation}
with the trivial left $\cC$-module coherence.
This way, $\lmodint{A}{\cC}$ is a $\cC$-bimodule, with the natural right $\cC$-action and trivial bimodule coherence. Similarly, $\rmodint{\cC}{B}=\bimodint{\one}{\cC}{B}$ becomes a $\cC$-bimodule.

\begin{proposition}\label{rem:rel-Del-bimodule-realization}
Let $A,B$ be algebras in $\cC$. Then, as a left $\cC$-module, the relative Deligne product $\lmodint{A}{\cC}\boxtimes_\cC\rmodint{\cC}{B}$  of $\cC$-bimodules from \Cref{prop:rel-Del-bimod} is equivalent to $\bimodint{A}{\cC}{B}$, with the left $\cC$-action defined by \eqref{eq:bimod1}--\eqref{eq:left-A-bimodule}.
\end{proposition}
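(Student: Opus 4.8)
The plan is to combine the two identifications that have already been set up in this section: the realization of the relative Deligne product of \emph{plain} one-sided module categories as a bimodule category, \eqref{eq:rel-Del-realization}, with the bimodule refinement of the relative Deligne product of \Cref{prop:rel-Del-bimod}. Concretely, $\lmodint{A}{\cC}$ carries a $\cC$-bimodule structure by regarding it as $\bimodint{A}{\cC}{\one}$, using the left $\cC$-action from \eqref{eq:bimod1}--\eqref{eq:left-A-bimodule}, i.e.\ \eqref{eq:left-A-action}, together with the tautological right $\cC$-action and trivial bimodule coherence; dually $\rmodint{\cC}{B}=\bimodint{\one}{\cC}{B}$ is a $\cC$-bimodule with tautological left action and the braided-twisted right action. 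First I would observe that, as \emph{right} (resp.\ \emph{left}) $\cC$-modules, these bimodule structures restrict to precisely the ones used in \eqref{eq:rel-Del-realization}, so that the balanced functor $P_{\cN,\cM}$ of \eqref{eq:balancing-AB-bimod}, $(V,W)\mapsto V\otimes W$ with $a^l_{V\otimes W}=a^l_V\otimes W$ and $a^r_{V\otimes W}=V\otimes a^r_W$, is unchanged.

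The core of the argument is then to check that this same functor $P_{\cN,\cM}$ is in fact \emph{$\cC$-balanced as a functor of $\cC$-bimodule categories} in the sense underlying \Cref{prop:rel-Del-bimod}, and that the resulting coequalizer/relative-Deligne object, now equipped with the extra left $\cC$-action coming from the ambient $\cC$-bimodule structures, agrees with $\bimodint{A}{\cC}{B}$ equipped with the left $\cC$-action of \eqref{eq:bimod1}--\eqref{eq:left-A-bimodule}. Since the proof of \Cref{prop:rel-Del-bimod} cited from \cite{DSS2}*{Thm.\,2.5.5} does not use semisimplicity, the $\cC$-bimodule structure on $\cN\boxtimes_\cC\cM$ is inherited from the residual $\cC$-actions on the factors that are \emph{not} being coequalized: here the residual left $\cC$-action on $\cN=\lmodint{A}{\cC}$ is exactly \eqref{eq:left-A-action}. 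So on an object $V\otimes W$ of the image of $P_{\cN,\cM}$ the induced left $\cC$-action sends $X\lact (V\otimes W)$ to $(X\lact V)\otimes W$ with left $A$-action $a^l_{X\otimes V}\otimes W = \big((X\otimes a^l_V)(\psi^{-1}_{A,X}\otimes V)\big)\otimes W$, and this is visibly the restriction to such objects of the left $\cC$-action on $\bimodint{A}{\cC}{B}$ given by \eqref{eq:bimod1}--\eqref{eq:left-A-bimodule}. I would then invoke that such objects $V\otimes W$ generate $\bimodint{A}{\cC}{B}$ under cokernels (indeed the realization \eqref{eq:rel-Del-realization} is proved exactly by exhibiting $V\otimes_A V'$-type coequalizer presentations), together with right exactness of $\lact$ in both variables, to conclude that the two left $\cC$-module structures on $\bimodint{A}{\cC}{B}$ coincide, not just on generators but on all objects and morphisms; one must also check that the module coherences match, but both are trivial since $\cC$ is strict. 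Finally I would record that the equivalence produced this way is the one of \Cref{def:rel-Del-Cmod}, i.e.\ that $\lmodint{A}{\cC}\boxtimes_\cC\rmodint{\cC}{B}\simeq \bimodint{A}{\cC}{B}$ as left $\cC$-modules via $P_{\cN,\cM}$, by appealing to the uniqueness up to canonical equivalence in \Cref{rem:rel-Del-unique}.

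The main obstacle I anticipate is purely bookkeeping rather than conceptual: verifying that the left $\cC$-action \eqref{eq:left-A-action} on $\lmodint{A}{\cC}$, which is built using the \emph{inverse} braiding $\psi^{-1}_{A,X}$, is exactly the residual action picked out by the bimodule-refined relative Deligne product of \cite{DSS2}, and in particular that the braiding conventions of \Cref{def:*M}, \Cref{sec:MorC-monoidal}, and the bimodule coherence \eqref{eq:bimod-coherence} all align so that no stray braiding or its inverse is introduced. One subtle point worth spelling out is that in \Cref{prop:rel-Del-bimod} the $\cC$-bimodule structure on $\cN\boxtimes_\cC\cM$ uses the left $\cC$-action on $\cN$ \emph{and} the right $\cC$-action on $\cM$; here $\cN=\lmodint{A}{\cC}$ already has its left $\cC$-action twisted by $\psi^{-1}$ precisely because it was obtained as $\bimodint{A}{\cC}{\one}$ via \eqref{eq:left-A-bimodule}, so the twist we see on $\bimodint{A}{\cC}{B}$ is forced and consistent. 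Once these conventions are matched, the verification that the induced and the claimed left $A$-actions agree is the short diagram appearing already in the proof of \Cref{prop:fun-A-bimod-B}, and the rest follows formally.
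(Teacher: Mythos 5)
Your proposal is correct and follows the same overall strategy as the paper: start from the categorical realization \eqref{eq:rel-Del-realization} of the product as $\bimodint{A}{\cC}{B}$, and then identify the left $\cC$-action induced via \Cref{prop:rel-Del-bimod} (i.e.\ via the construction in \cite{DSS2}*{Thm.\,2.5.5}) with the action of \eqref{eq:bimod1}--\eqref{eq:left-A-bimodule}. The one place where your route genuinely differs is in how that identification is carried out. The paper identifies $\bimodint{A}{\cC}{B}$ with modules over the monad $T=(-)\ract B$ on $\cM=\lmodint{A}{\cC}$; since every object of $\bimodint{A}{\cC}{B}$ \emph{is} a $T$-module, the induced action $X\lact(M,a^T_M)=(X\lact M, X\lact a^T_M)$ is read off on all objects at once, and comparing it with \eqref{eq:induced-action-T} finishes the proof with no density argument. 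You instead compute the induced action only on objects $V\otimes W$ in the image of the universal balanced functor $P_{\cN,\cM}$ and then extend to all of $\bimodint{A}{\cC}{B}$ by a generation-under-cokernels argument together with right exactness of $\lact$; this works (every $A$-$B$-bimodule $M$ is presented by induced bimodules such as $A\otimes M\otimes B=(A\otimes M)\otimes B$, which are of the required form), but it is an extra step the monad description makes unnecessary. Your identification of the induced action as the ``residual'' left $\cC$-action on the factor $\cN=\lmodint{A}{\cC}$, i.e.\ \eqref{eq:left-A-action} with its $\psi^{-1}$-twist, is exactly the point the paper also makes, and your observation that all module coherences involved are trivial by strictness is likewise what the paper uses. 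In short: same inputs and same conclusion; the paper's monad formulation is slightly more economical, while your generator-based check is more hands-on but requires the additional (routine) density step to be spelled out.
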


\begin{proof}
The left $\cC$-action \eqref{eq:left-A-action} induces a left $\cC$-module structure on $\lmodint{A}{\cC}\boxtimes_\cC\rmodint{\cC}{B}$ by \Cref{prop:rel-Del-bimod}. A description of this left $\cC$-action was provided in the second paragraph of the proof of \cite{DSS2}*{Thm.\,2.5.5}. In our context of the bimodule $\cM=\lmodint{A}{\cC}$ described around \eqref{eq:left-A-action}, this left $\cC$-action is constructed as follows.
Consider the monad 
$T=(-)\ract B\colon \cM\to \cM$. We can identify $\bimodint{A}{\cC}{B}$ with the category $\lmodint{T}{\cM}$  of modules over the monad $T$. The monad $T$ is one of right $\cC$-module categories, with trivial $\cC$-linear structure because the bimodule coherence of $\cM$ is strict. If $\big(M,a^T_M\colon T(M)\to M\big)$ is an object of $\lmodint{T}{\cM}$, define 
$$X\lact \big(M,a^T_M\big)=\bigg(X\lact M, a^T_{X\lact M}:= \Big(T(X\lact M)=X\lact T(M)\xrightarrow{X\lact a^T_M} X\lact M\Big)\bigg),$$
with trivial $\cC$-module coherence. 
Under the identification $\bimodint{A}{\cC}{B}= \lmodint{T}{\cM}$, $(M,a^T_M)$ corresponds to a right $B$-module $\big(M,a^r_M\colon M\otimes B\to M\big)$ and the above left $\cC$-action becomes
\begin{equation}\label{eq:induced-action-T}
X\lact \big(M,a^r_M\big)=\bigg(X\lact M, a^r_{X\lact M}:= \Big((X\lact M)\otimes B=X\lact (M\otimes B)\xrightarrow{X\lact a^r_M} X\lact M\Big)\bigg),
\end{equation}
where the left $A$-module $X\lact M$ was defined in \eqref{eq:left-A-action}.
This left $\cC$-action coincides with the left $\cC$-module structure on $\bimodint{A}{\cC}{B}$ defined in \eqref{eq:bimod1}--\eqref{eq:left-A-bimodule}.
 We thus see that \eqref{eq:rel-Del-realization} realizes the relative Deligne product as a left $\cC$-module, not just as a category.  
\end{proof}

For the next preliminary lemma, recall the $2$-functors $\rS_{lr}, \rS_{rl}$ from \Cref{lem:rS}.

\begin{lemma}\label{lem:S-via-algebra}
Let  $A$ be an algebra in $\cC$.
\begin{enumerate}
    \item[(i)]     There is an equivalence of right $\cC$-modules $\Phi_r\colon \rS_{lr}(\rmodint{\cC}{A})\isomorph \lmodint{A^{\psi^{-1}}}{\cC}$.
    \item[(ii)] There is an equivalence of left $\cC$-modules $\Phi_l\colon \rS_{rl}(\lmodint{A}{\cC})\isomorph \rmodint{\cC}{A^{\psi}}$.
\end{enumerate}
\end{lemma}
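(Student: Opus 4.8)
The statement asserts two equivalences of module categories, which are mutually dual under reversing the braiding (replacing $\cC$ by $\cC^{\rev}$) and swapping left/right modules, so it suffices to prove one of them, say (i), and deduce (ii) formally. For (i), recall that $\rS_{lr}(\rmodint{\cC}{A})$ is the category of right $A$-modules in $\cC$, but now viewed as a \emph{right} $\cC$-module category via $M \ract X := X \lact M = X \otimes M$, with coherence built from the braiding $\psi$ as in \eqref{eq:right-coherence}. On the other side, $\lmodint{A^{\psi^{-1}}}{\cC}$ is the category of left $A^{\psi^{-1}}$-modules in $\cC$ with its natural right $\cC$-action $V \ract X = V \otimes X$. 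The underlying functor $\Phi_r$ should be the identity on underlying objects of $\cC$: a right $A$-module $(V, a^r_V \colon V \otimes A \to V)$ is sent to the left $A^{\psi^{-1}}$-module with action
$$
a^l_{\Phi_r(V)} \colon A \otimes V \xrightarrow{\psi^{-1}_{A,V}} V \otimes A \xrightarrow{a^r_V} V.
$$

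\textbf{Key steps.} First I would check that $\Phi_r(V)$ is a well-defined left $A^{\psi^{-1}}$-module: the associativity axiom for $a^l_{\Phi_r(V)}$ with respect to $m^{\psi^{-1}}_A = m_A \psi^{-1}_{A,A}$ reduces, via naturality of $\psi^{-1}$ and the hexagon axioms, to the associativity of $a^r_V$ over $m_A$; the unit axiom is immediate since $\psi^{-1}_{\one, V} = \id$. This assignment is clearly functorial and is an isomorphism on Hom-sets (a morphism of right $A$-modules is exactly a morphism of the associated left $A^{\psi^{-1}}$-modules, again by naturality of the braiding), hence $\Phi_r$ is an equivalence — in fact an isomorphism — of categories, with inverse given by the analogous formula using $\psi_{V,A}$. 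Second, and this is the substantive point, I would verify that $\Phi_r$ is a functor of right $\cC$-module categories: the right $\cC$-linear structure
$$
s^{\Phi_r}_{V,X} \colon \Phi_r(V \ract X) = \Phi_r(X \otimes V) \longrightarrow \Phi_r(V) \ract X = V \otimes X
$$
should be taken to be $\psi^{-1}_{X,V}$ (or possibly the identity, depending on how one matches up underlying objects — one must be careful here that $V \ract X = X \otimes V$ on the left side but $= V \otimes X$ on the right). One then checks the $\cC$-linearity coherence \eqref{eq:C-linear-coherence}, which unwinds to a diagram comparing the braiding-twisted right-module coherence \eqref{eq:right-coherence} on $\rS_{lr}(\rmodint{\cC}{A})$ with the trivial (associator-only) right $\cC$-module coherence on $\lmodint{A^{\psi^{-1}}}{\cC}$; commutativity follows from the hexagon axioms for $\psi$ together with naturality, exactly the same bookkeeping as in the proof of \Cref{lem:Hom-C-mod}. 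One also checks compatibility with the left $A$-action, i.e.\ that $s^{\Phi_r}_{V,X}$ intertwines the $A^{\psi^{-1}}$-actions on $\Phi_r(X\otimes V)$ and on $X \otimes V$ (where the latter carries the left $A^{\psi^{-1}}$-module structure coming from \eqref{eq:left-A-bimodule}, i.e.\ \eqref{eq:left-A-action}) — this is again a short diagram using naturality of $\psi^{-1}$ and the hexagon.

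\textbf{Deducing (ii) and main obstacle.} For (ii), one applies (i) with $\cC$ replaced by $\cC^{\rev}$: a left $A$-module in $\cC$ is a right $A^{\rev}$-module in $\cC^{\rev}$ where now the roles of $\psi$ and $\psi^{-1}$ are exchanged, and $\rS_{rl}$ over $\cC$ corresponds to $\rS_{lr}$ over $\cC^{\rev}$, so (i) yields $\rS_{rl}(\lmodint{A}{\cC}) \simeq \rmodint{\cC}{A^{\psi}}$ after translating the opposite-algebra operations via \Cref{lem:tensor-ops}. Alternatively one just writes down $\Phi_l$ directly by the mirror-image formula using $\psi_{V,A}$ and repeats the verification. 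The main obstacle is bookkeeping rather than conceptual: one must track carefully which occurrences of the braiding are $\psi$ versus $\psi^{-1}$, keep consistent the identification of underlying objects ($X \otimes V$ versus $V \otimes X$) on the two sides so that $s^{\Phi_r}$ is the correct braiding isomorphism, and confirm that all this is compatible with the conventions fixed in \Cref{def:Fun-action} and \eqref{eq:left-A-bimodule} — in particular the choice made in \Cref{rem:convention1}. Once the conventions are pinned down, every coherence diagram collapses to a hexagon axiom or naturality square.
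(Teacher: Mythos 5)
Your architecture is the same as the paper's: $\Phi_r$ acts as the identity on underlying objects and morphisms, converts the right $A$-action into a left $A^{\psi^{-1}}$-action via $a^l_V=a^r_V\circ\psi^{-1}_{A,V}$, carries a right $\cC$-linear structure given by a single braiding isomorphism $X\otimes V\to V\otimes X$ whose verification reduces to naturality and the hexagons, and part (ii) is then obtained by a formal duality (the paper uses $\otimes^{\oop}$-duality via \eqref{eq:left-coherence}; your direct mirror-image construction of $\Phi_l$ is equally fine, though your $\cC^{\rev}$ translation for (ii) is garbled, since $\cC^{\rev}$ has the same underlying tensor product and so does not exchange left and right modules).

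There is, however, one step that fails as written: the $\cC$-linear structure cannot be $\psi^{-1}_{X,V}$, and it certainly cannot be the identity (the source $X\otimes V$ and target $V\otimes X$ are different objects of $\cC$). The constraint that pins it down is that $s^{\Phi_r}_{V,X}$ must be a morphism in the \emph{target} category, i.e.\ it must intertwine the left $A^{\psi^{-1}}$-action $(\id_X\otimes a^r_V)\circ\psi^{-1}_{A,X\otimes V}$ on $\Phi_r(V\ract X)$ with the action $\bigl(a^r_V\psi^{-1}_{A,V}\bigr)\otimes\id_X$ on $\Phi_r(V)\ract X$ coming from \eqref{eq:left-A-action}. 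Pushing $a^r_V$ through $s$ by naturality and expanding via the hexagons, the intertwining condition for $s=\psi_{X,V}$ becomes exactly the braid relation on the three strands $A,X,V$, whereas for $s=\psi^{-1}_{X,V}$ it becomes an identity equivalent to $\psi^2=\id$; the same dichotomy occurs in the coherence check against the right-module coherence $r_{V,X,Y}=\psi_{Y,X}\otimes\id_V$ of \eqref{eq:right-coherence}. So the correct (and forced) choice is $s^{\Phi_r}_{V,X}=\psi_{X,V}$, which is what the paper takes and what is used downstream in \Cref{lem:B-via-algebra}. Since you explicitly flagged this as the convention-sensitive point, the repair is local, but with your stated choice the functor is not well defined as a right $\cC$-module functor for a non-symmetric braiding.
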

\begin{proof}
    We prove Part (i), while Part (ii) follows by $\otimes^\oop$-duality, using the module coherence involving the inverse braiding from \eqref{eq:left-coherence}. 
    
    We define a functor $\Phi_r\colon \rS_{lr}(\rmodint{\cC}{A})\to \lmodint{A^{\psi^{-1}}}{\cC}$ sending a right $A$-module $(V,a^r_V)$ to the left $A^{\psi^{-1}}$-module $(V,a^l_V)$, where
    $a^l_V:=a^r_V\psi^{-1}_{A,V}$. This clearly extends to an equivalence of categories, acting as the identity on morphisms.

    Next, we observe that 
    \begin{equation}\label{eq:Phir-C-linear}
    s^{\Phi_r}_{V,X}\colon \Phi_r(V\ract X)=X\otimes V\xrightarrow{\psi_{X,V}} V\otimes X=\Phi_r(V)\ract X
    \end{equation}
    is an isomorphism of left $A^{\psi^{-1}}$-modules. Recall the right $\cC$-module coherence from \eqref{eq:right-coherence}. For $\cM=\rmodint{\cC}{A}$, it specifies to 
    $$r_{V,X,Y}\colon (V\ract X)\ract Y=Y\otimes X\otimes V\xrightarrow{\psi_{Y,X}\otimes \id_V}  X\otimes Y\otimes V=V\ract (X\otimes Y).$$ 
    One checks that naturality of the braiding implies that $(\Phi_r,s^{\Phi_r})$ is a functor of right $\cC$-modules.  
\end{proof}

Recall the  $2$-functor $\rB$ from \eqref{eq:Slr-B} and \Cref{lem:S-bimod}.

\begin{lemma}\label{lem:B-via-algebra}
Let  $A$ be an algebra in $\cC$.
There is an equivalence of $\cC$-bimodules $\Phi_r\colon \rB(\rmodint{\cC}{A})\isomorph \lmodint{A^{\psi^{-1}}}{\cC}$, where the target is the $\cC$-bimodule with left $\cC$-action defined in \eqref{eq:left-A-action}.
\end{lemma}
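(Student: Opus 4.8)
The plan is to reuse the functor $\Phi_r$ constructed in \Cref{lem:S-via-algebra}(i) and merely check that the extra bimodule data on the two sides is compatible with it. Recall from \Cref{lem:S-bimod} that $\rB(\rmodint{\cC}{A})$ has the same underlying abelian category and the same left $\cC$-action as $\rmodint{\cC}{A}$, the right $\cC$-action of $\rS_{lr}(\rmodint{\cC}{A})$, and bimodule coherence given by \eqref{eq:bimod-coherence}; dually, as explained around \eqref{eq:left-A-action}, the target $\lmodint{A^{\psi^{-1}}}{\cC}$ carries the left $\cC$-action \eqref{eq:left-A-bimodule}, the natural right $\cC$-action, and trivial bimodule coherence. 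By \Cref{lem:S-via-algebra}(i) the functor $\Phi_r$ sending $(V,a^r_V)$ to the left $A^{\psi^{-1}}$-module $(V,a^r_V\psi^{-1}_{A,V})$ and acting as the identity on morphisms is already an equivalence of right $\cC$-modules, with right $\cC$-linear structure $s^{\Phi_r}_{V,X}=\psi_{X,V}$ of \eqref{eq:Phir-C-linear}.

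First I would check that $\Phi_r$ is a left $\cC$-module functor when equipped with the identity left $\cC$-linear structure $s^{\Phi_r,l}_{X,V}=\id_{X\otimes V}$. Applying $\Phi_r$ to $X\lact(V,a^r_V)=(X\otimes V,\id_X\otimes a^r_V)$ produces a left $A^{\psi^{-1}}$-action which, after cancelling the common morphism $\id_X\otimes a^r_V$, differs from the one on $X\lact\Phi_r(V,a^r_V)$ prescribed by \eqref{eq:left-A-bimodule} precisely by an instance of the hexagon axiom for the inverse braiding. The module-functor coherence for $s^{\Phi_r,l}$ is then automatic, since by \Cref{rem:strictification} all module associators in sight are identities.

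Next I would verify that the pair $(s^{\Phi_r,l},s^{\Phi_r})=(\id,\psi)$ intertwines the bimodule coherences, i.e.\ satisfies the compatibility diagram of a $\cC$-bimodule functor between the source coherence $b_{X,M,Y}=\psi_{Y,X}\otimes\id_V$ read off from \eqref{eq:bimod-coherence} and the trivial coherence on $\lmodint{A^{\psi^{-1}}}{\cC}$. Tracing this diagram, the only non-identity edges are $\Phi_r(b_{X,M,Y})=\psi_{Y,X}\otimes\id_V$ on one side and the composite of $s^{\Phi_r}_{X\lact M,Y}=\psi_{Y,X\otimes V}$ with $(X\lact s^{\Phi_r}_{M,Y})^{-1}=\id_X\otimes\psi^{-1}_{Y,V}$ on the other, so the identity to be shown is $\psi_{Y,X}\otimes\id_V=(\id_X\otimes\psi^{-1}_{Y,V})\circ\psi_{Y,X\otimes V}$, which is once more the hexagon axiom. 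As $\Phi_r$ is already essentially surjective and fully faithful as a functor of categories, this exhibits it as an equivalence of $\cC$-bimodules onto $\lmodint{A^{\psi^{-1}}}{\cC}$ with left $\cC$-action \eqref{eq:left-A-action}.

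I expect the main obstacle to be purely bookkeeping: one must carefully track which braiding, and in which orientation, appears in each of \eqref{eq:right-coherence}, \eqref{eq:bimod-coherence}, and \eqref{eq:left-A-bimodule}, so that every compatibility collapses to a single hexagon identity and the occurrences of $\psi$ coming from $\rS_{lr}$ match the $\psi^{-1}$ built into the opposite algebra on the target. No new conceptual ingredient beyond \Cref{lem:S-via-algebra}(i) is required.
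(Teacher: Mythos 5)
Your proposal is correct and follows essentially the same route as the paper: reuse $\Phi_r$ from \Cref{lem:S-via-algebra}(i), equip it with the identity left $\cC$-linear structure (well-defined on left $A^{\psi^{-1}}$-modules by the hexagon for $\psi^{-1}$), and reduce the bimodule coherence \eqref{eq:bimodule-functor-condition} to the right hexagon $\psi_{Y,X\otimes V}=(\id_X\otimes\psi_{Y,V})\circ(\psi_{Y,X}\otimes\id_V)$, which is exactly the computation in the paper's proof (your rearranged identity is this hexagon with one factor moved to the other side).
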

\begin{proof}
    We upgrade the equivalence $\Phi_r\colon\rS_{lr}(\rmodint{\cC}{A})\isomorph \lmodint{A^{\psi^{-1}}}{\cC}$ of right $\cC$-module categories from \Cref{lem:S-via-algebra}(i) to an equivalence of $\cC$-bimodules as stated. Recall the right $\cC$-linear structure of $\Phi_r$ defined in \eqref{eq:Phir-C-linear}.
The functor $\Phi_r$ also has a left $\cC$-linear structure
$$s^{\Phi_r}_{X,V}\colon \Phi_r(X\lact V)\isomorph X\lact \Phi_r(V)$$
consisting of identity components. Indeed, the identity trivially satisfies the coherence \eqref{eq:C-linear-coherence} and is a morphism of right $A^{\psi^{-1}}$-modules because of the identity 
$$a^l_{X\otimes V}=(X\otimes a^l_{V})\circ (\psi^{-1}_{A,X}\otimes V)=a^r_{X\otimes V}\psi^{-1}_{A,X\otimes Y}.$$

It remains to verify the coherence condition \eqref{eq:bimodule-functor-condition}. For a right $A$-module $V$ and $X,Y$ in $\cC$, this follows from
\begin{align*}
    (X\lact s^{\Phi_r}_{V,Y})\circ s^{\Phi_r}_{X,V\ract Y}\circ \Phi_r(b_{X,V,Y})&=(X\otimes \psi_{Y,V})\circ(\psi_{Y,X}\otimes V)\\
    &=\psi_{Y,X\otimes V}\\
    &=b_{X,\Phi_r(V),Y}\circ (s^{\Phi_r}_{X,V}\ract Y)\circ s^{\Phi_r}_{X\lact V,Y},
\end{align*}
where we use that the bimodule coherence of the target bimodule $\lmodint{A^{\psi^{-1}}}{\cC}$ as well as the left $\cC$-linear structure of $\Phi_r$ have identity components, and the right hexagon for the braiding.
\end{proof}

We recall the relative Deligne product of left $\cC$-module categories from \Cref{def:rel-Del-Cmod}.
\begin{proposition}\label{prop:rel-tensor-bimod}
For algebras $A,B$ in  $\cC$, there is an equivalence of left $\cC$-module categories
$$\rmodint{\cC}{A}\boxtimes_\cC\rmodint{\cC}{B}\simeq \bimodint{A^{\psi^{-1}}}{\cC}{B}.$$
\end{proposition}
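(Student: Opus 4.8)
The plan is to reduce the claim to the internal-bimodule realization of the relative Deligne product from \Cref{rem:rel-Del-bimodule-realization} together with the algebraic description of the $2$-functor $\rB$ of \eqref{eq:Slr-B} given in \Cref{lem:B-via-algebra}. By \Cref{def:rel-Del-Cmod}, the left $\cC$-module $\rmodint{\cC}{A}\boxtimes_\cC\rmodint{\cC}{B}$ is, by definition, the underlying left $\cC$-module of the relative Deligne product $\rB(\rmodint{\cC}{A})\boxtimes_\cC\rB(\rmodint{\cC}{B})$ of $\cC$-bimodules. So the first step is to rewrite the first tensor factor: \Cref{lem:B-via-algebra} provides an equivalence of $\cC$-bimodules $\Phi_r\colon \rB(\rmodint{\cC}{A})\isomorph \lmodint{A^{\psi^{-1}}}{\cC}$, where the target carries the left $\cC$-action of \eqref{eq:left-A-action} (with $A$ replaced everywhere by $A^{\psi^{-1}}$) and the natural right $\cC$-action.

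Next I would use that $\boxtimes_\cC$ is the monoidal product of the monoidal $2$-category $\bimod{\cC}$, hence a $2$-functor separately in each variable; in particular it sends equivalences to equivalences, and an equivalence in the left-hand slot which is one of $\cC$-bimodules induces an equivalence of the resulting left $\cC$-modules, since by \Cref{prop:rel-Del-bimod} the left $\cC$-action on a product $\cN\boxtimes_\cC\cM$ is inherited from $\cN$. Applying $(-)\boxtimes_\cC\rB(\rmodint{\cC}{B})$ to $\Phi_r$ thus yields an equivalence of left $\cC$-modules $\rB(\rmodint{\cC}{A})\boxtimes_\cC\rB(\rmodint{\cC}{B})\simeq\lmodint{A^{\psi^{-1}}}{\cC}\boxtimes_\cC\rB(\rmodint{\cC}{B})$. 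Moreover, as a left $\cC$-module the relative Deligne product of two $\cC$-bimodules depends on its right tensor factor only through that factor's left $\cC$-module structure (the right $\cC$-action on the factor only enters the right $\cC$-module structure of the product), and the left $\cC$-action on $\rB(\rmodint{\cC}{B})$ is just the standard one on $\rmodint{\cC}{B}$. Hence the right-hand side coincides, as a left $\cC$-module, with $\lmodint{A^{\psi^{-1}}}{\cC}\boxtimes_\cC\rmodint{\cC}{B}$.

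Finally I would apply \Cref{rem:rel-Del-bimodule-realization} with $A$ replaced by $A^{\psi^{-1}}$: this identifies $\lmodint{A^{\psi^{-1}}}{\cC}\boxtimes_\cC\rmodint{\cC}{B}$, as a left $\cC$-module, with $\bimodint{A^{\psi^{-1}}}{\cC}{B}$ equipped with the left $\cC$-action of \eqref{eq:bimod1}--\eqref{eq:left-A-bimodule}, which is the left $\cC$-module structure on categories of internal bimodules fixed in \Cref{sec:fun-bimodules}; composing the three equivalences gives the statement. The only genuinely delicate point — and the one I would treat most carefully — is the bookkeeping of the left $\cC$-module structures along this chain: one must check that the twisted left action \eqref{eq:left-A-action} output by \Cref{lem:B-via-algebra} on $\lmodint{A^{\psi^{-1}}}{\cC}$ is precisely the input required by \Cref{rem:rel-Del-bimodule-realization} (they agree verbatim after the substitution $A\mapsto A^{\psi^{-1}}$), and that transporting an equivalence along the left slot of $\boxtimes_\cC$ indeed carries the left $\cC$-action as stated. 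Both are immediate from conventions already fixed, so no new computation is needed. Alternatively, one could bypass this and argue directly that $(V,W)\mapsto V\otimes W$ is a universal right-exact $\cC$-balanced functor $\lmodint{A^{\psi^{-1}}}{\cC}\times\rmodint{\cC}{B}\to\bimodint{A^{\psi^{-1}}}{\cC}{B}$, in analogy with \eqref{eq:balancing-AB-bimod}, but the route through \Cref{lem:B-via-algebra,rem:rel-Del-bimodule-realization} is the most economical.
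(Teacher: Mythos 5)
Your proposal is correct and follows essentially the same route as the paper: the paper's proof likewise applies the bimodule equivalence $\Phi_r$ of \Cref{lem:B-via-algebra} in the left slot of $\boxtimes_\cC$ and then invokes the realization of $\lmodint{A^{\psi^{-1}}}{\cC}\boxtimes_\cC\rmodint{\cC}{B}$ as $\bimodint{A^{\psi^{-1}}}{\cC}{B}$ from \Cref{rem:rel-Del-bimodule-realization}. Your additional bookkeeping of the left $\cC$-module structures is sound but is exactly the content those two cited results already supply.
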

\begin{proof}
The equivalence of $\cC$-bimodules $\Phi_r$ from \Cref{lem:B-via-algebra} induces an equivalence of left $\cC$-modules 
$$\Phi_r\boxtimes_\cC \rmodint{\cC}{B}\colon \quad \rmodint{\cC}{A}\boxtimes_\cC\rmodint{\cC}{B}\isomorph \lmodint{A^{\psi^{-1}}}{\cC}\boxtimes_\cC\rmodint{\cC}{B}.$$
By \Cref{rem:rel-Del-bimodule-realization}, the target $\cC$-module is realized by $\bimodint{A^{\psi^{-1}}}{\cC}{B}$. This proves the claim.
\end{proof}

The following equivalence, describing the relative Deligne product by the tensor product of internal algebras from \Cref{def:tensor-algebras}, was observed in \cite{DN}*{Prop.\,3.4}.

\begin{proposition}\label{prop:rel-Del-modules}
    For algebras $A,B$ in $\cC$, there is an equivalence of left $\cC$-module categories 
    $$\rmodint{\cC}{A}\boxtimes_\cC \rmodint{\cC}{B}\simeq \rmodint{\cC}{A\otimes B}.$$
\end{proposition}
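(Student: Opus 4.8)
The plan is to reduce the statement to \Cref{prop:rel-tensor-bimod} together with a purely algebraic identification of a category of internal bimodules with a category of modules over a tensor-product algebra. By \Cref{prop:rel-tensor-bimod} we already have an equivalence of left $\cC$-module categories
\[
\rmodint{\cC}{A}\boxtimes_\cC\rmodint{\cC}{B}\simeq \bimodint{A^{\psi^{-1}}}{\cC}{B},
\]
so it remains to produce an equivalence of left $\cC$-modules $\bimodint{A^{\psi^{-1}}}{\cC}{B}\simeq \rmodint{\cC}{A\otimes B}$, where $A\otimes B=A\otimes^\psi B$ is the tensor-product algebra of \Cref{def:tensor-algebras}. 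Note that $(A^{\psi^{-1}})^\psi=A$ as algebras, since $m_A\circ\psi^{-1}_{A,A}\circ\psi_{A,A}=m_A$ and the units agree; this is precisely what makes the $\psi$-twisted tensor product $A\otimes^\psi B$ the natural target, and the equivalence below is an instance of the general braided identification $\bimodint{R}{\cC}{S}\simeq\rmodint{\cC}{R^\psi\otimes^\psi S}$ specialised to $R=A^{\psi^{-1}}$, $S=B$.

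First I would define the functor $\bimodint{A^{\psi^{-1}}}{\cC}{B}\to\rmodint{\cC}{A\otimes^\psi B}$ on objects by sending an $A^{\psi^{-1}}$-$B$-bimodule $(V,a^l_V,a^r_V)$ to the object $V$ equipped with the right $(A\otimes^\psi B)$-action
\[
a_V:=\Bigl(V\otimes A\otimes B\xrightarrow{\psi_{V,A}\otimes\id_B}A\otimes V\otimes B\xrightarrow{a^l_V\otimes\id_B}V\otimes B\xrightarrow{a^r_V}V\Bigr),
\]
and by the identity on morphisms. The main step is to verify that $a_V$ is a well-defined right $(A\otimes^\psi B)$-action: the unit axiom is immediate from the unit axioms of the bimodule, while the associativity axiom, after substituting the twisted product $m_{A\otimes^\psi B}=(m_A\otimes m_B)(\id_A\otimes\psi_{B,A}\otimes\id_B)$, reduces — using the hexagon identities for $\psi$, the bimodule compatibility of $a^l_V$ and $a^r_V$, and crucially the fact that the product of $A^{\psi^{-1}}$ is $m_A\circ\psi^{-1}_{A,A}$, whose inverse braiding cancels a $\psi_{A,A}$ produced by the hexagon — to the left-module and right-module associativity of $a^l_V$ and $a^r_V$. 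This is the braided analogue of the classical identification of $R$-$S$-bimodules with $(R\otimes S^{\mathrm{op}})$-modules; I expect this braiding bookkeeping to be the only real obstacle, but it is routine. An inverse functor sends a right $(A\otimes^\psi B)$-module $(W,a_W)$ to $W$ with right $B$-action obtained by restricting $a_W$ along the unit of the first tensor factor, and left $A^{\psi^{-1}}$-action obtained by restricting $a_W$ along the unit of the second tensor factor and converting the resulting right $A$-action into a left $A^{\psi^{-1}}$-action via $\psi^{-1}$ as in \Cref{lem:S-via-algebra}(i); a short calculation using the unit axioms and $\psi_{V,A}\circ\psi^{-1}_{A,V}=\id$ shows these constructions are mutually inverse and land in genuine bimodules, respectively modules.

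Finally I would make this functor one of left $\cC$-modules by equipping it with the identity $\cC$-linear structure $s_{X,V}=\id_{X\otimes V}$, for which the coherence \eqref{eq:C-linear-coherence} holds trivially by strictness of $\cC$. It then remains to check that applying the functor to $X\lact V$ yields $X\otimes V$ with the action $\id_X\otimes a_V$. The bimodule $X\lact V$ has left $A^{\psi^{-1}}$-action given by \eqref{eq:left-A-bimodule}, hence involving the inverse braiding $\psi^{-1}_{A,X}$, while the right $B$-action is $\id_X\otimes a^r_V$. Expanding $\psi_{X\otimes V,A}$ by the hexagon identity, the factor $\psi_{X,A}$ it produces cancels precisely against the $\psi^{-1}_{A,X}$ appearing in \eqref{eq:left-A-bimodule}, and what remains is exactly $\id_X\otimes a_V$ — the same cancellation mechanism as in the proof of \Cref{lem:B-via-algebra}, and this is exactly what the conventions of \Cref{def:Fun-action} and \Cref{rem:convention1} are arranged to make work. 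Composing this equivalence with the one from \Cref{prop:rel-tensor-bimod} then yields the asserted equivalence $\rmodint{\cC}{A}\boxtimes_\cC\rmodint{\cC}{B}\simeq\rmodint{\cC}{A\otimes B}$, which is the equivalence recorded in \cite{DN}*{Prop.\,3.4}.
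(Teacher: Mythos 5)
Your proposal is correct and follows essentially the same route as the paper's own proof: both reduce to the equivalence $\rmodint{\cC}{A}\boxtimes_\cC\rmodint{\cC}{B}\simeq \bimodint{A^{\psi^{-1}}}{\cC}{B}$ of \Cref{prop:rel-tensor-bimod} and then identify internal $R$-$S$-bimodules with right $R^\psi\otimes B$-modules by converting the left action into a right action via the braiding $\psi_{V,R}$, using $(A^{\psi^{-1}})^\psi=A$. The only difference is that you spell out the inverse functor and the braiding cancellations that the paper leaves as ``one checks''.
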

\begin{proof}
Consider the functor 
$$G\colon \bimodint{A}{\cC}{B}\to \rmodint{\cC}{A^\psi\otimes B}$$
which sends an $A$-$B$-bimodule $(M, a^l\colon A\otimes M\to M, a^r\colon M\otimes B \to M)$ to the $A^\psi\otimes B$-module $(M,a)$ with the right 
$A^\psi\otimes B$-action defined by
$$a:=\Big(\xymatrix{M\otimes A^\psi\otimes B\ar[rr]^-{\psi_{M, A^\psi}\otimes \id_B}&&A^\psi\otimes  M\otimes B\ar[r]^-{a^l\otimes \id}& M\otimes B\ar[r]^-{a^r}& M}\Big).$$
One checks that this indeed defines a right $A^\psi\otimes B$-module and extends to a functor acting as identities on morphisms. Recall the left $\cC$-module structure on $\bimodint{A}{\cC}{B}$ defined in \eqref{eq:bimod1}--\eqref{eq:left-A-bimodule}. The functor $G$ has the structure of a functor of left $\cC$-module categories with the identity coherence. 
By \Cref{prop:rel-tensor-bimod}, together with $(A^{\psi^{-1}})^\psi=A$, the claim follows. 
\end{proof}

\begin{proposition}\label{prop:Funrex-modules}
     For algebras $A,B$ in a braided tensor category $\cC$, there is an equivalence of left $\cC$-module categories 
    $$\Fun_\cC(\rmodint{\cC}{A},\rmodint{\cC}{B})\simeq \rmodint{\cC}{A^{\psi}\otimes B}.$$
\end{proposition}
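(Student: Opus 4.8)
The plan is to chain together two equivalences of left $\cC$-module categories that are already available. First I would invoke \Cref{prop:fun-A-bimod-B}, which provides an equivalence of left $\cC$-module categories $E\colon \Fun_\cC(\rmodint{\cC}{A},\rmodint{\cC}{B}) \isomorph \bimodint{A}{\cC}{B}$, where $\bimodint{A}{\cC}{B}$ is equipped with the left $\cC$-module structure of \eqref{eq:bimod1}--\eqref{eq:left-A-bimodule} (so $X\lact V = X\otimes V$ with right $B$-action $\id_X\otimes a^r_V$ and left $A$-action $(\id_X\otimes a^l_V)(\psi^{-1}_{A,X}\otimes\id_V)$).

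Next I would use the functor $G\colon \bimodint{A}{\cC}{B}\to \rmodint{\cC}{A^\psi\otimes B}$ constructed in the proof of \Cref{prop:rel-Del-modules}, which sends an $A$-$B$-bimodule $(M,a^l,a^r)$ to the right $A^\psi\otimes B$-module $(M,a)$ with $a = a^r\circ(a^l\otimes\id_B)\circ(\psi_{M,A^\psi}\otimes\id_B)$ and acts as the identity on morphisms. As recorded in that proof, $G$ is a functor of left $\cC$-module categories with identity coherence, for the very same left $\cC$-module structure \eqref{eq:bimod1}--\eqref{eq:left-A-bimodule} on its source; and it is an equivalence of categories, since an $A^\psi\otimes B$-module structure on an object $M$ is precisely a commuting pair consisting of a right $A^\psi$-module structure (equivalently, via the braiding, a left $A$-module structure) and a right $B$-module structure, with $A^\psi\otimes B$-module morphisms corresponding exactly to $A$-$B$-bimodule morphisms. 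Therefore $G\circ E$ is the required equivalence of left $\cC$-module categories. Alternatively, the second step can be obtained without unwinding that proof by combining \Cref{prop:rel-tensor-bimod}, applied with $A$ replaced by $A^\psi$ so that $(A^\psi)^{\psi^{-1}}=A$, which gives $\rmodint{\cC}{A^\psi}\boxtimes_\cC\rmodint{\cC}{B}\simeq \bimodint{A}{\cC}{B}$ as left $\cC$-modules, with \Cref{prop:rel-Del-modules}, which gives $\rmodint{\cC}{A^\psi}\boxtimes_\cC\rmodint{\cC}{B}\simeq\rmodint{\cC}{A^\psi\otimes B}$.

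Since every ingredient is in place, I do not expect a genuine obstacle here; the only thing requiring care is bookkeeping, namely to confirm that \Cref{prop:fun-A-bimod-B} and the proof of \Cref{prop:rel-Del-modules} both use the same left $\cC$-module structure on $\bimodint{A}{\cC}{B}$ — both point to \eqref{eq:bimod1}--\eqref{eq:left-A-bimodule} — so that the composite of $\cC$-module equivalences is again a $\cC$-module equivalence.
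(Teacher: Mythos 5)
Your proposal is correct and follows the paper's own proof essentially verbatim: the paper also obtains the equivalence by composing \Cref{prop:fun-A-bimod-B} with the $\cC$-linear equivalence $\bimodint{A}{\cC}{B}\simeq \rmodint{\cC}{A^\psi\otimes B}$ constructed via the functor $G$ in the proof of \Cref{prop:rel-Del-modules}. Your bookkeeping check that both results use the left $\cC$-module structure \eqref{eq:bimod1}--\eqref{eq:left-A-bimodule} on $\bimodint{A}{\cC}{B}$ is exactly the point that makes the composition legitimate.
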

\begin{proof}
    The result follows from \Cref{prop:fun-A-bimod-B} and the equivalence of left $\cC$-module categories $\bimodint{A}{\cC}{B}\simeq \rmodint{\cC}{A^\psi\otimes B}$ given in the proof of \Cref{prop:rel-Del-modules}.
\end{proof}

\begin{remark}
With \Cref{prop:rel-Del-modules}, one can extend the  biequivalence of \Cref{thm:morita} between $\lmod{\cC}$ and $\Mor_\cC$ to a monoidal biequivalence. Thus, we can use internal algebras as a tool to study the monoidal $2$-category $\lmod{\cC}$.
\end{remark}

The following result shows that the $\cC$-module structure on functor categories makes them internal Hom categories with respect to the relative Deligne product. 

\begin{proposition}\label{prop:int-hom-cat}
    Let $\cM,\cN,\cP$ be left $\cC$-module categories. Then there is an equivalence of $\cC$-module categories
    $$\Fun_\cC(\cM\boxtimes_\cC \cN, \cP)\simeq \Fun_\cC(\cN, \Fun_\cC(\cM,\cP)).$$
\end{proposition}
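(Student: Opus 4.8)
The plan is to reduce the statement, via the internal algebra realization of \Cref{thm:morita}, to an associativity statement for relative tensor products of bimodules, and then to invoke the universal property of $\boxtimes_\cC$ (\Cref{def:rel-Del}) to match this with the iterated functor category. First I would write $\cM \simeq \rmodint{\cC}{A}$, $\cN \simeq \rmodint{\cC}{B}$ and $\cP \simeq \rmodint{\cC}{C}$ for algebras $A,B,C$ in $\cC$. By \Cref{prop:rel-Del-modules}, $\cM \boxtimes_\cC \cN \simeq \rmodint{\cC}{A\otimes B}$ as left $\cC$-module categories, so by \Cref{prop:Funrex-modules} the left-hand side becomes
$$
\Fun_\cC(\cM\boxtimes_\cC \cN, \cP) \simeq \rmodint{\cC}{(A\otimes B)^\psi \otimes C}
$$
as left $\cC$-module categories. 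On the other hand, \Cref{prop:Funrex-modules} gives $\Fun_\cC(\cM,\cP) \simeq \rmodint{\cC}{A^\psi \otimes C}$, and applying it again yields
$$
\Fun_\cC(\cN,\Fun_\cC(\cM,\cP)) \simeq \rmodint{\cC}{B^\psi \otimes (A^\psi \otimes C)}.
$$
So the whole statement reduces to producing an equivalence of left $\cC$-module categories between $\rmodint{\cC}{(A\otimes B)^\psi \otimes C}$ and $\rmodint{\cC}{B^\psi \otimes A^\psi \otimes C}$, which by \Cref{thm:morita} (Morita invariance) will follow from an isomorphism of algebras in $\cC$
$$
(A\otimes^\psi B)^\psi \otimes^\psi C \;\cong\; B^\psi \otimes^\psi A^\psi \otimes^\psi C.
$$

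The key step is then a purely algebraic computation inside $\cC$. By \Cref{lem:tensor-ops}, the braiding $\psi^{-1}_{A,B}$ furnishes an isomorphism $(A\otimes^\psi B)^\psi \isomorph B^\psi \otimes^\psi A^\psi$; tensoring on the right with $C$ (and noting that $(-)\otimes^\psi C$ is functorial in the first slot, which one checks directly from the definition of $m_{A\otimes^\psi B}$) upgrades this to the displayed algebra isomorphism. I would verify that $\psi^{-1}_{A,B}\otimes \id_C$ intertwines the two products by expanding $m_{(A\otimes^\psi B)^\psi \otimes^\psi C}$ and $m_{B^\psi \otimes^\psi A^\psi \otimes^\psi C}$ using the hexagon axioms and naturality of $\psi$ — this is the routine diagram chase underlying \Cref{lem:tensor-ops} already, applied componentwise. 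One must also keep track that all intermediate equivalences (from \Cref{prop:rel-Del-modules}, \Cref{prop:Funrex-modules}, and \Cref{thm:morita}) are equivalences of \emph{left} $\cC$-module categories, not merely of categories; this is exactly what those cited propositions assert, so no extra work is needed beyond composing them.

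The main obstacle I anticipate is bookkeeping rather than conceptual: the double application of \Cref{prop:Funrex-modules} to the iterated functor category requires that the $\cC$-module structure on $\Fun_\cC(\cM,\cP)$ from \Cref{def:Fun-action} is the one used as input when forming $\Fun_\cC(\cN,\Fun_\cC(\cM,\cP))$, and that the identification $\Fun_\cC(\cM,\cP)\simeq \rmodint{\cC}{A^\psi\otimes C}$ is compatible with that module structure — this is guaranteed by \Cref{prop:fun-A-bimod-B} together with the proof of \Cref{prop:rel-Del-modules}, but it needs to be cited carefully. A secondary subtlety is the non-strictness of the relative tensor product $\otimes_A$ (see \Cref{def:MorC}) when one unwinds $\cM\boxtimes_\cC\cN$ through $\bimodint{A}{\cC}{B}$; this is harmless because \Cref{prop:rel-Del-modules} already absorbs it into the equivalence $\bimodint{A}{\cC}{B}\simeq \rmodint{\cC}{A^\psi\otimes B}$. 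I would therefore present the proof as a short chain of the cited equivalences followed by the one-line algebra isomorphism from \Cref{lem:tensor-ops}, leaving the hexagon verification implicit as in the proofs of \Cref{lem:tensor-ops} and \Cref{lem:tensor-ops2}.
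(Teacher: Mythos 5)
Your proposal is correct and follows essentially the same route as the paper's proof: realize all three module categories via internal algebras, reduce both sides to categories of modules over tensor product algebras using \Cref{prop:rel-Del-modules} and \Cref{prop:Funrex-modules}, and conclude via the algebra isomorphism $(A\otimes^\psi B)^\psi\cong B^\psi\otimes^\psi A^\psi$ from \Cref{lem:tensor-ops}. The paper organizes this as a single chain of equivalences passing through $\bimodint{B}{\cC}{A^{\psi}\otimes C}$ rather than computing both sides and meeting in the middle, but the content is identical.
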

\begin{proof}
We use \Cref{prop:rel-Del-modules} and \Cref{prop:Funrex-modules} to see that both relative Deligne products and categories of right exact functors are given by internal bimodules over tensor product algebras, with the (trivial) associator of $\cC$ giving the coherence isomorphisms for the left $\cC$-actions. Indeed, for $\cM=\rmodint{\cC}{A}$,  $\cM=\rmodint{\cC}{B}$, and $\cN=\rmodint{\cC}{C}$, we have equivalences of left $\cC$-module categories
\begin{align*}
    \Fun_\cC(\cM\boxtimes_\cC \cN, \cP)&\simeq \Fun_\cC(\rmodint{\cC}{A\otimes B},\rmodint{\cC}{C})\\
    &\simeq \rmodint{\cC}{(A\otimes B)^\psi\otimes C}\\
    & \simeq \rmodint{\cC}{B^\psi\otimes A^\psi\otimes C}\\
    &\simeq \bimodint{B}{\cC}{A^{\psi}\otimes C}\\
    &\simeq \Fun_\cC(\rmodint{\cC}{B},\rmodint{\cC}{A^{\psi}\otimes C})\\
    &\simeq \Fun_\cC(\cN,\Fun_\cC(\cM,\cP)).
\end{align*}
The third equivalence follows from the first algebra isomorphism of \Cref{lem:tensor-ops}. 
\end{proof}

\begin{example}\label{ex:FunCM}
    There is an equivalence of left $\cC$-module categories $\Fun_\cC(\cC,\cM)\simeq \cM$ given by evaluating $F\colon \cC\to\cM$ on the tensor unit $\one$. Conversely, to an object $M$ of $\cM$, we associate the $\cC$-module functor $F_M=X\lact (-)$, with coherence isomorphism 
    $$s^{F_M}(Y,X)=(l^\cM_{Y,X,M})^{-1}\colon F_M(Y\otimes X)=Y\otimes X\lact M\longrightarrow Y\lact F_M(X)=Y\lact (X\lact M).$$
\end{example}

\subsection{Dual module categories}\label{sec:duals}

In this section, for $\cC$ braided, we define dual $\cC$-module categories and show that relative tensor products with these duals are equivalent to categories of $\cC$-module functors equipped with the $\cC$-module structure from \Cref{def:Fun-action}

First, we combine the construction of turning left into right $\cC$-modules from \Cref{lem:rS} and the opposite module categories from \Cref{def:*M,def:M*} to define dual module categories.

\begin{definition}\label{def:dualsM}
    For a left $\cC$-module category $\cM$, we call $\cM^*:={}^\#\rS_{lr}(\cM)$ the \emph{left dual} $\cC$-module category and ${}^*\cM:=\rS_{rl}(\cM
    ^\#)$ the \emph{right dual}.
\end{definition}

We now describe the $\cC$-module categories $\cM^{\#}$ and ${}^{\#}\cN$ from \Cref{def:M*} by internal algebras following an analogous result for bimodule categories from \cite{DSS2}*{Cor.\,2.4.14}.

\begin{lemma}\label{lem:M-hash=A-mod}
Let $A$ be an algebra in $\cC$. 
\begin{enumerate}
    \item[(i)]
The right duality functor 
$$\rD_r\colon (\rmodint{\cC}{A})^\# \to \lmodint{A}{\cC}, \quad (V,a^r_V)\mapsto( {}^*V, a^l_{{}^*V}),$$
where the left $A$-action is given by
$$a_{{}^*V}^l:=  (\id_{{}^*V}\otimes \ev^r_V)(\id_{{}^*V}\otimes a_V^r \otimes \id_{{}^*V})(\coev_V^r\otimes \id_{A\otimes {}^*V}),$$
defines an equivalence of right $\cC$-module categories.
\item[(ii)]
The left duality functor
$$\rD_l\colon {}^\#(\lmodint{A}{\cC}) \to \rmodint{\cC}{A}, \quad (V,a^l_V)\mapsto (V^*, a_V^r),$$
where the left $A$-action is given by
$$a_V^r:=  (\ev^l_V\otimes \id_{V^*})(\id_{V^*}\otimes a_V^l \otimes \id_{V^*})(\id_{V^*\otimes A}\otimes \coev_V^l),$$
defines an equivalence of left $\cC$-module categories.
\end{enumerate}
\end{lemma}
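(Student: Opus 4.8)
The plan is to verify the two statements separately, but each divides into the same three tasks: (a) the functor $\rD_r$ (resp.\ $\rD_l$) is well defined on objects, i.e.\ the proposed formula for $a^l_{{}^*V}$ really is a left $A$-action; (b) it is an equivalence of the underlying categories; (c) it is $\cC$-linear, i.e.\ it carries the right $\cC$-module structure of $(\rmodint{\cC}{A})^\#$ to the standard right $\cC$-module structure of $\lmodint{A}{\cC}$ in a coherent way. I will only treat Part (i) in detail and then note that Part (ii) follows by $\otimes^{\oop}$-duality (replacing right duals by left duals and using the other zigzag identities), exactly as in the proofs of \Cref{lem:S-via-algebra} and \Cref{lem:B-via-algebra}.

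For task (a), I would check associativity and unitality of $a^l_{{}^*V}$ by a graphical (string-diagram) computation: the composite $a^l_{{}^*V}\circ(\id_A\otimes a^l_{{}^*V})$ unfolds into a diagram containing two copies of the $A$-action $a^r_V$ on $V$ and the evaluation/coevaluation of $V$, and after sliding the two $a^r_V$'s past the cup/cap using naturality one is reduced precisely to associativity of $a^r_V$ itself (and similarly for the unit, using the unit axiom of the $A$-module $V$ together with $(\id_{{}^*V}\otimes \ev_V^r)(\coev_V^r\otimes \id_{{}^*V})=\id_{{}^*V}$). This is the categorical shadow of the familiar fact that the linear dual of a right module over an algebra becomes a left module. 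For task (b), the duality functor $V\mapsto {}^*V$ is already an equivalence $\cC\to\cC^{\oop}$ (an anti-equivalence), with quasi-inverse $W\mapsto W^*$ or $W\mapsto {}^*W$ depending on conventions; I would observe that the $A$-module structures are transported compatibly under this quasi-inverse, so no new argument is needed beyond bookkeeping—this is essentially the content cited from \cite{DSS2}*{Cor.\,2.4.14}, adapted from the bimodule to the one-sided setting.

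Task (c) is where the real work lies, and I expect it to be the main obstacle. Recall that the right $\cC$-module structure on $(\rmodint{\cC}{A})^\#$ is $V\ract X = X\lact V = X\otimes V$ by \eqref{eq:act-right-M*}, while on $\lmodint{A}{\cC}$ the right action is $V\ract X = V\otimes X$. So I need a natural isomorphism $\rD_r(V\ract X)={}^*(X\otimes V)={}^*V\otimes{}^*X \isomorph \rD_r(V)\ract X = {}^*V\otimes X$\,—wait, this cannot be an isomorphism on the nose, so in fact the $\cC$-linear structure of $\rD_r$ must use the canonical isomorphism ${}^*(X\otimes V)\cong {}^*V\otimes {}^*X$ only after one has correctly matched right duals on the $(\rmodint{\cC}{A})^\#$ side; the point is that $\cM^\#$ uses left duals in its coherence \eqref{eq:act-left-M*} while $\rD_r$ uses right duals, and these are intertwined by the monoidal structure of the duality functor. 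Concretely, I would define
$$
s^{\rD_r}_{V,X}\colon \rD_r(V\ract X)={}^*(X\otimes V)\isomorph {}^*V\otimes X=\rD_r(V)\ract X
$$
via the standard natural isomorphism ${}^*(X\otimes V)\cong {}^*V\otimes {}^*X$ composed with $\id\otimes(\text{a chosen iso }{}^*X\cong X$ arising from the module coherence $r^{\#}$ of $\cM^\#$, which was itself built from $r^{-1}$ and left duals) — and then verify that this is (i) a morphism of left $A$-modules, which amounts to another string-diagram check that the $A$-action "on the inside" commutes past the duality isomorphism, and (ii) coherent, i.e.\ compatible with the right $\cC$-module associativity constraints $r_{V,X,Y}$ on both sides; this last compatibility reduces to the pentagon/zigzag coherences for the duality functor, i.e.\ to naturality and the defining equations of $\ev^r,\coev^r$. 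Once (c) is established, $\rD_r$ is an equivalence of right $\cC$-module categories, proving Part (i). For Part (ii), one runs the mirror argument: the left duality functor $V\mapsto V^*$ together with the action $a^r_V$ as stated gives, by the same three tasks with "right" and "left" swapped throughout and the other pair of zigzag identities, an equivalence of left $\cC$-module categories ${}^\#(\lmodint{A}{\cC})\isomorph\rmodint{\cC}{A}$.
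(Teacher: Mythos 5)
Your tasks (a) and (b) are fine and essentially match what the paper does (the paper in fact dispenses with (a) silently and justifies (b) by rigidity of $\cC$), and your reduction of Part (ii) to a mirror of Part (i) in $\cCop$ is also the paper's route. The genuine problem is in task (c), and it originates in a misreading of the module structure on the source category. By \eqref{eq:act-right-M*} in \Cref{def:M*}, the right $\cC$-action on $(\rmodint{\cC}{A})^\#$ is $V\ract X = X^*\lact V = X^*\otimes V$, using the \emph{left} dual of $X$ — not $V\ract X = X\lact V = X\otimes V$ as you wrote. Working from your incorrect action you rightly notice that ${}^*(X\otimes V)={}^*V\otimes{}^*X$ cannot be identified with ${}^*V\otimes X$ on the nose, and you then try to repair this with ``a chosen iso ${}^*X\cong X$ arising from the module coherence $r^\#$''. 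No such natural isomorphism exists in a general finite tensor category: $X$ and ${}^*X$ need not even be isomorphic in the absence of a pivotal-type structure, and the coherence $r^\#$ (which is built from $r^{-1}$ and duals, and is a family of morphisms in the module category, not in $\cC$) supplies nothing of the sort. So the patch fails and your $\cC$-linearity argument collapses as stated.

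With the correct action the difficulty evaporates, and this is exactly the paper's short argument: $\rD_r(V\ract X)={}^*(X^*\otimes V)={}^*V\otimes{}^*(X^*)$, so the required component of the $\cC$-linear structure is $\id_{{}^*V}\otimes\eta_X$ for the canonical \emph{monoidal} natural isomorphism $\eta_X\colon{}^*(X^*)\isomorph X$ of \Cref{sec:duality}, which exists in any rigid category; the coherence \eqref{eq:C-linear-coherence} then follows from monoidality and naturality of $\eta$. Everything else in your plan — the string-diagram verification that $a^l_{{}^*V}$ is an associative unital action, the equivalence of underlying categories by rigidity, and Part (ii) by the mirror argument — is sound once the action on $(\rmodint{\cC}{A})^\#$ is corrected.
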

\begin{proof}
We first prove Part (i). The functor $\rD_r$ is an equivalence by rigidity of $\cC$.
Recall that the right $\cC$-action on $\rmodint{\cC}{A}^\#$ is given by 
$$V\ract X:=X^* \otimes V.$$
Using the monoidal isomorphism $\eta_X\colon {}^*(X^*)\isomorph X$ we see that the isomorphisms $$\rD_r(V\ract X)= {}^*(X^*\otimes V)={}^*V\otimes {}^*(X^*)\xrightarrow{\id\otimes \eta_X} {}^*V\otimes X=\rD_r(V)\ract X,$$
satisfy the coherences \eqref{eq:C-linear-coherence} for $\rD_r$ to be a $\cC$-linear functor. Part (ii) follows by applying Part (i) with $\cCop$ instead of $\cC$.
\end{proof}

We will now give descriptions of the duals $\cM^*$ and ${}^*\cM$ of \Cref{def:dualsM} via modules over internal algebras. 

\begin{proposition}\label{prop:*M*-algebras}
Let $A$ be an algebra in a braided tensor category $\cC$. There are equivalences of left $\cC$-module categories 
\begin{itemize}
    \item[(i)]
    ${}^*\rD\colon {}^*(\rmodint{\cC}{A})\isomorph \rmodint{\cC}{A^{\psi}},\quad (V,a^r_V)\mapsto ({}^*V, a^r_{{}^*V}),$
    where the right $A^\psi$-action is defined by 
    $$a_{{}^*V}^r:=  (\id_{{}^*V}\otimes \ev^r_V)(\id_{{}^*V}\otimes a_V^r \otimes \id_{{}^*V})(\coev_V^r\otimes \id_{A\otimes {}^*V})\psi_{{}^*V,A},$$
    \item[(ii)]
        $\rD^*\colon (\rmodint{\cC}{A})^*\isomorph \rmodint{\cC}{A^{\psi^{-1}}},\quad (V,a^r_V)\mapsto (V^*, a^r_{V^*}),$
    where the right $A^{\psi^{-1}}$-action is defined by 
    $$a^r_{V^*}:=(\ev^l_V\otimes \id_{V^*})(\id_{V^*}\otimes a^r_V\otimes \id_{V^*})(\id_{V^*\otimes V}\otimes \psi_{V^*,A})(\id_{V^*}\otimes \coev^l_V\otimes \id_A).$$
    \end{itemize}
\end{proposition}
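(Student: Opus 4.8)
The plan is to obtain both equivalences as composites of functors already constructed --- the duality equivalences $\rD_r,\rD_l$ of \Cref{lem:M-hash=A-mod} and the reinterpretation equivalences $\Phi_r,\Phi_l$ of \Cref{lem:S-via-algebra} --- after unwinding $\cM^*$ and ${}^*\cM$ via \Cref{def:dualsM}, and then to read off the resulting internal module structure. For (i): since ${}^*(\rmodint{\cC}{A})=\rS_{rl}\big((\rmodint{\cC}{A})^\#\big)$ by \Cref{def:dualsM}, I would first apply the $2$-functor $\rS_{rl}$ (turning right $\cC$-modules into left ones) to the equivalence of right $\cC$-modules $\rD_r\colon(\rmodint{\cC}{A})^\#\isomorph\lmodint{A}{\cC}$ of \Cref{lem:M-hash=A-mod}(i), and then compose with the equivalence of left $\cC$-modules $\Phi_l\colon\rS_{rl}(\lmodint{A}{\cC})\isomorph\rmodint{\cC}{A^\psi}$ of \Cref{lem:S-via-algebra}(ii); this composite defines ${}^*\rD$. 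Because $\rS_{rl}$ leaves the internal $A$-module structure unchanged and $\Phi_l$ reinterprets a left $A$-module $(W,a^l_W)$ as the right $A^\psi$-module $(W,a^l_W\circ\psi_{W,A})$, the composite sends $(V,a^r_V)$ to ${}^*V$ equipped with $a^l_{{}^*V}\circ\psi_{{}^*V,A}$, where $a^l_{{}^*V}$ is the left $A$-action produced by $\rD_r$ in \Cref{lem:M-hash=A-mod}(i). This is verbatim the formula in the statement, so (i) requires no further calculation.

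For (ii): by \Cref{def:dualsM} one has $(\rmodint{\cC}{A})^*={}^\#\rS_{lr}(\rmodint{\cC}{A})$, so I would compose the equivalence of right $\cC$-modules $\Phi_r\colon\rS_{lr}(\rmodint{\cC}{A})\isomorph\lmodint{A^{\psi^{-1}}}{\cC}$ of \Cref{lem:S-via-algebra}(i), the right-opposite construction ${}^\#(-)$ (which carries equivalences of right $\cC$-modules to equivalences of left $\cC$-modules), and the equivalence $\rD_l\colon{}^\#(\lmodint{A^{\psi^{-1}}}{\cC})\isomorph\rmodint{\cC}{A^{\psi^{-1}}}$ of \Cref{lem:M-hash=A-mod}(ii), applied with the algebra $A^{\psi^{-1}}$; this composite defines $\rD^*$, which sends $V$ to $V^*$. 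Tracing the internal module structure --- $\Phi_r$ turns $a^r_V$ into the left $A^{\psi^{-1}}$-action $a^r_V\psi^{-1}_{A,V}$, to which \Cref{lem:M-hash=A-mod}(ii) then associates a right $A^{\psi^{-1}}$-action on $V^*$ --- gives
\[
a^r_{V^*}=(\ev^l_V\otimes\id_{V^*})\,(\id_{V^*}\otimes a^r_V\otimes\id_{V^*})\,(\id_{V^*}\otimes\psi^{-1}_{A,V}\otimes\id_{V^*})\,(\id_{V^*\otimes A}\otimes\coev^l_V).
\]
To recognize this as the displayed formula, I would rewrite its two rightmost factors: using naturality of the braiding applied to $\coev^l_V\colon\one\to V\otimes V^*$, the hexagon identity for $\psi_{V\otimes V^*,A}$, and the identity $\psi^{-1}_{A,V}=(\psi_{V,A})^{-1}$, one obtains
\[
(\id_{V^*}\otimes\psi^{-1}_{A,V}\otimes\id_{V^*})\,(\id_{V^*\otimes A}\otimes\coev^l_V)=(\id_{V^*\otimes V}\otimes\psi_{V^*,A})\,(\id_{V^*}\otimes\coev^l_V\otimes\id_A),
\]
and substituting this identity into the previous line yields exactly the stated expression for $a^r_{V^*}$.

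The only genuine computation here is the braiding identity in (ii); everything else is a formal composition of equivalences established earlier. The point that needs care is keeping the conventions straight --- left versus right duals, and which braiding ($\psi$ or $\psi^{-1}$) enters $\rS_{lr}$ versus $\rS_{rl}$ and $\Phi_r$ versus $\Phi_l$ --- so that the braidings in the final formulas occupy exactly the claimed slots; concretely, passing to $A^\psi$-modules conjugates the action by $\psi$, whereas passing to $A^{\psi^{-1}}$-modules conjugates it by $\psi^{-1}$. I do not anticipate any conceptual obstacle.
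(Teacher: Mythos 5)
Your proposal is correct and follows essentially the same route as the paper: both equivalences are defined as the composites $\Phi_l\circ\rS_{rl}(\rD_r)$ and $\rD_l\circ{}^{\#}(\Phi_r)$ of the previously established equivalences, and the stated action formulas are read off from these. The only difference is that you explicitly verify the braiding identity needed to match the formula in (ii) (which follows from naturality of $\psi_{-,A}$ applied to $\coev^l_V$ together with the hexagon), a detail the paper leaves implicit.
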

\begin{proof}
We define ${}^*\rD$ to be the composed equivalence
$${}^*(\rmodint{\cC}{A})=\rS_{rl}(\rmodint{\cC}{A}^\#)\xrightarrow{~\rS_{rl}(\rD_r)~} \rS_{rl}(\lmodint{A}{\cC})\xrightarrow{~\Phi_l~}\rmodint{\cC}{A^{\psi}},$$
of the equivalences from \Cref{lem:S-via-algebra,lem:M-hash=A-mod}.
Then 
${}^*\rD(V,a^r_V)$ is the right $A^\psi$-module defined on ${}^*V$ with action given by the formula for $a_{{}^*V}^r$ in the statement of (i). Similarly, define ${}^*\rD$ to be the composition
$$(\rmodint{\cC}{A})^*={}^\#\rS_{lr}(\rmodint{\cC}{A})\xrightarrow{~{}^\#(\Phi_r)~} {}^\#(\lmodint{A^{{\psi}^{-1}}}{\cC}) \xrightarrow{~\rD_l~}\rmodint{\cC}{A^{\psi^{-1}}}.$$
Then $\rD^*(V,a^r_V)=(V^*,a^r_{V^*})$. 
\end{proof}

\begin{remark}\label{rem:convention2}
Recall from \Cref{rem:convention1} that we chose a convention on how to define the $\cC$-linear structure on $X\lact G$, for $G\in \Fun_\cC(\cM,\cN)$, that uses the braiding $\psi$. The alternative choice of the inverse braiding gives a $\cC$-module category $\Fun_\cC'(\cM,\cN)$. The analogue of \Cref{prop:Funrex-modules} with this convention gives an equivalence of $\cC$-module categories $$\Fun'_\cC(\cM,\cN)\simeq \rmodint{\cC}{A^{\psi^{-1}}\otimes^{\psi^{-1}}B}\simeq \rmodint{\cC}{B\otimes A^{\psi^{-1}}}\simeq \rmodint{\cC}{(A\otimes B^{\psi})^{\psi^{-1}}},$$
where the last equivalence comes from the isomorphism of algebras in \Cref{lem:tensor-ops2}.
This shows that $\Fun'_\cC(\cM,\cN)\simeq \Fun_\cC(\cN,\cM)^*$ are equivalent left $\cC$-modules.
\end{remark}

\begin{corollary}\label{cor:stars-inverses}
    There are equivalences of left $\cC$-modules ${}^*(\cM^*)\simeq \cM$ and $({}^*\cM)^*\simeq \cM$. 
\end{corollary}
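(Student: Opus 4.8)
The plan is to pass to the internal-algebra description and apply \Cref{prop:*M*-algebras} twice. First, using \Cref{thm:morita} I would fix an algebra $A$ in $\cC$ together with an equivalence of left $\cC$-module categories $\cM\simeq\rmodint{\cC}{A}$. Since the assignments $\cN\mapsto\cN^*$ and $\cN\mapsto{}^*\cN$ of \Cref{def:dualsM} are built from the $2$-functors $\rS_{lr},\rS_{rl}$ of \eqref{eq:Slr-B} and the opposite-module operations of \Cref{def:*M,def:M*}, they send equivalences of module categories to equivalences; hence it suffices to identify ${}^*\bigl((\rmodint{\cC}{A})^*\bigr)$ and $\bigl({}^*(\rmodint{\cC}{A})\bigr)^*$ with $\rmodint{\cC}{A}$ as left $\cC$-modules.

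Next I would chain the equivalences of \Cref{prop:*M*-algebras}. Part (ii) gives $(\rmodint{\cC}{A})^*\simeq\rmodint{\cC}{A^{\psi^{-1}}}$, and then part (i), applied to the algebra $A^{\psi^{-1}}$, gives ${}^*\bigl(\rmodint{\cC}{A^{\psi^{-1}}}\bigr)\simeq\rmodint{\cC}{(A^{\psi^{-1}})^\psi}$. Symmetrically, part (i) gives ${}^*(\rmodint{\cC}{A})\simeq\rmodint{\cC}{A^{\psi}}$ and then part (ii) gives $\bigl(\rmodint{\cC}{A^{\psi}}\bigr)^*\simeq\rmodint{\cC}{(A^{\psi})^{\psi^{-1}}}$. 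Composing, the corollary reduces to the algebra identities $(A^{\psi^{-1}})^\psi=A=(A^{\psi})^{\psi^{-1}}$.

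Finally I would check this identity directly from \Cref{def:opp-algebras}: in every case the underlying object is $A$, and the product of $(A^{\psi^{-1}})^\psi$ is $(m_A\circ\psi^{-1}_{A,A})\circ\psi_{A,A}=m_A$, while the product of $(A^{\psi})^{\psi^{-1}}$ is $(m_A\circ\psi_{A,A})\circ\psi^{-1}_{A,A}=m_A$, using that $\psi_{A,A}$ and $\psi^{-1}_{A,A}$ are mutually inverse endomorphisms of $A\otimes A$ together with the standing strictness assumption on $\cC$. I do not expect a real obstacle here; the one point requiring care is the bookkeeping of which dual carries the $\psi$-twist and which carries the $\psi^{-1}$-twist, so that the two opposites genuinely cancel rather than compound — but this is precisely the content that parts (i) and (ii) of \Cref{prop:*M*-algebras} are set up to track.
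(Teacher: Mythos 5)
Your proposal is correct and follows essentially the same route as the paper: realize $\cM\simeq\rmodint{\cC}{A}$, apply \Cref{prop:*M*-algebras} twice, and reduce to the algebra identities $(A^{\psi^{-1}})^\psi=A=(A^{\psi})^{\psi^{-1}}$. The only difference is that you spell out the intermediate equivalences and the verification of the identity, which the paper leaves implicit.
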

\begin{proof}
    We can choose an algebra $A$ such that $\cM\simeq \rmodint{\cC}{A}$. The claims now follow from \Cref{prop:*M*-algebras} using the equalities of algebras $(A^\psi)^{\psi^{-1}}=A=(A^{\psi^{-1}})^\psi.$
\end{proof}

Together with \Cref{lem:tensor-ops} and \Cref{prop:rel-Del-modules} we find that duals are compatible with relative Deligne products in the following way.

\begin{corollary}\label{cor:tensor-dual}
    There are equivalences of left $\cC$-module categories
    $$(\cM\boxtimes_\cC\cN)^*\simeq \cN^*\boxtimes_\cC\cM^* \qquad \text{and}\qquad {}^*(\cM\boxtimes_\cC\cN)\simeq {}^*\cN\boxtimes_\cC{}^*\cM.$$
\end{corollary}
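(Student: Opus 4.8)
The statement to prove is \Cref{cor:tensor-dual}, giving equivalences of left $\cC$-module categories
$$(\cM\boxtimes_\cC\cN)^*\simeq \cN^*\boxtimes_\cC\cM^* \qquad \text{and}\qquad {}^*(\cM\boxtimes_\cC\cN)\simeq {}^*\cN\boxtimes_\cC{}^*\cM.$$
The plan is to reduce everything to the internal-algebra descriptions already established. First I would pick algebras $A, B$ in $\cC$ with $\cM\simeq\rmodint{\cC}{A}$ and $\cN\simeq\rmodint{\cC}{B}$, which is possible since $\cC$ is a finite tensor category (using \cite{EGNO}*{Section~7.10} or \cite{DSS1}*{Thm.\,2.18}, as recalled before \Cref{rem:strictification}). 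Then I would translate each side of the first claimed equivalence into a category of modules over an explicit internal algebra and check the two algebras agree (or are isomorphic), which yields the desired equivalence of left $\cC$-module categories.

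**Key steps for the first equivalence.** By \Cref{prop:rel-Del-modules}, $\cM\boxtimes_\cC\cN\simeq\rmodint{\cC}{A\otimes^\psi B}$, so by \Cref{prop:*M*-algebras}(ii),
$$(\cM\boxtimes_\cC\cN)^*\simeq\rmodint{\cC}{(A\otimes^\psi B)^{\psi^{-1}}}.$$
On the other side, \Cref{prop:*M*-algebras}(ii) gives $\cM^*\simeq\rmodint{\cC}{A^{\psi^{-1}}}$ and $\cN^*\simeq\rmodint{\cC}{B^{\psi^{-1}}}$, so \Cref{prop:rel-Del-modules} yields
$$\cN^*\boxtimes_\cC\cM^*\simeq\rmodint{\cC}{B^{\psi^{-1}}\otimes^\psi A^{\psi^{-1}}}.$$
The two algebras $(A\otimes^\psi B)^{\psi^{-1}}$ and $B^{\psi^{-1}}\otimes^\psi A^{\psi^{-1}}$ are isomorphic in $\cC$ by the second isomorphism in \Cref{lem:tensor-ops} (the map $\psi_{A,B}$), and an isomorphism of algebras induces an equivalence of their module categories as left $\cC$-module categories. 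This proves the first equivalence.

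**Key steps for the second equivalence, and the main obstacle.** For ${}^*(-)$ the argument is parallel, using \Cref{prop:*M*-algebras}(i): ${}^*(\cM\boxtimes_\cC\cN)\simeq\rmodint{\cC}{(A\otimes^\psi B)^{\psi}}$, while ${}^*\cN\boxtimes_\cC{}^*\cM\simeq\rmodint{\cC}{B^{\psi}\otimes^\psi A^{\psi}}$ by combining \Cref{prop:*M*-algebras}(i) with \Cref{prop:rel-Del-modules}; these two algebras are isomorphic via the first isomorphism ($\psi^{-1}_{A,B}$) in \Cref{lem:tensor-ops}. Alternatively, one could deduce the second equivalence from the first by applying it to $\cC^{\rev}$ (equivalently, swapping $\psi\leftrightarrow\psi^{-1}$ in all constructions), or by applying ${}^*(-)$ to the first equivalence and using \Cref{cor:stars-inverses} together with the fact that ${}^*(-)$ and $(-)^*$ are mutually inverse. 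The only real subtlety — and the step I would be most careful about — is checking that the ordering of the tensor factors comes out correctly: $\boxtimes_\cC$ reverses the order (the left dual of a tensor product reverses factors, as the algebra-level statement makes manifest), so one must verify that the $\psi$-opposite and braided-tensor-product operations interact with the algebra isomorphisms of \Cref{lem:tensor-ops} exactly so that $(A\otimes^\psi B)^{\psi^{-1}}$ matches $B^{\psi^{-1}}\otimes^\psi A^{\psi^{-1}}$ on the nose up to the stated isomorphism; this is precisely the content of \Cref{lem:tensor-ops}, so no new computation is needed, but the bookkeeping of which braiding and which side deserves attention. This being a routine assembly of earlier results, the proof is short:

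\begin{proof}
Choose algebras $A,B$ in $\cC$ with $\cM\simeq\rmodint{\cC}{A}$ and $\cN\simeq\rmodint{\cC}{B}$. By \Cref{prop:rel-Del-modules} and \Cref{prop:*M*-algebras}(ii),
$$(\cM\boxtimes_\cC\cN)^*\simeq\rmodint{\cC}{A\otimes B}^*\simeq\rmodint{\cC}{(A\otimes^\psi B)^{\psi^{-1}}},$$
while by \Cref{prop:*M*-algebras}(ii) and \Cref{prop:rel-Del-modules},
$$\cN^*\boxtimes_\cC\cM^*\simeq\rmodint{\cC}{B^{\psi^{-1}}}\boxtimes_\cC\rmodint{\cC}{A^{\psi^{-1}}}\simeq\rmodint{\cC}{B^{\psi^{-1}}\otimes^\psi A^{\psi^{-1}}}.$$
The two algebras are isomorphic by the second isomorphism of \Cref{lem:tensor-ops}, and an isomorphism of algebras in $\cC$ induces an equivalence of the corresponding categories of right modules as left $\cC$-module categories. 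The proof of the equivalence ${}^*(\cM\boxtimes_\cC\cN)\simeq{}^*\cN\boxtimes_\cC{}^*\cM$ is analogous, using \Cref{prop:*M*-algebras}(i) and the first isomorphism of \Cref{lem:tensor-ops} instead.
\end{proof}
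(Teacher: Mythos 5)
Your proof is correct and follows essentially the same route as the paper, which derives the corollary directly from \Cref{lem:tensor-ops}, \Cref{prop:rel-Del-modules}, and \Cref{prop:*M*-algebras} via the internal-algebra realizations; you have simply written out the bookkeeping that the paper leaves implicit.
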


These results can be used to describe duals of functor categories.

\begin{corollary}\label{cor:functors-*M*}
There are equivalences of left $\cC$-module categories
$$\Fun_\cC(\cM,\cN)^*\simeq \Fun_\cC(\cN^{**},\cM) \qquad \text{and} \qquad {}^*\Fun_\cC(\cM,\cN)\simeq \Fun_\cC(\cN,{}^{**}\cM).$$
\end{corollary}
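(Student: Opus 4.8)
The plan is to prove both equivalences by passing to internal algebras, where all the operations in sight become tensor products of $\psi$-opposite algebras. Using \Cref{thm:morita}, I would fix algebras $A,B$ in $\cC$ with $\cM\simeq\rmodint{\cC}{A}$ and $\cN\simeq\rmodint{\cC}{B}$. Since the left $\cC$-action on a category of internal modules is given by the ambient tensor product of $\cC$ and is insensitive to the algebra, every isomorphism of internal algebras induces an equivalence of the associated left $\cC$-module categories; hence it suffices to match the algebras realizing the two sides.

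For the first equivalence I would compute the left-hand side using \Cref{prop:Funrex-modules}, giving $\Fun_\cC(\cM,\cN)\simeq\rmodint{\cC}{A^\psi\otimes B}$, and then \Cref{prop:*M*-algebras}(ii), giving $\Fun_\cC(\cM,\cN)^*\simeq\rmodint{\cC}{(A^\psi\otimes^\psi B)^{\psi^{-1}}}$. For the right-hand side, \Cref{prop:*M*-algebras}(ii) applied twice yields $\cN^{**}\simeq\rmodint{\cC}{(B^{\psi^{-1}})^{\psi^{-1}}}$, and then \Cref{prop:Funrex-modules} gives $\Fun_\cC(\cN^{**},\cM)\simeq\rmodint{\cC}{((B^{\psi^{-1}})^{\psi^{-1}})^{\psi}\otimes^\psi A}$. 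It then remains to identify the two algebras: the elementary identities $(C^\psi)^{\psi^{-1}}=C=(C^{\psi^{-1}})^\psi$ reduce the right-hand algebra to $B^{\psi^{-1}}\otimes^\psi A$, while the algebra isomorphism $\psi_{A^\psi,B}\colon (A^\psi\otimes^\psi B)^{\psi^{-1}}\xrightarrow{\sim} B^{\psi^{-1}}\otimes^\psi (A^\psi)^{\psi^{-1}}$ of \Cref{lem:tensor-ops} (applied with $A$ replaced by $A^\psi$) together with $(A^\psi)^{\psi^{-1}}=A$ reduces the left-hand algebra to $B^{\psi^{-1}}\otimes^\psi A$ as well.

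The second equivalence is handled identically, using \Cref{prop:*M*-algebras}(i) in place of (ii): the left-hand side becomes ${}^*\Fun_\cC(\cM,\cN)\simeq\rmodint{\cC}{(A^\psi\otimes^\psi B)^\psi}$, the right-hand side becomes $\Fun_\cC(\cN,{}^{**}\cM)\simeq\rmodint{\cC}{B^\psi\otimes^\psi (A^\psi)^\psi}$, and these are matched by the algebra isomorphism $\psi^{-1}_{A^\psi,B}\colon (A^\psi\otimes^\psi B)^\psi\xrightarrow{\sim} B^\psi\otimes^\psi (A^\psi)^\psi$ of \Cref{lem:tensor-ops}. I do not anticipate a genuine obstacle: the only delicate point is bookkeeping of which algebra carries a $\psi$- versus a $\psi^{-1}$-opposite product and selecting the correct variant of \Cref{prop:*M*-algebras} and \Cref{lem:tensor-ops} at each step, which is why I would phrase everything in terms of internal algebras rather than manipulate module categories directly.

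As a more conceptual alternative, one can avoid a choice of presentation: from \Cref{prop:int-hom-cat} with $\cP=\cC$, together with the identification ${}^*\cM\simeq\Fun_\cC(\cM,\cC)$ (itself a consequence of \Cref{prop:*M*-algebras} and \Cref{prop:Funrex-modules}), \Cref{cor:tensor-dual}, and \Cref{cor:stars-inverses}, one first derives the identity $\Fun_\cC(\cM,\cN)\simeq {}^*\cM\boxtimes_\cC\cN$ of left $\cC$-modules. Granting this, $\Fun_\cC(\cM,\cN)^*\simeq({}^*\cM\boxtimes_\cC\cN)^*\simeq\cN^*\boxtimes_\cC({}^*\cM)^*\simeq\cN^*\boxtimes_\cC\cM$ by \Cref{cor:tensor-dual} and \Cref{cor:stars-inverses}, while $\Fun_\cC(\cN^{**},\cM)\simeq {}^*(\cN^{**})\boxtimes_\cC\cM\simeq\cN^*\boxtimes_\cC\cM$ by the same two results, which gives the first equivalence; the second follows symmetrically by applying ${}^*(-)$ instead.
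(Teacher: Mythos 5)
Your main argument is correct and is essentially the paper's own proof: present $\cM$ and $\cN$ by internal algebras, compute both sides via \Cref{prop:Funrex-modules} and \Cref{prop:*M*-algebras}, and match the resulting algebras $B^{\psi^{-1}}\otimes A$ (respectively $B^{\psi}\otimes A^{\psi^2}$) using \Cref{lem:tensor-ops}; the paper only writes out the first chain and says the second is similar. Your alternative route via $\Fun_\cC(\cM,\cN)\simeq {}^*\cM\boxtimes_\cC\cN$, \Cref{cor:tensor-dual}, and \Cref{cor:stars-inverses} is also valid and non-circular, but it is a repackaging of the same ingredients rather than a genuinely different argument.
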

\begin{proof}
Take algebras $A$, $B$ in $\cC$ such that $\cM=\rmodint{\cC}{A}$ and $\cN=\rmodint{\cC}{B}$. We have equivalences of left $\cC$-module categories
\begin{align*}
    \Fun_\cC(\cM,\cN)^*\simeq (\rmodint{\cC}{A^\psi\otimes B})^* 
    \simeq \rmodint{\cC}{(A^\psi\otimes B)^{\psi^{-1}}}  
    \simeq \rmodint{\cC}{B^{\psi^{-1}}\otimes A},
\end{align*}
using \Cref{prop:*M*-algebras,prop:Funrex-modules}. This is equivalent to $\Fun_\cC(\cN^{**},\cM)$ by \Cref{lem:tensor-ops}. The other equivalence is proved similarly.
\end{proof}

\begin{remark}\label{rem:double-dual-equiv}
If $\cC$ is a ribbon category \cite{EGNO}*{Def.\,8.10.1} (for example, when $\cC$ is pivotal and symmetric), then  $\cM^{**}\simeq \cM$ as $\cC$-module categories. This follows using \Cref{prop:*M*-algebras} and observing that for an algebra $A$ in $\cC$, the ribbon twist $\theta_A$ gives an isomorphism of algebras $A\cong A^{\psi^{-2}}$.
Therefore, we can remove double stars in above \Cref{cor:functors-*M*}.
We expect that the monoidal $2$-category $\lmod{\cC}$ is pivotal provided that $\cC$ is a finite ribbon category, cf.\,\cite{BMS}*{Cor.\,4.9} and \cite{DR}*{Rem.\,2.2.8}.
\end{remark}

\begin{proposition}\label{prop:rel-tensor-as-hom}
    Let $\cC$ be a finite braided tensor category and $\cM,\cN$ left $\cC$-module categories. There are equivalences of $\cC$-module categories 
    \begin{enumerate}
        \item [(i)] $\cM\boxtimes_\cC\cN\simeq \Fun_\cC(\cM^*,\cN),$ 
        \item [(ii)] $\Fun_\cC(\cM,\cN)\simeq {}^*\cM\boxtimes_\cC \cN .$
    \end{enumerate}
\end{proposition}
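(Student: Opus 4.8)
The plan is to reduce both equivalences to the internal-algebra descriptions established earlier. Since $\cC$ is a finite tensor category, we may choose algebras $A,B$ in $\cC$ with $\cM\simeq\rmodint{\cC}{A}$ and $\cN\simeq\rmodint{\cC}{B}$ (see \Cref{sec:fun-bimodules}). Then every category appearing in the statement is a category of right modules over an algebra built from $A$ and $B$ by forming $\psi$-opposites and tensor products, and the claim follows by chaining the equivalences of left $\cC$-module categories from \Cref{prop:*M*-algebras}, \Cref{prop:Funrex-modules}, and \Cref{prop:rel-Del-modules}, together with the elementary identities $(A^{\psi^{-1}})^\psi=A=(A^\psi)^{\psi^{-1}}$ from \Cref{def:opp-algebras}.

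For (i), I would first invoke \Cref{prop:*M*-algebras}(ii) to get $\cM^*\simeq\rmodint{\cC}{A^{\psi^{-1}}}$, and then \Cref{prop:Funrex-modules} (applied with $A^{\psi^{-1}}$ in place of $A$) to obtain
$$\Fun_\cC(\cM^*,\cN)\simeq\Fun_\cC(\rmodint{\cC}{A^{\psi^{-1}}},\rmodint{\cC}{B})\simeq\rmodint{\cC}{(A^{\psi^{-1}})^\psi\otimes B}=\rmodint{\cC}{A\otimes B}.$$
On the other hand \Cref{prop:rel-Del-modules} gives $\cM\boxtimes_\cC\cN\simeq\rmodint{\cC}{A\otimes B}$, so composing these equivalences yields (i).

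For (ii), I would use \Cref{prop:*M*-algebras}(i) to get ${}^*\cM\simeq\rmodint{\cC}{A^\psi}$, whence by \Cref{prop:rel-Del-modules}
$${}^*\cM\boxtimes_\cC\cN\simeq\rmodint{\cC}{A^\psi}\boxtimes_\cC\rmodint{\cC}{B}\simeq\rmodint{\cC}{A^\psi\otimes B},$$
while \Cref{prop:Funrex-modules} gives precisely $\Fun_\cC(\cM,\cN)=\Fun_\cC(\rmodint{\cC}{A},\rmodint{\cC}{B})\simeq\rmodint{\cC}{A^\psi\otimes B}$; composing proves (ii). Alternatively, (ii) follows from (i) by substituting ${}^*\cM$ for $\cM$ and using $({}^*\cM)^*\simeq\cM$ from \Cref{cor:stars-inverses}.

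The argument is essentially bookkeeping of previously established equivalences, so I do not anticipate a genuine obstacle; the only point requiring care is to check that each cited equivalence is one of \emph{left $\cC$-module} categories and that the coherence data for the $\cC$-actions are the (trivial) associators of $\cC$ in each module-over-an-algebra realization, so that the equivalences compose as $\cC$-module equivalences. This is exactly what is recorded in the cited propositions, so the verification is routine.
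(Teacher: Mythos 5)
Your proposal is correct and follows essentially the same route as the paper: the paper proves (i) by citing \cite{DSS2}*{Cor.\,2.4.11} but explicitly notes the alternative of combining \Cref{prop:rel-Del-modules}, \Cref{prop:Funrex-modules}, and \Cref{prop:*M*-algebras}, which is exactly your chain of equivalences, and it derives (ii) from (i) via \Cref{cor:stars-inverses} just as in your closing remark. The bookkeeping with $(A^{\psi^{-1}})^\psi=A$ and the trivial module-associators is carried out correctly.
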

\begin{proof}
The equivalence (i) follows from \cite{DSS2}*{Cor.\,2.4.11}. Alternatively, the statement follows by combining \Cref{prop:rel-Del-modules,prop:Funrex-modules,prop:*M*-algebras}. The equivalence (ii) follows from (i) using that ${}^*(\cM^*)\simeq \cM$ by \Cref{cor:stars-inverses}.
 \end{proof}

\begin{example}\label{ex:dual-and-CM}
    Let $\cM$ be a left $\cC$-module category. As a special case of \Cref{prop:rel-tensor-as-hom} we obtain equivalences of left $\cC$-module categories 
    $\Fun_\cC(\cM,\cC)\simeq {}^*\cM$ and ${}^*\cM\boxtimes_\cC \cM\simeq \cC^*_\cM$.
\end{example}


\section{Exact module categories}\label{sec:exact-mod-cat}

In this section, we first recall the definition and some fundamental properties of exact module categories and exact algebras in \Cref{sec:exact-algebras}. The following \Crefrange{sec:rel-semisimple}{sec:exactness-centralizer} contain results concerning relative semisimplicity of a left $\cC$-module category $\cM$ as a necessary criterion for exactness of $\cM$, exactness of braided opposite algebras, and exactness of the centralizer $\cC^*_\cM$ as a $\cC$-module. In particular, we prove the lower row of implications in \Cref{fig:overview}.
 These results are either new or not readily available in the literature. Finally, in \Cref{sec:rel-Del-exact} we give examples of exact module categories whose  relative Deligne product is not an exact $\cC$-module, in both zero and non-zero characteristic, which motivates the main concept of this paper, \emph{fully exact} module categories, introduced in \Cref{sec:fully-exact}.

\subsection{Exact module categories and exact algebras}
\label{sec:exact-algebras}

\begin{definition}\label{def:exact-M}
A left $\cC$-module category $\cM$ is \emph{exact} if for any projective object $P$ in $\cC$ and any object $M$ in $\cM$, $P\lact M$ is projective in $\cM$.

We denote the full $2$-subcategory in $\lmod{\cC}$ of exact module categories by $\lemod{\cC}$. 
\end{definition}

\pagebreak

\begin{example}\label{ex:exact}
$~$
\begin{enumerate}[(i)]
    \item     If $\cC$ is a fusion category over any field, then a finite module category $\cM$ is exact if and only if it is semisimple. Indeed, $\one_\cC$ is a projective object and hence, for $\cM$ to be exact, for any object $M$ in $\cM$, $\one_\cC\lact M\cong M$ has to be projective.
    \item  The regular left $\cC$-module category $\cC$, where $\cC$ acts by the tensor product, is exact by \cite{EGNO}*{Ex.\,7.5.5}.
    \item Let $F\colon \cC\to \cD$ be an exact monoidal functor of finite tensor categories. Consider $\cD$ as a left $\cC$-module category as in \Cref{ex:tensor-act}. Then $\cD$ is an exact module category if and only if $F$ preserves projective objects. Exactness follows from rigidity of $\cD$ because projective objects form a tensor ideal, see \Cref{lem:tensor-cat-properties}. This holds, in particular, when $\cD$ is semisimple, e.g.\ $\cD=\Vect$ and $F$ is a fiber functor. Conversely, if $\cD$ is exact and $P$ a projective object in $\cC$, then $P\lact \one \cong F(P)$ has to be projective.   
\end{enumerate}

\end{example}

The following result from \cite{EGNO}*{Prop.\,7.6.9. and 7.9.7} characterizes exact module categories in terms of their $\cC$-module functors. Over a general ground field $\Bbbk$, see \cite{DSS2}*{Thm.\,2.3.5}.

\begin{proposition}\label{prop:exact-adjoints}
Let $\cC$ be a finite tensor category. If a left $\cC$-module $\cM$ is exact, then any additive $\cC$-linear functor $G\colon \cM \to \cN$, for $\cN$ a  left $\cC$-module category, is exact. Moreover, $\cM$ is exact if and only if every additive $\cC$-module functor $\cM\to \cM$ is exact.

In particular, if \emph{$\cM$ is exact}, any $\cC$-linear functor $G\colon \cM\to \cN$ has both a left and right adjoint.
\end{proposition}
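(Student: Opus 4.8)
The plan is to reduce everything to the known characterization of exactness via exactness of $\cC$-module endofunctors, which is the content of \cite{EGNO}*{Prop.\,7.6.9 and 7.9.7} (and \cite{DSS2}*{Thm.\,2.3.5} over a general field), so the only genuinely new assertion to argue is the final sentence about the existence of left and right adjoints. First I would recall that every finite left $\cC$-module category $\cM$ is equivalent to $\rmodint{\cC}{A}$ for an algebra $A$ in $\cC$, and that $\cN$ may be taken to be $\rmodint{\cC}{B}$; by \Cref{prop:fun-A-bimod-B} the additive $\cC$-module functor $G$ corresponds to an $A$-$B$-bimodule $V=E(G)$ with $G\simeq (-)\otimes_A V$. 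The first step is then to note that $G$ is automatically right exact: the relative tensor product $(-)\otimes_A V$ is a cokernel of a diagram built from right-exact functors ($\otimes$ in $\cC$ is exact in each variable, restriction along the $A$-action is exact), so it is right exact, and this does not require exactness of $\cM$.

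Next I would use exactness of $\cM$ to upgrade right exactness of $G$ to exactness. For this, I would invoke the cited result that exactness of $\cM$ forces every additive $\cC$-module endofunctor $\cM\to\cM$ to be exact; then, given an arbitrary $\cC$-module functor $G\colon\cM\to\cN$, I would produce from it a $\cC$-module endofunctor of $\cM$ whose exactness implies that of $G$. Concretely, compose $G$ with a right adjoint (which will be constructed abstractly, see below) or, more cleanly, use that $G$ being left exact is detectable after composing with the faithful exact forgetful-type functors; in the internal-algebra picture the key point is that $(-)\otimes_A V$ is exact as soon as $V$ is projective as a left $A$-module, and exactness of $\cM=\rmodint{\cC}{A}$ is precisely equivalent to the condition that every such $V$ coming from a $\cC$-module functor is left-$A$-projective (this is exactly the content of the EGNO/DSS proof). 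So the exactness of $G$ follows by directly quoting these references; I would present this as ``by \cite{EGNO}*{Prop.\,7.6.9}, respectively \cite{DSS2}*{Thm.\,2.3.5}'' rather than reproving it.

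For the adjoints, once $G$ is known to be exact (hence both right and left exact) and $\Bbbk$-linear between finite abelian categories, existence of a left and a right adjoint is formal: a right exact $\Bbbk$-linear functor between finite categories always has a right adjoint, and a left exact one always has a left adjoint, by the Eilenberg--Watts-type / adjoint functor theorem for finite categories (again recorded in \cite{DSS2}*{Thm.\,2.3.5}). Finally, I would observe that these adjoints are themselves $\cC$-module functors: the left/right adjoint of a $\cC$-module functor inherits a canonical $\cC$-module structure by mating the structure isomorphism $s^G$ against the (co)unit, using that each $X\lact(-)$ has a two-sided adjoint $X^*\lact(-)$ on exact module categories. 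The main obstacle I anticipate is purely expository rather than mathematical: making precise how one reduces exactness of a general $G\colon\cM\to\cN$ (between possibly non-equivalent module categories) to the endofunctor statement, since the cleanest route is to quote the internal-bimodule criterion for exactness directly and not to re-derive it; I would therefore lean on \cite{EGNO}*{Prop.\,7.6.9 and 7.9.7} and \cite{DSS2}*{Thm.\,2.3.5} for the hard analytic input and keep the written proof to the two-line reduction plus the formal adjoint-existence argument.
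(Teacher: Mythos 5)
Your proposal is correct and matches the paper's treatment: the paper gives no proof of \Cref{prop:exact-adjoints} at all, simply citing \cite{EGNO}*{Prop.\,7.6.9 and 7.9.7} and \cite{DSS2}*{Thm.\,2.3.5} for the exactness statements, exactly as you do, and the final clause about adjoints is the same formal consequence of exactness via the Eilenberg--Watts-type result recorded in the paper as \Cref{lem:exact-vs-adjoints}. Your extra remarks on the internal-bimodule picture are consistent with those references but are not needed beyond the citations.
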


We say that an algebra $A$ in $\cC$ is \emph{exact} if $\rmodint{\cC}{A}$ is an exact left $\cC$-module category. The terminology is justified by the following result (which does not require $\cC$ to be braided). 

\begin{lemma}\label{lem:A-left-right-exact} Let $\cC$ be a finite tensor category. 
    An algebra $A$ in $\cC$ is exact if and only if $\lmodint{A}{\cC}$ is exact as a right $\cC$-module category.
\end{lemma}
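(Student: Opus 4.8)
The plan is to reduce the claim, via the module-category duality of \Cref{lem:M-hash=A-mod}, to a left/right-symmetric \emph{injective} reformulation of exactness, and then to establish that reformulation by a short adjunction argument; in particular no braiding is needed. By \Cref{lem:M-hash=A-mod}(i) the right $\cC$-module category $\lmodint{A}{\cC}$ is equivalent to the left opposite $(\rmodint{\cC}{A})^{\#}$ of \Cref{def:M*}. Since exactness of a module category is preserved under equivalences of module categories, it suffices to show that a finite left $\cC$-module category $\cM$ is exact if and only if $\cM^{\#}$ is exact as a right $\cC$-module category. Now the underlying category of $\cM^{\#}$ is $\cM^{\oop}$ with right $\cC$-action $N\ract X = X^{*}\lact N$, so a projective object of $\cM^{\#}$ is precisely an injective object of $\cM$; and $(-)^{*}$ is an anti-autoequivalence of the abelian category $\cC$, hence sends projective objects to injective ones, so --- using that projective and injective objects coincide in the finite tensor category $\cC$ (\cite{EGNO}*{Prop.\,6.1.3}, see also \cite{DSS2}*{Thm.\,2.3.5}) --- the assignment $X\mapsto X^{*}$ permutes the isomorphism classes of projective objects of $\cC$. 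Consequently $\cM^{\#}$ is exact if and only if $P\lact N$ is injective in $\cM$ for every projective $P$ in $\cC$ and every $N$ in $\cM$.

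It remains to prove that a finite left $\cC$-module category $\cM$ is exact in the sense of \Cref{def:exact-M} (i.e.\ $P\lact N$ is projective for all projective $P$ and all $N$) if and only if $P\lact N$ is injective for all projective $P$ and all $N$. The inputs are that, over the rigid category $\cC$, the functor $X\lact(-)\colon\cM\to\cM$ is exact and sits in the standard adjunctions $({}^{*}X\lact(-))\dashv(X\lact(-))\dashv(X^{*}\lact(-))$, and that both ${}^{*}X$ and $X^{*}$ are projective whenever $X$ is (again by \cite{EGNO}*{Prop.\,6.1.3} together with $(-)^{*}$ being an anti-autoequivalence). Assume $\cM$ is exact and fix a projective $P$ in $\cC$, an object $N$ in $\cM$, and a short exact sequence $0\to Z_{1}\to Z_{2}\to Z_{3}\to 0$ in $\cM$. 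Then ${}^{*}P\lact(-)$ is exact and ${}^{*}P\lact Z_{3}$ is projective, so the epimorphism ${}^{*}P\lact Z_{2}\to{}^{*}P\lact Z_{3}$ splits; hence $\Hom_{\cM}({}^{*}P\lact Z_{2},N)\to\Hom_{\cM}({}^{*}P\lact Z_{1},N)$ is surjective, which under the adjunction $\Hom_{\cM}({}^{*}P\lact(-),N)\cong\Hom_{\cM}(-,P\lact N)$ says exactly that $\Hom_{\cM}(-,P\lact N)$ takes short exact sequences to short exact sequences, i.e.\ $P\lact N$ is injective. The converse is strictly dual: assuming $P\lact N$ injective for all projective $P$ and all $N$, the monomorphism $P^{*}\lact Z_{1}\to P^{*}\lact Z_{2}$ splits (as $P^{*}\lact Z_{1}$ is injective), so $\Hom_{\cM}(N,P^{*}\lact Z_{2})\to\Hom_{\cM}(N,P^{*}\lact Z_{3})$ is surjective, and the adjunction $\Hom_{\cM}(P\lact N,-)\cong\Hom_{\cM}(N,P^{*}\lact(-))$ then shows $P\lact N$ is projective, so $\cM$ is exact.

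The real content is this injective reformulation of exactness, and it rests on the fact that projective and injective objects coincide in a finite tensor category: that is exactly what makes $X\mapsto X^{*}$ permute the projectives and makes the adjoints ${}^{*}X\lact(-),\,X^{*}\lact(-)$ of the exact functor $X\lact(-)$ preserve projectivity. With that in hand, the splitting arguments above are routine and the reduction via \Cref{lem:M-hash=A-mod} is purely formal. One could equally avoid opposite module categories: $V\mapsto{}^{*}V$ is an anti-equivalence $\rmodint{\cC}{A}\to\lmodint{A}{\cC}$ under which $P\lact(-)$ corresponds to $(-)\ract{}^{*}P$, so the same two inputs yield the equivalence of the two exactness conditions directly.
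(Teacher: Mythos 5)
Your proof is correct and follows the paper's route: both reduce, via \Cref{lem:M-hash=A-mod}, to showing that a left $\cC$-module $\cM$ is exact if and only if $\cM^{\#}$ is exact as a right $\cC$-module, which (since $(-)^{*}$ permutes projectives of $\cC$ and projectives of $\cM^{\#}$ are injectives of $\cM$) amounts to the equivalence of the ``projective'' and ``injective'' formulations of exactness. The only difference is that where the paper cites \cite{EGNO}*{Cor.\,7.6.4, Prop.\,6.1.3} for this equivalence, you prove it directly by the adjunction-and-splitting argument; that argument is sound, the one cosmetic slip being that with the paper's duality conventions the adjoint triple is $(X^{*}\lact(-))\dashv(X\lact(-))\dashv({}^{*}X\lact(-))$ rather than the order you wrote, which is harmless since both duals of a projective object of $\cC$ are projective.
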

\begin{proof}
Recall the equivalence of right $\cC$-module categories
$$\lmodint{A}{\cC}\simeq (\rmodint{\cC}{A})^\#$$
from \Cref{lem:M-hash=A-mod}. We need that  
$\cM$ is exact as a left $\cC$-module if and only if $\cM^\#$ is exact as a right $\cC$-module category. But this follows from \cite{EGNO}*{Cor.\,7.6.4, Prop.\,6.1.3} because an object in an exact $\cC$-module category is injective if and only if it is  projective and duals of projective objects in $\cC$ are projective. 
\end{proof}

A direct consequence of this lemma is that passing to opposite module categories, as defined in \Cref{sec:opposite-mod-cats}, preserves exactness.

\begin{corollary}\label{lem:*-autoequiv2}
The assignments $\cM\mapsto \cM^\#$ and $\cM\mapsto {}^\#\cM$ are mutually inverse up to equivalence of $\cC$-module categories and  preserve exact module categories.
\end{corollary}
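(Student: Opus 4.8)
The plan is to leverage the two structural facts that have just been established: the equivalence of right $\cC$-module categories $\lmodint{A}{\cC}\simeq (\rmodint{\cC}{A})^\#$ from \Cref{lem:M-hash=A-mod}, together with the right-left symmetric versions of \Cref{def:*M,def:M*}, and the characterization of exactness of an algebra in terms of left and right modules given in \Cref{lem:A-left-right-exact}. First I would record that the two assignments $\cM\mapsto \cM^\#$ and $\cM\mapsto {}^\#\cM$ are mutually inverse up to equivalence. For this, fix a left $\cC$-module category $\cM$; it is no loss to take $\cM=\rmodint{\cC}{A}$ for an algebra $A$ in $\cC$ by \Cref{thm:morita}. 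Then \Cref{lem:M-hash=A-mod}(i) gives $\cM^\#=(\rmodint{\cC}{A})^\#\simeq \lmodint{A}{\cC}$ as right $\cC$-modules, and applying the analogous statement \Cref{lem:M-hash=A-mod}(ii) (the ${}^\#(-)$ version, with left and right swapped) to the right $\cC$-module $\lmodint{A}{\cC}$ returns ${}^\#(\cM^\#)\simeq {}^\#(\lmodint{A}{\cC})\simeq \rmodint{\cC}{A}=\cM$ as left $\cC$-modules. The opposite composition ${}^\#\cM$ then back to $({}^\#\cM)^\#$ is handled symmetrically by the same two equivalences read in the other order. One only needs to check that these equivalences are suitably natural/invertible, which is immediate since the duality functors $\rD_r,\rD_l$ act as the identity on morphisms up to the monoidal isomorphisms $\eta_X\colon {}^*(X^*)\isomorph X$ and $X\isomorph (X^*){}^*$.

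Next I would establish that these operations preserve exactness. Again write $\cM\simeq\rmodint{\cC}{A}$. By definition $\cM$ is exact precisely when the algebra $A$ is exact. By \Cref{lem:A-left-right-exact}, $A$ is exact if and only if $\lmodint{A}{\cC}$ is exact as a right $\cC$-module category. Since $\cM^\#\simeq \lmodint{A}{\cC}$ as right $\cC$-modules by \Cref{lem:M-hash=A-mod}(i), and equivalence of module categories preserves exactness (exactness only depends on whether $P\lact(-)$ lands in projectives, which is an equivalence-invariant statement), we conclude $\cM$ is exact $\iff$ $\cM^\#$ is exact. The same argument, now starting from the algebra presentation of a right $\cC$-module and using \Cref{lem:M-hash=A-mod}(ii), shows ${}^\#\cN$ is exact as a left $\cC$-module exactly when $\cN$ is exact as a right $\cC$-module; combined with the previous sentence this yields that both $\cM\mapsto\cM^\#$ and $\cM\mapsto{}^\#\cM$ carry exact module categories to exact module categories.

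I do not expect a serious obstacle here — everything reduces to the already-proven \Cref{lem:M-hash=A-mod} and \Cref{lem:A-left-right-exact}. The one point that requires a little care, and which I would flag as the main (mild) subtlety, is making sure the two parts of \Cref{lem:M-hash=A-mod} really do compose to the \emph{identity} up to equivalence rather than merely to some autoequivalence: this comes down to the fact that ${}^*(V^*)\cong V$ and $(V^*){}^* \cong V$ canonically in the rigid category $\cC$, so the double-opposite of $\rmodint{\cC}{A}$ is $\rmodint{\cC}{A}$ on the nose (the algebra is unchanged, only re-presented), and the resulting $\cC$-linear equivalence is the one induced by these canonical isomorphisms. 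Given that, the corollary follows formally. I would keep the write-up short, citing \Cref{lem:M-hash=A-mod,lem:A-left-right-exact} and \Cref{thm:morita} and spelling out only the composition of equivalences and the exactness transfer.
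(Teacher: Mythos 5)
Your proposal is correct and follows essentially the same route as the paper: the mutual-inverse statement is read off from the equivalences $\rD_r,\rD_l$ of \Cref{lem:M-hash=A-mod}, and preservation of exactness is reduced, via a presentation $\cM\simeq\rmodint{\cC}{A}$, to \Cref{lem:A-left-right-exact}. The paper's proof is just a terser version of the same argument.
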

\begin{proof}
We see, e.g. by \Cref{lem:M-hash=A-mod} that ${}^\#(\cM^\#)\simeq \cM$ when $\cM$ is a left (respectively, a right) $\cC$-module category. 
These assignments preserve exact module categories by \Cref{lem:A-left-right-exact}, choosing an algebra $A$ in~$\cC$ such that $\cM=\rmodint{\cC}{A}$. 
\end{proof}

\subsection{Relatively semisimple algebras and module categories}
\label{sec:rel-semisimple}

Let $\cC$ be a finite tensor category. Let $A$ be an algebra in $\cC$ with multiplication map $m_A\colon A\otimes A\to A$. This section contains some conditions on $A$ that imply exactness of the associated $\cC$-module category $\cM=\rmodint{\cC}{A}$. These conditions will be used later in \Cref{sec:rel-projective}.
We consider the induction functor
\begin{equation}\label{eq:ind-functor}
I\colon \cC\to \rmodint{\cC}{A}, \quad X \mapsto (X\otimes A, a^r_{I(X)}=\id_X\otimes m_A)\ ,\end{equation}
which is a functor of left $\cC$-modules and maps projective objects in $\cC$ to projective objects in $\rmodint{\cC}{A}$. 
Similar to the proof for usual algebras in $\Vect$, one can show that any projective object in $\rmodint{\cC}{A}$ is a direct summand of $I(P)$, for $P$ a projective object  in $\cC$.

\begin{definition}\label{def:proj-alg}
An algebra $A$ in $\cC$ is \emph{relatively semisimple} if any $A$-module in $\cC$ is a direct summand of some induced module $I(X)$, $X\in \cC$. 
\end{definition}

\begin{lemma}\label{lem:rel-semisimple-exact}
    Any relatively semisimple algebra $A$ is exact.
\end{lemma}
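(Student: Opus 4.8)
The plan is to reduce exactness of $\cM=\rmodint{\cC}{A}$ to the good behaviour of the induction functor $I\colon\cC\to\rmodint{\cC}{A}$ from \eqref{eq:ind-functor}. First I would fix a projective object $P$ in $\cC$ and an arbitrary $A$-module $M$ in $\rmodint{\cC}{A}$; by \Cref{def:exact-M} it suffices to show that $P\lact M$ is projective in $\rmodint{\cC}{A}$.

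The second step uses relative semisimplicity (\Cref{def:proj-alg}): $M$ is a direct summand of $I(X)$ for some $X$ in $\cC$. Since $I$ is a functor of left $\cC$-modules, one has $P\lact I(X)\cong I(P\lact X)=I(P\otimes X)$; this identification is immediate from the definition of the $\cC$-action on $\rmodint{\cC}{A}$, namely $X\lact(V,a^r_V)=(X\otimes V,\id_X\otimes a^r_V)$. Now $P\otimes X$ is projective in $\cC$ because projective objects form a tensor ideal in a finite tensor category by rigidity (\Cref{lem:tensor-cat-properties}), and $I$ sends projective objects to projective objects as recalled after \eqref{eq:ind-functor}; hence $I(P\otimes X)$ is projective in $\rmodint{\cC}{A}$. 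Finally, since $\lact$ is additive in the second variable, the functor $P\lact(-)$ preserves direct summands, so $P\lact M$ is a direct summand of the projective module $P\lact I(X)\cong I(P\otimes X)$ and is therefore projective. This shows $\cM$ is exact, i.e.\ $A$ is exact.

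I do not expect a genuine obstacle here; the argument is short. The only points deserving (routine) care are the verification that the left $\cC$-module structure of $I$ really yields the isomorphism $P\lact I(X)\cong I(P\otimes X)$ as $A$-modules, and the standard fact that a direct summand of a projective object in an abelian category is projective.
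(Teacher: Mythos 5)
Your proof is correct and is essentially identical to the paper's: write $M$ as a summand of $I(X)$, identify $P\lact I(X)\cong I(P\otimes X)$, and use that $P\otimes X$ is projective together with the fact that $I$ preserves projectives and that summands of projectives are projective. The only (harmless) difference is that you spell out the tensor-ideal justification for projectivity of $P\otimes X$, which the paper leaves implicit.
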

\begin{proof}
Assume that $A$ is relatively semisimple. Then an arbitrary object $M$ of $\cM=\rmodint{\cC}{A}$ is a direct summand of $I(X)$ for some object $X$ of $\cC$. For any projective object $P$ in $\cC$, by additivity of the action functor, $P\lact M$ is a direct summand of the projective right $A$-module $P\lact I(X)$, which is isomorphic to $I(P\otimes X)$ as a right $A$-module. The right $A$-module $I(P\otimes X)$ is projective and so are its direct summands. Hence, $P\lact M$ is projective  and  $\cM$ is exact as a left $\cC$-module. This shows that $A$ is an exact algebra. 
\end{proof}

A sufficient condition for an algebra to be relatively semisimple is to be separable.

\begin{definition}\label{def:separable}
    We say that $A$ is \emph{separable} if the multiplication map $m\colon A\otimes A\to A$ has a section as a morphism of $A$-bimodules in $\cC$. 
\end{definition}

\begin{lemma}\label{lem:sep-rel-ss}
    If $A$ is a separable algebra, then it is relatively  semisimple and hence exact.
\end{lemma}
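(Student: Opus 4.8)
The plan is to show that a separable algebra $A$ is relatively semisimple (\Cref{def:proj-alg}), after which exactness follows from \Cref{lem:rel-semisimple-exact}. First I would fix a section $\sigma\colon A\to A\otimes A$ of the multiplication map $m\colon A\otimes A\to A$ in the category $\bimodints{A}{\cC}$ of $A$-bimodules, so that $m\sigma=\id_A$. The key observation is that for any right $A$-module $M$, the multiplication action $a^r_M\colon M\otimes A\to M$ is a morphism of right $A$-modules from the induced module $I(M)=M\otimes A$ (with $A$ acting on the right factor) onto $M$. So it suffices to produce a right $A$-module section of $a^r_M$; then $M$ is a direct summand of $I(M)$, which is a direct summand of $I(P)$ for a projective cover $P\twoheadrightarrow M$ in $\cC$ — but for relative semisimplicity it is already enough that $M$ is a summand of $I(M)$ with $M\in\cC$.

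The section is built from $\sigma$ as follows. I would consider the composite
$$
s_M\colon\quad M \xrightarrow{\ \coevr?\ } \text{(use the unit }u_A)\ M\otimes A \xrightarrow{\ \id_M\otimes \sigma\ } M\otimes A\otimes A \xrightarrow{\ a^r_M\otimes\id_A\ } M\otimes A,
$$
more precisely $s_M := (a^r_M\otimes\id_A)\circ(\id_M\otimes\sigma)\circ(\id_M\otimes u_A)$, where $u_A\colon\one\to A$ is the unit. Using that $\sigma$ is a section of $m$ and unitality of the $A$-action on $M$, one checks $a^r_M\circ s_M=\id_M$: indeed $a^r_M\circ(a^r_M\otimes\id_A) = a^r_M\circ(\id_M\otimes m_A)$ by associativity of the action, and $m_A\circ\sigma = \id_A$, while $a^r_M\circ(\id_M\otimes u_A)=\id_M$ by unitality. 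That $s_M$ is a morphism of right $A$-modules $M\to I(M)$ is where the bimodule-linearity of $\sigma$ enters: the right-module structure on $I(M)=M\otimes A$ is multiplication on the last factor, and compatibility of $s_M$ with this action reduces to the statement that $\sigma$ commutes with right multiplication by $A$, i.e. $\sigma$ is a right $A$-module map; the left-$A$-linearity of $\sigma$ together with associativity of $a^r_M$ handles the interaction with the $a^r_M\otimes\id_A$ term. (As $\cC$ is strict monoidal, no associators clutter these computations.)

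The main obstacle — really the only place care is needed — is bookkeeping the module structures: one must be careful that $s_M$ lands in $I(M)$ with the correct right $A$-action (action on the rightmost tensor factor $A$, not the one produced by $\sigma$), and that the verification $s_M$ is $A$-linear uses precisely the bimodule property of $\sigma$, not merely its being a one-sided section. Once $a^r_M\circ s_M=\id_M$ is established with $s_M$ an $A$-module map, $M$ is a direct summand of $I(M)$ as a right $A$-module, so $A$ is relatively semisimple by \Cref{def:proj-alg}, and hence exact by \Cref{lem:rel-semisimple-exact}.
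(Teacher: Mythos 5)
Your proof is correct and is essentially the paper's argument made explicit: the paper applies the additive functor $M\otimes_A(-)$ to the $A$-bimodule splitting $\sigma$ of $m_A$ to exhibit $M\cong M\otimes_A A$ as a direct summand of $M\otimes_A(A\otimes A)\cong M\otimes A=I(M)$, and your section $s_M=(a^r_M\otimes\id_A)\circ(\id_M\otimes\sigma)\circ(\id_M\otimes u_A)$ is precisely the image of $\sigma$ under this functor composed with the canonical isomorphisms. The explicit verification you sketch goes through — in particular, checking that $s_M$ is a right $A$-module map does require both the left and the right $A$-linearity of $\sigma$, exactly as you note.
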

\begin{proof}
Recall the relative tensor product $\otimes_A$ from \Cref{def:MorC}. For any right $A$-module $M$, we have that $M \otimes_A (-)\colon \bimodints{A}{\cC}\to \rmodint{\cC}{A}$ is an additive functor. Hence, if $A$ is separable, we have 
    that $M\cong M\otimes_A A$ is a direct summand of $M\otimes_A (A\otimes A)\cong M \otimes A = I(M)$  as a right $A$-module. Thus, $A$ is relatively semisimple.
\end{proof}

\begin{remark}
When $\cC$ is semisimple, relative semisimplicity of an algebra $A$ is equivalent to $\rmodint{\cC}{A}$ being semisimple. Indeed, any object is a direct summand of an object of the form $I(X)=X\lact A$, which is projective.  
Thus, being relatively semisimple and being separable are properties  of algebras that are preserved under Morita invariance.  See \cite{DSS2}*{Section~2.5} for the semisimple case. 
\end{remark}

Recall the internal Hom object $\iHom(M,N)\in\cC$ defined by the natural isomorphisms
    $$\Hom_\cM(X\lact M,N)\cong \Hom_\cC(X,\iHom(M,N))\ ,$$
for $X$ in $\cC$. Then the internal endomorphism object $\iEnd(M)$ is an algebra in $\cC$ and the internal Hom extends to a $\cC$-linear functor 
$$\iHom(M,-)\colon \cM \to \rmodint{\cC}{\iEnd(M)}, \quad N\mapsto\iHom(M,N),$$
see \cite{EGNO}*{Secs.\,7.9--7.10}. If $\cM$ is exact, then $\iHom(M,-)$ is an equivalence for any $M$.

For endomorphism algebras $\iEnd(M)$, we have the following criterion for relative semisimplicity. 

\begin{lemma}\label{lem:rel-proj-int-hom}
    Let $\cM$ be a left $\cC$-module category and $M$ an object in $\cM$ such that $\iHom(M,-)$ is right exact. Then the algebra $\iEnd(M)$ is relatively semisimple if and only if every object $N$ of $\cM$ is a direct summand of $X\lact M$ for some $X$ in $\cC$. In particular, $\cM$ is exact. 
\end{lemma}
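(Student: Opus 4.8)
\textbf{Proof plan for \Cref{lem:rel-proj-int-hom}.}
The plan is to use that $\iHom(M,-)$ is a left $\cC$-module functor which, under the hypothesis that it is right exact, is moreover \emph{monadic}, identifying $\cM$ with $\rmodint{\cC}{\iEnd(M)}$ via the comparison equivalence. Concretely, write $A:=\iEnd(M)$ and let $F:=\iHom(M,-)\colon \cM\to \rmodint{\cC}{A}$; this functor sends $M$ to the regular module $A$ and, being a $\cC$-module functor, satisfies $F(X\lact N)\cong X\lact F(N)$. Its left adjoint is the functor $(-)\lact M\colon \rmodint{\cC}{A}\to\cM$ (sending $A$ to $M$), so the composite $F\big((-)\lact M\big)$ on $\rmodint{\cC}{A}$ is the monad $(-)\otimes_A A\cong \id$, i.e.\ $F$ exhibits $\cM\simeq \rmodint{\cC}{A}$ as $\cC$-module categories. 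Under this equivalence, the induction functor $I\colon \cC\to\rmodint{\cC}{A}$ of \eqref{eq:ind-functor}, $X\mapsto X\otimes A$, corresponds to $X\mapsto X\lact M$.

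With this identification the two sides of the claimed equivalence translate directly. By \Cref{def:proj-alg}, $A=\iEnd(M)$ is relatively semisimple exactly when every $A$-module is a direct summand of some $I(X)=X\otimes A$; transporting along $F$, this says every object $N$ of $\cM$ is a direct summand of $X\lact M$ for some $X$ in $\cC$. This is the asserted equivalence. The final sentence, that such $\cM$ is exact, then follows from \Cref{lem:rel-semisimple-exact}: the relatively semisimple algebra $A$ is exact, hence $\rmodint{\cC}{A}\simeq\cM$ is an exact $\cC$-module category.

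First I would record that $F=\iHom(M,-)$ is $\cC$-linear (from the defining adjunction $\Hom_\cM(X\lact M,N)\cong\Hom_\cC(X,\iHom(M,N))$, which is natural in $X$) and that $F(M)\cong A$ as an $A$-module. Then I would invoke the standard fact (e.g.\ \cite{EGNO}*{Secs.\,7.9--7.10}) that right exactness of $\iHom(M,-)$ makes it an equivalence $\cM\simeq\rmodint{\cC}{A}$ of $\cC$-module categories, intertwining $X\lact M$ with $I(X)=X\otimes A$. Given this, the equivalence of the two conditions is immediate from \Cref{def:proj-alg}, and exactness of $\cM$ is \Cref{lem:rel-semisimple-exact}. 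The only mildly delicate point is justifying that right exactness of $\iHom(M,-)$ suffices for the comparison functor to an equivalence without an \emph{a priori} exactness assumption on $\cM$; I expect this to be the main obstacle, and would handle it by checking the Barr--Beck hypotheses directly (the functor is right exact, conservative since $\Hom_\cC(X,\iHom(M,N))=0$ for all $X$ forces $N=0$, and has left adjoint $(-)\lact M$ with the comonadicity/monadicity comparison being the $\cC$-linear isomorphism $F(X\lact M)\cong X\lact A$), or alternatively by citing \cite{EGNO}*{Prop.\,7.9.7} and its surrounding discussion where this is carried out.
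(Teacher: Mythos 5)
Your argument is essentially the paper's own: both identify $\cM$ with $\rmodint{\cC}{\iEnd(M)}$ via the equivalence $\iHom(M,-)$ (the paper cites \cite{EGNO}*{Thm.\,7.10.1} directly, where you propose either the same citation or a hands-on monadicity check), observe that under this equivalence $X\lact M$ corresponds to the induced module $X\otimes A$ by the $\cC$-linearity isomorphism $\iHom(M,X\lact M)\cong X\lact\iHom(M,M)$, and then read off the equivalence of the two conditions from \Cref{def:proj-alg}, with exactness of $\cM$ following from \Cref{lem:rel-semisimple-exact}. The proposal is correct and matches the paper's route.
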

\begin{proof}Denote $A=\iEnd(M)$.
    Under our assumptions, $\iHom(M,-)\colon \cM \to \rmodint{\cC}{A}$ is an equivalence \cite{EGNO}*{Thm.\,7.10.1}.
    Using additivity  of internal Homs, $N$ in $\cM$ is a direct summand of $X\lact M$  if and only if  $$\iHom(M,N)\underset{\oplus}{\leq} \iHom(M,X\lact M)\cong X\lact \iHom(M,M)=X\otimes A,$$
    is a direct summand in $\rmodint{\cC}{A}$, 
    where for the isomorphism, we use \cite{EGNO}*{Lem.\,7.9.4}. Since $\iHom(M,-)$ is essentially surjective, we see that $A$ is relatively semisimple if and only if \emph{every} object $N$ appears as a direct summand of some object $X\lact M$. As $\cM\simeq \rmodint{\cC}{A}$ and we have shown that $A$ is relatively semisimple, $\cM$ is exact by \Cref{lem:rel-semisimple-exact}.    
\end{proof}

\begin{definition}\label{def:M-rel-ss}
We call an exact left $\cC$-module category $\cM$ \emph{relatively semisimple}  if there exists an object $M$ in $\cM$ such that every object $N$ in $\cM$ is a direct summand of $X\lact M$ for some $X$ in $\cC$. 
\end{definition}

Equivalently, by \Cref{lem:rel-proj-int-hom}, $\cM$ is relatively semisimple if and only if there exists a relatively semisimple algebra $A$ in $\cC$  such that $\cM\simeq \rmodint{\cC}{A}$ as left $\cC$-module categories.

\begin{definition}\label{def:separable-mod-cat}
    We say that a left $\cC$-module category $\cM$ is \emph{separable} if it is equivalent as a left $\cC$-module category to $\rmodint{\cC}{A}$ for a separable algebra $A$ in $\cC$. 
\end{definition}
By \Cref{lem:rel-semisimple-exact}, relatively semisimple module categories are exact. Moreover, by  \Cref{lem:sep-rel-ss}, separable module categories are relatively semisimple and hence exact.

\begin{example}\label{ex:rel-semisimple-not-semisimple}
    The regular $\cC$-module $\cC$ is always separable and relatively semisimple since $\cC\simeq\rmodint{\cC}{\one}$ but might not be semisimple. 
\end{example}

\begin{remark}\label{rem:separable1}
    In the semisimple case, the definition of a separable algebra appears in \cite{DSS2}*{Def.\,2.5.1} and it is shown that if $\cM$ is separable, then $\cM$ is semisimple \cite{DSS2}*{Prop.\,2.5.3}, and thus exact by \Cref{ex:exact}(i). The converse is not true in general, see \Cref{cor:semisimple-fully-exact} and the discussion afterwards. 
\end{remark}

If $\cC$ is non-semisimple, being separable or relatively semisimple are properties \emph{not} stable under Morita equivalence of algebras. This is demonstrated in the following example. 

\begin{example}\label{ex:sep-not-Morita}
Assume that $\cC$ is non-semisimple.
Consider the  left $\cC$-module $\cM=\cC$ and two Morita equivalent algebras realizing this $\cC$-module category $\cM$, the tensor unit $\one$ and the algebra $A_P=\iEnd(P)=P\otimes P^*$ for a projective object $P$  in $\cC$. The fact that $\rmodint{\cC}{A_P}\simeq \cC$ is because $\cC$ is an exact $\cC$-module category. 
    The trivial algebra $\one$ in $\cC$ is separable and thus relatively semisimple. However, $A_P$ is \textit{not} relatively semisimple because otherwise by \Cref{lem:rel-proj-int-hom} every object $N$ of $\cC$ is a direct summand of $X \lact P = X\otimes P$, for some $X\in \cC$, which is projective and thus $\cC$ would be semisimple which is a contradiction.  In particular, $A_P$ is also not a separable algebra, otherwise it would be relatively semisimple due to~\Cref{lem:sep-rel-ss}.
\end{example}

\subsection{Exactness of braided opposite algebras}
\label{sec:opposite-exact}

If $\cC$ is braided, exactness of algebras can, equivalently, be checked for the corresponding braided opposite algebras that were defined in \Cref{def:opp-algebras}.

\begin{proposition}\label{prop:A-Psi-exact}
    Let $A$ be an algebra in $\cC$. The following statements are equivalent. 
    \begin{enumerate}
        \item $A$ is exact.
        \qquad (2)~$A^\psi$ is exact.
        \qquad (3)~$A^{\psi^{-1}}$ is exact.
    \end{enumerate}
\end{proposition}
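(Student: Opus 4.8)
The plan is to deduce the three equivalences by transporting the $\cC$-module category $\rmodint{\cC}{A}$ through the left–right switching $2$-functors $\rS_{lr}, \rS_{rl}$ of \Cref{lem:rS} together with the internal-algebra identifications of \Cref{lem:S-via-algebra}, using throughout that exactness is a property of a $\cC$-module category that is preserved under equivalence of $\cC$-module categories.

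First I would record the elementary observation that $\rS_{lr}$ and $\rS_{rl}$ preserve exactness in both directions. Indeed, if $\cM$ is a left $\cC$-module category, then $\rS_{lr}(\cM)$ has the same underlying abelian category with right action $M \ract X = X \lact M$; hence, for a projective object $P$ in $\cC$, the object $M \ract P = P \lact M$ is projective for all $M$ if and only if $\cM$ is exact as a left $\cC$-module, i.e.\ $\rS_{lr}(\cM)$ is exact as a right $\cC$-module precisely when $\cM$ is exact as a left $\cC$-module. The same reasoning applies to $\rS_{rl}$ going from right to left $\cC$-modules.

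Next, for the equivalence of (1) and (2): by definition $A$ is exact if and only if $\rmodint{\cC}{A}$ is exact as a left $\cC$-module; by \Cref{lem:A-left-right-exact} this is equivalent to $\lmodint{A}{\cC}$ being exact as a right $\cC$-module; by the observation above this is equivalent to $\rS_{rl}(\lmodint{A}{\cC})$ being exact as a left $\cC$-module; and by \Cref{lem:S-via-algebra}(ii) there is an equivalence of left $\cC$-modules $\rS_{rl}(\lmodint{A}{\cC}) \simeq \rmodint{\cC}{A^{\psi}}$, so this is in turn equivalent to $A^\psi$ being exact. Finally, for the equivalence of (1) and (3), note that $(A^{\psi^{-1}})^\psi = A$ since the two $\psi$-twists of the multiplication cancel; applying the already established equivalence (1)$\Leftrightarrow$(2) to the algebra $A^{\psi^{-1}}$ in place of $A$ then shows that $A^{\psi^{-1}}$ is exact if and only if $(A^{\psi^{-1}})^\psi = A$ is exact. (Alternatively, one could run the chain of the previous paragraph with $\rS_{lr}$ and \Cref{lem:S-via-algebra}(i) instead.)

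I do not anticipate a genuine obstacle here: the argument is a formal transport along equivalences, and the only point needing a (very short) verification is the claim that $\rS_{lr}$ and $\rS_{rl}$ preserve exactness, which is immediate once one observes that these $2$-functors change neither the underlying abelian category nor the action of objects of $\cC$.
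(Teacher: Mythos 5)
Your proof is correct and takes essentially the same route as the paper: both arguments reduce to \Cref{lem:A-left-right-exact} together with a braiding-induced switch between right $A$-modules and left $A^{\psi}$-modules, and both dispose of (3) via the involution $(A^{\psi^{-1}})^{\psi}=A$. The only difference is cosmetic — you invoke the pre-established equivalences $\rS_{rl}$ and \Cref{lem:S-via-algebra}(ii) (plus the immediate observation that these switches preserve exactness, since they alter only coherence data and not the action on objects), whereas the paper re-derives the relevant equivalence $\rmodint{\cC}{A}\isomorph\lmodint{A^{\psi}}{\cC}$ explicitly and checks compatibility of $P\lact M$ with $M\ract P$ by hand.
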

\begin{proof}
    There is an equivalence of categories 
    \begin{equation}\label{eq:equiv-A-Aop-mod}
    \rmodint{\cC}{A}\isomorph \lmodint{A^{\psi}}{\cC},
    \end{equation}
which sends a right $A$-module $M$, with action $a^r_M\colon M\otimes A \to M$ to the left $A^\psi$-module $M$ with the left $A^\psi$-action 
\begin{equation}\label{eq:left-right-op-equiv}
a^l_M=a^r_M\psi_{A,M}\colon A\otimes M \to M,
\end{equation}
noting that $A=A^\psi$ as objects in $\cC$.
For $X\in \cC$, this functor sends the right $A$-module $X\lact M=X\otimes M$ to the $A^\psi$-module $X\otimes M$ with left $A^\psi$-action given by the morphism
$$(\id_X\otimes a^r_M)\psi_{A,X\otimes M}\colon A\otimes (X\otimes M)\to X\otimes M.$$
This left $A^\psi$-module is isomorphic to $M\ract X$, with the left $A^\psi$-action~\eqref{eq:left-right-op-equiv} on $M$ and the standard right $\cC$-action on $\lmodint{A^{\psi}}{\cC}$, via the isomorphism $\psi_{X,M}^{-1}\colon X\otimes M\to M\otimes X$. Hence, if $A$ is exact, $P\in \cC$ a projective object, then $P\lact M$ is projective as a right $A$-module. Then the image of $P\lact M$ under the equivalence \eqref{eq:equiv-A-Aop-mod} is projective as a left $A^\psi$-module. By the above isomorphism $\psi_{P,M}^{-1}$ of left $A^\psi$-modules, $M\ract P$  is also projective as a left $A^{\psi}$-module. 
This shows that  $\lmodint{A^{\psi}}{\cC}$ is exact as a right $\cC$-module. By \Cref{lem:A-left-right-exact}, this implies that $\rmodint{\cC}{A^{\psi}}$ is exact as a left $\cC$-module category. Hence, the algebra $A^{\psi}$ is exact. This shows that (1) implies (2).

Now, we can use the inverse braiding to show that (1) implies (3). Since $(A^\psi)^{\psi^{-1}}=A=(A^{\psi^{-1}})^{\psi}$, it follows that (2) implies (1) and (3) implies (1).
\end{proof}

We immediately get the following result, cf.\ \cite{DSS2}*{Cor.\,2.4.7}.

\begin{corollary}\label{cor:duals-exact}
For $\cM$ a left $\cC$-module category, the following statements are equivalent. 
\begin{enumerate}
    \item $\cM$ is exact 
    \qquad (2)~$\cM^*$ is exact
    \qquad (3)~${}^*\cM$ is exact.
\end{enumerate}
\end{corollary}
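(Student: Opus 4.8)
The statement is an immediate consequence of \Cref{prop:A-Psi-exact} once the duals $\cM^*$ and ${}^*\cM$ are expressed via internal algebras. First I would choose an algebra $A$ in $\cC$ with an equivalence of left $\cC$-module categories $\cM\simeq \rmodint{\cC}{A}$, which exists by the internal algebra realization of finite module categories used throughout \Cref{sec:int-alg}. By definition, $\cM$ is then exact precisely when the algebra $A$ is exact.

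Next I would invoke \Cref{prop:*M*-algebras}: part (i) gives an equivalence of left $\cC$-module categories ${}^*\cM={}^*(\rmodint{\cC}{A})\simeq \rmodint{\cC}{A^{\psi}}$, and part (ii) gives $\cM^*=(\rmodint{\cC}{A})^*\simeq \rmodint{\cC}{A^{\psi^{-1}}}$. Hence ${}^*\cM$ is exact if and only if $A^\psi$ is exact, and $\cM^*$ is exact if and only if $A^{\psi^{-1}}$ is exact. Now \Cref{prop:A-Psi-exact} states that the three conditions ``$A$ exact'', ``$A^\psi$ exact'', and ``$A^{\psi^{-1}}$ exact'' are equivalent, so combining the three translations above yields the equivalence of (1), (2), (3). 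This is essentially a bookkeeping argument, so I do not anticipate any genuine obstacle; the only point to keep straight is the matching of which braided opposite algebra corresponds to the left dual versus the right dual, which is pinned down by \Cref{prop:*M*-algebras}. (Alternatively, one could argue via \Cref{lem:*-autoequiv2}, \Cref{lem:A-left-right-exact}, and \Cref{lem:S-via-algebra}, reducing the left/right duals to the $\#$-opposites and the $\rS$-functors, but this route still ultimately relies on \Cref{prop:A-Psi-exact}, so the direct argument above is cleanest.)
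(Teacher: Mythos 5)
Your proposal is correct and coincides with the paper's own proof: the paper likewise picks $A$ with $\cM\simeq\rmodint{\cC}{A}$, identifies $\cM^*\simeq\rmodint{\cC}{A^{\psi^{-1}}}$ and ${}^*\cM\simeq\rmodint{\cC}{A^{\psi}}$ via \Cref{prop:*M*-algebras}, and concludes by \Cref{prop:A-Psi-exact}. The matching of braided opposites to left/right duals is also exactly as you state.
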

\begin{proof}
    Finding an algebra $A$ in $\cC$ such that $\cM\simeq\rmodint{\cC}{A}$, we have  $\cM^*\simeq \rmodint{\cC}{A^{\psi^{-1}}}$ and ${}^*\cM\simeq \rmodint{\cC}{A^{\psi}}$ by \Cref{prop:*M*-algebras}. Now the result follows from \Cref{prop:A-Psi-exact}. 
\end{proof}


\subsection{Exactness of centralizers}
\label{sec:exactness-centralizer}

We will now show that for a finite $\cC$-module category, exactness of $\cC^*_\cM$ implies exactness of $\cM$. For this, we define, for an algebra $A$ in $\cC$, the functor 
\begin{align}
I^b\colon \cC\to \bimodints{A}{\cC}, \quad X \mapsto I^b(X)=A\otimes X \otimes A,\label{eq:functor-Ib}
\end{align}
where the left and right $A$-actions are given by the multiplication of $A$.
Recall from \eqref{eq:bimod1}--\eqref{eq:left-A-bimodule} that when $\cC$ is braided and $A$ an algebra in $\cC$, the category $\bimodints{A}{\cC}$ of internal $A$-bimodules becomes a $\cC$-module category. In this case, the functor $I^b$ becomes a left $\cC$-module functor with the $\cC$-module structure given by the isomorphism of $A$-bimodules
$$
\xymatrix@R=10pt{
I^b(X\lact Y)\ar@{=}[d]\ar[rr]^{s^{I_b}_{X,Y}}&& X\lact I^b(Y)\ar@{=}[d]\\
A\otimes X\otimes Y \otimes A\ar[rr]^{\psi^{-1}_{A,X}\otimes \id}&& X\otimes A\otimes Y\otimes A.
}
$$
Any projective object in $\bimodints{A}{\cC}$ is a direct summand of an object of an induced bimodule $I^b(Q)$, for $Q$ a projective object in $\cC$, cf.\ \cite{EGNO}*{Sec.\,7.8}.

\begin{lemma}\label{lem:Fun-projectives}
Let $\cM,\cN$ be left $\cC$-module categories. Let $F\colon \cM\to \cN$ be a projective object in $\Fun_\cC(\cM,\cN)$. Then $F(M)$ is projective for every $M\in\cM$.
\end{lemma}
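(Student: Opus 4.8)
The plan is to translate the statement into internal algebras and reduce it to the standard description of projective bimodules. First I would fix algebras $A,B$ in $\cC$ with $\cC$-module equivalences $\cM\simeq\rmodint{\cC}{A}$ and $\cN\simeq\rmodint{\cC}{B}$; these induce a $\cC$-module equivalence $\Fun_\cC(\cM,\cN)\simeq\Fun_\cC(\rmodint{\cC}{A},\rmodint{\cC}{B})$ sending $F$ to a projective functor and sending $F(M)$ to the corresponding object of $\rmodint{\cC}{B}$, so there is no loss in assuming $\cM=\rmodint{\cC}{A}$ and $\cN=\rmodint{\cC}{B}$.

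Next I would invoke \Cref{prop:fun-A-bimod-B}: the equivalence $E\colon\Fun_\cC(\cM,\cN)\isomorph\bimodint{A}{\cC}{B}$, $G\mapsto G(A)$, has quasi-inverse $V\mapsto(-)\otimes_A V$, so $F$ is naturally isomorphic to $(-)\otimes_A V$ as a $\cC$-module functor, where $V:=F(A)$ is a projective object of $\bimodint{A}{\cC}{B}$ since $F$ is projective. As recalled just before the lemma (in the $A$-bimodule case, the argument being identical for $A$-$B$-bimodules), every projective $A$-$B$-bimodule is a direct summand of an induced bimodule $A\otimes Q\otimes B$ with $Q$ a projective object of $\cC$: the functor $X\mapsto A\otimes X\otimes B$ is left adjoint to the exact forgetful functor $\bimodint{A}{\cC}{B}\to\cC$, hence preserves projectives, while every bimodule is a quotient of such an induced one. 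Thus $V$ is a direct summand of $A\otimes Q\otimes B$ for some projective $Q$.

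Finally, for $M\in\cM$ I would compute $F(M)\cong M\otimes_A V$, a direct summand of $M\otimes_A(A\otimes Q\otimes B)$. Using that $A\otimes Q\otimes B$ is free as a left $A$-module on $Q\otimes B$, the canonical isomorphism $M\otimes_A(A\otimes Z)\cong M\otimes Z$ (for $Z\in\cC$) gives an isomorphism of right $B$-modules $M\otimes_A(A\otimes Q\otimes B)\cong(M\otimes Q)\otimes B=I(M\otimes Q)$, where $I\colon\cC\to\rmodint{\cC}{B}$, $X\mapsto X\otimes B$, is the induction functor. Since projective objects of $\cC$ form a tensor ideal (\Cref{lem:tensor-cat-properties}) and $Q$ is projective, $M\otimes Q$ is projective in $\cC$; as $I$ preserves projectives, $I(M\otimes Q)$ is projective in $\cN$, and hence so is its direct summand $F(M)$.

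I do not expect a real obstacle. The only points meriting care are the identification of $F$ with $(-)\otimes_A F(A)$, which is exactly \Cref{prop:fun-A-bimod-B}, and checking that $M\otimes_A(A\otimes Q\otimes B)$ is the induced right $B$-module $I(M\otimes Q)$, which is a routine coequalizer computation exploiting the freeness of $A\otimes Q\otimes B$ over $A$ on the left.
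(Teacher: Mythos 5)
Your proposal is correct and follows essentially the same route as the paper's proof: reduce to $\cM=\rmodint{\cC}{A}$, $\cN=\rmodint{\cC}{B}$, identify $F$ with $(-)\otimes_A V$ for a projective bimodule $V$ via \Cref{prop:fun-A-bimod-B}, write $V$ as a summand of an induced bimodule $A\otimes Q\otimes B$, and conclude from $M\otimes_A(A\otimes Q\otimes B)\cong I(M\otimes Q)$ together with the tensor-ideal property of projectives. The only difference is that you spell out why projective bimodules are summands of induced ones, which the paper merely recalls.
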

\begin{proof}
By \cite{EGNO}*{Cor.\,7.10.5} or \cite{DSS1}*{Thm.\,2.18}, we can assume that there exist algebras $A$ and $B$ in $\cC$, such that $\cM= \rmodint{\cC}{A}$ and $\cN= \rmodint{\cC}{B}$.
Using the equivalence  
 $\bimodint{A}{\cC}{B}\isomorph \Fun_\cC(\rmodint{\cC}{A}, \rmodint{\cC}{B}), V \mapsto G_V,$ 
 from \Cref{prop:fun-A-bimod-B}, it suffices to consider $\cC$-linear functors of the form
 $$G_V\colon\rmodint{\cC}{A}\to \rmodint{\cC}{B}, \quad M\mapsto M\otimes_A V,$$
for an $A$-$B$-bimodule $V$ in $\cC$. Assume first that $V=A\otimes P\otimes B$, for $P$ a projective object in $\cC$ and with the $A$- and $B$-actions given by left and right multiplication, respectively. Recall that any projective $A$-$B$-bimodule is a direct summand of such bimodules. Then, for any $M$ in $\rmodint{\cC}{A}$, we have $G_V(M)\cong I(M\otimes P)$, as a right $B$-module  in $\cC$, for the functor $I$ defined by replacing $A$ by $B$ in \eqref{eq:ind-functor}. Since $M\otimes P$ is a projective object in $\cC$, $G_V(M)$ is a projective object  in $\rmodint{\cC}{B}\simeq \cN$. As $\otimes_A$ is biadditive, it preserves direct sums. If $W$ is only a direct summand of the projective bimodule $V=A\otimes P\otimes B$, then $G_W(M)$ is a direct summand of $G_V(M)$, for any $M$, which is, again, projective in $\rmodint{\cC}{B}$. This proves the claim.  
\end{proof}

For the next lemma, recall the \action functor from \Cref{def:AC-C-mod-functor}. We say that a functor $F$ \emph{preserves projective objects} if $F(P)$ is projective for any projective $P$.

\begin{lemma}\label{lem:AC-proj-M-exact}
    Let $\cM$ be a  left $\cC$-module category. If $A_\cM\colon \cC\to \cC^*_\cM$ preserves projective objects, then $\cM$ is exact as a left $\cC$-module category.  
\end{lemma}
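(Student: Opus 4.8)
The plan is to deduce this directly from \Cref{lem:Fun-projectives} applied to the centralizer $\cC^*_\cM=\Fun_\cC(\cM,\cM)$. Recall the goal is to show that $P\lact M$ is projective in $\cM$ for every projective $P$ in $\cC$ and every object $M$ in $\cM$.

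First I would fix a projective object $P$ in $\cC$. By the hypothesis that $A_\cM\colon\cC\to\cC^*_\cM$ preserves projective objects, the functor $A_\cM(P)=P\lact\id_\cM$ is a projective object of the category $\cC^*_\cM=\Fun_\cC(\cM,\cM)$. Now apply \Cref{lem:Fun-projectives} with $\cN=\cM$ and with the projective object $F=A_\cM(P)$: this gives that $A_\cM(P)(M)=P\lact M$ is projective in $\cM$ for every $M$ in $\cM$. Since $P$ and $M$ were arbitrary, $\cM$ is exact as a left $\cC$-module category by \Cref{def:exact-M}.

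There is essentially no obstacle here beyond correctly identifying the object $A_\cM(P)$ of $\cC^*_\cM$ with the $\cC$-module endofunctor $P\lact(-)$ (via \Cref{def:AC-C-mod-functor}), so that evaluation of this functor at $M$ reproduces $P\lact M$; once this identification is in place the statement is an immediate consequence of \Cref{lem:Fun-projectives}. The only point worth spelling out, if desired, is that the $\cC$-linear structure carried by $A_\cM(P)$ plays no role in the argument, since \Cref{lem:Fun-projectives} concerns projectivity in the abelian category $\Fun_\cC(\cM,\cM)$ and the conclusion $P\lact M$ projective is a statement about the underlying object of $\cM$ only.
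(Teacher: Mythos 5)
Your proof is correct and is essentially identical to the paper's own argument: both deduce from the hypothesis that $A_\cM(P)=P\lact\id_\cM$ is projective in $\cC^*_\cM$ and then invoke \Cref{lem:Fun-projectives} to conclude that $P\lact M=A_\cM(P)(M)$ is projective in $\cM$ for all $M$. Nothing further is needed.
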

\begin{proof}
    Assume that $A_\cM$ preserves projective objects. Then $A_\cM(P)=P\lact \id_\cM$ is a projective object in $\Fun_\cC(\cM,\cM)$.  Now, by \Cref{lem:Fun-projectives}, $A_\cM(P)$ sends arbitrary objects in $\cM$ to projective objects in $\cM$. Thus,
    $A_\cM(P)(M)=P\lact M$ is projective in $\cM$, for any object $M$ of $\cM$. This shows that $\cM$ is exact as a left $\cC$-module category. 
\end{proof}

\begin{proposition}\label{lem:AC-exact-equiv}
Let $\cM$ be a left $\cC$-module category. Then $\cC^*_\cM$ is exact as a left $\cC$-module category if and only if the functor $A_\cM\colon \cC\to \cC^*_\cM$ preserves projective objects.
\end{proposition}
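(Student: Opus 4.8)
The plan is to prove the two implications separately; each is short, resting on results already assembled above.

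First, suppose $\cC^*_\cM$ is exact as a left $\cC$-module category and let me deduce that $A_\cM$ preserves projectives; here I would argue straight from \Cref{def:exact-M}. The monoidal unit of the strict monoidal category $\cC^*_\cM=\Fun_\cC(\cM,\cM)$ is the identity functor $\id_\cM$, which is in particular an object of the left $\cC$-module category $\cC^*_\cM$, and by \Cref{def:AC-C-mod-functor}---using the identification, recorded in \Cref{lem:AC-braided}, of the $\cC$-action from \Cref{lem:Hom-C-mod} with the one induced by $A_\cM$---one has $X\lact\id_\cM=A_\cM(X)$ as objects of $\cC^*_\cM$ for every $X$ in $\cC$. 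Hence, if $\cC^*_\cM$ is exact, then for each projective object $P$ of $\cC$ the object $P\lact\id_\cM=A_\cM(P)$ is projective in $\cC^*_\cM$, i.e.\ $A_\cM$ preserves projective objects.

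Conversely, suppose $A_\cM$ preserves projectives. I would first invoke \Cref{lem:AC-proj-M-exact} to conclude that $\cM$ is an exact left $\cC$-module category. It is then a standard fact that the centralizer $\cC^*_\cM=\Fun_\cC(\cM,\cM)$, with composition as monoidal product, is itself a finite tensor category (see \cite{EGNO}*{Thm.\,7.12.11}, and \cite{DSS2}*{Thm.\,2.3.5} for a general field $\Bbbk$). Moreover $A_\cM\colon\cC\to\cC^*_\cM$ is a monoidal functor by \Cref{lem:AC-braided} and is exact by \Cref{lem:AM-exact-faithful}, and by \Cref{lem:AC-braided} the left $\cC$-module structure on $\cC^*_\cM$ appearing in the statement is precisely the one induced by this monoidal functor in the sense of \Cref{ex:tensor-act}. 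We are thus in the situation of \Cref{ex:exact}(iii): for an exact monoidal functor between finite tensor categories, the target is an exact module category over the source if and only if the functor preserves projectives. Since $A_\cM$ does, $\cC^*_\cM$ is exact as a left $\cC$-module category.

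I do not expect a genuinely hard step. The only point requiring care is the bookkeeping: checking that the left $\cC$-module structure on $\cC^*_\cM$ used in the statement---the one built via \Cref{def:Fun-action} in \Cref{lem:Hom-C-mod}---coincides with the module structure induced by the monoidal functor $A_\cM$, which is exactly the content of \Cref{lem:AC-braided}, and ensuring that the finiteness and rigidity of $\cC^*_\cM$ for exact $\cM$ are cited in a form that does not impose extra hypotheses on $\Bbbk$.
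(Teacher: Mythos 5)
Your proof is correct and follows essentially the same route as the paper's: the forward direction is identical, and for the converse both arguments first obtain exactness of $\cM$ from \Cref{lem:AC-proj-M-exact} and then use rigidity of $\cC^*_\cM$ together with the fact that projective objects form a tensor ideal, via the identification $P\lact F=A_\cM(P)\circ F$ from \Cref{lem:AC-braided}. The only cosmetic difference is that you package the final step as a citation of \Cref{ex:exact}(iii), which is stated there for a target that is a finite tensor category with simple unit, whereas $\cC^*_\cM$ is in general only a multitensor category; the paper sidesteps this by inlining the same rigidity/tensor-ideal argument directly, and you should do likewise (or note that \Cref{lem:tensor-cat-properties} only needs rigidity).
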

\begin{proof}
    The forward implication is clear since $A_\cM(X)=X\lact \id_\cM$ by \Cref{def:Fun-action}. And thus, if $\cC^*_\cM$ is exact, $A_\cM(P)$  is projective for any projective object $P$ of $\cC$. Conversely, assume that $A_\cM$ preserves projective objects. By \Cref{lem:AC-proj-M-exact}, this implies that $\cM$ is an exact left $\cC$-module category. Thus, by results of \cite{DSS2}*{Thm.\,2.3.5}, see also \cite{EGNO}*{Section~7.12}, any $\Bbbk$-linear additive $\cC$-module endofunctor of $\cM$ is exact and $\cC^*_\cM$ has left and right duals. Thus, projective objects in $\cC^*_\cM$ form a (right) tensor ideal, see \Cref{lem:tensor-cat-properties}. Hence, the tensor product $A_\cM(P)\circ F$ is projective for any $F\in \cC^*_\cM$. Thus, by \Cref{lem:AC-braided}, $P\lact F=A_\cM(P)\circ F$ is projective and $\cC^*_\cM$ is exact.
\end{proof}

\begin{remark}\label{rem:perfect-conditions}
    The condition that the tensor functor $A_\cM$ preserves projective objects admits several equivalent characterizations, see \cite{Shi5}*{Lem.\,4.3}. This condition is equivalent to the right adjoint of $A_\cM$ being exact, or, admitting a further right adjoint. Such tensor functors are also called \emph{perfect} in the literature, see \cites{BrNa,Shi5,JY25}.
Consider the following commutative diagram of functors obtained from the action functor $\rho_\cM\colon \cC\to \End(\cM)$ from \eqref{eq:rho-lact}: 
\begin{equation}\label{eq:diag-center}
\vcenter{\hbox{
\xymatrix{
\cZ(\cC)\ar[d]^F\ar[rrr]^{\cZ(\rho_\cM)} &&& \End_\cC(\cM)\ar[d]^U.\\
\cC\ar[rrr]^{\rho_\cM}\ar[urrr]^{A_\cM}&&&\End(\cM)
}}}
\end{equation}
Here, $\cZ(\cC)$ is the Drinfeld center of $\cC$ with forgetful functor $F$, and where the functor $U$ forgets the $\cC$-linear structure.
The functor $\cZ(\rho_\cM)$ appeared in \cite{Shi2}*{Thm.\,3.11}. 
For $\cM$ exact, the functor $\rho_\cM$ is perfect \cite{Shi2}*{Thm.\,3.4}. Hence, $\cZ(\rho_\cM)$ is also perfect. This can be seen by the characterization of perfect functors from \cite{Shi5}*{Lem.\,4.3} and using \cite{Will}*{Lem.\,3.14--3.15}. 
Hence, the horizontal functors $\rho_\cM$ and $\cZ(\rho_\cM)$ in \eqref{eq:diag-center} preserve projective objects, while $A_\cM$ does in general not. 
Those exact $\cM$ where $A_\cM$ is perfect form a special class that we discuss in the \Cref{sec:fully-exact}.
\end{remark}

\subsection{Failure of closure of exact module categories under relative Deligne product}
\label{sec:rel-Del-exact}

Exact $\cC$-module categories are closed under the Deligne tensor product $\boxtimes$ (over $\Vect$) by \cite{DSS2}*{Section~3.3.2}.
However, if either $\cC$ is non-semisimple, or $\cC$ is semisimple but $\Bbbk$ is not of characteristic zero, then exact finite $\cC$-module categories are, in general, not closed under the \emph{relative} Deligne tensor product $\boxtimes_\cC$ and, therefore, $\lemod{\cC}$ is not a monoidal $2$-subcategory of $\lmod{\cC}$. We demonstrate this in two examples, the first one over a field of finite characteristic and the second one over $\mC$.

\begin{example}\label{ex:first}
Let $\cha\Bbbk$ divide the order of a finite abelian group $G$. The category $\cC=\Vect_G$ is the category of $G$-graded $\Bbbk$-vector spaces with the standard monoidal structure and symmetric braiding inherited from $\Vect$. In this case, 
we have an equivalence of $\cC$-modules $\Vect\simeq {}^*\Vect$ and that 
$$\Vect\boxtimes_{\Vect_G}\Vect \simeq \cC^*_\Vect\simeq \lmod{\Bbbk G},$$
applying \Cref{prop:rel-tensor-as-hom} and \cite{EGNO}*{Ex.\,7.12.19} in the second equivalence, which is monoidal. Under this equivalence, the \action functor $A_\Vect$ corresponds to a monoidal functor $\Vect_G \to \lmod{\Bbbk G}$. Such a functor cannot preserve projective objects as the tensor unit of $\Vect_G$ is projective by semisimplicity while the tensor unit of $\lmod{\Bbbk G}$ is not projective. By \Cref{lem:AC-exact-equiv}, $\cC^*_\Vect$ is not exact and hence $\Vect\boxtimes_\cC \Vect$ is not exact.
\end{example}

\begin{example}\label{ex:Sweedler-first}
Consider $\cC=\lmod{S}$ for the Sweedler's four-dimensional Hopf algebra  $S$, which is isomorphic to the bosonization Hopf algebra $B\rtimes \mC C_2$, for $B=\mC[x]/(x^2)$ a commutative and cocommutative Hopf algebra in $\sVect$, see \Cref{sec:Sweedler}. Hence, $\sVect$ admits a left $\cC$-module structure through the tensor functor $F\colon \cC\to \sVect$ which forgets the $B$-action. By the fundamental theorem for Hopf modules (in $\sVect$), we have an equivalence of left $\cC$-module categories 
$\sVect\simeq \rmodint{\cC}{B^*},$ 
where $B^*=\mC[y]/(y^2)$. The braiding $\psi$ of $\sVect$   induces a braiding on $\cC$ and $B^*$ is a commutative exact algebra in $\cC$ with respect to this braiding. Hence, $(B^*)^\psi=B^*=(B^*)^{\psi^{-1}}$ as algebras in $\cC$ and, applying \Cref{lem:B-via-algebra}, we  have an equivalence of $\cC$-bimodule categories 
 $$\rB(\sVect)\simeq \rB(\rmodint{\cC}{B^*})\simeq \lmodint{(B^*)^{\psi^{-1}}}{\cC}=\lmodint{B^*}{\cC},$$
 where $\rB(\cM)$ is the $\cC$-bimodule associated to a left $\cC$-module $\cM$, see \eqref{eq:Slr-B}.
 Again using the fundamental theorem of Hopf modules, the latter category is equivalent to $\sVect$ as a right $\cC$-module category. Hence, $\rB(\sVect)$ is equivalent to the $\cC$-bimodule defined by tensor functor $F$ via \Cref{ex:tensor-bimod}, which we simply denote by $\sVect$.

Recall from the proof of \Cref{rem:rel-Del-bimodule-realization} that 
the left $\cC$-module $\sVect\boxtimes_\cC \sVect$ can be identified with the category of modules over the monad 
$$T=(-)\ract B^*\cong (-)\otimes F(B^*)\colon \quad \sVect\to \sVect.$$  Thus, we have an equivalence of left $\cC$-module categories $\sVect\boxtimes_\cC \sVect\simeq \rmodint{\sVect}{B^*}$. The left $\cC$-action on $\rmodint{\sVect}{B^*}$ is inherited from the left $\cC$-action on $\sVect$. For a right $B^*$-module $V$ and an object $X$ of $\cC$, we thus have
$X\lact V = F(X)\otimes V$, where the right $B^*$-action is given by 
$$(x\otimes v)\cdot f= x\otimes vf,$$
for $x\in X,v\in V$, and $f\in B^*$. 
Consider the right $B^*$-module $\one$ given by the tensor unit of $\sVect$, where $y$ acts by zero. Then, for any object $X$ in $\cC$, $y$ still acts by zero on $X\lact \one=F(X)\otimes \one$. Thus, $X\lact \one$ is not projective and $\sVect\boxtimes_\cC \sVect$ is not exact. 
\end{example}

Another example is obtained in \Cref{cor:rel-Del-not-exact-factorizable} where we show that ${}^*\Vect\boxtimes_\cC \Vect$ is not exact, where $\cC=\lmod{u_q(\mathfrak{sl}_2)}$ is the non-degenerate braided tensor category of finite-dimensional modules over the factorizable small quantum group of $\mathfrak{sl}_2$ at a root of unity $q$ of odd order.

\medskip

The above examples show that exact $\cC$-module categories do \emph{not} form a monoidal $2$-subcategory of $\lmod{\cC}$, in both zero and non-zero characteristics, and both symmetric and non-degenerate braiding cases.
Equivalently, $A,B$ being exact algebras does, in general, \emph{not} imply that $A\otimes B$ is an exact algebra. \Cref{ex:Sweedler-first} also shows that the tensor product of commutative exact algebras is not exact in general.

\begin{remark}\label{rem:literature}
In the more general setup of $\cC$-bimodule categories, the result \cite{DN}*{Prop.\,2.10} showing that if $\cM$ and $\cN$ are exact $\cC$-bimodule categories,  then $\cM\boxtimes_\cC \cN$ is an exact $\cC$-bimodule category, is incorrect.\footnote{However, this does not affect the main results of \cite{DN}.} 
Using the monoidal $2$-functor $\rB\colon \lmod{\cC}\to \bimod{\cC}$ from \Cref{thm:CMod-monoidal-2-cat}, \Cref{ex:first,ex:Sweedler-first}  give counterexamples to this statement. 

As a counterexample with no braiding assumption, consider $\Vect$ as a $\cC$-bimodule, via the fiber functor $F\colon \cC\to \Vect$ as in \Cref{ex:tensor-bimod}. Then $\cC=\lmod{H}$ and
$F\colon \cC\to \Vect$ as in \Cref{ex:tensor-bimod}. Then 
    $$\cC^*_\Vect=\Fun_\cC(\Vect,\Vect)\simeq \lmod{H^*},$$
see \cite{EGNO}*{Ex.\,7.12.26} or \Cref{sec:exactness-vect}. By \cite{DSS2}*{Cor.\,2.4.11}, $\Vect\boxtimes_\cC \Vect\simeq \cC^*_\Vect$ as $\cC$-bimodules.
The left and right $\cC$-actions on $\Vect\boxtimes_\cC \Vect$ are inherited from $\Vect$. Thus, $\Vect\boxtimes_\cC \Vect \simeq \cC^*_\Vect$ is not exact as a $\cC$-bimodule provided that $H^*$ is not semisimple, in particular, if $\cha \Bbbk=0$ and $H$ is non-semisimple.

The gap in the proof of \cite{DN}*{Prop.\,2.10} is that while $\Fun_\cC(\cM, \cN)$ is exact as a $\cC^*_\cN$\,-\,$\cC^*_\cM$-bimodule category, this does not imply exactness as a $\cC$-bimodule category unless the functors $A^r_\cM\colon \cC^{\otimesop}\to \cC^*_\cM$ and $A^r_\cN\colon \cC^{\otimesop}\to \cC^*_\cN$ of \Cref{sec:left-right-action-fun} preserve projective objects.
\end{remark}

The failure of the relative Deligne product preserving exact module categories motivates the concept of \emph{fully exact} module categories that we introduce  in the next \Cref{sec:fully-exact}. Fully exact module categories have the important property that the functor $A^r_\cM\colon \cC^{\otimesop}\to \cC^*_\cM$ mentioned in \Cref{eq:Ar} does preserve projective objects, and as a consequence this class is closed under relative Deligne product, and hence provides a monoidal $2$-subcategory of $\lmod{\cC}$. If $\cC$ is a braided fusion category and $\cha \Bbbk=0$, then these $2$-subcategories of exact and fully exact $\cC$-modules coincide.


\section{Fully exact module categories}
\label{sec:fully-exact}

Throughout this section, we assume that $\cC$ is a finite braided tensor category over any field $\Bbbk$. In \Cref{sec:fully-exact-def}, we define fully exact $\cC$-module categories and show that they form a monoidal $2$-subcategory of $\lmod{\cC}$. \Cref{sec:fully-exact-alg} defines fully exact algebras and \Cref{sec:rel-projective} shows that algebras that are relatively projective as bimodules, a generalization of the class of separable algebras, are fully exact.

\subsection{Fully exact module categories}
\label{sec:fully-exact-def}

\begin{definition}\label{def:fully-exact}
We say that a $\cC$-module category $\cM$ is \emph{fully exact} if, for any exact $\cC$-module category $\cN$, 
$\cN\boxtimes_\cC\cM$ is exact as a left $\cC$-module category.

We denote the  full $2$-subcategory in $\lmod{\cC}$ of fully exact $\cC$-module categories by $\fexmod{\cC}$.
\end{definition}

\begin{lemma}\label{rem:fully-exact-exact}
If a left $\cC$-module category $\cM$ is fully exact, then $\cM$ is exact
\end{lemma}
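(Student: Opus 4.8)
The plan is to deduce exactness of $\cM$ from full exactness by exhibiting $\cM$ (or something exact equivalent to it) as a relative Deligne product $\cN \boxtimes_\cC \cM$ with $\cN$ a conveniently chosen exact $\cC$-module category. The obvious candidate is $\cN = \cC$, the regular module category, which is exact by \Cref{ex:exact}(ii), together with the unitor equivalence $L_\cM \colon \cC \boxtimes_\cC \cM \isomorph \cM$ from \eqref{eq:unitors-Sec2}.

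Concretely, first I would invoke the definition of fully exact with the exact $\cC$-module category $\cN = \cC$: this gives that $\cC \boxtimes_\cC \cM$ is exact as a left $\cC$-module category. Second, I would note that the left unitor $L_\cM$ is an equivalence of left $\cC$-module categories, so $\cM \simeq \cC \boxtimes_\cC \cM$ in $\lmod{\cC}$. Since exactness of a $\cC$-module category is invariant under equivalence of $\cC$-module categories (it is defined purely in terms of the action of projective objects of $\cC$, which is preserved by any $\cC$-module equivalence, as projectivity is preserved by any equivalence of abelian categories), it follows that $\cM$ is exact.

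Alternatively, and perhaps even more directly given the internal-algebra machinery set up in \Cref{sec:int-alg}, one could argue at the level of algebras: write $\cM \simeq \rmodint{\cC}{A}$ for an algebra $A$ in $\cC$, take $\cN = \cC \simeq \rmodint{\cC}{\one}$ which is exact, and use \Cref{prop:rel-Del-modules} to identify $\cN \boxtimes_\cC \cM \simeq \rmodint{\cC}{\one \otimes A} = \rmodint{\cC}{A} \simeq \cM$ as left $\cC$-module categories. Full exactness of $\cM$ then forces $\rmodint{\cC}{A}$ to be exact, i.e.\ $\cM$ is exact.

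There is essentially no obstacle here: the statement is a direct unpacking of \Cref{def:fully-exact} against the single well-chosen test object $\cN = \cC$, combined with the unitality of the monoidal structure on $\lmod{\cC}$ established in \Cref{thm:CMod-monoidal-2-cat}. The only point requiring a word of care is that the relevant equivalence $\cC \boxtimes_\cC \cM \simeq \cM$ is one of $\cC$-module categories (so that it transports the exactness property), which is exactly the content of the unitor $L_\cM$ being a $1$-equivalence in the monoidal $2$-category $\lmod{\cC}$.
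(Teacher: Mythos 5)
Your proof is correct and is essentially identical to the paper's: the paper also sets $\cN=\cC$ in \Cref{def:fully-exact} and uses the unitor equivalence $\cC\boxtimes_\cC\cM\simeq\cM$ to conclude. The internal-algebra variant you sketch is a fine alternative but adds nothing beyond the one-line argument the paper gives.
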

\begin{proof}
    Setting $\cN=\cC$ in \Cref{def:fully-exact}, for the regular $\cC$-module category $\cC$,  $\cC\boxtimes_\cC \cM\simeq \cM$ is exact. 
\end{proof}

\begin{example}\label{ex:fully-exact-first}$~$
 \begin{enumerate}[(i)]
 \item 
    The regular $\cC$-module category $\cC$, the monoidal unit of $\lmod{\cC}$, is fully exact by virtue of the equivalences 
    $\cN\boxtimes_\cC \cC\simeq \cN.$
\item As we will show below in \Cref{prop:rel-proj-fully-exact}, if a left $\cC$-module category $\cM$ is separable, see \Cref{def:separable}, then $\cM$ is fully exact. 
\item Recall from \Cref{ex:first} that, over a field $\Bbbk$ of characteristic dividing the order of the group $G$, for $\cC=\Vect_G$ and the exact $\cC$-module  $\cM=\Vect$, the relative Deligne product $\Vect\boxtimes_\cC\Vect$ is not an exact $\cC$-module. Hence, $\Vect$ is not fully exact in this case.
\item
Even if $\cha \Bbbk=0$, the module category $\Vect$ is also often not fully exact. We will later discuss as examples the symmetric tensor category $\lmod{S}$, where $S$ is Sweedler's four-dimensional Hopf algebra (see \Cref{sec:Sweedler}) and $\lmod{u_q(\mathfrak{sl}_2)}$, where the Hopf algebra $u_q(\mathfrak{sl}_2)$ is factorizable (see \Cref{sec:uqsl2}). 
\end{enumerate}
\end{example}

Our motivation for defining fully exact module categories is their closure under relative Deligne products.

\begin{proposition}\label{prop:fully-exact-tensor}
    If $\cM_1$ and $\cM_2$ are fully exact $\cC$-module categories, then so is their relative Deligne product $\cM_1\boxtimes_\cC \cM_2$. Hence, $\fexmod{\cC}$ is a monoidal $2$-subcategory of $\lmod{\cC}$.
\end{proposition}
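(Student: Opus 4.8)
The plan is to prove closure of fully exact module categories under $\boxtimes_\cC$ directly from the definition, using associativity of the relative Deligne product in the monoidal $2$-category $\lmod{\cC}$ established in \Cref{thm:CMod-monoidal-2-cat}. The key observation is that \Cref{def:fully-exact} says $\cM$ is fully exact precisely when tensoring on the \emph{right} by $\cM$ preserves the class of exact module categories, and this property composes well.

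First I would let $\cN$ be an arbitrary exact left $\cC$-module category and consider $\cN \boxtimes_\cC (\cM_1 \boxtimes_\cC \cM_2)$. Using the associator $A_{\cN,\cM_1,\cM_2}$ from \eqref{eq:ass-Sec2}, which is an equivalence of left $\cC$-module categories, we have
$$
\cN \boxtimes_\cC (\cM_1 \boxtimes_\cC \cM_2) \simeq (\cN \boxtimes_\cC \cM_1) \boxtimes_\cC \cM_2
$$
as left $\cC$-module categories. Now since $\cM_1$ is fully exact and $\cN$ is exact, \Cref{def:fully-exact} gives that $\cN \boxtimes_\cC \cM_1$ is an exact left $\cC$-module category. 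Applying \Cref{def:fully-exact} again, this time to $\cM_2$ being fully exact with the exact module category $\cN \boxtimes_\cC \cM_1$ in place of $\cN$, we conclude that $(\cN \boxtimes_\cC \cM_1) \boxtimes_\cC \cM_2$ is exact. Since exactness of a module category is invariant under equivalence of $\cC$-module categories, $\cN \boxtimes_\cC (\cM_1 \boxtimes_\cC \cM_2)$ is exact. As $\cN$ was an arbitrary exact $\cC$-module category, $\cM_1 \boxtimes_\cC \cM_2$ is fully exact.

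For the second assertion, it remains to note that $\fexmod{\cC}$ is indeed a monoidal $2$-subcategory: it is a full $2$-subcategory by \Cref{def:fully-exact}, it is closed under $\boxtimes_\cC$ by the argument above, and it contains the monoidal unit $\cC$ of $\lmod{\cC}$, which is fully exact by \Cref{ex:fully-exact-first}(i). Hence the associators and unitors of $\lmod{\cC}$ from \eqref{eq:ass-Sec2}--\eqref{eq:unitors-Sec2}, together with all their coherence data, restrict to $\fexmod{\cC}$, making it a monoidal $2$-subcategory.

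I do not anticipate a serious obstacle here; the statement is essentially formal once associativity of $\boxtimes_\cC$ on $\lmod{\cC}$ is in hand. The only point requiring a little care is to confirm that the associator equivalence is genuinely one of left $\cC$-module categories (so that exactness transfers along it), which is precisely what \Cref{thm:CMod-monoidal-2-cat} provides, and that the chosen bimodule coherence conventions in \Cref{def:rel-Del-Cmod} are consistent throughout — but both are already settled in \Cref{sec:C-mod}.
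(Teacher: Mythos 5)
Your proof is correct and follows essentially the same route as the paper: apply full exactness of $\cM_1$ and then of $\cM_2$ to an arbitrary exact $\cN$, and transfer exactness along the associator equivalence $\cN\boxtimes_\cC(\cM_1\boxtimes_\cC\cM_2)\simeq(\cN\boxtimes_\cC\cM_1)\boxtimes_\cC\cM_2$. The additional remarks on the monoidal unit and the restriction of coherence data are fine and merely make explicit what the paper leaves implicit.
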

\begin{proof}
    Assume that $\cM_1$ and $\cM_2$ are fully exact $\cC$-module categories. Let $\cN$ be an exact $\cC$-module category. Now, $\cN\boxtimes_\cC \cM_1$ is exact, using that $\cM_1$ is fully exact and then  $(\cN\boxtimes_\cC \cM_1)\boxtimes_\cC \cM_2$ is exact since $\cM_2$ is fully exact. 
    Thus, the equivalent category
    $$\cN\boxtimes_\cC (\cM_1\boxtimes_\cC \cM_2)\simeq (\cN\boxtimes_\cC \cM_1)\boxtimes_\cC \cM_2,$$
    via the associator constructed in \Cref{def:2associator}, is exact, implying that $\cM_1\boxtimes_\cC \cM_2$ is fully exact. 
\end{proof}

We can now derive equivalent conditions for fully exact $\cC$-module categories. 

\begin{theorem}
\label{prop:fully-exact-equiv}
Let $\cC$ be a braided finite tensor category over any field $\Bbbk$.
The following are equivalent for a finite left $\cC$-module category $\cM$:
\begin{enumerate}[(i)]
 \item     $\cM$ is fully exact.
  \item For any exact $\cC$-module  $\cN$, the left $\cC$-module category $\Fun_\cC(\cN,\cM)$ from \Cref{def:Fun-action} is exact.
    \item 
    The centralizer $\cC^*_\cM=\Fun_\cC(\cM,\cM)\simeq {}^*\cM\boxtimes_\cC\cM$ is exact as a left $\cC$-module category.
    \item The functor $A_\cM$ preserves projective objects. 
\end{enumerate}
\end{theorem}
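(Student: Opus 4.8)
The plan is to prove the four conditions equivalent by establishing a cycle of implications, say (i)$\Rightarrow$(ii)$\Rightarrow$(iii)$\Rightarrow$(iv)$\Rightarrow$(i), drawing on the functor-category descriptions of relative Deligne products from \Cref{sec:duals} and the criterion \Cref{lem:AC-exact-equiv} relating exactness of $\cC^*_\cM$ to $A_\cM$ preserving projectives.

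First I would show (i)$\Rightarrow$(ii). Given an exact $\cC$-module $\cN$, \Cref{cor:duals-exact} tells us that $\cN^*$ is exact as well, and by \Cref{prop:rel-tensor-as-hom}(ii) we have an equivalence of $\cC$-module categories $\Fun_\cC(\cN,\cM)\simeq {}^*\cN\boxtimes_\cC\cM$. Since ${}^*\cN$ is exact (again \Cref{cor:duals-exact}) and $\cM$ is fully exact, ${}^*\cN\boxtimes_\cC\cM$ is exact, hence so is $\Fun_\cC(\cN,\cM)$. For (ii)$\Rightarrow$(iii), I simply take $\cN=\cM$, which is exact because fully exact module categories are exact by \Cref{rem:fully-exact-exact} — wait, at this stage we only know (ii), not (i), so instead I note that (ii) is only being applied to exact $\cN$, and to deduce (iii) I need $\cM$ itself exact; this follows by first specializing (ii) to $\cN=\cC$ (the regular module, exact by \Cref{ex:exact}(ii)), giving $\Fun_\cC(\cC,\cM)\simeq\cM$ exact, and then specializing (ii) to $\cN=\cM$ to conclude $\cC^*_\cM=\Fun_\cC(\cM,\cM)$ is exact; the stated equivalence $\cC^*_\cM\simeq{}^*\cM\boxtimes_\cC\cM$ is \Cref{ex:dual-and-CM}. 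The implication (iii)$\Rightarrow$(iv) is exactly \Cref{lem:AC-exact-equiv}.

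The remaining implication (iv)$\Rightarrow$(i) is the one requiring real work, and I expect it to be the main obstacle. The strategy is: assume $A_\cM$ preserves projectives, and let $\cN$ be an arbitrary exact $\cC$-module; we must show $\cN\boxtimes_\cC\cM$ is exact. Using \Cref{prop:rel-Del-modules} and the internal-algebra realizations, write $\cM\simeq\rmodint{\cC}{A}$ and $\cN\simeq\rmodint{\cC}{B}$ so that $\cN\boxtimes_\cC\cM\simeq\rmodint{\cC}{B\otimes A}$. To check exactness we must show that for a projective object $P$ in $\cC$ and any right $B\otimes A$-module $M$, the object $P\lact M$ is projective. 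The key point is to translate ``$A_\cM$ preserves projectives'' into an intrinsic condition on the algebra $A$ — which is the content of the forthcoming \Cref{lem:A-fully-exact-equiv} referenced in the introduction, namely that $P_\one\otimes A$ is a direct summand of $A\otimes P_\one\otimes A$ as an $A$-bimodule. Granting that, one argues that for any exact $B$, the induced bimodule structure propagates: the projective object $P\lact(B\otimes A)\cong I(P\otimes B\text{-stuff})$ type computations, combined with the fact that on the exact module $\cN$ projectives are controlled by $B\otimes P'$ for $P'$ projective in $\cC$, yield that every object of $\rmodint{\cC}{B\otimes A}$ is a summand of something of the form $P'\lact(B\otimes A)$ with $P'$ projective, which is projective by the summand condition on $A$. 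The delicate part is making the bimodule bookkeeping precise — correctly tracking how the braiding enters $B\otimes A=B\otimes^\psi A$, how the $\cC$-action \eqref{eq:left-A-bimodule} interacts with it, and verifying that preservation of projectives by $A_\cM$ really is equivalent to the summand condition rather than merely implying it (the converse direction needs \Cref{lem:Fun-projectives} and the tensor-ideal property of projectives in $\cC^*_\cM$ for exact $\cM$, as in the proof of \Cref{lem:AC-exact-equiv}). Once the algebraic reformulation is in hand, the closure statement follows formally.

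I would close by remarking that this circle of implications, together with \Cref{prop:fully-exact-tensor}, shows $\fexmod{\cC}$ is precisely the class of $\cM$ for which $A_\cM$ is a perfect tensor functor in the sense of \Cref{rem:perfect-conditions}, which is the conceptual punchline and also explains why condition (iv) is the most useful for checking examples in \Cref{sec:Hopf}.
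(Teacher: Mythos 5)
Your handling of (i)$\Leftrightarrow$(ii), (ii)$\Rightarrow$(iii) and (iii)$\Rightarrow$(iv) matches the paper's proof, and your patch for (ii)$\Rightarrow$(iii) — first taking $\cN=\cC$ to see that $\cM$ itself is exact before specializing to $\cN=\cM$ — is a point the paper glosses over. The gap is in the closing implication. The paper does not close the cycle by proving (iv)$\Rightarrow$(i) through internal algebras; it proves (iv)$\Rightarrow$(ii) in two short steps: since $A_\cM$ preserves projectives, $\cM$ is exact by \Cref{lem:AC-proj-M-exact}, hence for any exact $\cN$ the category $\Fun_\cC(\cN,\cM)$ is exact as a module over the dual category $\cC^*_\cM$ by \cite{EGNO}*{Prop.\,7.12.14} (a standard fact about exact module categories over a finite tensor category); then restriction of this $\cC^*_\cM$-action along the projective-preserving monoidal functor $A_\cM$ — which by \Cref{lem:AC-braided} recovers exactly the $\cC$-action of \Cref{def:Fun-action} — preserves exactness as in \Cref{lem:restriction-2-functor}. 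Your route bypasses this and instead tries to propagate a ``projectively separable'' condition on the algebra $A$ through the tensor algebra $B\otimes A$.

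Two concrete problems with that route. First, the intermediate goal you state — that every object of $\rmodint{\cC}{B\otimes A}$ is a direct summand of $P'\lact(B\otimes A)$ with $P'$ projective — is false in general: it would make $\cN\boxtimes_\cC\cM$ semisimple, which already fails for $\cN=\cM=\cC$, where $\cC\boxtimes_\cC\cC\simeq\cC$ is exact but typically not semisimple. Exactness only requires $P'\lact M$ to be projective for projective $P'$ and \emph{arbitrary} $M$, and your sketch does not explain how the splitting of $P_\one\lact m_A$ as an $A$-bimodule map yields this for $B\otimes A$-modules; the interaction of the $B$-action with the braided $A$-bimodule structure is precisely the ``bookkeeping'' you defer, and it is where the entire content of the implication lies. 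Second, \Cref{lem:A-fully-exact-equiv} is derived in the paper \emph{from} the theorem you are proving (its proof invokes the equivalence of full exactness with $A_\cM$ preserving projectives), so leaning on it here risks circularity unless you re-derive the direction you need from \Cref{prop:fun-A-bimod-B} alone. The clean fix is to replace your (iv)$\Rightarrow$(i) by the (iv)$\Rightarrow$(ii) argument above, which closes the cycle without any algebra-level computation.
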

\begin{proof}
The equivalence of (i) and (ii) follows from \Cref{cor:duals-exact}, which shows that $\cN$ is exact if and only if ${}^*\cN$ is exact and the equivalence of $\cC$-module categories ${}^*\cN\boxtimes_\cC\cM\simeq \Fun_\cC(\cN,\cM)$ from \Cref{prop:rel-tensor-as-hom}.

Clearly, (ii) implies (iii) by specializing to 
$\cC^*_\cM=\Fun_\cC(\cM,\cM)$ by taking $\cN=\cM$.

The equivalence of (iii) and (iv) was proved in \Cref{lem:AC-exact-equiv}.

To conclude, we show that (iv) implies (ii). Since $\cM$ is exact by \Cref{lem:AC-proj-M-exact}, then for any exact $\cN$ we have $\Fun_\cC(\cN,\cM)$ is exact as a left $\cC^*_\cM$-module category by \cite{EGNO}*{Prop.\,7.12.14}. Thus, as the monoidal functor $A_\cM$ preserves projective objects, $\Fun_\cC(\cN,\cM)$ is also exact as a left $\cC$-module category via restriction of the action along the monoidal functor $A_\cM$ as in \Cref{ex:tensor-act}. As shown in \Cref{lem:AC-braided}, this action agrees with the one defined in \Cref{def:Fun-action}. 
\end{proof}

\begin{corollary}\label{cor:semisimple-fully-exact}
Assume that $\cC$ is a braided fusion category over a  field $\Bbbk$. Then the following are equivalent for a left $\cC$-module category $\cM$:
\begin{enumerate}[(i)]
    \item $\cM$ is separable,
    \item $\cM$ is fully exact,
    \item $\cC^*_\cM$ is semisimple.
\end{enumerate}
If, in addition,  $\Bbbk$ is algebraically closed and $\cC$ has non-zero dimension (e.g.\ if $\cha \Bbbk =0$), then (i)--(iii) are further equivalent to each of the following conditions:
\begin{enumerate}[(i)]
    \item[(iv)] $\cM$ is exact,
    \item[(v)] $\cM$ is semisimple.
\end{enumerate}

\end{corollary}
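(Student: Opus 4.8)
The plan is to deduce everything from \Cref{prop:fully-exact-equiv} together with the classical fact, recalled in the introduction via \cite{DSS2}*{Sec.\,2.5}, that for a braided \emph{fusion} category separability of $\cM$ is equivalent to semisimplicity of $\cC^*_\cM$. First I would establish the cycle (i)$\Rightarrow$(ii)$\Rightarrow$(iii)$\Rightarrow$(i) for the first three conditions, which requires no hypothesis on $\Bbbk$ beyond $\cC$ being fusion. For (i)$\Rightarrow$(ii): a separable $\cC$-module category is fully exact by \Cref{prop:rel-proj-fully-exact} (or \Cref{ex:fully-exact-first}(ii)), since separability implies relative projectivity of the defining algebra as a bimodule. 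For (ii)$\Rightarrow$(iii): by \Cref{prop:fully-exact-equiv}, $\cM$ fully exact means $\cC^*_\cM=\Fun_\cC(\cM,\cM)$ is exact as a left $\cC$-module; but when $\cC$ is fusion, $\one_\cC$ is projective, so for any object $F$ of $\cC^*_\cM$ the object $\one_\cC\lact F\cong F$ is projective, i.e.\ $\cC^*_\cM$ is semisimple. (Here I use that $\cC^*_\cM$ is a finite category, being $\Fun_\cC(\cM,\cM)$ for $\cM$ finite.) For (iii)$\Rightarrow$(i): this is exactly \cite{DSS2}*{2.5.1}, which characterizes separable $\cC$-modules over a fusion category as those with semisimple centralizer; the proof there does not use that $\Bbbk$ is algebraically closed or of characteristic zero.

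For the second part, assume in addition $\Bbbk$ algebraically closed and $\dim\cC\neq 0$. The implication (v)$\Rightarrow$(iv) is immediate: a semisimple module category is exact since every object, being a summand of a projective, is projective. The implication (iv)$\Rightarrow$(v) is the one genuinely using the new hypotheses: over an algebraically closed field of nonzero dimension a braided fusion category behaves like a fusion category over $\mC$, and by \cite{EGNO}*{Cor.\,9.1.6} (or the discussion in the introduction paragraph on separable module categories) exact module categories over a fusion category with $\dim\cC\neq 0$ are automatically semisimple — this is Ostrik's theorem that exactness coincides with semisimplicity in the fusion setting. Finally (v)$\Rightarrow$(i) follows because every semisimple $\cC$-module category over such a field is of the form $\rmodint{\cC}{A}$ for a separable algebra $A$; alternatively one closes the loop by noting (v)$\Rightarrow$(iv)$\Rightarrow$(ii) via \Cref{rem:fully-exact-exact}'s converse fails, so instead I would argue (iv)$\Leftrightarrow$(ii) here: when $\cC$ is fusion, $\cM$ exact forces $\cC^*_\cM$ to be fusion as well (a tensor category whose underlying category is finite and whose every module endofunctor is exact, hence semisimple by the same $\one_\cC$-projectivity argument once we know exactness propagates), giving (iii).

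The main obstacle I anticipate is the implication (iv)$\Rightarrow$(iii) (equivalently (iv)$\Rightarrow$(ii)) in the unconditional first part — that is, whether $\cM$ merely exact already forces $\cC^*_\cM$ exact as a \emph{$\cC$-module} over a fusion $\cC$. This is precisely where one must invoke that $\cC^*_\cM$ is again semisimple: for $\cC$ fusion and $\cM$ exact, $\cM$ is in fact semisimple (Ostrik), hence $\cC^*_\cM=\Fun_\cC(\cM,\cM)$ is a fusion category, so it is semisimple and thus exact as a $\cC$-module; then \Cref{prop:fully-exact-equiv} gives $\cM$ fully exact, closing (iv)$\Rightarrow$(ii). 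So in the end I would structure the argument as: (i)$\Leftrightarrow$(iii) by \cite{DSS2}*{2.5.1}; (ii)$\Leftrightarrow$(iii) by \Cref{prop:fully-exact-equiv} plus projectivity of $\one_\cC$; and under the extra hypotheses (iii)$\Leftrightarrow$(iv)$\Leftrightarrow$(v) by Ostrik's semisimplicity theorem for exact modules over fusion categories of nonzero dimension, being careful to cite the versions of \cite{EGNO} results that do not require $\Bbbk$ algebraically closed where applicable, and flagging exactly where algebraic closedness and $\dim\cC\neq0$ enter.
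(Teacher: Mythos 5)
Your cycle (i)$\Rightarrow$(ii)$\Rightarrow$(iii)$\Rightarrow$(i) is essentially the paper's argument and is fine: separable implies fully exact via \Cref{prop:rel-proj-fully-exact}, fully exact implies $\cC^*_\cM$ exact hence semisimple because $\one_\cC$ is projective in a fusion category, and semisimplicity of $\cC^*_\cM$ implies separability of $\cM$ by \cite{DSS2} (the relevant statement is Thm.\,2.5.4 rather than 2.5.1, which is just the definition). The second half of your proposal, however, contains a genuine error. You claim that for $\cC$ fusion and $\cM$ exact (hence semisimple), the category $\cC^*_\cM=\Fun_\cC(\cM,\cM)$ is again a fusion category, hence semisimple, hence exact as a $\cC$-module, so that $\cM$ is fully exact. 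This would prove (iv)$\Rightarrow$(ii) \emph{unconditionally}, i.e.\ that every exact module category over a braided fusion category is fully exact. That is false, and the paper's \Cref{ex:first} is an explicit counterexample: for $\cC=\Vect_G$ over a field whose characteristic divides $|G|$, the module category $\Vect$ is semisimple (hence exact), but $\cC^*_{\Vect}\simeq\lmod{\Bbbk G}$ is \emph{not} semisimple, and $\Vect$ is not fully exact. The false step is the assertion that $\Fun_\cC(\cM,\cM)$ is semisimple whenever $\cM$ is a semisimple module category over a fusion category; this fails in positive characteristic, and repairing that failure is precisely the role of separability in \cite{DSS2}.

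You have also misplaced where the extra hypotheses ($\Bbbk$ algebraically closed, $\dim\cC\neq 0$) enter. The equivalence (iv)$\Leftrightarrow$(v) needs no extra hypothesis at all: for any fusion category the unit is projective, so an exact module category is semisimple (apply exactness to $\one_\cC\lact M$), and conversely a semisimple one is trivially exact -- this is \Cref{ex:exact}(i). The implication that genuinely uses the additional assumptions is (v)$\Rightarrow$(i): one needs that every \emph{semisimple} $\cC$-module category is \emph{separable}, which the paper deduces from semisimplicity of the Drinfeld center $\cZ(\cC)$ (requiring $\Bbbk$ algebraically closed and $\dim\cC\neq0$) together with \cite{DSS2}*{Cor.\,2.5.9\,\&\,Prop.\,2.5.10}. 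You state the right conclusion in passing but give no justification and then abandon it for the flawed alternative. The unconditional part of the second half that you should keep is (i)$\Rightarrow$(v), i.e.\ separable implies semisimple, which is \cite{DSS2}*{Prop.\,2.5.3}.
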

\begin{proof}
By  \cite{DSS2}*{Thm.\,2.5.4}, (i) and (iii) are equivalent. By \Cref{ex:exact}(i), (iii) is equivalent to $\cC^*_\cM$ being exact, which, by \Cref{prop:fully-exact-equiv}, is equivalent to (ii).
By the same example, (iv) and (v) are equivalent. By  \cite{DSS2}*{Prop.\,2.5.3}, if $\cM$ is  separable, then it is semisimple, so (i) implies~(v).
 Under the stronger set of assumptions that $\Bbbk$ is algebraically closed and $\dim\cC\neq 0$, e.g.\ when $\cha \Bbbk=0$, then (v) implies (i). Indeed, under these assumptions, the Drinfeld center $\cZ(\cC)$ of a fusion category $\cC$ is semisimple, see \cite{EGNO}*{Thm.\,9.3.2}. Hence, by \cite{DSS2}*{Cor.\,2.5.9\,\&\,Prop.\,2.5.10}, this shows that any semisimple left $\cC$-module category $\cM$ is separable. 
\end{proof}

Note that the assumptions on the field $\Bbbk$ for equivalence of all conditions (i)--(v) in \Cref{cor:semisimple-fully-exact} are necessary. \Cref{ex:fully-exact-first}(iii) provides  a module category $\cM$ over a fusion category $\cC$ over an algebraically closed field of characteristic $p>0$ that is exact (semisimple) but not fully exact. In particular, $\cM$ is not separable by \Cref{cor:semisimple-fully-exact}.
It might be possible to relax that $\Bbbk$ is algebraically closed to assuming that $\Bbbk$ is a perfect field in the second part of \Cref{cor:semisimple-fully-exact}, see \cite{DSS2}*{Remark 2.6.10}, and of course keeping the assumption  $\dim \cC \neq 0$.
For example, for $\cC=\Vect$, the second part of \Cref{cor:semisimple-fully-exact} implies that the product of two simple algebras in $\Vect$ is  semisimple over any algebraically closed field. However, this semisimplicity statement is very well known to hold actually for any perfect field.

\begin{remark}\label{rem:separable2}
If $\cC$ is a fusion category over any  field $\Bbbk$, then \cite{DSS2}*{Thm.\,2.5.4} shows that $\cC^*_\cM$ is semisimple if and only if $\cM$ is separable.
When $\cC$ is a braided fusion category, this shows that $\cM$ is a separable left $\cC$-module if and only if $\cC^*_\cM$ is an exact left $\cC$-module, using \Cref{ex:exact}(i). The equivalence of (i) and (iii) in \Cref{prop:fully-exact-equiv} above is an analogue of this statement for non-semisimple tensor categories $\cC$. This suggests that fully exact module categories are a good analogue of separable module categories for non-semisimple $\cC$. In \Cref{lem:A-fully-exact-equiv} below, we give a characterization of fully exact algebras akin to the definition of a separable algebra.
\end{remark}

\begin{proposition}\label{lem:Res-fully-exacts}
For an equivalence $F\colon \cC\to\cD$ of finite braided tensor categories, the monoidal $2$-functor $\Res_F\colon \lmod{\cD}\to \lmod{\cC}$ from \Cref{prop:res} preserves fully exact module categories.    
\end{proposition}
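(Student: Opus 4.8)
The plan is to exploit the characterization of fully exact module categories from \Cref{prop:fully-exact-equiv}, in particular the equivalence of (i) and (iv): a left $\cD$-module category $\cM$ is fully exact if and only if the \action functor $A_\cM\colon\cD\to\cD^*_\cM$ preserves projective objects. So I would reduce the statement to showing that, under $\Res_F$, this condition is preserved.

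First I would set up the relevant data: given a left $\cD$-module category $(\cM,\lact_\cD)$, its restriction $\Res_F(\cM)$ is the same abelian category $\cM$ with $\cC$-action $X\lact_\cC M := F(X)\lact_\cD M$, and since $F$ is a (braided) monoidal equivalence, the coherence data of \Cref{def:Fun-action} for the $\cC$-module structure on $\Fun_\cC(\cN,\Res_F(\cM))$ is obtained from that for $\cD$ by transporting along $F$ and its (braided) monoidal structure. Next I would observe that the identity functor on $\cM$ underlies an equivalence of strict monoidal categories $\cD^*_\cM = \Fun_\cD(\cM,\cM) \simeq \Fun_\cC(\Res_F(\cM),\Res_F(\cM)) = \cC^*_{\Res_F(\cM)}$: a $\cD$-linear endofunctor of $\cM$ is, via $F$, precisely a $\cC$-linear endofunctor of $\Res_F(\cM)$, because $F$ is an equivalence and $F(X)$ ranges over all objects of $\cD$. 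Crucially, this is an equivalence of abelian categories (indeed the identity on the underlying category), so it preserves and reflects projective objects.

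Then I would check the compatibility triangle: the \action functor $A_{\Res_F(\cM)}\colon \cC\to \cC^*_{\Res_F(\cM)}$ equals the composite $\cC \xrightarrow{F} \cD \xrightarrow{A_\cM} \cD^*_\cM \simeq \cC^*_{\Res_F(\cM)}$, where the last arrow is the equivalence just described. Indeed $A_{\Res_F(\cM)}(X) = X\lact_\cC\id = F(X)\lact_\cD\id = A_\cM(F(X))$ as objects of the functor category, and one verifies (using the braided monoidal structure of $F$ entering the coherences of \Cref{def:Fun-action}) that the $\cC$-linear structures match under the identification. Given this factorization, if $\cM$ is fully exact then $A_\cM$ preserves projectives by \Cref{prop:fully-exact-equiv}; the equivalence $F$ preserves projectives because it is an equivalence of abelian categories; and the equivalence $\cD^*_\cM\simeq\cC^*_{\Res_F(\cM)}$ preserves projectives for the same reason. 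Hence $A_{\Res_F(\cM)} = (\text{equiv})\circ A_\cM\circ F$ preserves projectives, so $\Res_F(\cM)$ is fully exact, again by \Cref{prop:fully-exact-equiv}.

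I expect the main obstacle to be purely bookkeeping: carefully verifying that the $\cC$-linear structure on $A_{\Res_F(\cM)}(X)$, built from the braiding of $\cC$ via \Cref{def:Fun-action}, is transported correctly to the $\cD$-linear structure on $A_\cM(F(X))$ under the braided monoidal structure isomorphisms of $F$ — i.e.\ that the hexagon-type diagram \eqref{eq:FunC-coherence} for $\cC$ is carried to the one for $\cD$. This is where the hypothesis that $F$ is \emph{braided} (not merely monoidal) is used; everything else is formal. Once this coherence check is in place, the conclusion follows immediately from \Cref{prop:fully-exact-equiv}. Alternatively, one could phrase the whole argument at the level of the defining condition: $\Res_F$ is a monoidal $2$-functor, it sends exact $\cC$-modules to exact $\cD$-modules and conversely (via the inverse equivalence $F^{-1}$), and hence it sends the class characterized by $\cN\boxtimes_{(-)}\cM$ exact for all exact $\cN$ to the analogous class — but the \action-functor argument via \Cref{prop:fully-exact-equiv}(iv) is cleaner since it localizes the check to a single functor.
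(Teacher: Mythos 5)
Your argument is correct, but it takes a different route from the paper. The paper's proof is exactly the ``alternative'' you sketch in your last sentences and then set aside: since $\Res_F$ is a monoidal $2$-equivalence that preserves exact module categories (both facts already established in \Cref{prop:res}), one has $R(\cN\boxtimes_\cD\cM)\simeq R(\cN)\boxtimes_\cC R(\cM)$ exact for every exact $\cN$, and because every exact $\cC$-module is of the form $R(\cN)$ (apply the quasi-inverse $\Res_{F^{-1}}$, which also preserves exactness), $R(\cM)$ is fully exact directly from \Cref{def:fully-exact}. This is a three-line formal argument requiring no new coherence checks. Your main argument instead localizes the problem via \Cref{prop:fully-exact-equiv}(iv), identifying $\cD^*_\cM\simeq\cC^*_{\Res_F(\cM)}$ and factoring $A_{\Res_F(\cM)}$ as $(\text{equiv})\circ A_\cM\circ F$; this works and correctly isolates where the braided hypothesis enters, but it obliges you to verify the transport of the $\cC$-linear structures of \Cref{def:Fun-action} along $\mu^F$ and the braiding compatibility --- precisely the bookkeeping the definitional argument avoids. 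So your assessment that the action-functor route is ``cleaner'' is debatable: it buys a more concrete picture (and a factorization of $A_{\Res_F(\cM)}$ that is independently useful, cf.\ the computations in \Cref{rem:vect-Sweedler}), at the cost of coherence verifications that the paper's proof sidesteps entirely. One minor slip in your alternative sketch: $\Res_F$ goes from $\lmod{\cD}$ to $\lmod{\cC}$, so it sends exact $\cD$-modules to exact $\cC$-modules, not the other way around.
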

\begin{proof}
    In fact, any monoidal $2$-equivalence $\rR\colon \lmod{\cD}\to \lmod{\cC}$ which preserves exact module categories also preserves fully exact module categories. If $\cM$ is fully exact and $\cN$ is exact, then
    $$R(\cN\boxtimes_\cD\cM)\simeq R(\cN)\boxtimes_\cC R(\cM),$$
    and $R(\cN\boxtimes_\cD\cM)$ is exact. Since any exact $\cC$-module category is equivalent to one of the form $R(\cN)$, $R(\cM)$ is fully exact.
\end{proof}

\subsection{Fully exact algebras}\label{sec:fully-exact-alg}

We now return to the case when $\cC$ is a general braided finite tensor category over any field $\Bbbk$.

\begin{definition}[Fully exact algebras]\label{def:exactproj-A}
We say an algebra $A$ in $\cC$ is \emph{fully exact} if $\rmodint{\cC}{A}$ is fully exact as a left $\cC$-module category. 
\end{definition}

We have the following equivalent conditions for fully exact algebras.

\begin{corollary}\label{cor:proj-exact-algebras}
The following statements are equivalent for an algebra $A$ in $\cC$.
\begin{enumerate}[(i)]
    \item $A$ is a fully exact algebra.
    \item $A^\psi\otimes A$ is an exact algebra.
    \item $A^{\psi^{-1}}\otimes A$ is an exact algebra.
    \item For any exact algebra $B$ in $\cC$, $B\otimes A$ is an exact algebra. 
\end{enumerate}
\end{corollary}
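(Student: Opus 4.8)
The plan is to prove \Cref{cor:proj-exact-algebras} by combining the functor‑category characterization of fully exact module categories from \Cref{prop:fully-exact-equiv} with the internal‑algebra descriptions of centralizers, functor categories, and relative Deligne products established in \Cref{sec:MorC-C-mod-monoidal}. The key translation is: $A$ is fully exact means $\cM=\rmodint{\cC}{A}$ is fully exact, and by \Cref{prop:fully-exact-equiv}(iii) this is equivalent to $\cC^*_\cM=\Fun_\cC(\cM,\cM)$ being exact as a left $\cC$-module. By \Cref{prop:Funrex-modules} we have an equivalence of left $\cC$-module categories $\cC^*_\cM=\Fun_\cC(\rmodint{\cC}{A},\rmodint{\cC}{A})\simeq \rmodint{\cC}{A^\psi\otimes A}$, so exactness of $\cC^*_\cM$ is exactly exactness of the algebra $A^\psi\otimes A$. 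This gives the equivalence of (i) and (ii). For (iii), I would invoke \Cref{prop:A-Psi-exact}: $A^\psi\otimes A$ is exact if and only if its $\psi^{-1}$‑opposite $(A^\psi\otimes A)^{\psi^{-1}}$ is exact, and by \Cref{lem:tensor-ops} there is an algebra isomorphism $(A^\psi\otimes A)^{\psi^{-1}}\cong A^{\psi^{-1}}\otimes A^{\psi^{-1}\cdot?}$ — here I need to be a little careful, so let me instead argue directly that $A^{\psi^{-1}}\otimes A$ is obtained from $A^\psi\otimes A$ by an inner braiding symmetry; concretely, applying \Cref{prop:A-Psi-exact} to the algebra $A$ shows $A$ is exact iff $A^{\psi^{-1}}$ is exact, and then running the whole argument with $A$ replaced by $A^{\psi^{-1}}$ (noting $(A^{\psi^{-1}})^\psi = A$) yields that $A^{\psi^{-1}}$ is fully exact iff $(A^{\psi^{-1}})^\psi\otimes A^{\psi^{-1}} = A\otimes A^{\psi^{-1}}$ is exact; a final application of \Cref{lem:tensor-ops2} identifies $A\otimes A^{\psi^{-1}}$, up to algebra isomorphism and opposite, with $A^{\psi^{-1}}\otimes A$ or its relevant twist, and \Cref{prop:A-Psi-exact} absorbs the remaining opposite. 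Since fully exactness is a property of the module category $\rmodint{\cC}{A}$, which is unchanged, (i), (ii), (iii) are all equivalent.

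For the implication (i) $\Rightarrow$ (iv), suppose $A$ is fully exact and $B$ is any exact algebra in $\cC$. Then $\rmodint{\cC}{B}$ is an exact $\cC$-module category, so by \Cref{def:fully-exact} applied to the fully exact module category $\rmodint{\cC}{A}$ with $\cN=\rmodint{\cC}{B}$, the relative Deligne product $\rmodint{\cC}{B}\boxtimes_\cC\rmodint{\cC}{A}$ is exact as a left $\cC$-module. By \Cref{prop:rel-Del-modules} this relative Deligne product is equivalent as a left $\cC$-module category to $\rmodint{\cC}{B\otimes A}$, hence $B\otimes A$ is an exact algebra. The reverse implication (iv) $\Rightarrow$ (i) is immediate: taking $B=A^\psi$, which is exact by \Cref{prop:A-Psi-exact} since $A$ is exact (and $A$ is exact because it is fully exact by \Cref{rem:fully-exact-exact}), condition (iv) gives that $A^\psi\otimes A$ is exact, which is condition (ii), already shown equivalent to (i).

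The main obstacle I anticipate is bookkeeping the braided‑opposite and tensor‑product algebra isomorphisms correctly — in particular getting the precise statement relating $A^\psi\otimes A$, $A^{\psi^{-1}}\otimes A$, and $(A^\psi\otimes A)^{\psi^{\pm 1}}$ right, since \Cref{lem:tensor-ops} and \Cref{lem:tensor-ops2} involve flipping the tensor factors and changing which braiding decorates the product. The cleanest route is probably to avoid any direct manipulation of $A^{\psi^{-1}}\otimes A$ and instead derive (iii) purely formally: observe that working with $\cC^\rev$ (the same tensor category with inverse braiding) turns the $\psi$-opposite into the $\psi^{-1}$-opposite and the relative Deligne product over $\cC$ into the one over $\cC^\rev$; since $\rmodint{\cC}{A}$ is fully exact over $\cC$ iff it is (op-)fully exact in the relevant sense, and exactness of an algebra is insensitive to the braiding on $\cC$, the equivalence (ii)$\Leftrightarrow$(iii) follows from the symmetry $\cC\leftrightarrow\cC^\rev$ together with \Cref{prop:A-Psi-exact}. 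I would present whichever of these two arguments turns out to require the fewest explicit coherence checks, and relegate the algebra‑isomorphism verifications to citations of \Cref{lem:tensor-ops,lem:tensor-ops2} and \Cref{prop:A-Psi-exact}.
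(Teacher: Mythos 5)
Your treatment of (i) $\Leftrightarrow$ (ii) $\Leftrightarrow$ (iv) is correct and coincides with the paper's proof: both rest on \Cref{prop:fully-exact-equiv} together with the identifications $\cC^*_\cM\simeq\rmodint{\cC}{A^\psi\otimes A}$ and $\cN\boxtimes_\cC\cM\simeq\rmodint{\cC}{B\otimes A}$ from \Cref{prop:Funrex-modules} and \Cref{prop:rel-Del-modules}. (One small circularity: in (iv) $\Rightarrow$ (i) you justify exactness of $A$ by appealing to $A$ being fully exact, which is the conclusion; instead take $B=\one$ in (iv) to see that $A$ is exact, and then $B=A^\psi$ is exact by \Cref{prop:A-Psi-exact}.)

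The genuine gap is in your handling of (iii). The computation you abandon at the outset is in fact the correct one: by the second isomorphism of \Cref{lem:tensor-ops}, $(A^\psi\otimes A)^{\psi^{-1}}\cong A^{\psi^{-1}}\otimes(A^\psi)^{\psi^{-1}}=A^{\psi^{-1}}\otimes A$, and \Cref{prop:A-Psi-exact} then gives (ii) $\Leftrightarrow$ (iii) in one line; this (with $\psi$ in place of $\psi^{-1}$) is exactly what the paper does. The two replacement arguments you offer both fail. First, replacing $A$ by $A^{\psi^{-1}}$ does \emph{not} leave the module category unchanged: $\rmodint{\cC}{A^{\psi^{-1}}}\simeq\cM^*$ by \Cref{prop:*M*-algebras}, so ``$A^{\psi^{-1}}$ is fully exact'' means $\cM^*$ is fully exact, i.e.\ $\cM$ is op-fully exact (\Cref{prop:duality-fully-exact}), which by \Cref{ex:double-group} is genuinely different from $\cM$ being fully exact; your sentence ``fully exactness is a property of the module category $\rmodint{\cC}{A}$, which is unchanged'' is therefore false. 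Moreover, the algebra that argument produces, $A\otimes A^{\psi^{-1}}$, is the one controlling op-full exactness (\Cref{prop:op-exact-algebras}), not the algebra $A^{\psi^{-1}}\otimes A$ of statement (iii), and \Cref{lem:tensor-ops2} converts it to $A^{\psi^{-1}}\otimes^{\psi^{-1}}A$ (the inverse-braided product), not to $A^{\psi^{-1}}\otimes^{\psi}A$. Second, the $\cC\leftrightarrow\cCrev$ symmetry likewise exchanges fully exact with op-fully exact, so it cannot by itself yield (ii) $\Leftrightarrow$ (iii). To repair the proof, either carry out the one-line opposite-algebra computation above, or argue as the paper does: (iv) $\Rightarrow$ (iii) by taking $B=A^{\psi^{-1}}$ (exact by \Cref{prop:A-Psi-exact}), and (iii) $\Rightarrow$ (ii) via $(A^{\psi^{-1}}\otimes A)^\psi\cong A^\psi\otimes A$ from \Cref{lem:tensor-ops} together with \Cref{prop:A-Psi-exact}.
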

\begin{proof}
    Consider the left $\cC$-module categories $\cM=\rmodint{\cC}{A}$ and $\cN=\rmodint{\cC}{B}$ and recall the $\cC$-linear equivalences $\cC^*_\cM=\Fun_\cC(\cM,\cM)\simeq \rmodint{\cC}{A^\psi\otimes A}$ and $\cN\boxtimes_\cC \cM\simeq \rmodint{\cC}{B\otimes A}$ from \Cref{prop:Funrex-modules} and \Cref{prop:rel-Del-modules}, respectively. The equivalence of (i), (ii), (iv) now follows from \Cref{prop:fully-exact-equiv}. 
    
    The implication that (iv) implies (iii) is clear, setting $B=A^{\psi^{-1}}$. We prove that (iii) implies (ii). Assume that $A^{\psi^{-1}}\otimes A$ is exact. Then, by \Cref{prop:A-Psi-exact}, 
    $$(A^{\psi^{-1}}\otimes A)^\psi\cong A^\psi\otimes A$$ is exact, where the isomorphism of algebras is taken from \Cref{lem:tensor-ops}. This completes the proof.
\end{proof}

\subsection{Relatively projective algebras}
\label{sec:rel-projective}
For an algebra $A$ in $\cC$, we recall the induced $A$-bimodules $I^b(X)=A\otimes X\otimes A$, for $X\in \cC$, introduced in~\eqref{eq:functor-Ib}.

\begin{definition}\label{def:rel-proj-bimodule}
    We say that an algebra $A$ in  $\cC$ is \emph{relatively projective as a bimodule} if the $A$-bimodule $A$ is a direct summand of an induced bimodule $I^b(X)=A\otimes X \otimes A$, for some  $X\in\cC$.
    \end{definition}
    
A special case considered in the literature (cf.\ \cite{DSS2}*{Sec.\,2.5}) is that of separable algebras, cf.\ \Cref{def:separable}. In this case, one can choose $X=\one$ in \Cref{def:rel-proj-bimodule} to see that any separable algebra is relatively projective.

 Let us comment on the terminology of \Cref{def:rel-proj-bimodule}. The functor $I^b\colon \cC\to \bimodints{A}{\cC}$ is left adjoint to the functor $U\colon \bimodints{A}{\cC}\to \cC$ which forgets the bimodule structure. Note that the functor $U$ is additive, exact, faithful and hence $I^b\dashv U$ is \emph{resolvent pair} \cite{ML2}*{Sec.\, IX.6}. Recall the definition of  a \emph{relatively projective} object for a resolvent pair from \cite{ML2}*{Ch.\, IX}.
By \cite{FGS}*{Lem.\, 2.1}, relatively projective objects for the resolvent pair $I^b\dashv U$ correspond precisely to those $A$-bimodules  that are a direct summand of $I^b(X)$, for some object $X$ in $\cC$.

\begin{definition}\label{def:rel-proj-cat}
We say that a left $\cC$-module  $\cM$ is \emph{relatively projective} if it is equivalent as a $\cC$-module category to $\rmodint{\cC}{A}$ for an algebra $A$ in $\cC$ that is relatively projective as an $A$-bimodule. 
\end{definition}

\begin{lemma}\label{lem:rel-proj-rel-ss}
    If $\cM$ is relatively projective, then $\cM$ is relatively semisimple in the sense of \Cref{def:M-rel-ss}.
\end{lemma}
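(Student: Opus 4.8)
The plan is to reduce the statement to the assertion that the defining algebra of $\cM$ is relatively semisimple in the sense of \Cref{def:proj-alg}. First I would invoke \Cref{def:rel-proj-cat} to pick an algebra $A$ in $\cC$ with $\cM\simeq \rmodint{\cC}{A}$ as left $\cC$-module categories and with the regular $A$-bimodule $A$ a direct summand of an induced bimodule $I^b(X)=A\otimes X\otimes A$ for some object $X$ in $\cC$. The target then becomes: every right $A$-module $N$ is a direct summand of some induced module $I(Y)$, $Y\in\cC$, where $I$ is the induction functor of \eqref{eq:ind-functor}; equivalently, writing $M=A$ for the regular module, every $N$ is a direct summand of $Y\lact M$ for some $Y$ in $\cC$.

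Next I would fix a right $A$-module $N$ and apply the additive functor $N\otimes_A(-)\colon\bimodints{A}{\cC}\to\rmodint{\cC}{A}$ to the direct-summand decomposition $A\underset{\oplus}{\leq}I^b(X)$ in $\bimodints{A}{\cC}$. Since additive functors send direct summands to direct summands, this exhibits $N\cong N\otimes_A A$ as a direct summand of $N\otimes_A(A\otimes X\otimes A)$. A short computation — using only the canonical isomorphism $N\otimes_A A\cong N$ together with the description of the left $\cC$-action on $\rmodint{\cC}{A}$ — identifies the latter with $N\otimes X\otimes A = I(N\otimes X)=(N\otimes X)\lact A$ as right $A$-modules. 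Hence $N$ is a direct summand of $I(N\otimes X)$, so $A$ is a relatively semisimple algebra; in particular $\cM$ is exact by \Cref{lem:rel-semisimple-exact}, and, taking $M=A$ and $Y=N\otimes X$, $\cM$ is relatively semisimple in the sense of \Cref{def:M-rel-ss}.

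I do not expect any real obstacle: this is exactly the argument of \Cref{lem:sep-rel-ss} for separable algebras (the case $X=\one$), with the extra object $X$ merely threaded through the relative tensor product. The only point requiring a moment's care is checking that the isomorphism $N\otimes_A(A\otimes X\otimes A)\cong I(N\otimes X)$ is one of right $A$-modules — that the right $A$-action on $I^b(X)$, which sits on its last tensor factor, matches the regular action on $I(N\otimes X)$ — but this is immediate from the definitions of $I^b$ in \eqref{eq:functor-Ib} and $I$ in \eqref{eq:ind-functor}.
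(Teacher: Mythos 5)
Your proof is correct, and it does not follow the paper's own route; in fact it is the more robust of the two. The paper's proof uses only additivity of the forgetful functor $\bimodints{A}{\cC}\to\rmodint{\cC}{A}$: from the bimodule retraction $A\underset{\oplus}{\leq} I^b(X)$ it deduces that the regular \emph{right} $A$-module $A$ is a direct summand of $I(A\otimes X)$, and from this alone it concludes that $A$ is relatively semisimple. Read literally, that last inference is a jump: the regular module is always induced ($A\cong I(\one)$), so ``the regular module is a summand of an induced module'' holds for every algebra, while \Cref{ex:sep-not-Morita} shows not every algebra is relatively semisimple. What \Cref{def:proj-alg} actually requires is the universally quantified statement that \emph{every} right $A$-module is a summand of an induced one, and that is precisely what your argument delivers: applying the additive functor $N\otimes_A(-)$ to the bimodule splitting gives $N\cong N\otimes_A A\underset{\oplus}{\leq} N\otimes_A I^b(X)\cong I(N\otimes X)$ for arbitrary $N$, the verbatim generalization of \Cref{lem:sep-rel-ss} from $X=\one$ to general $X$. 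The one point of substance you flag --- that $N\otimes_A(A\otimes X\otimes A)\cong N\otimes X\otimes A$ as right $A$-modules, the right action sitting on the last tensor factor on both sides --- is indeed immediate from \eqref{eq:functor-Ib} and \eqref{eq:ind-functor}, and your closing remarks (exactness via \Cref{lem:rel-semisimple-exact}, then \Cref{def:M-rel-ss} with $M=A$ and $Y=N\otimes X$) land the statement. In short, your route proves the lemma and supplies the step the paper's written proof leaves implicit.
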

\begin{proof}
For any $X$ in $\cC$, forgetting the left $A$-action of the $A$-bimodule $I^b(X)$ from \eqref{eq:functor-Ib}, we obtain the right $A$-module $I(A\otimes X)$ from \eqref{eq:ind-functor}.
Hence, if $\cM=\rmodint{\cC}{A}$ for $A$ relatively projective as a bimodule, then  $A$ is a direct summand of the right $A$-module $I(A\otimes X)$ because the forgetful functor is additive. This means that $A$ is relatively semisimple as an algebra in $A$, see \Cref{def:proj-alg}, and hence $\cM$ is relatively semisimple.
\end{proof}

Relative projectivity of algebras  as bimodules is  stable under the tensor product $A\otimes B=A\otimes^\psi B$ from \Cref{def:tensor-algebras}.

\begin{proposition}\label{prop:rel-proj-AB}
Assume $\cC$ is braided. If $A$ and $B$ are relatively projective as bimodules, then $A\otimes B$ is relatively projective as a bimodule.
\end{proposition}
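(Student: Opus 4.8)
The idea is to manufacture, for a suitable object $Z\in\cC$, a presentation of the regular $(A\otimes^\psi B)$-bimodule as a direct summand of the induced bimodule $I^b(Z)=(A\otimes^\psi B)\otimes Z\otimes(A\otimes^\psi B)$ directly out of the given presentations of $A$ and $B$, using the monoidal bicategory structure on $\Mor_\cC$ (\Cref{def:MorC}, \Cref{def:tensor-algebras}). Recall that under $\lmod{\cC}\simeq\Mor_\cC$ (\Cref{thm:morita}) the category $\bimodints{A}{\cC}$ is the category of endo-$1$-morphisms of $A$ in $\Mor_\cC$, the regular bimodule being the identity $1$-morphism, and that the monoidal product of $\Mor_\cC$ sends a pair of $1$-morphisms $M\colon A\to A$, $N\colon B\to B$ to a $1$-morphism $M\otimes N\colon A\otimes^\psi B\to A\otimes^\psi B$ whose underlying object in $\cC$ is $M\otimes N$ with $(A\otimes^\psi B)$-bimodule structure obtained by braiding the inner algebra factor past the inner bimodule factor. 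This yields a bifunctor $(-)\otimes^\psi(-)\colon\bimodints{A}{\cC}\times\bimodints{B}{\cC}\to\bimodints{A\otimes^\psi B}{\cC}$ which is additive in each variable (the underlying object being an ordinary tensor product in $\cC$), hence preserves direct summands; moreover, by unitality of this monoidal structure it sends the pair of regular bimodules $(A,B)$ to the regular $(A\otimes^\psi B)$-bimodule.

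The one nontrivial input I would establish is an isomorphism of $(A\otimes^\psi B)$-bimodules $\Theta\colon I^b_A(X)\otimes^\psi I^b_B(Y)\isomorph I^b_{A\otimes^\psi B}(X\otimes Y)$ for all $X,Y\in\cC$, where $I^b_A(X)=A\otimes X\otimes A$, etc. At the level of objects of $\cC$ this is a reordering of the six tensor factors of $A\otimes X\otimes A\otimes B\otimes Y\otimes B$ into $A\otimes B\otimes X\otimes Y\otimes A\otimes B$, realized as a composite of three braiding isomorphisms (moving the inner $B$ to the left past $A$ and $X$, and moving the right-hand $A$ to the left past $Y$), chosen so that the two "left" and the two "right" algebra copies land on the multiplicative factors of $I^b_{A\otimes^\psi B}(X\otimes Y)$. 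That this $\cC$-isomorphism intertwines the left and right $(A\otimes^\psi B)$-actions is a diagram chase reconciling the $\psi$-twisted multiplication of \Cref{def:tensor-algebras} with the $\psi$-twisted external bimodule structure; it uses only naturality of the braiding and the hexagon axioms, and is of exactly the routine nature of the verifications whose proofs are omitted around \Cref{def:tensor-algebras} and \Cref{lem:tensor-ops}, so I would reduce it to those identities rather than write it out in full.

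Granting $\Theta$, the conclusion is formal. By \Cref{def:rel-proj-bimodule} pick $X,Y\in\cC$ together with $A$-bimodule maps $\iota_A\colon A\to I^b_A(X)$, $\pi_A\colon I^b_A(X)\to A$ with $\pi_A\iota_A=\id_A$, and $B$-bimodule maps $\iota_B,\pi_B$ with $\pi_B\iota_B=\id_B$. Applying the bifunctor $(-)\otimes^\psi(-)$ gives $(A\otimes^\psi B)$-bimodule maps $\iota_A\otimes\iota_B$ and $\pi_A\otimes\pi_B$ between $A\otimes^\psi B$ (identified as the regular bimodule via the remark above) and $I^b_A(X)\otimes^\psi I^b_B(Y)$, with $(\pi_A\otimes\pi_B)(\iota_A\otimes\iota_B)=(\pi_A\iota_A)\otimes(\pi_B\iota_B)=\id_{A\otimes^\psi B}$. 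Composing with $\Theta$, the maps $\Theta\circ(\iota_A\otimes\iota_B)$ and $(\pi_A\otimes\pi_B)\circ\Theta^{-1}$ exhibit $A\otimes^\psi B$ as a direct summand of $I^b_{A\otimes^\psi B}(X\otimes Y)$, so $A\otimes^\psi B$ is relatively projective as a bimodule with witnessing object $Z=X\otimes Y$. The main obstacle is thus the construction and bimodule-compatibility of $\Theta$ — pinning down the correct permutation of tensor factors and checking it against both the twisted algebra multiplication and the twisted bimodule actions; everything else is bookkeeping with retracts under an additive bifunctor.
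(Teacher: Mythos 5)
Your proposal is correct and follows essentially the same route as the paper: the paper's explicit maps $\iota_{A\otimes B}$ and $\pi_{A\otimes B}$ are precisely your $\Theta\circ(\iota_A\otimes\iota_B)$ and $(\pi_A\otimes\pi_B)\circ\Theta^{-1}$, with the braidings in those formulas playing the role of your reordering isomorphism $\Theta$, and the paper likewise leaves the bimodule-compatibility check to the reader. (Only a cosmetic slip: the right-hand copy of $A$ must move to the \emph{right} past $Y$ to reach the order $A\otimes B\otimes X\otimes Y\otimes A\otimes B$.)
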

\begin{proof}
Assume that both $A$ and $B$ are relatively projective as bimodules. Thus, we find objects $X$ and $Y$ in $\cC$ and bimodule maps 
\begin{align*}
    \iota_A&\colon A\hookrightarrow A\otimes X \otimes A, & \pi_A\colon A\otimes X \otimes A \twoheadrightarrow A, \\
\iota_B&\colon B\hookrightarrow B\otimes Y \otimes B, & \pi_B\colon B\otimes Y \otimes B \twoheadrightarrow B,
\end{align*}
of bimodules over $A$, respectively, over $B$, such that 
\begin{align*}
    \pi_A\iota_A=\id_A, &&     \pi_B\iota_B=\id_B.
\end{align*}
Now consider the following maps
\begin{align*}
    \iota_{A\otimes B}=&(\id_{A}\otimes \psi^{-1}_{X,B}\otimes \psi^{-1}_{A,Y}\otimes \id_B)(\id_{A\otimes X}\otimes \psi^{-1}_{A,B}\otimes \id_{Y\otimes B})(\iota_A\otimes \iota_B)\colon \\ &A\otimes B \hookrightarrow A\otimes B\otimes X\otimes Y \otimes A \otimes B,\\
\pi_{A\otimes B}=&(\pi_A\otimes \pi_B)(\id_{A\otimes X}\otimes \psi_{B,A}\otimes \id_{Y\otimes B})(\id_{A}\otimes \psi_{B,X}\otimes \psi_{Y,A}\otimes \id_B)\colon \\ &  A\otimes B\otimes X\otimes Y \otimes A \otimes B\twoheadrightarrow A\otimes B.
\end{align*}
One checks that, indeed, $\iota_{A\otimes B}$ and $\pi_{A\otimes B}$ are morphisms of $A\otimes B$-bimodules with respect to the regular left and right actions of this tensor product algebra. 
It follows directly by construction that 
$\pi_{A\otimes B}\iota_{A\otimes B}=\id_{A\otimes B}.$
Thus, $A\otimes B$ is relatively projective as a bimodule.
\end{proof}

We obtain the following equivalent characterization of fully exact algebras, for which we recall the $\cC$-action on $A$-bimodules in $\cC$ from \eqref{eq:bimod1}--\eqref{eq:left-A-bimodule}. 

\begin{proposition}\label{lem:A-fully-exact-equiv}
    Let $A$ be an algebra in $\cC$ with product map $m_A\colon A\otimes A \to A$ and $P$ some projective object in $\cC$.
    Then the following are equivalent:
    \begin{enumerate}
        \item $A$ is fully exact.
        \item The map $P\lact m_A$ splits as a morphism in $\bimodints{A}{\cC}$.
        \item $P\lact A$ is projective in 
        $\bimodints{A}{\cC}$.
    \end{enumerate}
\end{proposition}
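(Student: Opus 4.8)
The plan is to first dispose of the role of the chosen projective $P$, then prove the cycle of implications among (1), (2), (3). By Corollary \ref{cor:proj-exact-algebras}, $A$ is fully exact if and only if $A^\psi\otimes A$ is exact. Under the equivalence of left $\cC$-module categories $\cC^*_\cM = \Fun_\cC(\cM,\cM)\simeq \rmodint{\cC}{A^\psi\otimes A}$ from Proposition \ref{prop:Funrex-modules}, which in the proof of Proposition \ref{prop:rel-Del-modules} is realized via $\bimodints{A}{\cC}\simeq \rmodint{\cC}{A^\psi\otimes A}$, the $\cC$-action on $\bimodints{A}{\cC}$ of \eqref{eq:bimod1}--\eqref{eq:left-A-bimodule} corresponds to the standard $\cC$-action on $\rmodint{\cC}{A^\psi\otimes A}$. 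Under this identification, the regular bimodule $A$ corresponds to the regular module $A^\psi\otimes A$ (this is the functor $G$ of the proof of Proposition \ref{prop:rel-Del-modules} applied to $A$, which evidently sends the regular $A$-bimodule to the regular $A^\psi\otimes A$-module), and the induced bimodule $I^b(X) = A\otimes X\otimes A$ corresponds, up to isomorphism, to the induced module $I(X) = X\otimes(A^\psi\otimes A)$ from \eqref{eq:ind-functor} for the algebra $A^\psi\otimes A$. Thus the statement translates verbatim into the corresponding statement about the module category $\rmodint{\cC}{A^\psi\otimes A}$, and the whole proposition reduces to a single lemma about an arbitrary algebra $B$ in $\cC$: \emph{$\rmodint{\cC}{B}$ is exact if and only if $P\lact m_B$ splits as a $B$-module map for one (equivalently every) projective $P$ in $\cC$, if and only if $P\lact B$ is projective in $\rmodint{\cC}{B}$}. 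I will prove this lemma directly, which is cleaner than carrying the bimodule decorations throughout.

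For this lemma: $(3)\Rightarrow(1)$ is the standard argument that any object $M$ of $\rmodint{\cC}{B}$ admits a surjection from some $I(X)$ with $X$ projective (take $X$ a projective cover of $M$ regarded in $\cC$, using that $I$ is right exact and left adjoint to the forgetful functor), so if we only knew $P\lact B = I(P)$ projective for $P = P_\one$ the projective cover of $\one$, we would still need genuine exactness. Here the cleaner route is: $I$ preserves projectives (it is left adjoint to an exact functor), so $I(P)$ is projective for every projective $P$; but actually what we want is the converse direction, establishing exactness \emph{from} the hypothesis. So I will run the cycle as $(1)\Rightarrow(3)\Rightarrow(2)\Rightarrow(1)$. $(1)\Rightarrow(3)$: if $\cM=\rmodint{\cC}{B}$ is exact then $P\lact B$ is projective in $\cM$ by Definition \ref{def:exact-M} directly. $(3)\Rightarrow(2)$: the multiplication $m_B\colon B\otimes B\to B$ is a surjection of $B$-modules with $B\otimes B = I(B)$ the free module; applying the exact functor $P\lact(-)$ we get a surjection $P\lact m_B\colon P\lact I(B)\twoheadrightarrow P\lact B$ of $B$-modules; since $P\lact B$ is projective by (3), this surjection splits. $(2)\Rightarrow(1)$: from a splitting we get that $P\lact B$ is a direct summand of $P\lact(B\otimes B)\cong I(P\otimes B)$ — using that $P\lact(B\otimes B) = P\otimes B\otimes B$ with the right $B$-action on the last factor is exactly the free module $I(P\otimes B)$ — hence $P\lact B = I(P)$ is projective in $\cM$. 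Now take $P = P_\one$ the projective cover of $\one_\cC$; since every projective $Q$ in $\cC$ is a direct summand of $P_\one\otimes Q'$ for a suitable $Q'$ (projective objects of $\cC$ form a tensor ideal and $\one$ generates in the appropriate sense — more simply, $Q$ is a direct summand of $P_\one \otimes Q$ when $Q$ is projective, as there is a split surjection $P_\one\otimes Q \to \one \otimes Q = Q$ after noting $P_\one\otimes Q$ is projective and surjects onto the projective $Q$), and since projectives in $\cM$ form a tensor ideal under $\lact$ once we know $P_\one\lact B$ is projective... actually the clean statement: $Q\lact M$ for $M\in\cM$ and $Q$ projective in $\cC$ is a summand of $Q\lact I(X) = I(Q\otimes X)$ once $M$ is a summand of $I(X)$; and every $M$ is a quotient of some $I(X)$; combined with $(2)\Rightarrow$ ``$I(P_\one)$ projective'' and the fact that this forces all $I(P)$ with $P$ projective to be projective. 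I will phrase $(2)\Rightarrow(1)$ as: (2) gives $I(P_\one) = P_\one\lact B$ projective in $\cM$; then for arbitrary $M\in\cM$ and arbitrary projective $P\in\cC$, write $P$ as a summand of $P_\one\otimes X$ for some $X$ (possible since $P_\one\otimes X \twoheadrightarrow \one\otimes X = X$ can be adapted; more directly, one uses that $P$ projective implies $\mathrm{Hom}_\cC(P_\one, -)$-related covering, i.e. $P$ is a quotient hence summand of a sum of copies of $P_\one\otimes(\text{something})$) — here I will instead just invoke that, by Lemma \ref{lem:AC-proj-M-exact}-type reasoning or directly, it suffices to check exactness on $P_\one$, since $P\lact M$ for general projective $P$ is a summand of $P_\one \lact (X\lact M)$ for suitable $X$.

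The main obstacle I anticipate is the bookkeeping in the reduction step — verifying carefully that under the equivalence $\bimodints{A}{\cC}\simeq\rmodint{\cC}{A^\psi\otimes A}$ the induced bimodule $I^b(X)=A\otimes X\otimes A$ really goes to an \emph{induced} module $I(X)$ over $A^\psi\otimes A$ (up to reordering tensor factors by braidings), and that the $\cC$-module structures match; this uses Lemma \ref{lem:tensor-ops} and the explicit form of the functor $G$ in the proof of Proposition \ref{prop:rel-Del-modules}, plus Remark \ref{rem:rel-Del-bimodule-realization}. A secondary point requiring a short argument is the claim that exactness of $\rmodint{\cC}{B}$ can be tested on the single projective $P_\one$ rather than all projectives, i.e.\ the independence of the choice of $P$ in the statement; this follows because any projective $Q\in\cC$ is a direct summand of $P_\one\otimes Q$ (as $P_\one\otimes Q$ is projective and there is a surjection onto the projective object $Q$ coming from the counit $P_\one\twoheadrightarrow\one$ tensored with $Q$), so $Q\lact B$ is a summand of $P_\one\lact(Q\lact B)$, and $Q\lact B\cong P_\one\lact(Q\otimes B)/\!\!\sim$... more precisely $Q\lact B$ is a direct summand of $(P_\one\otimes Q)\lact B = P_\one\lact(Q\lact B)$, which reduces projectivity of $Q\lact B$ to projectivity of $P_\one\lact M$ for all $M$, i.e.\ to the $P=P_\one$ case, once one knows $I(P_\one)$ projective $\Rightarrow$ $P_\one\lact M$ projective for all $M$ via the split-surjection-from-$I(X)$ argument. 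I will spell this out in one or two lines rather than belaboring it.
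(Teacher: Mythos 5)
Your reduction step contains a fatal misidentification. The functor $G\colon \bimodints{A}{\cC}\to\rmodint{\cC}{A^\psi\otimes A}$ from the proof of \Cref{prop:rel-Del-modules} is the identity on underlying objects, so it sends the regular $A$-bimodule $A$ to a right $A^\psi\otimes A$-module structure on the \emph{object} $A$ (the ``diagonal'' module, the analogue of $A$ viewed as an $A^{\oop}\otimes A$-module in classical ring theory), \emph{not} to the free module $A^\psi\otimes A$, whose underlying object is $A\otimes A$. Consequently the ``single lemma about an arbitrary algebra $B$'' to which you reduce everything is false: for the regular module $B$, the object $P\lact B=I(P)$ is \emph{always} projective in $\rmodint{\cC}{B}$ (induction is left adjoint to the exact forgetful functor, hence preserves projectives), and $P\lact m_B$ \emph{always} splits (a surjection onto a projective splits), so your conditions (2) and (3) hold for every algebra $B$, whereas exactness of $\rmodint{\cC}{B}$ certainly does not. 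Your implication $(2)\Rightarrow(1)$ breaks down at exactly this point: from projectivity of $I(P_\one)$ you cannot deduce that $P_\one\lact M$ is projective for arbitrary $M$, since a general $M$ is only a \emph{quotient} of some $I(X)$, not a direct summand, and that quotient map does not split.

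The content of the proposition lives precisely in the fact that the regular bimodule $A$ is generically \emph{not} projective in $\bimodints{A}{\cC}$, so ``$P\lact A$ is projective as a bimodule'' is a genuine condition on $A$. The paper's proof therefore stays inside $\bimodints{A}{\cC}$: by \Cref{lem:AC-exact-equiv} and the equivalence $E$ of \Cref{prop:fun-A-bimod-B}, full exactness of $A$ is equivalent to $E\circ A_\cM$ preserving projectives, and $E(A_\cM(P))=P\lact A$ is exactly the bimodule in condition (3); the equivalence of (2) and (3) then follows from the split surjection $I^b(P)\cong P\lact(A\otimes A)\xrightarrow{P\lact m_A}P\lact A$ of $A$-bimodules, and the independence of the choice of $P$ from the fact that any non-zero projective generates the thick tensor ideal of projectives in $\cC$. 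Your observation that every projective $Q$ is a retract of $P_\one\otimes Q$ is correct and would serve for that last step, but it cannot repair the reduction; you would need to redo the argument with the diagonal module in place of the free one, at which point you have essentially reconstructed the paper's bimodule-level proof.
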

\begin{proof}
Recall that any projective object in $\bimodints{A}{\cC}$ is a direct summand of an induced bimodule $I^b(Q)$, for $Q$ a projective object in $\cC$, cf.\ \cite{EGNO}*{Sec.\,7.8}. We saw in \Cref{sec:exactness-centralizer} that the functor $I^b$ is $\cC$-linear. In particular, we have isomorphisms of $A$-bimodules
    $Q\lact I^b(X) \cong I^b(Q\otimes X),$
    for objects $Q,X$ in $\cC$.
By \Cref{lem:AC-exact-equiv}, $\rmodint{\cC}{A}$ is fully exact 
if and only if the composition 
$$\cC \xrightarrow{~A_\cM~} \cC^*_\cM\xrightarrow{E} \bimodints{A}{\cC},$$
where $E$ is the equivalence from \Cref{prop:fun-A-bimod-B}, preserves projective objects. Hence, applying \Cref{lem:AC-braided}, $A$ is fully exact if and only if for any projective $P$,
\begin{equation}\label{eq:EA-P-act}
(E\circ A_\cM)(P)=E(P\lact \id_\cM)=P\lact A,
\end{equation}
is a direct summand of $I^b(Q)$ for some projective $Q$.
Equivalently, $P\lact A$ is projective in $\bimodints{A}{\cC}$. Since $m_A\colon A\otimes A\to A$ is a morphism in the left $\cC$-module category $\bimodints{A}{\cC}$, $P\lact m_A$ is also a morphism in this category. Hence, the composition
$$I^b(P)=I^b(P\lact\one)\isomorph P\lact I^b(\one)=P\lact (A\otimes A)\xrightarrow{P\lact m_A}P\lact A,$$
where the first isomorphism is the $\cC$-linear structure of the functor $I^b$ from \Cref{eq:functor-Ib},
is a surjective morphism in $\bimodints{A}{\cC}$ and hence splits. Thus, we can choose $Q=P$ and $P\lact A$ is a direct summand of $P\lact (A\otimes A)$ as an $A$-bimodule in $\cC$. We have now shown that condition (1) is equivalent to conditions (2) or (3)  holding for \emph{all} projective objects $P$. We shown below that (1) is also equivalent to these conditions for at least one projective.

Now recall from~\cite{BGR}*{Prop.\,2.3(1)} that the (thick) tensor ideal of projective objects in $\cC$ is generated (in the sense of taking retracts) by any fixed non-zero projective object $P$. 
Using additivity of the $\lact$ functor, we thus can replace  the action of any projective object in~\eqref{eq:EA-P-act} by the action of $X\otimes P$ for one fixed projective $P$ and some $X\in \cC$.  The statement now follows from the fact that for any $X$, the functor $X\lact (-)$ preserves projective objects in $\cM$.
\end{proof}

The equivalent property (2) of a fully exact algebra $A$ of \Cref{lem:A-fully-exact-equiv} may be referred to by saying that  $A$ is \emph{projectively separable},
recall our discussion in~\Cref{secIntro:sep}.
We emphasize that being separable, or, more generally, relatively projective as a bimodule is a sufficient condition for being fully exact.

\begin{proposition}\label{prop:rel-proj-fully-exact}
    If $A$ is relatively projective as a bimodule, then $A$ is fully exact. In particular, if $A$ is separable, then $A$ is fully exact.
\end{proposition}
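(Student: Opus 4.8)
The plan is to deduce this directly from the characterization of fully exact algebras in \Cref{lem:A-fully-exact-equiv}, in particular from the equivalence of condition (1) with condition (3) there: an algebra $A$ in $\cC$ is fully exact if and only if $P\lact A$ is a projective object of $\bimodints{A}{\cC}$ for some (equivalently, any) non-zero projective object $P$ of $\cC$. So the entire task reduces to showing that relative projectivity of $A$ as a bimodule forces $P\lact A$ to be a projective $A$-bimodule.

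First I would unfold the hypothesis: by \Cref{def:rel-proj-bimodule}, $A$ being relatively projective as a bimodule means that $A$ is a direct summand, \emph{inside} $\bimodints{A}{\cC}$, of an induced bimodule $I^b(X)=A\otimes X\otimes A$ for some object $X$ of $\cC$. Applying the additive, $\cC$-linear functor $P\lact(-)$ to this split inclusion of $A$-bimodules shows that $P\lact A$ is a direct summand of $P\lact I^b(X)$, and the $\cC$-linear structure of $I^b$ recalled in \Cref{sec:exactness-centralizer} identifies the latter with $I^b(P\otimes X)$. Now two standard facts finish the argument: $P\otimes X$ is projective in $\cC$ because projective objects form a (thick) tensor ideal (see \Cref{lem:tensor-cat-properties}), and $I^b$ sends projective objects of $\cC$ to projective $A$-bimodules, being left adjoint to the exact, faithful forgetful functor $U\colon\bimodints{A}{\cC}\to\cC$. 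Hence $I^b(P\otimes X)$ is projective, so its direct summand $P\lact A$ is projective in $\bimodints{A}{\cC}$, and \Cref{lem:A-fully-exact-equiv} yields that $A$ is fully exact.

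For the final assertion, I would simply note that a separable algebra $A$ is automatically relatively projective as a bimodule: a section of $m_A\colon A\otimes A\to A$ in $\bimodints{A}{\cC}$ exhibits $A$ as a direct summand of $A\otimes A = A\otimes\one\otimes A = I^b(\one)$, i.e.\ one may take $X=\one$ in \Cref{def:rel-proj-bimodule} (as already observed after that definition). The claim then follows from the first part. I do not expect a genuine obstacle here; the one point that needs a little care is that the splitting must be taken in the category of $A$-bimodules rather than merely of right $A$-modules, so that applying $P\lact(-)$ keeps us inside $\bimodints{A}{\cC}$ and lets us compare with $I^b(P\otimes X)$ — but this is exactly the content of \Cref{def:rel-proj-bimodule}.
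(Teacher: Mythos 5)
Your proposal is correct and follows essentially the same route as the paper: both apply the additive $\cC$-linear functor $P\lact(-)$ to the splitting $A\mid I^b(X)$, identify $P\lact I^b(X)\cong I^b(P\otimes X)$ as a projective $A$-bimodule, and conclude via \Cref{lem:A-fully-exact-equiv}; the separable case is handled identically by taking $X=\one$. Your write-up merely makes explicit two facts the paper leaves implicit (that $P\otimes X$ is projective and that $I^b$ preserves projectivity), which is a reasonable elaboration rather than a different argument.
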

\begin{proof}
Assume that $A$ is a direct summand of the $A$-bimodule $I^b(X)$. Let $P$ be a projective object in $\cC$. Then, because the action functor is additive in both components, $P\lact A$,  is a direct summand of the projective $A$-bimodule 
$P\lact I^b(X)\cong I^b(P\otimes X)$. 
Hence, the claim follows from \Cref{lem:A-fully-exact-equiv}.
\end{proof}

We will see later in \Cref{lem:separable-Sweedler} that there are examples of fully exact module categories that are not relatively projective.

\begin{corollary}
    For $\cC$ braided, the full $2$-subcategory on relatively projective
    $\cC$-module categories is a monoidal $2$-subcategory of $\fexmod{\cC}$.
\end{corollary}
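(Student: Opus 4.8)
The plan is to verify two facts and then invoke results already in the excerpt. First, that the class of relatively projective $\cC$-module categories contains the monoidal unit of $\lmod{\cC}$ and is closed under the relative Deligne product $\boxtimes_\cC$; this, together with \Cref{thm:CMod-monoidal-2-cat}, makes the full $2$-subcategory it spans a monoidal $2$-subcategory of $\lmod{\cC}$. Second, that this $2$-subcategory is contained in $\fexmod{\cC}$, which is exactly the content of \Cref{prop:rel-proj-fully-exact}. Fullness on $1$- and $2$-morphisms is part of the assertion; once the objects are closed under $\boxtimes_\cC$ and include the unit, all structural data of the monoidal $2$-category (associators, unitors, interchangers) are automatically $1$- and $2$-morphisms among objects of the subcategory, so no further checks are needed there.

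For the unit, I would note that $\cC\simeq\rmodint{\cC}{\one}$ and that $\one$ is a separable algebra, hence relatively projective as a bimodule by the remark following \Cref{def:rel-proj-bimodule} (taking $X=\one$).

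For closure under $\boxtimes_\cC$, I would take relatively projective $\cM_1,\cM_2$ and choose algebras $A,B$ in $\cC$, relatively projective as bimodules, with $\cM_1\simeq\rmodint{\cC}{A}$ and $\cM_2\simeq\rmodint{\cC}{B}$. Then \Cref{prop:rel-Del-modules} gives an equivalence of left $\cC$-module categories $\cM_1\boxtimes_\cC\cM_2\simeq\rmodint{\cC}{A\otimes^\psi B}$, and \Cref{prop:rel-proj-AB} says $A\otimes^\psi B$ is again relatively projective as a bimodule; hence $\cM_1\boxtimes_\cC\cM_2$ is relatively projective by \Cref{def:rel-proj-cat}.

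There is no real obstacle here: the one substantive ingredient is \Cref{prop:rel-proj-AB} (closure of relative projectivity under $\otimes^\psi$, where the braiding enters through the explicit splittings $\iota_{A\otimes B},\pi_{A\otimes B}$), which has already been proved; everything else is formal bookkeeping. Combining the two observations with \Cref{prop:rel-proj-fully-exact} then yields the corollary.
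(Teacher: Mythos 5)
Your proposal is correct and follows essentially the same route as the paper: the substantive step is \Cref{prop:rel-proj-AB} combined with \Cref{prop:rel-Del-modules} for closure under $\boxtimes_\cC$, and \Cref{prop:rel-proj-fully-exact} for the inclusion into $\fexmod{\cC}$. Your explicit verification that the unit $\cC\simeq\rmodint{\cC}{\one}$ is relatively projective (via separability of $\one$) is a small point the paper leaves implicit, but it is correct and harmless.
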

\begin{proof}
    It is a direct consequence of \Cref{prop:rel-proj-AB} with $\cM=\rmodint{\cC}{A}$, $\cN=\rmodint{\cC}{B}$, for some choice of algebras $A$ and $B$ that are relatively projective as bimodules, that $\cM\boxtimes_\cC\cN$ is also relatively projective. By \Cref{prop:rel-proj-fully-exact}, we obtain a monoidal $2$-subcategory of the monoidal $2$-category of fully exact $\cC$-module categories. 
\end{proof}

\begin{remark}\label{rem:separable-semisimple}
Assume that $\cC$ is a finite tensor category. We observe that $A$ is projective as an $A$-bimodule if and only if the object $X$ in \Cref{def:rel-proj-bimodule} can be chosen to be projective. 
If $\cC$ is a fusion category, $A$ being separable is equivalent to $A$ being projective as an $A$-bimodule, which is also equivalent to $A$ being relatively projective as an $A$-bimodule. Hence, in the semisimple case, these properties are stable under Morita equivalence of algebras.
When $\cC$ is non-semisimple, the concepts of separable algebra and algebras being relatively projective as bimodules are \emph{not} stable  under Morita equivalence by \Cref{ex:sep-not-Morita}  because  every algebra that is relatively projective as an $A$-bimodule is, in particular, relatively semisimple by \Cref{lem:rel-proj-rel-ss}.
\end{remark}

We can also consider $\cC$-module categories $\cM=\rmodint{\cC}{A}$, where $A$ is \emph{projective} in $\bimodints{A}{\cC}$. This condition can be shown to be stable under Morita equivalence of internal algebras. However, at least in characteristic zero, we see that  there are no non-trivial algebras with this property for $\cC$ a non-semisimple finite braided tensor category.

\begin{corollary}\label{cor:A-proj-sep}
Assume that $\cha\Bbbk=0$. If $A$ is a non-zero algebra in $\cC$ which is projective as an object in $\bimodints{A}{\cC}$, then $\cC$ is semisimple and thus $A$ is separable.
\end{corollary}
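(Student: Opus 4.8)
The statement says: if $\cha\Bbbk=0$ and $A$ is a non-zero algebra in $\cC$ that is projective as an object of $\bimodints{A}{\cC}$, then $\cC$ is semisimple (and hence $A$ is separable). The key point is that being projective as an $A$-bimodule is a stronger condition than being \emph{relatively} projective as a bimodule, so \Cref{prop:rel-proj-fully-exact} applies and $A$ is fully exact. By \Cref{lem:A-fully-exact-equiv} this is equivalent to $P_\one\lact A$ being projective in $\bimodints{A}{\cC}$ for the projective cover $P_\one$ of $\one$ — but in fact we can extract something much stronger directly from the hypothesis that $A$ itself is projective.

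First I would invoke \Cref{rem:separable-semisimple}: $A$ is projective as an $A$-bimodule if and only if the object $X$ in \Cref{def:rel-proj-bimodule} can be chosen projective, i.e.\ $A$ is a direct summand of $I^b(Q) = A\otimes Q\otimes A$ for some projective $Q$ in $\cC$. Now I would apply the faithful exact monoidal \action functor, or more elementarily, forget the left $A$-module structure: the $A$-bimodule map $I^b(Q)\twoheadrightarrow A\hookrightarrow I^b(Q)$ realizing $A$ as a retract remains a retract after forgetting to right $A$-modules, so $A$ is a direct summand of $I(Q\otimes A) = (Q\otimes A)\otimes A$ (cf.\ the proof of \Cref{lem:rel-proj-rel-ss}). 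Forgetting further to $\cC$, the regular module $A$ becomes a direct summand of $Q\otimes A\otimes A$ in $\cC$, which is projective since $Q$ is projective and projectives form a tensor ideal (\Cref{lem:tensor-cat-properties}). Hence $A$ is a projective object of $\cC$.

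Now I would use the unit: the unit map $u\colon \one\to A$ is a morphism in $\cC$, and I want to see that $\one$ is a retract of $A$. This is where a trace/dimension argument enters and where characteristic zero is needed. One route: $A$ projective in $\cC$ forces $A$ to also be injective (projectives = injectives in a finite tensor category when... no — better to argue directly via the counit). Actually the cleanest approach: since $A$ is a projective object and $\one$ is simple, in $\cha\Bbbk=0$ one shows the Frobenius–Perron dimension (or, if $\cC$ is not fusion, the virtual/categorical dimension argument from \cite{BGR} combined with the simplicity of $\one$ used elsewhere in the paper, e.g.\ in \Cref{cor:A-proj-sep}'s sibling statements) forces $\one$ to be a direct summand of $A$ as an $A$-bimodule via the separability-type splitting. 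Concretely: $A$ fully exact plus $A$ projective in $\cC$ — combined with \Cref{lem:A-fully-exact-equiv}(2) applied with $P=A$ (legitimate since $A$ is projective and nonzero) — gives that $A\lact m_A$ splits as an $A$-bimodule map. Restricting this splitting along $u\otimes\id\colon A\otimes A\to A\otimes A$ (using that $u$ is a map in $\cC$ and composing with the $A$-bimodule structure maps) produces a section of $m_A\colon A\otimes A\to A$ as an $A$-bimodule map, i.e.\ $A$ is separable. Then a separable algebra in $\cC$ whose underlying object is projective forces $\cC$ to be semisimple: $\rmodint{\cC}{A}$ is a relatively semisimple (\Cref{lem:sep-rel-ss}) hence exact module category, but since $A$ is projective in $\cC$, the induction functor $I$ sends \emph{all} of $\cC$ to projectives and every object of $\rmodint{\cC}{A}$ is a retract of an induced module (relative semisimplicity), so every object of $\rmodint{\cC}{A}$ is projective; using that $\rmodint{\cC}{A}\simeq \cC$ fails in general, I instead note that $\iHom(A,-)\colon \cC \to \rmodint{\cC}{\iEnd(A)}$-type reasoning, or more simply: $\one$ is a retract of $A$ (from separability $+$ the unit) and $A$ is projective, so $\one$ is projective, hence $\cC$ is semisimple by \Cref{ex:exact}(i)-type reasoning (every object $Y\cong \one\otimes Y$ is a retract of $A\otimes Y$ which is projective).

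\textbf{Main obstacle.} The delicate step is passing from "$A$ is projective in $\cC$ and fully exact" to "$\one$ is a retract of $A$" — this is exactly where $\cha\Bbbk=0$ is essential and where the simplicity of $\one$ (flagged as an assumption one cannot drop, per the Conventions section) gets used. The argument should run: from \Cref{lem:A-fully-exact-equiv}(2) with $P = A$, the multiplication $A\lact m_A\colon A\lact(A\otimes A)\to A\lact A$ splits in $\bimodints{A}{\cC}$; precomposing the would-be section with the $A$-bimodule map $A\lact(u\otimes u)$ (built from the unit, which lives in $\cC$, tensored into the $A$-action) and using the unit axioms of $A$ yields a bimodule section of $m_A$ itself after cancelling the $A\lact(-)$, giving separability — but the cancellation of the $A\lact$ requires $A$ to be a "faithful" enough object, precisely that the trace $\ev_A\coev_A$ is invertible, which is the $\cha\Bbbk=0$ input (via $\dim A \neq 0$ when $A$ is projective, cf.\ the dimension results of \cite{BGR}). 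I expect writing this cancellation carefully, rather than the subsequent semisimplicity deduction, to be the crux.
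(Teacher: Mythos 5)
Your first step---$A$ projective in $\bimodints{A}{\cC}$ implies relatively projective as a bimodule, hence $\cM=\rmodint{\cC}{A}$ is fully exact via \Cref{prop:rel-proj-fully-exact}---agrees with the paper, and your intermediate observation that $A$ is then projective as an object of $\cC$ (a retract of $A\otimes Q\otimes A$ with $Q$ projective) is correct, though the paper never needs it. The rest of your argument has a genuine gap, located exactly where you flag it. ``Cancelling the $A\lact(-)$'' to turn a bimodule splitting of $A\lact m_A$ into a splitting of $m_A$ is not a valid move: the whole point of the paper's distinction between fully exact (``projectively separable'') and separable algebras is that $P\lact m_A$ splitting for a projective $P$ does \emph{not} imply $m_A$ splits (e.g.\ the $\cC$-modules $\cV_\lambda$, $\lambda\neq 0$, over Sweedler's Hopf algebra are fully exact but not separable, \Cref{lem:separable-Sweedler}), so any such cancellation must use the hypothesis in an essential way that you do not supply. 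The mechanism you propose---invertibility of $\ev_A\circ\coev_A$ because ``$\dim A\neq 0$ when $A$ is projective''---is backwards: a nonzero composite $\one\to A\otimes A^*\to \one$ would split $\one$ off the projective object $A\otimes A^*$ and already force $\cC$ semisimple, so in the non-semisimple case (the case to be excluded) categorical dimensions of projectives vanish; moreover $\cC$ is not assumed pivotal, so ``$\dim A$'' is not even defined. Finally, ``$\one$ is a retract of $A$ in $\cC$'' is not a formal consequence of separability plus the unit map.

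The paper closes the argument with a short observation you miss entirely: $A$ is the monoidal \emph{unit} of $\bimodints{A}{\cC}\cong\cC^*_\cM$, which is a multitensor category because $\cM$ is exact; since projective objects form a tensor ideal (\Cref{lem:tensor-cat-properties}), a multitensor category with projective unit is semisimple, so $\cC^*_\cM$ is semisimple. The Morita $2$-equivalence $\lmod{\cC}\simeq\lmod{\cC^*_\cM}$ of \cite{EGNO}*{Thm.\,7.12.16} (using faithfulness of the action, \Cref{lem:AM-exact-faithful}) then identifies $\cC$ with $\Fun_{\cC^*_\cM}(\cN,\cN)$ for the exact $\cC^*_\cM$-module $\cN=\Fun_\cC(\cM,\cC)$, and over the fusion category $\cC^*_\cM$ in characteristic zero exactness forces this functor category to be semisimple by \Cref{cor:semisimple-fully-exact}; this is the only place $\cha\Bbbk=0$ enters. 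If you want to repair your route, the missing idea is that the hypothesis is a projectivity statement about the unit object of a tensor category, not about a general object of a module category.
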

\begin{proof}
Assume that $A$ is projective in $\bimodints{A}{\cC}$. Then $A$ is, in particular, relatively projective as an $A$-bimodule. Hence, by \Cref{prop:rel-proj-fully-exact}, $\cM$ is fully exact. Thus, $\cM$ and $\cC^*_\cM$ are exact left $\cC$-module categories. As $\cM$ is exact, $\cC^*_\cM$ is a multitensor category by \cite{DSS2}*{Thm.\,2.3.5}. Therefore, by \Cref{lem:tensor-cat-properties}, projective objects in $\cC^*_\cM$ form an ideal. Since the identity of 
$\cC^*_\cM \cong \bimodints{A}{\cC}$ is projective, every object is projective and $\cC^*_\cM$ is semisimple. 

Now, since $\cM$ is non-zero, we have that no object acts by zero by \Cref{lem:AM-exact-faithful}. Hence, \cite{EGNO}*{Thm.\,7.12.16} provides a $2$-equivalence 
$\lmod{\cC}\simeq \lmod{\cC^*_\cM}$ which preserves exact module categories. This $2$-equivalence maps the regular $\cC$-module $\cC$ to the $\cC^*_\cM$-module $\cN:=\Fun_\cC(\cM,\cC)$ and induces an equivalence
$\cC\simeq \Fun_{\cC}(\cC,\cC)\simeq \Fun_{\cC^*_\cM}(\cN,\cN)$.
If $\Bbbk$ is a  field of characteristic zero, $\cN$ being exact implies that $\Fun_{\cC^*_\cM}(\cN,\cN)$ is exact and hence semisimple by \Cref{cor:semisimple-fully-exact}. Thus,  $\cC$ is semisimple. As explained in \Cref{rem:separable-semisimple}, $A$ is then also separable.
\end{proof}

\section{Duals of fully exact module categories}
\label{sec:dual-fully-exact}

This section investigates duals of fully exact $\cC$-module categories over a finite braided tensor category $\cC$. After discussing preliminaries on duals of $\cC$-modules in \Cref{sec:duals-finite}, we introduce \emph{op-fully exact} $\cC$-module categories  and show that duals of fully exact module categories are op-fully exact in \Cref{sec:opfully}. We see by~\Cref{ex:double-group} that, in general, for non-symmetric $\cC$, fully exact module categories are not closed under duals. We find criteria, in terms of braided opposite algebras, for duals to be fully exact in \Cref{sec:dualfully}. Finally, in \Cref{sec:fully-dualizable}, we define $\cM$ to be a \emph{perfect} $\cC$-module category if $\cM$ is both fully exact and op-fully exact. 
We prove that perfect $\cC$-module categories are precisely the fully dualizable objects (in the sense of \cites{Lur,Pst}) in $\lmod{\cC}$, see \Cref{prop:dualizability}.

\subsection{Dualizability of finite module categories}\label{sec:duals-finite}

We start with a preliminary section to discuss, following \cite{DSS2}*{Section~3.2}, how the left and right dual $\cC$-modules introduced in \Cref{sec:duals} are duals in the monoidal $2$-category $\lmod{\cC}$ in the sense of \Cref{def:left-dual-cat}.

For a left $\cC$-modules $\cM,\cN,\cP$, consider the composition functor 
$$\circ \colon \Fun_\cC(\cM,\cN) \times \Fun_\cC(\cN, \cP)\to \Fun_\cC(\cM,\cP), \quad (G,H) \mapsto H\circ G.$$
We define a $\cC$-balancing for $\circ$ from the $\cC$-linear structure, namely, for $M$ in $\cM$, we define
$$\circ\big((G\ract X),H\big)(M)=H(X\lact G(M))\xrightarrow{(\beta_{G,X,H})_M:=s^H_{X,G(M)}}X\lact HG(M)=\circ\big(G, (X\lact H)\big)(M).$$
Here, we use that $(X\lact G)(M)=X\lact G(M)$ by \Cref{def:Fun-action} and $M\ract X=X\lact M$ by \eqref{eq:right-coherence}. The following computation checks the coherence condition  \eqref{eq:balancing-coherence} for the balancing $\beta$ defined above. Evaluated at an object $M$ in $\cM$, this follows because the upper diagram in 
\begin{align*}
\resizebox{\textwidth}{!}{
\xymatrix@R=15pt{
H(X\otimes Y\lact GM)\ar[rrrrr]^{s^H_{X\otimes Y,GM}=(\beta_{G,X\otimes Y,H})_M} & && & & X\otimes Y\lact HG(M)\\
& && &X\lact (Y\lact HGM)\ar[ru]^{l_{X,Y,HGM}} & \\
& H(Y\lact (X\lact GM))\ar[ldd]^{H(l_{Y,X,GM})}\ar[uul]_{H(r_{G,X,Y})_M}\ar[rr]^{(\beta_{G\ract X,Y,H})_M}_{s^H_{Y,X\lact GM}}&& Y\lact H(X\lact GM)\ar[ru]^{(\beta_{G,X,Y\lact H})_M}_{s^{Y\lact H}_{X,GM}}\ar[rd]^{Y\lact s^H_{X,GM}} && \\
& && &Y\lact (X\lact HGM)\ar[rd]^{l_{Y,X,HGM}} & \\
H(Y\otimes X\lact GM)\ar[rrrrr]_{s^H_{Y\otimes X,GM}}\ar[uuuu]_{H(\psi_{Y,X}\lact GM)}& && & & Y\otimes X\lact HG(M)\ar[uuuu]^{\psi_{Y,X}\lact HG(M)}
}}
\end{align*}
commutes. Here, the right diagram commutes by \eqref{eq:FunC-coherence}, the left diagram commutes by definition of $(r_{G,X,Y})_M=r_{GM,X,Y}$ in \eqref{eq:right-coherence}, the lower diagram commutes by coherence of $s^H$, see \eqref{eq:C-linear-coherence}, and the outer diagram commutes by naturality of $s^H_{-,GM}$ with respect to $\psi_{Y,X}$. Moreover, by strict unitality, it follows that $\beta_{G,\one,H}=s^H_{\one,G(M)}=\id$ as required.
Thus, by descending to the relative Deligne product, we obtain a well-defined functor of $\cC$-module categories 
\begin{equation}\label{eq:rel-circ}
\circ \colon \Fun_\cC(\cM,\cN) \boxtimes_\cC \Fun_\cC(\cN, \cP)\to \Fun_\cC(\cM,\cP).
\end{equation}

By \Cref{prop:rel-tensor-as-hom} and self-duality of $\cC$, there is an equivalence of $\cC$-module categories $\Fun_\cC(\cC,\cM)\simeq \cM$ and, in particular, $\Fun_\cC(\cC,\cC)\simeq \cC$. Together with the equivalence from \Cref{prop:rel-tensor-as-hom}(ii), we obtain a functor
\begin{equation}\label{eq:ev}
\ev^r_\cM:=\big( \cM\boxtimes_\cC {}^*\cM\isomorph \Fun_\cC(\cC,\cM)\boxtimes_\cC \Fun_{\cC}(\cM,\cC)\xrightarrow{~\circ~} \Fun_\cC(\cC,\cC)\isomorph \cC\big).
\end{equation}

\begin{lemma}\label{lem:ev-coherence}
    The following diagrams    of $\cC$-linear functors commute up to natural isomorphism:
    \begin{gather*}
    \xymatrix@C=45pt{
    \Fun_\cC(\cC,\cM)\boxtimes_\cC \Fun_{\cC}(\cM,\cM)\ar[r]^-{\circ}\ar[d]_{\sim}&\Fun_\cC(\cC,\cM)\ar[r]^{\sim}&\cM \\
    \cM\boxtimes_\cC({}^*\cM\boxtimes_\cC \cM)\ar[r]^{A_{\cM,{}^*\cM,\cM}} & (\cM\boxtimes_\cC {}^*\cM)\boxtimes_\cC \cM \ar[r]^-{\ev^r_\cM\boxtimes_\cC \id_\cM}& \cC\boxtimes_\cC \cM\ar[u]_{L_\cM},
    }\\[10pt]
    \xymatrix@C=45pt{
    \Fun_{\cC}(\cM,\cM)\boxtimes_\cC \Fun_\cC(\cM,\cC) \ar[r]^-{\circ }\ar[d]_{\sim}& \Fun_\cC(\cM,\cC)\ar[r]^{\sim}&{}^*\cM\\
    ({}^*\cM\boxtimes_\cC \cM)\boxtimes_\cC {}^*\cM \ar[r]^{A_{{}^*\cM,\cM,{}^*\cM}} & {}^*\cM\boxtimes_\cC (\cM\boxtimes_\cC {}^*\cM)  \ar[r]^-{\id_{{}^*\!\cM}\boxtimes_\cC \ev^r_\cM}& {}^*\cM\boxtimes_\cC \cC\ar[u]_{R_\cM}.
    }
\end{gather*}
Here, $\circ$ is the functor from \eqref{eq:rel-circ} and  $A$, $R$, $L$ denote the associator, right, and left unitor functors of $\lmod{\cC}$, see \eqref{eq:ass-Sec2}--\eqref{eq:unitors-Sec2}.  
\end{lemma}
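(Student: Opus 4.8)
The plan is to verify both commutativity statements by unwinding all the equivalences into the language of internal algebras via \Cref{prop:rel-Del-modules}, \Cref{prop:Funrex-modules}, and \Cref{prop:*M*-algebras}, where $\circ$, the associators, and the unitors all become identity functors on categories of bimodules over tensor-product algebras. First I would fix an algebra $A$ in $\cC$ with $\cM \simeq \rmodint{\cC}{A}$. Then ${}^*\cM \simeq \rmodint{\cC}{A^\psi}$, so that $\cM \boxtimes_\cC {}^*\cM \simeq \rmodint{\cC}{A \otimes A^\psi}$ and $\cC^*_\cM = \Fun_\cC(\cM,\cM) \simeq \rmodint{\cC}{A^\psi \otimes A}$. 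Under these identifications, the evaluation functor $\ev^r_\cM$ from \eqref{eq:ev} is, by the proof of \Cref{prop:rel-tensor-as-hom}, induced by the relative tensor product $\otimes_A$, which presents $\cC = \Fun_\cC(\cC,\cC)$ as bimodules over $\one$. The composition $\circ$ from \eqref{eq:rel-circ} is, via \Cref{prop:fun-A-bimod-B}, precisely the relative tensor product of bimodules, and hence associative up to the canonical (coherent) isomorphism from \Cref{def:MorC}. The strategy is then to observe that both squares reduce to the associativity isomorphism $M \otimes_A (N \otimes_A P) \cong (M \otimes_A N) \otimes_A P$ together with the unitality isomorphisms $M \otimes_A A \cong M$ and $A \otimes_A M \cong M$, all of which are part of the data of the Morita bicategory $\Mor_\cC$.

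Concretely, for the first diagram I would trace an object $M \in \cM \simeq \rmodint{\cC}{A}$ around both paths. Going right along the top, $M$ viewed in $\Fun_\cC(\cC,\cM)$ is the functor $(-) \otimes_{\one} M$; composing with a functor $G \in \Fun_\cC(\cM,\cM)$ corresponding to an $A$-bimodule $V$ gives $M \otimes_A V$, and then the equivalence back to $\cM$ is the identity. Going around the bottom via the associator $A_{\cM,{}^*\cM,\cM}$, the object $M \boxtimes_\cC (N \boxtimes_\cC M')$ corresponds to $M \otimes (N' \otimes M')$ (suitable bimodules over $A$, $A^\psi$, $A$), the associator is the canonical reassociation, then $\ev^r_\cM \boxtimes_\cC \id_\cM$ applies $\otimes_A$ in the first two slots producing an object of $\cC \boxtimes_\cC \cM \simeq \rmodint{\cC}{\one \otimes A}$, and $L_\cM$ is the left unitor $A \otimes_{\one} (-) \cong (-)$. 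The required natural isomorphism is then assembled from the canonical coherence isomorphisms of the relative tensor product, which commute with all the colimit diagrams by construction. The second diagram is entirely analogous, with the roles of $L_\cM$ and $R_\cM$ exchanged and ${}^*\cM \simeq \rmodint{\cC}{A^\psi}$ playing the outer role; one uses instead the right unitor $(-) \otimes_{\one} A \cong (-)$ and the identification $\Fun_\cC(\cM,\cC) \simeq {}^*\cM$ from \Cref{ex:dual-and-CM}.

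The main obstacle I anticipate is not conceptual but bookkeeping: one must check that the $\cC$-balancings on $\circ$ defined via the coherences $s^H$ in \eqref{eq:rel-circ} are genuinely compatible with the balancings built into the equivalences of \Cref{prop:rel-tensor-as-hom} and with the structural $2$-isomorphisms $a_{G,H,K}$ of \eqref{eq:2associator-natural-Sec2} and the interchange isomorphism \eqref{eq:interchange-C-mod} for $\lmod{\cC}$. In other words, the nontrivial point is that the identifications are compatible not just as plain functors but as $\cC$-module functors, so that descending to the relative Deligne products is legitimate; this is where the explicit diagram already verified above (for the balancing $\beta$ on $\circ$) is used, together with \Cref{rem:factorization-canonical-iso} to pin down the factorization through $\boxtimes_\cC$ up to canonical isomorphism. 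I would organize the proof so that all such compatibilities are reduced to \Cref{rem:rel-Del-unique} and \Cref{rem:factorization-canonical-iso}, which guarantee that any two $\cC$-linear functors out of a relative Deligne product agreeing after precomposition with the universal balanced functor are canonically isomorphic — this lets one avoid writing out the pentagon-type coherence diagrams explicitly and instead argue that both paths, after precomposition with $P_{\cN,\cM}$, restrict to the same $\cC$-balanced functor on the nose, namely iterated relative tensor product.
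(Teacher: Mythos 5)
Your proposal follows essentially the same route as the paper's proof: both pass to the internal-algebra realization ($\cM\simeq\rmodint{\cC}{A}$, ${}^*\cM\simeq\rmodint{\cC}{A^\psi}$, functor categories and relative Deligne products as modules over tensor-product algebras) and then trace an object around the two paths, identifying the discrepancy as the canonical associativity/unitality isomorphism $(M\otimes_A V)\otimes W\cong M\otimes_A(V\otimes W)$ of the relative tensor product, compatible with the (trivial) $\cC$-module coherences. The paper's write-up is terser — it simply records the two images $M\otimes V\otimes W\mapsto(M\otimes_A V)\otimes W$ versus $M\otimes_A(V\otimes W)$ and notes they agree since the $A^\psi$-action lives only on $V$ — but the content matches yours.
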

\begin{proof}
Assume that $\cM=\rmodint{\cC}{A}$. Then we have equivalences of $\cC$-module categories 
$$\Fun_\cC(\cM,\cM)\simeq \rmodint{\cC}{A^\psi\otimes A}, \quad {}^*\cM\simeq \rmodint{\cC}{A^\psi}$$
and the relative Deligne products are simply given by internal right modules over the corresponding tensor product algebras, see \Cref{prop:Funrex-modules} and \Cref{prop:rel-Del-modules}. 
To show commutativity of the first diagram, it suffices to consider an object $V\otimes W\in \rmodint{\cC}{A^\psi\otimes A}$ and 
$$M\otimes V\otimes W\in \rmodint{\cC}{A\otimes A^\psi\otimes A}\simeq \cM\boxtimes_\cC \Fun_\cC(\cM,\cM).$$
On the one hand, applying the lower path of compositions of functors in the diagram of the statement of the lemma, we find that  
$$M\otimes V\otimes W \mapsto (M\otimes_A V)\otimes W.$$
On the other hand, the functor $\circ$ in the upper path is given by 
$$M\otimes V\otimes W \mapsto M\otimes_A(V\otimes W).$$
Since the tensor product is exact and the right $A^\psi$-action is defined on $V$ only, the two resulting right $A$-modules are isomorphic. The isomorphism is compatible with the left $\cC$-module coherence, which for all of the categories of internal modules is given by the associator of $\cC$.

Commutativity of the second diagram can be checked using internal modules in a similar way.
\end{proof}

\begin{lemma}\label{lem:duals-fun}
    The left $\cC$-modules ${}^*\cM$ and $\cM^*$ are right, respectively, left duals in $\lmod{\cC}$ in the sense of \Cref{def:left-dual-cat}. 
\end{lemma}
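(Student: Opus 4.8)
The plan is to verify the two triangle/zig-zag identities that exhibit $({}^*\cM, \cM^*)$ as the right and left dual data of $\cM$ in the monoidal $2$-category $\lmod{\cC}$, using the evaluation functors already constructed and the candidate coevaluation functors. First I would write down the coevaluation functors: by the same recipe as for $\ev^r_\cM$ in \eqref{eq:ev}, but using \Cref{prop:rel-tensor-as-hom}(i) in the form $\cM\boxtimes_\cC \cM^*\simeq \Fun_\cC((\cM^*)^*,\cM^*)$ together with $({}^*\cM)^*\simeq\cM$ and $(\cM^*)^*$-type identifications from \Cref{cor:stars-inverses}, one obtains a functor $\coev^r_\cM\colon \cC\simeq\Fun_\cC(\cM,\cM)\to \cM\boxtimes_\cC \cM^*$ (and symmetrically for the left dual). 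Concretely, writing $\cM=\rmodint{\cC}{A}$, all relevant categories become categories of right modules over tensor products of $A$, $A^\psi$, $A^{\psi^{-1}}$ via \Cref{prop:Funrex-modules}, \Cref{prop:rel-Del-modules}, \Cref{prop:*M*-algebras}, and the functor $\coev$ is realised by the bimodule $A$ viewed appropriately, i.e.\ induction along $m_A$.

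The key steps, in order: (1) fix an algebra $A$ with $\cM\simeq\rmodint{\cC}{A}$ and translate ${}^*\cM$, $\cM^*$, $\cC^*_\cM$, and all the relevant relative Deligne products into module categories over $A$, $A^\psi$, $A^{\psi^{-1}}$ and their tensor products, using \Cref{prop:*M*-algebras}, \Cref{prop:Funrex-modules}, \Cref{prop:rel-Del-modules}, and \Cref{lem:tensor-ops}; (2) identify $\ev^r_\cM$ under these identifications with the relative-tensor-product functor $\rmodint{\cC}{A\otimes A^\psi}\to\cC$, $M\otimes V\mapsto M\otimes_A V$, exactly as in the proof of \Cref{lem:ev-coherence}, and identify $\coev^r_\cM$ with the functor $X\mapsto (X\otimes A$ with its $A\otimes^\psi A^\psi$-structure$)$; (3) compute the two composites $(\ev^r_\cM\boxtimes_\cC\id_\cM)\circ A_{\cM,{}^*\cM,\cM}\circ(\id_\cM\boxtimes_\cC\coev^r_\cM)$ and its partner for ${}^*\cM$, and exhibit natural isomorphisms to $\id_\cM$, $\id_{{}^*\cM}$ respectively. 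Since all associators on these internal-module categories are the (trivial) associator of $\cC$, and the relevant relative tensor products reduce to $M\otimes_A A\cong M$ and $A\otimes_A V\cong V$ as in \Cref{def:MorC}, the triangle identities hold up to the canonical unitor isomorphisms of $\otimes_A$; this is essentially the same computation that proves \Cref{lem:ev-coherence}. Then invoke the definition of left/right dual object in a monoidal $2$-category (\Cref{def:left-dual-cat}) to conclude.

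The main obstacle will be bookkeeping of the $\psi$-opposite algebra structures and the braiding isomorphisms from \Cref{lem:tensor-ops}: one must check that the isomorphism $(A\otimes^\psi B)^\psi\cong B^\psi\otimes^\psi A^\psi$ and the various $\psi$, $\psi^{-1}$ twists in \Cref{prop:*M*-algebras} are threaded consistently through the associator functors $A_{\cM,{}^*\cM,\cM}$ and $A_{{}^*\cM,\cM,{}^*\cM}$, so that the coevaluation and evaluation functors compose to the identity rather than to some nontrivial braided automorphism of $\cM$. This is exactly the kind of coherence check that \Cref{lem:ev-coherence} isolates; in fact the cleanest route is to deduce the triangle identities \emph{directly} from \Cref{lem:ev-coherence} plus the analogous statement for $\coev$, which we would state and prove in parallel, rather than re-expanding everything into module formulas a second time. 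Beyond that, the argument is routine, so I would keep the written proof short: reduce to \Cref{lem:ev-coherence} and its coevaluation counterpart, note that together they are precisely the two zig-zag identities of \Cref{def:left-dual-cat}, and remark that since $({}^*\cM)^*\simeq\cM\simeq{}^*(\cM^*)$ by \Cref{cor:stars-inverses}, the roles of left and right dual are consistent.

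\begin{proof}
Fix an algebra $A$ in $\cC$ with $\cM\simeq\rmodint{\cC}{A}$. By \Cref{prop:*M*-algebras} we have equivalences of left $\cC$-module categories ${}^*\cM\simeq\rmodint{\cC}{A^\psi}$ and $\cM^*\simeq\rmodint{\cC}{A^{\psi^{-1}}}$, and by \Cref{prop:rel-Del-modules}, \Cref{prop:Funrex-modules} all the relative Deligne products and functor categories occurring in \Cref{def:left-dual-cat} become categories of right modules over tensor products of $A$, $A^\psi$, $A^{\psi^{-1}}$, with the (trivial) associator of $\cC$ furnishing all left $\cC$-module coherences. Under these identifications, the functor $\ev^r_\cM$ of \eqref{eq:ev} is the relative tensor product $\rmodint{\cC}{A\otimes^\psi A^\psi}\to\cC$, $M\otimes V\mapsto M\otimes_A V$, as computed in the proof of \Cref{lem:ev-coherence}. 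By the same recipe, using \Cref{prop:rel-tensor-as-hom}(i), \Cref{cor:stars-inverses}, and the equivalence $\Fun_\cC(\cM,\cM)\simeq\cC^*_\cM$, one obtains a coevaluation functor
$$
\coev^r_\cM\colon \cC\isomorph\Fun_\cC(\cM,\cM)\xrightarrow{~\circ~}\Fun_\cC(\cC,\cM)\boxtimes_\cC\Fun_\cC(\cM,\cC)\isomorph \cM\boxtimes_\cC{}^*\cM,
$$
realised on internal modules by $X\mapsto X\otimes A$ with its canonical $A\otimes^\psi A^\psi$-module structure, i.e.\ by induction along $m_A$.

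The two zig-zag identities of \Cref{def:left-dual-cat} for the pair $({}^*\cM,\ev^r_\cM,\coev^r_\cM)$ assert the commutativity, up to coherent natural isomorphism, of
$$
\cM\xrightarrow{\coev^r_\cM\boxtimes\id}(\cM\boxtimes_\cC{}^*\cM)\boxtimes_\cC\cM\xrightarrow{A^{-1}}\cM\boxtimes_\cC({}^*\cM\boxtimes_\cC\cM)\xrightarrow{\id\boxtimes\ev^r_\cM}\cM
$$
and of the symmetric composite on ${}^*\cM$. By \Cref{lem:ev-coherence} applied with the above identifications, the composite $(\ev^r_\cM\boxtimes_\cC\id_\cM)\circ A_{\cM,{}^*\cM,\cM}$ agrees (up to natural isomorphism) with the composition functor $\Fun_\cC(\cC,\cM)\boxtimes_\cC\Fun_\cC(\cM,\cM)\xrightarrow{\circ}\Fun_\cC(\cC,\cM)\simeq\cM$; precomposing with $\coev^r_\cM\boxtimes_\cC\id_\cM$ and using associativity and unitality of $\circ$ (equivalently, the canonical isomorphisms $M\otimes_A A\cong M$ and $A\otimes_A V\cong V$ of \Cref{def:MorC}), this composite is naturally isomorphic to $\id_\cM$. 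The second zig-zag identity follows in the same way from the second diagram of \Cref{lem:ev-coherence} together with $M\otimes_A A\cong M$. The required coherence of these natural isomorphisms with the pentagon/triangle data of $\lmod{\cC}$ holds because all the isomorphisms involved are the canonical colimit-compatible isomorphisms of relative tensor products, which by construction satisfy these coherences.

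This shows ${}^*\cM$ is a right dual of $\cM$. The statement that $\cM^*$ is a left dual follows by the symmetric argument, replacing $\psi$ by $\psi^{-1}$ throughout and using \Cref{prop:*M*-algebras}(ii), \Cref{prop:rel-tensor-as-hom}, and \Cref{cor:stars-inverses} (which gives $({}^*\cM)^*\simeq\cM\simeq{}^*(\cM^*)$, so that the left/right dual assignments are mutually inverse).
\end{proof}
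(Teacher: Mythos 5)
Your proposal follows essentially the same route as the paper: take $\ev^r_\cM$ from \eqref{eq:ev}, define the coevaluation via the internal-algebra picture (induction along $m_A$, i.e.\ $X\mapsto X\otimes A$), reduce the zig-zag identities to \Cref{lem:ev-coherence}, verify the remaining composites via $M\otimes_A A\cong M$ and $A\otimes_A V\cong V$, and obtain the left dual from ${}^*(\cM^*)\simeq\cM$ with $\ev^l_\cM=\ev^r_{\cM^*}$, $\coev^l_\cM=\coev^r_{\cM^*}$. That is exactly the paper's argument.

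However, your displayed definition of the coevaluation is wrong as written and needs to be fixed. You write
$\coev^r_\cM\colon \cC\isomorph\Fun_\cC(\cM,\cM)\xrightarrow{\circ}\Fun_\cC(\cC,\cM)\boxtimes_\cC\Fun_\cC(\cM,\cC)\isomorph \cM\boxtimes_\cC{}^*\cM$.
Three things are off: (a) $\cC\to\Fun_\cC(\cM,\cM)$ is the centralizer functor $A_\cM$, which is \emph{not} an equivalence in general --- by \Cref{prop:Minv-AM-equiv} that would force $\cM$ to be invertible; (b) the composition functor $\circ$ of \eqref{eq:rel-circ} goes \emph{from} the relative Deligne product of functor categories \emph{to} $\Fun_\cC(\cC,\cM)$, so your middle arrow points the wrong way and does not define a functor; (c) by \Cref{def:left-dual-cat} the right coevaluation must land in ${}^*\cM\boxtimes_\cC\cM$, not $\cM\boxtimes_\cC{}^*\cM$ (the algebra is $A^\psi\otimes A$, not $A\otimes A^\psi$). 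The correct definition, which is what your prose about induction along $m_A$ actually describes, is
$\coev^r_\cM=\big(\cC\xrightarrow{A_\cM}\Fun_\cC(\cM,\cM)\isomorph{}^*\cM\boxtimes_\cC\cM\big)$,
using \Cref{prop:rel-tensor-as-hom}(ii). With that replacement the rest of your argument goes through: the first zig-zag becomes the statement that $\Fun_\cC(\cC,\cM)\boxtimes_\cC\cC\xrightarrow{\id\boxtimes_\cC A_\cM}\Fun_\cC(\cC,\cM)\boxtimes_\cC\Fun_\cC(\cM,\cM)\xrightarrow{\circ}\Fun_\cC(\cC,\cM)$ is isomorphic to the identity, which is the computation $M\otimes_A(X\otimes A)\mapsto \one\lact M\cong M$ you indicate, and the second zig-zag is handled by the second diagram of \Cref{lem:ev-coherence}.
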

\begin{proof}
We have already constructed an evaluation functor $\ev_\cM^r$ above. We define 
\begin{equation}\label{eq:coev}\coev_\cM^r=\Big(\cC\xrightarrow{A_\cM}\Fun_\cC(\cM,\cM)\xrightarrow{\sim} {}^*\cM\boxtimes_\cC \cM\Big),\end{equation}
where $A_\cM$ is the functor from \Cref{def:AC-C-mod-functor} and the equivalence $\sim$ is obtained from \Cref{prop:rel-tensor-as-hom}. By construction, this is a left $\cC$-module functor. 
Applying \Cref{lem:ev-coherence}, providing the equivalence $\alpha$ of \Cref{def:left-dual-cat} is equivalent to showing that the composition 
$$\cM\simeq \Fun_\cC(\cC,\cM)\boxtimes_\cC \cC \xrightarrow{\id_\cM\boxtimes_\cC A_\cM}\Fun_\cC(\cC,\cM) \boxtimes_\cC \Fun_\cC(\cM,\cM) \xrightarrow{\circ} \Fun_\cC(\cC,\cM)\isomorph \cM$$
is isomorphic to the identity functor. This can again be checked using the equivalent description of relative Deligne products and $\cC$-linear functor categories via internal algebras. Here, the functor $A_\cM\colon \cC \to \Fun_\cC(\cM,\cM)$ is given by $X\mapsto X\otimes A$, which is an $A$-bimodule in $\cC$ as in  \eqref{eq:bimod1}--\eqref{eq:left-A-bimodule}. Thus, the entire composition of functors sends an object $M$ to $\one \lact M\cong M$. Hence, ${}^*\cM$ is a right dual in the sense of \Cref{def:left-dual-cat}.

Using the second diagram in \Cref{lem:ev-coherence}, constructing the equivalence $\beta$ required in \Cref{def:left-dual-cat} is equivalent to showing that the composition
$$\Fun_\cC(\cM,\cC)\simeq \cC\boxtimes_\cC \Fun_\cC(\cM,\cC)\xrightarrow{A_\cM \boxtimes_\cC \id} \Fun_\cC(\cM,\cM)\boxtimes_\cC  \Fun_\cC(\cM,\cC)  \xrightarrow{\circ} \Fun_\cC(\cM,\cC)$$
is isomorphic to the identity functor, which is clear. 

Now, we can display $\cM^*$ as a \emph{left} dual of $\cM$ in the sense of \Cref{def:left-dual-cat} by application of the identification ${}^*(\cM^{*})\simeq \cM$ of \Cref{cor:stars-inverses}. Under this identification, 
$\ev^l_\cM=\ev^r_{\cM^*}$ 
and $\coev^l_\cM=\coev^r_{\cM^*}$. 
\end{proof}

Recall from \Cref{thm:CMod-monoidal-2-cat} that $\rB\colon \lmod{\cC}\to \bimod{\cC}$ is a functor of monoidal $2$-categories.
From \Cref{rem:FunC-bimod-compare} we know that 
$\rB(\Fun_\cC(\cM,\cN))\simeq \Fun_\cC(\rB(\cM),\rB(\cN)),$
where the target has the $\cC$-$\cC$-bimodule structure from \cite{DSS2}*{Section~2.4}. This is summarized in the following result.

\begin{corollary}\label{cor:C-mod-duals}
    The inclusion $\rB\colon \lmod{\cC} \to \bimod{\cC}$ is a functor of monoidal $2$-categories that preserves duals.
\end{corollary}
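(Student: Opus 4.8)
The plan is to combine the general principle that a strong monoidal $2$-functor sends a dual pair of objects to a dual pair with the explicit dualities of $\lmod{\cC}$ established in \Cref{lem:duals-fun}. By \Cref{thm:CMod-monoidal-2-cat} the $2$-functor $\rB\colon\lmod{\cC}\to\bimod{\cC}$ is monoidal, and its structural $1$-morphisms---the equivalences $I_{\cN,\cM}\colon\rB(\cN\boxtimes_\cC\cM)\isomorph\rB(\cN)\boxtimes_\cC\rB(\cM)$ and the unit comparison relating $\rB(\cC)$ to the monoidal unit of $\bimod{\cC}$---are equivalences. Given a left $\cC$-module $\cM$ with right dual $({}^*\cM,\ev^r_\cM,\coev^r_\cM,\alpha,\beta)$ in the sense of \Cref{def:left-dual-cat} (which exists by \Cref{lem:duals-fun}), I would first transport this datum along $\rB$: precomposing $\rB(\ev^r_\cM)$ and $\rB(\coev^r_\cM)$ with the relevant instances of $I_{\cN,\cM}$ and with the unit comparison yields evaluation and coevaluation $1$-morphisms for the pair $(\rB(\cM),\rB({}^*\cM))$ in $\bimod{\cC}$, and whiskering the $2$-morphisms $\rB(\alpha)$ and $\rB(\beta)$ by the associativity, unitality and interchange $2$-cells of $\rB$ yields invertible $2$-morphisms of the kind required by \Cref{def:left-dual-cat}. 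This exhibits $\rB({}^*\cM)$ as a right dual of $\rB(\cM)$; the same argument applied to $\cM^*$ shows $\rB(\cM^*)$ is a left dual of $\rB(\cM)$, which is precisely the claim that $\rB$ preserves duals.

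To identify the outcome with the dualities of $\bimod{\cC}$ constructed in \cite{DSS2}, I would use that ${}^*\cM\simeq\Fun_\cC(\cM,\cC)$ as left $\cC$-modules by \Cref{ex:dual-and-CM} and that $\rB(\Fun_\cC(\cM,\cC))\simeq\Fun_\cC(\rB(\cM),\rB(\cC))\simeq\Fun_\cC(\rB(\cM),\cC)$ as $\cC$-bimodule categories by \Cref{rem:FunC-bimod-compare} applied with $\cN=\cC$. With the bimodule structure of \cite{DSS2}*{Section~2.4}, the last category is exactly the right dual of the bimodule $\rB(\cM)$ by \cite{DSS2}*{Section~3.2}, so $\rB({}^*\cM)\simeq{}^*(\rB(\cM))$ and, dually, $\rB(\cM^*)\simeq(\rB(\cM))^*$. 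I would also note that the evaluation and coevaluation functors \eqref{eq:ev}, \eqref{eq:coev} are built from the composition functor $\circ$ of \eqref{eq:rel-circ} and the \action functor $A_\cM$, each of which is carried by $\rB$ to the corresponding bimodule-level datum---$\circ$ via \Cref{rem:FunC-bimod-compare} and $A_\cM$ because it is one of the two central functors of the bimodule $\rB(\cM)$ in the sense of \cite{ENO}*{Section~4.1}---so the duality datum produced by $\rB$ agrees with the one of \cite{DSS2}*{Section~3.2}.

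The main obstacle, although purely formal, is the coherence bookkeeping in the first step: verifying that $\rB(\alpha)$ and $\rB(\beta)$, once re-associated along the $I_{\cN,\cM}$ and along the associativity and interchange isomorphisms of the monoidal $2$-functor $\rB$, still exhibit the two zig-zag composites as identity $1$-morphisms up to invertible $2$-morphism. This is the $2$-categorical analogue of the classical statement that a strong monoidal functor preserves duals, and it uses nothing beyond invertibility of the structural $2$-cells of $\rB$. As an alternative to carrying this out by hand, one may invoke the general fact that a strong monoidal pseudofunctor of monoidal bicategories preserves dualizable objects (cf.\ \cite{Pst}) and apply it to $\rB$ together with \Cref{lem:duals-fun}, which guarantees that every object of $\lmod{\cC}$ is dualizable.
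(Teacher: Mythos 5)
Your proposal is correct and follows essentially the same route as the paper: the paper's entire proof of this corollary consists of citing \Cref{thm:CMod-monoidal-2-cat} for the monoidal $2$-functor structure on $\rB$ and \Cref{rem:FunC-bimod-compare} for the identification $\rB(\Fun_\cC(\cM,\cN))\simeq \Fun_\cC(\rB(\cM),\rB(\cN))$, which applied with $\cN=\cC$ is exactly your second paragraph. Your first paragraph, transporting the duality data of \Cref{lem:duals-fun} along the structural equivalences $I_{\cN,\cM}$ (equivalently, invoking that a strong monoidal pseudofunctor preserves dualizable objects), is a harmless elaboration of what the paper leaves implicit.
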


\subsection{Op-fully exact module categories and duality}\label{sec:opfully}

In \Cref{def:fully-exact}, we require closure of the exactness property under tensoring on the right. Instead, we can tensor on the left, leading to the following notion of \emph{op-fully exact} module categories. This concept will be crucial to understanding duals of fully exact $\cC$-module categories. 

\begin{definition}\label{def:op-fex}
    We say that a left $\cC$-module category $\cM$ is \emph{op-fully exact} if for any exact $\cC$-module category $\cN$, the $\cC$-module category $\cM\boxtimes_\cC \cN$ is exact. 
\end{definition}

 We denote the  full $2$-subcategory in $\lmod{\cC}$ of op-fully exact $\cC$-module categories by $\opfexmod{\cC}$. As in \Cref{prop:fully-exact-tensor}, it follows that $\opfexmod{\cC}$ is closed under relative Deligne product and, thus, $\opfexmod{\cC}$ is a monoidal $2$-subcategory of $\lmod{\cC}$.  

\begin{proposition}\label{prop:duality-fully-exact}
    Let $\cM$ be a finite left $\cC$-modules category. The following statements are equivalent:
    \begin{enumerate}[(i)]
        \item $\cM$ is fully exact.
        \qquad \qquad (ii) $\cM^*$ is op-fully exact.
        \item[(iii)] ${}^*\cM$ is op-fully exact.
        \qquad  (iv) $\cM^{**}$ is fully exact.
    \end{enumerate}
\end{proposition}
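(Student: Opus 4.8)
The plan is to deduce all four equivalences from a single engine: the duality results relating $\boxtimes_\cC$ and $\Fun_\cC$ to the operations ${}^*(-)$ and $(-)^*$, combined with \Cref{cor:tensor-dual} and \Cref{cor:stars-inverses}. First I would record the basic bookkeeping identities we need. By \Cref{cor:stars-inverses} we have ${}^*(\cM^*)\simeq \cM\simeq ({}^*\cM)^*$ as left $\cC$-module categories, and by \Cref{cor:tensor-dual} the duals turn a relative Deligne product inside out:
$$(\cN\boxtimes_\cC \cP)^*\simeq \cP^*\boxtimes_\cC \cN^*, \qquad {}^*(\cN\boxtimes_\cC \cP)\simeq {}^*\cP\boxtimes_\cC {}^*\cN.$$
Finally, by \Cref{cor:duals-exact}, a left $\cC$-module category $\cN$ is exact if and only if $\cN^*$ is exact if and only if ${}^*\cN$ is exact; this is what lets us move exactness hypotheses across dualities freely.

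Next I would prove (i)$\Leftrightarrow$(ii). Suppose $\cM$ is fully exact, and let $\cN$ be any exact left $\cC$-module category; we must show $\cM^*\boxtimes_\cC \cN$ is exact. By \Cref{cor:duals-exact}, it is equivalent to show $(\cM^*\boxtimes_\cC \cN)^*$ is exact, and by \Cref{cor:tensor-dual} this equals $\cN^*\boxtimes_\cC \cM^{**}\simeq \cN^*\boxtimes_\cC \cM$ using ${}^*(\cM^*)\simeq\cM$ (more precisely, $\cM^{**}=(\cM^*)^*$ and ${}^*((\cM^*)^*) \simeq \cM^*$, so iterating gives $\cM^{**}\simeq \cM$ only when we can cancel — I should instead route through $\cN^*\boxtimes_\cC \cM^{**}$ and note $\cM^{**}$ is handled by (i)$\Leftrightarrow$(iv) below; cleanest is to first establish (i)$\Leftrightarrow$(iv) and then the others). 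Let me reorganize: I would first prove (i)$\Leftrightarrow$(iv). The implication (i)$\Rightarrow$(iv): given $\cM$ fully exact and $\cN$ exact, we want $\cN\boxtimes_\cC \cM^{**}$ exact; apply ${}^*(-)$ twice and use \Cref{cor:tensor-dual} to rewrite this as (up to the canonical equivalences) $\cM\boxtimes_\cC$ applied to a dual of $\cN$, which is exact by \Cref{cor:duals-exact} — but this needs $\cM$ to be \emph{op}-fully exact, not fully exact. So the honest route is: prove (i)$\Leftrightarrow$(ii) directly and (iii)$\Leftrightarrow$(iv) directly, each being a one-line dualization, and then prove (ii)$\Leftrightarrow$(iii).

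Concretely: \textbf{(i)$\Leftrightarrow$(ii).} For any exact $\cN$, we have $\cM^*\boxtimes_\cC \cN \simeq {}^*({}^*(\cM^*\boxtimes_\cC \cN)) \simeq {}^*({}^*\cN\boxtimes_\cC {}^*(\cM^*))\simeq {}^*({}^*\cN\boxtimes_\cC \cM)$ using \Cref{cor:tensor-dual} and \Cref{cor:stars-inverses}. By \Cref{cor:duals-exact}, $\cM^*\boxtimes_\cC\cN$ is exact iff ${}^*\cN\boxtimes_\cC\cM$ is exact; and as $\cN\mapsto{}^*\cN$ is an involution on exact module categories (\Cref{cor:duals-exact} and \Cref{lem:*-autoequiv2}), the condition ``$\cM^*\boxtimes_\cC\cN$ exact for all exact $\cN$'' is equivalent to ``$\cN'\boxtimes_\cC\cM$ exact for all exact $\cN'$'', i.e.\ to $\cM$ being fully exact. \textbf{(iii)$\Leftrightarrow$(iv).} Identical argument with the roles of ${}^*(-)$ and $(-)^*$ swapped, using ${}^*\cM\boxtimes_\cC\cN\simeq (\cN^*\boxtimes_\cC({}^*\cM)^*)^* \simeq(\cN^*\boxtimes_\cC\cM)^*$. \textbf{(ii)$\Leftrightarrow$(iii).} Apply the already-proven equivalence (i)$\Leftrightarrow$(ii) to the module category ${}^*\cM$ in place of $\cM$: it gives ``${}^*\cM$ fully exact $\Leftrightarrow$ $({}^*\cM)^*$ op-fully exact'', and $({}^*\cM)^*\simeq\cM$; similarly (iii)$\Leftrightarrow$(iv) applied to $\cM^*$ gives ``${}^*(\cM^*)$ op-fully exact $\Leftrightarrow$ $(\cM^*)^{**}$ fully exact''. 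It is cleaner to instead observe that (ii) and (iii) are \emph{both} equivalent to a symmetric statement: by the computation above, (ii) says ${}^*\cN\boxtimes_\cC\cM$ is exact for all exact $\cN$, and (iii) says $\cN^*\boxtimes_\cC\cM$ is exact for all exact $\cN$; since $\{{}^*\cN : \cN \text{ exact}\} = \{\cN^* : \cN\text{ exact}\} = \{\text{all exact module categories}\}$ by \Cref{cor:duals-exact}, these two statements coincide, and in fact both equal the statement that $\cM$ is fully exact. The main (and only) obstacle is purely organizational: keeping straight which of ${}^*(-)$ and $(-)^*$ is being applied and making sure the canonical equivalences of \Cref{cor:stars-inverses}, \Cref{cor:tensor-dual}, \Cref{cor:duals-exact} are invoked on the correctly-handed module categories; there is no substantive difficulty once the dictionary is set up.
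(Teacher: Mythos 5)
Your overall strategy is the same as the paper's: use \Cref{cor:tensor-dual} to turn a dual of a relative Deligne product inside out, \Cref{cor:duals-exact} to transport exactness across duals, and the fact that $\cN\mapsto{}^*\cN$ and $\cN\mapsto\cN^*$ are mutually inverse bijections on (equivalence classes of) exact module categories, so that quantifying over all exact $\cN$ is the same as quantifying over all exact ${}^*\cN$ or $\cN^*$. That engine does prove (i)$\Leftrightarrow$(ii)$\Leftrightarrow$(iii). However, two of your displayed identities are wrong-handed. The first step of your chain for (i)$\Leftrightarrow$(ii) asserts $\cP\simeq{}^*({}^*\cP)$; this is false in general, since by \Cref{prop:*M*-algebras} the double right dual of $\rmodint{\cC}{A}$ is $\rmodint{\cC}{A^{\psi^2}}$, which is not equivalent to $\rmodint{\cC}{A}$ unless, e.g., $\cC$ is ribbon (\Cref{rem:double-dual-equiv}). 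The cancellations available from \Cref{cor:stars-inverses} are only the mixed ones ${}^*(\cP^*)\simeq\cP$ and $({}^*\cP)^*\simeq\cP$. Your argument survives because you never actually need to undo the dual: it suffices to compute ${}^*(\cM^*\boxtimes_\cC\cN)\simeq{}^*\cN\boxtimes_\cC{}^*(\cM^*)\simeq{}^*\cN\boxtimes_\cC\cM$ and apply \Cref{cor:duals-exact} once. The same handedness error occurs in the step labelled (iii)$\Leftrightarrow$(iv): $(\cN^*\boxtimes_\cC\cM)^*\simeq\cM^*\boxtimes_\cC\cN^{**}$, not ${}^*\cM\boxtimes_\cC\cN$; you want ${}^*(\cN^*\boxtimes_\cC\cM)\simeq{}^*\cM\boxtimes_\cC\cN$ instead.

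The more substantive gap is that statement (iv) is never actually proved. The paragraph you label ``(iii)$\Leftrightarrow$(iv)'' nowhere involves $\cM^{**}$: what it establishes (once the handedness is corrected) is that ${}^*\cM\boxtimes_\cC\cN$ is exact for all exact $\cN$ if and only if $\cN^*\boxtimes_\cC\cM$ is exact for all exact $\cN$, i.e.\ (iii)$\Leftrightarrow$(i). Since your final paragraph also reduces (ii) and (iii) to (i), your write-up ends with three of the four conditions tied together and (iv) dangling. The repair is short and is exactly what the paper does (``applying this argument twice''): having shown for \emph{arbitrary} $\cM$ that $\cM$ is fully exact iff $\cM^*$ is op-fully exact iff ${}^*\cM$ is op-fully exact, apply the ${}^*$-version to $\cM^{**}$: it is fully exact iff ${}^*(\cM^{**})\simeq\cM^*$ is op-fully exact, which by the $*$-version applied to $\cM$ holds iff $\cM$ is fully exact. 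You should state this explicitly rather than leaving it implicit in the organizational remarks.
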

\begin{proof}
    This follows from the $\cC$-linear equivalence $(\cN\boxtimes \cM)^*\simeq \cM^*\boxtimes \cN^*$ from \Cref{cor:tensor-dual} and the fact that $\cM$ is exact if and only if $\cM^*$ is exact, see \Cref{cor:duals-exact}. The same argument works with ${}^*\cM$. Equivalence of (i) and (iv) follows by applying this argument twice.
\end{proof}

\begin{definition}
We call an algebra $A\in \cC$ an \emph{op-fully exact algebra} if $\rmodint{\cC}{A}$ is op-fully exact as a left $\cC$-module category.  
\end{definition}
    
\begin{corollary}\label{lem:opfullyexact}
    The following statements are equivalent for an algebra $A$ in $\cC$. 
    \begin{enumerate}[(i)]
        \item $A$ is fully exact.
        \qquad \qquad \quad (ii) $A^\psi$ is op-fully exact.
        \item[(iii)] $A^{\psi^{-1}}$ is  op-fully exact.
        \qquad (iv) $A^{\psi^2}$ is fully exact.
    \end{enumerate}
\end{corollary}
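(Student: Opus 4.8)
The plan is to deduce \Cref{lem:opfullyexact} directly from \Cref{prop:duality-fully-exact} together with the algebraic descriptions of the duals $\cM^*$ and ${}^*\cM$ in terms of braided opposite algebras from \Cref{prop:*M*-algebras}. First I would set $\cM=\rmodint{\cC}{A}$, so that by \Cref{prop:*M*-algebras} we have $\cC$-module equivalences $\cM^*\simeq \rmodint{\cC}{A^{\psi^{-1}}}$ and ${}^*\cM\simeq \rmodint{\cC}{A^{\psi}}$. Hence an algebra $B$ in $\cC$ is op-fully exact precisely when $\rmodint{\cC}{B}$ is op-fully exact, and $\rmodint{\cC}{B}\simeq \cM^*$ (as left $\cC$-modules) exactly when $B\cong A^{\psi^{-1}}$ for $\cM=\rmodint{\cC}{A}$; symmetrically $\rmodint{\cC}{B}\simeq {}^*\cM$ when $B\cong A^{\psi}$.

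The core of the proof is then a translation. \Cref{prop:duality-fully-exact} gives the equivalences: $\cM$ fully exact $\iff$ $\cM^*$ op-fully exact $\iff$ ${}^*\cM$ op-fully exact $\iff$ $\cM^{**}$ fully exact. Reading these through the internal-algebra dictionary: ``$A$ fully exact'' means ``$\rmodint{\cC}{A}$ fully exact''; ``$\cM^*$ op-fully exact'' becomes ``$\rmodint{\cC}{A^{\psi^{-1}}}$ op-fully exact'', i.e.\ ``$A^{\psi^{-1}}$ is op-fully exact''; similarly ``${}^*\cM$ op-fully exact'' becomes ``$A^{\psi}$ is op-fully exact''; and ``$\cM^{**}$ fully exact'' becomes ``$(A^{\psi^{-1}})^{\psi^{-1}}=A^{\psi^{-2}}$ is fully exact''. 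Wait — I should double-check the iterated duality: by \Cref{prop:*M*-algebras}, $(\cM^*)^* \simeq (\rmodint{\cC}{A^{\psi^{-1}}})^* \simeq \rmodint{\cC}{(A^{\psi^{-1}})^{\psi^{-1}}} = \rmodint{\cC}{A^{\psi^{-2}}}$. The statement as written in \Cref{lem:opfullyexact}(iv) reads ``$A^{\psi^2}$ is fully exact'', so here I would either invoke the symmetric statement using ${}^*({}^*\cM)\simeq \rmodint{\cC}{A^{\psi^2}}$ (consistent with (iii) $\Rightarrow$ (iv) via a second application of \Cref{prop:duality-fully-exact} to ${}^*\cM$), or note via \Cref{prop:A-Psi-exact}-style arguments that $A^{\psi^2}$ and $A^{\psi^{-2}}$ are simultaneously fully exact. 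Cleanest is to use both chains: (i) $\iff$ (ii) and (i) $\iff$ (iii) come from the first two equivalences of \Cref{prop:duality-fully-exact} applied to $\cM=\rmodint{\cC}{A}$, while (iii) $\iff$ (iv) comes from applying \Cref{prop:duality-fully-exact} again to the module ${}^*\cM$, whose right dual is ${}^*({}^*\cM)\simeq \rmodint{\cC}{A^{\psi^2}}$ by \Cref{cor:stars-inverses} and \Cref{prop:*M*-algebras}.

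So the actual proof is short: fix $\cM=\rmodint{\cC}{A}$; then $A$ is fully exact $\iff$ $\cM$ is fully exact $\iff$ (by \Cref{prop:duality-fully-exact}) $\cM^*$ is op-fully exact $\iff$ (by \Cref{prop:*M*-algebras}) $A^{\psi^{-1}}$ is op-fully exact, giving (i) $\iff$ (iii); likewise $\cM$ fully exact $\iff$ ${}^*\cM$ op-fully exact $\iff$ $A^{\psi}$ op-fully exact, giving (i) $\iff$ (ii); and applying \Cref{prop:duality-fully-exact} to the module $\rmodint{\cC}{A^{\psi}}$ (whose right dual is $\rmodint{\cC}{A^{\psi^2}}$) yields that $A^{\psi}$ is op-fully exact $\iff$ $A^{\psi^2}$ is fully exact, giving (ii) $\iff$ (iv). I do not anticipate a genuine obstacle here; the only thing to be careful about is keeping the conventions straight — which dual ($\cM^*$ vs.\ ${}^*\cM$) corresponds to which twist ($A^{\psi^{-1}}$ vs.\ $A^{\psi}$) per \Cref{prop:*M*-algebras}, and making sure the exponents in (iv) match the stated $\psi^2$ rather than $\psi^{-2}$. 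One could alternatively bypass \Cref{prop:duality-fully-exact} entirely and argue intrinsically via \Cref{cor:proj-exact-algebras}, but piggybacking on the already-proven module-category statement is the most economical route.

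\begin{proof}
Write $\cM=\rmodint{\cC}{A}$. By \Cref{prop:*M*-algebras}, there are equivalences of left $\cC$-module categories $\cM^*\simeq \rmodint{\cC}{A^{\psi^{-1}}}$ and ${}^*\cM\simeq \rmodint{\cC}{A^{\psi}}$, and consequently ${}^*({}^*\cM)\simeq \rmodint{\cC}{A^{\psi^2}}$ using \Cref{cor:stars-inverses}. Since op-fully exactness of an algebra $B$ means exactly op-fully exactness of $\rmodint{\cC}{B}$, and op-fully exactness of a module category is preserved under $\cC$-module equivalence, \Cref{prop:duality-fully-exact} applied to $\cM$ gives that $A$ is fully exact $\iff$ $\cM$ is fully exact $\iff$ $\cM^*$ is op-fully exact $\iff$ $A^{\psi^{-1}}$ is op-fully exact, and likewise $\iff$ ${}^*\cM$ is op-fully exact $\iff$ $A^{\psi}$ is op-fully exact. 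This proves the equivalences (i) $\iff$ (ii) $\iff$ (iii). Finally, applying \Cref{prop:duality-fully-exact} to the module ${}^*\cM=\rmodint{\cC}{A^{\psi}}$ in the form ``${}^*\cM$ op-fully exact $\iff$ ${}^*({}^*\cM)$ fully exact'' and using ${}^*({}^*\cM)\simeq \rmodint{\cC}{A^{\psi^2}}$ yields that $A^{\psi}$ is op-fully exact $\iff$ $A^{\psi^2}$ is fully exact, i.e.\ (iii) $\iff$ (iv).
\end{proof}
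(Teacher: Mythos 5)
Your proof is correct and takes essentially the same route as the paper, whose entire proof reads ``Setting $\cM=\rmodint{\cC}{A}$, this is a direct consequence of \Cref{prop:duality-fully-exact}.'' Your extra care with item (iv) --- observing that the literal translation of ``$\cM^{**}$ fully exact'' via \Cref{prop:*M*-algebras} yields $A^{\psi^{-2}}$ rather than $A^{\psi^{2}}$, and rerouting through ${}^*({}^*\cM)\simeq\rmodint{\cC}{A^{\psi^2}}$ together with \Cref{cor:stars-inverses} --- is a worthwhile clarification of a bookkeeping point the paper leaves implicit.
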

\begin{proof}
    Setting $\cM=\rmodint{\cC}{A}$, this is a direct consequence of \Cref{prop:duality-fully-exact}.
\end{proof}

Recall, from \Cref{appendix:monoidal}, the braided tensor categories $\cCop$, with opposite tensor product $X\otimes^{\oop}Y=Y\otimes X$ and braiding $\psi^{\cCop}_{X,Y}=\psi_{Y,X}$, and $\cCrev$,  with braiding $\psi_{X,Y}^{\cCrev}=\psi^{-1}_{X,Y}$.

\begin{proposition}\label{prop:op-exact-algebras}
Let $A$ be an algebra in $\cC$. Then the following conditions are equivalent:
\begin{enumerate}[(i)]
    \item $A$ is op-fully exact.
    \item $A$ is fully exact as an algebra in $\cCop$.
    \item $A$ is fully exact as an algebra in $\cCrev$.
    \item $A\otimes A^\psi$ is an exact algebra.
    \item $A\otimes A^{\psi^{-1}}$ is an exact algebra.
\end{enumerate}
\end{proposition}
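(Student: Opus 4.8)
The plan is to establish the chain of equivalences by combining the duality results of the previous subsection with the characterization of fully exact algebras from \Cref{cor:proj-exact-algebras} and the opposite-category descriptions of module categories. First I would observe that (i)$\Leftrightarrow$(iv)$\Leftrightarrow$(v) is essentially a restatement of \Cref{cor:proj-exact-algebras} applied in $\cCop$ or $\cCrev$. Indeed, by \Cref{prop:duality-fully-exact} and \Cref{lem:opfullyexact}, $\cM=\rmodint{\cC}{A}$ is op-fully exact if and only if ${}^*\cM\simeq\rmodint{\cC}{A^\psi}$ is fully exact, equivalently (by \Cref{cor:proj-exact-algebras}(ii) applied to $A^\psi$ in place of $A$, noting $(A^\psi)^{\psi^{-1}}=A$) the algebra $(A^\psi)^\psi\otimes A^\psi = A^{\psi^2}\otimes A^\psi$ is exact. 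Using \Cref{prop:A-Psi-exact} (exactness is insensitive to passing to $\psi$-opposites) together with the algebra isomorphism $\psi^{-1}_{A^{\psi^2},A^\psi}\colon (A^{\psi^2}\otimes A^\psi)^{\psi^{-1}} \cong (A^\psi)^{\psi^{-1}}\otimes (A^{\psi^2})^{\psi^{-1}}= A\otimes A^\psi$ from \Cref{lem:tensor-ops}, this is equivalent to (iv), and by the analogous computation with the inverse braiding to (v).

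Next I would handle (i)$\Leftrightarrow$(ii)$\Leftrightarrow$(iii), which is the conceptual core. The tensor category $\cCop$ is obtained from $\cC$ by reversing the tensor product (and adjusting the braiding accordingly), so a left $\cC$-module category $\cM$ is, tautologically, a left $\cCop$-module category via $X\lact^{\oop}(-):=(-)$ read in the opposite order; more usefully, under the identification $\rmodint{\cC}{A}$ with $\rmodint{\cCop}{A}$ the roles of left and right tensoring swap, so that $\boxtimes_\cC$ read in $\cCop$ becomes $\boxtimes_\cC$ with the factors interchanged. Hence ``$\cM\boxtimes_\cC\cN$ exact for all exact $\cN$'' in $\cC$ translates precisely to ``$\cN\boxtimes_\cC\cM$ exact for all exact $\cN$'' in $\cCop$, i.e.\ $\cM$ op-fully exact over $\cC$ $\Leftrightarrow$ $\cM$ fully exact over $\cCop$. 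The equivalence with $\cCrev$ then follows because $\cCrev$ and $\cCop$ induce the same monoidal 2-category structure on module categories up to the relabeling of duals: concretely, the braided tensor categories $\cCrev$ and $\cCop$ are related so that the left $\cC$-module category $\cM$ viewed over $\cCrev$ has its relative Deligne product computed with the inverse braiding in \eqref{eq:right-coherence}--\eqref{eq:bimod-coherence}, which is exactly the ``swapped'' product; one checks that the two prescriptions agree, giving (ii)$\Leftrightarrow$(iii). I would phrase this cleanly by invoking the already-established equivalences of 2-categories $\lmod{\cC}\simeq \lmod{\cCop}$ and $\opfexmod{\cC}\simeq\fexmod{\cC^{\otimesop}}$ noted in the introduction, so that (i)$\Leftrightarrow$(ii) is immediate and (ii)$\Leftrightarrow$(iii) reduces to the observation that $A$ is fully exact over $\cCop$ iff over $\cCrev$, which in turn follows from (iv)/(v) being symmetric under $\psi\leftrightarrow\psi^{-1}$.

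Finally, to close the loop I would verify (iv)$\Leftrightarrow$(v) directly as a sanity check: by \Cref{prop:A-Psi-exact} exactness of $A\otimes A^\psi$ is equivalent to exactness of $(A\otimes A^\psi)^{\psi^{-1}}\cong A^\psi\otimes A$ via \Cref{lem:tensor-ops2} (or \Cref{lem:tensor-ops}), and a further application of \Cref{prop:A-Psi-exact} together with $\psi\leftrightarrow\psi^{-1}$ symmetry gives exactness of $A\otimes A^{\psi^{-1}}$; this cross-check also confirms consistency with \Cref{cor:proj-exact-algebras}.

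\textbf{Main obstacle.} The routine part is the bookkeeping with $\psi$-opposite and tensor-product algebras and the repeated use of \Cref{lem:tensor-ops}, \Cref{lem:tensor-ops2}, and \Cref{prop:A-Psi-exact}; the genuinely delicate point is pinning down the precise sense in which ``$\cM$ as a module category over $\cCop$ (resp.\ $\cCrev$)'' has relative Deligne product computed with the factors swapped (resp.\ with the inverse braiding), so that the defining condition of op-full exactness over $\cC$ matches the defining condition of full exactness over $\cCop$ (resp.\ $\cCrev$). This requires care because the constructions $\rS_{lr}$, $\rB$, and the choice of braiding in \eqref{eq:right-coherence} all enter, and one must make sure the identification is one of \emph{monoidal} 2-categories (not merely 2-categories) so that exactness of relative Deligne products is genuinely transported. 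I expect most of the work to be in making that identification explicit and invariant, after which the algebraic equivalences (i)$\Leftrightarrow$(iv)$\Leftrightarrow$(v) fall out by the manipulations above.
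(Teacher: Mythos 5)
Your proposal is correct, but it reorganizes the argument compared to the paper, and one of your shortcuts would be circular as stated. The paper's proof is very short: (i)$\Leftrightarrow$(ii) is the tautology $A\otimes B=B\otimes^{\oop}A$ read through \Cref{cor:proj-exact-algebras}(iv) (exactly your "swapped factors" observation, but carried out at the level of algebras, which dissolves the "delicate point" you worry about concerning how $\boxtimes$ transforms under $\cC\rightsquigarrow\cCop$); then (ii)$\Leftrightarrow$(iv)$\Leftrightarrow$(v) is a one-line application of \Cref{cor:proj-exact-algebras} to the braided category $\cCop$; and (i)$\Leftrightarrow$(iii) follows from the algebra isomorphism $A\otimes^\psi B\cong B\otimes^{\psi^{-1}}A$ of \Cref{lem:tensor-ops2}. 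Your route to (iv) and (v) is a longer but valid detour through \Cref{prop:duality-fully-exact}, \Cref{prop:*M*-algebras}, \Cref{prop:A-Psi-exact} and \Cref{lem:tensor-ops}: the chain "$A$ op-fully exact iff $A^\psi$ (equivalently $A^{\psi^{-1}}$) is fully exact, then apply \Cref{cor:proj-exact-algebras}(ii)" does check out, and taking $A^{\psi^{-1}}$ even gives (v) immediately without the $\psi^{-1}$-twist computation. The one genuine caution: you propose to make (i)$\Leftrightarrow$(ii) "immediate" by invoking the equivalence $\opfexmod{\cC}\simeq\fexmod{\cCop}$ "noted in the introduction" — but in the paper that equivalence is recorded as a \emph{consequence} of this very proposition, so citing it here would be circular. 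Your own direct swapped-factors argument is what actually proves (i)$\Leftrightarrow$(ii), so you should rely on that and drop the appeal to the introduction. Likewise, your reduction of (ii)$\Leftrightarrow$(iii) to "the $\psi\leftrightarrow\psi^{-1}$ symmetry of (iv)/(v)" works, but only after you make the isomorphism $A^{\psi^{-1}}\otimes^{\psi^{-1}}A\cong A\otimes^{\psi}A^{\psi^{-1}}$ from \Cref{lem:tensor-ops2} explicit; the paper's direct use of that lemma to link (i) and (iii) is cleaner.
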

\begin{proof}
    Assume (i) that $A$ is op-fully exact. Then $A\otimes B= B\otimes^{\oop}A$ is exact for any exact algebra $B$ in $\cC$. This shows (ii) that $A$ is fully exact as an algebra in $\cCop$. Since $\cC=(\cCop)^{\otimesop}$, (ii) also implies (i).
    Now, \Cref{cor:proj-exact-algebras} applied to the braided tensor category $\cCop$ shows that (ii) is equivalent to (iv) and to (v). 

    The isomorphism of algebras 
    $A\otimes B=A\otimes^\psi B\cong B\otimes^{\psi^{-1}}A$
    proved in \Cref{lem:tensor-ops2} shows that (i) and (iii) are equivalent. This completes the proof.
\end{proof}

An analogue of the equivalent characterizations of fully exact algebras in \Cref{lem:A-fully-exact-equiv} also holds for $A$ being op-fully exact. We simply replace $A$ by $A^\psi$ and the multiplication $m_A$ by the opposite one $m_A^{\mathrm{op}}$
in the conditions
(2) and (3) there.

A consequence of \Cref{prop:op-exact-algebras} is that the $2$-categories $\opfexmod{\cC}$ and $\fexmod{\cCop}$ are equivalent by restricting the $2$-equivalence from \eqref{eq:right-C-to-left-Cop}. Using \Cref{lem:M-hash=A-mod} and \Cref{prop:rel-Del-modules}, we can show that this $2$-equivalence is compatible with relative Deligne products.

\smallskip

We can now derive the following criterion for op-fully exact module categories. 
For this, note that the left $\cC$-module $\cM$ is also a left module  over $\cCrev$, as $\cCrev$ is equal to $\cC$ as a tensor category but with the inverse braiding, see \Cref{sec:braided-mon-cat}. The corresponding monoidal functor $A_\cM$,
defined as in~\Cref{def:AC-C-mod-functor} but with the left $\cC$-linear structure using in \Cref{def:Fun-action} the reverse braiding $\psi^{-1}$ replacing the braiding $\psi$,
is denoted by $A_\cM^\rev\colon \cC \to \cC^*_\cM$.

\begin{corollary}
\label{prop:op-fully-equiv}
The following statements are equivalent for a left $\cC$-module category~$\cM$:
\begin{enumerate}[(i)]
    \item $\cM$ is op-fully exact.
    \item The functor $A_\cM^\rev$ preserves projective objects. 
\end{enumerate}  
\end{corollary}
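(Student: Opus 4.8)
The plan is to reduce the statement to \Cref{prop:fully-exact-equiv} applied to the finite braided tensor category $\cCrev$, that is, $\cC$ equipped with the inverse braiding $\psi^{-1}$.

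First I would note that a left $\cC$-module category $\cM$ is tautologically also a left $\cCrev$-module category, with the same underlying abelian category and the same action bifunctor $\lact$, since $\cCrev$ and $\cC$ agree as tensor categories. Exactness of $\cM$ depends only on the action of projective objects of $\cC=\cCrev$, and is therefore unchanged by this reinterpretation. Moreover, the action functor of \Cref{def:AC-C-mod-functor} attached to the $\cCrev$-module structure on $\cM$ is, by construction, the functor $A_\cM^\rev\colon\cC\to\cC^*_\cM$ appearing in the statement: the braiding enters only through the $\cC$-linear structure $s^{X\lact G}$ of \Cref{def:Fun-action}, and replacing $\psi$ by $\psi^{-1}$ in \eqref{eq:FunC-coherence} is precisely what defines $A_\cM^\rev$.

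Next I would identify op-fully exact $\cC$-modules with fully exact $\cCrev$-modules. Choosing an algebra $A$ in $\cC$ with $\cM\simeq\rmodint{\cC}{A}$, the object $A$ with the same multiplication is an algebra in $\cCrev$ as well, and $\rmodint{\cCrev}{A}$ coincides with $\rmodint{\cC}{A}\simeq\cM$ as a module category. By \Cref{prop:op-exact-algebras} (equivalence of (i) and (iii)), $A$ is op-fully exact as an algebra in $\cC$ if and only if $A$ is fully exact as an algebra in $\cCrev$; equivalently, $\cM$ is op-fully exact if and only if $\cM$ is fully exact when regarded as a $\cCrev$-module category.

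Finally, applying the equivalence of (i) and (iv) in \Cref{prop:fully-exact-equiv} with $\cCrev$ in place of $\cC$, the $\cCrev$-module category $\cM$ is fully exact if and only if its action functor preserves projective objects; by the first step this action functor is $A_\cM^\rev$, and by the second step the left-hand side is exactly the condition that $\cM$ be op-fully exact. This yields the equivalence of (i) and (ii). The argument carries no serious obstacle: the substance lies entirely in \Cref{prop:op-exact-algebras} and \Cref{prop:fully-exact-equiv}, and the only point needing care is the bookkeeping that passing from $\cC$ to $\cCrev$ leaves the underlying module category and its projective objects intact while turning $A_\cM$ into $A_\cM^\rev$.
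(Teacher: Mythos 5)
Your proof is correct and follows essentially the same route as the paper's: the paper likewise invokes \Cref{prop:op-exact-algebras} to identify op-fully exact $\cC$-modules with fully exact $\cCrev$-modules and then applies \Cref{prop:fully-exact-equiv} to $\cCrev$, whose action functor is $A_\cM^\rev$ by construction. Your additional bookkeeping about why the $\cCrev$-action functor is exactly $A_\cM^\rev$ is a harmless elaboration of what the paper takes as the definition.
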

\begin{proof}
\Cref{prop:op-exact-algebras} shows that (i) is equivalent to $\cM=\rmodint{\cC}{A}=\rmodint{\cCrev}{A}$ being fully exact as a left $\cCrev$-module category. By \Cref{prop:fully-exact-equiv}, this is equivalent to (ii).   
\end{proof}

\begin{remark}\label{rem:alpha-induction2}
    Continuing \Cref{rem:alpha-induction}, the monoidal functor $A_\cM^\rev$ corresponds to the other $\alpha$-induction functor $\alpha_\cM^-$ of \cite{DN2}*{(4.4)}, i.e.\,$\alpha_\cM^-=A_\cM^\rev\circ I^-$, for the monoidal equivalence $I^-$  from \eqref{eq:I+-}. 
\end{remark}

The characterization in \Cref{prop:op-fully-equiv} helps us to give an example of a fully exact module category that is \emph{not} op-fully exact.

\begin{example}\label{ex:double-group}
    Let $\cC=\cZ(\lmod{H})$, where $H=\Bbbk G$ is the group algebra of a finite group $G$ such that $\cha \Bbbk$ divides $|G|$. Equivalently, $\cC\simeq \lmod{\Drin(\Bbbk G)}$, where $\Drin(\Bbbk G)$ is the \emph{Drinfeld double} of $G$, see e.g.\,\cite{EGNO}*{Prop.\,7.14.6}. The Hopf algebra $H$ is non-semisimple while the dual $H^*=\Bbbk[G]$ is semisimple. The Drinfeld double $D(\Bbbk G)$ is also a non-semisimple algebra~\cite{Rad}*{Cor.\,13.2.3}. As a coalgebra, $\Drin(\Bbbk G)=\Bbbk[G]\otimes \Bbbk G$, and the Hopf algebra $\Drin(\Bbbk G)$ is quasi-triangular with universal R-matrix 
    $$R=\sum_{g\in G} (\varepsilon_{\Bbbk G} \otimes g)\otimes (\delta_g\otimes 1_{\Bbbk G}),$$
where $\varepsilon_{\Bbbk G}\in (\Bbbk G)^*$ is the counit and $1_{\Bbbk G}$ is the unit of $\Bbbk G$.
Consider the $\cC$-module category $\Vect$ obtained from the fiber functor $F\colon\cC\to \Vect$ via \Cref{ex:tensor-act}. It is well-known, see e.g. \cite{EGNO}*{Ex.\,7.12.26}, that $\cC^*_\Vect\simeq\lmod{\Drin(\Bbbk G)^*}$ are equivalent categories. Since $\Drin(\Bbbk G)=\Bbbk [G]\otimes \Bbbk G$ as a coalgebra, we see that $\Drin(\Bbbk G)^*$ is also non-semisimple. Using the result \Cref{lem:Vect-fully-exact} below, we see that $\Vect$ is fully exact if and only if the restriction functor $\Res_{\phi_R}\colon \lmod{\Drin(H)}\to\lmod{\Drin(H)^*}$, for the morphism $\phi_R$ from \eqref{eq:phiR}, preserves projective objects.
With the above universal R-matrix and the convention $(V\otimes W)^*:=W^*\otimes V^*$, we compute that
$$\phi_R \colon \Big(\Drin(\Bbbk G)^*= (\Bbbk G)^*\otimes (\Bbbk G)^{**}\xrightarrow{ (1_{\Bbbk [G]}\circ \varepsilon_{\Bbbk [G]})\otimes p} (\Bbbk G)^*\otimes \Bbbk G=\Drin(\Bbbk G)\Big),$$
where $p\colon (\Bbbk G)^{**}\to \Bbbk G$ is the pivotal structure of $\Vect$, an isomorphism. Since $(\Bbbk G)^*$ is semisimple, restriction along $\phi_R$ preserves projective objects. Thus, $\Vect$ is fully exact.

By \Cref{prop:op-fully-equiv}, $\cM$ is op-fully exact if and only if $A_\cM^\rev$ preserves projective objects. By \Cref{cor:Vect-op-fex}, this functor is given by restriction along the morphism  
$$ \phi_{R^{-1}_{21}}\colon \Big(\Drin(\Bbbk G)^*= (\Bbbk G)^*\otimes (\Bbbk G)^{**}\xrightarrow{ S_{\Bbbk[G]}\otimes (1_{\Bbbk G}\circ \varepsilon_{(\Bbbk G)^{**}})} (\Bbbk G)^*\otimes \Bbbk G=\Drin(\Bbbk G)\Big), $$
involving the antipode $S_{\Bbbk[G]}$ of $\Bbbk[G]$. Restriction along this homomorphism of algebras does not preserve projective objects. For example, the regular module is sent to a direct sum of simple objects but the category $\cC^*_\cM$ is not semisimple. Thus, $\Vect$ is \emph{not} op-fully exact. This example generalizes to Hopf algebras $H$ in non-zero characteristic which are non-semisimple but where the dual is semisimple. 
\end{example}

\subsection{Criteria for duals to be fully exact}
\label{sec:dualfully}

To conclude this section, we will consider situations in which the notions of fully exact and op-fully exact left $\cC$-modules are equivalent.  

\begin{corollary}\label{cor:C-symmetric-op-fex}
For $\cC$ symmetric, a left $\cC$-module is fully exact if and only if it is  op-fully exact.
\end{corollary}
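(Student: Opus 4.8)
\textbf{Proof plan for Corollary~\ref{cor:C-symmetric-op-fex}.}
The plan is to reduce everything to a statement about the \action functor $A_\cM$ and observe that symmetry makes $A_\cM$ and $A_\cM^\rev$ literally the same functor. First I would recall that by \Cref{prop:fully-exact-equiv}(iv), a left $\cC$-module category $\cM$ is fully exact if and only if $A_\cM\colon \cC\to\cC^*_\cM$ preserves projective objects, and by \Cref{prop:op-fully-equiv}(ii), $\cM$ is op-fully exact if and only if $A_\cM^\rev\colon\cC\to\cC^*_\cM$ preserves projective objects, where $A_\cM^\rev$ is defined exactly as $A_\cM$ but using the inverse braiding $\psi^{-1}$ in the $\cC$-linear structure of \Cref{def:Fun-action}.

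The key observation is then that if $\cC$ is symmetric, then $\psi^{-1}_{X,Y}=\psi_{Y,X}^{-1}=\psi_{X,Y}$ for all $X,Y$, so $\cCrev=\cC$ as braided tensor categories, and consequently the two coherences $s^{X\lact G}$ built from $\psi$ and from $\psi^{-1}$ in \eqref{eq:FunC-coherence} coincide. Hence $A_\cM^\rev=A_\cM$ as monoidal functors $\cC\to\cC^*_\cM$, and in particular one preserves projective objects if and only if the other does. Combining this identification with the two characterizations from the previous paragraph immediately gives that $\cM$ is fully exact if and only if it is op-fully exact.

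Alternatively, and perhaps more cleanly at the level of algebras, one can argue via \Cref{prop:op-exact-algebras}: for $\cM\simeq\rmodint{\cC}{A}$, being op-fully exact is equivalent to $A$ being fully exact as an algebra in $\cCrev$; but $\cCrev=\cC$ when $\cC$ is symmetric, so this is just the statement that $A$ is fully exact, i.e.\ that $\cM$ is fully exact. Since there is essentially no obstacle here — the entire content is the tautology $\psi=\psi^{-1}$ for symmetric braidings together with the already-established characterizations — the only thing to be careful about is making sure the comparison of the $\cC$-linear structures $s^{X\lact G}$ for $\psi$ versus $\psi^{-1}$ is genuinely an equality and not merely an isomorphism; this is clear from the formula in \eqref{eq:FunC-coherence}, where $\psi$ enters in precisely one place. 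No field assumptions beyond those standing in this section are needed.
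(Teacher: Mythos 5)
Your proposal is correct. The paper's own proof takes a different (and even shorter) route: it observes that the symmetric braiding gives an isomorphism of algebras $\psi_{A,B}\colon A\otimes B\to B\otimes A$, hence an equivalence of left $\cC$-module categories $\cN\boxtimes_\cC\cM\simeq\cM\boxtimes_\cC\cN$ for all $\cN$, so the defining conditions of fully exact and op-fully exact literally coincide. You instead pass through the characterizations in terms of the \action functors, using that symmetry forces $\psi^{-1}_{X,Y}=\psi_{X,Y}$, hence $\cCrev=\cC$ and $A_\cM^\rev=A_\cM$ on the nose (the braiding enters the coherence $s^{X\lact G}$ of \eqref{eq:FunC-coherence} in exactly one place, and the monoidal structure $\mu^{A_\cM}=l_{X,Y,M}$ does not involve it at all), so the two projectivity-preservation conditions of \Cref{prop:fully-exact-equiv}(iv) and \Cref{prop:op-fully-equiv}(ii) are identical. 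Both arguments are valid; the paper's works directly at the level of $\boxtimes_\cC$ and needs only the definitions, while yours leans on the already-established functor characterizations and has the minor virtue of making transparent exactly where the braiding enters. Your alternative via \Cref{prop:op-exact-algebras} and $\cCrev=\cC$ is also fine and is essentially the same observation packaged at the level of algebras. No gaps.
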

\begin{proof}
    The symmetric braiding of $\cC$ provides an isomorphism of algebras 
    $$\psi_{A,B}\colon A\otimes B \to B\otimes A.$$
    This gives an equivalence of left $\cC$-module categories $\cN\boxtimes_\cC \cM\simeq \cM\boxtimes_\cC \cN$, for $\cM=\rmodint{\cC}{A}$ and $\cN=\rmodint{\cC}{B}$. Now the claim follows from \Cref{prop:duality-fully-exact}.
\end{proof}

We now consider criteria for algebras to be fully exact and op-fully exact.
Notably, separable algebras provide a class of fully exact algebras that are also op-fully exact.

\begin{lemma}\label{lem:separable-duals}
    Assume $A$ is separable. Then $A^\psi$ is separable and $A$ is op-fully exact.
\end{lemma}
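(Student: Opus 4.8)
The statement has two parts, and the second will follow quickly from the first together with earlier results. The first claim is that the $\psi$-opposite algebra $A^\psi$ is separable whenever $A$ is. To see this, I would start from a section $\sigma\colon A \to A\otimes A$ of the multiplication $m_A$ in the category of $A$-bimodules in $\cC$. The key observation is that the braiding gives an isomorphism of objects $\psi_{A,A}\colon A\otimes A \to A\otimes A$ that intertwines the $A$-bimodule structure on $A\otimes A$ (with left action on the first factor, right action on the second) with the $A^\psi$-bimodule structure on $A\otimes A$; one checks this using naturality of $\psi$ and the hexagon axioms, exactly as in the proof of \Cref{lem:tensor-ops}. Moreover $m_A^\psi = m_A\circ \psi_{A,A}$ by \Cref{def:opp-algebras}, so the composite $\sigma^\psi := \psi_{A,A}^{-1}\circ \sigma$ is an $A^\psi$-bimodule morphism satisfying $m_A^\psi \circ \sigma^\psi = m_A\circ \psi_{A,A}\circ \psi_{A,A}^{-1}\circ \sigma = m_A\circ\sigma = \id_A$. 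Hence $A^\psi$ is separable.

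For the second claim, that $A$ is op-fully exact, I would invoke \Cref{prop:op-exact-algebras}: it suffices to show that $A\otimes A^\psi$ is an exact algebra. By \Cref{prop:rel-proj-fully-exact}, every separable algebra is fully exact, and in particular, since separable algebras are relatively projective as bimodules by the remark following \Cref{def:rel-proj-bimodule} (taking $X = \one$), the class of separable algebras is closed under the tensor product $\otimes = \otimes^\psi$ by \Cref{prop:rel-proj-AB}. Since both $A$ and $A^\psi$ are separable (the latter by the first part), $A\otimes A^\psi$ is separable, hence exact by \Cref{lem:sep-rel-ss}. Therefore $A$ is op-fully exact by condition (iv) of \Cref{prop:op-exact-algebras}. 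Alternatively, and perhaps more cleanly, one can argue directly: $A$ separable implies $A$ is relatively projective as a bimodule, so $\rmodint{\cC}{A}$ is relatively projective; then for any exact $\cN = \rmodint{\cC}{B}$, the relative Deligne product $\cM\boxtimes_\cC\cN \simeq \rmodint{\cC}{A^{\psi^{-1}}\otimes B}$ is the module category of the algebra $A^{\psi^{-1}}\otimes B$, and one checks this is exact by combining the splitting data for $A$ as a bimodule with the exactness of $B$, exactly as in the proof of \Cref{prop:rel-proj-fully-exact} applied on the appropriate side.

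The only mildly delicate point is the bookkeeping in the first part: verifying that $\psi_{A,A}$ really carries the $A$-bimodule structure to the $A^\psi$-bimodule structure requires care about which braiding (or its inverse) appears on the left versus the right factor. I expect this to be the main, though routine, obstacle; it is a diagram chase using the two hexagon axioms and naturality of $\psi$, entirely parallel to the unstated verifications behind \Cref{lem:tensor-ops} and \Cref{lem:tensor-ops2}. Everything else is a direct appeal to results already established in the excerpt.
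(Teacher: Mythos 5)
Your proof is correct and takes essentially the same route as the paper: the section of $m_A$ is conjugated by the braiding (the paper sets $j=\psi^{-1}_{A,A}\circ i$ and $q=p\circ\psi_{A,A}$, matching your $\sigma^\psi$ and $m_A^\psi$), with the verification that these are $A^\psi$-bimodule morphisms likewise deferred to a routine check. For the second claim the paper passes through \Cref{lem:opfullyexact} ($A^\psi$ separable $\Rightarrow$ fully exact $\Rightarrow$ $A$ op-fully exact) rather than your argument that $A\otimes A^\psi$ is exact via \Cref{prop:rel-proj-AB} and \Cref{prop:op-exact-algebras}(iv), but both are immediate consequences of results already established.
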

\begin{proof}
We already know from \Cref{prop:rel-proj-fully-exact} that $A$ is fully exact. 
    Fix an inclusion $i$ and projection $p$ of $A$-bimodules
    $$i\colon A\to A\otimes A, \qquad p\colon A\otimes A\to A,$$
    such that $i p=\id_A$. Consider the morphisms
    $$j= \psi^{-1}_{A,A}i\colon A^\psi \to A^\psi\otimes A^\psi, \qquad q= p\,\psi_{A,A}\colon A^\psi \to A^\psi\otimes A^\psi,$$
    using that $A=A^\psi$ as objects in $\cC$ (but not as algebras). One checks that $j$ and $q$ are morphisms of $A^\psi$-bimodules. Hence, $A^\psi$ is separable and hence fully exact. By \Cref{lem:opfullyexact}, this means that $A$ is op-fully exact. 
\end{proof}

\begin{remark}\label{rem:sep-indep-braiding}
    Note that the definition of a separable algebra $A$ (or, more generally, that of an algebra that is relatively projective as a bimodule) does not involve the braiding $\psi$ of $\cC$. Thus, if $A$ is separable (respectively, relatively projective as a bimodule), then $\rmodint{\cC}{A}$ is fully exact and op-fully exact for $\cC$ equipped with \emph{any} braiding.
\end{remark}

\begin{corollary}\label{cor:sep-dualizable}
If $\cM$ is a separable left $\cC$-module, then $\cM$ is fully exact and op-fully exact. 
\end{corollary}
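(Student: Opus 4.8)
The plan is to reduce the statement entirely to the algebra-level results already established, using that a separable module category comes with a distinguished separable representing algebra. Concretely, by \Cref{def:separable-mod-cat} we may fix a separable algebra $A$ in $\cC$ together with an equivalence $\cM\simeq\rmodint{\cC}{A}$ of left $\cC$-module categories. Since being fully exact and being op-fully exact are properties of the module category $\cM$ itself — they are phrased purely in terms of relative Deligne products of $\cM$ with arbitrary exact $\cC$-module categories, see \Cref{def:fully-exact} and \Cref{def:op-fex} — it suffices to verify that the separable algebra $A$ is both fully exact (in the sense of \Cref{def:exactproj-A}) and op-fully exact (in the sense of the definition preceding \Cref{lem:opfullyexact}).

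For fully exactness I would simply invoke \Cref{prop:rel-proj-fully-exact}: a separable algebra is relatively projective as a bimodule, since choosing $X=\one$ in \Cref{def:rel-proj-bimodule} the bimodule section of the multiplication $m_A$ exhibits $A$ as a retract of $I^b(\one)=A\otimes\one\otimes A$; hence $A$ is fully exact, so $\cM\simeq\rmodint{\cC}{A}$ lies in $\fexmod{\cC}$. For op-fully exactness I would quote \Cref{lem:separable-duals}, which states precisely that a separable $A$ is op-fully exact (via the observation that conjugating the bimodule section and projection of $m_A$ by the braiding $\psi$ makes $A^\psi$ separable, hence fully exact, whence $A$ is op-fully exact by \Cref{lem:opfullyexact}). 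Therefore $\cM$ also lies in $\opfexmod{\cC}$, completing the argument.

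Since every ingredient is already in place, there is no real obstacle here; the only point worth flagging is the Morita-independence, namely that one must be sure the two conclusions are intrinsic to $\cM$ and do not depend on the choice of separable algebra $A$ representing it — but this is immediate because $\fexmod{\cC}$ and $\opfexmod{\cC}$ are, by definition, full $2$-subcategories of $\lmod{\cC}$, so membership is a property of the object $\cM$ alone. Alternatively, one could bypass internal algebras altogether: given an exact $\cC$-module $\cN$, relative projectivity of $A$ as a bimodule makes both $\cN\boxtimes_\cC\cM$ and $\cM\boxtimes_\cC\cN$ retracts of induced module categories on which the $\lact$-action manifestly sends projectives to projectives, hence exact; but the bookkeeping is cleanest via the two cited results, so that is the route I would take.
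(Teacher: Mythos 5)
Your argument is correct and coincides with the paper's own proof, which likewise deduces the statement directly from \Cref{def:separable-mod-cat} together with \Cref{lem:separable-duals} (the latter already containing the appeal to \Cref{prop:rel-proj-fully-exact} for full exactness). The remark on Morita-independence is a sensible sanity check but, as you note, immediate since membership in $\fexmod{\cC}$ and $\opfexmod{\cC}$ is a property of the object $\cM$ itself.
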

\begin{proof}
    This follows directly from \Cref{def:separable-mod-cat} and \Cref{lem:separable-duals}.
\end{proof}

\begin{proposition}
    Assume that one of the following conditions holds for an algebra $A$~in~$\cC$:
    \begin{enumerate}[(i)]
        \item $A$ is commutative in $\cC$;
        \item $\psi_{A,A}^{2}=\id_{A\otimes A}$;
        \item $A$ is contained in the M\"uger center of $\cC$;
        \item the algebras $A$ and $A^\psi$ are Morita equivalent.
    \end{enumerate}
Then $A$ is fully exact if and only if it is op-fully exact in $\cC$.
\end{proposition}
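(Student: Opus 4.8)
The plan is to reduce all four cases to the characterizations of fully exactness and op-fully exactness in terms of exactness of tensor-product algebras. By \Cref{cor:proj-exact-algebras}, $A$ is fully exact if and only if the algebra $A^\psi\otimes A$ is exact, and by \Cref{prop:op-exact-algebras}, $A$ is op-fully exact if and only if $A\otimes A^\psi$ is exact. Since an isomorphism of algebras in $\cC$ induces an equivalence of the corresponding categories of right modules as left $\cC$-module categories, exactness of an internal algebra is invariant under algebra isomorphisms; hence in cases (i), (ii) and (iii) it will suffice to produce an isomorphism of algebras $A^\psi\otimes A\cong A\otimes A^\psi$, where $\otimes=\otimes^\psi$ throughout.

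For (i), if $A$ is commutative then $m_A\psi_{A,A}=m_A$, so $A^\psi=A$ as algebras and both $A^\psi\otimes A$ and $A\otimes A^\psi$ are literally equal to $A\otimes^\psi A$; nothing remains to check. For (iii), an object $A$ in the M\"uger center of $\cC$ satisfies $\psi_{A,X}\psi_{X,A}=\id_{X\otimes A}$ for all $X$ in $\cC$, so taking $X=A$ gives $\psi_{A,A}^{2}=\id_{A\otimes A}$; thus (iii) is a special case of (ii). For (ii), the hypothesis $\psi_{A,A}^{2}=\id_{A\otimes A}$ forces $\psi^{-1}_{A,A}=\psi_{A,A}$, so $A^{\psi^{-1}}=A^{\psi}$ as algebras and, comparing the defining multiplications of \Cref{def:tensor-algebras} with those of its inverse-braiding variant, $A\otimes^{\psi^{-1}}A^\psi=A\otimes^{\psi}A^\psi$ as algebras. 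Combining this identification with the algebra isomorphism $\psi^{-1}_{A^\psi,A}\colon A^\psi\otimes^\psi A\isomorph A\otimes^{\psi^{-1}}A^\psi$ supplied by \Cref{lem:tensor-ops2} yields the required isomorphism $A^\psi\otimes A\cong A\otimes A^\psi$, and then \Cref{cor:proj-exact-algebras} and \Cref{prop:op-exact-algebras} give the equivalence of fully exactness and op-fully exactness.

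For (iv), I would argue instead by Morita invariance. Being fully exact (respectively, op-fully exact) is a property of the left $\cC$-module category $\rmodint{\cC}{A}$ by \Cref{def:exactproj-A} (respectively, by the analogous definition of an op-fully exact algebra), hence it depends only on the Morita equivalence class of the internal algebra $A$. If $A$ and $A^\psi$ are Morita equivalent, then $A$ is op-fully exact if and only if $A^\psi$ is op-fully exact, which by the equivalence of (i) and (ii) in \Cref{lem:opfullyexact} holds if and only if $A$ is fully exact.

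I expect the only genuinely delicate point to be case (ii): one must keep careful track of whether $\psi$ or $\psi^{-1}$ appears in each of the tensor-product algebras $A^\psi\otimes A$, $A\otimes A^\psi$ and $A\otimes^{\psi^{-1}}A^\psi$, and verify that the symmetry hypothesis $\psi_{A,A}^{2}=\id$ really identifies the two relevant algebra structures on $A\otimes A\otimes A\otimes A$. This is a short diagrammatic check using the hexagon axioms and naturality of the braiding, together with \Cref{lem:tensor-ops,lem:tensor-ops2}; the remaining cases are essentially formal once this framework is in place.
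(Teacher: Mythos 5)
Your proposal is correct and follows essentially the same route as the paper: cases (i)--(iii) are handled by exhibiting an algebra isomorphism $A^\psi\otimes A\cong A\otimes A^\psi$ via \Cref{lem:tensor-ops2} and the symmetry hypothesis, and then invoking \Cref{cor:proj-exact-algebras} and \Cref{prop:op-exact-algebras}. The only (harmless) deviation is in case (iv), where you use Morita invariance of the (op-)fully exact property of $\rmodint{\cC}{A}$ together with \Cref{lem:opfullyexact} directly, whereas the paper instead deduces that the exact algebra $A^\psi\otimes A$ is Morita equivalent to $A\otimes A^\psi$; both arguments are valid and of comparable length.
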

\begin{proof}
Assume that  $A$ is a  fully exact algebra.

Assume (i) so that  $A=A^\psi$. By \Cref{lem:opfullyexact}, $A=A^\psi=A$ is also op-fully exact. 

Assume (ii) so that $\psi_{A,A}=\psi_{A,A}^{-1}$. As objects in $\cC$, $A=A^\psi$ and this assumption implies that
$A^\psi\otimes A=A^\psi\otimes^{\psi^{-1}} A$ are equal as algebras. But $A^\psi\otimes^{\psi^{-1}} A\cong A\otimes^{\psi}  A^\psi$ by \Cref{lem:tensor-ops2}. Thus, $A$ is op-fully exact by \Cref{prop:op-exact-algebras}.

Assuming (iii) implies (ii), so $A$ is also op-fully exact in this case.

Assuming (iv), we have $\cM:=\rmodint{\cC}{A}\simeq \rmodint{\cC}{A^\psi}$ as left $\cC$-module categories. But then 
$${}^*\cM\boxtimes_\cC \cM\simeq \cM\boxtimes_\cC {}^*\cM.$$
In particular, the exact algebra $A^\psi\otimes A$ is Morita equivalent to the algebra $A\otimes A^\psi$. This implies that $A\otimes A^\psi$ is also exact and $A$ is op-fully exact by \Cref{prop:op-exact-algebras}.

Thus, under any of the assumptions (i)--(iv), we have checked that $A$ being  fully exact implies that $A$ is op-fully exact. The converse follows by the same reasoning since $A$ is op-fully exact if and only if $A$ is fully exact when viewed as an algebra in $\cCop$, by \Cref{prop:op-exact-algebras} and all assumptions (i)--(iv) also hold for $A$ viewed as an algebra in $\cCop$.
\end{proof}

 We now recall the definition of a braided module category, see e.g.~\cite{DN2}*{Def.\,4.1}.

\begin{proposition}\label{prop:braided-mod-cat}
Let $\cM$ be a left $\cC$-module that admits a structure of a braided module category. Then $\cM$ is fully exact if and only if $\cM$ is op-fully exact. 
\end{proposition}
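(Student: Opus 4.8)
The plan is to show that a braided module category structure on $\cM$ provides an equivalence of left $\cC$-module categories between $\cN \boxtimes_\cC \cM$ and $\cM \boxtimes_\cC \cN$ for every $\cC$-module $\cN$, and then invoke \Cref{prop:duality-fully-exact}. Recall from \cite{DN2}*{Def.\,4.1} that a braided module category structure on $\cM$ consists of a natural family of isomorphisms $\sigma_{X,M}\colon X\lact M \isomorph X\lact M$ (the \emph{module braiding}) compatible with the $\cC$-action and satisfying hexagon-type axioms relating $\sigma$ to the braiding $\psi$ of $\cC$. The key consequence I would extract is that $\sigma$ intertwines the left $\cC$-module structure $A_\cM\colon \cC\to \cC^*_\cM$ with the "reversed" one $A_\cM^\rev$, i.e.\ the module braiding gives a monoidal natural isomorphism exhibiting $A_\cM$ and $A_\cM^\rev$ as isomorphic monoidal functors $\cC\to \cC^*_\cM$.

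First I would make the above precise: unpacking the hexagon axioms of a braided module category, the family $\sigma_{-,M}$ assembles (as $M$ ranges over $\cM$) into a natural transformation $A_\cM \Rightarrow A_\cM^\rev$ of functors $\cC\to \cC^*_\cM$, and the compatibility of $\sigma$ with the module associator makes this a monoidal natural isomorphism. (Concretely, in terms of an algebra $A$ with $\cM\simeq\rmodint{\cC}{A}$, this is the statement that $\psi$-twisting of the left $A$-action via the module braiding identifies the $\cC$-linear structures built from $\psi$ and from $\psi^{-1}$.) Then, since $A_\cM \cong A_\cM^\rev$ as functors, one preserves projective objects if and only if the other does. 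By \Cref{prop:fully-exact-equiv}(iv), $\cM$ is fully exact iff $A_\cM$ preserves projectives; by \Cref{prop:op-fully-equiv}(ii), $\cM$ is op-fully exact iff $A_\cM^\rev$ preserves projectives. Combining these three facts gives the claim immediately.

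An alternative route, which may be cleaner to write, is to stay on the level of module categories: a braided module structure on $\cM$ should directly produce, for each left $\cC$-module $\cN$, a $\cC$-module equivalence $\cN\boxtimes_\cC\cM \simeq \cM\boxtimes_\cC\cN$ — this is the analogue, for module categories, of the isomorphism of algebras $A\otimes^\psi B\cong B\otimes^{\psi}A$ that holds when one of them is "central enough". One builds the balanced functor $\cN\times\cM\to\cM\boxtimes_\cC\cN$ using the module braiding to swap factors, checks the $\cC$-balancing coherence \eqref{eq:balancing-coherence} against the hexagon axioms of the braided module category, and descends to the relative Deligne product. Then \Cref{prop:duality-fully-exact} (equivalence of (i) and (ii), together with the fact that exactness is a dualization-invariant and swap-invariant property) finishes the argument: if $\cM$ is fully exact, then for exact $\cN$ the product $\cM\boxtimes_\cC\cN\simeq\cN\boxtimes_\cC\cM$ is exact, so $\cM$ is op-fully exact, and conversely.

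The main obstacle I expect is bookkeeping: verifying that the module braiding hexagons precisely match the coherence \eqref{eq:balancing-coherence} for the swap balancing (or, in the other approach, that $\sigma$ is genuinely \emph{monoidal} as a natural transformation $A_\cM\Rightarrow A_\cM^\rev$ and not merely natural). Both checks are routine diagram chases but require care about which braiding ($\psi$ versus $\psi^{-1}$) enters where, especially since the conventions in \Cref{def:Fun-action} and \eqref{eq:right-coherence} fix particular choices. I would handle this by working with the internal-algebra model $\cM\simeq\rmodint{\cC}{A}$, where the module braiding becomes an explicit morphism in $\cC$ and the two $\cC$-linear structures on functor categories differ by an explicit insertion of $\psi^{\pm 1}$, reducing everything to the algebra isomorphism $(A^\psi\otimes A) \cong (A\otimes A^\psi)$ of \Cref{lem:tensor-ops2} twisted by the module braiding.
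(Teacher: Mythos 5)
Your first argument is exactly the paper's proof: the paper cites \cite{DN2}*{Prop.\,4.9} for the fact that a braided module structure yields a monoidal isomorphism $A_\cM\cong A_\cM^\rev$, and then concludes that the two functors preserve projectives simultaneously, invoking the same characterizations you use. The only difference is that you propose to verify the monoidal isomorphism by hand (and sketch an alternative swap-equivalence route), whereas the paper simply quotes the reference.
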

\begin{proof}
    By \cite{DN2}*{Prop.\,4.9}, a braided $\cC$-module category structure corresponds to a monoidal isomorphism $A_\cM\cong A_\cM^\rev$. Hence, these functors preserve projective objects at the same time. 
\end{proof}

\begin{remark}\label{rem:ZC-mod}
 According to the work \cite{DN2}*{Prop.\,4.9 and Thm.4.11} there is a $2$-equivalence between  the $2$-categorical version of Drinfeld center $\cZ(\lmod{\cC})$ of \cite{BN}*{Sec.\,3} and the $2$-category $\cC$-$\mathbf{brmod}$ of braided $\cC$-modules. Therefore, the latter has a monoidal $2$-category structure. According to 
 \cite{DN2}*{Rem.4.13} the tensor product of braided module categories $\cM$ and $\cN$ is just $\cM\boxtimes_\cC \cN$ and equipped with a canonical braiding. We thus conclude that product of (op-)fully exact braided $\cC$-modules is again (op-)fully exact and braided.
\end{remark}

We conclude this subsection by observing that the argument from \Cref{lem:separable-duals} does not directly generalize to relatively projective or fully exact algebras and pose the following question. 

\begin{question}
    If $\cha \Bbbk=0$, are fully exact module categories stable under duality, i.e.\ is $\cM$ fully exact if and only if $\cM^*$ is fully exact? Equivalently, is the algebra $A^\psi\otimes A$ in $\cC$ exact if and only if $A\otimes A^{\psi}$ is exact?
\end{question}
When $\cha \Bbbk = p$, the answer is negative by \Cref{ex:double-group}. For $\cC$ a symmetric category, the answer is positive by \Cref{cor:C-symmetric-op-fex}, see also \Cref{sec:Sweedler} for a detailed example. 

\subsection{Perfect module categories}
\label{sec:fully-dualizable}

\Cref{ex:double-group} shows that fully exact module categories are, in general, not closed under duals. In order to produce a monoidal $2$-categories with duals, we can restrict to left $\cC$-modules that are \emph{both} fully exact and op-fully exact. Recall from \Cref{def:left-dual-cat} that an object of a monoidal $2$-category is called \emph{dualizable} if it has a left and right dual. Further recall that all objects in $\lmod{\cC}$ are dualizable, see \Cref{sec:duals-finite}.

\begin{definition}\label{def:dualizable-fully-exact}
    A left $\cC$-module category $\cM$ is 
    called \emph{perfect} if it is dualizable in $\fexmod{\cC}$. 
We denote the full $2$-subcategory of $\lmod{\cC}$ on all perfect module categories by $\perf{\cC}$.
\end{definition}
As $\cM^*$ is fully exact if and only if ${}^*\cM$ is fully exact by \Cref{prop:duality-fully-exact}, we know that $\cM$ is perfect if and only if $\cM$ and $\cM^*$ are fully exact.

Since the classes of fully exact and op-fully exact $\cC$-modules are closed under tensor product, $\perf{\cC}$  is a monoidal $2$-subcategory of $\lmod{\cC}$.
Combining the previous results \Cref{prop:fully-exact-equiv}, \Cref{prop:op-exact-algebras}, \Cref{prop:op-fully-equiv}, and \Cref{prop:rel-tensor-as-hom} we obtain equivalent criteria for perfect module categories.

\begin{corollary}\label{cor:dualizable-fully-exact-criterion}
The following are equivalent for a left $\cC$-module category $\cM$:
\begin{enumerate}[(i)]
    \item $\cM$ is perfect.
    \item  $\cM$ is both fully exact and op-fully exact.
    \item Both functors $A_\cM, A_\cM^\rev\colon \cC \to \cC^*_\cM$ preserve projective objects.
    \item Both left $\cC$-modules ${}^*\cM\boxtimes_\cC \cM$ and $\cM\boxtimes_\cC {}^*\cM$ are exact.
    \item Both left $\cC$-modules $\cM^*\boxtimes_\cC \cM$ and $\cM\boxtimes_\cC \cM^*$ are exact.
\end{enumerate}
\end{corollary}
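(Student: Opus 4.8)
The statement is Corollary~\ref{cor:dualizable-fully-exact-criterion}, asserting the equivalence of five conditions on a left $\cC$-module category $\cM$. The proof should be purely a matter of assembling results already proved in the excerpt, so I would organize it as a cycle of implications, invoking the relevant prior statements at each step, rather than proving anything from scratch.

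First I would establish (i)$\Leftrightarrow$(ii). By \Cref{def:dualizable-fully-exact}, $\cM$ is perfect if and only if it is dualizable in $\fexmod{\cC}$; since $\fexmod{\cC}$ is a full monoidal $2$-subcategory of $\lmod{\cC}$ in which every object is already dualizable (with left dual $\cM^*$ and right dual ${}^*\cM$ by \Cref{lem:duals-fun}), dualizability of $\cM$ inside $\fexmod{\cC}$ amounts to requiring that the dual objects $\cM^*$ and ${}^*\cM$ also lie in $\fexmod{\cC}$, i.e.\ are fully exact. By \Cref{prop:duality-fully-exact}, $\cM^*$ is fully exact if and only if ${}^*\cM$ is fully exact, and moreover $\cM^*$ fully exact is equivalent to $\cM$ being op-fully exact (again \Cref{prop:duality-fully-exact}, applying the equivalence of fully exact and op-fully exact through the duals). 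Hence $\cM$ perfect is equivalent to $\cM$ being both fully exact and op-fully exact, which is~(ii). This is the observation already recorded in the line following \Cref{def:dualizable-fully-exact}, so I would simply cite it.

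Next I would handle (ii)$\Leftrightarrow$(iii). By \Cref{prop:fully-exact-equiv}, $\cM$ is fully exact if and only if the \action functor $A_\cM\colon\cC\to\cC^*_\cM$ preserves projective objects. By \Cref{prop:op-fully-equiv}, $\cM$ is op-fully exact if and only if $A_\cM^\rev$ preserves projective objects. Conjoining these gives (iii). For (ii)$\Leftrightarrow$(iv), I would use the $\cC$-linear equivalence ${}^*\cM\boxtimes_\cC\cM\simeq\Fun_\cC(\cM,\cM)=\cC^*_\cM$ from \Cref{prop:rel-tensor-as-hom}(ii) (equivalently \Cref{ex:dual-and-CM}), together with the equivalence $\cM\boxtimes_\cC{}^*\cM\simeq\cC^*_{{}^*\cM}$ obtained by applying the same to ${}^*\cM$ and using $({}^*\cM)^*\simeq\cM$ (no: more directly, $\cM\boxtimes_\cC{}^*\cM\simeq\Fun_\cC({}^*(\,{}^*\cM)\,,\ldots)$ — here I would instead go through \Cref{prop:fully-exact-equiv}(iii) which states $\cC^*_\cM$ exact $\Leftrightarrow$ $\cM$ fully exact, and the op-fully exact analogue that $\cM\boxtimes_\cC{}^*\cM$ exact $\Leftrightarrow$ $\cM$ op-fully exact, which follows from \Cref{prop:duality-fully-exact} applied to ${}^*\cM$ together with \Cref{prop:rel-tensor-as-hom}). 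Finally (iv)$\Leftrightarrow$(v) follows because, for an algebra $A$ with $\cM\simeq\rmodint{\cC}{A}$, one has ${}^*\cM\simeq\rmodint{\cC}{A^\psi}$ and $\cM^*\simeq\rmodint{\cC}{A^{\psi^{-1}}}$ by \Cref{prop:*M*-algebras}, so the module categories in (iv) and (v) are realized by the algebras $A^\psi\otimes A$, $A\otimes A^\psi$ versus $A^{\psi^{-1}}\otimes A$, $A\otimes A^{\psi^{-1}}$; exactness of these is matched up using the algebra isomorphisms of \Cref{lem:tensor-ops} (as in the proof of \Cref{cor:proj-exact-algebras} and \Cref{prop:op-exact-algebras}), or more cleanly: $\cM^{**}\simeq\cM$ need not hold in general, but \Cref{cor:duals-exact} and \Cref{prop:A-Psi-exact} show exactness is insensitive to the relevant $\psi$-twists, giving (iv)$\Leftrightarrow$(v) directly.

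I do not expect a genuine obstacle here — every implication is a citation of an earlier result. The only point requiring care is the bookkeeping in (iv)$\Leftrightarrow$(v): one must be careful that $\cM^*$ and ${}^*\cM$ differ (there is no assumption that $\cC$ is ribbon), so the cleanest route is to phrase (iv) and (v) at the level of the internal algebras $A^{\psi}\otimes A$ and $A^{\psi^{-1}}\otimes A$ (and their opposite-order versions), and then invoke \Cref{prop:A-Psi-exact} together with \Cref{cor:proj-exact-algebras} and \Cref{prop:op-exact-algebras}, which already contain exactly the needed $\psi$ versus $\psi^{-1}$ comparisons. So the write-up is short: state the cycle (i)$\Rightarrow$(ii)$\Rightarrow$(iii)$\Rightarrow$(ii), then (ii)$\Leftrightarrow$(iv)$\Leftrightarrow$(v), citing \Cref{def:dualizable-fully-exact}, \Cref{prop:duality-fully-exact}, \Cref{prop:fully-exact-equiv}, \Cref{prop:op-fully-equiv}, \Cref{prop:rel-tensor-as-hom}, \Cref{prop:*M*-algebras}, and \Cref{prop:op-exact-algebras} at the appropriate spots.
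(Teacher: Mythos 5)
Your proposal is correct and follows essentially the same route as the paper, which gives no detailed proof but simply states that the corollary is obtained by combining \Cref{prop:fully-exact-equiv}, \Cref{prop:op-exact-algebras}, \Cref{prop:op-fully-equiv}, and \Cref{prop:rel-tensor-as-hom} — exactly the results you cite. Your instinct at the end to handle (iv)$\Leftrightarrow$(v) (and the exactness of $\cM\boxtimes_\cC{}^*\cM$) at the level of the internal algebras $A\otimes A^{\psi}$, $A^{\psi}\otimes A$, etc., via \Cref{prop:rel-Del-modules}, \Cref{cor:proj-exact-algebras} and \Cref{prop:op-exact-algebras} is the right one, since ${}^*({}^*\cM)\not\simeq\cM$ in general, so the earlier detour you (correctly) abandoned would not have worked as stated.
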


\begin{remark}\label{rem:perf-complex}
The terminology of perfect module categories is motivated by analogy with the terminology of \emph{perfect complexes} of modules over a commutative ring $R$ or quasi-coherent sheaves over a scheme $X$ 
\cite{stacks-project}*{Secs.\,\href{https://stacks.math.columbia.edu/tag/0656}{15.75} and \href{https://stacks.math.columbia.edu/tag/0716}{20.3}}.
 It can be shown that perfect complexes are precisely the dualizable objects in the derived category $D(R)$ of $R$-modules with respect to the derived relative tensor product over $R$. More generally, this observation applies to the derived category $D(\cO_X)$ of quasi-coherent sheaves over $X$ with respect to the derived relative tensor product 
 \cite{stacks-project}*{\href{https://stacks.math.columbia.edu/tag/0FP7}{Sec.\,20.50}}. See also \cite{BFN}*{Prop.\,3.6} and \cite{HR}*{Lem.\,4.3} for a generalization of such a statement to stacks. 
In particular, perfect module categories give a categorical version of perfect modules over $R$, i.e.\,perfect complexes concentrated in degree zero.

By \Cref{cor:dualizable-fully-exact-criterion}, perfect module categories $\cM$ are equivalently characterized by both \action functors $A_\cM$ and $A_\cM^\rev$ being \emph{perfect} tensor functors in the terminology of \cites{BrNa,Shi5}, see \Cref{rem:perfect-conditions}.
\end{remark}

Recall that in any $2$-category, we can define left and right adjoints of $1$-morphisms, see \cite{JY21}*{Def.\, 6.1.1}.

\begin{definition}\label{def:2rigid}
    A monoidal $2$-category is \emph{rigid} if all of its objects are dualizable and all $1$-morphisms have left and right adjoints.\footnote{In an $(\infty,2)$-categorical setting, in \cite{Lur}*{Def.\,2.3.16} this property is called \emph{having duals}. We choose the terminology of rigid monoidal $2$-category over fully dualizable monoidal $2$-category to avoid confusion with the property of being a fully dualizable object in a Morita $4$-category of monoidal $2$-categories, see \cite{Dec2}. Note that in \cite{DJF}*{Sec.\,4.1}, what we call a rigid monoidal $2$-categories is called \emph{fully rigid}.}
\end{definition}
For example, $\perf{\cC}$ is a rigid monoidal $2$-category.

\begin{definition}\label{def:fully-dualizable}
We say that an object of a monoidal $2$-category $\bfC$ is \emph{fully dualizable} if it is contained in a rigid monoidal $2$-subcategory $\bfD$ of $\bfC$.   An equivalent condition for an object $\cM$ of $\bfC$ to be fully dualizable is that $\cM$ is dualizable and the left and right evaluation and coevaluation $1$-morphisms have left and right adjoints, and these adjoints have further adjoints, etc.,  
cf.\,the beginning of \cite{DSS2}*{Ch.\,1}.  
\end{definition}

We note that $\bfD$ in \Cref{def:fully-dualizable} is not required to be a \emph{full} monoidal $2$-subcategory. The following is the main result of this section.

\begin{theorem}
\label{prop:dualizability} 
 An object $\cM$ in $\lmod{\cC}$ is fully dualizable if and only if $\cM$ is perfect, i.e.\,$\cM$ and $\cM^*$ are fully exact.
\end{theorem}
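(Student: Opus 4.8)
The strategy is to prove the two implications separately, with the easier direction being ``perfect $\Rightarrow$ fully dualizable''. For this direction, I would argue that $\perf{\cC}$ is itself a rigid monoidal $2$-subcategory of $\lmod{\cC}$ containing $\cM$: it is monoidal by \Cref{prop:fully-exact-tensor} and the analogous closure of $\opfexmod{\cC}$ under $\boxtimes_\cC$, its objects are dualizable by \Cref{def:dualizable-fully-exact}, and all its $1$-morphisms have left and right adjoints because perfect module categories are in particular exact (by \Cref{rem:fully-exact-exact}), so \Cref{prop:exact-adjoints} applies to any $\cC$-linear functor between them. Hence $\cM\in\perf{\cC}$ is fully dualizable by \Cref{def:fully-dualizable}.

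\textbf{The converse direction.} Suppose $\cM$ is fully dualizable, i.e.\ contained in some rigid monoidal $2$-subcategory $\bfD\subseteq\lmod{\cC}$. We must show $\cM$ and $\cM^*$ are fully exact; by \Cref{prop:duality-fully-exact} it suffices to show $\cM$ is both fully exact and op-fully exact, which by \Cref{prop:fully-exact-equiv}(iv) and \Cref{prop:op-fully-equiv}(ii) amounts to showing that both $A_\cM$ and $A_\cM^\rev$ preserve projective objects. The plan is to unravel what the structural $1$-morphisms of the duality data in $\bfD$ compute. Concretely, $\cM$ dualizable in $\bfD$ means there is an object playing the role of the dual together with evaluation/coevaluation $1$-morphisms; by uniqueness of duals these must be (equivalent to) ${}^*\cM$ (or $\cM^*$) with $\ev^r_\cM$, $\coev^r_\cM$ as constructed in \eqref{eq:ev}, \eqref{eq:coev} — in particular the dual object $\cM^*$ (equivalently ${}^*\cM$) lies in $\bfD$, and since $\bfD$ is closed under $\boxtimes_\cC$, so does $\cC^*_\cM\simeq{}^*\cM\boxtimes_\cC\cM$. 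The key observation is then: the counit of the adjunction witnessing that $\coev^r_\cM$ has a right adjoint (or the unit/counit of the further adjunctions forced by full dualizability) produces, after composing with the identifications of \Cref{prop:rel-tensor-as-hom} and \Cref{ex:dual-and-CM}, precisely a right adjoint to the functor $A_\cM\colon\cC\to\cC^*_\cM$ that is again exact, or equivalently admits a further right adjoint. By \Cref{rem:perfect-conditions} (citing \cite{Shi5}*{Lem.\,4.3}), this is exactly the condition that $A_\cM$ preserves projective objects, i.e.\ is a perfect tensor functor. Running the same argument with the left duality data (the $1$-morphisms $\ev^l_\cM$, $\coev^l_\cM$) gives the corresponding statement for $A_\cM^\rev$, using that $\coev^l_\cM=\coev^r_{\cM^*}$ involves the opposite braiding via \Cref{prop:*M*-algebras}.

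\textbf{Main obstacle.} The hard part will be making precise the passage from ``the evaluation and coevaluation $1$-morphisms in $\bfD$ have all iterated adjoints'' to ``$A_\cM$ admits a further right adjoint after its obvious one'', i.e.\ identifying the relevant adjoint $1$-morphism in $\lmod{\cC}$ with a functor-level right adjoint of $A_\cM$. This requires (i) recognizing, via \Cref{lem:duals-fun} and the diagrams in \Cref{lem:ev-coherence}, that $\coev^r_\cM$ is literally the composite of $A_\cM$ with the equivalence $\cC^*_\cM\simeq{}^*\cM\boxtimes_\cC\cM$, so that a right adjoint to $\coev^r_\cM$ as a $1$-morphism transports to a right adjoint of $A_\cM$ as a functor; and (ii) checking that the existence of the second-order adjoint of $\coev^r_\cM$ forced by rigidity of $\bfD$ translates to the right adjoint of $A_\cM$ being exact — here one invokes that a right adjoint functor between finite categories is exact iff it has a further right adjoint, together with \cite{Shi5}*{Lem.\,4.3}. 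One must also verify that restricting the duality data of $\bfD$ does not lose information: since $\bfD\hookrightarrow\lmod{\cC}$ is full on $1$- and $2$-morphisms only if $\bfD$ is a full subcategory (which is \emph{not} assumed in \Cref{def:fully-dualizable}), care is needed — but the adjoint $1$-morphisms, being $\cC$-linear functors, are objects of $\Fun_\cC(-,-)$ regardless, and their adjunction data are honest $\cC$-module natural transformations, so the argument goes through at the level of the ambient $2$-category $\lmod{\cC}$. Once these identifications are in place, the proof closes by combining \Cref{cor:dualizable-fully-exact-criterion}(iii) with \Cref{prop:duality-fully-exact}.
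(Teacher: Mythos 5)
Your proposal is correct and follows essentially the same route as the paper: the forward direction exhibits $\perf{\cC}$ as a rigid monoidal $2$-subcategory, and the converse identifies $\coev^r_\cM$ with $A_\cM$ up to equivalence, uses the iterated adjoints forced by full dualizability together with \Cref{lem:exact-vs-adjoints} and \Cref{rem:perfect-conditions} to conclude that $A_\cM$ preserves projectives, and then handles op-full exactness via the dual $\cM^*$ and \Cref{prop:duality-fully-exact}. The subtleties you flag (non-fullness of $\bfD$, transport of adjunction data to the ambient $2$-category) are real but resolved exactly as you indicate, via the equivalent characterization in \Cref{def:fully-dualizable}.
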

\begin{proof}
We know at this point that $\perf{\cC}$ is a fully dualizable monoidal $2$-category that is a full $2$-subcategory of $\lmod{\cC}$. Hence, all perfect $\cC$-modules are fully dualizable.

Conversely, assume that $\cM$ is a fully dualizable object in $\lmod{\cC}$. In particular, the coevaluation functor $\coev^r_\cM$ from \eqref{eq:coev} has a right adjoint $R$, which itself has a further right adjoint.  Hence, as $\coev^r_\cM$ is a functor between finite abelian categories, this implies that $R$ is exact by \Cref{lem:exact-vs-adjoints}. 
 But, by construction, the \action functor $A_\cM$ corresponds to $\coev^r_\cM$ up to composition with an equivalence. Hence, $A_\cM$ has an exact right adjoint. By \Cref{rem:perfect-conditions} this implies that $A_\cM$ preserves projective objects and hence $\cM$ is fully exact by \Cref{prop:fully-exact-equiv}. If $\cM$ is fully dualizable then so is $\cM^*$. Hence, $\cM^*$ is fully exact and using the equivalence ${}^*(\cM^*)\simeq \cM$ from \Cref{cor:stars-inverses},  $\cM$ is also op-fully exact by \Cref{prop:duality-fully-exact}. Together, we see that $\cM$ is perfect by \Cref{def:dualizable-fully-exact}.
\end{proof}

\begin{remark}
\cite{Lur}*{Claim 2.3.19} introduces the fully dualizable part $\bfC^{\mathrm{fd}}$ of a symmetric monoidal $2$-category $\bfC$. In the case of a symmetric finite tensor category $\cC$ and $\bfC=\lmod{\cC}$, we observe using \Cref{prop:dualizability} that $\bfC^{\mathrm{fd}}$ is given by the rigid monoidal $2$-category $\perf{\cC}$.  
\end{remark}

\begin{remark}
Consider the (full) monoidal $2$-subcategory of $\lmod{\cC}$ generated under $\boxtimes_\cC$, direct sums and summands  by both classes, fully exact and op-fully exact $\cC$-module categories. If the two classes do not agree, there exists a left $\cC$-module category $\cM$ which is op-fully exact but not fully exact. Such an $\cM$ is provided in \Cref{ex:double-group}. Hence, using \Cref{prop:rel-tensor-as-hom}, $\cC^*_\cM\simeq {}^*\cM\boxtimes_\cC\cM$ is not exact. The dual ${}^*\cM$ is fully exact but not op-fully exact. Hence, tensoring an op-fully exact and a fully exact $\cC$-module gives a non-exact $\cC$-module in this case.
\end{remark}

\section{Invertible module categories}\label{sec:invertible}

This section discusses the relationship between fully exact  and invertible module categories. 

\subsection{Characterization of invertible module categories}\label{sec:invertible-def}

We say that a finite left $\cC$-module category $\cM$ is \emph{invertible} if there exists another left $\cC$-module category $\cN$, called an \emph{inverse}, and equivalences of left $\cC$-module categories $\cM\boxtimes_\cC \cN\simeq \cC$ and $\cN\boxtimes_\cC \cM \simeq \cC$, cf. \Cref{def:inv} for the monoidal $2$-category $\lmod{\cC}$. 
We require the following lemma for which the idea of the proof appeared in \cite{ENO}*{Prop.\,4.2} in the context of invertible bimodule categories.

\begin{lemma}\label{lem:F-equiv}
    Let $F\colon \cC\to \cD$ be  a monoidal functor between monoidal categories $\cC$ and $\cD$,
 with $\cC$ braided, and consider $\cD$ as a left $\cC$-module category as in \Cref{ex:tensor-act}. Then $\cD$ is equivalent to $\cC$ as a left $\cC$-module if and only if $F$ is an equivalence. 
\end{lemma}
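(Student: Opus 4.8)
\textbf{Proof strategy for Lemma \ref{lem:F-equiv}.}

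The plan is to prove both directions, the ``only if'' direction being straightforward and the ``if'' direction requiring more care. First, suppose $F$ is an equivalence of monoidal categories. Then $F$ in particular admits a monoidal quasi-inverse, and the module category structure on $\cD$ from \Cref{ex:tensor-act} is transported along $F$ from the regular module structure on $\cC$. Concretely, the functor $F^{-1}\colon \cD\to \cC$ (or rather $F$ itself, read as a functor $\cD\to\cC$) carries the action $X\lact d = F(X)\otimes^\cD d$ to $X\otimes^\cC (-)$ up to the monoidal coherence isomorphisms of $F$, and the hexagon/pentagon conditions on these coherences are exactly what is needed to upgrade $F$ to a $\cC$-module equivalence $\cD\simeq\cC$. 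This is a direct verification using that the $\cC$-module structure on $\cD$ in \Cref{ex:tensor-act} is defined via $F$ together with the braiding (which enters only through the module coherence, and transports along $F$ without obstruction).

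For the converse, assume $E\colon \cC\to\cD$ is an equivalence of left $\cC$-module categories, where $\cC$ has the regular module structure. The key observation, following \cite{ENO}*{Prop.\,4.2}, is to evaluate $E$ on the tensor unit. Set $D_0:=E(\one_\cC)\in\cD$. For every $X\in\cC$ the $\cC$-linearity of $E$ gives a natural isomorphism
$$
E(X) = E(X\otimes^\cC \one_\cC) = E(X\lact \one_\cC) \;\xrightarrow{\;s^E_{X,\one_\cC}\;}\; X\lact E(\one_\cC) = F(X)\otimes^\cD D_0.
$$
Thus $E$ is naturally isomorphic to the composite $(-\otimes^\cD D_0)\circ F$. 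Since $E$ is an equivalence, the functor $(-\otimes^\cD D_0)\colon \cD\to\cD$ composed with $F\colon\cC\to\cD$ is an equivalence; in particular $F$ is fully faithful (as $(-\otimes^\cD D_0)\circ F$ is), and $-\otimes^\cD D_0$ restricted to the essential image of $F$ is essentially surjective onto $\cD$. To conclude $F$ is essentially surjective, I would argue that $D_0$ is $\otimes$-invertible in $\cD$: applying the same analysis to a $\cC$-module quasi-inverse $E'\colon\cD\to\cC$ of $E$ and composing, one gets that $-\otimes^\cD D_0$ is an equivalence with quasi-inverse of the form $-\otimes^\cD D_0'$ for $D_0' = F'(\text{something})$, whence $D_0\otimes^\cD D_0'\cong \one_\cD$ and $D_0'\otimes^\cD D_0\cong\one_\cD$ up to associator coherences. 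With $D_0$ invertible, $-\otimes^\cD D_0$ is an autoequivalence of $\cD$, so from $E\cong(-\otimes^\cD D_0)\circ F$ being an equivalence we deduce $F$ is an equivalence.

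The main obstacle I anticipate is the careful bookkeeping in establishing that $D_0=E(\one_\cC)$ is $\otimes$-invertible, since this is where the monoidal (as opposed to merely module-categorical) structure must be genuinely used — the module equivalence $E$ a priori only controls $D_0$ up to the action of $\cC$, and one must leverage that the unit object $\one_\cC$ is the monoidal unit to pin down $D_0$. One clean way around explicit manipulation is: the $\cC$-module $\cC$ has the property that its $\cC$-module endofunctor category is $\Fun_\cC(\cC,\cC)\simeq\cC$ (see \Cref{ex:FunCM}); transporting along the equivalence $E$, we get $\Fun_\cC(\cD,\cD)\simeq\cC$ as monoidal categories. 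On the other hand, $\Fun_\cC(\cD,\cD)$ contains the subcategory of functors $-\otimes^\cD D$ for $D\in\cD$, and chasing the identifications shows the equivalence $\cC\xrightarrow{\sim}\Fun_\cC(\cD,\cD)$ sends $X\mapsto (F(X)\otimes^\cD(-))$; invertibility of $D_0$ then follows from essential surjectivity of this composite together with the fact that $D_0$ represents the image of $\one_\cC$, which is sent to the identity functor. I would present whichever of these two routes turns out to require the least coherence chasing, and relegate the hexagon/pentagon checks to a remark that they follow by the same computations as in \cite{ENO}.
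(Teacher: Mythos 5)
Your reduction agrees with the paper's: $\cC$-linearity of the module equivalence $E$ gives $E\cong F(-)\otimes E(\one_\cC)$, and everything comes down to showing that $D_0=E(\one_\cC)$ is $\otimes$-invertible. The problem is that neither of your two routes to invertibility closes. Essential surjectivity of $E$ produces $C$ with $E(C)\cong\one_\cD$, hence $F(C)\otimes D_0\cong\one_\cD$; but the lemma is stated for an arbitrary monoidal $\cD$, where a one-sided inverse does not make $D_0$ invertible, so the relation $D_0\otimes F(C)\cong\one_\cD$ must come from somewhere. Route (a) does not supply it: the quasi-inverse $E'\colon\cD\to\cC$ has source $\cD$, which is not the regular module, so there is no analogue of the decomposition $E'(d)\cong(\cdots)\otimes E'(\one_\cD)$; and asserting that $-\otimes D_0$ is an equivalence with quasi-inverse of the form $-\otimes D_0'$ presupposes exactly the invertibility you are after (a priori you only know that $(-\otimes D_0)\circ F$ is an equivalence, which is consistent with both factors failing to be). Route (b) rests on a misidentification: the canonical equivalence $\Fun_\cC(\cC,\cC)\simeq\cC$ of \Cref{ex:FunCM} matches $X$ with \emph{right} multiplication $-\otimes X$ (a module endofunctor of the regular module with trivial linearity), whereas $F(X)\otimes(-)$ is left multiplication, which is a $\cC$-module endofunctor of $\cD$ only after inserting the braiding (this is $A_\cD$ of \Cref{def:AC-C-mod-functor}). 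Transporting along $E$ therefore sends $X$ to $E\circ(-\otimes X)\circ E^{-1}$, which is not $F(X)\otimes(-)$ in general, and invertibility of $D_0$ does not drop out. A smaller slip: full faithfulness of $(-\otimes D_0)\circ F$ only yields faithfulness of $F$, not full faithfulness.

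The paper closes this gap by a different device: it upgrades $G=E$ to an equivalence of $\cC$-\emph{bimodule} categories via the $2$-functor $\rB$ of \Cref{lem:S-bimod} and then plays the left and right $\cC$-linear structures against each other on the single object $C$ with $G(C)\cong\one_\cD$. The left structure gives $\one_\cD\cong F(C)\otimes G(\one_\cC)$ and the right structure gives $\one_\cD\cong G(\one_\cC)\otimes F(C)$, so $F(C)$ is a two-sided inverse of $D_0$, after which your concluding step ($F$ is $E$ followed by the inverse of the autoequivalence $-\otimes D_0$) goes through. This use of the right $\cC$-action on $\cD$ induced by $F$ (cf.\ \Cref{ex:tensor-bimod}) is the ingredient missing from your argument. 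Your forward direction is correct but can be shortened as in the paper: $F$ itself, with its monoidal structure as the linearity datum, is already a left $\cC$-module functor $\cC\to\cD$, and it is an equivalence of underlying categories by hypothesis.
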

\begin{proof}
    Let $G\colon \cC\to \cD$ be an equivalence of left $\cC$-modules. 
    Then, using the $2$-functor $\rB$ 
    defined in \Cref{lem:S-bimod},
    we obtain an equivalence of $\cC$-bimodules
    $\rB(G)\colon \rB(\cC)\to \rB(\cD)$. Therefore, there exists an object $C$ in $\cC$ such that $G(C)\cong \one_\cD$. Now, 
    $$\one _\cD\cong G(C)\cong G(C\otimes \one_\cC)\cong F(C)\otimes G(\one_\cC),$$
    and, using the right $\cC$-linear structure,     
    $\one_\cD\cong 
G(\one_\cC)\otimes F(C).$
    This shows that $G(\one_\cC)$ is an invertible object in $\cD$.
    Further, by $\cC$-linearity, we have the natural family of isomorphisms
    $$
    s^G_{X,\one}\colon G(X)\xrightarrow{\;\cong\;} F(X)\otimes G(\one_\cC)
    $$
and hence the functor $G$ is isomorphic to the composition of $F$ with the equivalence given by tensoring with an invertible object. This implies that $F$ is  an equivalence as well.

The converse direction is clear since $F\colon \cC\to \cD$ is, in particular a functor of left $\cC$-modules.
\end{proof}

Following the argument in \cite{ENO}*{Prop.\,4.2} we can prove a characterization of invertible module categories. 

\begin{theorem}\label{prop:Minv-AM-equiv}
    Let $\cM$ be a finite left $\cC$-module category. Then the following are equivalent:
    \begin{enumerate}[(i)]
        \item $\cM$ is invertible as a left $\cC$-module category.
        \item The centralizer functor $A_\cM$ from \Cref{def:AC-C-mod-functor} is an equivalence.
        \item The evaluation functor of a right dual of $\cM$ is an equivalence.
        \item The coevaluation functor of a right dual of $\cM$ is an equivalence.
        \item The evaluation functor of a left dual of $\cM$ is an equivalence.
        \item The coevaluation functor of a left dual of $\cM$ is an equivalence.
    \end{enumerate}
\end{theorem}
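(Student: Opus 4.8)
The plan is to prove the cyclic chain of equivalences $(i)\Rightarrow(ii)\Rightarrow(iii)\Rightarrow(iv)\Rightarrow(i)$ together with the symmetric statement $(i)\Leftrightarrow(v)\Leftrightarrow(vi)$ for left duals; the left-dual case follows from the right-dual case by applying the already-established identification ${}^*(\cM^*)\simeq\cM$ from \Cref{cor:stars-inverses}, which exchanges the roles of $\ev^r$, $\coev^r$ with $\ev^l$, $\coev^l$ (as recorded at the end of the proof of \Cref{lem:duals-fun}).

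First I would prove $(i)\Rightarrow(ii)$. Suppose $\cM$ is invertible with inverse $\cN$. Using \Cref{ex:dual-and-CM} we have $\cC^*_\cM\simeq {}^*\cM\boxtimes_\cC\cM$, and since $\cM$ is invertible its right dual ${}^*\cM$ is also an inverse (uniqueness of inverses up to equivalence, cf.\ \Cref{sec:duals-finite}), so ${}^*\cM\boxtimes_\cC\cM\simeq\cC$ as $\cC$-module categories. Under this equivalence the monoidal functor $A_\cM\colon\cC\to\cC^*_\cM$ from \Cref{def:AC-C-mod-functor} corresponds to a monoidal functor $\cC\to\cC$ which realizes $\cC$ as the regular $\cC$-module over itself (this is the same bookkeeping as in \Cref{ex:first}). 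By \Cref{lem:F-equiv}, such a monoidal functor is an equivalence. Then $(ii)\Rightarrow(iii)$: by construction $\coev^r_\cM$ from \eqref{eq:coev} equals $A_\cM$ up to composition with the equivalence ${}^*\cM\boxtimes_\cC\cM\simeq\cC^*_\cM$, so $A_\cM$ being an equivalence makes $\coev^r_\cM$ an equivalence. For $(iii)\Leftrightarrow(iv)$ (I will state it as $(iv)\Rightarrow(iii)$ and $(iii)\Rightarrow(iv)$), I would use the triangle identities from \Cref{def:left-dual-cat}, realized by the coherence $2$-isomorphisms $\alpha,\beta$ of \Cref{lem:duals-fun}: in any monoidal $2$-category, if either the evaluation or coevaluation $1$-morphism of a duality pair is an equivalence then so is the other, because the triangle identities exhibit $\ev\boxtimes_\cC\id$ and $\id\boxtimes_\cC\coev$ as mutually inverse (up to the remaining equivalences) after tensoring with $\cM$ respectively ${}^*\cM$ and using that $\cM, {}^*\cM$ are themselves (left-)cancellable under $\boxtimes_\cC$ — more precisely, $\cM\boxtimes_\cC(-)$ and ${}^*\cM\boxtimes_\cC(-)$ reflect equivalences since they have left/right adjoints realized again by $\boxtimes_\cC$ with a dual. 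Finally $(iv)\Rightarrow(i)$: if $\coev^r_\cM\colon\cC\to{}^*\cM\boxtimes_\cC\cM$ is an equivalence, then taking $\cN={}^*\cM$ the triangle identities immediately give $\cN\boxtimes_\cC\cM\simeq\cC$, and the mirror argument with $\ev^r_\cM$ (which is an equivalence once $\coev^r_\cM$ is, by the previous step) gives $\cM\boxtimes_\cC\cN\simeq\cC$, so $\cM$ is invertible.

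The main obstacle I anticipate is the step relating evaluation and coevaluation being equivalences, i.e.\ $(iii)\Leftrightarrow(iv)$ (and $(v)\Leftrightarrow(vi)$). Unlike in a rigid category where this is automatic, here one must carefully track the coherence $2$-isomorphisms and argue that $\boxtimes_\cC\cM$ and $\boxtimes_\cC{}^*\cM$ are conservative (reflect equivalences). The cleanest route is probably: if $\coev^r_\cM$ is an equivalence, tensor the first triangle identity of \Cref{lem:duals-fun} on the appropriate side to see $\ev^r_\cM\boxtimes_\cC\id_\cM$ is an equivalence, then use \Cref{prop:rel-tensor-as-hom} and \Cref{cor:stars-inverses} to identify $(-)\boxtimes_\cC\cM$ with $\Fun_\cC(\cM^*,-)$, which is an equivalence of $2$-categories onto its image when $\cM$ is invertible — but to avoid circularity at this stage I would instead argue directly via internal algebras: writing $\cM\simeq\rmodint{\cC}{A}$, all four functors $\ev^r_\cM$, $\coev^r_\cM$, $A_\cM$, $A_\cM^{\mathrm{rev}}$ become explicit functors between categories of modules over $A^\psi\otimes A$ and $\cC$ (\Cref{prop:Funrex-modules}, \Cref{prop:*M*-algebras}), and one checks that each of $\ev^r_\cM$ and $\coev^r_\cM$ is an equivalence if and only if the algebra homomorphism $\one\to A^\psi\otimes A$, respectively the multiplication-type map, identifies $A^\psi\otimes A$ as Morita-trivial, which is a single condition. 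This reduces the whole theorem to the statement that $A_\cM$ is an equivalence iff $A^\psi\otimes A$ is Morita equivalent to $\one$ iff $\cM$ is invertible, following the strategy of \cite{ENO}*{Prop.\,4.2}.
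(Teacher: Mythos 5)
Your overall architecture is sound on the easy side — the reduction of (v), (vi) to (iii), (iv) via ${}^*(\cM^*)\simeq\cM$, the implication (i)$\Rightarrow$(ii) via \Cref{lem:F-equiv}, and the identification of $\coev^r_\cM$ with $A_\cM$ up to equivalence all match the paper. (Minor slip: what you label ``(ii)$\Rightarrow$(iii)'' is in fact (ii)$\Rightarrow$(iv).) However, there is a genuine gap at the step you yourself flag as the main obstacle, namely (iii)$\Leftrightarrow$(iv), and neither of your proposed repairs closes it. The claim that ``in any monoidal $2$-category, if either the evaluation or coevaluation of a duality pair is an equivalence then so is the other'' is false: in the monoidal $2$-category of categories, a right dual of a functor is its right adjoint, and for a reflective localization the counit (evaluation) is invertible while the unit (coevaluation) is not. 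Your justification that $(-)\boxtimes_\cC\cM$ ``reflects equivalences since it has adjoints'' is not a valid inference — having two-sided adjoints does not imply conservativity — and establishing that conservativity here is essentially equivalent to the invertibility you are trying to prove. Your internal-algebra fallback has the same problem in disguise: with $\cM\simeq\rmodint{\cC}{A}$, condition (iv) says $A^\psi\otimes A$ is Morita trivial while condition (iii) says $A\otimes A^\psi$ is Morita trivial, and these are \emph{two different algebras} (the braiding twists opposite factors), so it is not ``a single condition''; proving they are simultaneously Morita trivial is exactly the content of the hard direction.

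The paper closes this gap by a route your proposal does not engage with: assuming $A_\cM$ is an equivalence, it invokes the double centralizer theorem (\cite{EGNO}*{Thm.\,7.12.11}, applicable because $\cM$ is faithful by \Cref{lem:AM-exact-faithful}) together with \Cref{lem:AM-AMr-compare}, \Cref{prop:centralizer-correspondence} and \Cref{lem:Al-identified} to deduce that the \emph{reverse} centralizer functor $A_\cM^\rev$ is also an equivalence, which supplies the second equivalence $\cC\simeq\cM\boxtimes_\cC\cM^*$ and hence two-sided invertibility. Some substitute for this input — converting the one-sided inverse ${}^*\cM\boxtimes_\cC\cM\simeq\cC$ into a two-sided one — is unavoidable, and your proof as written does not contain it.
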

\begin{proof}
Assume that $\cM$ is an invertible $\cC$-module category. Recall the equivalence of left $\cC$-module categories $\Fun_\cC(\cM,\cM)\simeq {}^*\cM\boxtimes_\cC\cM$ from \Cref{prop:rel-tensor-as-hom}. Since $\cM$ is invertible, with inverse $\cM^*\simeq {}^*\cM$ by \Cref{lem:duals-inv}, we have that $\Fun_\cC(\cM,\cM)\simeq \cC$ are equivalent left $\cC$-module categories. By \Cref{lem:AC-braided}, the left $\cC$-action on $\Fun_\cC(\cM,\cM)$ is induced by the tensor functor $A_\cM$. 
Hence, by \Cref{lem:F-equiv}, $A_\cM$ is an equivalence.  This shows that (i) implies (ii).

We now prove that (ii) implies (i).
For this, assume that $A_\cM\colon \cC\to \End_\cC(\cM)$ is an equivalence. Thus, $\cC\simeq {}^*\cM \boxtimes_\cC\cM$ and $\cM$ has a left inverse ${}^*\cM$. 
Also, by \Cref{lem:AM-AMr-compare}, the monoidal functor $A^r_{\rB(\cM)}$ is an equivalence,
and therefore the pull-back $2$-functor $\Res_{A^r_{\rB(\cM)}}$ from \Cref{lem:restriction-2-functor} is a $2$-equivalence. 
Furthermore,
by \cite{EGNO}*{Thm.\,7.12.11}, the double centralizer functor~$\mD_\cM$ in~\eqref{eq:can-double-dual} is an equivalence. 
Using this result requires that $\cM$ is \emph{faithful}, i.e.\ that any non-zero object acts by a non-zero functor $X\lact \id_\cM\colon \cM\to \cM$. But this follows from \Cref{lem:AM-exact-faithful}. Therefore, by \Cref{prop:centralizer-correspondence}, $A^l_{\rB(\cM)}$ is also an equivalence. Thus, by \Cref{lem:Al-identified}, the functor $A_\cM^\rev$ is an equivalence.
Suppose that $\cM=\rmodint{\cC}{A}$. Then this gives an equivalence of left $\cC$-module categories
$$A_\cM^\rev\colon \cC\to (\cC^\rev)^*_\cM\simeq \rmodint{\cC}{A^{\psi^{-1}}\otimes^{\psi^{-1}} A}\simeq \rmodint{\cC}{A \otimes A^{\psi^{-1}}}\simeq \cM\boxtimes_\cC \cM^*.$$
Here, we have applied, in order, the equivalences from \Cref{prop:Funrex-modules} (for $\cCrev$), \Cref{lem:tensor-ops2} via the isomorphism of algebras $A\otimes B\simeq B\otimes^{\psi^{-1}} A$, and \Cref{prop:*M*-algebras} to the target category.

Now, we have equivalences of left $\cC$-module categories 
$$\cC\simeq {}^*\cM\boxtimes_\cC\cM, \quad \text{and}\quad  \cC\simeq \cM\boxtimes_\cC\cM^*.$$
Combining these shows that 
$$ {}^*\cM\simeq  {}^*\cM\boxtimes_\cC \cC \simeq{}^*\cM\boxtimes_\cC (\cM\boxtimes_\cC\cM^*) \simeq ({}^*\cM\boxtimes_\cC \cM)\boxtimes_\cC\cM^*\simeq \cC\boxtimes_\cC \cM^*\simeq \cM^*$$
and, hence, that $\cM$ is invertible, i.e.\ (i) holds. 

By construction, the coevaluation functor $\coev^r_\cM$ of \eqref{eq:coev} is the composition of $A_\cM$ with an equivalence. This shows that (ii) and (iv) are equivalent.

We may use the left duality to define
\begin{gather*}
    \ev'\colon \big(\cM^*\boxtimes_\cC \cM\simeq ({}^*\cM\boxtimes_\cC \cM)^*\xrightarrow{(\coev^r_\cM)^*} \cC^*\simeq \cC\big), 
    \\
    \coev' \colon \big(\cC^*\simeq \cC\xrightarrow{(\ev^r_\cM)^*} (\cM\boxtimes_\cC {}^*\cM)^*\simeq \cM\boxtimes_\cC \cM^*\big). 
\end{gather*}
It follows that $(\cM^*,\ev',\coev')$ define a left dual for $\cM$. 
Now, $\coev^r_\cM$ is an equivalence if and only $\ev'$ is an equivalence, i.e.\ (iv) and (v) are equivalent. 
Further, $\ev^r_\cM$ is an equivalence if and only $\coev'$ is an equivalence, i.e.\ (iii) and (vi) are equivalent. 

Using \Cref{lem:duals-inv}, (i) implies (iii) which is equivalent to (vi).
It remains to show that (vi) implies (i). Under the identification $\cM={}^*(\cM^*)$, we have $\coev^l_{\cM}=\coev^r_{\cM^*}$. Thus, $\coev^l_{\cM}$ being an equivalence is equivalent to $\cM^*$ being invertible. Applying \Cref{lem:duals-inv}, it follows that ${}^*(\cM^*)= \cM$
 is invertible, i.e.\ that (i) holds.
\end{proof}

\subsection{Invertible module categories are perfect}

We are now ready to prove that invertible module categories are fully and op-fully exact, i.e.\,perfect, see \Cref{def:dualizable-fully-exact}. 

\begin{corollary}\label{cor:inv-fully-exact}
    If $\cM$ is invertible then $\cM$ is perfect.
\end{corollary}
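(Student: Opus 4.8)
The plan is to deduce \Cref{cor:inv-fully-exact} directly from the characterization of invertible module categories in \Cref{prop:Minv-AM-equiv} together with the characterization of fully exact and op-fully exact module categories via the \action functors. Suppose $\cM$ is an invertible left $\cC$-module category. By \Cref{prop:Minv-AM-equiv}, the \action functor $A_\cM\colon \cC\to \cC^*_\cM$ is an equivalence of categories. In particular, $A_\cM$ preserves projective objects, since any equivalence does. Hence, by the equivalence of (i) and (iv) in \Cref{prop:fully-exact-equiv}, $\cM$ is fully exact.

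Next I would argue that $\cM$ is op-fully exact. By \Cref{prop:op-fully-equiv}, it suffices to show that the functor $A_\cM^\rev\colon \cC\to \cC^*_\cM$ preserves projective objects. The argument in the proof of \Cref{prop:Minv-AM-equiv} already establishes that when $\cM$ is invertible, $A_\cM^\rev$ is also an equivalence (it is shown there, via $\Res_{A^r_{\rB(\cM)}}$ being a $2$-equivalence, the double centralizer theorem, and \Cref{lem:Al-identified}, that $A_\cM^\rev$ is an equivalence identifying $\cC$ with $\cM\boxtimes_\cC\cM^*$). An equivalence preserves projective objects, so $A_\cM^\rev$ does, and therefore $\cM$ is op-fully exact.

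Combining the two conclusions, $\cM$ is both fully exact and op-fully exact, which by the equivalence of (i) and (ii) in \Cref{cor:dualizable-fully-exact-criterion} means exactly that $\cM$ is perfect. I do not anticipate any serious obstacle here: the entire content has already been packaged into \Cref{prop:Minv-AM-equiv} and the characterizations of (op-)fully exact module categories, so the proof is a short three-line chain of implications. The only point that requires a moment's care is making sure we invoke the conclusion of \Cref{prop:Minv-AM-equiv} that \emph{both} $A_\cM$ and $A_\cM^\rev$ are equivalences — this is stated inside that proof rather than in the statement — but since we may assume results stated earlier, this is unproblematic. Alternatively, one could phrase the op-fully exact half using \Cref{lem:duals-inv} (the inverse of $\cM$ is $\cM^*\simeq{}^*\cM$, which is then also invertible, hence fully exact) together with \Cref{prop:duality-fully-exact}, but the direct route via $A_\cM^\rev$ is cleanest.

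\begin{proof}
Assume $\cM$ is an invertible left $\cC$-module category. By \Cref{prop:Minv-AM-equiv}, the \action functor $A_\cM\colon \cC\to \cC^*_\cM$ is an equivalence, and the proof of that proposition shows that $A_\cM^\rev\colon \cC\to \cC^*_\cM$ is an equivalence as well. Any equivalence of abelian categories preserves projective objects. Hence $A_\cM$ preserves projective objects, so $\cM$ is fully exact by \Cref{prop:fully-exact-equiv}, and $A_\cM^\rev$ preserves projective objects, so $\cM$ is op-fully exact by \Cref{prop:op-fully-equiv}. Therefore $\cM$ is perfect by \Cref{cor:dualizable-fully-exact-criterion}.
\end{proof}
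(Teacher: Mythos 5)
Your proof is correct, and the first half (invertible $\Rightarrow$ $A_\cM$ an equivalence $\Rightarrow$ preserves projectives $\Rightarrow$ fully exact) is exactly the paper's argument. For the second half you diverge slightly: you extract from the \emph{proof} of \Cref{prop:Minv-AM-equiv} the fact that $A_\cM^\rev$ is also an equivalence and then apply \Cref{prop:op-fully-equiv}, whereas the paper instead uses \Cref{lem:duals-inv} to conclude that $\cM^*$ is invertible, hence fully exact by the first half, hence $\cM$ is op-fully exact by \Cref{prop:duality-fully-exact} --- precisely the alternative you sketch at the end. Both routes are valid; yours is arguably more direct but leans on a statement that lives only inside an earlier proof rather than in a citable proposition, which is a minor stylistic blemish (one could fix it by strengthening the statement of \Cref{prop:Minv-AM-equiv} to record that $A_\cM^\rev$ is an equivalence). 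The paper's route has the advantage of using only cited statements and illustrating the duality mechanism ($\cM^*$ invertible $\Leftrightarrow$ $\cM$ invertible) that recurs elsewhere. No gap either way.
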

\begin{proof}
Assume that $\cM$ is invertible. Then, by \Cref{prop:Minv-AM-equiv}, the tensor functor $A_\cM\colon \cC\to \cC^*_\cM$ is an equivalence. Thus, it preserves projective objects, and $\cM$ is fully exact. Now, $\cM^*$ is also invertible by \Cref{lem:duals-inv}. Thus, $\cM^*$ is fully exact and hence  $\cM$ is also op-fully exact by \Cref{prop:duality-fully-exact}. 
\end{proof}

\begin{remark}
In particular, we have provided an alternative proof of the statement in \cite{DN2}*{Prop.\,5.1} that any finite invertible module category is exact. This statement now follows from combining \Cref{cor:inv-fully-exact} and \Cref{rem:fully-exact-exact}.
\end{remark}

The following example shows that the converse of \Cref{cor:inv-fully-exact} does not hold.

\begin{example}\label{ex:counterexample}
    Let $G$ be a finite group and $\cC:=\lmod{\Bbbk G}$, and $F\colon \cC\to \Vect$ be the associated fiber functor, where $\Bbbk$ is any field. As in \Cref{ex:first}, this makes $\Vect$ an indecomposable left $\cC$-module category. We give $\cC$ the symmetric braiding inherited from $\Vect$. The \action functor 
    $$A_{\Vect}\colon \cC\to  \Fun_\cC(\Vect,\Vect)\simeq \lcomod{\Bbbk G}$$
    factors through $F$ since $V\lact \id_{\Vect}$, for any $\Bbbk G$-module $V$, corresponds to the trivial $\Bbbk G$-comodule. The category $\lcomod{\Bbbk G}$ is semisimple and hence $A_{\Vect}$ preserves projective objects. This shows that $\Vect$ is fully exact. As $\cC$ is symmetric, $\Vect$ is also op-fully exact by \Cref{cor:C-symmetric-op-fex}.  However, $A_{\Vect}$ is not an equivalence for a non-trivial group $G$. Hence, by \Cref{prop:Minv-AM-equiv}, $\Vect$ is \emph{not} invertible. This example applies independently of the characteristic of $\Bbbk$.
\end{example}

The results of this section amount to the following result. 
\begin{corollary}\label{cor:inv-2-cat}
    Invertible left $\cC$-module categories form a  full monoidal $2$-subcategory $\invmod{\cC}$ of $\perf{\cC}$.
\end{corollary}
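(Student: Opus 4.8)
The plan is to assemble \Cref{cor:inv-2-cat} from pieces already established, treating it as essentially a bookkeeping statement once the hard analytic input (\Cref{cor:inv-fully-exact}) is available. First I would observe that by \Cref{cor:inv-fully-exact}, every invertible left $\cC$-module category is perfect, so the class of invertible objects is contained in that of perfect objects; hence $\invmod{\cC}$ sits inside $\perf{\cC}$ as a $2$-subcategory. Since $\invmod{\cC}$ is defined as a \emph{full} $2$-subcategory of $\lmod{\cC}$ (it contains all $1$- and $2$-morphisms between its objects), and $\perf{\cC}$ is likewise a full $2$-subcategory of $\lmod{\cC}$, the inclusion $\invmod{\cC}\hookrightarrow\perf{\cC}$ is automatically full on $1$- and $2$-morphisms.

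Next I would check that $\invmod{\cC}$ is closed under the monoidal structure $\boxtimes_\cC$, i.e.\ that it is genuinely a monoidal $2$-subcategory. This is the standard fact that a tensor product of invertible objects in a monoidal $2$-category is invertible: if $\cM$ has inverse $\cN$ and $\cM'$ has inverse $\cN'$, then $\cM\boxtimes_\cC\cM'$ has inverse $\cN'\boxtimes_\cC\cN$, using the associator and unitor equivalences of $\lmod{\cC}$ from \Cref{thm:CMod-monoidal-2-cat} to produce the required equivalences $(\cM\boxtimes_\cC\cM')\boxtimes_\cC(\cN'\boxtimes_\cC\cN)\simeq\cC$ and similarly on the other side. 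The monoidal unit $\cC$ is invertible (its own inverse via the unitors), so $\invmod{\cC}$ contains the unit. Together with the inclusion into $\perf{\cC}$, this gives that $\invmod{\cC}$ is a full monoidal $2$-subcategory of $\perf{\cC}$.

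I do not anticipate a serious obstacle here: the substance of the corollary is \Cref{cor:inv-fully-exact}, which has already been proved, and closure under $\boxtimes_\cC$ is a formal consequence of the monoidal $2$-category axioms. The only point requiring a little care is the meaning of ``full monoidal $2$-subcategory'': I would spell out that fullness refers to Hom-categories (as in the paper's conventions, $\Hom_{\invmod{\cC}}(\cM,\cN)=\Hom_{\lmod{\cC}}(\cM,\cN)=\Hom_{\perf{\cC}}(\cM,\cN)$ for invertible $\cM,\cN$), while the monoidal structure is simply the restriction of $\boxtimes_\cC$. A one-line proof along these lines — cite \Cref{cor:inv-fully-exact} for the inclusion into $\perf{\cC}$, note fullness is inherited from $\lmod{\cC}$, and note closure under $\boxtimes_\cC$ from invertibility of tensor products of invertibles — suffices.

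\begin{proof}
By \Cref{cor:inv-fully-exact}, every invertible left $\cC$-module category is perfect, so the objects of $\invmod{\cC}$ form a subclass of those of $\perf{\cC}$. Both $\invmod{\cC}$ and $\perf{\cC}$ are, by definition, full $2$-subcategories of $\lmod{\cC}$, so for invertible $\cM,\cN$ we have
$$\Hom_{\invmod{\cC}}(\cM,\cN)=\Hom_{\lmod{\cC}}(\cM,\cN)=\Hom_{\perf{\cC}}(\cM,\cN),$$
whence $\invmod{\cC}$ is a full $2$-subcategory of $\perf{\cC}$. Finally, $\invmod{\cC}$ is closed under $\boxtimes_\cC$: if $\cN$, $\cN'$ are inverses of $\cM$, $\cM'$, then the associators and unitors of \Cref{thm:CMod-monoidal-2-cat} yield equivalences $(\cM\boxtimes_\cC\cM')\boxtimes_\cC(\cN'\boxtimes_\cC\cN)\simeq\cC$ and $(\cN'\boxtimes_\cC\cN)\boxtimes_\cC(\cM\boxtimes_\cC\cM')\simeq\cC$, so $\cM\boxtimes_\cC\cM'$ is invertible, and the monoidal unit $\cC$ of $\lmod{\cC}$ is invertible. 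Therefore $\invmod{\cC}$ is a full monoidal $2$-subcategory of $\perf{\cC}$.
\end{proof}
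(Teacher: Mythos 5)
Your proof is correct and matches the paper's intent: the paper presents this corollary as an immediate consequence of \Cref{cor:inv-fully-exact} together with the closure of invertible objects under $\boxtimes_\cC$, exactly the two ingredients you assemble. The fullness observation and the standard inverse $(\cN'\boxtimes_\cC\cN)$ for $\cM\boxtimes_\cC\cM'$ are both fine.
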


In general, not every indecomposable perfect left $\cC$-module category is invertible, as \Cref{ex:counterexample} shows. In contrast, in \Cref{sec:Sweedler} we will see a non-trivial example where all indecomposable perfect module categories are invertible.

\section{Hopf algebra examples}\label{sec:Hopf}

We conclude the paper with some results on fully exact module categories over braided finite tensor categories obtained from quasitriangular Hopf algebras.  In \Cref{sec:exactness-vect}, we provide equivalent conditions for $\Vect$ to be fully exact in~\Cref{lem:Vect-fully-exact}, op-fully exact and perfect in~\Cref{cor:Vect-op-fex}, and invertible in~\Cref{cor:Vect-inv}. We show in \Cref{sec:uqsl2}  that even in examples with a non-degenerate braiding, such as the factorizable small quantum group $u_q(\mathfrak{sl}_2)$ at any odd root of unity $q$, its exact module category $\Vect$ is \textsl{not} fully exact, see~\Cref{prop:vect-sl2}.  We conjecture that this result holds for general factorizable small quantum groups.  In \Cref{sec:Sweedler}, we classify fully exact and perfect module categories for the finite tensor category $\cC=\lmod{S}$ for Sweedler's four-dimensional Hopf algebra~$S$. We further compute relative Deligne products, duals, and functor categories in this case. This requires calculations of categories of $\cC$-linear functors between twisted $\cC$-module categories $\Vect^J$, where $J$ is a Hopf $2$-cocycle, see \Cref{sec:vectJ}. Some proofs are only sketched and more details will appear in a forthcoming paper \cite{GL}.


\subsection{Conditions for fully exactness of \texorpdfstring{$\Vect$}{Vect}}\label{sec:exactness-vect}

Let $\cC$ be a braided finite tensor category with $\Vect$ as a left $\cC$-module. Then we have an exact faithful monoidal  functor, called \emph{fiber functor},
\begin{equation}\label{eq:F-fiber}
F\colon \cC\to \Fun(\Vect,\Vect)\simeq \Vect,
\end{equation}
see \Cref{lem:F-faithful}.
Hence, $\cC\simeq \lmod{H}$ for $H$ a quasi-triangular Hopf algebra over $\Bbbk$, see e.g. \cite{EGNO}*{Thm.\,5.2.3} together with \cite{Maj}*{Prop.\,9.4.2}. We denote the product, unit, coproduct and counit of~$H$ by 
$$m_H\colon H\otimes H\to H,\quad u_H\colon \one \to H, \quad\Delta_H\colon H\to H\otimes H, \quad\text{and}\quad \varepsilon_H \colon H\to \one.$$ We will sometimes use Sweedler's notation to write the coproduct by $\Delta_H(h)=h_{(1)}\otimes h_{(2)}$, for $h\in H$ and the universal R-matrix of $H$ by $R=R^{(1)}\otimes R^{(2)}\in H\otimes H$, where sums are omitted. The braiding of $\cC$ is given by 
$$
\psi_{V,W}\colon V\otimes W \to W\otimes V, \quad v\otimes w \mapsto R^{(2)}w\otimes R^{(1)}v,
$$
for left $H$-modules $V$, $W$ and any $v\in V, w\in W$. We also consider the tensor category $\lcomod{H}$ of finite-dimensional left $H$-comodules $(C,\delta)$, where we denote the left $H$-coaction by
\begin{equation}\label{eq:Sweedler-coact}
\delta\colon C\to H\otimes C,
\qquad
c\mapsto c^{(-1)}\otimes c^{(0)}.
\end{equation}

The following lemma displays a well-known equivalence, see \cite{EGNO}*{Ex.\,7.12.26} and its generalization in \Cref{lem:equiv-FunVect-twisted} below.

\begin{lemma}\label{lem:FunC-H*-mod}
There is an equivalence of categories
$$E\colon \Fun_\cC(\Vect,\Vect)\longrightarrow \lcomod{H}, \quad (G, s^G)\mapsto (G(\Bbbk),\delta^G),$$
where, denoting the regular $H$-module by $H$,
\begin{equation}\label{eq:delta-G}
\delta^G = s^G_{H,\Bbbk}\circ 
G(u_H)
\colon \quad G(\Bbbk)\to H\otimes G(\Bbbk),
\end{equation}
where we identify
the target of $G(u_H)$ with 
$G(F(H)) = G(H\lact \Bbbk)$,
with its quasi-inverse 
$$T\colon \lcomod{H}\longrightarrow \Fun_\cC(\Vect,\Vect), \quad (C,\delta)\mapsto ( (-)\otimes C, s^\delta),$$
where the components of $s^\delta$ are defined as,
 recall $F$ from~\eqref{eq:F-fiber},
\begin{equation}
    s^\delta_{W,\Bbbk}\colon F(W)\otimes C\to F(W)\otimes C, \quad w\otimes c \mapsto c^{(-1)}w\otimes c^{(0)}, 
\end{equation}
for any $H$-module $W$, and elements $w\in W$, $c\in C$. For a general object 
$V=\Bbbk^{\oplus n}$ in $\Vect$, $s^\delta_{W,V}$ is given by the direct sum of $n$ copies of the map $s^\delta_{W,\Bbbk}$. 
\end{lemma}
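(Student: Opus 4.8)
\textbf{Proof plan for \Cref{lem:FunC-H*-mod}.}

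The plan is to verify that $E$ and $T$ are well-defined functors, mutually quasi-inverse, and that this is an equivalence of categories; since the statement does not claim a monoidal equivalence, I would keep the discussion purely at the level of abelian categories with $\Bbbk$-linear functors. First I would check that $T$ lands in $\Fun_\cC(\Vect,\Vect)$: given an $H$-comodule $(C,\delta)$, one must confirm that the proposed components $s^\delta_{W,\Bbbk}(w\otimes c) = c^{(-1)}w\otimes c^{(0)}$ assemble into a $\cC$-linear structure on the functor $(-)\otimes C\colon \Vect\to\Vect$, i.e.\ that they are natural in $W$ and $V$ and satisfy the coherence \eqref{eq:C-linear-coherence}. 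Naturality in $W$ is immediate from $H$-linearity of morphisms; the coherence condition unwinds to the coassociativity and counitality of the coaction $\delta$, combined with the fact that the braiding of $\cC$ is implemented by the R-matrix as recalled in \eqref{eq:F-fiber} and the surrounding text. Concretely, the coherence diagram \eqref{eq:FunC-coherence} specialized to this functor (where $\cM=\cN=\Vect$, so module associators are trivial) becomes, after applying $F$, the statement that acting by $c^{(-1)}\otimes c^{(-2)}$ in two stages equals acting by $\Delta_H(c^{(-1)})$ in one stage, which is exactly coassociativity; the unit axiom \eqref{eq:mod-fun-unit} reduces to counitality. I would also note that $(-)\otimes C$ is right exact (indeed exact) since $C$ is finite-dimensional, so it is an object of $\Fun_\cC(\Vect,\Vect)$.

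Next I would check that $E$ is well-defined, i.e.\ that $\delta^G$ as in \eqref{eq:delta-G} is a coassociative, counital coaction on $G(\Bbbk)$. Here the key input is again the coherence \eqref{eq:C-linear-coherence} for $s^G$, now applied with $W = H$ the regular $H$-module: comultiplication $\Delta_H\colon H\to H\otimes H$ is a morphism of $H$-modules (for the regular action on the source and the diagonal action on the target, which $H$-linearly splits as $H\otimes H$ with $H$ acting on the right factor after using the braiding — one must be careful and use precisely the module structure on $F(H)$ and the fact that $s^G$ is compatible with the braided structure as in \Cref{def:Fun-action}), and functoriality of $G$ together with the coherence square translates into coassociativity of $\delta^G$. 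Counitality of $\delta^G$ follows from $\varepsilon_H\circ u_H = \id$ together with the strict unit axiom for $s^G$. One should also verify that $E$ and $T$ are functorial on morphisms: a natural transformation $\eta\colon G\to G'$ of $\cC$-linear functors restricts to $\eta_\Bbbk\colon G(\Bbbk)\to G'(\Bbbk)$, which is an $H$-colinear map precisely because $\eta$ is compatible with the $\cC$-linear structures $s^G, s^{G'}$ (the naturality square of $\eta$ at the morphism $u_H$ combined with compatibility with $s$); and conversely a colinear map $f\colon C\to C'$ induces $(-)\otimes f$, which is a morphism of $\cC$-linear functors. All of this is a routine unwinding of definitions.

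Finally I would exhibit the natural isomorphisms $ET\cong \id_{\lcomod{H}}$ and $TE\cong \id_{\Fun_\cC(\Vect,\Vect)}$. For $ET$: starting from $(C,\delta)$, we have $T(C,\delta) = ((-)\otimes C, s^\delta)$, and then $E$ produces $(F(\Bbbk)\otimes C,\ \delta^{T(C,\delta)}) \cong (C, \delta^{T(C,\delta)})$; tracing through the definitions, $\delta^{T(C,\delta)} = s^\delta_{H,\Bbbk}\circ (u_H\otimes \id_C)$ sends $c\mapsto c^{(-1)}\cdot 1_H \otimes c^{(0)} = c^{(-1)}\otimes c^{(0)}$ (the regular module on $H$ sends $1_H$ to the chosen element, so acting $c^{(-1)}$ on $1_H$ recovers $c^{(-1)}$), i.e.\ we recover $\delta$ on the nose up to the canonical identification $F(\Bbbk)\otimes C\cong C$. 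For $TE$: starting from $(G,s^G)$, set $C = G(\Bbbk)$ with coaction $\delta^G$; I would use the canonical isomorphism $G(V)\cong F(V)\otimes G(\Bbbk) = V\otimes G(\Bbbk)$ for $V\in\Vect$ coming from the fact that any $G$ is determined on $\Vect$ by its value at $\Bbbk$ (since every object of $\Vect$ is a finite direct sum of copies of $\Bbbk$ and $G$ is additive), and check that under this isomorphism $s^G$ corresponds to $s^{\delta^G}$; this is the content of the coherence \eqref{eq:C-linear-coherence} written out on generators, using that the $H$-action on $F(W)$ together with the braiding encode exactly the formula $w\otimes c\mapsto c^{(-1)}w\otimes c^{(0)}$. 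The main obstacle I anticipate is bookkeeping: getting the R-matrix conventions, the direction of the braiding used in \Cref{def:Fun-action}, and the identification of $G(F(H))$ with $H\lact G(\Bbbk)$ all consistent, so that the coassociativity of $\delta^G$ comes out with the correct side and no inverse antipode is secretly required. Everything else is a direct, if tedious, diagram chase, so I would state the isomorphisms explicitly on elements and leave the verification of naturality to the reader.
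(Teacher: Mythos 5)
Your proposal is correct and takes essentially the same route as the paper: the paper does not prove this lemma directly (it cites \cite{EGNO}*{Ex.\,7.12.26}), but its proof of the twisted generalization \Cref{lem:equiv-FunVect-twisted} proceeds exactly as you outline — identify $G(V)\cong V\otimes G(\Bbbk)$ by additivity, use naturality of $s^G$ in the first argument to recover $s^G_{X,\Bbbk}(x\otimes v)=v^{(-1)}x\otimes v^{(0)}$ from $\delta^G$, evaluate the coherence \eqref{eq:C-linear-coherence} at $X=Y=H$, $M=\Bbbk$ on $1_H\otimes 1_H\otimes v$ to obtain coassociativity, get counitality from strict unitality, and exhibit the stated quasi-inverse. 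One small correction: the braiding/R-matrix plays no role in this plain categorical equivalence (it only enters later, in \Cref{prop:Fun-H*-mod}, when the left $\cC$-module structures are compared), so your appeals to it when checking the coherence of $s^\delta$ and in the aside about $H$-linearly splitting $H\otimes H$ are unnecessary and can be dropped.
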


Now consider the dual Hopf algebra $H^*$ with product 
\begin{equation}\label{eq:dual-product}
fg(h)=f(h_{(2)})g(h_{(1)}), \qquad \forall f,g\in H^*, h\in H,
\end{equation}
and coproduct $\Delta(f)=f_{(1)}\otimes f_{(2)}$ defined by 
\begin{equation}\label{eq:dual-coproduct}
f(hk)= f_{(1)}(h)f_{(2)}(k), \qquad \forall h,k\in H.
\end{equation}

The following result is standard.

\begin{lemma}\label{lem:dualize}
    There is an equivalence of categories 
    $$\lcomod{H}\to \lmod{H^*}, \quad (C,\delta)\mapsto (C,a^l_C),$$
    where, for a left $H$-comodule $(C,\delta)$, we define 
    \begin{equation}\label{eq:H-dual-action}
    a^l_C\colon H^*\otimes C\to C, \quad f\otimes c\mapsto f\cdot c:=f(c^{(-1)})c^{(0)},
    \end{equation}
    for all $f\in H^*$, $c\in C$, using the notation of \eqref{eq:Sweedler-coact}.
\end{lemma}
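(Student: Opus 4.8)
The plan is to verify directly that the assignment $(C,\delta)\mapsto (C,a^l_C)$ is an equivalence of categories by exhibiting a quasi-inverse and checking that both composites are (isomorphic to) the identity. Since this is the standard dual equivalence between comodules over a finite-dimensional Hopf algebra $H$ and modules over the dual Hopf algebra $H^*$, I would present it in the self-dual spirit used throughout the paper: no cleverness is required, only careful bookkeeping of the pairing $\langle -,-\rangle\colon H^*\otimes H\to \Bbbk$.

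First I would check that $a^l_C$ as defined in \eqref{eq:H-dual-action} is an action of $H^*$. Unitality uses that the counit $\varepsilon_H$ is the unit of $H^*$ together with the counit axiom $(\varepsilon_H\otimes\id_C)\delta=\id_C$ for a comodule. Associativity, i.e. $(fg)\cdot c = f\cdot(g\cdot c)$, unwinds on the left-hand side to $\langle fg, c^{(-1)}\rangle c^{(0)}$, and by the definition \eqref{eq:dual-product} of the product on $H^*$ this equals $\langle f, (c^{(-1)})_{(2)}\rangle\langle g,(c^{(-1)})_{(1)}\rangle c^{(0)}$; on the right-hand side $f\cdot(g\cdot c) = \langle g, c^{(-1)}\rangle\, f\cdot c^{(0)} = \langle g, c^{(-1)}\rangle\langle f,(c^{(0)})^{(-1)}\rangle (c^{(0)})^{(0)}$, and the coassociativity axiom $(\Delta_H\otimes\id_C)\delta=(\id_H\otimes\delta)\delta$ for the comodule makes the two sides agree. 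Functoriality is immediate: a comodule morphism $g\colon C\to C'$ satisfies $\delta' g = (\id_H\otimes g)\delta$, and applying $\langle f,-\rangle$ in the first tensor factor shows $g(f\cdot c)=f\cdot g(c)$, so the same underlying linear map is an $H^*$-module morphism.

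Next I would construct the quasi-inverse. Because $H$ is finite-dimensional, the canonical evaluation $H\cong H^{**}$ is an isomorphism of Hopf algebras, so one can apply the same construction with $H^*$ in place of $H$: to a left $H^*$-module $(D,a)$ associate the left $H$-comodule $(D,\delta_D)$ whose coaction is $\delta_D(d) = \sum_i e_i\otimes (e^i\cdot d)$ for a basis $\{e_i\}$ of $H$ with dual basis $\{e^i\}$ of $H^*$; equivalently $\delta_D$ is the composite $D\xrightarrow{\coev\otimes\id} H\otimes H^*\otimes D\xrightarrow{\id\otimes a} H\otimes D$. That this is a coaction follows from the module axioms for $a$ dualized through the finite-dimensionality pairing, exactly as above. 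One then checks that the two round-trips recover the original structure: starting from $(C,\delta)$, the reconstructed coaction is $\sum_i e_i\otimes(e^i\cdot c) = \sum_i e_i\langle e^i, c^{(-1)}\rangle c^{(0)} = c^{(-1)}\otimes c^{(0)} = \delta(c)$ by completeness of the dual bases; the other composite is identical by symmetry. Both round-trips are literally the identity on objects and morphisms, so we get an equivalence (indeed an isomorphism) of categories; restricting to finite-dimensional objects on both sides is harmless.

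I do not expect a genuine obstacle here — the only thing to be careful about is the placement of the ``flip'' in the convolution product \eqref{eq:dual-product}–\eqref{eq:dual-coproduct}: the paper uses $fg(h)=f(h_{(2)})g(h_{(1)})$, so the associativity check must be done with that convention (a different convention would force using the antipode or would produce a right action), and likewise the coproduct \eqref{eq:dual-coproduct} must match the product on $H$ used to define the comodule coassociativity. Once the conventions are pinned down, every step is a one-line computation with Sweedler notation and the completeness relation $\sum_i e_i\langle e^i,h\rangle = h$. I would therefore present the proof compactly, citing that this is the classical comodule–module duality (see e.g. the standard references on Hopf algebras) and spelling out only the associativity of $a^l_C$ and the reconstruction of $\delta$ from the dual bases.
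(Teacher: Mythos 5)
Your proof is correct, and it fills in exactly the verification the paper omits: the paper simply labels this result ``standard'' and gives no proof at all. Your associativity check is consistent with the paper's flipped convolution convention $fg(h)=f(h_{(2)})g(h_{(1)})$ from \eqref{eq:dual-product} --- indeed that flip is precisely what makes \eqref{eq:H-dual-action} a \emph{left} action rather than a right one --- and the dual-basis quasi-inverse together with the two round-trip computations is the standard argument one would expect here.
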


In the following, we will utilize the morphism 
\begin{equation}\label{eq:phiR}
\phi_R\colon H^*\to H,\quad  f\mapsto R^{(1)}f(R^{(2)}),
\end{equation}
obtained from the universal R-matrix.
Before stating the next lemma, recall that for a Hopf algebra~$H$, a \emph{left $H$-comodule algebra} is an algebra $A$ internal to the tensor category of left $H$-comodules. The category $\lmod{A}$ is then a left module over $\cC=\lmod{H}$, see\,\cite{AM}*{Sec.\,1.5} or \cite{EGNO}*{Ex.\,7.8.3}. The left $\cC$-action is defined, for $W$ in $\lmod{H}$ and $V$ in $\lmod{A}$, by 
$$W\lact V=(W\otimes V, a^l_{W\lact V}),$$
with the $A$ action 
\begin{equation}\label{eq:comod-act}
a^l_{W\lact V}\colon A\otimes W\otimes V\to W\otimes V,\quad a\otimes w\otimes v\mapsto a^{(-1)}w\otimes a^{(0)}v,
\end{equation}
for $a\in A,w\in W,v\in V$, using the notation \eqref{eq:Sweedler-coact} for the $H$-coaction on $A$.

\begin{lemma}\label{lem:H-com-alg}
The morphism 
$\phi_R\colon H^*\to H$ is a homomorphism of Hopf algebras. Hence, $H^*$ is a left $H$-comodule algebra via the coaction
\begin{equation}\label{eq:delta-star}
\delta_{H^*}\colon H^*\to H\otimes H^*, \quad f\mapsto \phi_R(f_{(1)})\otimes f_{(2)}.
\end{equation}
In particular, $\lmod{H^*}$ is a $\cC$-module via the $\cC$-action defined by
$W\lact V=(W\otimes V, a^l_{W\lact V}),$ 
for $W\in \lmod{H}=\cC$ and $V\in \lmod{H^*}$, where  
\begin{align}
\begin{split}\label{eq:C-action-H*-mod}
    a^l_{W\lact V}\colon H^*\otimes W\otimes V &\to W\otimes V,\\
    f\otimes w\otimes v&\mapsto \phi_R(f_{(1)})w\otimes f_{(2)}v=f_{(1)}(R^{(2)})R^{(1)}w\otimes f_{(2)}v,
    \end{split}
\end{align}
for $f\in H^*,w\in W,v\in V$, and module associator inherited from $\Vect$.
\end{lemma}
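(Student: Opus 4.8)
The plan is to verify the three claims in \Cref{lem:H-com-alg} in sequence: first that $\phi_R$ is a Hopf algebra map, then that the formula \eqref{eq:delta-star} gives $H^*$ the structure of a left $H$-comodule algebra, and finally that \eqref{eq:C-action-H*-mod} is the resulting $\cC$-action under the equivalences established earlier. All of this is essentially a translation between the categorical constructions of the preceding sections and explicit Hopf-algebraic formulas.

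First I would show that $\phi_R\colon H^*\to H$, $f\mapsto R^{(1)}f(R^{(2)})$, is a homomorphism of Hopf algebras. That $\phi_R$ is an algebra map follows from the hexagon identity $(\Delta_H\otimes\id)(R)=R_{13}R_{23}$ together with the definition \eqref{eq:dual-product} of the product on $H^*$; that it is a coalgebra map follows from $(\id\otimes\Delta_H)(R)=R_{13}R_{12}$ and \eqref{eq:dual-coproduct}; compatibility with antipodes is then automatic for a bialgebra map between Hopf algebras, or follows from $(\mathrm{S}_H\otimes\id)(R)=R^{-1}$. This is the standard fact that a quasitriangular structure on $H$ yields a Hopf algebra map from $H^{*\,\cop}$ (or $H^*$, depending on conventions) to $H$; I would cite \cite{Maj}*{Sec.\,2.1} or reprove the three diagrams in a couple of lines. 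Consequently $\delta_{H^*}=(\phi_R\otimes\id)\circ\Delta_{H^*}$ makes $H^*$ into a left $H$-comodule, and since both $\phi_R$ and $\Delta_{H^*}$ are algebra maps, $\delta_{H^*}$ is an algebra map, i.e.\ $H^*$ is a left $H$-comodule algebra.

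Next I would identify the $\cC$-module structure on $\lmod{H^*}$. The composite equivalence $\lmod{H^*}\simeq\lcomod{H}\simeq\Fun_\cC(\Vect,\Vect)=\cC^*_\Vect$ from \Cref{lem:dualize} and \Cref{lem:FunC-H*-mod} transports the canonical $\cC$-module structure of $\cC^*_\Vect$ (given by the \action functor / composition of module functors) to $\lmod{H^*}$. Unwinding: a left $H^*$-module $V$ corresponds via \Cref{lem:dualize} to the $H$-comodule $(V,\delta)$ with $\delta(v)=v^{(-1)}\otimes v^{(0)}$ determined by $f\cdot v=f(v^{(-1)})v^{(0)}$, which corresponds via $T$ to the module functor $(-)\otimes V$ with coherence $s^\delta_{W,\Bbbk}(w\otimes v)=v^{(-1)}w\otimes v^{(0)}$. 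The action of $W\in\cC$ on this functor, by \Cref{def:Fun-action}, is $W\lact((-)\otimes V)=(-)\otimes(W\otimes V)$ as a functor, and tracing the coherence \eqref{eq:FunC-coherence} back through $E$ shows the new comodule structure on $W\otimes V$ is $w\otimes v\mapsto v^{(-1)}\otimes(w\otimes v^{(0)})$ — i.e.\ the $H$-coaction only sees the $V$-factor — which, dualizing back via \Cref{lem:dualize} and using $\delta_{H^*}$ as in \eqref{eq:delta-star}, is precisely the formula \eqref{eq:comod-act} specialized to the comodule algebra $A=H^*$. Substituting $\delta_{H^*}(f)=\phi_R(f_{(1)})\otimes f_{(2)}=f_{(1)}(R^{(2)})R^{(1)}\otimes f_{(2)}$ into \eqref{eq:comod-act} yields exactly \eqref{eq:C-action-H*-mod}. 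The module associator is inherited from $\Vect$ since all the coherences in play are built from the (strict) associator of $\Vect$ together with the braiding data already absorbed into $\phi_R$.

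The main obstacle I anticipate is bookkeeping of conventions: whether the braiding is $\psi_{V,W}(v\otimes w)=R^{(2)}w\otimes R^{(1)}v$ (as stated before the lemma) forces a particular variance on $\phi_R$, and matching the $\psi$ versus $\psi^{-1}$ choices made in \Cref{def:Fun-action} and \Cref{rem:convention1} so that the $R^{(1)}$/$R^{(2)}$ legs land on the correct tensor factors in \eqref{eq:C-action-H*-mod}. I would resolve this by fixing the convention $(V\otimes W)^*=W^*\otimes V^*$ and carefully tracking the image of the universal balancing / the coherence \eqref{eq:FunC-coherence} through the equivalences $E$ and $T$, checking the final formula against the $\cC=\lmod{H}$ action on $\Vect=\lmod{\Bbbk}$ (where $F$ is just the forgetful functor) as a sanity check. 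No deep idea is needed beyond the standard dictionary between quasitriangular structures, comodule algebras, and module categories over $\lmod{H}$; the content of the lemma is that this dictionary produces the explicit comodule algebra $(H^*,\delta_{H^*})$.
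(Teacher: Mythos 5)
Your first two paragraphs match the paper's proof: the paper cites \cite{Rad}*{Prop.\,12.2.11} for $\phi_R$ being a Hopf algebra map, obtains the comodule algebra structure on $H^*$ by corestricting the regular coaction along $\phi_R$, and then observes that the stated action is the specialization of the general comodule-algebra action \eqref{eq:comod-act} to $A=H^*$ with $\delta_{H^*}$ as in \eqref{eq:delta-star}. Your closing sentence of the third paragraph (``Substituting $\delta_{H^*}(f)=\phi_R(f_{(1)})\otimes f_{(2)}$ into \eqref{eq:comod-act} yields exactly \eqref{eq:C-action-H*-mod}'') is all the paper does for the last claim, so the essential argument is the same. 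One small slip in the first paragraph: with the convention \eqref{eq:dual-product}, multiplicativity of $\phi_R$ uses $(\id\otimes\Delta_H)(R)=R_{13}R_{12}$ and comultiplicativity uses $(\Delta_H\otimes\id)(R)=R_{13}R_{23}$; you have the two hexagon identities assigned to the opposite roles.

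The detour through $\Fun_\cC(\Vect,\Vect)$ in your third paragraph is not needed for this lemma and contains a genuine error. You claim that tracing \eqref{eq:FunC-coherence} through $E$ gives the comodule structure $w\otimes v\mapsto v^{(-1)}\otimes(w\otimes v^{(0)})$ on $W\otimes V$, ``i.e.\ the $H$-coaction only sees the $V$-factor.'' That is false: the coherence $s^{X\lact G}$ of \Cref{def:Fun-action} involves the braiding, and the correct computation (carried out in the proof of \Cref{prop:Fun-H*-mod}) gives $\delta^{X\lact G}(w\otimes v)=R^{(2)}v^{(-1)}\otimes R^{(1)}w\otimes v^{(0)}$. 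If the coaction were concentrated on the $V$-factor, dualizing via \Cref{lem:dualize} would produce the action $f\otimes w\otimes v\mapsto w\otimes f\cdot v$, which is \emph{not} \eqref{eq:C-action-H*-mod} (it is what you would get for $R=1\otimes 1$), so your intermediate claim contradicts your own conclusion. The identification of the transported $\cC$-module structure on $\Fun_\cC(\Vect,\Vect)$ with the one defined in this lemma is precisely the content of the subsequent \Cref{prop:Fun-H*-mod}; the lemma itself only requires the comodule-algebra formalism, and your proof is complete (and correct) once that detour is excised.
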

\begin{proof}
    The statement that $\phi_R$ is a homomorphism of Hopf algebras can be found in \cite{Rad}*{Prop.\,12.2.11} with different conventions. Thus, $H^*$ is an $H$-comodule algebra by co-restriction of the regular $H^*$-coaction along $\phi_R$.  This $H$-comodule structure gives a $\cC$-module structure on $\lmod{H^*}$ as in \eqref{eq:comod-act}, which one checks coincides with the stated one.
\end{proof}

\begin{proposition}\label{prop:Fun-H*-mod}
    The composition of the equivalences from \Cref{lem:FunC-H*-mod,lem:dualize}  defines  an equivalence of left $\cC$-module categories
    $$D\colon \Fun_\cC(\Vect,\Vect)\isomorph \lmod{H^*},$$
    where the $\cC$-actions are defined in \Cref{def:Fun-action} for the source and in \Cref{lem:H-com-alg} for the target.
\end{proposition}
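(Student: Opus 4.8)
The plan is to build the claimed equivalence by composing the two equivalences of categories already established, namely $E\colon \Fun_\cC(\Vect,\Vect)\isomorph \lcomod{H}$ from \Cref{lem:FunC-H*-mod} and the dualization equivalence $\lcomod{H}\isomorph\lmod{H^*}$ from \Cref{lem:dualize}, and then to verify that this composite $D$ intertwines the two $\cC$-module structures: the one on $\Fun_\cC(\Vect,\Vect)$ from \Cref{def:Fun-action} (which uses the braiding $\psi$, hence the universal R-matrix $R$) and the one on $\lmod{H^*}$ from \Cref{lem:H-com-alg} (which uses the comodule-algebra structure via $\phi_R$). Since both $E$ and the dualization functor are already known to be equivalences of underlying categories, the composite $D$ is automatically an equivalence of categories; the entire content of the statement is therefore the $\cC$-linearity of $D$.

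First I would unwind what $D$ does on objects: given a $\cC$-module functor $(G,s^G)\colon\Vect\to\Vect$, $E$ sends it to the $H$-comodule $(G(\Bbbk),\delta^G)$ with $\delta^G=s^G_{H,\Bbbk}\circ G(u_H)$, and then \Cref{lem:dualize} turns this into the $H^*$-module $G(\Bbbk)$ with action $f\cdot c = f(c^{(-1)})c^{(0)}$. Next I would fix $W\in\cC=\lmod{H}$ and compute both sides of the putative $\cC$-linear structure isomorphism $s^D_{W,G}\colon D(W\lact G)\isomorph W\lact D(G)$, with underlying $\Bbbk$-vector space map given by the identity (or an obvious natural iso) $F(W)\otimes G(\Bbbk)\to F(W)\otimes G(\Bbbk)$. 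The left-hand side: $W\lact G$ has underlying functor $M\mapsto W\lact G(M)$ with the coherence $s^{W\lact G}$ from the commutative diagram \eqref{eq:FunC-coherence}, so $\delta^{W\lact G}=s^{W\lact G}_{H,\Bbbk}\circ (W\lact G)(u_H)$; applying \eqref{eq:FunC-coherence} at $Y=H$, $M=\Bbbk$, this coaction will be expressed in terms of $\psi_{W,H}$ (equivalently $R$), the module coherences of $\Vect$ (trivial here), $s^G_{H,\Bbbk}$ and $G(u_H)$. The right-hand side: $W\lact D(G)$ is the $\cC$-action on $\lmod{H^*}$ from \eqref{eq:C-action-H*-mod}, i.e. $f\otimes w\otimes c\mapsto f_{(1)}(R^{(2)})R^{(1)}w\otimes f_{(2)}\cdot c$ where $f_{(2)}$ acts on $c$ via $\delta^G$. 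The task is to show these two $H^*$-actions on $F(W)\otimes G(\Bbbk)$ coincide. This is a direct but somewhat intricate computation: one dualizes $\delta^{W\lact G}$ to an $H^*$-action, substitutes the formula for $s^{W\lact G}$ coming from \eqref{eq:FunC-coherence}, uses that $\psi_{W,H}(w\otimes h)=R^{(2)}h\otimes R^{(1)}w$ and the defining property $\phi_R(f)=R^{(1)}f(R^{(2)})$, together with the coalgebra structure \eqref{eq:dual-coproduct} of $H^*$ and the axioms of a comodule algebra, to match terms. I would also check the coherence condition \eqref{eq:C-linear-coherence} for $s^D$, which should reduce to the hexagon axiom for $\psi$ exactly as in the proof of \Cref{lem:Hom-C-mod} — indeed the computation there (the displayed three-line equality ending in $s^{X\otimes Y\lact G}$) is the heart of it.

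The main obstacle I expect is bookkeeping: carefully tracking the braiding versus inverse-braiding conventions (recall \Cref{rem:convention1,rem:convention2} flag exactly this choice) so that the R-matrix appears on the correct side and with the correct factor ordering to match $\phi_R$ rather than $\phi_{R^{-1}_{21}}$, and likewise keeping the $H^*$-coproduct/antipode conventions from \eqref{eq:dual-product}--\eqref{eq:dual-coproduct} consistent throughout the dualization. A secondary check is that the identity map on $F(W)\otimes G(\Bbbk)$ is genuinely an $H^*$-module map (not merely a $\Bbbk$-linear iso) — this is precisely the equality of the two actions above — and that $s^D$ is natural in $G$ and $W$, which is immediate since every ingredient ($s^G$, $\delta^G$, $\psi_{W,H}$) is natural. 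Once the action-matching identity is verified, \Cref{prop:Fun-H*-mod} follows, and I would remark that it is the $\Vect$-instance of the more general \Cref{lem:equiv-FunVect-twisted} promised in the text, with the R-matrix twist $\phi_R$ playing the role of the Hopf $2$-cocycle data there.
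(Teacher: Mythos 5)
Your proposal is correct and follows essentially the same route as the paper: one computes the coaction $\delta^{W\lact G}$ from the coherence \eqref{eq:FunC-coherence} (which brings in $\psi_{W,H}$, hence $R$), dualizes it via \Cref{lem:dualize}, and checks that the resulting $H^*$-action agrees with \eqref{eq:C-action-H*-mod}, so that $D$ is $\cC$-linear with trivial (identity) coherence. The only cosmetic difference is that the coherence check for $s^D$ is immediate once the actions match, rather than requiring a separate hexagon argument.
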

\begin{proof}
    We first consider $\delta^{X\lact G}$ as defined in \eqref{eq:delta-G}, for the functor $X\lact G$ defined in \Cref{def:Fun-action}, for $G\in \cC^*_\Vect$ and $X\in \cC$.  We compute
\begin{align*}
    \delta^{X\lact G}(x\otimes v)&=s^{X\lact G}_{H,\Bbbk}(x\otimes 1_H\otimes  v)\\
    &=(\psi_{X,H}\otimes \id_{G(\Bbbk)})\circ (\id_X\otimes s^G_{H,\Bbbk})(x\otimes 1_H\otimes v)\\
    &=(\psi_{X,H}\otimes \id_{G(\Bbbk)})(x\otimes v^{(-1)}\otimes v^{(0)})
    =R^{(2)}v^{(-1)}\otimes R^{(1)}x\otimes v^{(0)},
\end{align*}
for $v\in G(\Bbbk), x\in X$.
Here, we have used \eqref{eq:FunC-coherence} in the second equality, using the trivial coherence $l^\Vect$ of $\Vect$. We use adapted Sweedler's notation 
$\delta^G(v)=v^{(-1)}\otimes v^{(0)},$
for $v\in G(\Bbbk)$, in the second equality. This describes the $H$-comodule $E(X\lact G)=(X\otimes G(\Bbbk),\delta^{X\lact G})$, where $E$ is the equivalence of \Cref{lem:FunC-H*-mod}.  

Now we apply the equivalence from \Cref{lem:dualize} to $E(X\lact G)=(X\otimes G(\Bbbk), \delta^{X\lact G})$. For this, recall that the $H^*$-module associated to $E(G)$ is defined on the vector space $G(\Bbbk)$ with left $H^*$-action from \eqref{eq:H-dual-action}. By the same formula, the $H^*$-action on $X\otimes G(\Bbbk)$ induced by $\delta^{X\lact G}$ is given by
\begin{align*}
a^l_{E(X\lact G)}\colon H^*\otimes X\otimes G(\Bbbk)&\to X\otimes G(\Bbbk), \\
f\otimes x\otimes v &\mapsto f(R^{(2)}v^{(-1)}) R^{(1)}x\otimes v^{(0)}\\
&= f_{(1)}(R^{(2)})f_{(2)}(v^{(-1)}) R^{(1)}x\otimes v^{(0)}\\
&= f_{(1)}(R^{(2)})R^{(1)}x\otimes f_{(2)}\cdot v.
\end{align*}
The first equality applies the dual coproduct from \eqref{eq:dual-coproduct} and the second equality applies \eqref{eq:H-dual-action} to $f_{(2)}\cdot v$. 
This equals the $H^*$-action $a^l_{X\lact G(\Bbbk)}$ defined in \eqref{eq:C-action-H*-mod}. Hence, $D(X\lact G)=X\lact D(G)$ as left $H^*$-modules. Thus, the functor $D$ is one of left $\cC$-modules with trivial $\cC$-linear structure.  
\end{proof}

For a homomorphism of algebras $\phi\colon A\to B$, we denote the restriction functor by
\begin{equation}\label{eq:Res-phi}
\Res_{\phi}\colon \lmod{B}\to \lmod{A},
\end{equation}
where $\Res_\phi(V)=V$ as a $\Bbbk$-vector space with action $av=\phi(a)v$, for all $v\in V$, $a\in A$. The functor $\Res_\phi$ is faithful and exact. Moreover, if $\phi$ is a homomorphism of Hopf algebras, $\Res_\phi$ is a monoidal functor with trivial monoidal structure. 

\begin{proposition}\label{lem:Vect-fully-exact}
The following are equivalent for the left $\cC$-module category $\Vect$. 
\begin{enumerate}
    \item[(i)] $\Vect$ is fully exact.
    \item[(ii)] The functor $\Res_{\phi_R}\colon \lmod{H}\to \lmod{H^*}$ for $\phi_R$ from \eqref{eq:phiR} preserves projective objects.
    \item[(iii)]  $\Res_{\phi_R}(H)$ is projective as a left $H^*$-module. 
\end{enumerate}
\end{proposition}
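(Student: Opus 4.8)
The plan is to unwind the chain of equivalences established in the preceding lemmas so that the abstract condition ``$\Vect$ is fully exact'' becomes a concrete condition about the restriction functor along $\phi_R$. First I would invoke \Cref{prop:fully-exact-equiv}: $\Vect$ is fully exact if and only if the \action functor $A_\Vect\colon \cC\to \cC^*_\Vect=\Fun_\cC(\Vect,\Vect)$ preserves projective objects. Then I would transport this statement across the equivalence of left $\cC$-module categories $D\colon \Fun_\cC(\Vect,\Vect)\isomorph \lmod{H^*}$ from \Cref{prop:Fun-H*-mod}. Since $D$ is an equivalence of $\cC$-module categories, $A_\Vect$ preserves projectives if and only if the composite $D\circ A_\Vect\colon \cC=\lmod H\to \lmod{H^*}$ does. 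The crucial step is to identify $D\circ A_\Vect$ explicitly: it sends an $H$-module $W$ to $W\lact \id_\Vect$, which under $D$ is the vector space $W$ (take $G=\id_\Vect$, so $G(\Bbbk)=\Bbbk$, giving $X\otimes G(\Bbbk)\cong X$) equipped with the $H^*$-action read off from \eqref{eq:C-action-H*-mod} with $v$ the generator of $\Bbbk$ carrying trivial coaction, namely $f\otimes w\mapsto \phi_R(f)w$. This is precisely $\Res_{\phi_R}(W)$. Hence $D\circ A_\Vect\cong \Res_{\phi_R}$ as functors $\lmod H\to \lmod{H^*}$, which gives the equivalence of (i) and (ii).

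For the equivalence of (ii) and (iii), one implication is immediate: if $\Res_{\phi_R}$ preserves all projectives, it preserves the regular module $H$, which is projective, so $\Res_{\phi_R}(H)$ is projective. For the converse, I would use the standard fact that every projective $H$-module is a direct summand of a finite direct sum of copies of the regular module $H$ (since $H$ is a finite-dimensional algebra). Because $\Res_{\phi_R}$ is additive and exact, it commutes with finite direct sums and sends direct summands to direct summands; so if $\Res_{\phi_R}(H)$ is projective, then $\Res_{\phi_R}$ applied to any direct summand of $H^{\oplus n}$ is a direct summand of $\Res_{\phi_R}(H)^{\oplus n}$, hence projective. This closes the cycle (i)$\Leftrightarrow$(ii)$\Leftrightarrow$(iii).

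\textbf{Main obstacle.} I expect the only genuinely delicate point to be the explicit identification $D\circ A_\Vect \cong \Res_{\phi_R}$ — specifically, checking that the $\cC$-linear (i.e.\ $H^*$-module) structure produced by chasing $\id_\Vect$ through the equivalences $E$ of \Cref{lem:FunC-H*-mod} and the dualization of \Cref{lem:dualize}, and then through the $\cC$-action formula \eqref{eq:C-action-H*-mod}, genuinely reduces to multiplication via $\phi_R$ and not via some twisted or $21$-flipped variant. Concretely one must verify that when $G = \id_\Vect$ the comodule $G(\Bbbk)=\Bbbk$ has trivial coaction, so that in the formula $f_{(1)}(R^{(2)})R^{(1)}x\otimes f_{(2)}\cdot v$ the second tensor factor contributes only $\varepsilon(f_{(2)})$, collapsing the action to $f\otimes x\mapsto \phi_R(f)x = R^{(1)}f(R^{(2)})x$. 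This is a short but bookkeeping-heavy Sweedler-notation computation, essentially already carried out inside the proof of \Cref{prop:Fun-H*-mod}; the remaining work is just specialization. Everything else — the reduction to $A_\Vect$ preserving projectives, and the passage between ``preserves all projectives'' and ``preserves the regular module'' — is formal and uses only finiteness of $H$ together with additivity and exactness of $\Res_{\phi_R}$, which were already noted after \eqref{eq:Res-phi}.
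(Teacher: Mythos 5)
Your proposal is correct and follows essentially the same route as the paper: both reduce (i) to $A_\Vect$ preserving projectives via \Cref{prop:fully-exact-equiv}, identify $D\circ A_\Vect\cong\Res_{\phi_R}$ by observing that $\id_\Vect$ corresponds to the trivial $H^*$-module so that the action formula \eqref{eq:C-action-H*-mod} collapses to multiplication by $\phi_R(f)$, and handle (ii)$\Leftrightarrow$(iii) by the standard fact that $H$ is a projective generator together with additivity of the restriction functor. The "delicate point" you flag is exactly the one the paper addresses, and your specialization argument for it is the intended one.
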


\begin{proof} 
To prove the equivalence of (i) and (ii), consider the commutative diagram of functors 
\begin{equation}\label{eq:AVect-diag}
\vcenter{\hbox{
\xymatrix@R=10pt{
\cC\ar[rr]^{A_\Vect}\ar[rd]_-{\Res_{\phi_R}}&&\cC^*_{\Vect}, \ar^-D[ld]\\
&\lmod{H^*}&}}}
\end{equation}
where $A_\Vect$ is the \action functor from \Cref{def:AC-C-mod-functor}, $D$ is the equivalence of \Cref{prop:Fun-H*-mod}, and $\Res_{\phi_R}$ is defined in \eqref{eq:Res-phi}.
Indeed, $A_\Vect(X)=X\lact \id_\Vect$, but $\id_\Vect$ corresponds to the trivial $H^*$-module under the equivalence $D$ since $\id_\Vect(\Bbbk)=\Bbbk$ and the trivial $\cC$-linear structure on $\id_\Vect$ produces the trivial $H^*$-module.  As $D$ is a $\cC$-linear equivalence, 
$$D(X\lact \id_\Vect)\cong X\lact D(\id_\Vect)=X\lact \Bbbk\cong \Res_{\phi_R}(X),$$
where the last equivalence uses \eqref{eq:C-action-H*-mod}, applied to the trivial $H^*$-module $\Bbbk$. Now, equivalence of (i) and (ii) follows as $\Vect$ is fully exact if and only if $A_\Vect$ preserves projective objects, see \Cref{prop:fully-exact-equiv}. 

 The equivalence of (ii) and (iii) is standard homological algebra. That (ii) implies (iii) is clear and the converse uses that $H$ is a projective generator for $\lmod{H}$ and that $A_\Vect$ is, in particular, right exact by \Cref{lem:AM-exact-faithful}.
\end{proof}

 Similarly, we obtain equivalent conditions for $\Vect$ to be  op-fully exact, see \Cref{def:op-fex}.
\begin{corollary}\label{cor:Vect-op-fex}
    The left $\cC$-module $\Vect$ is op-fully exact or, equivalently, ${}^*\Vect$ is fully exact if and only if the functor $\Res_{\phi_{R^{-1}_{21}}}$ preserves projective objects. Moreover, $\Vect$ is perfect if both $\Res_{\phi_R}$ and $\Res_{\phi_{R^{-1}_{21}}}$ preserve projective objects.
\end{corollary}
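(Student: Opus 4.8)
The plan is to deduce this corollary from \Cref{lem:Vect-fully-exact} by passing to the reverse braided tensor category $\cCrev$. First I would recall that $\cCrev$ is again of the form $\lmod{H}$, now equipped with the universal R-matrix $R' := R^{-1}_{21}$, so that the Hopf algebra homomorphism of \eqref{eq:phiR} associated to $\cCrev$ is $\phi_{R'} = \phi_{R^{-1}_{21}}$. By \Cref{prop:op-exact-algebras} (equivalently, by \Cref{prop:op-fully-equiv}), the left $\cC$-module $\Vect$ is op-fully exact if and only if it is fully exact when regarded as a left $\cCrev$-module, noting that the underlying module category structure of $\Vect$ over $\cC$ and over $\cCrev$ is the same, only the $\cC$-module structure on functor categories and hence the \action functor changes. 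Applying \Cref{lem:Vect-fully-exact} with $\cCrev$ in place of $\cC$ then gives that this holds if and only if $\Res_{\phi_{R^{-1}_{21}}}$ preserves projective objects, which is the first equivalence. The equivalence with the condition that ${}^*\Vect$ be fully exact follows from \Cref{prop:duality-fully-exact}: taking $\cM = {}^*\Vect$ there and using the equivalence $({}^*\Vect)^* \simeq \Vect$ from \Cref{cor:stars-inverses}, one sees that ${}^*\Vect$ is fully exact if and only if $\Vect$ is op-fully exact.

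The one point that requires a short argument, rather than a literal re-reading of the proof of \Cref{lem:Vect-fully-exact}, is that under the equivalence $D\colon \cC^*_\Vect \isomorph \lmod{H^*}$ of \Cref{prop:Fun-H*-mod} the \action functor $A_\Vect^\rev$, defined via the inverse braiding, corresponds to $\Res_{\phi_{R^{-1}_{21}}}$. I would verify this by re-running the computation in the proof of \Cref{prop:Fun-H*-mod}, the only change being that \eqref{eq:FunC-coherence} now uses $\psi^{-1}$ in place of $\psi$: concretely, $\delta^{X\lact G}$ acquires the R-matrix $R^{-1}_{21}$ instead of $R$, and after applying the equivalence of \Cref{lem:dualize} one arrives at exactly the $H^*$-action through $\phi_{R^{-1}_{21}}$. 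Alternatively, one can invoke \Cref{rem:alpha-induction2}, which identifies $A_\Vect^\rev$ with the $\alpha$-induction functor $\alpha^-$ and hence with the corresponding construction over $\cCrev$. Either way, the commutative triangle \eqref{eq:AVect-diag} remains valid with $R$ replaced by $R^{-1}_{21}$, and the argument of \Cref{lem:Vect-fully-exact} carries over verbatim.

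Finally, for the statement about perfectness: by \Cref{cor:dualizable-fully-exact-criterion}, $\Vect$ is perfect if and only if it is simultaneously fully exact and op-fully exact. Combining \Cref{lem:Vect-fully-exact} with the first part of the present corollary, this is equivalent to both $\Res_{\phi_R}$ and $\Res_{\phi_{R^{-1}_{21}}}$ preserving projective objects; in particular, if both do, then $\Vect$ is perfect. The only real obstacle is bookkeeping: tracking how the inverse braiding propagates through the chain of equivalences $\Fun_\cC(\Vect,\Vect) \simeq \lcomod{H} \simeq \lmod{H^*}$ and confirming that it cleanly replaces $R$ by $R^{-1}_{21}$; everything else is a formal consequence of results already established.
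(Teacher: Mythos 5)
Your proposal is correct and follows essentially the same route as the paper: reduce op-full exactness to $A_\Vect^\rev$ preserving projectives via \Cref{prop:op-fully-equiv}, identify $A_\Vect^\rev$ with $\Res_{\phi_{R^{-1}_{21}}}$ under the equivalence $D$ of \eqref{eq:AVect-diag} (the paper states this in one line, while you spell out the computation with $\psi^{-1}$ replacing $\psi$ in \Cref{prop:Fun-H*-mod}), and then conclude perfectness from \Cref{cor:dualizable-fully-exact-criterion}. The extra detail you supply on the ${}^*\Vect$ reformulation and on tracking $R^{-1}_{21}$ through the equivalences is consistent with, and slightly more explicit than, the paper's argument.
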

\begin{proof}
By \Cref{prop:op-fully-equiv}, $\Vect$ is op-fully exact if and only if $A_\Vect^\rev$ preserves projective objects. This functor corresponds, under equivalence $D$ in \eqref{eq:AVect-diag}, to  restriction along the morphism 
$\phi_{R^{-1}_{21}}$ since the reverse braiding on $\cC$ is obtained by acting with the R-matrix $R^{-1}_{21}$ instead of $R$. Thus, the result follows from \Cref{lem:Vect-fully-exact}. Combining both assumptions, $\Vect$ is perfect, see \Cref{def:dualizable-fully-exact} and \Cref{cor:dualizable-fully-exact-criterion}.
\end{proof}

We note that whether $\Vect$ is fully exact depends on the choice of R-matrix for $H$. See the later \Cref{rem:vect-Sweedler} that shows a Hopf algebra $S$ with a class of braidings $R_t$, for $t\in \mC$, where $\Vect$ is fully exact if and only if $t\neq 0$.
We observe the following corollaries.

\begin{corollary}\label{cor:C-semisimple-Vect-fully-exact}
    Assume $H$ is cocommutative and equip $\cC=\lmod{H}$ with the symmetric braiding given by the universal R-matrix $R=1\otimes 1$. If $H^*$ is not semisimple, then $\Vect$ is not fully exact. 
\end{corollary}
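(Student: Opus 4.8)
The plan is to reduce the claim to \Cref{lem:Vect-fully-exact}, for which we need to show that the restriction functor $\Res_{\phi_R}\colon \lmod{H}\to \lmod{H^*}$ fails to preserve projective objects when $R = 1\otimes 1$. First I would observe that with the trivial universal R-matrix, the Hopf algebra map $\phi_R\colon H^*\to H$ from \eqref{eq:phiR} becomes $\phi_R(f) = 1_H\cdot f(1_H) = f(1_H)1_H = \varepsilon_{H^*}(f)1_H$, i.e.\ $\phi_R$ is the composition $H^*\xrightarrow{\varepsilon_{H^*}}\Bbbk\xrightarrow{u_H}H$ of the counit of $H^*$ with the unit of $H$. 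Consequently, the $H$-comodule algebra structure \eqref{eq:delta-star} on $H^*$ becomes the trivial coaction $f\mapsto 1_H\otimes f$, and the induced $\cC$-action \eqref{eq:C-action-H*-mod} on $\lmod{H^*}$ satisfies $W\lact V = F(W)\otimes V$ with $H^*$ acting only on the second tensor factor (through the trivial $H$-comodule structure on $W$). Equivalently, $\Res_{\phi_R}$ factors through the forgetful functor $\cC = \lmod{H}\to\Vect$ followed by the functor $\Vect\to\lmod{H^*}$ equipping a vector space with the trivial $H^*$-action $f\cdot v = \varepsilon_{H^*}(f)v$.

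Next I would identify the image of the regular module $H$ under $\Res_{\phi_R}$: it is $\Bbbk^{\dim H}$ as a $\Bbbk$-vector space, equipped with the trivial $H^*$-action, hence isomorphic to a direct sum of $\dim H$ copies of the trivial (one-dimensional) $H^*$-module $\Bbbk$. Since $H^*$ is not semisimple by hypothesis, and since $\varepsilon_{H^*}\colon H^*\to\Bbbk$ is the counit, the one-dimensional trivial module $\Bbbk$ is \emph{not} projective as an $H^*$-module: indeed, if the trivial module $\Bbbk$ were projective over the finite-dimensional algebra $H^*$, then its projective cover would be $\Bbbk$ itself, forcing the principal block of $H^*$ containing the trivial module to be one-dimensional; combined with the general fact that a finite-dimensional Hopf algebra is semisimple if and only if its trivial module is projective (equivalently, by Maschke's theorem for Hopf algebras, if and only if there is an integral on which the counit is non-zero), this contradicts non-semisimplicity of $H^*$. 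Therefore $\Res_{\phi_R}(H)$, being a non-zero direct sum of copies of the non-projective module $\Bbbk$, is not projective in $\lmod{H^*}$. By condition (iii) of \Cref{lem:Vect-fully-exact} this shows $\Vect$ is not fully exact.

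The only genuinely content-bearing step is the assertion that the trivial module over a non-semisimple finite-dimensional Hopf algebra is not projective; everything else is a routine unwinding of the formulas in \Cref{lem:H-com-alg} and \Cref{prop:Fun-H*-mod} specialized to $R = 1\otimes 1$. I expect that step to be the main (though standard) obstacle, and I would either cite the Hopf-algebraic Maschke theorem (e.g.\ \cite{Rad}) directly, or argue as follows: the projectivity of the trivial $H^*$-module is equivalent to exactness of the functor of $H^*$-invariants $(-)^{H^*} = \Hom_{H^*}(\Bbbk,-)$, which is the defining property of a semisimple — more precisely, cosemisimple-dual — situation; since $(H^*)^*\cong H$ and semisimplicity of $H^*$ is equivalent to cosemisimplicity of $H$, and in any case to projectivity of the trivial $H^*$-module, the hypothesis that $H^*$ is not semisimple is exactly the negation we need. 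A clean alternative, avoiding any appeal to duality, is to note that $H^*$ itself is finite-dimensional and the trivial module is projective over a finite-dimensional algebra $A$ if and only if it is injective if and only if $A$ has $\Bbbk$ as a direct summand as a left $A$-module, which by the counit would make $A$ decompose compatibly with $\varepsilon$; for a Hopf algebra this forces semisimplicity. Either route closes the proof.
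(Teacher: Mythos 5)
Your proposal is correct and follows essentially the same route as the paper: specialize $\phi_R$ to $u_H\circ\varepsilon_{H^*}$, observe that $\Res_{\phi_R}$ lands in trivial $H^*$-modules, note that the trivial module is not projective when $H^*$ is non-semisimple, and conclude via \Cref{lem:Vect-fully-exact}. The only difference is cosmetic: you justify the non-projectivity of the trivial module via the Hopf-algebraic Maschke theorem, whereas the paper simply asserts it (implicitly relying on the fact that projectives form a tensor ideal, so a projective unit forces semisimplicity); both justifications are standard and valid.
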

\begin{proof}
    As $R=1\otimes 1$, we have that $\phi_R= u_H\circ \varepsilon_H$. This implies that $\Res_{\phi_R}=\Res_{\varepsilon_H}\circ \Res_{u_H}$. This functor sends an object $X$ of $\cC$ first to the underlying vector space and then to $\one^{\oplus \dim_\Bbbk X}$. If $\lmod{H^*}$ is not semisimple, then the object $\one^{\oplus \dim_\Bbbk X}$ is not projective for non-zero $X$. By \Cref{lem:Vect-fully-exact}, $\Vect$ is not fully exact. 
\end{proof}

\begin{lemma} \label{lem:Res-equiv}
For a morphism of algebras $\phi\colon A\to B$, the functor $\Res_\phi$ from \eqref{eq:Res-phi} is an equivalence if and only if $\phi$ is an isomorphism of algebras.
\end{lemma}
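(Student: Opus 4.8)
\textbf{Proof plan for \Cref{lem:Res-equiv}.}
The statement is an elementary ring-theoretic fact dressed up in categorical language, so the plan is to prove both implications essentially by hand. The direction ``$\phi$ iso $\Rightarrow$ $\Res_\phi$ equivalence'' is immediate: if $\phi\colon A\to B$ is an algebra isomorphism with inverse $\phi^{-1}$, then $\Res_{\phi^{-1}}$ is a two-sided inverse to $\Res_\phi$ on the nose (on underlying vector spaces these functors are the identity, and $\Res_{\phi^{-1}}\circ\Res_\phi=\Res_{\phi\circ\phi^{-1}}=\Res_{\id_B}=\id$, and similarly on the other side). So the content is in the converse.

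For the converse, assume $\Res_\phi\colon \lmod{B}\to\lmod{A}$ is an equivalence. First I would recover $\phi$ from the functor by looking at what it does to the regular module $B$. We have $\End_B(B)\cong B^{\oop}$ as algebras (via right multiplication), and $\Res_\phi$ induces a ring homomorphism $\End_B(B)\to\End_A(\Res_\phi(B))$. The plan is to show this map is an isomorphism of algebras and to identify its source and target with $B^{\oop}$ and (via the image of $\phi$) with a subalgebra of $A^{\oop}$, thereby forcing $\phi$ to be injective and its image to be all of $A$. Concretely: since $\Res_\phi$ is fully faithful, $\End_A(\Res_\phi(B))\cong\End_B(B)\cong B^{\oop}$; on the other hand $\Res_\phi(B)$ is the vector space $B$ with $A$ acting through $\phi$, so every element of $\End_A(\Res_\phi(B))$ is in particular a $\Bbbk$-linear endomorphism of $B$. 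The right-multiplication operators $\rho_b$ by elements $b\in B$ are all $A$-linear (right multiplication commutes with the left $A$-action $a\cdot x=\phi(a)x$), and this accounts for all of $\End_A(\Res_\phi(B))$ by the dimension/iso count above; hence $\End_A(\Res_\phi(B))=\{\rho_b : b\in B\}$.

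Next I would use essential surjectivity. Since $\Res_\phi$ is essentially surjective and $A$ (the regular $A$-module) lies in its image, there is a $B$-module $M$ with $\Res_\phi(M)\cong A$ as $A$-modules. But then $\dim_\Bbbk M=\dim_\Bbbk A$, and more usefully $\End_A(A)\cong A^{\oop}$ must be isomorphic as a ring to $\End_B(M)$. Rather than chase this, a cleaner route is: the $A$-module $\Res_\phi(B)$ is faithful as an $A$-module if and only if $\phi$ is injective, and $\Res_\phi(B)$ is a cyclic $A$-module (generated by $1_B$, since $A\cdot 1_B=\phi(A)\ni\phi(1_A)=1_B$... wait, one needs $\phi(A)\cdot 1_B=\phi(A)$, so $\Res_\phi(B)$ is cyclic iff $\phi(A)=B$). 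So I want: (a) $\phi$ surjective, and (b) $\phi$ injective. For (a): $B$ is a cyclic $B$-module, a property preserved by the equivalence $\Res_\phi^{-1}$ only in a suitable sense — instead, argue that $\Res_\phi$ being essentially surjective means the cyclic $A$-module $A$ is isomorphic to some $\Res_\phi(M)$; since $\Res_\phi$ preserves submodule lattices (it is the identity on underlying spaces and $A$-submodules of $\Res_\phi(N)$ are exactly $B$-submodules of $N$ because $\phi(A)\subseteq B$ generates... here is the subtlety), one gets that $M$ has the same submodule lattice as $A$ over $A$. A more robust argument: $\Res_\phi$ is exact and faithful always; it is full iff $\phi$ is surjective (standard: fullness of restriction along a ring map forces surjectivity, e.g.\ by testing on $B$ itself — any $A$-linear map $B\to B$ must be $B$-linear, which pins down $\phi(A)=B$ using that $\End_A(\Res_\phi B)\supseteq\End_B(B)$ with equality forcing no extra endomorphisms, and extra endomorphisms exist precisely when $\phi$ is not surjective). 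And $\Res_\phi$ reflects the zero module / is faithful enough that essential surjectivity forces $\ker\phi$ to act as zero on every module hence $\ker\phi=0$ since $\Res_\phi(B)$ would otherwise be a nonfaithful $A/\ker\phi$-module while $A$ itself is faithful over $A$, contradicting $\Res_\phi(M)\cong A$.

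\textbf{Main obstacle.} The genuinely delicate point is making the ``fullness $\Leftrightarrow$ surjectivity'' and ``essential surjectivity $\Rightarrow$ injectivity'' steps airtight without hand-waving, since $A$ and $B$ are only assumed to be finite-dimensional $\Bbbk$-algebras (not Hopf algebras here), and one must be careful that $\Res_\phi$ being an \emph{equivalence} — rather than just full, or just essentially surjective — is what forces $\phi$ bijective. I expect the cleanest writeup invokes: $\Res_\phi$ has a left adjoint $B\otimes_A-$ and a right adjoint $\Hom_A(B,-)$; $\Res_\phi$ is an equivalence iff the unit and counit of (say) the first adjunction are isomorphisms; the unit $A\to \Res_\phi(B\otimes_A A)=B$ evaluated appropriately is exactly $\phi$, so $\phi$ is an isomorphism. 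This adjunction argument is the shortest rigorous path and is the one I would ultimately write down, citing standard facts about restriction-of-scalars adjunctions.
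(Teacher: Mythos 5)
Your final, committed argument --- via the adjunction $(B\otimes_A -)\dashv \Res_\phi$ --- is correct and is a genuinely different route from the paper's. The paper instead invokes the Eilenberg--Watts theorem: an equivalence $\Res_\phi$ corresponds to the $A$-$B$-bimodule ${}_\phi B$ being invertible; writing the inverse bimodule as ${}_\tau A$ for an algebra map $\tau\colon B\to A$, $b\mapsto b\cdot 1_A$, and evaluating the bimodule isomorphism ${}_\phi B\otimes_B{}_\tau A\cong A$ on $1\otimes 1$ yields $\tau\phi=\id_A$ and $\phi\tau=\id_B$, so the paper explicitly exhibits the inverse algebra homomorphism. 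Your route is shorter: if $\Res_\phi$ is an equivalence then its left adjoint $B\otimes_A-$ is a quasi-inverse and the unit is a natural isomorphism; the unit at the regular module $A$, composed with the canonical identification $B\otimes_A A\cong B$, is precisely $\phi\colon A\to{}_\phi B$, so $\phi$ is bijective, and a bijective algebra homomorphism is an isomorphism. This buys economy at the cost of not displaying the inverse $\tau$ explicitly; both are complete proofs. Do write down only the adjunction argument: the exploratory material preceding it should be cut, since it contains claims that are either incomplete or false as stated. In particular, ``fullness of restriction along a ring map forces surjectivity of $\phi$'' is not true in general --- $\Res_\phi$ is fully faithful exactly when $\phi$ is an epimorphism of rings, and there are non-surjective epimorphisms even between finite-dimensional algebras (e.g.\ the inclusion of upper-triangular matrices into $M_2(\Bbbk)$) --- and the endomorphism-ring computation only yields that the centralizer of $\phi(A)$ in $B$ is central, not that $\phi(A)=B$. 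None of this affects the adjunction argument, which stands on its own.
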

\begin{proof} It is clear that if $\phi$ is an isomorphism, $\Res_\phi$ is an equivalence. Conversely, assume that $\Res_\phi$ is an equivalence. By the Eilenberg--Watts theorem, this is equivalent to the $A$-$B$-bimodule $X=\Res_{\phi}(B)$ being invertible. This bimodule $X$ is defined on $B$, with the regular right $B$-action and left $A$-action given by restriction along $\phi$.  We denote $X$ by ${}_\phi B$.

For the inverse $B$-$A$-bimodule $Y$, there exists an isomorphism of $A$-bimodules ${}_\phi B \otimes_B Y\cong A$. Using the canonical isomorphism $B\otimes_B Y\cong Y$ of right $A$-modules, we have that $Y\cong A$ as a right $A$-module. For simplicity, we identify $Y=A$ as right $A$-modules. Thus, the left $B$-module structure on $Y$ is given by the algebra homomorphism $\tau\colon B\to A$ defined by $\tau(b)=b\cdot 1_A$. Hence, we write $Y={}_\tau A$. 
With this identification, the isomorphism 
$\alpha \colon {}_\phi B\otimes_B {}_\tau A \isomorph A$
of  $A$-bimodules is given by the left $B$-action on $A$. That is  $\alpha(b\otimes a)=b\cdot a=\tau(b)a.$ But $\alpha$ is an isomorphism of left $A$-modules. Hence, for any $a\in A$,
$$a=a\cdot \alpha(1\otimes 1)=\alpha(a\cdot 1\otimes 1)=\alpha(\phi(a)\otimes 1)=\tau\phi(a).$$
This implies that $\tau\phi=\id_A.$ Similarly, $\phi\tau=\id_B$ and $\phi$ is an isomorphism. 
\end{proof}

\begin{corollary}\label{cor:Vect-inv}
 The  left  $\cC$-module category $\Vect$ is invertible  if and only if $\phi_R$ is injective (or surjective). In this case, $\cC^*_\Vect\simeq \cC$ as tensor categories and as left $\cC$-module categories. 
    In particular, by \Cref{cor:inv-fully-exact}, $\Vect$ is fully exact and op-fully exact  (see \Cref{def:op-fex}) in this case.
\end{corollary}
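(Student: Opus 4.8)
The plan is to reduce invertibility of $\Vect$ to an algebraic statement about $\phi_R$ using machinery already assembled in the excerpt. First I would invoke \Cref{prop:Minv-AM-equiv}: the left $\cC$-module category $\Vect$ is invertible if and only if the \action functor $A_\Vect\colon \cC\to \cC^*_\Vect$ is an equivalence. Next I would feed in the commutative triangle \eqref{eq:AVect-diag}, where $D\colon \cC^*_\Vect\isomorph\lmod{H^*}$ is the equivalence of \Cref{prop:Fun-H*-mod} and $D\circ A_\Vect=\Res_{\phi_R}$; since $D$ is an equivalence, $A_\Vect$ is an equivalence precisely when $\Res_{\phi_R}$ is. Then \Cref{lem:Res-equiv} translates this into: $\phi_R\colon H^*\to H$ is an isomorphism of algebras. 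Note that $\phi_R$ is at least a homomorphism of algebras by \Cref{lem:H-com-alg} (it is in fact a Hopf algebra map), so \Cref{lem:Res-equiv} applies.

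To finish the first claim I would add the routine dimension count. Since $\cC\simeq\lmod{H}$ admits the fiber functor $F$ from \eqref{eq:F-fiber}, the Hopf algebra $H$ is finite-dimensional, so $\dim_\Bbbk H^*=\dim_\Bbbk H$; hence a homomorphism of algebras $\phi_R\colon H^*\to H$ is an isomorphism if and only if it is injective, if and only if it is surjective (a bijective algebra map has an inverse that is again an algebra map). Chaining the equivalences gives exactly: $\Vect$ is invertible $\iff$ $\phi_R$ is injective $\iff$ $\phi_R$ is surjective. For the ``in this case'' assertion, assuming $\phi_R$ is an isomorphism, $A_\Vect$ is an equivalence; by \Cref{lem:AC-braided} the functor $A_\Vect$ is monoidal and it induces the left $\cC$-module structure on $\cC^*_\Vect$, so $A_\Vect$ is simultaneously an equivalence of tensor categories $\cC\simeq\cC^*_\Vect$ and an equivalence of left $\cC$-module categories (with $\cC$ in its regular module structure). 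The last sentence is then immediate: \Cref{cor:inv-fully-exact} gives that invertible module categories are perfect, hence fully exact and op-fully exact.

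I do not expect a genuine obstacle here, since every building block is already established; the only mild care needed is to confirm that finite-dimensionality of $H$ (which powers the injective $\iff$ surjective step) is genuinely supplied by the presence of the fiber functor, and that the weakening from ``isomorphism of Hopf algebras'' to ``isomorphism of algebras'' is all that \Cref{lem:Res-equiv} requires. If one wanted a fully self-contained argument one could instead observe directly that $A_\Vect$ is exact and faithful (\Cref{lem:AM-exact-faithful}) and trace through why an exact faithful functor landing in an equivalent copy of $\cC$ via the diagram must be an equivalence precisely under the stated condition, but invoking \Cref{prop:Minv-AM-equiv} and \Cref{lem:Res-equiv} is the cleanest route.
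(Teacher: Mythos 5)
Your argument is correct and follows exactly the paper's route: \Cref{prop:Minv-AM-equiv} reduces invertibility to $A_\Vect$ being an equivalence, the diagram \eqref{eq:AVect-diag} transfers this to $\Res_{\phi_R}$, \Cref{lem:Res-equiv} converts that to $\phi_R$ being an isomorphism, and finite-dimensionality gives the injective/surjective equivalence. Your extra remarks on the ``in this case'' clause (via \Cref{lem:AC-braided}) and on why \Cref{lem:Res-equiv} applies are consistent with the paper, which leaves these points implicit.
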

\begin{proof}
    By \Cref{prop:Minv-AM-equiv}, $\Vect$ is invertible if and only if $A_\Vect$ is an equivalence. By the commutative diagram  \eqref{eq:AVect-diag}, this is equivalent to $\Res_{\phi_R}$ being an equivalence. By \Cref{lem:Res-equiv}, this is equivalent to $\phi_R$ being an isomorphism. 
    Since the Hopf algebras $H$ and $H^*$ are finite-dimensional of the same dimension, $\phi_R$ is an isomorphism if and only if it is injective (or surjective).
\end{proof}

 An equivalent characterization of invertibility of $\phi_R$ is that the R-matrix defines a non-degenerate copairing when viewed as a map $\Bbbk \to H\otimes H, 1\mapsto R$. 
The next example in~\Cref{sec:uqsl2} 
shows that $\Vect$ might not be fully exact even when the braiding is non-degenerate, i.e.\ when the Hopf algebra $H$ is factorizable  in the sense that $RR_{21}$ defines a non-degenerate copairing, see e.g. \cite{Rad}*{Def.\,12.4.1}.

\subsection{Cocyle twists of \texorpdfstring{$\Vect$}{Vect}}\label{sec:vectJ}
 We first recall that a Hopf $2$-cocycle is an invertible element $J\in H\otimes H$ 
satisfying
\begin{equation}\label{eq:Hopf-cocycle}
    (\id\otimes \Delta)J\cdot J_{23}=(\Delta\otimes \id)J\cdot J_{12} \ ,
\end{equation}
that is normalized, i.e.\,$(\varepsilon\otimes \id)J=1=(\id\otimes\varepsilon)J$.
Then we can twist the monoidal structure of the fiber functor $F\colon \cC \to \Vect$ by acting with $J$ to a monoidal functor $F^J\colon \cC\to \Vect$, see e.g. \cite{EGNO}*{Sec.\,5.14}. The resulting $\cC$-module category structure on $\Vect$ is denoted by $\Vect^J$. Its module associator is defined by the action of $J$, i.e.
\begin{equation}\label{eq:J-ass}
    l^J_{V,U,W}\colon V\lact (U\lact W)\isomorph V\otimes U \lact W, \quad v\otimes (u \otimes w) \mapsto J\cdot (v\otimes u)\otimes w,
\end{equation}
for objects $V,U$ in $\cC$, $W\in \Vect^J$, and $v\in V,u\in U,w\in W$.

In \Cref{prop:Fun-H*-mod-twisted}, we will generalize \Cref{prop:Fun-H*-mod} to compute the categories of $\cC$-linear functors between such  $\cC$-module categories.

 \begin{definition}\label{def:H-twisted-coalg}
     The coalgebra $\Btwist{L}{H}{J}$ is defined on the vector space $H$ with the counit $\varepsilon_H$ and the twisted coproduct
    \begin{equation}\label{eq:Delta-twist}
    ({}^{L}\Delta_H^J)(h)=L^{-1}\cdot \Delta_H (h)\cdot J,\qquad \text{for}\quad h\in H\ ,
    \end{equation}
    which is coassociative due to the $2$-cocycle condition on $J$ and $L$.
 \end{definition}

We define a left $\cC$-action on $\lcomod{\Btwist{L}{H}{J}}$ as follows. For a left $H$-module $(W,a_W)$ and a left $\Btwist{L}{H}{J}$-comodule $(V,\delta_V)$, $W\lact V$ is the $\Btwist{L}{H}{J}$-comodule $(W\otimes V, \delta_{W\lact V})$, where 
\begin{equation}\label{eq:twisted-coaction}
\delta_{W\lact V}(w\otimes v)=(R^L)^{(2)}v^{(-1)} \otimes ((R^L)^{(1)}\cdot w)\otimes v^{(0)},
\end{equation}
for $w\in W$, $v\in V$, where we denote $\delta_V(v)=v^{(-1)}\otimes v^{(0)}$, and  
\begin{equation}\label{eq:R-twisted}
R^L=(R^L)^{(1)}\otimes (R^L)^{(2)}=L^{-1}_{21}\cdot R\cdot L.
\end{equation}
The module associator for this $\cC$-action is inherited from $\Vect^L$ and thus given by the action of $L$.

\smallskip 

For the next definition, recall our convention of \eqref{eq:dual-product} on the dual algebra $H^*$.

\begin{definition}\label{def:H-twisted-alg}
Given two $2$-cocycles $J$ and $L$, dualizing the coproduct $\Btwist{L}{H}{J}$ induces a product on $H^*$:
\begin{equation}\label{eq:m-twisted}
    \Dtwist{L}{m}{J}{}(f\otimes g)(h):=g\bigl(L^{(-1)}h_{(1)}J^{(1)}\bigr)f\bigl(L^{(-2)}h_{(2)}J^{(2)}\bigr),
\end{equation}
for $f,g\in H^*$ and $h\in H$, where $J=J^{(1)}\otimes J^{(2)}$ and $L^{-1}=L^{(-1)}\otimes L^{(-2)}$. We denote the resulting algebra structure on the vector space $H^*$, with product $\Dtwist{L}{m}{J}{}$ and the same unit as $H^*$, by $\Dtwist{L}{H}{J}{*}$.
\end{definition}

We obtain the following generalization of the equivalence $E$ from \Cref{lem:FunC-H*-mod}.
\begin{lemma}\label{lem:equiv-FunVect-twisted}
    There is an equivalence of categories
\begin{equation}\label{eq:equiv-Fun-twisted}
\Btwist{L}{E}{J}\colon \Fun_\cC(\Vect^J,\Vect^L)\isomorph \lcomod{\Btwist{L}{H}{J}},\quad (G,s^G)\mapsto (G(\Bbbk),\delta^G),
\end{equation}
for the coalgebra $\Btwist{L}{H}{J}$ from \Cref{def:H-twisted-coalg} and with $\delta^G$ defined as in \eqref{eq:delta-G}. 
\end{lemma}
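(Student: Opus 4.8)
\textbf{Proof proposal for \Cref{lem:equiv-FunVect-twisted}.}
The plan is to imitate the proof of \Cref{lem:FunC-H*-mod}, which treats the untwisted case $J=L=\one$, adapting it to keep track of the two Hopf $2$-cocycles. Since $\cC=\lmod{H}$ and $\Vect^J$ is the $\cC$-module category obtained from the twisted fiber functor $F^J$, an object $V$ of $\Fun_\cC(\Vect^J,\Vect^L)$ is a right exact $\Bbbk$-linear functor $G$ together with a $\cC$-linear structure $s^G$ whose coherence \eqref{eq:C-linear-coherence} now involves the twisted module associators $l^J$ on the source and $l^L$ on the target, given by \eqref{eq:J-ass}. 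As in the untwisted case, any such $G$ is determined up to isomorphism by the finite-dimensional vector space $C:=G(\Bbbk)$, because $\Vect^J\simeq \Vect$ as an abelian category and $G$ is right exact and $\Bbbk$-linear, so $G\simeq (-)\otimes C$ as a plain functor. First I would define the candidate coaction $\delta^G$ by the same formula \eqref{eq:delta-G}, namely $\delta^G = s^G_{H,\Bbbk}\circ G(u_H)$, identifying the target of $G(u_H)$ with $G(F^J(H))=G(H\lact\Bbbk)$.

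The key step is to verify that $(C,\delta^G)$ is a left $\Btwist{L}{H}{J}$-comodule, i.e.\ that $\delta^G$ is coassociative with respect to the \emph{twisted} coproduct ${}^{L}\Delta_H^J$ of \eqref{eq:Delta-twist}, and counital with respect to $\varepsilon_H$. For this I would specialize the $\cC$-linear coherence diagram \eqref{eq:C-linear-coherence} for $s^G$ to the objects $X=Y=H$ (the regular $H$-module) and $M=\Bbbk$, evaluate it on the unit element, and use the module associators $l^J$ on the source (which contributes a factor of $J$ via \eqref{eq:J-ass}) and $l^L$ on the target (which contributes $L^{-1}$). The appearance of $L^{-1}\cdot(-)\cdot J$ sandwiching $\Delta_H$ is exactly the twisted coproduct of \Cref{def:H-twisted-coalg}; the $2$-cocycle conditions \eqref{eq:Hopf-cocycle} on $J$ and on $L$ are precisely what guarantees coassociativity, as already noted in \Cref{def:H-twisted-coalg}. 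Counitality follows from the unit axiom \eqref{eq:mod-fun-unit} for the module functor $G$ together with normalization of $J$ and $L$. Conversely, given a left $\Btwist{L}{H}{J}$-comodule $(C,\delta)$ I would define $T(C,\delta)=\big((-)\otimes C, s^\delta\big)$ with $s^\delta$ given, as in \Cref{lem:FunC-H*-mod}, by $w\otimes c\mapsto c^{(-1)}w\otimes c^{(0)}$ on $F^J(W)\otimes C$; checking that this $s^\delta$ satisfies the twisted coherence \eqref{eq:C-linear-coherence} reduces again to the coassociativity of $\delta$ relative to ${}^{L}\Delta_H^J$, read in the reverse direction, and to the compatibility of $F^J$'s monoidal structure (acting by $J$) with $F^L$'s (acting by $L$). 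The functors $G\mapsto(G(\Bbbk),\delta^G)$ and $(C,\delta)\mapsto((-)\otimes C,s^\delta)$ are then mutually quasi-inverse by the same argument as in the untwisted case: on objects one has $T\Btwist{L}{E}{J}(G)\cong G$ via the isomorphism $G\cong(-)\otimes G(\Bbbk)$ coming from right exactness and $\Bbbk$-linearity, compatibly with the $\cC$-linear structures, and $\Btwist{L}{E}{J}T=\id$ on the nose.

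The main obstacle I anticipate is purely bookkeeping: correctly threading the two cocycles $J$ and $L$ through the coherence diagram \eqref{eq:C-linear-coherence} so that the sandwiched product $L^{-1}\cdot\Delta_H(h)\cdot J$ emerges with the cocycles on the correct sides, and confirming that the two instances of the $2$-cocycle identity \eqref{eq:Hopf-cocycle} land on the nose where coassociativity of ${}^{L}\Delta_H^J$ needs them. Once that single diagram chase is done carefully, everything else is a direct transcription of the proof of \Cref{lem:FunC-H*-mod}. I would therefore present the verification of the twisted coassociativity identity in detail and leave the remaining mutual-inverse checks to the reader, noting they are identical to the untwisted argument. (One may also observe that \eqref{eq:equiv-Fun-twisted} recovers \Cref{lem:FunC-H*-mod} upon setting $J=L=\one$, and that dualizing $\Btwist{L}{E}{J}$ along the equivalence $\lcomod{\Btwist{L}{H}{J}}\simeq \lmod{\Dtwist{L}{H}{J}{*}}$ of \Cref{def:H-twisted-alg} will, in the subsequent \Cref{prop:Fun-H*-mod-twisted}, upgrade this to an equivalence of $\cC$-module categories using the $\cC$-action \eqref{eq:twisted-coaction}.)
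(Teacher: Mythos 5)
Your proposal follows essentially the same route as the paper's proof: define $\delta^G$ by the formula \eqref{eq:delta-G}, evaluate the twisted coherence \eqref{eq:C-linear-coherence} on $X=Y=H$, $M=\Bbbk$ to identify comodule coassociativity for ${}^{L}\Delta_H^J$, obtain counitality from $s^G_{\one,\Bbbk}=\id$, and construct the quasi-inverse $(C,\delta)\mapsto((-)\otimes C,s^\delta)$ exactly as in \Cref{lem:FunC-H*-mod}. This is correct and matches the paper's argument.
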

\begin{proof}
Write $\delta^G(v)=v^{(-1)}\otimes v^{(0)}\in F(H)\otimes G(\Bbbk)$. 
Identifying $G(V) := V \otimes G(\Bbbk)$ as vector spaces, we use naturality
of the $\cC$-linear structure $s^G_{X,\Bbbk}$, to show that $s^G_{X,\Bbbk}(x\otimes v)=v^{(-1)}\cdot x\otimes v^{(0)}$,
for any $X\in\cC$, $x\in X$ and $v\in G(\Bbbk)$.
Then we evaluate the coherence condition \eqref{eq:C-linear-coherence} on projective generators, setting $X=Y=H$ and $M=\Bbbk$. 
First, the term $s^G_{H\otimes H,\Bbbk}$ of \eqref{eq:C-linear-coherence}, which is an automorphism of $F(H)\otimes F(H)\otimes G(\Bbbk)$, is the following linear map: $g\otimes h \otimes v \mapsto \Big(\Delta_H(v^{(-1)})\cdot(g\otimes h)\Big)\otimes v^{(0)}$.
Then, the equation \eqref{eq:C-linear-coherence} evaluated on the element $1_H\otimes 1_H\otimes v$  gives the equation
$$ \Delta_H(v^{(-1)})\otimes v^{(0)} = 
L^{(1)}v^{(-1)}J^{-(1)}\otimes L^{(2)}v^{(0)(-1)} J^{-(2)}\otimes v^{(0)(0)}\ ,$$
for any $v\in G(\Bbbk)$, and we used the non-trivial module associators of $\Vect^J$ and $\Vect^L$ given by the action of $J$ and $L$, recall~\eqref{eq:J-ass}.
This is equivalent to the comodule coassociativity condition for the map $\delta^G$,
recall the coproduct~\eqref{eq:Delta-twist}.
Moreover,  $s^G_{\one,\Bbbk}=\id_{G(\Bbbk)}$ is equivalent 
to the counit axiom for~$\delta^G$. Hence 
$(G(\Bbbk),\delta^G)$
is a  $\Btwist{L}{H}{J}$-comodule and thus $\Btwist{L}{E}{J}$ is a well-defined functor. 
A quasi-inverse for $\Btwist{L}{E}{J}$ is obtained by mapping a $\Btwist{L}{H}{J}$-comodule $(C,\delta)$ to the pair $((-)\otimes C,s^\delta)$, with $s^\delta$ defined by the same formula \eqref{eq:F-fiber}. Checking the coherence condition \eqref{eq:C-linear-coherence} for $s^{\delta}$ is straightforward.
\end{proof}

\begin{proposition}\label{prop:Fun-H*-mod-twisted}
For two $2$-cocycles $J$ and $L$, we have an equivalence of left $\cC$-module categories 
\begin{equation}\label{eq:twisted-dualizing-equiv}
    \Fun_\cC(\Vect^J,\Vect^L)\isomorph \lmod{\Dtwist{L}{H}{J}{*}\,}.
\end{equation}
Here, we equip $\lmod{\Dtwist{L}{H}{J}{*}\,}$ with the following left $\cC$-module  structure:
For $V\in\cC$ and $W\in \lmod{\Dtwist{L}{H}{J}{*}\,}$, we define $V\lact W$ to be $V\otimes W$, as a vector space, with left $\Dtwist{L}{H}{J}{*}$-action defined by 
\begin{equation}\label{eq:H*-action-VW}
    f\cdot (v\otimes w) = \phi_{R^L}(f_{(1)})\cdot v \otimes f_{(2)}\cdot w,
\end{equation}
for $v\in V, w\in W$. The module associator $l^L$ is defined by the action of $L$, cf.\,\eqref{eq:J-ass}.
\end{proposition}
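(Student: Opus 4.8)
The plan is to reduce Proposition~\ref{prop:Fun-H*-mod-twisted} to Lemma~\ref{lem:equiv-FunVect-twisted} by the same two-step strategy used to prove Proposition~\ref{prop:Fun-H*-mod} (the untwisted case): first dualize the comodule description of $\Fun_\cC(\Vect^J,\Vect^L)$ to a module description, and then check that the equivalence so obtained intertwines the two $\cC$-module structures with trivial $\cC$-linear coherence. Concretely, I would proceed as follows. First, note that $\Dtwist{L}{H}{J}{*}$ is by Definition~\ref{def:H-twisted-alg} precisely the linear dual coalgebra $(\Btwist{L}{H}{J})^*$ with its convolution product; hence, exactly as in Lemma~\ref{lem:dualize}, there is an equivalence of categories
$$
\lcomod{\Btwist{L}{H}{J}}\isomorph \lmod{\Dtwist{L}{H}{J}{*}\,},\qquad (C,\delta)\mapsto (C,a^l_C),\quad a^l_C(f\otimes c)=f(c^{(-1)})c^{(0)}.
$$
Composing this with the equivalence $\Btwist{L}{E}{J}$ of Lemma~\ref{lem:equiv-FunVect-twisted} already gives the underlying equivalence of categories \eqref{eq:twisted-dualizing-equiv}, so the content of the proposition is entirely about the $\cC$-module structure.

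Second, I would verify that the left $\cC$-action \eqref{eq:H*-action-VW} on $\lmod{\Dtwist{L}{H}{J}{*}\,}$ is the one transported across this equivalence from the action of Definition~\ref{def:Fun-action} on $\Fun_\cC(\Vect^J,\Vect^L)$. This is the computational heart and mirrors the calculation in the proof of Proposition~\ref{prop:Fun-H*-mod}: starting from a $\cC$-linear functor $(G,s^G)$ and $X\in\cC$, one computes $\delta^{X\lact G}$ using the formula for $s^{X\lact G}$ in Definition~\ref{def:Fun-action} (now with the nontrivial module associators of $\Vect^J$ and $\Vect^L$, i.e.\ the actions of $J$ and $L$, inserted as in \eqref{eq:J-ass}). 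The braiding contribution $\psi_{X,H}$ in \eqref{eq:FunC-coherence}, conjugated by the associators, produces exactly the twisted R-matrix $R^L=L^{-1}_{21}RL$ of \eqref{eq:R-twisted}; this is the same mechanism by which the coaction \eqref{eq:twisted-coaction} on $\lcomod{\Btwist{L}{H}{J}}$ was defined. Dualizing $\delta^{X\lact G}$ via the convolution product and applying the dual coproduct of $\Dtwist{L}{H}{J}{*}$ then yields
$$
f\cdot(x\otimes v)=f_{(1)}\bigl((R^L)^{(2)}v^{(-1)}\bigr)(R^L)^{(1)}x\otimes f_{(2)}\cdot v=\phi_{R^L}(f_{(1)})\cdot x\otimes f_{(2)}\cdot v,
$$
which is \eqref{eq:H*-action-VW}. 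Since the identity on $G(\Bbbk)$ also carries the module associator $l^L$ across (both source and target associators being given by the action of $L$), the transported $\cC$-linear structure of the equivalence is trivial, exactly as in Remark~\ref{rem:convention2} and the untwisted case.

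The one genuine subtlety I would flag as the main obstacle is bookkeeping of the two distinct $2$-cocycles $J$ and $L$ and the non-strictness of the module categories $\Vect^J$, $\Vect^L$: the coherence diagram \eqref{eq:FunC-coherence} defining $s^{X\lact G}$ involves both module associators $l^\cM=l^J$ and $l^\cN=l^L$, and one must be careful that the $J$-dependence cancels out of the final action formula (it should: $J$ twists the \emph{source} $\Vect^J$ and so affects the coalgebra $\Btwist{L}{H}{J}$ and hence the \emph{algebra structure} $\Dtwist{L}{H}{J}{*}$ via \eqref{eq:m-twisted}, but not the $\cC$-action \eqref{eq:H*-action-VW}, which only sees the target-side cocycle $L$ through $R^L$ and $\phi_{R^L}$). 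I would handle this by reducing, as in the proof of Lemma~\ref{lem:Hom-C-mod}, to computing on projective generators $X=Y=H$, $M=\Bbbk$, and tracking where each occurrence of $J$, $L$, $R$ enters; the hexagon axioms for the braiding of $\cC$ and the $2$-cocycle conditions \eqref{eq:Hopf-cocycle} on $J$ and $L$ are exactly what is needed to collapse the resulting expression. Once this is done the verification of $\cC$-linearity of the composite functor is routine, since its coherence cells are all identities.
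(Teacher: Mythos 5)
Your proposal is correct and follows essentially the same route as the paper: dualize the comodule description of Lemma~\ref{lem:equiv-FunVect-twisted} via the twisted analogue of Lemma~\ref{lem:dualize}, then verify $\cC$-linearity by computing $\delta^{X\lact G}$ on the generator $H$ with the associators $l^L$ inserted, which conjugates the braiding into $R^L=L_{21}^{-1}RL$ and dualizes to the action \eqref{eq:H*-action-VW}. The subtlety you flag about $J$ is handled in the paper simply by traversing the lower path of the coherence diagram \eqref{eq:FunC-coherence}, in which only the target associator $l^L$ appears, so $J$ never enters the action formula (it is absorbed entirely into the algebra structure $\Dtwist{L}{H}{J}{*}$); also note your intermediate expression should read $f\bigl((R^L)^{(2)}v^{(-1)}\bigr)(R^L)^{(1)}x\otimes v^{(0)}$ before splitting $f$ via the dual coproduct, though your final formula is the correct one.
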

\begin{proof}
The proof follows the same strategy as that of \Cref{prop:Fun-H*-mod}. 
Recall that the twisted algebra $\Dtwist{L}{H}{J}{*}$ from \Cref{def:H-twisted-alg} is obtained by dualizing the coalgebra $\Btwist{L}{H}{J}$. Hence,  \Cref{lem:dualize} generalizes to an equivalence of categories 
\begin{equation}\label{eq:twisted-dualizing-equiv-pre}
    \lcomod{\Btwist{L}{H}{J}}\isomorph \lmod{\Dtwist{L}{H}{J}{*}\,},
\end{equation}
by dualizing a coaction to an action of the dual by the same formula as in \eqref{eq:H-dual-action}. Composing with the equivalence of \Cref{lem:equiv-FunVect-twisted}, we obtain an equivalence
$$
D\colon \Fun_\cC(\Vect^J,\Vect^L)\isomorph \lmod{\Dtwist{L}{H}{J}{*}\,}.
$$
We have to show that the equivalence $D$ is $\cC$-linear. As in the proof of \Cref{prop:Fun-H*-mod}, but with the non-trivial module associator $l^L$, we compute
 \begin{align*}
    \delta^{X\lact G}(x\otimes v)
    &=s^{X\lact G}_{H,\Bbbk}(x\otimes 1_H\otimes  v)\\
&=(l^L_{H,X,G(\Bbbk)})^{-1}\circ (\psi_{X,H}\otimes \id_{G(\Bbbk)})\circ l^L_{X,H,G(\Bbbk)}\circ (\id_X\otimes s^G_{H,\Bbbk})(x\otimes 1_H\otimes v)\\
    &=(l^L_{H,X,G(\Bbbk)})^{-1}\circ (\psi_{X,H}\otimes \id_{G(\Bbbk)})(L^{(1)}x\otimes L^{(2)}v^{(-1)}\otimes v^{(0)})\\
    &=L^{-(1)}R^{(2)} L^{(2)}v^{(-1)}\otimes L^{-(2)}R^{(1)}L^{(1)} x\otimes v^{(0)}
    =(R^L)^{(2)} v^{(-1)}\otimes (R^L)^{(1)}  x\otimes v^{(0)},
\end{align*}
for $v\in G(\Bbbk), x\in X$,
where we use \eqref{eq:FunC-coherence} in the second equality and the definition of $R^L$ from \eqref{eq:R-twisted} in the final equality. Next, dualizing the $\Btwist{L}{H}{J}$-coaction $\delta^{X\lact G}$ via the equivalence \eqref{eq:twisted-dualizing-equiv-pre} recovers the $\Dtwist{L}{H}{J}{*}\,$-action of \eqref{eq:H*-action-VW}. This shows that the equivalence $D$
maps $X\lact (G,s^G)$ to $X\lact D(G,s^G)$. Thus, $D$ is an equivalence of $\cC$-module categories with trivial $\cC$-linear structure. The associator of $\lmod{\Dtwist{L}{H}{J}{*}\,}$ is inherited from the associator of $\Vect^L$ in \eqref{lem:Hom-C-mod} as it is the image of the associator of $\Fun_\cC(\Vect^J,\Vect^L)$, defined in \Cref{lem:Hom-C-mod}, under the equivalence $D$.
\end{proof}

The module associator $l^L$ 
from \Cref{prop:Fun-H*-mod-twisted} being a morphism of $\Dtwist{L}{H}{J}{*}$-modules is equivalent to
\begin{equation}\label{eq:twisted-comod-algebra}
    (L^{-1}\otimes 1)\cdot ((\Delta\otimes \id)\delta(f))\cdot (L\otimes 1)=(\id\otimes \delta)\delta(f)\ ,
\end{equation}
where, analogue to \eqref{eq:delta-star}, we define a homomorphism of algebras 
\begin{equation}\label{eq:delta-twisted}
\delta \colon \Dtwist{L}{H}{J}{*}\to H\otimes \Dtwist{L}{H}{J}{*}, \quad f\mapsto \phi_{R^L}(f_{(1)})\otimes f_{(2)}.
\end{equation}
We observe that \eqref{eq:twisted-comod-algebra} is equivalent to  $\Dtwist{L}{H}{J}{*}$ being a $\Btwist{L}{H}{L}$-comodule algebra with respect to $\delta$ in~\eqref{eq:delta-twisted}. Here, the Hopf algebra $\Btwist{L}{H}{L}$ is $H$ with coproduct twisted by $L$ as in \Cref{def:H-twisted-coalg}.

\begin{remark}\label{rem:twisted-fully-exact}
    It follows from \Cref{prop:Fun-H*-mod-twisted} that the equivalence of (i) and (ii) in \Cref{lem:Vect-fully-exact} has the following 
    generalization:
    $\Vect^J$ is fully exact if and only if $\Res_{\phi_{R^J}}$ preserves projective objects,
    with $R^J$ defined in~\eqref{eq:R-twisted} and recall~\eqref{eq:phiR}. We omit the proof because it essentially repeats the one of \Cref{lem:Vect-fully-exact}.
    Furthermore, using \eqref{eq:R-twisted}, we have  $(R^J)_{21}^{-1}=(R_{21}^{-1})^J$. Hence, ${}^*\Vect^J$ is fully exact if and only if  $\Res_{\phi_{(R_{21}^{-1})^J}}$ preserves projective objects, 
   a direct generalization of 
   \Cref{cor:Vect-op-fex}. Moreover, $\Vect^J$ is invertible if and only if 
 $\phi_{R^J}$ is invertible, 
 generalizing \Cref{cor:Vect-inv}. 
\end{remark}

\subsection{Small quantum \texorpdfstring{$\mathfrak{sl}_2$}{sl2}}
 \label{sec:uqsl2}
Here, we consider the case of a non-semisimple factorizable Hopf algebra.
Let $q$ be a primitive root of unity of odd order $p \geq 3$. The small quantum group $u_q = u_q(\mathfrak{sl}_2)$ is the $\mathbb{C}$-algebra generated by $E,F,K$ with the defining relations
\[ KE = q^2EK, \quad KF = q^{-2}FK, \quad EF - FE = \frac{K - K^{-1}}{q - q^{-1}}, \quad E^p = F ^p = 0, \quad K^p = 1. \]
Its coproduct counit are given by\footnote{We do not give the antipode because it is not used in this section.}
\begin{equation}\label{HopfStructureUqsl2}
\begin{split}
\Delta(E) &= 1 \otimes E + E \otimes K, \qquad  \Delta(F) = F \otimes 1 + K^{-1} \otimes F, \qquad \Delta(K) = K \otimes K\ ,\\
\varepsilon(E)&=0,\qquad \varepsilon(F)=0, \qquad \varepsilon(K)=1.
\end{split}
\end{equation}
It is well-known~\cite{CPr} or \cite{KS}*{Sec.\,3.3.3} that this algebra admits $p$ isomorphism classes of simple modules, each of dimension $1\leq s \leq p$. The one simple module of dimension $p$ is projective. All the other $p-1$ simple modules have projective covers of dimension $2p$ each \cite{CPr}*{Sec.\,3.8}.

We now consider the dual Hopf algebra $u_q^*$.
 Let $\rho : u_q(\mathfrak{sl}_2) \to \mathrm{End}(\mathbb{C}^2)$ be the fundamental representation defined by
$$\rho(E) = 
\begin{pmatrix}
0 & 1\\
0 & 0
\end{pmatrix},\qquad  
\rho(F) = 
\begin{pmatrix}
0 & 0\\
1 & 0
\end{pmatrix}, 
\qquad \rho(K) = 
\begin{pmatrix}
q & 0\\
0 & q^{-1}
\end{pmatrix}.
$$
Write  $\rho = 
\left(\begin{smallmatrix}
a & b\\
c & d
\end{smallmatrix}\right),$
so that $a,b,c,d \in u_q^*$.  We first notice that these functionals on the PBW basis are
\begin{equation}
    \label{eq:abcd-PBW}
    \begin{split}
a(F^mE^nK^j) &= q^j\delta_{m,0}\delta_{n,0} \ ,\qquad
b(F^mE^nK^j) = q^{-j}\delta_{m,0}\delta_{n,1} \ ,\\
c(F^mE^nK^j) &= q^j\delta_{m,1}\delta_{n,0} \ ,\qquad
d(F^mE^nK^j) = q^{-j}(\delta_{m,0}\delta_{n,0} + \delta_{m,1}\delta_{n,1}) \ ,
\end{split}
\end{equation}
for all $0\leq m,n,j \leq p-1$. By induction, we prove the following formula for the coproduct on the PBW basis:
\begin{multline*}
  \Delta(F^m E^n K^j)
  =\sum_{r=0}^m\sum_{s=0}^n\,q^{2(n-s)(r-m)+r(m - r) + s(n - s)}
  \qbin{m}{r}\qbin{n}{s}
  F^r E^{n-s} K^{r-m+j}\otimes F^{m-r} E^s K^{n-s+j}.
\end{multline*}
Here, we use $q$-binomials
$$\qbin{m}{n}=\frac{[m]!}{[n]![m-n]!}, \qquad \text{where} \qquad [n]!=\prod_{k=1}^n[k], \qquad \text{and} \qquad [k]=\frac{q^{k}-q^{-k}}{q-q^{-1}}.$$
With this coproduct formula and with the convention~\eqref{eq:dual-product} on the product in $H^*$, we calculate that 
\begin{align*}
bc(F^m E^n K^j) &= \delta_{m,1} \delta_{n,1} = cb(F^m E^n K^j)\ ,\\
db(F^m E^n K^j) &= q^{-2j-1} ( \delta_{m,0} \delta_{n,1} + \delta_{m,1} \delta_{n,2}) \ ,\\
da(F^m E^n K^j) &= \delta_{m,0} \delta_{n,0} + q^{-1}\delta_{m,1} \delta_{n,1}\ , 
\end{align*}
and similar for the other products. With this result, we see that $u_q^*$ is generated by $b,c,d$ with the defining relations
\begin{equation}\label{relationsBarUiDual}
bc = cb, \:\:\:\: db = q^{-1} bd, \:\:\:\: dc = q^{-1} cd, \:\:\:\: b^p=c^p=0, \:\:\:\: d^p=1,
\end{equation}
together with $da -ad = (q^{-1} - q)cb$, and where $1$ in the last relation is the counit of $u_q$.
We thus see that the element $a$ is not independent due to invertibility of $d$: $a=d^{-1} + q^{-1}d^{-1}bc$.

Now we can describe projective covers over $u_q^*$.
From the relations~\eqref{relationsBarUiDual} we see that  $u_q^*$ admits precisely $p$ isomorphism classes of simple modules that are one-dimensional $\mathbb{C}_\xi$ indexed by $p$th roots $q^\xi$ of $1$ which are eigenvalues of $d$, for $\xi=0,\ldots,p-1$, and with trivial action of $b$ and $c$.
The corresponding primitive idempotents are $e_\xi = \frac{1}{p} \sum_{n=1}^p q^{-\xi n} d^n$, i.e.\ $d e_\xi = q^\xi e_\xi$, and the  projective modules $u_q^* e_\xi$ cover $\mathbb{C}_\xi$. 
Because $b$ and $c$ commute, these are of dimension $p^2$ each, using the penultimate relations in~\eqref{relationsBarUiDual}. 
The projective covers $P_\xi$ of $\mC_\xi$ are direct summands of $u_q^* e_\xi$ and thus have dimension at most $p^2$. By the Wedderburn--Artin theorem, $\sum_{\xi=0}^{p-1}\dim_\mC P_\xi\leq p\cdot p^2$ has to equal $p^3=\dim_\mC u_q^*$. And therefore $P_\xi=u_q^* e_\xi$ for all $\xi$ and every projective $u_q^*$-module has dimension at least $p^2$.

The Hopf algebra $u_q$ is known to be factorizable \cite{Lyu}*{Cor.\,A.3.3} with the R-matrix~\cite{Lyu}*{Secs.\,A.2--A.3} of the form
$R \in u_q(\mathfrak{b}_+)\otimes u_q(\mathfrak{b}_-)$ where $u_q(\mathfrak{b}_+)$ is the Hopf subalgebra generated by $E$ and $K$ while $u_q(\mathfrak{b}_-)$ is the one generated by $F$ and $K$. Therefore, the image of $u_q^*$ under the Hopf algebra map $\phi_R$ from~\eqref{eq:phiR} is $u_q(\mathfrak{b}_+)\subset u_q$. The corresponding restriction functor $\Res_{\phi_R}\colon \lmod{u_q}\to \lmod{u_q^*}$  sends the $p$ and $2p$ dimensional projective $u_q$-modules to $u_q^*$-modules of the same dimension which cannot be projective  because of the inequality $2p< p^2$. 
By \Cref{lem:Vect-fully-exact} (ii), we conclude the following result. 

\begin{proposition}\label{prop:vect-sl2}
The left $\cC$-module category $\Vect$ is not fully exact for $\cC=\lmod{u_q(\mathfrak{sl}_2)}$.
\end{proposition}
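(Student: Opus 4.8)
By \Cref{lem:Vect-fully-exact}(ii), it suffices to show that the restriction functor $\Res_{\phi_R}\colon\lmod{u_q}\to\lmod{u_q^*}$ fails to preserve projective objects, where $\phi_R\colon u_q^*\to u_q$ is the Hopf algebra homomorphism of \eqref{eq:phiR}. The plan is to locate the image of $\phi_R$ precisely and then compare the dimensions of projective modules on the two sides, exploiting the fact that $\lmod{u_q}$ and $\lmod{u_q^*}$ both have ``small'' projectives but of incompatible minimal dimensions.

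First I would pin down the structure of $u_q^*$ explicitly. The fundamental $2$-dimensional representation $\rho$ gives matrix coefficients $a,b,c,d\in u_q^*$; using the PBW-basis formulas \eqref{eq:abcd-PBW} and the inductive formula for $\Delta(F^mE^nK^j)$ one computes enough products (as displayed just before \eqref{relationsBarUiDual}) to see that $u_q^*$ is generated by $b,c,d$ subject to the relations \eqref{relationsBarUiDual} together with $da-ad=(q^{-1}-q)cb$, with $a=d^{-1}+q^{-1}d^{-1}bc$ expressible in terms of the generators since $d$ is invertible. From the relations $b^p=c^p=0$, $d^p=1$ and the commutation rules, a Wedderburn--Artin dimension count shows that $u_q^*$ has exactly $p$ simple modules $\mC_\xi$, all one-dimensional (with $d$ acting by $q^\xi$, $b,c$ acting by $0$), and that the primitive idempotents $e_\xi=\tfrac1p\sum_{n=1}^p q^{-\xi n}d^n$ give projective covers $P_\xi=u_q^*e_\xi$ of dimension exactly $p^2$. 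Hence \emph{every} projective $u_q^*$-module has dimension divisible by, and in particular at least, $p^2$.

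Next I would recall the representation theory of $u_q$ itself: it has $p$ simple modules of dimensions $1,\dots,p$, the $p$-dimensional one being projective, and the remaining $p-1$ simples have projective covers of dimension $2p$ (references \cite{CPr}, \cite{KS}). So the projective $u_q$-modules have dimensions that are multiples of $p$ but never multiples of $p^2$ (for $p\geq3$, $2p<p^2$). Then I would use factorizability of $u_q$ \cite{Lyu} and the triangular shape $R\in u_q(\mathfrak b_+)\otimes u_q(\mathfrak b_-)$ of its R-matrix: this forces $\phi_R(u_q^*)=u_q(\mathfrak b_+)$, the Borel subalgebra generated by $E,K$. Since restriction of a module along an inclusion of a subalgebra only decreases dimension, $\Res_{\phi_R}$ takes a projective $u_q$-module of dimension $p$ or $2p$ to a $u_q^*$-module of that same dimension $p$ or $2p$, which cannot be projective since every nonzero projective $u_q^*$-module has dimension $\geq p^2>2p$. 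Thus $\Res_{\phi_R}$ does not preserve projectives, and by \Cref{lem:Vect-fully-exact} the $\cC$-module category $\Vect$ is not fully exact.

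The main obstacle is the bookkeeping in the second paragraph: establishing the presentation \eqref{relationsBarUiDual} of $u_q^*$ and, especially, the exact value $\dim_\mC P_\xi=p^2$ for the projective covers. This requires the coproduct formula on the PBW basis and a careful Wedderburn--Artin argument ($\sum_\xi\dim P_\xi\leq p\cdot p^2$ must equal $\dim u_q^*=p^3$, forcing equality $P_\xi=u_q^*e_\xi$). Everything after that — the dimension comparison and the identification $\phi_R(u_q^*)=u_q(\mathfrak b_+)$ — is then immediate. One should double-check that $p$ being odd is genuinely used (it guarantees $K^p=1$ with $p$ odd so that the $p$-dimensional simple is projective and the listed dimensions are correct) but no additional subtlety arises.
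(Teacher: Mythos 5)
Your proposal is correct and follows essentially the same route as the paper: reduce via \Cref{lem:Vect-fully-exact}(ii) to showing $\Res_{\phi_R}$ does not preserve projectives, compute the presentation of $u_q^*$ and the dimension $p^2$ of its projective covers via the regular-module decomposition, and compare with the $p$- and $2p$-dimensional projectives of $u_q$ using $2p<p^2$. The only cosmetic slip is the phrase ``restriction \ldots only decreases dimension'': restriction along any algebra map preserves the underlying vector space exactly, which is what your argument actually uses.
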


As a corollary, we obtain the following example of a relative Deligne product of exact $\cC$-module categories over a non-degenerate braided tensor category $\cC$ that is not exact. 
\begin{corollary}\label{cor:rel-Del-not-exact-factorizable}
The left $\cC$-module category ${}^*\Vect\boxtimes_\cC \Vect$ is not exact.
\end{corollary}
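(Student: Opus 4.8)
The plan is to recognize the left $\cC$-module category ${}^*\Vect\boxtimes_\cC\Vect$ as the centralizer $\cC^*_\Vect=\Fun_\cC(\Vect,\Vect)$. Indeed, by \Cref{prop:rel-tensor-as-hom}(ii) (equivalently, by \Cref{ex:dual-and-CM}) there is an equivalence of left $\cC$-module categories ${}^*\Vect\boxtimes_\cC\Vect\simeq\Fun_\cC(\Vect,\Vect)=\cC^*_\Vect$. Since exactness is a property of the left $\cC$-module structure (see \Cref{def:exact-M}), it suffices to show that $\cC^*_\Vect$ is not exact as a left $\cC$-module.

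Next I would invoke the equivalence of conditions (i) and (iii) in \Cref{prop:fully-exact-equiv}: a finite left $\cC$-module $\cM$ is fully exact if and only if $\cC^*_\cM$ is exact as a left $\cC$-module category. Applying this with $\cM=\Vect$, we obtain that $\cC^*_\Vect$ is exact if and only if $\Vect$ is fully exact. But \Cref{prop:vect-sl2} states precisely that $\Vect$ is \emph{not} fully exact for $\cC=\lmod{u_q(\mathfrak{sl}_2)}$; concretely, by \Cref{lem:Vect-fully-exact} this is because the restriction functor $\Res_{\phi_R}\colon\lmod{u_q}\to\lmod{u_q^*}$ fails to preserve projective objects, as the $p$- and $2p$-dimensional projective $u_q$-modules are sent to $u_q^*$-modules of dimension strictly less than $p^2$. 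Therefore $\cC^*_\Vect$, and hence ${}^*\Vect\boxtimes_\cC\Vect$, is not exact.

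There is essentially no obstacle here; this is a direct consequence of the previously established results. The only point requiring a moment of care is that the chain of equivalences used must consist of equivalences of left $\cC$-module categories, not merely of abelian categories, so that the failure of exactness is correctly transported — but this is exactly what \Cref{prop:rel-tensor-as-hom} and \Cref{prop:fully-exact-equiv} guarantee. One could also phrase the argument dually via ${}^*\Vect$ being not op-fully exact using \Cref{prop:duality-fully-exact} and \Cref{cor:Vect-op-fex}, but the route through $\cC^*_\Vect$ is the most economical.
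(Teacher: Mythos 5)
Your proof is correct and follows essentially the same route as the paper: identify ${}^*\Vect\boxtimes_\cC\Vect$ with $\cC^*_\Vect$ and apply the equivalence of (i) and (iii) in \Cref{prop:fully-exact-equiv} together with \Cref{prop:vect-sl2}. The only detail the paper adds is the observation (via \Cref{prop:A-Psi-exact}) that ${}^*\Vect$ is itself exact, which supports the surrounding claim that this is a relative Deligne product of \emph{exact} module categories failing to be exact, but is not needed for the statement as literally phrased.
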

\begin{proof}
    By \Cref{prop:vect-sl2}, $\Vect$ is not fully exact and hence ${}^*\Vect\boxtimes_\cC \Vect$ is not exact by \Cref{prop:fully-exact-equiv}. Moreover,  ${}^*\Vect$ is exact since $\Vect$ is exact, see \Cref{prop:A-Psi-exact}.
\end{proof}

A similar analysis can be done for general simple Lie algebras $\mathfrak{g}$.
The R-matrix for any $u_q (\mathfrak{g})$ has   
the form $R \in u_q(\mathfrak{b}_+)\otimes u_q(\mathfrak{b}_-)$, and therefore, similarly to the above $\mathfrak{sl}_2$ case, the image of $\phi_R: u_q (\mathfrak{g})^* \to u_q (\mathfrak{g})$ is the Borel subalgebra $u_q(\mathfrak{b}_+)$. In particular, we see from~\Cref{cor:Vect-inv} that the module category $\Vect$  is not invertible. Furthermore, we can conjecture:

\begin{conjecture}
Let $\cC$ be the braided tensor category of finite-dimensional modules over $u_q(\mathfrak{g})$, for $q$ a root of unity of odd order, and with the standard factorizable R-matrix, see \cite{Lyu}*{Secs.\,A.2--A.3}. Then the $\cC$-module category $\Vect$ is not fully exact.
\end{conjecture}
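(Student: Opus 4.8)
The plan is to reduce the conjecture, exactly as in the $\mathfrak{sl}_2$ case treated in \Cref{prop:vect-sl2}, to a dimension-counting statement about projective modules over $u_q(\mathfrak{g})^*$ versus those over $u_q(\mathfrak{g})$, using \Cref{lem:Vect-fully-exact}(ii). First I would recall that the standard universal R-matrix of $u_q(\mathfrak{g})$ lies in $u_q(\mathfrak{b}_+)\otimes u_q(\mathfrak{b}_-)$ (see \cite{Lyu}*{Secs.\,A.2--A.3}), so that the Hopf algebra map $\phi_R\colon u_q(\mathfrak{g})^*\to u_q(\mathfrak{g})$ of \eqref{eq:phiR} has image contained in the positive Borel part $u_q(\mathfrak{b}_+)$. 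Hence $\Res_{\phi_R}$ factors through the restriction functor $\lmod{u_q(\mathfrak{g})}\to \lmod{u_q(\mathfrak{b}_+)}$ followed by $\Res$ along $\phi_R$ viewed as a surjection onto $u_q(\mathfrak{b}_+)$ (in the factorizable case $\phi_R$ is actually onto $u_q(\mathfrak{b}_+)$ by the non-degeneracy of the restricted pairing). By \Cref{lem:Vect-fully-exact}, it then suffices to show that $\Res_{\phi_R}$ does \emph{not} preserve projectives, and for that it is enough to exhibit one indecomposable projective $u_q(\mathfrak{g})$-module whose underlying $u_q(\mathfrak{g})^*$-module, or even its underlying $u_q(\mathfrak{b}_+)$-module, is not projective.

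The key step is therefore a comparison of dimensions. On the $u_q(\mathfrak{g})$ side, by the root-of-unity representation theory, the Steinberg module (the simple module of highest weight $(p-1)\rho$) is projective of dimension $p^N$, where $N=|\Delta_+|$ is the number of positive roots, while the projective cover of the trivial module has dimension $> p^N$ but still polynomially bounded, of the form $p^N$ times a constant depending only on $\mathfrak{g}$ (this uses the known structure of tilting/projective modules for small quantum groups). On the $u_q(\mathfrak{g})^*$ side, one needs a lower bound on the dimension of every indecomposable projective $u_q(\mathfrak{g})^*$-module. Here I would mimic the $\mathfrak{sl}_2$ argument: describe $u_q(\mathfrak{g})^*$ explicitly enough to see that it has precisely $|Z|$ isomorphism classes of simple modules (where $Z$ is the group of grouplike-dual characters, i.e. $Z\cong (\mZ/p)^{\mathrm{rk}\,\mathfrak{g}}$), all one-dimensional, and that the corresponding principal indecomposable modules $u_q(\mathfrak{g})^* e_\xi$ have large dimension; a Wedderburn--Artin count $\sum_\xi (\dim P_\xi)\cdot 1 = \dim_\mC u_q(\mathfrak{g})^* = p^{\dim_\mC \mathfrak{g}} = p^{2N+\mathrm{rk}\,\mathfrak{g}}$ together with $|Z|=p^{\mathrm{rk}\,\mathfrak{g}}$ forces the average, hence the minimum once one argues the $P_\xi$ all have the same dimension by a symmetry/twist argument, to be $p^{2N}$. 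Thus every projective $u_q(\mathfrak{g})^*$-module has dimension divisible by (indeed at least) $p^{2N}$, while the image under $\Res_{\phi_R}$ of, say, the Steinberg module has dimension only $p^N < p^{2N}$ for $N\geq 1$; this contradiction shows $\Vect$ is not fully exact.

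The main obstacle will be the second dimension bound — making precise, for general $\mathfrak{g}$, that $u_q(\mathfrak{g})^*$ has only one-dimensional simples and that all its principal indecomposable modules have dimension exactly $p^{2N}$ (equivalently, that the ``nilpotent part'' of $u_q(\mathfrak{g})^*$, i.e. the kernel of the projection to the maximal semisimple quotient, together with the commuting positive/negative nilpotent generators, contributes a factor $p^{2N}$). For $\mathfrak{sl}_2$ this came from the simple presentation \eqref{relationsBarUiDual}; for general $\mathfrak{g}$ one would instead use that $u_q(\mathfrak{g})^* \cong \mathcal{O}_q[G]$, the quantized function algebra at a root of unity, whose representation theory (Frobenius kernel structure, with simples indexed by the finite torus and principal blocks of dimension $p^{2N}$) is documented in the literature (De Concini--Lyubashenko, Brown--Goodearl, etc.). One alternative, possibly cleaner, route that sidesteps the full structure theory: work only with $u_q(\mathfrak{b}_+)$, showing directly that $\Res_{\phi_R}$ lands in $\lmod{u_q(\mathfrak{b}_+)}$ and that the restriction of the Steinberg module to $u_q(\mathfrak{b}_+)$ is not free (hence not projective), since $\dim_\mC u_q(\mathfrak{b}_+) = p^{N+\mathrm{rk}\,\mathfrak{g}}$ does not divide $p^N=\dim_\mC\mathrm{St}$. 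Either way, the schematic of the proof is entirely parallel to \Cref{prop:vect-sl2}; only the explicit ring-theoretic input grows with the rank, which is why the statement is posed as a conjecture rather than a theorem.
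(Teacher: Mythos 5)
The paper offers no proof of this statement: it is posed as a conjecture, accompanied only by the observation that the image of $\phi_R$ is the Borel subalgebra $u_q(\mathfrak{b}_+)$ and the suggestion that ``a similar analysis'' to \Cref{prop:vect-sl2} should apply. So there is nothing in the paper to compare against line by line; I can only assess your proposal on its own merits.

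Your main route is sound and, once its inputs are pinned down, essentially settles the conjecture along exactly the lines the authors intend. By \Cref{lem:Vect-fully-exact} it suffices to exhibit one projective $u_q(\mathfrak{g})$-module whose restriction along $\phi_R$ is not projective over $u_q(\mathfrak{g})^*$, and the Steinberg module (simple, projective, of dimension $p^N$ with $N=|\Delta_+|$) does the job provided every nonzero projective $u_q(\mathfrak{g})^*$-module has dimension at least $p^{2N}$. For that you do not need the structure theory of quantum function algebras that you flag as the main obstacle: since $u_q(\mathfrak{g})$ is pointed, every simple $u_q(\mathfrak{g})^*$-module, i.e.\ every simple $u_q(\mathfrak{g})$-comodule, is one-dimensional, so there are exactly $p^{r}$ of them for $r=\operatorname{rk}\mathfrak{g}$; tensoring with a one-dimensional module is an autoequivalence permuting these simples transitively, so all principal indecomposables have the same dimension, namely $p^{2N+r}/p^{r}=p^{2N}$ by the Wedderburn--Artin count. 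Since $p^N<p^{2N}$ for $N\geq 1$, the restriction of the Steinberg module cannot be projective. The one remaining input requiring a careful citation is the simplicity and projectivity of the Steinberg module for $u_q(\mathfrak{g})$ at the given odd order (this needs hypotheses for small orders or bad primes), which is presumably why the authors left the statement as a conjecture. Your parenthetical identification of $u_q(\mathfrak{g})^*$ with $\mathcal{O}_q[G]$ is not literally correct (the latter is infinite-dimensional), but it is not needed.

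Your ``possibly cleaner'' alternative route, by contrast, does not work. The inference ``not free over $u_q(\mathfrak{b}_+)$, hence not projective'' is invalid: $u_q(\mathfrak{b}_+)$ is not local (it has $p^r$ one-dimensional simples coming from the torus part), so projective does not imply free, and in fact the Steinberg module restricted to $u_q(\mathfrak{b}_+)$ \emph{is} projective, being free of rank one over $u_q(\mathfrak{n}_+)$. Moreover, even a genuine failure of projectivity over the quotient $u_q(\mathfrak{b}_+)$ of $u_q(\mathfrak{g})^*$ would not by itself decide projectivity over $u_q(\mathfrak{g})^*$, which is the algebra that \Cref{lem:Vect-fully-exact} actually refers to. Keep the first route and discard the second.
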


\subsection{Fully exact module categories for Sweedler's Hopf algebra}\label{sec:Sweedler}

In this section, we provide a classification of fully exact and perfect module categories over the category of (finite-dimensional) modules over Sweedler's four-dimensional Hopf algebra, see \Cref{prop:S-mod-fully-exact-class}. We also compute the relative Deligne product and functor categories for this example in \Cref{prop:Sweedler-products}.

We assume that $\Bbbk=\mC$ for the rest of this section. Denote by $\sVect$ the symmetric tensor category of \emph{super vector spaces} over $\mC$.  As a braided tensor category, $\sVect$ is equivalent to the category of $\mC C_2$-modules, for $C_2=\inner{g~|~g^2=1}$, with braiding given by the R-matrix 
\begin{equation}\label{eq:R-sVect}
R=\frac{1}{2}(1\otimes 1+g\otimes 1+1\otimes g-g\otimes g).
\end{equation}

\begin{definition}[Sweedler Hopf algebra]
The \emph{Sweedler Hopf algebra} is the algebra
$$S=\mC\inner{x,g~|~x^2=0,~g^2=1,~gx=-xg},$$
with coproduct $\Delta$, counit $\varepsilon$, and antipode $S$ determined on generators by
\begin{gather*}
\Delta(x)=x\otimes 1+ g\otimes x, \qquad \Delta(g)=g\otimes g,\\
\varepsilon(x)=0, \qquad \varepsilon(g)=1,\qquad S(g)=g, \qquad S(x)=xg.
\end{gather*}
\end{definition}

The tensor category $\cC=\lmod{S}$ is non-semisimple. It has two simple objects $\mC_{+}$ and $\mC_-$, which are one-dimensional and where $g$ acts by $+1$, respectively, $-1$, and $x$ acts by zero. The projective covers of these two simple objects are $Se_{\pm}$, for the idempotents 
$e_{\pm}=(1\pm g)/2$.

\smallskip

Consider the super Hopf algebra 
\begin{equation}\label{def:B}
B=\mC[x]/(x^2)\ ,
\end{equation}
where $x$ has degree 1, with primitive coproduct $\Delta(x)=x\otimes 1+1\otimes x$. Equivalently, $B$ is a commutative and cocommutative Hopf algebra in $\sVect$. As a Hopf algebra, $S$ is isomorphic to the bosonization $B\rtimes \mC C_2$.
The super Hopf algebra $B$ is quasi-triangular with respect to any of the R-matrices 
\begin{equation} \sR_t =  1\otimes 1+ t x\otimes x,\label{eq:Rt-super}
\end{equation}
for $t \in \mC$. This makes the Hopf algebra $S$ quasi-triangular with a family of R-matrices given by 
\begin{align}\label{eq:Rt}
    R_t:=\frac{1}{2}(1\otimes 1+g\otimes 1+1\otimes g-g\otimes g)(1\otimes 1+tx\otimes gx), \qquad t\in \mC.
\end{align}
Note that $R_0=R$ for the R-matrix from \eqref{eq:R-sVect}. One checks that $(R_t)_{21}\cdot R_t=1\otimes 1$. Hence, the R-matrix $R_t$ induces a symmetric braiding on $\cC=\lmod{S}$.

\begin{definition}\label{def:Ct}
The symmetric  tensor category obtained by equipping $\cC$ with the braiding induced by the R-matrix $R_t$ from~\eqref{eq:Rt} is denoted by $\cC_t$.
 We   denote the symmetric tensor category $\cC_0$ simply by $\cC$.
\end{definition}

We note that the R-matrices $R_t$ are, in particular, Hopf $2$-cocycles in $S\otimes S$, recall~\eqref{eq:Hopf-cocycle}.  Similarly, $\sR_t$ is a Hopf $2$-cocycle in $B\otimes B$ but internal to $\sVect$.

\begin{lemma}\label{lem:Ct-equivalences}
For any $t,\lambda \in \mC$, we have an equivalence of braided tensor categories
\begin{equation}\label{eq:Ct-equivalence}
I^\lambda_t=(\id_\cC,\iota^\lambda)\colon \cC_{t}\to\cC_{t+2\lambda},
\end{equation}
with monoidal structure defined by
$$\iota_{V,W}^\lambda\colon V\otimes W \to V\otimes W, \quad v\otimes w\mapsto R_\lambda \cdot v\otimes w,$$
for objects $V,W$ of $\cC$ and elements $v\in V,w\in W$. In particular,
we get an equivalence of braided tensor categories $I_t^{-t/2}\colon \cC_t\isomorph \cC_0$. 
\end{lemma}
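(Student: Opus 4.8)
The plan is to verify directly that the pair $(\id_\cC, \iota^\lambda)$ defines a braided monoidal equivalence $\cC_t \to \cC_{t+2\lambda}$. Since the underlying functor is the identity on objects and morphisms, essential surjectivity and fully faithfulness are automatic; the content is entirely in checking (a) that $\iota^\lambda_{V,W}$ is a natural isomorphism of functors $\cC \times \cC \to \cC$, (b) that it satisfies the hexagon/associativity constraint making $(\id_\cC,\iota^\lambda)$ a monoidal functor, and (c) that it intertwines the braidings induced by $R_t$ on the source and $R_{t+2\lambda}$ on the target.

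First I would observe that $\iota^\lambda_{V,W}$ is invertible with inverse given by the action of $R_\lambda^{-1} = R_{-\lambda}$ (using $(R_\lambda)_{21} R_\lambda = 1\otimes 1$ and cocommutativity of the bosonization data, or a direct check that $R_\lambda R_{-\lambda} = 1\otimes 1$ in $S\otimes S$), and that naturality in $V$ and $W$ is immediate since $R_\lambda$ acts by an $S$-module map on tensor products. For the monoidal coherence, the key computational fact is that $R_\lambda$ is a Hopf $2$-cocycle: the condition $(\id\otimes\Delta)(R_\lambda)\cdot (1\otimes R_\lambda) = (\Delta\otimes\id)(R_\lambda)\cdot(R_\lambda\otimes 1)$, which holds because each $R_\lambda$ arises from the quasitriangular structure (the quasitriangularity axioms $(\Delta\otimes\id)R = R_{13}R_{23}$ etc.\ imply the cocycle identity, or one checks it by hand on the two-term expression $1\otimes 1 + \lambda x\otimes gx$ combined with the symmetric part). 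This cocycle identity is exactly what makes the hexagon for $\iota^\lambda$ commute relative to the (strict, since $\cC = \lmod{S}$) associator. Normalization $(\varepsilon\otimes\id)R_\lambda = 1 = (\id\otimes\varepsilon)R_\lambda$ gives the unit axiom.

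Next I would check compatibility with braidings. The braiding of $\cC_t$ on $V\otimes W$ is $\tau\circ (R_t\cdot -)$ where $\tau$ is the flip, and similarly for $\cC_{t+2\lambda}$. The required identity is that the diagram relating $\psi^{\cC_t}$, $\psi^{\cC_{t+2\lambda}}$ and $\iota^\lambda$ commutes, which unwinds to the algebra identity
\begin{equation*}
(R_\lambda)_{21}\cdot R_t \cdot R_\lambda^{-1}\cdot(R_\lambda)_{21}^{-1} \cdot \text{(flip conjugation)} = R_{t+2\lambda},
\end{equation*}
more precisely to $R_{\lambda,21}\, R_t\, R_\lambda = R_\lambda\, R_{t+2\lambda}\, R_{\lambda,21}$ after moving the flips through; I would verify this by a short computation using that all the $R$-matrices here are built from the common symmetric part and the commuting ``even'' part $1\otimes 1 + s\,x\otimes gx$, so that products just add the parameters $s$ (modulo signs from $gx$ vs $xg$, which is where the factor $2\lambda$ rather than $\lambda$ enters). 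The element $x\otimes gx$ squares to zero, so all these exponential-type factors multiply additively in the parameter, which makes the parameter bookkeeping transparent once the sign conventions are fixed. The final claim $I_t^{-t/2}\colon \cC_t \isomorph \cC_0$ is then just the special case $\lambda = -t/2$, since $t + 2(-t/2) = 0$.

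The main obstacle I anticipate is purely a matter of sign and convention bookkeeping: correctly tracking how the non-cocommutative part of $\Delta$ (the $g\otimes x$ term) and the relation $gx = -xg$ interact with the R-matrices when computing $(R_\lambda)_{21}$ and the various products, so that the parameter genuinely shifts by $2\lambda$ and not $\lambda$ or $-\lambda$. Everything else — naturality, invertibility, the hexagon from the cocycle condition, the unit axiom — is routine. I would organize the proof so that the cocycle property of $R_\lambda$ (hence the monoidal structure) is dispatched first by citing quasitriangularity, and then devote the bulk of the argument to the one explicit matrix identity encoding braiding-compatibility.
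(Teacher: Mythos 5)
Your overall strategy coincides with the paper's: the $2$-cocycle property of $R_\lambda$ gives the monoidal coherence of $(\id_\cC,\iota^\lambda)$, the functor is evidently an equivalence, and braided compatibility reduces to a single identity among R-matrices. The problem is that the identity you propose to spend the bulk of the argument verifying is not the right one, and is in fact false. Unwinding the braided-functor square with $\psi^{\cC_t}=\tau\circ(R_t\cdot-)$ and $\psi^{\cC_{t+2\lambda}}=\tau\circ(R_{t+2\lambda}\cdot-)$ gives $R_tR_\lambda=(R_\lambda)_{21}R_{t+2\lambda}$, i.e.\ the standard Drinfeld-twist formula $R_{t+2\lambda}=(R_\lambda)_{21}^{-1}R_tR_\lambda$, which is \eqref{eq:twist-R} in the paper and is consistent with \eqref{eq:R-twisted}. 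Your ``more precise'' identity $R_{\lambda,21}\,R_t\,R_\lambda=R_\lambda\,R_{t+2\lambda}\,R_{\lambda,21}$ is not equivalent to this: using $(R_\lambda)_{21}=R_\lambda^{-1}$ (symmetry of $R_\lambda$) and substituting the correct formula, your identity reduces to $R_tR_\lambda=R_\lambda^{3}R_t$, which already fails at $t=0$, $\lambda\neq 0$: one computes $R_0R_\lambda=1+\lambda\, x\otimes gx$ while $R_\lambda^{3}R_0=1+\lambda\, x\otimes gx-2\lambda\, gx\otimes x$. So the step you identify as the core of the proof would not go through as stated; the correct target is the three-factor sandwich identity above.

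Two further inaccuracies feed into this. First, $R_\lambda^{-1}\neq R_{-\lambda}$: the inverse is $(R_\lambda)_{21}=R_0(1+\lambda\, gx\otimes x)$, and in fact $R_\lambda R_{-\lambda}=1-\lambda(x\otimes gx+gx\otimes x)\neq 1\otimes 1$. Relatedly, the heuristic that the ``exponential-type factors multiply additively in the parameter'' holds only for the bare factors $1+s\,x\otimes gx$; the interleaved copies of $R_0$ do not commute with $x\otimes gx$ (conjugation by $R_0$ sends $x\otimes gx$ to $-gx\otimes x$), and this is precisely where the shift by $2\lambda$ and the appearance of $(\,\cdot\,)_{21}$ come from — the sign bookkeeping you defer is where the content lies. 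Second, deducing the $2$-cocycle condition for $R_\lambda$ from quasitriangularity alone is delicate: with the paper's convention \eqref{eq:Hopf-cocycle} it amounts to $R_{12}R_{23}=R_{23}R_{12}$, which is not a general consequence of the quasitriangularity axioms; for these specific two-term R-matrices the direct verification (your stated fallback) is the safe route, and is what the paper implicitly relies on.
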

\begin{proof}
Because $R_\lambda$ is a $2$-cocycle for the Hopf algebra $S$, it follows that $I^\lambda_t$ is a monoidal functor. It is evidently an equivalence.
The identity
\begin{equation}\label{eq:twist-R}
    (R_\lambda)_{21}^{-1}\cdot R_t \cdot R_\lambda= R_{t+2\lambda}
\end{equation}
of R-matrices for the Hopf algebra $S$ is easily verified and implies that $I^\lambda_t$ 
is a braided monoidal equivalence.
\end{proof}

A classification of equivalence classes of indecomposable exact left module categories over $\cC$ can be obtained from the more general results \cite{EO1}*{Thm.\,4.10} and \cite{Mom1}*{Section~8.1}, see also \cite{Will}*{Sec.\,5.3} for a detailed classification of exact module categories in terms of mixed associators which we will use. The indecomposable exact left module categories fall into the two one-parameter families
$$
\Set{\Vect^\lambda ~|~ \lambda \in \mC} \quad \text{and} \quad \Set{\sVect^\lambda ~|~ \lambda \in  \mC},
$$
of semisimple module categories 
plus two non-semisimple module categories $\cC$ and $\cD=\lmod{D}$. Here, $D=\mC[x]/(x^2)$, which is a subalgebra of $S$. The restriction of the coproduct of $S$, $\left.\Delta \right|_D\colon D\to S\otimes D$ makes $D$ a left $S$-coideal subalgebra. In particular, $D$ is a left $S$-comodule algebra,  which induces a left $\cC$-action on $\cD$  by~\eqref{eq:comod-act}.

Denote by $F\colon \cC\to \Vect$ the fiber functor that forgets the $S$-action, together with the monoidal structure given by identity components. 
For each $\lambda \in \mC$,  $\Vect^\lambda=\Vect$ as a category, with the left $\cC$-action given via \Cref{ex:tensor-act} by the tensor functor 
\begin{equation}\label{eq:F-lambda}
F^\lambda=(F,\mu^{\lambda}) \colon \cC \to \Vect, \qquad \mu^{\lambda}_{V,W}\colon F(V)\otimes F(W)\to F(V\otimes W), \quad v\otimes w\mapsto R_\lambda\cdot v\otimes w,
\end{equation}
for $v\in F(V)$, $w\in F(W)$.
Similarly, there is a fiber functor $\sF\colon \cC\to \sVect$, since $\cC=\lmodint{B}{\sVect}$.  We define $\sVect^\lambda=\sVect$ as a category, with the left $\cC$-action given via \Cref{ex:tensor-act} by the  forgetful functor $(\sF,\smu^\lambda)\colon \cC \to \sVect$ with the monoidal structure
\begin{gather*}
\smu^\lambda_{V,W} \colon \sF(V)\otimes \sF(W)\to \sF(V\otimes W), 
\quad v\otimes w\mapsto \sR_\lambda\cdot v\otimes w,
\end{gather*}
where $\sR_\lambda$ is the $2$-cocycle of the super Hopf algebra $B$, see \eqref{def:B}--\eqref{eq:Rt-super}. 

In the following we will re-parametrize the set of exact module categories as follows: We denote, for $\lambda\in \mC \setminus \Set{0}$,
$$\cV_\lambda:=\Vect^{\lambda^{-1}}, \quad \cV_0:=\cC, \quad \cV_\infty:=\Vect^0,$$
and, similarly, 
$$\cS_\lambda:=\sVect^{\lambda^{-1}}, \quad \cS_0:=\cD, \quad \cS_\infty:=\sVect^0.$$
 This way, the indecomposable exact $\cC$-module categories of $\cC$ are parametrized as a disjoint union
 $$\Set{\cV_\lambda ~|~ \lambda \in \mC \rP^1} \sqcup \Set{\cS_\lambda ~|~ \lambda \in \mC \rP^1},$$
 indexed by two copies of the complex projective space $\mC\rP^1=\mC\cup \Set{\infty}$. That no two of these left $\cC$-module categories are equivalent follows from \cite{Will}*{Props.\,5.20\,\&\,5.21}. 

\begin{lemma}\label{lem:D-separable}
    The left $\cC$-module category $\cD$ is separable (see  \Cref{def:separable,def:separable-mod-cat}).
\end{lemma}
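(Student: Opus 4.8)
The plan is to exhibit an explicit separable algebra $A$ in $\cC$ together with a $\cC$-module equivalence $\rmodint{\cC}{A}\simeq\cD$; then $\cD$ is separable by \Cref{def:separable-mod-cat}.

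First I would take $A$ to be the ``$\mZ_2$-graded group algebra'' built from the invertible object $\mC_-\in\cC$: as an object of $\cC$, $A=\one\oplus\mC_-$, with unit the canonical inclusion $\one\hookrightarrow A$ and multiplication determined by the canonical isomorphisms $\one\otimes\mC_-\cong\mC_-\cong\mC_-\otimes\one$ and $\mC_-\otimes\mC_-\cong\one$ (the latter exists since $\mC_-$ is invertible of order two). Concretely, writing $A=\mC[h]/(h^2-1)$ with $1\in\one$ and $h$ spanning the $\mC_-$-summand, the $S$-action is $g\cdot h=-h$, $x\cdot h=0$; using $\Delta_S(g)=g\otimes g$ and $\Delta_S(x)=x\otimes 1+g\otimes x$ one checks directly that $m_A$ and $u_A$ are morphisms in $\cC$, so $A$ is an algebra in $\cC$. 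Since $2$ is invertible in $\mC$, the element $p:=\tfrac12(1\otimes 1+h\otimes h)\in A\otimes A$ is a separability idempotent: $m_A(p)=1$, $p$ is $A$-central, and, crucially, $p$ is $S$-invariant (immediate from $g\cdot(h\otimes h)=h\otimes h$ and $x\cdot(h\otimes h)=0$), hence defines a morphism $\one\to A\otimes A$ in $\cC$. Therefore $s_p\colon A\to A\otimes A$, $a\mapsto a\cdot p$, is an $A$-bimodule section of $m_A$ internal to $\cC$, so $A$ is a separable algebra in the sense of \Cref{def:separable}.

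It remains to identify $\rmodint{\cC}{A}$ with $\cD$. By \Cref{lem:sep-rel-ss}, $\rmodint{\cC}{A}$ is an exact left $\cC$-module category, hence a direct sum of the indecomposable exact $\cC$-modules classified above via \cites{EO1,Mom1,Will}. I would pin it down by computing three invariants. (1) The induced modules $I(\mC_+)=\mC_+\otimes A=A$ and $I(\mC_-)=\mC_-\otimes A$ are both isomorphic, as objects of $\rmodint{\cC}{A}$, to the regular module $A$ (the second using invertibility of $\mC_-$), and $A$ is simple in $\rmodint{\cC}{A}$: its only $A$-submodules are $0$, $\mC\tfrac{1+h}{2}$, $\mC\tfrac{1-h}{2}$, $A$, and $g$ interchanges the two idempotents $\tfrac{1\pm h}{2}$, so neither one-dimensional submodule is $S$-stable. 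Thus $\rmodint{\cC}{A}$ has exactly one simple object, hence is indecomposable and not equivalent to $\cC=\cV_0$. (2) It is not semisimple: the functor $I=(-)\otimes A$ is exact (as $\otimes$ is) and left adjoint to the forgetful functor $U$, so by the standard argument $\Ext^i_{\rmodint{\cC}{A}}(I(X),M)\cong\Ext^i_\cC(X,U(M))$, giving $\Ext^1_{\rmodint{\cC}{A}}(A,A)\cong\Ext^1_\cC(\mC_+,\mC_+\oplus\mC_-)\cong\mC\neq 0$. (3) Comparing with the classification—the $\cV_\lambda$ and $\cS_\lambda$ are semisimple for $\lambda\neq 0$, and $\cV_0=\cC$ has two simple objects—the only remaining possibility is $\rmodint{\cC}{A}\simeq\cS_0=\cD$. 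One should also check along the way that this equivalence can be made $\cC$-linear, which is automatic once both sides are identified with the same indecomposable exact $\cC$-module.

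The routine parts (that $A$ is an algebra, that $p$ is a central $S$-invariant idempotent, and that $I(\mC_\pm)\cong A$) are mechanical. I expect the only delicate point to be the last step: correctly matching $\rmodint{\cC}{A}$ with $\cD$ rather than with $\cC$ or one of the semisimple module categories, which is why I would isolate the invariants above and invoke the classification of indecomposable exact $\cC$-modules. Alternatively one could bypass the classification by building a $\cC$-module equivalence $\rmodint{\cC}{A}\simeq\lmod{D}$ directly—unwinding a right $A$-module in $\cC$ as an $S$-module equipped with a compatible order-two automorphism $\tau$ (with $\tau g=-g\tau$, $\tau x=x\tau$) and recognising the resulting $8$-dimensional endomorphism algebra as Morita equivalent to $D=\mC[x]/(x^2)$—but the invariant-counting argument is shorter.
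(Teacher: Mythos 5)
Your proof is correct and matches the paper's argument: the paper uses the same algebra $I=\one\oplus\mC_-=\mC[y]/(y^2-1)$ and the same separability section $a\mapsto a\cdot\tfrac12(1\otimes 1+y\otimes y)$. The only difference is that the paper asserts the identification $\rmodint{\cC}{I}\simeq\cD$ without proof ("It can be shown that\dots"), whereas you supply a valid verification via the classification of indecomposable exact $\cC$-modules (one simple object, non-semisimple), which is a welcome addition rather than a deviation.
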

\begin{proof}
    It can be shown that $\cD\simeq \rmodint{\cC}{I}$, for the algebra $I=\Bbbk[y]/(y^2-1)$ in $\cC$. An an object, $I=\mC_+\oplus \mC_-$, where $1_I$ and $y$ span the direct summands $\mC_+$ and $\mC_-$, respectively.  

    A section for the multiplication map is defined by 
    $$s\colon I\to I\otimes I,\qquad  1\mapsto \frac{1}{2}(1\otimes 1+y\otimes y), \quad y\mapsto \frac{1}{2}(y\otimes 1 +1\otimes y),$$
    which clearly commutes with the $S$-action.
    This defines a homomorphism of $I$-bimodules since
    $$y\cdot \frac{1}{2}(1\otimes 1+y\otimes y)=\frac{1}{2}(y\otimes 1+1\otimes y)=\frac{1}{2}(1\otimes 1+y\otimes y)\cdot y.$$
    Hence, $I$ is a separable algebra in $\cC$. 
    \end{proof}

We are now ready to classify fully exact $\cC$-modules in the Sweedler case $H=S$.

\begin{proposition}\label{prop:S-mod-fully-exact-class}
For $\lambda \in \mC\rP^1$, the left $\cC$-module categories $\cV_\lambda$ and $\cS_\lambda$ are fully exact, and thus perfect if and only if $\lambda\neq \infty$. 
\end{proposition}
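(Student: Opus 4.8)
The plan is to use the equivalence of conditions (i) and (iv) in \Cref{prop:fully-exact-equiv}: a module category is fully exact if and only if its \action functor $A_\cM$ preserves projective objects, and by \Cref{cor:C-symmetric-op-fex} (since each $\cC_t$, in particular $\cC=\cC_0$, is symmetric) fully exact is equivalent to perfect. So it suffices to decide, for each indecomposable exact $\cC$-module category in the list $\cV_\lambda$, $\cS_\lambda$, whether $A_\cM$ preserves projectives. First I would dispose of $\lambda = 0$: $\cV_0 = \cC$ is the regular module, which is fully exact by \Cref{ex:fully-exact-first}(i), and $\cS_0 = \cD$ is separable by \Cref{lem:D-separable}, hence fully exact by \Cref{prop:rel-proj-fully-exact}. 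For the semisimple families $\cV_\lambda = \Vect^{1/\lambda}$ and $\cS_\lambda = \sVect^{1/\lambda}$ with $\lambda \neq 0, \infty$, I would invoke the cocycle-twist analysis of \Cref{sec:vectJ}: by \Cref{rem:twisted-fully-exact}, $\Vect^J$ is fully exact if and only if $\Res_{\phi_{R^J}}$ preserves projective objects. Here the relevant twisted R-matrix is $R^J = R_{1/\lambda \cdot \text{(something)}}$; more precisely, twisting by the cocycle $R_\mu$ sends the braiding $R_t$ to $R_{t+2\mu}$ by \eqref{eq:twist-R}, and the upshot is that $\Vect^{1/\lambda}$ as a $\cC = \cC_0$-module corresponds, after the braided equivalence of \Cref{lem:Ct-equivalences}, to $\Vect$ over $\cC_{t}$ for a suitable $t \neq 0$.

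The heart of the matter is therefore the computation of $\phi_{R_t}\colon S^* \to S$ for $t \neq 0$ versus $t = 0$. For $t = 0$, $R_0$ is the symmetric R-matrix of $\sVect$ coming from the bosonization $S = B \rtimes \mathbb{C}C_2$; the pairing it induces is degenerate (the $B = \mathbb{C}[x]/(x^2)$ part contributes nothing), so $\phi_{R_0}$ has image the group-algebra part $\mathbb{C}C_2 \subsetneq S$, and restriction along $\phi_{R_0}$ lands in $\lmod{\mathbb{C}C_2}$, which is semisimple — hence $\Res_{\phi_{R_0}}$ sends the $2$-dimensional projectives $Se_\pm$ of $\cC$ to non-projective $S^*$-modules (here one needs that $S^*$ is itself non-semisimple, which it is, being $4$-dimensional with a non-semisimple structure). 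So $\cV_\infty = \Vect^0$ is \emph{not} fully exact. For $t \neq 0$, by contrast, $R_t$ is non-degenerate as a copairing $\mathbb{C} \to S \otimes S$ (one checks $(R_t)_{21} R_t = 1 \otimes 1$ and that $R_t$ itself gives a non-degenerate pairing when $t \neq 0$), so $\phi_{R_t}$ is an isomorphism $S^* \cong S$; then $\Res_{\phi_{R_t}}$ is an equivalence by \Cref{lem:Res-equiv}, in particular preserves projectives, so $\cV_\lambda$ for $\lambda \neq 0, \infty$ is fully exact — in fact invertible by \Cref{cor:Vect-inv} / \Cref{rem:twisted-fully-exact}. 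I would run the parallel argument for the $\sVect^\lambda$ family using the super-R-matrices $\check R_t = 1 \otimes 1 + t x \otimes x$ of $B$ and the fact that $\cS_\lambda$ is realized over $\sVect$ via the comodule algebra structure; the degeneracy dichotomy at $t = 0$ versus $t \neq 0$ is identical, with the role of $S^*$ played by the relevant dual algebra $(B^*)$ twisted appropriately, which is again non-semisimple exactly when the twist is trivial.

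The main obstacle I anticipate is the bookkeeping in the cocycle-twist dictionary: matching the parameter $\lambda \in \mathbb{C}\rP^1$ labelling $\cV_\lambda$ with the R-matrix parameter $t$ and the twisting cocycle via \eqref{eq:twist-R} and \eqref{eq:R-twisted}, and confirming that the "bad" value $\lambda = \infty$ is exactly the one where the relevant twisted R-matrix becomes the degenerate $R_0$. One must be careful that the module-category structure $\Vect^{1/\lambda}$ (monoidal structure on $F$ twisted by $R_{1/\lambda}$, per \eqref{eq:F-lambda}) really does go over, under the braided equivalence $I_t^{-t/2}$ of \Cref{lem:Ct-equivalences} which preserves fully exact modules by \Cref{lem:Res-fully-exacts}, to the plain module category $\Vect$ over $\cC_{t}$ with $t$ a nonzero multiple of $1/\lambda$ — and that $1/\lambda \to 0$, i.e.\ $\lambda \to \infty$, is precisely where $t \to 0$. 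A secondary point requiring care is verifying that $S^*$ (and the super-analogue) is genuinely non-semisimple, so that the failure of $\Res_{\phi_{R_0}}$ to preserve projectives is real; this is a finite explicit check on the $4$-dimensional algebra $S^*$. Once the dictionary and these small algebraic facts are in place, the statement follows by assembling: $\lambda = 0$ handled directly, $\lambda \notin \{0,\infty\}$ via invertibility ($\phi_{R_t}$ an isomorphism), and $\lambda = \infty$ excluded via the semisimplicity obstruction, and \Cref{cor:C-symmetric-op-fex} upgrades "fully exact" to "perfect" throughout.
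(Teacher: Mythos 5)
Your proposal is correct and follows essentially the same route as the paper: the regular/separable cases $\cV_0,\cS_0$ handled directly, the dichotomy for the semisimple families reduced via \Cref{rem:twisted-fully-exact} to whether $\phi_{R^J}$ is an isomorphism (degenerate exactly at the parameter value corresponding to $\lambda=\infty$, where the image of $\phi_{R_0}$ is the semisimple subalgebra $\mC C_2$), and \Cref{cor:C-symmetric-op-fex} upgrading fully exact to perfect. The only minor divergence is at $\cS_\infty$, where the paper cites the explicit computation that $\sVect\boxtimes_\cC\sVect$ is not exact (\Cref{ex:Sweedler-first}) rather than your super-analogue of the degeneracy argument, and, like you, the paper defers the full details of the $\cS_\lambda$, $\lambda\neq 0$, case to a forthcoming companion paper.
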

\begin{proof}
As $\cC$ is symmetric, the classes of fully exact and op-fully exact $\cC$-module categories coincide by \Cref{cor:C-symmetric-op-fex}. Therefore, $\cV_\lambda$ and $\cS_\lambda$ are fully exact if and only if they are perfect.

 Consider the morphism $\phi_R\colon S^*\to S$ from \eqref{eq:phiR}. Using the explicit formula for $R=R_0$ from \eqref{eq:R-sVect}, we see that the image of $\phi_R$ is equal to the subalgebra $\mC C_2\subset S$ generated by $g$, and in particular $\phi_R(x)=0$.  Therefore, the functor $\Res_{\phi_R}\colon \cC\to \lmod{S^*}\simeq \cC$ does not preserve projective objects because its image consists of semisimple objects. By \Cref{lem:Vect-fully-exact}, $\cV_\infty=\Vect$ is not fully exact.

Clearly, $\cV_0=\cC$ is fully exact by \Cref{ex:fully-exact-first}(i).
For $\lambda\neq 0$, to show that  $\cV_{\lambda^{-1}}=\Vect^\lambda$ is fully exact it is enough by \Cref{rem:twisted-fully-exact} to show that $\phi_{R^{J}}$, for $J=R_\lambda$, is an isomorphism. Here $R^J = (R_\lambda)_{21}^{-1}\cdot R_0 \cdot R_\lambda=R_{2\lambda}$ by \eqref{eq:twist-R}. 
We then compute that the homomorphism $\phi_{R_{2\lambda}}$ is indeed an isomorphism.

We have shown in \Cref{ex:Sweedler-first} that $\cS_\infty\boxtimes_\cC \cS_\infty$ is not exact and hence $\cS_\infty$ is not fully exact by \Cref{def:fully-exact}. The non-semisimple $\cC$-module category $\cS_0=\cD$ is fully exact by \Cref{lem:D-separable} and \Cref{prop:rel-proj-fully-exact}.
The proof that $\cS_\lambda$ is fully exact for all $\lambda \in \mC\setminus \Set{0}$ uses a generalization of the proof for $\cV_\lambda$ in the previous paragraph, but working internally to $\sVect$. 
Details are left to \cite{GL}.
\end{proof}

For $\cC=\lmod{S}$, we can compute the relative Deligne products, duals, and functor categories for all indecomposable fully exact $\cC$-module categories.

\begin{proposition}\label{prop:Sweedler-products}
For $\lambda, \mu \in \mC$ we have the following equivalences of left $\cC$-module categories:
\begin{align*}
\cV_\lambda\boxtimes_\cC \cV_\mu &\simeq \cV_{\lambda+\mu}, &\cS_\lambda\boxtimes_\cC \cV_\mu&\simeq \cS_{\lambda+\mu}, &  \cV_\lambda^*&\simeq \cV_{-\lambda}\simeq {}^*\cV_\lambda, \\ \cV_\lambda\boxtimes_\cC \cS_\mu&\simeq \cS_{\lambda+\mu}, &\cS_\lambda\boxtimes_\cC \cS_\mu &\simeq \cV_{\lambda+\mu},& \cS_\lambda^*&\simeq \cS_{-\lambda}\simeq {}^*\cS_\lambda,
\end{align*}
\begin{align*}
\Fun_\cC(\cV_\lambda,\cV_\mu)&\simeq \cV_{\mu-\lambda},& \Fun_\cC(\cS_\lambda,\cV_\mu)&\simeq \cS_{\mu-\lambda},\\
     \Fun_\cC(\cV_\lambda,\cS_\mu)&\simeq \cS_{\mu-\lambda},&
    \Fun_\cC(\cS_\lambda,\cS_\mu)&\simeq \cV_{\mu-\lambda}.
\end{align*}
\end{proposition}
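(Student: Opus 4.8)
The plan is to deduce everything from the internal-algebra dictionary of \Cref{prop:rel-Del-modules} (which turns $\boxtimes_\cC$ into $\otimes^\psi$ of algebras in $\cC$), \Cref{prop:Funrex-modules} (which turns $\Fun_\cC(\rmodint{\cC}{A},\rmodint{\cC}{B})$ into $\rmodint{\cC}{A^\psi\otimes B}$), and \Cref{prop:*M*-algebras} (which describes $\cM^*$ and ${}^*\cM$ via $A^{\psi^{-1}}$ and $A^\psi$). Since $\cC=\lmod{S}$ is symmetric, $A^\psi=A^{\psi^{-1}}$ as algebras, so $\cM^*\simeq{}^*\cM$ for every $\cM$, and $\boxtimes_\cC$ is symmetric (as in the proof of \Cref{cor:C-symmetric-op-fex}). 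Using this commutativity together with $\cS_\lambda\simeq\cV_\lambda\boxtimes_\cC\cD$ (recall $\cD=\cS_0$), the four product identities reduce to just three: $\cV_\lambda\boxtimes_\cC\cV_\mu\simeq\cV_{\lambda+\mu}$, $\cV_\lambda\boxtimes_\cC\cD\simeq\cS_\lambda$, and $\cD\boxtimes_\cC\cD\simeq\cC$, since then $\cS_\lambda\boxtimes_\cC\cV_\mu\simeq\cV_\lambda\boxtimes_\cC\cD\boxtimes_\cC\cV_\mu\simeq\cV_{\lambda+\mu}\boxtimes_\cC\cD\simeq\cS_{\lambda+\mu}$ and $\cS_\lambda\boxtimes_\cC\cS_\mu\simeq\cV_{\lambda+\mu}\boxtimes_\cC\cD\boxtimes_\cC\cD\simeq\cV_{\lambda+\mu}$.

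For the three core computations I would use explicit internal algebra models. The non-semisimple ones are $\cV_0=\rmodint{\cC}{\one}$ and $\cD=\rmodint{\cC}{I}$ with $I=\mC[y]/(y^2-1)$ the separable algebra of \Cref{lem:D-separable}; for $\lambda\neq0$ the semisimple $\cV_\lambda=\Vect^{1/\lambda}$ is $\rmodint{\cC}{A_\lambda}$ for a Hopf $2$-cocycle twist $A_\lambda$ of the dual algebra $S^*$, the twist being controlled by $R_{1/\lambda}$ of \eqref{eq:Rt}, and similarly $\cS_\lambda=\sVect^{1/\lambda}$ is realized inside $\sVect$ using the bosonization $S=B\rtimes\mC C_2$, $B=\mC[x]/(x^2)$, as a twist of $B^*=\mC[y]/(y^2)$ by the cocycle $\sR_{1/\lambda}$ of \eqref{eq:Rt-super}. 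Then $\cD\boxtimes_\cC\cD\simeq\rmodint{\cC}{I\otimes^\psi I}$ by \Cref{prop:rel-Del-modules}, and one checks directly (providing an explicit $\cC$-linear idempotent) that $I\otimes^\psi I$ is Morita equivalent in $\cC$ to $\one$. For the other two, one identifies $A_\lambda\otimes^\psi A_\mu$, respectively $A_\lambda\otimes^\psi I$, up to Morita equivalence with $A_{\lambda+\mu}$, respectively with the twist of $B^*$ realizing $\cS_\lambda$; the additivity of the parameter ultimately rests on the cocycle identity $(R_\lambda)_{21}^{-1}R_tR_\lambda=R_{t+2\lambda}$ of \eqref{eq:twist-R} and its $\sVect$-analogue for $\sR$, or equivalently on the additivity under $\boxtimes_\cC$ of the mixed associator of \cite{Will}. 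The detailed verification of these identifications is the bookkeeping deferred to \cite{GL}.

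The remaining statements are then formal. The three core computations show that each $\cV_\lambda$ and $\cS_\lambda$ with $\lambda\in\mC$ is invertible in $\lmod{\cC}$, with $\boxtimes_\cC$-inverse $\cV_{-\lambda}$, respectively $\cS_{-\lambda}$ (for the latter use $\cS_\lambda\boxtimes_\cC\cS_{-\lambda}\simeq\cV_\lambda\boxtimes_\cC\cD\boxtimes_\cC\cV_{-\lambda}\boxtimes_\cC\cD\simeq\cC$); this is consistent with $\Pic(\cC)=\mC\times\mZ_2$ of \cite{Car} and, combined with \Cref{cor:inv-fully-exact} and the classification of indecomposable fully exact $\cC$-module categories, recovers the statement that all of them are invertible in this example. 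For an invertible module category the left and right duals coincide with the inverse (\Cref{lem:duals-inv}), which gives $\cV_\lambda^*\simeq{}^*\cV_\lambda\simeq\cV_{-\lambda}$ and $\cS_\lambda^*\simeq{}^*\cS_\lambda\simeq\cS_{-\lambda}$. Finally, \Cref{prop:rel-tensor-as-hom}(ii) gives $\Fun_\cC(\cM,\cN)\simeq{}^*\cM\boxtimes_\cC\cN$, so $\Fun_\cC(\cV_\lambda,\cV_\mu)\simeq\cV_{-\lambda}\boxtimes_\cC\cV_\mu\simeq\cV_{\mu-\lambda}$, and likewise $\Fun_\cC(\cS_\lambda,\cV_\mu)\simeq\cS_{\mu-\lambda}$, $\Fun_\cC(\cV_\lambda,\cS_\mu)\simeq\cS_{\mu-\lambda}$, and $\Fun_\cC(\cS_\lambda,\cS_\mu)\simeq\cV_{\mu-\lambda}$, each reducing to one of the already-established products. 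The main obstacle is the second paragraph: pinning down the twisted internal algebras $A_\lambda$ and their $\sVect$-counterparts and computing their $\psi$-opposites and tensor products explicitly; this is elementary but lengthy Hopf $2$-cocycle bookkeeping, which is why the details are relegated to \cite{GL}.
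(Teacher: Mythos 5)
Your architecture is sound but runs the paper's proof in reverse, and the place where all the real content sits is exactly the part you defer. The paper first computes the functor categories $\Fun_\cC(\cV_\lambda,\cV_\mu)$ directly via \Cref{prop:Fun-H*-mod-twisted}: it identifies the twisted dual algebra $\Dtwist{L}{H}{J}{*}$ (for $J=R_{\lambda^{-1}}$, $L=R_{\mu^{-1}}$) with $\mC\langle h,y\rangle/(h^2=1,\ hy=-yh,\ y^2=d)$ for $d=\lambda^{-1}-\mu^{-1}$, computes the $S$-coaction on generators, matches the result with the comodule algebras $A_\xi$ of \cite{Will}, and evaluates the classification invariant $\mu^{-1}-\tfrac14\xi^2=(\mu-\lambda)^{-1}$ to land on $\cV_{\mu-\lambda}$ (treating $\lambda=\mu$ separately via invertibility from \Cref{prop:S-mod-fully-exact-class}); the duals and the relative Deligne products are then formal consequences via $\cM\boxtimes_\cC\cN\simeq\Fun_\cC(\cM^*,\cN)$, and only the $\cS$ cases are deferred to \cite{GL}. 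You go the other way: products first via \Cref{prop:rel-Del-modules}, reduced by symmetry and $\cS_\lambda\simeq\cV_\lambda\boxtimes_\cC\cD$ to three core identities, then invertibility, then duals via \Cref{lem:duals-inv}, then $\Fun_\cC$ via \Cref{prop:rel-tensor-as-hom}. All of that downstream formal reasoning is correct and is a legitimate alternative organization of the argument.

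The gap is in the three core identities themselves. You assert that $A_\lambda\otimes^\psi A_\mu$ is Morita equivalent to $A_{\lambda+\mu}$ and that the additivity ``ultimately rests on'' the cocycle identity \eqref{eq:twist-R}. It does not: \eqref{eq:twist-R} governs how the R-matrix transforms under twisting (hence which $\cV_\lambda$ are fully exact), whereas the additivity of the reparametrized index under $\boxtimes_\cC$ is the statement that the associator parameters $1/\lambda$ and $1/\mu$ combine to $1/(\lambda+\mu)$, and that is precisely the nontrivial invariant computation $\mu^{-1}-\tfrac14\xi^2=(\mu-\lambda)^{-1}$ from \cite{Will}*{Prop.\,5.23} that the paper carries out; note the target parameter is not an additive function of the cocycle parameters $\lambda^{-1},\mu^{-1}$, so no formal cocycle manipulation can produce it. Deferring the $\cS$ cases to \cite{GL} is defensible (the paper does the same), but your proposal defers every computation that distinguishes this proposition from a formal exercise, and the one hint you give for the source of the additivity points at the wrong identity. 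Either carry out the Morita identification of $A_\lambda\otimes^\psi A_\mu$ explicitly, or, more efficiently, compute $\Fun_\cC(\cV_\lambda,\cV_\mu)$ directly as the paper does and let the products follow.
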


\begin{proof} 
We first describe how the categories of $\cC$-linear functors involving $\cV_\lambda=\Vect^{\lambda^{-1}}, \cV_\mu=\Vect^{\mu^{-1}}$, for $\lambda,\mu\in \mC$, are obtained. We set $H=S$, $J=R_{\lambda^{-1}}$, and $L=R_{\mu^{-1}}$, for the $2$-cocycles from \eqref{eq:Rt}.
In this case, we compute that the algebra $\Dtwist{L}{H}{J}{*}$ of \Cref{def:H-twisted-alg} is isomorphic to the algebra
\begin{align}\label{eq:H*mu}
S^*_{d}=\mC\langle h,y\rangle /(h^2=1, hy=-yh, y^2=d 1),
\end{align}
for $d=\lambda^{-1}-\mu^{-1}$. 
If  $\lambda\neq \mu$, 
then $S^*_{d}$
is semisimple as isomorphic to the matrix algebra $M_2(\mC)$. The $\Btwist{L}{H}{L}$-coaction from \eqref{eq:delta-twisted} is just an $S$-coaction as, in this case, we have $\Btwist{L}{\Delta}{L}=\Delta$ for the twisted coproduct from \eqref{eq:Delta-twist}.  We compute that the $S$-coaction $\delta\colon S^*_{d}\to S\otimes S^*_{d},$
is determined on generators by
$$\delta(y)=g\otimes y -2\mu^{-1} x\otimes 1, \qquad \delta(h)=g\otimes h.$$
By setting $\hat{g}=h$ and $\hat{x}=\frac{iyh}{\sqrt{d}}$, we can identify the algebra $S^*_{d}$ with the $S$-comodule algebra $A_{\xi}=\mC\inner{\hat{g},\hat{x}}/(\hat{g}^2=1, \hat{g}\hat{x}=-\hat{x}\hat{g}, \hat{x}^2=1)$ of \cite{Will}*{Eq.\,(5.3)},\footnote{\cite{Will} uses a different presentation of Sweedler's Hopf algebra. The isomorphism is given by identifying $x$ in our presentation with $xg$ in that of \cite{Will}.}
with $\xi=-\frac{2\mu^{-1} i}{\sqrt{d}}$. Under this identification, $\delta$ corresponds to the $S$-coaction 
$$\delta\colon A_\xi\to S\otimes A_\xi,\qquad \hat{g}\mapsto g\otimes \hat{g}, \qquad \hat{x}\mapsto 1\otimes \hat{x}+\xi xg\otimes \hat{g}. $$
By \Cref{prop:Fun-H*-mod-twisted}, we have an equivalence of left $\cC$-module categories $\Fun_\cC(\cV_\lambda,\cV_\mu)\simeq \lmod{S^*_d}$, where the $\cC$-module associator is non-trivial, given by the action of
$L=R_{\mu^{-1}}$.
The resulting $\cC$-module category $\lmod{S^*_d}$ can be identified with  $\left(\lmod{A_{\xi}}\right)^{-\mu^{-1}}$ in the notation of \cite{Will}*{Cor.\,5.19}.
Hence, using \cite{Will}*{Prop.\,5.23}, 
 computing 
 $\mu^{-1}-\frac{1}{4}\bigl(-\frac{2\mu^{-1} i}{\sqrt{\lambda^{-1}-\mu^{-1}}}\bigr)^2 
= \frac{1}{\mu-\lambda}$,
 we have an equivalence of left $\cC$-module categories 
$$\Fun_\cC(\cV_\lambda,\cV_\mu)=\Fun_\cC(\Vect^{\lambda^{-1}},\Vect^{\mu^{-1}})\simeq \Vect^{(\mu-\lambda)^{-1}}=\cV_{\mu-\lambda}.$$
If $\lambda=\mu$,  we first recall the proof of \Cref{prop:S-mod-fully-exact-class} showing that $\phi_{R^{J}}$, for $J=R_{\lambda^{-1}}$, is an isomorphism. Then by \Cref{rem:twisted-fully-exact}, the $\cC$-module $\cV_\lambda$ is invertible, and thus by \Cref{prop:Minv-AM-equiv} its centralizer functor $\cC \to \cC^*_{\cV_\lambda}$ is a monoidal equivalence inducing 
 a $\cC$-module equivalence by \Cref{lem:F-equiv}.

We can further compute that $\cV_\lambda^*={}^*\cV_\lambda=\cV_{-\lambda}$. Thus,
$$\cV_\lambda\boxtimes_\cC \cV_\mu \simeq \Fun_\cC(\cV_\lambda^*,\cV_\mu)\simeq \Fun_\cC(\cV_{-\lambda},\cV_\mu)\simeq \cV_{\lambda+\mu}.$$

For the equivalences involving $\cS_\lambda$, similar but more technical methods are used. Details for this are left to appear in \cite{GL}. 
\end{proof}

For Sweedler's Hopf algebra we make the following notable observation. 

\begin{corollary}\label{cor:Sweedler-fully-exact-invertible}
    All indecomposable fully exact $\cC$-module categories $\cV_\lambda$, $\cS_\lambda$, for $\lambda \in \mC$, are invertible.
\end{corollary}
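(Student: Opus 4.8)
The plan is to deduce this directly from the explicit computation of relative Deligne products in \Cref{prop:Sweedler-products} together with the characterization of invertible module categories in \Cref{prop:Minv-AM-equiv} or, more simply, from the very definition of invertibility. The key observation is that \Cref{prop:Sweedler-products} records, for all $\lambda,\mu\in\mC$, the equivalences of left $\cC$-module categories
\[
\cV_\lambda\boxtimes_\cC\cV_\mu\simeq\cV_{\lambda+\mu},\qquad
\cS_\lambda\boxtimes_\cC\cS_\mu\simeq\cV_{\lambda+\mu},\qquad
\cS_\lambda\boxtimes_\cC\cV_\mu\simeq\cS_{\lambda+\mu},
\]
together with $\cV_0=\cC$, the monoidal unit of $\lmod{\cC}$.

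First I would observe that for each $\lambda\in\mC$ we have
\[
\cV_\lambda\boxtimes_\cC\cV_{-\lambda}\simeq\cV_0\simeq\cC,
\qquad
\cV_{-\lambda}\boxtimes_\cC\cV_\lambda\simeq\cV_0\simeq\cC,
\]
so that $\cV_\lambda$ is invertible with inverse $\cV_{-\lambda}$ (which also matches $\cV_\lambda^*\simeq{}^*\cV_\lambda\simeq\cV_{-\lambda}$ from the same proposition). Similarly, for $\cS_\lambda$ we use
\[
\cS_\lambda\boxtimes_\cC\cS_{-\lambda}\simeq\cV_0\simeq\cC,
\qquad
\cS_{-\lambda}\boxtimes_\cC\cS_\lambda\simeq\cV_0\simeq\cC,
\]
which exhibits $\cS_\lambda$ as invertible with inverse $\cS_{-\lambda}$. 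Since the only indecomposable fully exact $\cC$-module categories are, by \Cref{prop:S-mod-fully-exact-class}, precisely $\cV_\lambda$ and $\cS_\lambda$ for $\lambda\in\mC$ (the modules $\cV_\infty$ and $\cS_\infty$ being excluded), this covers all cases. Alternatively, one could invoke \Cref{prop:Minv-AM-equiv}: the functor-category computations give $\cC^*_{\cV_\lambda}=\Fun_\cC(\cV_\lambda,\cV_\lambda)\simeq\cV_{\lambda-\lambda}=\cV_0\simeq\cC$ and likewise $\cC^*_{\cS_\lambda}\simeq\cV_0\simeq\cC$, and then one argues via \Cref{lem:F-equiv} that the \action functor $A_\cM$ must be an equivalence, hence $\cM$ is invertible by \Cref{prop:Minv-AM-equiv}.

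There is essentially no obstacle here beyond invoking the previous results correctly; the only point requiring a sentence of care is that the stated equivalences are equivalences \emph{of left $\cC$-module categories}, not merely of underlying categories, so that they genuinely witness invertibility in the monoidal $2$-category $\lmod{\cC}$ in the sense of \Cref{def:inv}. This is exactly what \Cref{prop:Sweedler-products} provides. I would therefore write the proof in two or three lines, citing \Cref{prop:S-mod-fully-exact-class} for the classification and \Cref{prop:Sweedler-products} for the products with $\cV_{-\lambda}$ and $\cS_{-\lambda}$ respectively, and noting that $\cV_0\simeq\cC$ is the unit object of $\lmod{\cC}$.
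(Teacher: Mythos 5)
Your proposal is correct and follows essentially the same route as the paper: the paper's proof likewise applies \Cref{prop:Sweedler-products} to obtain $\cM^*\boxtimes_\cC\cM\simeq\cV_0\simeq\cM\boxtimes_\cC\cM^*$ with $\cM^*\simeq\cV_{-\lambda}$ (resp.\ $\cS_{-\lambda}$) and concludes from $\cV_0=\cC$ that $\cM$ is invertible. Your additional remarks (the alternative argument via $A_\cM$ and the care about module-category equivalences) are consistent but not needed.
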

\begin{proof}
\Cref{prop:Sweedler-products} shows that for these module categories $\cM=\cV_\lambda$ or $\cS_\lambda$, with $\lambda\in \mC$, we have $\cM^*\boxtimes_\cC \cM\simeq \cV_0 \simeq \cM\boxtimes_\cC \cM^*$. Since $\cV_0=\cC$, this shows that $\cM$ is invertible. 
\end{proof}

\begin{remark}
The group $\pi_0(\invmod{\cC})$ of all objects in $\invmod{\cC}$ up to equivalence is the Picard $1$-group of~$S$. This group is $\mC \times C_2$ as computed in \cite{Car}. By \Cref{prop:Sweedler-products} and \Cref{cor:Sweedler-fully-exact-invertible}, we recover this result and find representatives of the elements of the Picard $1$-group in terms of concrete module categories.
\end{remark}

 We finish by observing that there are only two separable module categories over $\cC$.

\begin{lemma}\label{lem:separable-Sweedler}
The only indecomposable relatively projective left $\cC$-module categories are the separable module categories $\cC=\cV_0$ and $\cD=\cS_0$. In particular, these are the only indecomposable separable $\cC$-module categories.
\end{lemma}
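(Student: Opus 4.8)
The plan is to prove both halves: that $\cV_0=\cC$ and $\cS_0=\cD$ are relatively projective, and that no other indecomposable fully exact $\cC$-module category is. For the first, note that $\cC\simeq\rmodint{\cC}{\one}$ with the (trivially separable) algebra $\one$, and $\cD\simeq\rmodint{\cC}{I}$ with $I$ separable by \Cref{lem:D-separable}; since a bimodule section of $m_A$ exhibits any separable algebra $A$ as a direct summand of $A\otimes A=I^b(\one)$, these are relatively projective, giving both the separability and relative projectivity claims. For the converse, if $\cM$ is indecomposable and relatively projective it is fully exact by \Cref{prop:rel-proj-fully-exact}, so by \Cref{prop:S-mod-fully-exact-class} we have $\cM\simeq\cV_\lambda$ or $\cM\simeq\cS_\lambda$ for some $\lambda\in\mC$, and it remains to exclude $\lambda\neq 0$.

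So assume $\lambda\neq0$ and fix a realization $\cM\simeq\rmodint{\cC}{A}$ in which the regular $A$-bimodule is a direct summand of $I^b(X)=A\otimes X\otimes A$ for some $X\in\cC$ (\Cref{def:rel-proj-bimodule}). By \Cref{cor:Sweedler-fully-exact-invertible} this $\cM$ is invertible, so the \action functor $A_\cM\colon\cC\to\cC^*_\cM$ is a monoidal equivalence by \Cref{prop:Minv-AM-equiv}; composing with the $\cC$-module equivalence $E\colon\cC^*_\cM\isomorph\bimodints{A}{\cC}$ of \Cref{prop:fun-A-bimod-B} produces a $\cC$-module equivalence $\Theta\colon\cC\isomorph\bimodints{A}{\cC}$, $Z\mapsto Z\otimes A$, taking $\one_\cC$ to the regular $A$-bimodule. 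Since $I^b$ is $\cC$-linear, $\Theta^{-1}\circ I^b$ is a $\cC$-module endofunctor of the regular module $\cC$, hence $\cong(-)\otimes W$ with $W:=\Theta^{-1}(I^b(\one_\cC))$ by \Cref{ex:FunCM}; moreover the forgetful functor $\bimodints{A}{\cC}\to\cC$, being right adjoint to $I^b$ and transporting under $\Theta$ to $(-)\otimes A$, yields an adjunction $(-)\otimes W\dashv(-)\otimes A$, so comparing with the canonical right adjoint $(-)\otimes W^*$ of $(-)\otimes W$ gives $W\cong{}^*A$. In particular, the assumed summand relation becomes: $\one_\cC$ is a direct summand of $X\otimes W$ with $W\cong{}^*A$.

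The key point is now that $A$ is a projective object of $\cC$. Indeed, $\cV_\lambda\simeq\Vect$ and $\cS_\lambda\simeq\sVect$ are semisimple categories for $\lambda\neq0$, so the regular $A$-module $A$ is projective in $\cM$; the forgetful functor $\cM\to\cC$ is $\cC$-linear and is a left adjoint to an exact functor (its right adjoint exists and is exact by \Cref{prop:exact-adjoints}, since $\cC$ is an exact $\cC$-module, \Cref{ex:exact}), hence preserves projectives, so $A$ is projective in $\cC$. Then $W\cong{}^*A$ is projective (duals of projectives are projective, \Cref{lem:tensor-cat-properties}), so every $X\otimes W$ is projective because the projectives form a tensor ideal; but $\one_\cC=\mC_+$ is not projective in $\cC=\lmod{S}$, so it cannot be a direct summand of any $X\otimes W=\Theta^{-1}(I^b(X))$ — contradicting that the regular $A$-bimodule is a summand of $I^b(X)$. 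Hence $\lambda=0$; since separable module categories are relatively projective, this also shows $\cV_0=\cC$ and $\cS_0=\cD$ are the only indecomposable separable $\cC$-module categories. I expect the main obstacle to be the bookkeeping of the $\cC$-module equivalences in the second paragraph, which is what converts "$A$ relatively projective as a bimodule" into the clean statement "$\one_\cC$ is a summand of $X\otimes{}^*A$"; once that is in place, the projectivity of ${}^*A$ versus the non-projectivity of $\one_\cC$ closes the argument at once.
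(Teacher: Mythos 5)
Your proof is correct, but it takes a genuinely different route from the paper. The paper's proof exploits the fact that relative projectivity of an algebra as a bimodule (\Cref{def:rel-proj-bimodule}) makes no reference to the braiding, whereas full exactness does: for each $\lambda\neq 0$ it exhibits a braiding $R_t$ on $\cC$ (namely $t=-2/\lambda$, in the notation of \Cref{def:Ct}) for which $\cV_\lambda$, resp.\ $\cS_\lambda$, fails to be fully exact, which by \Cref{prop:rel-proj-fully-exact} and \Cref{rem:sep-indep-braiding} rules out relative projectivity; this requires the computation of $\Fun_{\cC_t}(\cV_\lambda,\cV_\lambda)$ (with the $\cS_\lambda$ case deferred to a forthcoming paper). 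Your argument instead stays with the fixed braiding and combines semisimplicity of $\cV_\lambda,\cS_\lambda$ for $\lambda\neq 0$ (forcing any realizing algebra $A$ to be projective in $\cC$, via the exact right adjoint of the forgetful functor) with invertibility (\Cref{cor:Sweedler-fully-exact-invertible}), which lets you transport the summand condition $A\leq_\oplus I^b(X)$ along $\Theta=E\circ A_\cM$ into the statement that $\one_\cC$ is a summand of the projective object $X\otimes{}^*A$ of $\cC$ --- impossible since $\cC$ is not semisimple. Your identifications ($\cC$-module endofunctors of $\cC$ as right multiplications via \Cref{ex:FunCM}, and $W\cong{}^*A$ by uniqueness of adjoints) are all sound. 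What your approach buys is self-containedness and generality: it shows that over \emph{any} non-semisimple finite braided tensor category, no semisimple invertible module category can be relatively projective, without any case-by-case cocycle computation. What the paper's approach buys is the complementary observation, used again in \Cref{rem:vect-Sweedler}, that full exactness of a fixed module category genuinely depends on the choice of braiding even within a twist-equivalence class.
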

\begin{proof}
    Recall from \Cref{def:Ct} that $\cC_t$ denotes the braided tensor category $\cC$ with braiding defined by the R-matrix $R_t$ of \eqref{eq:Rt}. For $t,\lambda \in\mC\setminus\Set{0}$, using the techniques explained in the proof of \Cref{prop:Sweedler-products}, we obtain an equivalence of left $\cC$-module categories 
    $$\Fun_{\cC_t}(\cV_\lambda,\cV_\lambda)\simeq \lmod{S^*_0},$$
    with the notation from \eqref{eq:H*mu},
    where the $\cC$-action on $\lmod{S^*_0}$ is given by the $S$-coaction from \eqref{eq:twisted-coaction}, which corresponds to the $S$-coaction from \Cref{lem:H-com-alg} using the twisted R-matrix $R_{t+2\lambda^{-1}}=(R_{\lambda^{-1}})^{-1}_{21}\cdot R_t\cdot R_{\lambda^{-1}}$. Thus, for $\lambda=-2/t$, it follows that $\cV_\lambda$ is \emph{not} fully exact as a $\cC_t$-module category by \Cref{lem:Vect-fully-exact}. A similar argument shows that $\cS_{-2/t}$ is not fully exact as a $\cC_{t}$-module category. 
    
    However, we observe that the definition of an algebra being relatively projective as a bimodule in \Cref{def:rel-proj-bimodule} does not depend on the choice of braiding of $\cC$. Thus, if $\cV_\lambda$ was relatively projective in the sense of \Cref{def:rel-proj-cat}, then it would be fully exact by \Cref{prop:rel-proj-fully-exact} for \emph{any} choice of braiding on $\cC$. Hence, for $\lambda \in \mC\setminus\Set{0}$, the $\cC$-module categories $\cV_\lambda$ and $\cS_\lambda$ cannot be relatively projective.

    For $\lambda=0$, it is clear that $\cV_0=\cC$ is separable  and $\cS_0=\cD$ is separable by \Cref{lem:D-separable}.
\end{proof}

\begin{remark}\label{rem:vect-Sweedler}
Denote by $\cC_t$ the symmetric tensor category $\cC$ with braiding obtained from the R-matrix $R_t$ of \eqref{eq:Rt}. By \Cref{eq:Ct-equivalence}, there is an equivalence  of symmetric tensor categories $\cC\isomorph \cC_t$. We note that $\Vect$ is fully exact as a $\cC_t$-module category for all $t\neq 0$. This follows from \Cref{lem:Vect-fully-exact} as the map $\phi_{R_t}$, with $t\neq 0$, is an isomorphism. However, $\Vect$ is not fully exact as a $\cC_0$-module. Hence, full exactness of $\Vect$ depends on the choice of the braiding, even when the braidings are twist equivalent.

As shown in the proof of \Cref{lem:separable-Sweedler}, the module category $\Vect^{-t/2}$ is not fully exact as a left $\cC_t$-module.
The monoidal equivalence $I:=I_t^{-t/2}$ from \Cref{eq:Ct-equivalence}
induces a monoidal $2$-equivalence $\Res_{I}\colon \lmod{\cC}\to \lmod{\cC_t}$ via \Cref{prop:res} which preserves fully exact module categories by \Cref{lem:Res-fully-exacts}. This equivalence maps the non-fully exact $\cC$-module $\Vect$ to the non-fully exact $\cC_t$-module $\Vect^{-t/2}$. This follows as, by definition, $\Res_{I}(\Vect)$ is the $\cC_t$-module category defined on $\Vect$ with action given (as in \Cref{ex:tensor-act}) by the monoidal functor $F\circ I$  which is equal to the monoidal functor $F^{-t/2}$ from \eqref{eq:F-lambda} that defines the $\cC$-action on $\Vect^{-t/2}$.
 \end{remark}

\begin{remark}\label{rem:pivotal-mod-cat}
    We see from the example $\cC=\lmod{S}$ that the concepts of fully exact $\cC$-module categories and pivotal $\cC$-module categories \cite{ShiP} are not directly related. We see from \cite{ShiP}*{Table\,1} that the fully exact $\cV_\lambda$, for all $\lambda\in \mC$, and the non-fully exact $\Vect$ are all pivotal $\cC$-module categories. However,  the fully exact $\cS_\lambda$, for all $\lambda\in \mC$, and the non-fully exact $\sVect$ are all not pivotal $\cC$-module categories.
\end{remark}

\appendix

\section{Monoidal and braided monoidal categories}
\label{sec:basics}
We collect conventions and basic definitions concerning  monoidal categories here.

\subsection{Monoidal categories}\label{appendix:monoidal}

A monoidal category $\cC$ comes with the datum of an associator, a natural isomorphism with components 
$$\alpha_{X,Y,Z}\colon X\otimes (Y\otimes Z)\to (X\otimes Y)\otimes Z$$
satisfying the usual pentagon
and triangle
axioms, see e.g.\ \cite{EGNO}*{Prop.\ 2.2.3--2.2.4}.
However, we will assume that $\cC$ is strict and $\alpha$ consists of identity morphisms. Note that  $\cC$ can always be replaced with an equivalent strict monoidal category~\cite{ML}. 

For a category $\cM$, $\cM^\oop$ denotes the opposite category with morphism spaces 
$$\Hom_{\cM^\oop}(X,Y)=\Hom_\cM(Y,X).$$
If $\cC$ is a monoidal category, then $\cCop$ denotes the category $\cC$ with \emph{opposite} tensor product 
\begin{equation}
X\otimes^{\oop} Y= Y\otimes X.\label{eq:tensor-op}
\end{equation}

A monoidal functor $F\colon \cC\to \cD$ is a functor together with a natural isomorphism
$$\mu^F\colon F(X)\otimes F(Y) \to F(X\otimes Y)$$
which satisfies the coherence condition that the diagram 
\begin{align}
    \label{eq:mu-coherence}
    \vcenter{\hbox{\xymatrix{
    F(X)\otimes F(Y)\otimes F(Z)\ar[rr]^{\mu^F_{X,Y}\otimes F(Z)}\ar[d]_{F(X)\otimes \mu^F_{Y,Z}}&& F(X\otimes Y)\otimes F(Z)\ar[d]^{\mu^F_{X,Y\otimes Z}}\\
    F(X)\otimes F(Y\otimes Z)\ar[rr]^{\mu^F_{X,Y\otimes Z}}&& F(X)\otimes F(Y\otimes Z)
    }}}
\end{align}
commutes for any objects $X,Y,Z$ in $\cC$. We assume that all monoidal functors are strictly unital, i.e. that $F(\one_\cC)=\one_\cD$. This assumption is justified in \cite{EGNO}*{Rem.\,2.4.6}.

A \emph{monoidal natural transformation} $\eta\colon F\to G$, where $F$ and $G$ are monoidal functors, is a natural transformation satisfying that the diagram 
\begin{equation}
\xymatrix{
F(X)\otimes F(Y)\ar[d]^{\eta_X\otimes \eta_Y}\ar[rr]^{\mu^F_{X,Y}}&&F(X\otimes Y)\ar[d]^{\eta_{X\otimes Y}}\\
G(X)\otimes G(Y)\ar[rr]^{\mu^F_{X,Y}}&&G(X\otimes Y)
}    
\end{equation}
commutes for all objects $X,Y$ of $\cC$.

\subsection{Duals in monoidal categories}
\label{sec:duality}

We use the following conventions on duals in monoidal categories. A \emph{left dual} of an object $X$ of $\cC$ is an  object $X^*$ together with morphisms 
$$\ev^l_X\colon X^*\otimes X \to \one, \qquad \coev^l_X\colon \one \to X\otimes X^*$$
satisfying the usual snake identities. Similarly, a \emph{right dual} is an object ${}^*X$ together with morphisms 
$$\ev^r_X\colon X\otimes {}^*X \to \one, \qquad \coev^r_X\colon \one \to {}^*X\otimes X$$
satisfying similar identities. The monoidal category $\cC$ is \emph{rigid} if every object as a left and right dual. In this case, left duals assemble into a monoidal equivalence 
$$(-)^*\colon \cC\to \cCop.$$
Because $\cC$ is strict monoidal, we can assume that the canonical isomorphism ${}^*(X\otimes Y)={}^*Y \otimes {}^*X$ is an identity. Similarly, there is a right duality functor. There are canonical natural isomorphism
$$\eta^l_X\colon X\isomorph ({}^*X)^*, \qquad \eta^r_X\colon X\isomorph {}^*(X^*), \qquad $$
which are monoidal isomorphisms. 

The following lemma is well-known, see \cite{EGNO}*{Section~4, Section~6.1}. 
\begin{lemma}\label{lem:tensor-cat-properties}
    The following statements hold for $\cC$ an abelian rigid monoidal category:
    \begin{enumerate}[(i)]
        \item The tensor product $\otimes\colon \cC\times \cC\to \cC$ is exact in both variables;
        \item For every projective object, the left (and right) dual is projective;
        \item Projective objects form a $2$-sided tensor ideal in $\cC$.
    \end{enumerate}
\end{lemma}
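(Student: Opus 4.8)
\textbf{Proof plan for \Cref{lem:tensor-cat-properties}.}
The three statements are standard and I would assemble them from the definitions of a rigid monoidal category, the existence of enough projectives, and the adjunctions coming from the duality functors. First, for (i): given an object $X$, the functor $X\otimes(-)$ has both a left and a right adjoint, namely $X^*\otimes(-)$ and ${}^*X\otimes(-)$, by the usual unit/counit maps built from $\ev^l_X,\coev^l_X$ (resp. $\ev^r_X,\coev^r_X$) together with the snake identities. A functor with a right adjoint preserves cokernels and a functor with a left adjoint preserves kernels, so $X\otimes(-)$ is exact; symmetrically $(-)\otimes Y$ is exact. Since the ambient category is assumed $\Bbbk$-linear abelian, this suffices; alternatively one can simply cite \cite{EGNO}*{Prop.\,4.2.1}.

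For (ii): let $P$ be projective. I would show $P^*$ is projective by checking that $\Hom_\cC(P^*,-)$ is exact. Using the duality adjunction, $\Hom_\cC(P^*, Z)\cong \Hom_\cC(\one, P\otimes Z)$ (via $\coev$ and $\ev$), and $P\otimes(-)$ is exact by part (i) while $\Hom_\cC(\one,-)$ is exact because $\one$ is projective — here we use that in a finite tensor category the unit $\one$ is projective precisely when the category is semisimple, so this argument needs care; the cleaner route is to observe that $\Hom_\cC(P^*,Z)\cong \Hom_\cC(P, {}^*Z)\cong\Hom_\cC(P,-)\circ {}^*(-)$ evaluated at $Z$ is not quite functorial in the right way either. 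The correct standard argument is: $P$ is projective iff $P$ is a direct summand of a projective generator's sum, but more directly, $\Hom_\cC(P^*\otimes X, Y)\cong\Hom_\cC(X, P\otimes Y)$ naturally, so the functor $P^*\otimes(-)$ is left adjoint to $P\otimes(-)$; since $P\otimes(-)$ is exact, its left adjoint $P^*\otimes(-)$ preserves projectives, and applying this to $X=\one$ shows $P^*=P^*\otimes\one$ is projective. The analogous argument with right duals handles ${}^*P$. I would cite \cite{EGNO}*{Prop.\,6.1.3} as the reference.

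For (iii): I must show that if $P$ is projective and $X$ is arbitrary, then $X\otimes P$ and $P\otimes X$ are projective. For $X\otimes P$: the functor $\Hom_\cC(X\otimes P, -)\cong \Hom_\cC(P, {}^*X\otimes(-))$ is a composition of the exact functor ${}^*X\otimes(-)$ (part (i)) followed by the exact functor $\Hom_\cC(P,-)$ (projectivity of $P$), hence exact, so $X\otimes P$ is projective. Symmetrically $\Hom_\cC(P\otimes X,-)\cong\Hom_\cC(P,(-)\otimes X^*)$ is exact, so $P\otimes X$ is projective. This is exactly the statement that projectives form a two-sided tensor ideal; closure under retracts is automatic since a direct summand of a projective is projective. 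The main (minor) obstacle is bookkeeping the left-versus-right dual conventions consistently with \Cref{sec:duality} so that the adjunction isomorphisms are stated with the correct sidedness; there is no substantive difficulty, and I would keep the proof to a few lines or simply reference \cite{EGNO}*{Sec.\,4.2,\,Sec.\,6.1}.
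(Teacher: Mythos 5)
Parts (i) and (iii) are correct and are exactly the standard adjunction arguments that the paper is implicitly invoking when it cites \cite{EGNO}*{Sections~4,\,6.1} without proof. Part (ii), however, contains a genuine gap. Your final argument is: ``$P^*\otimes(-)$ is left adjoint to the exact functor $P\otimes(-)$, hence preserves projectives, and applying this to $X=\one$ shows $P^*=P^*\otimes\one$ is projective.'' The general fact you invoke only says that $P^*\otimes Q$ is projective whenever $Q$ is projective; evaluating at $X=\one$ therefore requires $\one$ to be projective, which fails precisely in the non-semisimple setting this paper is concerned with (and you had already flagged this very issue a few lines earlier when you rejected the $\Hom_\cC(\one,-)$ route, only to commit the same error in the ``corrected'' version). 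Note also that the conclusion ``$P^*\otimes Q$ projective for $Q$ projective'' already follows from part (iii) without any hypothesis on $P$, so this argument extracts no information about $P^*$ itself.

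The standard repair is to prove (iii) first (your argument for it is fine and independent of (ii)) and then argue as follows: by the duality adjunction, $\Hom_\cC(P^*,X)\cong\Hom_\cC(\one,P\otimes X)$ naturally in $X$. Given a short exact sequence $0\to X\to Y\to Z\to 0$, tensoring with $P$ gives a short exact sequence $0\to P\otimes X\to P\otimes Y\to P\otimes Z\to 0$ whose terms are projective by (iii); since $P\otimes Z$ is projective this sequence splits, so applying the (merely left exact) functor $\Hom_\cC(\one,-)$ still yields a short exact sequence. Hence $\Hom_\cC(P^*,-)$ is exact and $P^*$ is projective; the argument for ${}^*P$ is symmetric. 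This works in any abelian rigid monoidal category, as required by the statement.
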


\subsection{Braided monoidal categories}\label{sec:braided-mon-cat}

Throughout most of the paper, $\cC$ will be a braided monoidal category with braiding $\psi\colon \otimes \isomorph \otimes^\oop$, or, in components
$$\psi_{X,Y}\colon X\otimes Y\isomorph Y\otimes X=X\otimes^\oop Y,$$
for objects $X,Y$ in $\cC$,
satisfying the two hexagon axioms, see e.g.\ \cite{EGNO}*{Def.\ 8.1.1}.
The components of the inverse braiding are denoted by 
$$\psi^{-1}_{X,Y}=(\psi_{Y,X})^{-1}\colon X\otimes Y\isomorph Y\otimes X.$$
The inverse braiding also defines a  braiding on $\cC$, sometimes referred to as the \emph{reverse braiding}. We denote by $\cCrev$ the braided monoidal category $\cC$, with the same monoidal product but the reverse braiding $\psi^{-1}$. Note that the monoidal category $\cCop$ from \eqref{eq:tensor-op} inherits a braiding from $\cC$ defined by 
$$\psi^{\cCop}_{X,Y}:=\psi_{Y,X}\colon X\otimes^{\oop} Y\isomorph Y\otimes^{\oop} X.$$

For a braided monoidal category $\cC$, it is well-known that the identity functor $\id_\cC\colon \cC \to \cC$ has two natural structures of  monoidal equivalences
\begin{equation}\label{eq:I+-}
I^{\pm }=(\id_\cC,\mu^\pm) \colon \cCop\to \cC
\end{equation}
where the monoidal structure is given by the braiding or the inverse braiding, $\mu^\pm_{X,Y} =\psi^{\pm 1}_{X,Y}$. These functors are functors of braided monoidal categories since 
    $$\mu^{\pm}_{Y,X}\circ \psi_{X,Y}=\psi_{X,Y}^{\cCop}\circ \mu^{\pm}_{X,Y}.$$

\section{Module and bimodule categories}\label{appendix:mod-bimod}

We now review key definitions and constructions concerning (bi)module categories.

\subsection{Module categories}
\label{sec:mod-cats}

Let $\cC$ be a finite tensor category as defined in the beginning of \Cref{sec:rel-Del}. 
A \emph{left $\cC$-module category} is a finite category $\cM$ together with a functor 
$\lact\colon \cC\times \cM\to \cM,$
which is right exact in the $\cC$-component,\footnote{By rigidity of $\cC$, $\lact$ is always exact in the $\cM$ component \cite{EGNO}*{Ex.\,7.3.2} and by \cite{DSS1}*{Cor.\,2.26}, $\lact$ is also exact in the $\cC$ component.} and a natural isomorphism $l\colon \lact (\id_\cC\times \lact)\to \lact (\otimes \times \id_\cM$), called the \emph{module associator}, with components 
$$l_{X,Y,M}\colon X\lact (Y\lact M) \isomorph X\otimes Y\lact M$$
satisfying that the diagram
\begin{align}\label{eq:mod-pentagon-l}
\vcenter{\hbox{
\xymatrix{
& X\lact (Y \lact (Z\lact M))\ar[dl]_-{X\lact l_{Y,Z,M}}\ar[rd]^-{l_{X,Y,Z\lact M}}&\\
X\lact (Y \otimes  Z)\lact M)\ar[d]_-{l_{X,Y\otimes Z,M}}&&(X\otimes Y)\lact (Z \lact M)\ar[d]^{l_{X\otimes Y,Z,M}}\\
X\otimes (Y \otimes Z)\lact M \ar@{=}[rr]&&(X\otimes Y)\otimes Z \lact M
}}}
\end{align}
commutes. 
For simplicity, we assume that $\cM$ is strictly unital, i.e.\ that $\one \lact M=M$ for all $M$ in $\cM$. Indeed, by \Cref{rem:strictification}, every finite $\cC$-module category is equivalent to a strict module category which is, in particular, strictly unital. Since the monoidal category $\cC$ is also strictly unital, we have
\begin{equation}\label{eq:mod-fun-unit}
l_{X,\one,M}=\id_{X\lact M}=l_{\one,X,M}.
\end{equation}
We will often refer to finite  $\cC$-module categories simply as \emph{$\cC$-module categories} or \emph{$\cC$-modules}. The functor $\lact$ is called the \emph{action functor}. The corresponding monoidal functor 
\begin{equation}\label{eq:rho-lact}
    \rho_\cM\colon \cC\to \End(\cM), \quad X\mapsto X\lact (-)
\end{equation}
will also be referred to as action functor.

A \emph{right $\cC$-module category} or \emph{right $\cC$-module} is a finite category with a functor 
$$\ract \colon \cM\times \cC\to \cM,$$
right exact in $\cC$, 
 a natural isomorphism $r\colon \ract(\ract \times \id_\cC) \to \ract(\id_\cM \times \otimes)$ with components 
$$r_{M,X,Y}\colon (M\ract X)\ract Y\isomorph M\ract X\otimes Y$$
satisfying the analogous coherence diagram
\begin{align}\label{eq:mod-pentagon-r}
\vcenter{\hbox{
\xymatrix{
& ((M\ract X)\ract Y)\ract Z \ar[dl]_-{ r_{M,X,Y}\ract Z}\ar[rd]^-{r_{M\ract X,Y,Z}}&\\
(M\ract (X \otimes  Y))\ract Z \ar[d]_-{r_{M,X\otimes Y,Z}}&&(M\ract X)\ract (Y \otimes Z)\ar[d]^{r_{M,X,Y\otimes Z}}\\
M\ract (X\otimes Y) \otimes Z \ar@{=}[rr]&&M\ract X\otimes (Y\otimes Z)
}}},
\end{align}
and the action is again assumed to be strictly unital, i.e.\ $M\ract \one=M$ for all $M$ in $\cM$.

\begin{example}\label{ex:tensor-act}
For a right exact monoidal functor $F\colon \cC\to \cD$, we can endow $\cD$ with the structure of a left $\cC$-module category with action defined by 
$$C\lact D=F(C)\otimes D, \qquad \forall C\in \cC, D\in \cD,$$
and module associator
\begin{align}\label{eq:tensor-action-ass}
\vcenter{\hbox{
\xymatrix@R=5pt{
C\lact (C'\lact D)    \ar[rr]^{l^F_{C,C',D}}\ar@{=}[d]&&  C\otimes C'\lact D\ar@{=}[d]\\
F(C)\otimes F(C')\otimes D\ar[rr]^{\mu^F_{C,C'}\otimes D} && F(C\otimes  C')\otimes D.
}}}
\end{align}
In this case, we say that $\cC$ acts on $\cD$ via \emph{restriction along $F$}.
\end{example}

For a left $\cC$-module category, the module coherence
$l$ 
defines the coherence $(\mu^\lact_{X,Y})_M:=l_{X,Y,M}$ of the monoidal functor 
$$\lact \colon \cC\to \Fun(\cM).$$
For a right $\cC$-module category $\cN$, a coherence for the functor 
$$\ract\colon \cC^{\otimes\oop}\to \Fun(\cN)$$
can be defined by 
$(\mu^{\ract}_{Y,X})_N=r_{N,X,Y}.$

\medskip

A \emph{left $\cC$-module functor} $G\colon \cM\to \cN$ is defined by a pair $(G, s^G)$, where $G\colon \cM \to \cN$ is a ($\Bbbk$-linear) functor, $s^G\colon G(\lact)\to \lact \circ G$ is a natural isomorphism with components 
$$s^G_{X,M}\colon G(X\lact M)\to X\lact G(M),$$
that satisfying the coherences conditions of \cite{EGNO}*{Def.\,7.2.1}. That is, the diagram
\begin{align}
\label{eq:C-linear-coherence}
\vcenter{\hbox{
\xymatrix{G(X\lact (Y\lact M))\ar[d]_{G(l^\cM_{X,Y,M})}\ar[rr]^{s^G_{X,Y\lact M}}&&X\lact G(Y\lact M)\ar[d]^{X\lact s^G_{Y,M}}\\
G(X\otimes Y\lact M)\ar[dr]^{s^G_{X\otimes Y,M}}&&X\lact (Y\lact G(M))\ar[dl]_-{l^\cN_{X,Y,G(M)}}\\
&X\otimes Y\lact G(M)&
}}}
\end{align}
commutes. We will often denote the pair $(G,s^G)$ simply by $G$ when the $\cC$-module structure is clear from context. All module functors considered will be right exact.

Since we assume that $\cM$ and $\cN$ are strictly unital, it follows that $s_{\one,M}^G=\id_{G(M)}$, using the identity proved in \cite{Will}*{Prop.\,A.18} in the general case.

Right $\cC$-module functors are defined analogously. We will often refer to $\cC$-module functors simply as \emph{$\cC$-linear functors}.
The following result has been proved in \cite{DSS1}*{Sec.\,2.1}.

\begin{lemma}\label{lem:exact-vs-adjoints}
    A left $\cC$-module functor $G\colon \cM\to \cN$ has a left (or right) adjoint as a left $\cC$-module functor if and only if $G$ is left (respectively, right) exact.
\end{lemma}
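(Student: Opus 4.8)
The plan is to separate the statement into its two ``only if'' directions, which are formal, and its two ``if'' directions, which carry the actual content. For the ``only if'' part: if $G$ admits a left adjoint in $\lmod{\cC}$ then it admits one as a $\Bbbk$-linear functor, so $G$ is a right adjoint, hence preserves all limits, in particular kernels, and is therefore left exact. Dually, if $G$ admits a right adjoint in $\lmod{\cC}$ then $G$ is a left adjoint, preserves colimits, and is right exact — this last conclusion being automatic under our standing conventions, so the real content of the statement is the converse: \emph{every} right exact $\cC$-module functor has a right adjoint in $\lmod{\cC}$, and every left exact one has a left adjoint there.

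For the ``if'' part, I would first produce the adjoint at the level of underlying $\Bbbk$-linear functors. By \Cref{thm:morita} we may assume $\cM=\rmodint{\cC}{A}$ and $\cN=\rmodint{\cC}{B}$ for algebras $A,B$ in $\cC$, so that, as abelian $\Bbbk$-linear categories, $\cM$ and $\cN$ are categories of finite-dimensional modules over finite-dimensional $\Bbbk$-algebras. A right exact $\Bbbk$-linear functor between such categories is, by a Watts-type argument, of the form $(-)\otimes M$ for a bimodule $M$ that is finitely generated on the target side, and hence has a right adjoint $G^R$ (given by an internal $\Hom$); passing to opposite categories gives that a left exact $G$ has a left adjoint $G^L$. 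It then remains to equip $G^R$ (resp.\ $G^L$) with a $\cC$-module functor structure so that the adjunction unit and counit become $\cC$-module natural transformations; once this is done, $G^R$ (resp.\ $G^L$) is an adjoint to $G$ in the $2$-category $\lmod{\cC}$.

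The key point is that $\cC$ is rigid, so that for every object $X$ the functor $X\lact(-)$ on $\cM$ (and on $\cN$) has a left adjoint ${}^*X\lact(-)$ and a right adjoint $X^*\lact(-)$, built from the (co)evaluations of $\cC$ and the module associators. Taking an adjunction $G\dashv G^R$, I would define the $\cC$-module structure of $G^R$ as the mate of $s^G$ under these adjunctions. Concretely: since $s^G$ is invertible, $G\circ({}^*X\lact-)\cong({}^*X\lact-)\circ G$, and one checks that $G^R\circ(X\lact-)$ and $(X\lact-)\circ G^R$ are both right adjoint to $G\circ({}^*X\lact-)$ — the second one via the canonical identification $({}^*X)^*\cong X$ — so they are canonically isomorphic, and this canonical isomorphism is the required $s^{G^R}_{X,N}\colon G^R(X\lact N)\isomorph X\lact G^R(N)$. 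The symmetric argument with left duals, together with the fact that every object is of the form ${}^*X$, produces the $\cC$-module structure on $G^L$. (Equivalently, one can invoke Kelly's doctrinal adjunction: $G^R$ always acquires a lax $\cC$-module structure, which is strong precisely when the relevant mate is invertible, and rigidity of $\cC$ guarantees invertibility.)

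What remains is the diagrammatic verification that $s^{G^R}$ satisfies the coherence \eqref{eq:C-linear-coherence} and is strictly unital, and that the unit and counit of $G\dashv G^R$ are $\cC$-module natural transformations. This follows by mate calculus from the coherence of $s^G$ (the pentagon \eqref{eq:FunC-coherence}/\eqref{eq:C-linear-coherence}), the snake identities, and the coherence of the duality of $\cC$. This bookkeeping is the only real obstacle: it is lengthy but entirely formal, and it is the place where the coherence structure of module functors is used in an essential way. I expect the cleanest write-up to record the mate construction explicitly and then reduce all coherences to a single string-diagram identity, or simply to cite the doctrinal-adjunction principle and check the invertibility condition.
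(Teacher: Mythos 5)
Your proof is correct and is essentially the argument the paper relies on: the paper gives no proof of its own for this lemma, merely citing \cite{DSS1}*{Sec.\,2.1}, where the statement is established exactly along your lines (Eilenberg--Watts to produce the underlying $\Bbbk$-linear adjoint between finite abelian categories, then doctrinal adjunction/mates together with rigidity of $\cC$ to transport the module structure to the adjoint and verify invertibility of the mate). The only slip is a harmless convention mismatch: with the paper's duality conventions ($\ev^l_X\colon X^*\otimes X\to\one$, $\coev^l_X\colon\one\to X\otimes X^*$, $\ev^r_X\colon X\otimes{}^*X\to\one$), the functor $X\lact(-)$ has \emph{left} adjoint $X^*\lact(-)$ and \emph{right} adjoint ${}^*X\lact(-)$, the opposite of what you wrote; this does not affect the argument, since rigidity supplies adjoints on both sides either way.
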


A \emph{left $\cC$-module transformation} $\eta\colon G\to H$ between two left $\cC$-module functors 
$G$ and $H$ from $\cM$ to $\cN$ is a natural transformation $\eta$ satisfying 
\begin{equation}\label{eq:C-transform}
\vcenter{\hbox{
\xymatrix{
G(X\lact M) \ar[rr]^{\eta_{X\lact M}}\ar[d]^{s^G_{X,M}}&&H(X\lact M)\ar[d]^{s^H_{X,M}}\\
X\lact G(M) \ar[rr]^{X\lact \eta_{M}}&&X\lact H(M).
}}}
\end{equation}
The $2$-category of left $\cC$-module categories, right exact left $\cC$-module functors, and left $\cC$-module transformations  is denoted by $\lmod{\cC}$. For two left $\cC$-module categories $\cM$ and $\cN$ we set
$$\Fun_\cC(\cM,\cN):=\Hom_{\lmod{\cC}}(\cM,\cN).$$

Similarly, \emph{right $\cC$-module transformations} and the $2$-category $\rmod{\cC}$ are defined, and we denote 
$$\Fun(\cM,\cN)_\cC:=\Hom_{\rmod{\cC}}(\cM,\cN),$$
for right $\cC$-module categories $\cM$ and $\cN$. 

We note that the composition of two left $\cC$-linear functors $F\colon \cM\to \cN$, $G\colon \cN\to \cP$ has the $\cC$-linear structure defined by 
\begin{align}\label{eq:compose-C-lin}
\vcenter{\hbox{
\xymatrix@R=15pt{
GF(X\lact M)\ar[dr]_-{G(s^F_{X,M})}\ar[rr]^{s^{GF}_{X,M}}&&X\lact GF(M)\ .\\
&G(X\lact F(M))\ar[ru]_-{s^G_{X,F(M)}}&
}}}
\end{align}

Left $\cC^{\otimes\oop}$-module categories correspond to right $\cC$-module categories. 
Indeed, there is a $2$-equivalence
\begin{equation}\label{eq:right-C-to-left-Cop}
R\colon \rmod{\cC}\to \lmod{\cCop}, \quad (\cM,\ract, r)\mapsto (\cM,\lact^{\oop}, l^{\oop}),
\end{equation}
where we define  the left $\cCop$-action $\lact^{\oop}$ by $X\lact^{\oop} N=N\ract X$ and the left $\cCop$-module coherence  
\begin{equation} \label{eq:left-Cop-action}
l^{\cCop}_{Y,X,N}\colon Y\lact^{\oop} (X\lact^{\oop} N)=(N\ract X)\ract Y\xrightarrow{r^\cC_{N,X,Y}} N\ract X\otimes Y=Y\otimes X\lact^{\oop} N,
\end{equation}
for a given right $\cC$-action $\ract$ and right $\cC$-module coherence $r^\cC$.
On $1$-morphisms, $R$ is defined as follows. For a given right $\cC$-linear functor $(G, s^{G})\colon \cM\to \cN$, the same functor $G$ becomes a left $\cCop$-linear functor $R(\cM)\to R(\cN)$ with the left $\cCop$-linear structure
$$s^G_{X,M}\colon G(X\lact^{\oop}M)=G(M\ract X) \xrightarrow{s^G_{M,X}} G(M)\ract X=X\lact^{\oop}G(M).$$
On $2$-morphisms, $R$ acts as the identity. Note that $R$ is a strict $2$-functor in the sense that $R(G\circ H)=R(G)\circ R(H)$ for two compatible $1$-morphisms.

Restricting left $\cC$-modules along a monoidal functor gives a $2$-functor, generalizing \Cref{ex:tensor-act}.
\begin{lemma}\label{lem:restriction-2-functor}
    A  monoidal functor $F\colon \cC\to \cD$ induces a $2$-functor 
    $\Res_F\colon \lMod{\cD}\to \lMod{\cC}.$ Moreover, if $F$ preserves projective objects, the $\Res_F$ preserves exact module categories. 
\end{lemma}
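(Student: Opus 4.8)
The statement has two parts: that $\Res_F$ is a $2$-functor, and that when $F$ preserves projective objects, $\Res_F$ preserves exact module categories. The plan is to construct $\Res_F$ explicitly on all three levels of cells and then verify $2$-functoriality as a routine coherence check, before turning to the preservation of exactness, which is where the genuine content lies.

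\textbf{Construction of $\Res_F$.} First I would define $\Res_F$ on objects: given a left $\cD$-module $(\cM, \lact^\cD, l^\cD)$, set $\Res_F(\cM) := \cM$ with $\cC$-action $X \lact M := F(X)\lact^\cD M$ and module associator $l^{\Res_F(\cM)}_{X,Y,M}$ given by the composite of $F(\mu^F_{X,Y}^{-1})$-type data with $l^\cD_{F(X),F(Y),M}$, exactly as in the formula \eqref{eq:tensor-action-ass} of \Cref{ex:tensor-act} (which is the special case $\cM = \cD$ with the regular action). One checks the pentagon \eqref{eq:mod-pentagon-l} from the pentagon for $l^\cD$ together with the hexagon/associativity coherence \eqref{eq:mu-coherence} of $\mu^F$; strict unitality of $\Res_F(\cM)$ follows from strict unitality of $F$ and of $\cM$. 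Next, on $1$-morphisms: a right exact $\cD$-module functor $(G, s^G)\colon \cM \to \cN$ becomes a $\cC$-module functor $\Res_F(\cM) \to \Res_F(\cN)$ with $\cC$-linear structure $s^{\Res_F(G)}_{X,M} := s^G_{F(X),M}\colon G(F(X)\lact^\cD M)\to F(X)\lact^\cD G(M)$; the coherence \eqref{eq:C-linear-coherence} for this follows by applying \eqref{eq:C-linear-coherence} for $s^G$ at objects $F(X), F(Y)$ together with the module-associator formula just defined, which again reduces to \eqref{eq:mu-coherence} for $\mu^F$. Right exactness is preserved since the underlying functor is unchanged. Finally, on $2$-morphisms, a $\cD$-module transformation is sent to itself, and \eqref{eq:C-transform} at $F(X)$ gives \eqref{eq:C-transform} for $\Res_F$. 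Strict preservation of composition of $1$-morphisms, i.e. $\Res_F(G\circ H) = \Res_F(G)\circ \Res_F(H)$ as $\cC$-module functors, follows because the underlying functors compose strictly and the formula \eqref{eq:compose-C-lin} for the $\cC$-linear structure of a composite is compatible with precomposing the indices by $F$; likewise $\Res_F$ preserves identity $1$-morphisms and vertical composition of $2$-morphisms on the nose. This establishes that $\Res_F$ is a (strict) $2$-functor.

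\textbf{Preservation of exactness.} Now assume $F$ preserves projective objects and let $\cM$ be an exact $\cD$-module category; I must show $\Res_F(\cM)$ is an exact $\cC$-module. Let $P$ be a projective object of $\cC$ and $M$ an object of $\cM$. By hypothesis $F(P)$ is projective in $\cD$. Since $\cM$ is exact as a $\cD$-module, $F(P)\lact^\cD M$ is projective in $\cM$. But $P \lact M = F(P)\lact^\cD M$ by definition of the $\cC$-action on $\Res_F(\cM)$, so $P\lact M$ is projective in $\Res_F(\cM) = \cM$. As $P$ and $M$ were arbitrary, $\Res_F(\cM)$ is exact, cf.\ \Cref{def:exact-M}. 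This direction is essentially immediate once the construction is in place.

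\textbf{Main obstacle.} There is no deep obstacle; the only care needed is bookkeeping with the (non-strict) monoidal structure $\mu^F$ of $F$, since even when $\cC$, $\cD$, $\cM$, $\cN$ are strictified the functor $F$ need not be strict monoidal, so the module associator of $\Res_F(\cM)$ genuinely involves $\mu^F$ and is not the identity. The verifications of the pentagon \eqref{eq:mod-pentagon-l} and of the module-functor coherence \eqref{eq:C-linear-coherence} for $\Res_F$ are diagram chases that reduce cleanly to \eqref{eq:mu-coherence} and to the corresponding coherences for $\cM$, $\cN$, $G$; I would state these reductions and leave the elementary pasting of commutative squares to the reader, noting that the second assertion of the lemma is what is actually used in the body of the paper (e.g.\ in \Cref{lem:Res-fully-exacts}).
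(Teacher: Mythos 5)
Your proposal is correct and follows essentially the same route as the paper: the same definition of $\Res_F$ on objects (action $F(X)\lact M$ with associator built from $\mu^F$ and $l^\cD$), on $1$-morphisms ($s^{\Res_F(G)}_{X,M}=s^G_{F(X),M}$), and on $2$-morphisms (identity), with the same one-line exactness argument that $F(P)$ projective and $\cM$ exact force $F(P)\lact M$ to be projective. The paper likewise leaves the coherence verifications to the reader, so your additional remarks on how they reduce to \eqref{eq:mu-coherence} only add detail, not a different method.
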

\begin{proof}
    This construction is well-known but we sketch the data here for completeness. For a left $\cD$-module category $\cM$, define $\Res_F(\cM)$ to be the same category $\cM$ with left $\cC$-module action $C\lact^F M=F(C)\lact M$ and coherence $l^{\Res_F(\cM)}$ defined by the following diagram:
    \begin{align}
    \vcenter{\hbox{
        \xymatrix@R=10pt@C=45pt{
        C\lact^F (C'\lact^F M)\ar@{=}[d]\ar[rr]^{l^{\Res_F(\cM)}_{C,C',M}}&& C\otimes C'\lact^F M\ar@{=}[d]\\
        F(C)\lact (F(C')\lact M)\ar[r]^-{l^\cM_{F(C),F(C'),M}}&F(C)\otimes F(C')\lact M\ar[r]^-{\mu^F_{C,C'}\lact M}&F(C\otimes C')\lact M
        }}}
    \end{align}
For a $\cD$-linear functor $(G,s^G)\colon \cM\to \cN$, the same underlying functor $G$ defines a $\cC$-linear functor $\Res_F(G)\colon \Res_F(\cM)\to \Res_F(\cN)$ with $\cC$-linear structure $s^{\Res_F(G)}$ defined by $s^{\Res_F(G)}_{C,M}:=s^G_{F(C),M}$.
On $2$-morphisms, $\Res_F$ acts as the identity since every left $\cD$-module transformation restricts to a left $\cC$-module transformation. One checks that these assignments are indeed well-defined and $2$-functorial. 

Finally, by construction, if $F$ preserves projective objects and $\cM$ is exact, then $F(P)\lact M$ is projective for any projective object $P$ of $\cC$ and any object $M$ in $\cM$. Hence, $\Res_F(M)$ is exact.
\end{proof}

\subsection{Bimodule categories}
\label{sec:bimod}

A \emph{$\cC$-bimodule} is a simultaneous left and right $\cC$-module $\cM$ with a \emph{$\cC$-bimodule coherence}, i.e.\ a natural isomorphism 
$b\colon \ract \circ (\lact\times \id) \to \lact \circ (\id\times \ract)$ with components
\begin{equation}\label{eq:bimod-coh}
    b_{X,M,Y}\colon (X\lact M) \ract Y\to X\lact (M\ract Y)
\end{equation}
satisfying the following coherence diagrams (cf.\ \cite{EGNO}*{Def.\,7.1.7} or \cite{capucci2022actegories}*{Def.\,4.3.1})
\begin{gather}\label{eq:bimodule-coh1}
\vcenter{\hbox{
    \xymatrix{
    &(X\lact (Y\lact M))\ract Z\ar[dl]_{b_{X,Y\lact M,Z}} \ar[dr]^{l_{X,Y,M}\ract Z}&\\
    X\lact ((Y\lact M)\ract Z)\ar[d]_{X\lact b_{Y,M,Z}}&& (X\otimes Y\lact M)\ract Z \ar[d]^{b_{X\otimes Y,M,Z}}\\
    X\lact (Y\lact (M\ract Z)))\ar[rr]^{l_{X,Y,M\ract Z}}&& X\otimes Y\lact (M\ract Z).
    }
    }}
\\
\label{eq:bimodule-coh2}
\vcenter{\hbox{
    \xymatrix{
    &((X\lact M)\ract Y)\ract Z\ar[dl]_{b_{X,M,Y}\ract Z}\ar[rd]^{r_{X\lact M,Y,Z}}&\\
(X\lact (M\ract Y))\ract Z\ar[d]_{b_{X,M\ract Y,Z}}&&(X\lact M)\ract Y\otimes Z\ar[d]^{b_{X,M,Y\otimes Z}}\\
X\lact ((M\ract Y)\ract Z)\ar[rr]^{X\lact r_{M,Y,Z}}&&X\lact (M\ract Y\otimes Z),
    }
    }}
\end{gather}
and, since we assume that both the left and right $\cC$-module structures are strictly unital, 
\begin{equation}
b_{\one,M,X}=\id_{M\ract X}, \qquad b_{X,M,\one}=\id_{X\lact M}.
\end{equation}
A \emph{functor of $\cC$-bimodule categories} $G\colon \cM\to \cN$ is a (right exact $\Bbbk$-linear) functor of left and right $\cC$-module categories for the respective left and right $\cC$-module structures satisfying 
\begin{equation}\label{eq:bimodule-functor-condition}
\vcenter{\hbox{
\xymatrix{
    &G((X\lact M)\ract Y)\ar[dl]_{s^G_{X\lact M,Y}}\ar[rr]^{G(b^\cM_{X,M,Y})}&&G(X\lact (M\ract Y))\ar[dr]^{s^G_{X,M\ract Y}}&\\
    G(X\lact M)\ract Y\ar[dr]_{s^G_{X,M}\ract Y}&&&& X\lact G(M\ract Y)\ar[dl]^{X\lact s^G_{M,Y}},\\
    &(X\lact G(M))\ract Y\ar[rr]^{b^\cN_{X,G(M),Y}}&& X\lact (G(M)\ract Y)&
    }}}
\end{equation}
where $b^\cM$ and $b^\cN$ are the bimodule coherences of $\cM$, respectively, $\cN$, and $s^G_{X,M}$ and $s^G_{M,Y}$ are the components of the left, respectively, right $\cC$-linear structure of $G$.
Finally, \emph{$\cC$-bimodule transformations} are simultaneous left and right $\cC$-bimodule transformations. We thus obtain a $2$-category $\bimod{\cC}$ of $\cC$-bimodules.

\begin{example}\label{ex:tensor-bimod}
For a monoidal functor $F\colon \cC\to \cD$, we can extend the left $\cC$-module structure on $\cD$ from \Cref{ex:tensor-act} to a $\cC$-bimodule with the right action  
$$D\ract C=D\otimes F(C), \qquad \forall C\in \cC, D\in \cD,$$
and with a similar module associator to  \eqref{eq:tensor-action-ass}.
The bimodule associator is given by the associator of the monoidal category $\cD$ and therefore given by identity morphisms as we assume that $\cD$ is a strict monoidal category. There is also an analogue of \Cref{lem:restriction-2-functor} for bimodule categories. 
\end{example}

\subsection{From left modules to bimodules}\label{sec:left-to-bimod}
We now assume that $\cC$ is a braided monoidal category.

\begin{lemma}\label{lem:rS}
\begin{enumerate}[(i)]
    \item We have a $2$-functor 
$$\rS_{lr}\colon \lmod{\cC}\to \rmod{\cC}$$
which sends a left $\cC$-module $\cM$ to the same category, with right action given by 
$M\ract X:= X\lact M$
and coherence $r$ defined in \eqref{eq:right-coherence}, sends a left $\cC$-module functor $(G,s^G)$ to the same functor~$G$ with right $\cC$-linear structure given by the same component isomorphisms $s^G_{X,M}$ viewed as morphisms $G(M\ract X)\to G(M)\ract X$, and does not change the $\cC$-linear transformations.

\item Similarly, we have a $2$-functor 
$$\rS_{rl}\colon \rmod{\cC}\to \lmod{\cC}$$
which sends a left $\cC$-module $\cM$ to the same category, with left action given by 
\begin{align*}
    X\lact M:= M\ract X,
\end{align*}
with coherence $l$ defined by the components
\begin{align}\label{eq:left-coherence}
\vcenter{\hbox{
\xymatrix@R=10pt{
X\lact (Y\lact M)\ar@{=}[d]\ar[rr]^{l_{X,Y,M}}&& (X\otimes Y)\lact M\ar@{=}[d]\\
(M\ract Y)\ract X\ar[dr]^{r_{M,Y,X}}  && M\ract (X\otimes Y).\\
&M\ract (Y \otimes X)\ar[ru]^{M\ract \psi^{-1}_{Y,X}}.&
}}}
\end{align}
\item The $2$-functors defined in (i) and (ii) are mutual inverses, i.e.\ $\rS_{rl}\rS_{lr}=\id$ and $\rS_{lr}\rS_{rl}=\id$.
\end{enumerate}
\end{lemma}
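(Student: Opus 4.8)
The plan is to verify the three bullet points of \Cref{lem:rS} in turn, treating (i) and (ii) as essentially parallel constructions related by the duality $\cCop$ and then deducing (iii) by a direct comparison of the formulas defining the coherences.

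For part (i), I would first check that the proposed right $\cC$-action $M \ract X := X \lact M$ together with the coherence $r_{M,X,Y}$ defined by \eqref{eq:right-coherence} satisfies the pentagon \eqref{eq:mod-pentagon-r}. Expanding both sides in terms of the left module associator $l$ and the braiding $\psi$, the axiom reduces to an identity combining the left-module pentagon \eqref{eq:mod-pentagon-l} (applied to the three objects in suitable order) with one of the hexagon axioms for $\psi$. The strictly-unital condition $M \ract \one = M$ follows since $\one \lact M = M$ by our assumption of strict unitality, and the triangle identity $\beta_{N,\one,M} = \id$-type condition follows from \eqref{eq:mod-fun-unit}. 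Next I would confirm that for a left $\cC$-module functor $(G, s^G)$ the same components $s^G_{X,M}$, now read as maps $G(M \ract X) \to G(M) \ract X$, satisfy the right-module-functor analogue of \eqref{eq:C-linear-coherence}: this follows by substituting the definition of $r$ into the right-linear coherence and matching against \eqref{eq:C-linear-coherence} for $s^G$, again using a hexagon identity for $\psi$ to reconcile the braidings. Compatibility with $\cC$-linear transformations is immediate since \eqref{eq:C-transform} is unchanged under the relabelling. Finally, $\rS_{lr}$ is strictly $2$-functorial because composition of $\cC$-linear functors via \eqref{eq:compose-C-lin} only involves the $s^G$ components, which are untouched, and identities are preserved.

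For part (ii), I would argue by symmetry: $\rS_{rl}$ is obtained from $\rS_{lr}$ by passing through the $2$-equivalence $R\colon \rmod{\cC} \to \lmod{\cCop}$ of \eqref{eq:right-C-to-left-Cop} together with the braided monoidal equivalence $I^-\colon \cCop \to \cC$ of \eqref{eq:I+-} which uses the inverse braiding; the appearance of $\psi^{-1}_{Y,X}$ in \eqref{eq:left-coherence} is exactly the trace of $I^-$. Alternatively, one checks \eqref{eq:mod-pentagon-l} for the proposed left action $X \lact M := M \ract X$ with coherence \eqref{eq:left-coherence} directly, now using the right-module pentagon \eqref{eq:mod-pentagon-r} and a hexagon axiom; the computation is the mirror image of the one in part (i) with $\psi$ replaced by $\psi^{-1}$.

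For part (iii), the key observation is that both \eqref{eq:right-coherence} and \eqref{eq:left-coherence} are built so that applying one after the other returns the original coherence. Concretely, starting from a left $\cC$-module $\cM$, the composite $\rS_{rl}\rS_{lr}(\cM)$ has underlying category $\cM$, left action $X \lact' M = M \ract X = X \lact M$ (so the action is literally unchanged), and left-module coherence obtained by feeding $r_{M,Y,X}$ from \eqref{eq:right-coherence} into \eqref{eq:left-coherence}; unwinding, the two braiding insertions $\psi_{Y,X} \lact M$ and $M \ract \psi^{-1}_{Y,X}$ cancel and the composite of module associators collapses (using that $l$ is an isomorphism with the displayed components) to $l_{X,Y,M}$ itself. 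Similarly $\rS_{lr}\rS_{rl} = \id$ on $\rmod{\cC}$. On $1$-morphisms and $2$-morphisms both composites act as the identity because neither $\rS_{lr}$ nor $\rS_{rl}$ alters the underlying functor, its structure components $s^G$, or the natural transformations. I expect the main obstacle to be purely bookkeeping: correctly tracking the order of tensor factors and the direction of braidings through the nested diagrams \eqref{eq:right-coherence} and \eqref{eq:left-coherence} when verifying the cancellation in part (iii), and identifying precisely which of the two hexagon axioms is needed at each step in parts (i) and (ii). None of these steps is conceptually hard, but the diagram chases must be done carefully.
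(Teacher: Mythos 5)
Your proposal is correct and follows essentially the same route as the paper, which simply asserts that the verification of the module axioms is routine, notes that neither $2$-functor changes the underlying functors, their structure maps $s^G$, or the natural transformations, and observes that the coherences \eqref{eq:right-coherence} and \eqref{eq:left-coherence} compose to the identity (the braidings $\psi_{Y,X}$ and $\psi^{-1}_{Y,X}$ cancelling) to get part (iii). Your more detailed outline of which pentagon and hexagon/naturality identities enter is a faithful expansion of that sketch.
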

\begin{proof}
It is not hard to verify that $\rS_{lr}$ extends to a $2$-functor which does not change the
$\cC$-linear
functors and $\cC$-linear natural transformations. The statements for $\rS_{rl}$ are proved the same way. By construction, $\rS_{rl}\rS_{lr}(\cM)=\cM$ with the original coherence $l_{M,X,Y}$. As both $2$-functors do not change
$\cC$-linear functors and $\cC$-linear natural transformations,
Part (iii) follows. 
\end{proof}

The following result was observed in \cite{DN2}*{Sec.\,3.3}. We give a proof for the reader's convenience.

\begin{lemma}\label{lem:S-bimod}
The $2$-functor $\rS_{lr}$ from \Cref{lem:rS} extends to a $2$-functor
$$\rB\colon \lmod{\cC}\to \bimod{\cC},$$
with same left and right module coherences and the bimodule coherence of \eqref{eq:bimod-coherence}.
\end{lemma}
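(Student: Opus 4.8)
The statement to prove is \Cref{lem:S-bimod}: the $2$-functor $\rS_{lr}$ extends to a $2$-functor $\rB\colon \lmod{\cC}\to \bimod{\cC}$, where $\rB(\cM)$ has the given left $\cC$-module structure, the right $\cC$-module structure $\rS_{lr}(\cM)$, and the bimodule coherence $b_{X,M,Y}$ specified by~\eqref{eq:bimod-coherence}. The plan is to verify, in order: (1) that $\rB(\cM)$ is a well-defined $\cC$-bimodule category, i.e.\ the proposed $b$ is a natural isomorphism satisfying the two pentagon-type coherences~\eqref{eq:bimodule-coh1}--\eqref{eq:bimodule-coh2} and the unit conditions; (2) that for a left $\cC$-module functor $(G,s^G)$, the functor $G$ with its left $\cC$-linear structure $s^G$ and the right $\cC$-linear structure coming from $\rS_{lr}(G)$ (same components $s^G_{X,M}$, reinterpreted via $M\ract X = X\lact M$) satisfies the bimodule functor condition~\eqref{eq:bimodule-functor-condition}; (3) that every left $\cC$-module transformation is automatically a $\cC$-bimodule transformation; and (4) strict $2$-functoriality, i.e.\ $\rB$ preserves composition of $1$-morphisms and identities strictly, which is immediate since $\rS_{lr}$ already does and the left-module data is untouched.

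For step (1), naturality of $b_{X,M,Y}$ in all three arguments follows from naturality of the module associator $l$ and of the braiding $\psi$, since $b$ is defined as a composite of these. The unit conditions $b_{\one,M,X}=\id$ and $b_{X,M,\one}=\id$ follow from~\eqref{eq:mod-fun-unit} (trivial module associator components involving $\one$) and normalization of the braiding, reading off~\eqref{eq:bimod-coherence}. The coherence diagrams~\eqref{eq:bimodule-coh1} and~\eqref{eq:bimodule-coh2} are the substantive part: each unpacks, upon substituting the definitions of $b$ (via~\eqref{eq:bimod-coherence}), of the right action $\ract$ and its coherence $r$ (via~\eqref{eq:right-coherence}), and of the left coherence $l$, into a diagram in the left $\cC$-module category $\cM$ built entirely from $l$ and components of $\psi$. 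Commutativity then reduces to the module pentagon~\eqref{eq:mod-pentagon-l} together with the two hexagon axioms for the braiding $\psi$ (and naturality of $l$ with respect to braiding morphisms). I would carry this out by drawing the expanded pentagon, inserting intermediate vertices to factor it into the module pentagon region plus two hexagon regions, much as in the proof of \Cref{lem:Hom-C-mod}.

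Step (2) is the key compatibility: I need~\eqref{eq:bimodule-functor-condition} for $G$ with left structure $s^G_{X,M}\colon G(X\lact M)\to X\lact G(M)$ and right structure given by the \emph{same} map reinterpreted as $G(M\ract X)\to G(M)\ract X$. Substituting the definition~\eqref{eq:bimod-coherence} of $b^\cM$ and $b^\cN$, the hexagon~\eqref{eq:bimodule-functor-condition} becomes a diagram whose edges are: applications of $G$ to $l^\cM$ and to components of $\psi$, the component $s^G$ of the module-functor structure in various slots, and components of $l^\cN$ and $\psi$ on the target. This commutes by the single module-functor coherence~\eqref{eq:C-linear-coherence} for $G$ (used twice, once for $l^\cM$ in the $X,Y$ slots with $Y$ replaced appropriately) together with naturality of $s^G_{-,M}$ with respect to the braiding isomorphism $\psi_{Y,X}$ — exactly the kind of argument already used to define the balancing $\beta_{G,X,H}$ in \Cref{sec:duals-finite}. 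So this step, while the most intricate, uses only ingredients already assembled in the excerpt.

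For step (3): a left $\cC$-module transformation $\eta\colon G\to H$ satisfies~\eqref{eq:C-transform}; under $\rS_{lr}$ the identical diagram with $\ract$ in place of $\lact$ is the right-module-transformation condition, and since $b$ on both $\cM$ and $\cN$ is built from $l$ and $\psi$ while $\eta$ commutes with $l$ and $\psi$ by naturality and the left-module condition, the extra $\cC$-bimodule transformation axiom is automatic; $R$ acting as identity on $2$-morphisms makes $2$-functoriality on $2$-cells trivial. The main obstacle will be step (2) — verifying~\eqref{eq:bimodule-functor-condition} — because it requires carefully tracking the reinterpretation of $s^G$ between the left and right module roles and feeding in both hexagon axioms; but it is a routine diagram chase of the same flavour as the proofs of \Cref{lem:Hom-C-mod} and the balancing computation in \Cref{sec:duals-finite}, so I would present it by indicating the factorization into the module-functor coherence plus a braiding-naturality square rather than writing every composite in full.
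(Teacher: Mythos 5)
Your proposal is correct, and for the substantive coherence check it follows the same route as the paper: the first bimodule axiom \eqref{eq:bimodule-coh1} is unpacked via \eqref{eq:bimod-coherence} into a diagram in $l$ and $\psi$ that commutes by the module pentagon \eqref{eq:mod-pentagon-l}, naturality of $l$ applied to the braiding, and the hexagon axiom. Two points of divergence are worth noting. First, for the second axiom \eqref{eq:bimodule-coh2} you plan a second direct diagram chase, whereas the paper rewrites $b_{X,M,Y}=r_{M,Y,X}^{-1}(M\ract \psi_{X,Y})r_{M,X,Y}$ and deduces \eqref{eq:bimodule-coh2} from \eqref{eq:bimodule-coh1} applied to $\cCop$, whose left modules are right $\cC$-modules via \eqref{eq:right-C-to-left-Cop}; this duality trick saves you the second chase and is worth adopting. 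Second, your step (2) — verifying the bimodule-functor condition \eqref{eq:bimodule-functor-condition} for the image of a left $\cC$-module functor, using the coherence \eqref{eq:C-linear-coherence} together with naturality of $s^G$ in the $\cC$-variable applied to $\psi_{Y,X}$ — is a genuine check that the paper's proof omits entirely, even though it is needed for $\rB$ to land in $\bimod{\cC}$ on $1$-morphisms; your sketch of it is correct (the left-hand side collapses via \eqref{eq:C-linear-coherence} to $(l^\cN)^{-1}\circ(\psi_{Y,X}\lact G(M))\circ l^\cN$ conjugating the composite of $s^G$'s, which is the right-hand side), so your write-up is in this respect more complete than the paper's. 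Steps (3) and (4) are indeed automatic as you say.
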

\begin{proof}
The axiom \eqref{eq:bimodule-coh1} follows from commutativity of the outer diagram in 
\begin{align*}
\resizebox{1.1\textwidth}{!}{    
\xymatrix@C=3pt{
&(X\otimes Y\lact M)\ract Z\ar[rrrr]^{b_{X\otimes Y,M,Z}}\ar[rd]^{l_{Z,X\otimes Y,M}} &&&&X\otimes Y\lact (M\ract Z)\ar[ld]_{l_{X\otimes Y,Z,M}}&  \\
*+[r]{(X\lact (Y\lact M))\ract Z} \ar[dr]^{l_{Z,X,Y\lact M}}\ar[ru]^{l_{X,Y,Z}\ract Z} \ar@/_4pc/[dddrrr]_-{b_{X,Y\lact M,Z}}
&&Z\otimes X\otimes Y\lact M \ar[rr]^{\psi_{Z,X\otimes Y\lact M}}\ar[rd]_-{\psi_{Z,X}\otimes Y\lact M} &&X\otimes Y\otimes Z\lact M && *+[l]{X\lact (Y\lact (M\ract Z))}\ar[lu]_{l_{X,Y,M\ract Z}}\ar[ld]_{l_{X,Y\otimes Z,M}}.\\
    &\ar[ru]^{l_{Z\otimes X,Y,M}}Z\otimes X\lact (Y\lact M)\ar[dr]^{\psi_{Z,X}\lact (Y\lact M)}  && X\otimes Z \otimes Y\lact M\ar[ru]_-{X\otimes \psi_{Z,Y}\lact M} &&X\lact (Y\otimes Z\lact M)\ar[lu]_{l_{X,Y\otimes Z,M}}&\\
    &&X\otimes Z\lact (Y\lact M)\ar[ru]^{l_{X\otimes Z,Y,M}}&&\ar[lu]_{l_{X,Z\otimes Y,M}}X\lact (Z\otimes Y\lact M)\ar[ru]^{X\lact \psi_{Z,Y}\lact M} &&\\
    &&&X\lact ((Y\lact M)\ract Z)\ar[lu]_{l_{X,Z,Y\lact M}}\ar[ru]^{X\lact l_{Z,Y,M}}\ar@/_4pc/[rrruuu]_-{X\lact b_{Y,M,Z}}&&&
    }}
\end{align*}
In this diagram, the rectangles involving only $l$ commute by \eqref{eq:mod-pentagon-l}, the distorted squares involving $b$ commute by definition of $b$, see \eqref{eq:bimod-coherence}, the squares involving the braiding $\psi$ commute by naturality of $l$ applied to the braiding, and the central triangle commutes by the hexagon for the braiding.

We can use \eqref{eq:right-coherence} to rewrite 
$$b_{X,M,Y}=l^{-1}_{X,Y,M}r_{M,X,Y}=r_{M,Y,X}^{-1}(M\ract \psi_{X,Y}) r_{M,X,Y}.$$
Now, the other condition \eqref{eq:bimodule-coh2} follows from the first for the $\otimes$-opposite monoidal category $\cC^{\otimes\oop}$ whose left module categories correspond to right $\cC$-module categories, see \eqref{eq:right-C-to-left-Cop}.
\end{proof}

\subsection{Left and right \action functors}
\label{sec:left-right-action-fun}

For a $\cC$-bimodule category $\cM$, there are two monoidal functors 
\begin{align}\label{eq:Ar}
    A^r_\cM&\colon \cCop \to \Fun_{\cC}(\cM,\cM),  && X\mapsto (-)\ract X,\\
        A^l_\cM&\colon \cC \to \Fun(\cM,\cM)_\cC,  && X\mapsto X\lact (-).\label{eq:Al}
\end{align}
To define these functors, we need to specify the left $\cC$-linear structure of $A_\cM^r(X)$, and the right $\cC$-linear structure of $A_\cM^l(X)$, for a given object $X$ in $\cC$. These are defined using the bimodule coherence $b$ of $\cM$, see \eqref{eq:bimod-coh}, via
\begin{align}
\label{eq:C-linear-Ar}
    s^{A^r_\cM(X)}_{Y,M}\colon (Y\lact M)\ract X &\xrightarrow{b_{Y,M,X}} Y\lact (M\ract X),\\
\label{eq:C-linear-Al}
    s^{A^l_\cM(X)}_{M,Y}\colon X\lact (M\ract Y) &\xrightarrow{b_{X,M,Y}^{-1}} (X\lact M)\ract Y.
\end{align}
The monoidal structures are defined using the right (respectively, left) $\cC$-module coherences, namely,
\begin{gather}\label{eq:monoidal-structure-Ar}
\vcenter{\hbox{
\xymatrix@R=8pt{
\big(A^r_\cM(X)\circ A^r_\cM(Y)\big)(M)\ar@{=}[d]\ar[rr]^{(\mu^{A^r_\cM}_{X,Y})_M }&&A^r_\cM(Y\otimes X)(M)\ar@{=}[d]\\
(M\ract Y)\ract X\ar[rr]^{r_{M,Y,X}}&& M\ract Y\otimes X
}}},\\
\label{eq:monoidal-structure-Al}
\vcenter{\hbox{
\xymatrix@R=8pt{
    \big(A^l_\cM(X)\circ A^l_\cM(Y)\big)(M)\ar@{=}[d] \ar[rr]^{(\mu^{A^l_\cM}_{X,Y})_M }&&A^l_\cM(X\otimes Y)(M)\ar@{=}[d]\\
    X\lact (Y\lact M)\ar[rr]^{l_{X,Y,M}}&& X\otimes Y\lact M.
    }}}
\end{gather}
The coherence axioms of these monoidal functor structures follow from those of the left (respectively, right) $\cC$-module structures.

\smallskip

We will now use the equivalence of categories $I^\pm\colon \cCop\to \cC$  from \eqref{eq:I+-} to compare the monoidal functor $A_\cM$ associated to a left $\cC$-module category (see \Cref{def:AC-C-mod-functor}) with the monoidal functors $A_{\rB(\cM)}^r$ and $A_{\rB(\cM)}^l$ associated to the bimodule $\rB(\cM)$ from \Cref{lem:S-bimod} (see \eqref{eq:Ar} and \eqref{eq:Al}).

\begin{lemma}\label{lem:AM-AMr-compare}
    The monoidal functors $A_\cM\circ I^-$ and $A_{\rB(\cM)}^r$ are equal.
\end{lemma}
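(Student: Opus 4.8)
The statement asserts that two monoidal functors $\cC^{\otimes\oop} \to \Fun_\cC(\cM,\cM)$ are literally equal: the composite $A_\cM \circ I^-$, where $A_\cM\colon \cC \to \cC^*_\cM$ is the \action functor of \Cref{def:AC-C-mod-functor} and $I^-\colon \cC^{\otimes\oop} \to \cC$ is the braided equivalence from \eqref{eq:I+-} with monoidal structure $\mu^-_{X,Y} = \psi^{-1}_{X,Y}$; and $A^r_{\rB(\cM)}\colon \cC^{\otimes\oop} \to \Fun_\cC(\cM,\cM)$, which sends $X \mapsto (-)\ract X$ where $\ract$ is the right $\cC$-action coming from $\rB(\cM)$, i.e.\ $M \ract X = X \lact M$ by \Cref{lem:S-bimod} and \eqref{eq:right-coherence}. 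The plan is to check equality of the three pieces of data of a monoidal functor in turn: (a) the underlying functors on objects and morphisms; (b) the $\cC$-linear structure on each value, i.e.\ that both sides produce the \emph{same} object of $\cC^*_\cM = \Fun_\cC(\cM,\cM)$, not merely isomorphic ones; and (c) the monoidal coherence $\mu$.

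First I would compare the underlying functors. On objects, $I^-$ is the identity, so $(A_\cM \circ I^-)(X) = A_\cM(X) = X \lact \id_\cM$, the functor $M \mapsto X \lact M$. On the other side, $A^r_{\rB(\cM)}(X) = (-)\ract X$, and by the definition of $\rB(\cM)$ in \Cref{lem:S-bimod} this is the functor $M \mapsto M \ract X = X \lact M$. So the underlying functors agree, and since both $I^-$ and all the structure functors act as the identity on morphisms, the agreement extends to morphisms. Next, the $\cC$-linear structures: for $A_\cM(X) = X\lact\id_\cM$, the left $\cC$-linear structure $s^{X\lact\id_\cM}_{Y,M}$ is the one from \Cref{def:Fun-action} (the composite in \eqref{eq:FunC-coherence} with $G = \id_\cM$), which in the strict case reduces to $\psi_{Y,X}\lact M$ up to the module associators. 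For $A^r_{\rB(\cM)}(X)$, the left $\cC$-linear structure is given by \eqref{eq:C-linear-Ar}, namely $s^{A^r_{\rB(\cM)}(X)}_{Y,M} = b_{Y,M,X}\colon (Y\lact M)\ract X \to Y\lact(M\ract X)$, where $b$ is the bimodule coherence of $\rB(\cM)$ defined in \eqref{eq:bimod-coherence}. Unwinding \eqref{eq:bimod-coherence} with the identifications $M\ract X = X\lact M$, the bimodule coherence $b_{Y,M,X}$ is exactly (up to the module associators $l^{-1}$) the morphism $\psi_{Y,X}\lact M$. Hence the two $\cC$-linear structures coincide, so $A_\cM(X)$ and $A^r_{\rB(\cM)}(X)$ are the same object of $\Fun_\cC(\cM,\cM)$.

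Finally, I would match the monoidal structures. The monoidal structure of $A_\cM$ is $\mu^{A_\cM}_{X,Y} = l_{X,Y,-}$ (see \Cref{lem:AC-braided}), so the monoidal structure of the composite $A_\cM \circ I^-$ at $(X,Y) \in \cC^{\otimes\oop}$ is obtained by precomposing with $\mu^-_{X,Y} = \psi^{-1}_{X,Y}$; explicitly, on $M$ it is the composite $X \lact (Y \lact M) \xrightarrow{l_{X,Y,M}} X \otimes Y \lact M \xrightarrow{\psi^{-1}_{X,Y}\lact M} Y \otimes X \lact M$, reading $X\otimes^\oop Y = Y\otimes X$. On the other side, the monoidal structure of $A^r_{\rB(\cM)}$ is given by \eqref{eq:monoidal-structure-Ar}: at $(X,Y)\in\cC^{\otimes\oop}$, $(\mu^{A^r_{\rB(\cM)}}_{X,Y})_M = r_{M,Y,X}\colon (M\ract Y)\ract X \to M\ract(Y\otimes X)$, where $r$ is the right $\cC$-module coherence of $\rB(\cM)$ from \eqref{eq:right-coherence}. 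Unwinding \eqref{eq:right-coherence} with $M\ract Z = Z\lact M$, one finds $r_{M,Y,X} = (\psi_{Y,X}\lact M)\circ l_{Y,X,M}$ as a morphism $X\lact(Y\lact M)\to Y\otimes X\lact M$, which matches the composite above once one notes $\psi_{Y,X} = \psi^{-1}_{X,Y}$ as morphisms $X\otimes Y \to Y\otimes X$. So the monoidal structures agree, completing the proof.

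\textbf{Main obstacle.} The only real work is a careful bookkeeping of directions: which variable plays the role of the ``left'' and which the ``right'' factor once $\rB(\cM)$ turns a left action into a right action via $M\ract X = X\lact M$, and correspondingly whether the braiding or its inverse appears. This is precisely why the composite is taken with $I^-$ (inverse braiding) rather than $I^+$; getting the $\psi$ versus $\psi^{-1}$ and the $X\otimes Y$ versus $Y\otimes X$ conventions to line up is the step most prone to sign-of-braiding errors, but it is purely a matter of substituting the definitions \eqref{eq:right-coherence}, \eqref{eq:bimod-coherence}, \eqref{eq:FunC-coherence}, \eqref{eq:C-linear-Ar}, and \eqref{eq:monoidal-structure-Ar} and simplifying in the strict setting (allowed by \Cref{rem:strictification}). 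No deep input is needed beyond these definitions and the hexagon axioms for the braiding.
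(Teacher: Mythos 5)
Your overall strategy is the same as the paper's: compare the underlying functors, then the left $\cC$-linear structures via \eqref{eq:C-linear-Ar}, \eqref{eq:bimod-coherence} and \eqref{eq:FunC-coherence}, then the monoidal coherences via \eqref{eq:monoidal-structure-Ar} and \eqref{eq:right-coherence}. The first two comparisons are correct and essentially identical to the paper's, which records the middle one as the chain $s^{A^r_{\rB(\cM)}(X)}_{Y,M}=b_{Y,M,X}=l^{-1}_{Y,X,M}(\psi_{X,Y}\lact M)l_{X,Y,M}=s^{X\lact \id_\cM}_{Y,M}$; your version has the braiding indices transposed, but consistently on both sides of the equality, so no harm is done there.

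The gap is in the last step. Unwinding \eqref{eq:right-coherence} correctly gives $r_{M,Y,X}=(\psi_{X,Y}\lact M)\circ l_{X,Y,M}$ as a morphism $X\lact(Y\lact M)\to (Y\otimes X)\lact M$ (your formula $(\psi_{Y,X}\lact M)\circ l_{Y,X,M}$ does not even have source $X\lact(Y\lact M)$), while the composite monoidal structure of $A_\cM\circ I^-$ you computed is $(\psi^{-1}_{X,Y}\lact M)\circ l_{X,Y,M}$. You reconcile the two by asserting that ``$\psi_{Y,X}=\psi^{-1}_{X,Y}$ as morphisms $X\otimes Y\to Y\otimes X$''. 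This is false: $\psi_{Y,X}$ has source $Y\otimes X$, and the two parallel morphisms $X\otimes Y\to Y\otimes X$ actually in play are $\psi_{X,Y}$ and $\psi^{-1}_{X,Y}=(\psi_{Y,X})^{-1}$, which coincide only when the braiding is symmetric. Distinguishing $\psi$ from $\psi^{-1}$ is precisely the content of choosing $I^-$ over $I^+$ in \eqref{eq:I+-} --- the very point you flag as the ``main obstacle'' --- so it cannot be disposed of by this identification. The paper never equates $\psi$ with $\psi^{-1}$; its proof of the monoidal-structure step instead composes the braiding arising from $r_{M,Y,X}$ against the inverse braiding arising from the monoidal structure of $I^{\pm}$ and uses the on-the-nose cancellation $(\psi^{-1}\lact M)\circ(\psi\lact M)=\id$ so that everything collapses to $l_{X,Y,M}=\mu^{A_\cM}_{X,Y}$. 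You need to redo this final comparison with the indices of $r_{M,Y,X}$ and of $\mu^{I^-}_{X,Y}$ written out carefully and exhibit that cancellation explicitly, rather than invoking the false identity.
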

\begin{proof}
We first compare the functors. For an object $X$ in $\cC$, the $\Bbbk$-linear functors $\id_\cM\ract X$ and $X\lact \id_\cM$ are equal as the right action is defined in terms of the left action. The left $\cC$-linear structures are equal since
\begin{align}\label{eq:sAr-BM}
     s^{A^r_\cM(X)}_{Y,M}=b_{Y,M,X}=l^{-1}_{Y,X,M}(\psi_{X,Y}\lact M)l_{X,Y,M}=s^{X\lact \id_\cM}_{Y,M},
\end{align}
where the first equality is \eqref{eq:C-linear-Ar}, the second equality is \eqref{eq:bimod-coherence}, and the last equality is \eqref{eq:FunC-coherence}. As functors, we have $A_{\rB(\cM)}^r\circ I^-=A_\cM^r$, since the underlying functor of $I^-$ is $\id_\cC$. 

Next, we use the definitions of monoidal structures and compute
\begin{align*}
(\mu^{A^r_{\rB(\cM)}\circ I^-}_{X,Y})_M&=(A_{\rB(\cM)}^r(\mu^+_{X,Y}) )_M\circ  (\mu^{A^r_\cM}_{I^-(X),I^-(Y)})_M\\
&=(\psi^{-1}_{X,Y}\lact M)\circ r_{M,Y,X}\\
&=(\psi^{-1}_{X,Y}\lact M)(\psi_{X,Y}\lact M)l_{X,Y,M}\\
&=l_{X,Y,M}=\mu^{A_\cM}_{X,Y},
\end{align*}
using \eqref{eq:right-coherence} in the third equality. Thus, the monoidal structures are equal.
\end{proof}

We now turn attention to the left \action functor $A_{\rB(\cM)}^l$. Recall that $A_\cM^\rev$ denotes the \action functor for $\cM$ viewed as a left $\cC^\rev$-module category.
In the following, we will denote the functor obtained by evaluating a $2$-functor, such as $\rS_{lr}$,
 on a Hom category by the same symbol.
 
\begin{lemma}\label{lem:Al-identified}
The following diagram of monoidal functors commutes
$$
\xymatrix@C=2pt{
\cC\ar[rr]^{A^\rev_\cM}\ar[d]_-{A_{\rB(\cM)}^l}&& \cC^*_\cM\ar[d]^-{\;\;\;\rS_{lr}}\\
 \Fun(\rB(\cM),\rB(\cM))_\cC\ar@{=}[rr]&& \Fun(\rS_{lr}(\cM),\rS_{lr}(\cM))_\cC,
}$$
where $\rS_{lr}$ is the evaluation of the $2$-functor $\rS_{lr}\colon \lmod{\cC}\to \rmod{\cC}$ from \Cref{lem:rS} on the Hom category $\cC^*_\cM=\Fun_\cC(\cM,\cM)$, which is an equivalence.
\end{lemma}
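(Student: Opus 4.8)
The plan is to mimic the proof of \Cref{lem:AM-AMr-compare}, which already establishes the analogous identification $A_\cM\circ I^-=A^r_{\rB(\cM)}$, working now with $\cC^\rev$ in place of $\cC$ and with the roles of left and right module functors interchanged via $\rS_{lr}$. First I would record two structural observations. The vertical functor on the right, the evaluation of $\rS_{lr}$ on the Hom category $\cC^*_\cM=\Fun_\cC(\cM,\cM)$, is a \emph{strict} monoidal equivalence: it is an equivalence by \Cref{lem:rS}(iii) (with quasi-inverse the evaluation of $\rS_{rl}$), and it is strictly monoidal because $\rS_{lr}$ is a $2$-functor, hence preserves composition of $\cC$-linear functors on the nose, while the monoidal products on both $\cC^*_\cM$ and $\Fun(\rS_{lr}(\cM),\rS_{lr}(\cM))_\cC$ are given by composition with the $\cC$-linear structure of \eqref{eq:compose-C-lin}. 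Moreover the bottom equality is a strict identity of categories, since a right $\cC$-module functor between the $\cC$-bimodules $\rB(\cM)$ uses only the underlying right $\cC$-module structure, which is exactly $\rS_{lr}(\cM)$. As in the proof of \Cref{lem:Hom-C-mod}, I would then reduce to the case where $\cM$ is a strict $\cC$-module category by \Cref{rem:strictification}.

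The core of the argument is to check that $\rS_{lr}\circ A^\rev_\cM$ and $A^l_{\rB(\cM)}$ agree as monoidal functors, by comparing underlying functors, right $\cC$-linear structures, and monoidal structures. For $X$ in $\cC$, both $A^l_{\rB(\cM)}(X)$ and $A^\rev_\cM(X)$ have underlying functor $X\lact(-)\colon\cM\to\cM$, and on $\rS_{lr}(\cM)$ both right $\cC$-linear structures are isomorphisms $X\lact(Y\lact M)\to Y\lact(X\lact M)$. By \eqref{eq:C-linear-Al}, the right $\cC$-linear structure of $A^l_{\rB(\cM)}(X)$ is $b^{-1}_{X,M,Y}$ for the bimodule coherence $b$ of $\rB(\cM)$, and using \eqref{eq:bimod-coherence} this equals $l^{-1}_{Y,X,M}\circ(\psi^{-1}_{X,Y}\lact M)\circ l_{X,Y,M}$. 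On the other side, $\rS_{lr}$ sends $A^\rev_\cM(X)$ to the same functor with right $\cC$-linear structure obtained by reinterpreting $s^{A^\rev_\cM(X)}_{Y,M}$ as a map $X\lact(M\ract Y)\to(X\lact M)\ract Y$; evaluating the defining diagram \eqref{eq:FunC-coherence} for $G=\id_\cM$, with the reverse braiding $\psi^{-1}$ in place of $\psi$, yields exactly $l^{-1}_{Y,X,M}\circ(\psi^{-1}_{X,Y}\lact M)\circ l_{X,Y,M}$ — the reverse-braided counterpart of the computation \eqref{eq:sAr-BM}. Hence the two right $\cC$-linear structures coincide. For the monoidal structures, that of $A^l_{\rB(\cM)}$ is $l_{X,Y,M}$ by \eqref{eq:monoidal-structure-Al}, while by \Cref{lem:AC-braided} the monoidal structure of an \action functor is the module associator $l$, which does not involve the braiding, so $\mu^{A^\rev_\cM}_{X,Y}=l_{X,Y,M}$ as well; since the evaluation of $\rS_{lr}$ on the Hom category is strictly monoidal it leaves these morphisms unchanged, so the monoidal structures agree. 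Strict unitality makes the unit constraints automatic, and the diagram commutes.

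I expect the main obstacle to be purely bookkeeping: keeping track of the two conventions for the inverse braiding, $\psi^{-1}_{X,Y}=(\psi_{Y,X})^{-1}$, and correctly matching the reverse braiding that enters the definition of $A^\rev_\cM$ through \eqref{eq:FunC-coherence} against the inverse braiding that enters $b^{-1}$ through the bimodule coherence \eqref{eq:bimod-coherence} (equivalently, through the left-module coherence \eqref{eq:left-coherence} defining $\rS_{rl}$). Once the indices are handled carefully, the verification is a routine adaptation of the proof of \Cref{lem:AM-AMr-compare}.
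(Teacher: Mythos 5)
Your argument is correct and follows essentially the same route as the paper's proof: both compare the underlying functors, identify the module-functor structures via the computation $b^{-1}_{X,M,Y}=l^{-1}_{Y,X,M}(\psi^{-1}_{X,Y}\lact M)\,l_{X,Y,M}$ obtained from \eqref{eq:C-linear-Al}, \eqref{eq:bimod-coherence} and the reverse-braided instance of \eqref{eq:FunC-coherence}, and then note that both monoidal structures are the module associator $l$ while the evaluation of $\rS_{lr}$ is strictly monoidal. The extra framing remarks (strictification, strictness of the bottom identity) are harmless additions to what is the same verification.
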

\begin{proof}
We first compare the underlying linear functors $A_{\rB(\cM)}^l$ and $\rS_{lr}\circ  A^\rev_\cM$. To an object $X$ in $\cC$, both functors associate the functor $X\lact \id_\cM$. Recall from \Cref{lem:rS} that the $2$-functor $\rS_{lr}$ is invertible and given by the identity assignments on the level of $1$- and $2$-morphisms. That is, given a left $\cC$-module functor $(G,s^l)\colon \cM\to \cM$, the same data defines  a \emph{right} $\cC$-module functor $(G,s^r)\colon \rS_{lr}(\cM)\to \rS_{lr}(\cM)$, where
$$\xymatrix@R=5pt{
G(M\ract X)\ar@{=}[d]\ar@/^1pc/[rr]_{\overset{s^r_{M,X}}{\rotatebox{90}{=~}}}&&G(M)\ract X\ar@{=}[d]\\
G(X\lact M)\ar@/_1pc/[rr]^{s^l_{X,M}}&&X\lact G(M).
}$$
Thus, we see that the left $\cC$-linear structure of $s^{X\lact \id}$ of $X \lact \id_\cM$ under the functor $\rS_{lr}\circ  A^\rev_\cM$ is given by
\begin{align}\label{eq:s-Al-identified}
s^{X\lact \id}_{Y,M}\overset{\text{\eqref{eq:FunC-coherence}}}{=}l_{Y,X,M}^{-1}(\psi^{-1}_{X,Y}\lact M)l_{X,Y,M}\overset{\text{\eqref{eq:bimod-coherence}}}{=}b_{X,M,Y}^{-1}
\overset{\text{\eqref{eq:C-linear-Ar}}}{=} s^{A^l_\cM(X)}_{M,Y}.
\end{align}
Hence, the left $\cC$-linear structures  of $A_{\rB(\cM)}^l(X)$ and $\rS_{lr}\circ  A^\rev_\cM(X)$ coincide. 

It remains to compare the monoidal structures. The $2$-functor $\rS_{lr}$ strictly preserves the monoidal structure of composition (the composition of $1$-morphisms). Further, we have 
$$(\mu^{A^l_\cM}_{X,Y})_M
\overset{\text{\eqref{eq:monoidal-structure-Al}}}{=}
l_{X,Y,M}=\mu^{A_\cM}_{X,Y}=\mu^{A_\cM^\rev}_{X,Y},$$
where the second equality uses \Cref{lem:Hom-C-mod}, and we observe that $A_\cM^\rev$ and $A_\cM$ have the same monoidal structures. 
Thus, the functors are equal as monoidal functors and the claimed diagram commutes. 
\end{proof}

\subsection{Double centralizer functor}
For later use, we record the following proposition relating $A_\cM$ and $A_\cM^\rev$. For this, we recall the \textit{double centralizer} 
functor\footnote{The functor $\mD$ is part of the double centralizer theorem of \cite{EGNO}*{Thm.\,7.12.11} for exact module categories, which shows when $\mD$ is an equivalence.} 
\begin{equation}\label{eq:can-double-dual}
    \mD=\mD_\cM\colon \cC \to \Fun_{\cC^*_\cM}(\cM,\cM).
\end{equation}
Here, we use the left $\cC^*_\cM$-action on $\cM$ given by $G\lact M=G(M)$, for objects $G=(G,s^G)$ in $\cC^*_\cM$ and $M$ in $\cM$. For an object $X$ in $\cC$, $\mD(X)=(X\lact \id_\cM, s^{\mD(X)})$, where
\begin{equation}\label{eq:sDX}
\vcenter{\hbox{
\xymatrix@R=8pt{
\mD(X)(G\lact M)\ar@{=}[d]\ar[rr]^{s^{\mD(X)}_{G, M}}&& G\lact (\mD(X)(M))\ar@{=}[d]\\
X\lact G(M)\ar[rr]^{(s^G_{X,M})^{-1}}&& G(X\lact M)
}}}
\end{equation}
The monoidal structure of $\mD$ is given by
\begin{align}\label{eq:monoidal-structure-mD}
\vcenter{
\hbox{
   \xymatrix@R=8pt{
   (\mD(X)\circ \mD(Y))(M)\ar@{=}[d]\ar[rr]^{(\mu^{\mD}_{X,Y})_M}&&\mD(X\otimes Y)(M)\ar@{=}[d]\\
   X\lact (Y\lact M)\ar[rr]^{l_{X,Y,M}}&& X\otimes Y\lact M
   } }}
\end{align}
when evaluated on an object $M$ in $\cM$.
Further recall the $2$-functor $\Res_F$ for a monoidal functor $F$
defined in \Cref{lem:restriction-2-functor}.

\begin{proposition}\label{prop:centralizer-correspondence}
   For $\cM$ a left $\cC$-module category, the diagram 
\begin{align*}
    \xymatrix{
\cC\ar[rr]^{\mD_\cM}\ar[d]_-{A_{\rB(\cM)}^l}&&\Fun_{\cC^*_\cM}(\cM,\cM)\ar[d]^-{\Res_{A_{\rB(\cM)}^r}~}\\
\Fun(\rB(\cM),\rB(\cM))_\cC\ar[rr]^{R}&&\Fun_{\cCop}(\rB(\cM),\rB(\cM))
    }
\end{align*}
of monoidal functors commutes. Here, the functor $R$ is the equivalence obtained by evaluating
the $2$-equivalence from \eqref{eq:right-C-to-left-Cop} on the Hom category $\Fun(\rB(\cM),\rB(\cM))_\cC$.
\end{proposition}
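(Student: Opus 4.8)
The plan is to show the diagram commutes by comparing the two composite monoidal functors on objects, on the level of $\cC^*_\cM$-linear (resp. $\cCop$-linear) structures, and on monoidal structures, leveraging the computations already recorded in \Cref{lem:Al-identified} and \Cref{lem:AM-AMr-compare}. First I would unwind both paths sent to an object $X$ in $\cC$. Along the left-then-bottom path, $A^l_{\rB(\cM)}(X) = X\lact\id_\cM$ with right $\cC$-linear structure $b^{-1}_{X,M,Y}$ from \eqref{eq:C-linear-Al}, and then applying $R$ turns this into the left $\cCop$-linear functor on the same underlying $\Bbbk$-linear functor $X\lact\id_\cM$, with $\cCop$-linear structure given by the same component maps reinterpreted via \eqref{eq:left-Cop-action}. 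Along the top-then-right path, $\mD_\cM(X) = X\lact\id_\cM$ as a $\Bbbk$-linear functor with $\cC^*_\cM$-linear structure $s^{\mD(X)}_{G,M} = (s^G_{X,M})^{-1}$ from \eqref{eq:sDX}, and then $\Res_{A^r_{\rB(\cM)}}$ restricts this $\cC^*_\cM$-module endofunctor along the monoidal functor $A^r_{\rB(\cM)}\colon \cCop\to \cC^*_\cM$, producing a $\cCop$-module endofunctor with structure $s_{X,M}$ for the object $A^r_{\rB(\cM)}(X) = (-)\ract X$ in $\cC^*_\cM$, which by \eqref{eq:sDX} (with $G = (-)\ract X$) is $(s^{A^r_\cM(X)}_{X',M})^{-1} = b^{-1}_{X',M,X}$ evaluated appropriately — precisely the bimodule coherence. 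So on underlying functors both paths give $X\lact\id_\cM$, and I would verify that the two resulting $\cCop$-linear structures coincide by a direct comparison using \eqref{eq:right-coherence}, \eqref{eq:bimod-coherence}, the formula \eqref{eq:sAr-BM} from the proof of \Cref{lem:AM-AMr-compare}, and \eqref{eq:s-Al-identified} from the proof of \Cref{lem:Al-identified}.

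Next I would check that this identification is compatible with morphisms in $\cC$, which is automatic since $\mD_\cM$, $A^l_{\rB(\cM)}$, $R$, and $\Res_{A^r_{\rB(\cM)}}$ all act by the obvious assignment on the level of natural transformations (in particular $R$ and $\Res$ act as identities on $2$-morphisms, and both $\mD_\cM(f) = f\lact\id_\cM = A^l_{\rB(\cM)}(f)$ for a morphism $f$ in $\cC$). Then I would compare the monoidal structures: by \eqref{eq:monoidal-structure-mD}, $(\mu^{\mD}_{X,Y})_M = l_{X,Y,M}$, and $\Res_{A^r_{\rB(\cM)}}$ composes this with the monoidal structure of $A^r_{\rB(\cM)}$, which by \eqref{eq:monoidal-structure-Ar} is $r_{M,Y,X}$; using \eqref{eq:right-coherence} to rewrite $r_{M,Y,X} = (\psi_{Y,X}\lact M)\circ l_{Y,X,M}$ — wait, I must be careful about the order of the tensor factors since $\cCop$ reverses the product — I would match this against the monoidal structure of $A^l_{\rB(\cM)}$, namely $(\mu^{A^l_\cM}_{X,Y})_M = l_{X,Y,M}$ from \eqref{eq:monoidal-structure-Al}, transported through $R$ (which strictly preserves composition of $1$-morphisms but reinterprets the tensor direction). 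The key point is that all the structural isomorphisms in sight are built from $l$, $r$, $b$, and $\psi$, so the verification reduces to the module pentagon axioms \eqref{eq:mod-pentagon-l}, \eqref{eq:mod-pentagon-r}, the bimodule coherences \eqref{eq:bimodule-coh1}--\eqref{eq:bimodule-coh2}, the definitions \eqref{eq:right-coherence} and \eqref{eq:bimod-coherence} of the $\cC$-bimodule $\rB(\cM)$, and the hexagon axiom for $\psi$ — exactly the identities already exploited in \Cref{lem:S-bimod}, \Cref{lem:Al-identified}, and \Cref{lem:AM-AMr-compare}.

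The main obstacle I anticipate is bookkeeping: keeping the $\otimes$-opposite conventions straight when passing through $R\colon \Fun(\rB(\cM),\rB(\cM))_\cC \to \Fun_{\cCop}(\rB(\cM),\rB(\cM))$ and through $\Res_{A^r_{\rB(\cM)}}$ (which is restriction along a functor $\cCop\to \cC^*_\cM$, so its domain is already $\cCop$), and making sure the two descriptions of the $\cCop$-linear structure of $X\lact\id_\cM$ are literally the same component maps and not merely conjugate by some canonical isomorphism. Concretely, the delicate equality to pin down is that $b^{-1}_{X,M,Y}$ (the right $\cC$-linear structure of $A^l_{\rB(\cM)}(X)$, which after $R$ becomes the $\cCop$-linear structure) agrees with $(s^{(-)\ract X}_{X',M})^{-1} = b^{-1}_{X',M,X}$ restricted along $A^r_{\rB(\cM)}$ with the roles of the $\cC$-object and the module argument swapped; this is where \eqref{eq:right-coherence} and the bimodule coherence \eqref{eq:bimod-coherence} of $\rB(\cM)$ do the work. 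Once the conventions are fixed, the proof is a diagram chase of a size comparable to that of \Cref{lem:Al-identified}, so I would present it by stating the comparison of underlying functors, then the comparison of linear structures via \eqref{eq:s-Al-identified} and \eqref{eq:sAr-BM}, then the comparison of monoidal structures via \eqref{eq:monoidal-structure-mD}, \eqref{eq:monoidal-structure-Ar}, \eqref{eq:monoidal-structure-Al}, and \eqref{eq:right-coherence}, and conclude that the square of monoidal functors commutes.
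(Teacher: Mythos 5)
Your overall route is the same as the paper's: compare the two composites on underlying functors, identify the two resulting $\cCop$-linear structures of $X\lact\id_\cM$ using exactly \eqref{eq:sAr-BM} and \eqref{eq:s-Al-identified}, observe that nothing happens on morphisms, and then compare monoidal structures. The ``delicate equality'' you flag in the linear-structure step does resolve the way you hope: restricting $s^{\mD(X)}_{G,M}=(s^G_{X,M})^{-1}$ along $A^r_{\rB(\cM)}$ at $Y\in\cCop$ plugs in $G=A^r_{\rB(\cM)}(Y)=Y\lact\id_\cM$, and \eqref{eq:sAr-BM} together with \eqref{eq:s-Al-identified} shows both sides are literally the component $b^{-1}_{X,M,Y}$, with no further coherence chase needed; this is precisely how the paper argues.

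The one concrete misstep is in your comparison of monoidal structures. You propose to compose $(\mu^{\mD}_{X,Y})_M=l_{X,Y,M}$ with ``the monoidal structure of $A^r_{\rB(\cM)}$, which by \eqref{eq:monoidal-structure-Ar} is $r_{M,Y,X}$'' and then cancel the resulting braiding via \eqref{eq:right-coherence}. This conflates the monoidal functor $A^r_{\rB(\cM)}\colon\cCop\to\cC^*_\cM$ with the functor $\Res_{A^r_{\rB(\cM)}}$ appearing in the diagram. The latter is the evaluation of the restriction $2$-functor of \Cref{lem:restriction-2-functor} on an endo-Hom category; since a $2$-functor strictly preserves composition of $1$-morphisms and the tensor product on both endofunctor categories \emph{is} composition, $\Res_{A^r_{\rB(\cM)}}$ is \emph{strict} monoidal, and $\mu^{A^r_{\rB(\cM)}}=r$ enters only the module associator of the restricted module category $\Res_{A^r_{\rB(\cM)}}(\cM)$, not the monoidal structure of the functor between endofunctor categories. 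If you literally insert $r_{M,Y,X}=(\psi_{X,Y}\lact M)\circ l_{X,Y,M}$ into the comparison, a spurious braiding appears and the two sides will not match (nor does the composition typecheck as the monoidal structure of $\Res_{A^r_{\rB(\cM)}}\circ\mD_\cM$). The correct statement, as in the paper, is that the monoidal structure of the top-right composite is just $(\mu^{\mD}_{X,Y})_M=l_{X,Y,M}$ from \eqref{eq:monoidal-structure-mD}, which coincides on the nose with $(\mu^{A^l_\cM}_{X,Y})_M=l_{X,Y,M}$ from \eqref{eq:monoidal-structure-Al}, and $R$ is likewise strict monoidal; no appeal to \eqref{eq:right-coherence} or to the hexagon axioms is needed at this stage.
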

\begin{proof}
    We first note that $\Res_{A_{\rB(\cM)}^r}$ preserves finite module categories and right exact functors. As $\Res_{A_{\rB(\cM)}^r}$ is even a $2$-functor, it is strict monoidal when restricted to a functor on endofunctor categories. Thus, every functor in the diagram is a monoidal functor. 

To check that the diagram of functors commutes, let $X$ be an object in $\cC$. We will identify the left $\cCop$-linear structure obtained by restricting the $\cC^*_\cM$-linear structure $s^*$ of $X\lact \id_\cM$ along the monoidal functor $A_{\rB(\cM)}^r$. This results in the $\cCop$-linear structure 
$$X\lact (Y\lact M)\xrightarrow{s^*_{Y\lact \id, M}=(s^{Y\lact \id}_{X,M})^{-1}}Y\lact (X\lact M),$$
using the $\cC$-linear structure $s^{Y\lact \id}_{X,M}$ from \Cref{def:Fun-action}. We derive that 
\begin{align*}
    (s^{Y\lact \id}_{X,M})^{-1}&= l_{Y,X,G(M)}^{-1}(\psi^{-1}_{X,Y}\lact \id)l_{X,Y,G(M)}=s^{A_{\rB(\cM)}^l(X)}_{Y,M},
\end{align*}
which is precisely the \emph{right} $\cC$-linear structure $s^{A_{\rB(\cM)}^l(X)}_{M,Y}$ as computed in \eqref{eq:s-Al-identified}, considered here as a \emph{left} $\cCop$-linear structure. As on morphisms, none of the functors change the data of natural transformations, this shows that the diagram of functors commutes. 

It remains to compare the monoidal structures. The functor $\Res_{A^r_{\rB(\cM)}}$ is strict monoidal and the monoidal structure of the composition $\Res_{A^r_{\rB(\cM)}}\!\!\!\circ\mD_\cM$ is given by \eqref{eq:monoidal-structure-mD}. This is equal to the monoidal structure of $A^l_{\rB(\cM)}$ from \eqref{eq:monoidal-structure-Al}. The  equivalence $R$ is also strict monoidal as it comes from restricting a $2$-functor to a $1$-morphism category. Thus, we have a commutative diagram of monoidal functors. 
\end{proof}

\section{Monoidal \texorpdfstring{$2$}{2}-categories}\label{appendix:mod-2-cat}

We briefly review the definition of a monoidal $2$-category before including some basic definitions and results on duals in monoidal $2$-categories in \Cref{sec:2duals}. 
\Cref{sec:rel-Del-props} then reviews the construction of the monoidal $2$-category structure of $\bimod{\cC}$, with relative Deligne product, for a finite tensor category $\cC$. Finally, in \Cref{sec:2-mon-Res} we show that equivalences of the braided tensor categories induce equivalences of monoidal $2$-categories of module categories.

By a \emph{monoidal $2$-category} we mean a monoidal bicategory \cites{GPS,SPThe,JY21} which is a $2$-category, i.e.\,composition of $1$-morphisms is strictly associative. 
Such a monoidal $2$-category $\bfC$ comes equipped with the following data. 
\begin{itemize}
    \item The tensor product $\Box\colon \bfC\times \bfC \to \bfC$ is a \emph{pseudofunctor} \cite{JY21}*{Sec.\,4.1}. In particular, $\Box$ is not strictly compatible with composition of $1$-morphisms but up to natural $2$-isomorphisms, called \emph{interchange isomorphisms}
\begin{equation}
\label{eq:interchange-general}
\phi_{H\Box H',G\Box G'}\colon (H\Box H')\circ (G\Box G')\isomorph  HG\Box H'G',
\end{equation}
for $1$-morphisms $\cM\xrightarrow{G}\cN\xrightarrow{H}\cP$ and $\cM'\xrightarrow{G'} \cN'\xrightarrow{H'}\cP'$.
    \item A monoidal unit, i.e.\,an object $\bfOne$ in $\bfC$.
    \item \emph{Associator} equivalences
    \begin{equation}\label{eq:2associator-general} 
    A_{\cM,\cN,\cP}\colon \cM\Box (\cN\Box \cP)\isomorph (\cM\Box \,\cN)\Box \cP,
    \end{equation}
    that are natural up to a coherent system of $2$-isomorphisms
\begin{equation}\label{eq:2associator-natural}
    ((G\Box H)\Box K))\circ A_{\cM,\cN,\cP} \xrightarrow{a_{G,H,K}} A_{\cM',\cN',\cP'}\circ (G\Box (H\Box K)),
\end{equation}
for  $1$-morphisms $\cM\xrightarrow{G} \cM'$, $\cN\xrightarrow{H} \cN'$, and $\cP\xrightarrow{K} \cP'$.
    \item \emph{Left} and \emph{right unitor} equivalences 
    \begin{equation}\label{eq:2unitors-general} 
    L_\cM\colon \bfOne\Box \cM\isomorph \cM, \qquad R_\cM\colon \cM\Box \bfOne\isomorph\cM,
    \end{equation}
    that are natural up to coherent $2$-isomorphisms
\begin{equation}\label{eq:2unitors-natural} 
L_{\cM'}\circ (\bfOne\Box G)\xrightarrow{l_{\cM}} G \circ L_{\cM}, \qquad R_{\cM'}\circ ( G \Box\bfOne)\xrightarrow{r_{\cM}} G \circ R_{\cM},
\end{equation}
for any $1$-morphism $\cM\xrightarrow{G} \cM'$.
\item 
\emph{Pentagonator} $2$-isomorphisms
\begin{equation}\label{eq:pentagonator-general}
    (A_{\cM,\cN,\cP}\Box \cQ)\circ A_{\cM,\cN\Box \cP,\cQ}\circ ( \cM\Box A_{\cN,\cP,\cQ})\xrightarrow{p_{\cM,\cN,\cP,\cQ}}
    A_{\cM\Box \cN,\cP,\cQ}\circ A_{\cM,\cN,\cP\Box \cQ},
\end{equation}
for any four objects $\cM,\cN,\cP,\cQ$ in $\bfC$.
\item \emph{Triangulator} $2$-isomorphisms
\begin{gather}\label{eq:triangulator-middle}
    (R_\cM\Box \cN)\circ A_{\cM,\one,\cN}\xrightarrow{t^m_{\cM,\cN}} \cM\Box L_\cN, \\
    \label{eq:triangulator-left-right}
    R_{\cM\Box \cN}\circ A_{\cM,\cN,\one}\xrightarrow{t^r_{\cM,\cN}} \cM\Box R_\cN, \qquad   (L_\cM\Box \cN) \circ A_{\one,\cM,\cN}\xrightarrow{t^l_{\cM,\cN}}  L_{\cM\Box \cN},
\end{gather} for any two objects $\cM,\cN$ in $\bfC$.
\end{itemize}

The above data satisfies coherences\footnote{Found, for example, in \cite{JY21}*{Sec.\,12.1}.} which we do not specify here in detail, see also \Cref{rem:general-coherence}.

\begin{remark}\label{rem:2-strictification}
It is often convenient to work with \emph{semi-strict} monoidal $2$-categories, see e.g. \cite{BN}*{Lem.\,4} or \cite{DR}*{Def.\,2.1.1}. A semi-strict monoidal $2$-category has a strictly associative and unital tensor product $\Box$ so that the $1$-morphisms $A_{\cM,\cN,\cP},L_\cM,R_\cM$ of \eqref{eq:2associator-general} and \eqref{eq:2unitors-general} are identities and the associated natural $2$-isomorphisms $a,l,r$, as well as the pentagonator and triangulators $p, t^m,t^r,t^l$  defined in \eqref{eq:2associator-natural}, \eqref{eq:2unitors-natural}--\eqref{eq:triangulator-left-right} have identity components. However, the interchange isomorphisms $\phi$ of \eqref{eq:interchange-general} may still be non-trivial.
Using the strictification theorem for tricategories \cite{GPS}*{Thm.\,8.1} in the one-object case, any monoidal $2$-category is equivalent to a semi-strict monoidal $2$-category. 
\end{remark}

We note that the monoidal $2$-category $\lmod{\cC}$ is not semi-strict but equivalent to a semi-strict monoidal $2$-category.

\subsection{Duals and inverses in monoidal 2-categories}
\label{sec:2duals}
In this section, we let $\bfC$ denote  a monoidal $2$-category. We define duals and inverses in $\bfC$ and prove some elementary lemmas that we were unable to find in the literature. In the semi-strict case, the following definition appears in \cite{DR}*{Def.\,2.1.5}.

\begin{definition}\label{def:left-dual-cat}
 A \emph{left dual} of an object $\cM$ in $\bfC$ is an object $\cM^*$ together with $1$-morphisms $\ev^l=\ev^l_\cM\colon \cM^*\Box\cM\to \bfOne$  and $\coev^l=\coev^l_\cM\colon \bfOne\to \cM\Box\cM^*$, called \emph{left evaluation} and \emph{left coevaluation}, and $2$-isomorphisms 
\begin{gather*}
\alpha\colon R_\cM\circ (\cM\Box \ev^l)\circ A^{-1}_{\cM,\cM^*,\cM}\circ (\coev^l\Box \cM)\circ L^{-1}_\cM\isomorph\id_\cM, \\
\beta\colon \id_{\cM^*}\isomorph L_\cM\circ (\ev^l\Box \cM)\circ A_{\cM^*,\cM,\cM^*}\circ (\cM^*\Box \coev^l)\circ R_\cM^{-1}.
\end{gather*}
Right duals are defined similarly by changing the tensor product order. We denote the right dual by ${}^*\cM$ and the right evaluation and coevaluation $1$-morphisms by
$\ev^r=\ev^r_\cM\colon \cM\Box{}^{*}\cM\to \bfOne$  and $\coev^r=\coev^r_\cM\colon \bfOne\to {}^*\cM\Box\cM$. An object with a left dual and right dual is called \emph{dualizable}.
\end{definition}

For objects $\cM$ and $\cN$ in $\bfC$ we write $\cM\simeq \cN$ if the objects are equivalent, see e.g.\,\cite{JY21}*{Sec.\,6.2}.

\begin{lemma}\label{lem:dual-unique}
If $\cM^*$ and $\cM'$ are both left  duals for $\cM$, then $\cM'\simeq \cM^*$.
\end{lemma}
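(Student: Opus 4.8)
The statement is the uniqueness-up-to-equivalence of left duals in a monoidal $2$-category. This is the standard Eckmann--Hilton-style argument in the bicategorical setting: the key point is that duals in a monoidal category/bicategory are determined by a universal-style property, so two left duals are linked by a canonical equivalence built from the structural $1$- and $2$-morphisms. The plan is to produce explicit comparison $1$-morphisms in both directions and exhibit $2$-isomorphisms showing they compose to the identities (up to coherence), so that $\cM' \simeq \cM^*$.

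\textbf{Key steps.} First I would reduce to the semi-strict case: by \Cref{rem:2-strictification}, any monoidal $2$-category is equivalent to a semi-strict one, and equivalences of monoidal $2$-categories preserve and reflect duals and equivalences of objects. Working semi-strictly lets me drop the associators $A$, unitors $L,R$ and the $2$-isomorphisms $a,l,r,p,t^m,t^r,t^l$, keeping only the interchange isomorphisms $\phi$. Then, given left duals $(\cM^*,\ev^l,\coev^l,\alpha,\beta)$ and $(\cM',(\ev^l)',(\coev^l)',\alpha',\beta')$, I would define the comparison $1$-morphisms
$$F = \big(\cM^* \xrightarrow{\;\cM^*\Box (\coev^l)'\;} \cM^*\Box\cM\Box\cM' \xrightarrow{\;\ev^l\Box\cM'\;} \cM'\big),$$
$$G = \big(\cM' \xrightarrow{\;\cM'\Box \coev^l\;} \cM'\Box\cM\Box\cM^* \xrightarrow{\;(\ev^l)'\Box\cM^*\;} \cM^*\big),$$
exactly as one does for ordinary duals in a monoidal $1$-category. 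Next I would construct a $2$-isomorphism $G\circ F \cong \id_{\cM^*}$: this is a ``zig-zag'' manipulation that inserts the four structural $2$-cells, using $\beta'$ (the triangle $2$-isomorphism for $\cM'$) to cancel a $(\coev^l)'$--$(\ev^l)'$ pair and $\alpha$ (the triangle $2$-isomorphism for $\cM^*$) to cancel a $\coev^l$--$\ev^l$ pair, with the interchange isomorphisms $\phi$ used to reorganize the whispering/horizontal composites so the two triangle identities can be applied. Symmetrically, $F\circ G \cong \id_{\cM'}$ using $\beta$ and $\alpha'$. Hence $F$ is an equivalence with quasi-inverse $G$, giving $\cM'\simeq \cM^*$.

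\textbf{Main obstacle.} The substantive bookkeeping is in the two $2$-isomorphisms $G\circ F\cong\id$ and $F\circ G\cong\id$: one must carefully track the order of horizontal and vertical composites and correctly place the interchange $2$-cells $\phi$ before invoking the triangle identities $\alpha,\beta,\alpha',\beta'$. In the genuinely non-strict setting this also involves the associators and unitors, which is why I would handle it semi-strictly; even there, the interchange isomorphisms are non-trivial and must be threaded through. I would present the zig-zag as a pasting diagram (or, more economically, cite that this is the standard argument for uniqueness of duals in a bicategory, e.g.\ the analogue of the $1$-categorical proof in \cite{EGNO}*{Prop.\,2.10.5} carried out one categorical level up, as in \cite{JY21}), rather than grinding through every coherence cell. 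The only mild care needed is to confirm that ``equivalence of objects'' in the sense of $\bfC$ is exactly what the pair $(F,G)$ with the two $2$-isomorphisms witnesses, which is immediate from the definition of equivalence in a $2$-category.
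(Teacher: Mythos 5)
Your proposal is correct and follows essentially the same route as the paper's proof: reduce to the semi-strict case via \Cref{rem:2-strictification}, form the two comparison $1$-morphisms $(\ev^l\Box\cM')\circ(\cM^*\Box\coev'^{\,l})$ and $(\ev'^{\,l}\Box\cM^*)\circ(\cM'\Box\coev^l)$, and cancel the zig-zags using the interchange isomorphisms together with the triangle $2$-cells of \Cref{def:left-dual-cat}. The only difference is presentational (which of $\alpha,\beta,\alpha',\beta'$ is invoked in which composite), not mathematical.
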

\begin{proof}
The proof adapts the well-known argument for duals in monoidal categories replacing identities by $2$-isomorphisms. To streamline the proof, we show that two left dual objects $\cM'$ and $\cM^*$ are equivalent in a \emph{semi-strict} monoidal $2$-category where $A,L,R$ are trivial. By \Cref{rem:2-strictification}, the statement then also holds in $\bfC$. 
    We denote $\ev=\ev^l$, $\coev=\coev^l$ in this proof and write $\ev'$ and $\coev'$ for the left evaluation and left coevaluation $1$-morphisms of the other dual object $\cM'$. We show that the $1$-morphisms
    \begin{gather*}
   H_1:=(\ev\Box \cM')\circ (\cM^*\Box \coev')\colon \cM^*\to \cM',\\
   H_2:=(\ev'\Box \cM^*)\circ (\cM'\Box \coev)\colon \cM'\to \cM^*   
    \end{gather*}
define an equivalence in $\bfC$. We have a chain of $2$-isomorphisms
\begin{align*}
H_1\circ H_2&=H_1\circ (\ev'\Box \cM^*)\circ (\cM'\Box \coev)\\
&\cong (\ev'\Box \cM^*)\circ (\cM'\Box \cM \Box H_1)\circ (\cM'\Box \coev)\\
&= (\ev'\Box \cM^*)\circ (\cM'\Box \cM \Box \ev\Box \cM')\circ (\cM'\Box \cM \Box\cM^*\Box \coev')\circ (\cM'\Box \coev)\\
&\cong (\ev'\Box \cM^*)\circ (\cM'\Box \cM \Box \ev\Box \cM')\circ (\cM'\Box \coev\Box \cM\Box\cM')\circ (\cM'\Box \coev')\\
&\cong (\ev'\Box \cM^*)\circ (\cM'\Box \coev')\\
&\cong \id_{\cM'}.
\end{align*}
 Here, we first apply the interchange isomorphisms of \eqref{eq:interchange-general} twice before applying the $2$-isomorphism of the shape $\beta^{-1}$ in \Cref{def:left-dual-cat} for the dual $\cM^*$ and then the analogous $2$-isomorphism for $\cM'$. 
 The isomorphism $H_2H_1\cong \id_{\cM^*}$ is proved similarly. 
\end{proof}

\begin{definition}\label{def:inv}
We say that an object $\cM$ in $\bfC$ is \emph{invertible} if there exists an object $\cN$, called an \emph{inverse}, and equivalences $\cM\Box \cN\simeq \bfOne$ and $\cN\Box \cM \simeq \bfOne$. 
\end{definition}

Inverses are unique up to equivalence if they exist. The next lemma characterizes when objects with a left dual are invertible. 

\begin{lemma}\label{lem:duals-inv}
Let $\cM$ be an object of $\bfC$ and $\cM^*$ a left dual of $\cM$. Then $\cM$ is invertible if and only if the $1$-morphisms $\ev$ and $\coev$ from \Cref{def:left-dual-cat} are equivalences. In particular, $\cM^*\simeq {}^*\cM$ is the inverse of $\cM$.
\end{lemma}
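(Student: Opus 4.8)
\textbf{Proof plan for \Cref{lem:duals-inv}.} The statement is an ``if and only if''. For the easy direction, suppose $\ev=\ev^l_\cM$ and $\coev=\coev^l_\cM$ are equivalences. Then the defining $2$-isomorphisms of the dual, together with the unitors $L_\cM,R_\cM$ and associator $A_{\cM,\cM^*,\cM}$ (all of which are equivalences), directly exhibit equivalences $\cM^*\Box\cM\xrightarrow{\ev}\bfOne$ and $\bfOne\xrightarrow{\coev}\cM\Box\cM^*$, so $\cM^*$ is both a left and right inverse of $\cM$; hence $\cM$ is invertible with inverse $\cM^*$. For the converse, suppose $\cM$ is invertible, say with inverse $\cN$ and equivalences $e\colon\cM\Box\cN\xrightarrow{\sim}\bfOne$ and $f\colon\cN\Box\cM\xrightarrow{\sim}\bfOne$. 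I would first observe that $\cN$ is then also a left dual of $\cM$: one can produce evaluation and coevaluation $1$-morphisms from $f$ and a quasi-inverse $e^{-1}$ of $e$, and the required triangle $2$-isomorphisms $\alpha,\beta$ are assembled from the invertibility $2$-isomorphisms of $e,f$ together with the structural $2$-isomorphisms of $\bfC$ (interchangers, pentagonator, triangulators). By \Cref{lem:dual-unique}, any two left duals are equivalent, so there is an equivalence $g\colon\cM^*\xrightarrow{\sim}\cN$.

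The remaining point is to upgrade the equivalence $\cM^*\simeq\cN$ to the statement that the \emph{particular} $1$-morphisms $\ev=\ev^l_\cM\colon\cM^*\Box\cM\to\bfOne$ and $\coev=\coev^l_\cM\colon\bfOne\to\cM\Box\cM^*$ attached to $\cM^*$ are equivalences. The cleanest route is the standard ``two-out-of-three''-style argument for adjunctions in a $2$-category, carried out as in \Cref{def:left-dual-cat}. Concretely: from $g$ and the evaluation/coevaluation data of $\cN$ (which we have just shown are equivalences, being built from the invertibility data $e,f$), transport along $g\Box\id_\cM$ and $\id_\cM\Box g$ to compare with $\ev$ and $\coev$. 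More precisely, I would show that the composite $(\cM^*\Box\mathrm{quasi\text-inv\ of\ }g)\circ\coev^l_\cN\circ(\text{unitor})$ is $2$-isomorphic to $\coev$ up to composition with equivalences, using the uniqueness clause of \Cref{lem:dual-unique} (which in fact produces the comparison equivalence as $H_1$ in that proof, and one checks $H_1$ intertwines the two sets of (co)evaluation $1$-morphisms up to $2$-isomorphism). Since $\coev^l_\cN$ is an equivalence and all other factors are equivalences, $\coev$ is an equivalence; the argument for $\ev$ is symmetric, using the right unitor in place of the left.

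Finally, to conclude $\cM^*\simeq{}^*\cM$: once $\cM^*$ is shown to be invertible with inverse $\cM$, its inverse is unique up to equivalence, and the right dual ${}^*\cM$ (which exists because $\cM$ is dualizable in $\bfC$ -- indeed, invertible objects are dualizable with ${}^*\cM\simeq\cM^*\simeq$ inverse) is also seen to be an inverse of $\cM$ by the same construction with the tensor order reversed. Hence $\cM^*\simeq{}^*\cM$ by uniqueness of inverses.

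\textbf{Main obstacle.} The genuinely fiddly part is not the logical skeleton but the bookkeeping of structural $2$-isomorphisms: verifying that the comparison equivalence from \Cref{lem:dual-unique} actually intertwines $\ev,\coev$ with $\ev^l_\cN,\coev^l_\cN$ up to coherent $2$-isomorphism requires invoking the pentagonator and triangulators \eqref{eq:pentagonator-general}--\eqref{eq:triangulator-left-right} and the interchange isomorphisms \eqref{eq:interchange-general}. As in the proof of \Cref{lem:dual-unique}, I would sidestep the worst of this by passing to a semi-strict monoidal $2$-category via \Cref{rem:2-strictification}, where $A,L,R$ and $a,l,r,p,t^m,t^r,t^l$ are all trivial and only the interchangers $\phi$ survive; then the needed $2$-isomorphisms reduce to repeated application of the interchange isomorphisms and the triangle $2$-isomorphisms $\alpha,\beta$, exactly in the style already used for \Cref{lem:dual-unique}. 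Transporting the conclusion back along the strictification equivalence is harmless since equivalences of objects and the property of a $1$-morphism being an equivalence are preserved.
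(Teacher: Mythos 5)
Your overall strategy coincides with the paper's: both directions are handled the same way, the converse by showing that the inverse $\cN$ carries a left-dual structure whose (co)evaluation maps are equivalences, with the coherence bookkeeping deferred to a semi-strict model via \Cref{rem:2-strictification}. There is, however, one point where your plan glosses over exactly the step that constitutes the main computation in the paper's proof. You propose to take the evaluation and coevaluation for $\cN$ to be (essentially) $f$ and a quasi-inverse of $e$, and assert that the triangle $2$-isomorphisms $\alpha,\beta$ of \Cref{def:left-dual-cat} are ``assembled from the invertibility $2$-isomorphisms of $e,f$.'' For that naive choice this fails in general: the triangle composite $(f\Box\cN)\circ(\cN\Box e^{-1})$ is some self-equivalence $\gamma$ of $\cN$, with no reason to be isomorphic to $\id_\cN$. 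The paper's proof fixes this by inserting a correcting factor, defining $\ev:=\eta\circ(\gamma'\Box\cM)$ where $\gamma'$ is a quasi-inverse of $\gamma=(\eta\Box\cN)\circ(\cN\Box\phi')$; only after this correction do the snake identities hold up to $2$-isomorphism. Since $\gamma'$ is an equivalence the corrected $\ev$ is still an equivalence, so the conclusion is unaffected, but your write-up should make this adjustment explicit rather than attributing the triangle isomorphisms to the invertibility data alone.

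On the other hand, you are more careful than the paper on a point the paper leaves implicit: the statement refers to the $\ev,\coev$ of the \emph{given} left dual $\cM^*$, whereas the construction produces a left-dual structure on $\cN$. Your appeal to \Cref{lem:dual-unique} together with the observation that the comparison equivalence $H_1$ intertwines the two sets of (co)evaluations up to $2$-isomorphism and composition with equivalences is exactly what is needed to transfer ``is an equivalence'' back to the original structure maps; the paper's proof simply shows $\cN$ is a left and right dual and invokes uniqueness of duals. Your final step identifying $\cM^*\simeq{}^*\cM$ via uniqueness of inverses matches the paper's conclusion, which obtains the right-dual statement by reversing the roles of $\cM$ and $\cN$.
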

\begin{proof}
It is clear that if $\ev$ and $\coev$ are equivalences, then  $\cN=\cM^*$ satisfies the conditions of \Cref{def:inv}. To prove the converse, fix equivalences 
$\adj{\phi}{\cM\Box \cN}{\bfOne}{\phi'}$ and $\adj{\eta}{ \cN\Box \cM}{\bfOne}{\eta'}.$

We want to show that $\cN$ is a left dual of $\cM$.
Using \Cref{rem:2-strictification}, it suffices to prove this for semi-strict $\bfC$ so we can omit the  associators and unitors from the notation.

First, note that the $1$-morphism 
$$\gamma:=(\eta\Box \cN)\circ(\cN\Box \phi')\colon \cN\to \cN$$
is an equivalence and we choose a inverse equivalence $\gamma'$. 
Now we define 
\begin{gather*}
\ev:= \eta\circ (\gamma'\Box \cM)\colon \cN\Box \cM\to \bfOne,\qquad 
\coev:= \phi'\colon \bfOne\to \cM\Box \cN.
\end{gather*}
By construction, both $\ev$ and $\coev$ are equivalences. 
We have to check that the triple $(\cN,\ev,\coev)$ defines a left dual for $\cM$ in the sense of \Cref{def:left-dual-cat}. Indeed, we have
\begin{align*}
(\ev\Box \cN)\circ (\cN\Box \coev) &= (\eta\Box\cN) \circ (\gamma'\Box \cM\Box \cN) \circ (\cN\Box \phi')\\
&\simeq (\eta\Box\cN) \circ  (\cN\Box \phi')\circ \gamma'=\gamma \circ \gamma'\simeq \id_\cN,
\end{align*}
where the first isomorphism is an instance of the interchange isomorphism from \eqref{eq:interchange-Cmod}. This produces the isomorphism $\alpha$ required in \Cref{def:left-dual-cat}.
Moreover, we consider
\begin{align*}
(\cM\Box \ev\Box \cN)\circ
( \coev \Box\cM\boxtimes \cN)\circ \phi'
 &=(\cM\Box \eta\Box \cN)\circ (\cM\Box\gamma'\Box \cM\Box \cN)\circ (\phi'\Box\cM\Box \cN)\circ \phi'\\
&\simeq (\cM\Box \eta \Box \cN)\circ (\cM\Box \cN\Box\phi')\circ (\cM\Box \gamma')\circ \phi' \\
&=(\cM\Box \gamma)\circ (\cM\Box \gamma')\circ \phi'\simeq \phi',
\end{align*}
where both isomorphisms $\simeq$ are interchange isomorphism. Now, pre-composing with $\phi$ produces the second isomorphism $\beta$ proving that $\cN$ is a left dual in the sense of \Cref{def:left-dual-cat}.  Reversing the roles of $\cM$ and $\cN$, we can also show that $\cM$ is a left dual of $\cN$. But this is equivalent to $\cN$ being a right dual for $\cM$.
\end{proof}

\subsection{Relative Deligne Products and monoidal 2-categories}\label{sec:rel-Del-props}

This section contains a sketch of the proof that $\bimod{\cC}$ is a monoidal $2$-category. This is used in \Cref{thm:CMod-monoidal-2-cat}, when $\cC$ is braided, to show that $\lmod{\cC}$ is a monoidal $2$-category. 
In this section, we assume that all categories are $\Bbbk$-linear and finite abelian as in \Cref{sec:rel-Del}. Recall the relative Deligne product $\cN\boxtimes_\cC\cM$ and its universal property from \Cref{def:rel-Del}.  For $\cC$-bimodules $\cN,\cM$, it has the structure of a $\cC$-bimodule by \Cref{prop:rel-Del-bimod}.

Recall the canonical balanced functor $P_{\cM,\cN}\colon \cM\times \cN\to \cM\boxtimes_\cC \cN$ from \Cref{def:rel-Del}.  When $\cM$ and $\cN$ are $\cC$-bimodules, the functor $P_{\cM,\cN}$ and its $\cC$-balancing are compatible with the left and right $\cC$-actions. 
That is, there are natural isomorphisms
$$P_{\cM,\cN}(X\lact M,N)\xrightarrow{s^P_{X,M,N}} X\lact P_{\cM,\cN}( M,N), \qquad P_{\cM,\cN}( M,N\ract X)\xrightarrow{s^P_{M,N,X}} P_{\cM,\cN}( M,N)\ract X,$$
for any objects $M$ in $\cM$, $N$ in $\cN$, and $X$ in $\cC$ satisfying the coherences that $P_{\cM,\cN}(-,N)$ is a functor of left $\cC$-modules for any $N$ and that $P_{\cM,\cN}(M,-)$ is a functor of right $\cC$-modules for any $M$. 
Moreover, for every fixed $Y$ in $\cC$, the universal $\cC$-balancing  defines a left $\cC$-module transformation 
$$\beta^P_{-,Y,N}\colon P_{\cM,\cN}((-)\ract Y,N)\isomorph P_{\cM,\cN}(-,Y\lact N),$$ for every $N$ in $\cN$ and a right $\cC$-module transformation 
$$\beta^P_{M,Y,-}\colon P_{\cM,\cN}(M\ract Y,-)\isomorph P_{\cM,\cN}(M,Y\lact (-))),$$ for every $M$ in $\cM$.

Given two $\cC$-bimodule functors $G\colon \cM\to \cN$, $G'\colon \cM'\to \cN'$,  one checks that the functor $P_{\cM',\cN'}\circ (G\times G')$ is $\cC$-balanced. Hence, due to compatibility of $P_{\cM,\cN}$ with the $\cC$-bimodule structures, there exist an induced $\cC$-bimodule functor $G\boxtimes_\cC G'$ making the diagram 
$$
\xymatrix@R=15pt{
\cM\times\cN \ar[rr]^{G\times G'}\ar[d]_{P_{\cM,\cN}}&&\cM'\times \cN'\ar[d]^{P_{\cM',\cN'}}\\
\cM\boxtimes_\cC\cN \ar[rr]^{G\boxtimes_\cC G'}&&\cM'\boxtimes_\cC \cN'
}
$$
commute up to a $\cC$-balanced natural isomorphism.  
The resulting functor $G\boxtimes_\cC G'$ is unique up to canonical isomorphism  as explained in \Cref{rem:factorization-canonical-iso}. 
This uniqueness up to canonical isomorphism implies that, for additional $\cC$-bimodule functors $H\colon \cN\to \cP$ and $H'\colon \cN'\to \cP'$, there is a canonical isomorphism 
\begin{equation}\label{eq:interchange-Cmod} 
(H\boxtimes_\cC H')\circ (G\boxtimes_\cC G')\xrightarrow{\phi_{H\boxtimes_\cC H',G\boxtimes_\cC G'}}  (H\circ G)\boxtimes_\cC (H'\circ G'),
\end{equation}
which we refer to as an \emph{interchange isomorphism}, cf.\,\eqref{eq:interchange-C-mod}.
The canonical isomorphisms of the form $\phi_{(H,H'),(G,G')}$ make $\boxtimes_\cC\colon \bimod{\cC}\times \bimod{\cC}\to \bimod{\cC}$ a pseudofunctor in the sense of \cite{JY21}*{Sec.\,4.1}. In particular, for $\cC$-bimodule transformations $\theta\colon G\to H$, $\theta'\colon G'\to H'$, there are unique induced $\cC$-bimodule transformations 
$$
\theta\boxtimes_\cC \theta'\colon G\boxtimes_\cC G'\to H\boxtimes_\cC H'.
$$
The relative Deligne product of $\cC$-bimodule transformation  is strictly functorial with respect to vertical composition of natural transformations. That is, given additional $\cC$-bimodule transformations $\tau\colon H\to K$, $\tau'\colon H'\to K'$, 
$$(\tau\boxtimes_\cC \tau')(\theta\boxtimes_\cC \theta')=(\tau\theta) \boxtimes_\cC(\tau'\theta')\colon G\boxtimes_\cC G'\to K\boxtimes_\cC K'.$$
\smallskip

The relative Deligne product is not strictly associative. To construct associators, we extend the universal property of $\cN\boxtimes_\cC\cM$ to relative Deligne products of three $\cC$-bimodules. By composition, the equivalences from  \eqref{eq:universal-Del} induce two equivalences of categories
\begin{align}
\label{eq:multibalanced-universal}
\vcenter{\hbox{
\xymatrix@R=20pt@C=15pt{
*+[r]{\Fun(\cM\boxtimes_\cC (\cN\boxtimes_\cC \cP), \cQ)}\ar@<3ex>[d]^-{(-)\circ P_{\cM,\cN\boxtimes_\cC\cP}}
&&&&
*+[l]{\Fun((\cM\boxtimes_\cC \cN)\boxtimes_\cC \cP, \cQ)}\ar@<-3ex>[d]_-{(-)\circ P_{\cM\boxtimes_\cC\cN,\cP}}
\\
*+[r]{\Fun_\cC^{\mathrm{bal}}(\cM\times (\cN\boxtimes_\cC \cP), \cQ)}\ar@/_1pc/[drr]_-{(-)\circ (\cM\times P_{\cN,\cP})}
&&&&*+[l]{\Fun_\cC^{\mathrm{bal}}((\cM\boxtimes_\cC\cN)\times \cP), \cQ)}
\ar@/^1pc/[dll]^-{(-)\circ (P_{\cM,\cN}\times \cP)}
\\
&&\Fun^{\op{mulbal}}_\cC(\cM\times \cN \times \cP,\cQ)&&
}}}
\end{align}
Here,  $\Fun^{\op{mulbal}}(\cN\times \cM\times \cP,\cQ)$ denotes the category of $\cC$-\emph{multibalanced functors} and $\cC$-\emph{multibalanced natural transformations}, cf.\,\cite{GreThe}*{Def.\,2.1.3}. Here, a $\cC$-multibalanced functor consists of the data $(B,\beta^1,\beta^2)$, where $B\colon \cM\times \cN\times \cP\to \cQ$ is a functor that is right exact in each component, together with two natural isomorphisms
\begin{gather*}
    B(M\ract X,N,P)\xrightarrow{\beta^1_{M,X,N,P}} B(M,X\lact N,P),\qquad
    B(M,N\ract X,P)\xrightarrow{\beta^2_{M,N,X,P}} B(M,N,X\lact P),
\end{gather*}
such that for a fixed object $P$ in $\cP$, $\beta^1_{-,-,-,P}$ is a $\cC$-balancing for the functor $B(-,-,-,P)$ and for a fixed object $M$ in $\cM$, $\beta^2_{M,-,-,-}$ is a $\cC$-balancing for $B(M,-,-,-)$, and that the diagram 
\begin{equation}
\vcenter{\hbox{
\xymatrix@R=20pt@C=52pt{
B(M\ract X,N\ract Y,P)\ar[d]_{\beta^1_{M,X,N\ract Y,P}}\ar[rr]^{\beta^2_{M\ract X,N,Y,P}} && B(M\ract X,N,Y\lact P)\ar[d]^{\beta^1_{M,X,M,Y\lact P}}\\
B(M,X\lact(N\ract Y),P)\ar[r]^{(\id_M,b^{-1}_{X,N,Y},\id_P)}&B(M,(X\lact N)\ract Y,P)\ar[r]^{\beta^2_{M,X,Y\lact N}}&B(M,X\lact N,Y\lact P).
}}}
\end{equation}
commutes. A $\cC$-multibalanced natural transformation is simply a natural transformation that is compatible with all $\cC$-balancings obtained by fixing objects in $\cM$ and $\cP$. 

\smallskip

 Now, for three $\cC$-bimodule categories $\cM,\cN,\cP$, there is an equivalence 
\begin{equation}
\label{def:2associator}
A_{\cM,\cN,\cP}\colon \cM\boxtimes_\cC (\cN \boxtimes_\cC \cP)\isomorph(\cM\boxtimes_\cC \cN) \boxtimes_\cC \cP
\end{equation}
of $\cC$-bimodule categories making the diagram
\begin{align}\label{diag:associator}
\vcenter{\hbox{
\xymatrix{
&\cM \times \cN \times \cP\ar[dl]_{\cM\times P_{\cN,\cP}}\ar[dr]^{P_{\cM,\cN}\times \cP}& \\
\cM \times (\cN \boxtimes_\cC \cP)\ar[d]_{P_{\cM,\cN\boxtimes_\cC \cP}}&&(\cM \boxtimes_\cC \cN) \times \cP
\ar[d]^{P_{\cM\boxtimes_\cC \cN,\cP}}\\
\cM \boxtimes_\cC (\cN \boxtimes_\cC \cP)\ar[rr]^{A_{\cM,\cN,\cP}}&&(\cM \boxtimes_\cC \cN) \boxtimes_\cC \cP}}}
\end{align}
commute up to a $\cC$-multibalanced natural isomorphism. This equivalence is obtained as follows. We set $\cQ=(\cM\boxtimes_\cC\cN)\boxtimes_\cC \cP$ in \eqref{eq:multibalanced-universal} and define $A_{\cM,\cN,\cP}$ as a pre-image of the $\cC$-multibalanced functor 
$\cM\times \cN\times \cP\xrightarrow{P_{\cM\boxtimes_\cC\cN,\cP}\circ (\cM\times P_{\cN,\cP})}(\cM\boxtimes_\cC\cN)\boxtimes_\cC \cP$
under the composed functor on the left. Hence, $A_{\cM,\cN,\cP}$ is unique up to canonical isomorphism.

The associators of \eqref{def:2associator} are only natural (in the components $\cM,\cN$, and $\cP$) in the weaker sense that for any triple of $\cC$-bimodule functors $G\colon \cM\to \cM'$, $H\colon \cN\to \cN'$, $K\colon \cP\to \cP'$, there is a canonical isomorphism 
\begin{equation}\label{eq:A-naturality-constraint}
\vcenter{\hbox{
    \xymatrix@R=8pt{
&
\ar@{=>}[dd]|-{a_{G,H,K}}&\\
\cM\boxtimes_\cC (\cN \boxtimes_\cC \cP)\ar@/^1.6pc/[rr]^{((G\boxtimes_\cC H)\boxtimes_\cC K))\circ A_{\cM,\cN,\cP}}\ar@/_1.6pc/[rr]_{ A_{\cM',\cN',\cP'}\circ (G\boxtimes_\cC (H\boxtimes_\cC K))}&&(\cM'\boxtimes_\cC \cN')\boxtimes_\cC  \cP')
\\
&&}}}
\end{equation}
which satisfies the required coherences with respect to composition of $\cC$-bimodule functors.\footnote{In general form, these coherence are e.g.\,listed in \cite{JY21}*{Sec.\,4.2}.} 

\smallskip

To explain the weak pentagon axioms for the associators, we extend the universal property of $\boxtimes_\cC$ to the product of four $\cC$-bimodules in the expected way. There are two equivalences
\begin{equation}\label{eq:4-universal-prop}
\vcenter{\hbox{
\xymatrix@R=15pt{
*+[r]{\Fun(\cM\boxtimes_\cC (\cN \boxtimes_\cC (\cP\boxtimes_\cC \cQ)),\cR)}
\ar@/_0.5pc/[rd]_-{(-)\circ P_1}
&&*+[l]{\Fun(((\cM\boxtimes_\cC \cN) \boxtimes_\cC \cP)\boxtimes_\cC \cQ,\cR)}
\ar@/^0.5pc/[ld]^-{(-)\circ P_2}
\\
&\Fun^{\op{mulbal}}(\cM\times \cN\times  \cP\times \cQ,\cR)&
}}}
\end{equation}
defined by composing with 
\begin{gather*}
P_1= P_{\cM,\cN \boxtimes_\cC (\cP\boxtimes_\cC \cQ)}\circ(\cM\times P_{\cN, \cP\boxtimes_\cC \cQ})\circ (\cM\times \cN\times P_{\cP,\cQ}),\\
P_2=P_{(\cM\boxtimes_\cC\cN) \boxtimes_\cC \cP,\cQ}\circ(P_{\cM\boxtimes_\cC \cN,\cP}\times \cQ)\circ (P_{\cM,\cN}\times \cP\times \cQ),
\end{gather*}
where the target consist of $\cC$-multibalanced functors from a product of four $\cC$-bimodules, defined similarly to the above. 

Now there are two compositions of instances of the associator $1$-morphisms that provide equivalences between the two bracketings of the relative Deligne product of four $\cC$-bimodule categories $\cM,\cN,\cP,\cQ$ displayed in the following diagram:
\begin{equation}\label{eq:2associator}
\vcenter{\hbox{
\xymatrix@R=8pt{
&
\ar@{=>}[dd]|-{p_{\cM,\cN,\cP,\cQ}}&\\
\cM\boxtimes_\cC (\cN \boxtimes_\cC (\cP\boxtimes_\cC \cQ))\ar@/^1.8pc/[rr]^{(A_{\cM,\cN,\cP}\boxtimes_\cC \cQ)\circ A_{\cM,\cN\boxtimes_\cC \cP,\cQ}\circ ( \cM\boxtimes_\cC A_{\cN,\cP,\cQ})}\ar@/_1.8pc/[rr]_{A_{\cM\boxtimes_\cC \cN,\cP,\cQ}\circ A_{\cM,\cN,\cP\boxtimes_\cC \cQ}}&&((\cM\boxtimes_\cC \cN)\boxtimes_\cC  \cP)\boxtimes_\cC \cQ.
\\
&&}}}
\end{equation}
These compositions of associators are hence canonically isomorphic via the \emph{pentagonator} $p_{\cM,\cN,\cP,\cQ}$ that appears in the diagram. 
Because $p_{\cM,\cN,\cP,\cQ}$ is canonical, it will satisfy the coherence of the pentagonator of a monoidal $2$-category \cite{SPThe}*{Def.\,2.1, SM1.a, SM1.b}.\footnote{The associativity coherence for composing $1$-morphisms that appears in the diagrams of \cite{SPThe} are trivial here.}

Similarly, we obtain equivalences 
\begin{equation}\label{eq:2unitors}
\cC\boxtimes_\cC \cM\xrightarrow{L_{\cM}} \cM, \quad\text{ and }\quad  \cM\boxtimes_\cC \cC\xrightarrow{R_{\cM}} \cM
\end{equation}
which serve as left and right \emph{unitors}. 
The uniqueness of these equivalences up to canonical isomorphism provides isomorphisms encoding naturality of $R_\cM$ and $L_\cM$ in $\cM$ as well as isomorphisms up to which the unitality triangles involving $A_{\cM,\cN,\cP}$, $L_\cM$, $R_\cM$ commute. These, again, satisfy the required coherences by similar arguments as used before.

\begin{remark}\label{rem:general-coherence}
The structures obtained above give $\bimod{\cC}$ the structure of a monoidal $2$-category. In fact, the uniqueness property of the relative Deligne product implies that \emph{any} well-defined formal diagram obtained by combining interchange isomorphism, naturality constraints for $A,L,R$, pentagonators, and unitality isomorphisms via the operations of the $2$-category $\bimod{\cC}$ ($\boxtimes_\cC$, vertical and horizontal composition of $2$-morphisms) have to be equal. This coherence statement makes it unnecessary, in the scope of this paper, to identify a short list of axioms that is sufficient to define a monoidal $2$-category. However, such axiomatizations have appeared in the literature, e.g. \cites{GPS,SPThe,JY21}. 
\end{remark}

\subsection{Monoidal \texorpdfstring{$2$}{2}-equivalences induced by braided tensor equivalences}
\label{sec:2-mon-Res}

In this section, we show how the restriction functor along an equivalence of finite braided tensor categories provides monoidal $2$-equivalence.

\begin{proposition}\label{prop:res}
    Let $F\colon \cC \to \cD$ be an equivalence  of finite braided tensor categories. Then the restriction functor from \Cref{lem:restriction-2-functor} gives a monoidal $2$-functor 
    $\Res_F\colon \lmod{\cD}\to \lmod{\cC}$ which is a $2$-equivalence. The $2$-equivalence $\Res_F$ preserves exact module categories. 
\end{proposition}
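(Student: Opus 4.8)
The plan is to establish the three claims of \Cref{prop:res} in turn: that $\Res_F$ is a $2$-functor of monoidal $2$-categories, that it is a $2$-equivalence, and that it preserves exact module categories. The last two are comparatively short, so I would handle them first to clear the ground. That $\Res_F$ is a $2$-equivalence follows because $F$ is an equivalence: choosing a monoidal quasi-inverse $G\colon \cD\to \cC$ (which exists and is again a monoidal functor by \cite{EGNO}*{Rem.\,2.4.10}) and monoidal natural isomorphisms $GF\cong \id_\cC$, $FG\cong \id_\cD$, one checks using the explicit data in the proof of \Cref{lem:restriction-2-functor} that $\Res_G$ is a quasi-inverse to $\Res_F$ at the level of the underlying $2$-categories. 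Preservation of exact module categories is exactly the second sentence of \Cref{lem:restriction-2-functor}: since $F$ is an equivalence it is exact and preserves projective objects, hence $\Res_F(\cM)$ is exact whenever $\cM$ is.

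The main content is therefore upgrading the $2$-functor $\Res_F$ to a \emph{monoidal} $2$-functor, i.e. producing an equivalence of $\cC$-module categories
$$
\Res_F(\cM)\boxtimes_\cC \Res_F(\cN)\;\simeq\;\Res_F(\cM\boxtimes_\cD \cN)
$$
natural in $\cM,\cN$, together with a comparison for the monoidal units $\Res_F(\cD)\simeq \cC$, and checking compatibility with the associators and unitors of \Cref{sec:rel-Del-props}. First I would construct the unit comparison: $\Res_F(\cD)$ is $\cD$ with $\cC$-action $C\lact D=F(C)\otimes D$, i.e. the module category attached to the monoidal functor $F$ as in \Cref{ex:tensor-act}; by \Cref{lem:F-equiv} this is equivalent to the regular $\cC$-module $\cC$ precisely because $F$ is an equivalence. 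For the product comparison, I would observe that restriction along $F$ commutes with the formation of balanced functors: a $\cD$-balanced functor out of $\Res_F(\cM)\times \Res_F(\cN)$ is the same datum as a $\cC$-balanced functor out of $\cM\times\cN$ once one transports the balancing along the isomorphisms $F(C)\otimes(-)\cong(-)$ supplied by the module coherences of $\Res_F$, using that $F$ is essentially surjective and full. Concretely, the canonical balanced functor $P_{\Res_F(\cM),\Res_F(\cN)}$ factors through $P_{\cM,\cN}$, and the universal property of \Cref{def:rel-Del} then yields the desired $\cC$-linear equivalence, unique up to canonical isomorphism. This uniqueness is what makes the coherence diagrams (the hexagon for the monoidal $2$-functor structure, compatibility with pentagonators and unitors) automatic, exactly in the spirit of \Cref{rem:general-coherence}: any two formal composites of these canonical comparison equivalences agree.

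I expect the main obstacle to be bookkeeping rather than conceptual: one must be careful that the braidings on $\cC$ and $\cD$ enter the definition of $\boxtimes_\cC$ and $\boxtimes_\cD$ (via $\rS_{lr}$ and $\rB$ in \Cref{def:rel-Del-Cmod}) in a way compatible with $F$, and this is where the hypothesis that $F$ is \emph{braided} — not merely monoidal — is used. So the key step is to verify that $\Res_F$ intertwines the $2$-functors $\rB$, i.e. that the right $\cC$-module and bimodule coherences \eqref{eq:right-coherence}–\eqref{eq:bimod-coherence} on $\Res_F(\cM)$ match those obtained from the right $\cD$-module structure on $\cM$ after restriction; this follows from $F(\psi^\cC_{X,Y})=\psi^\cD_{F(X),F(Y)}$. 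Once this compatibility is in place, the product comparison above is a comparison of $\cC$-\emph{bimodule} (not just left-module) equivalences, and the monoidal $2$-functor structure descends. I would then invoke \Cref{lem:Res-fully-exacts} only as a corollary, having already proved the monoidal statement here; alternatively, the preservation of exact module categories can be seen directly from the product comparison since $\Res_F$ sends exact $\cD$-modules to exact $\cC$-modules and the comparison is $\cC$-linear.
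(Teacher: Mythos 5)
Your proposal is correct, but it takes a genuinely different route from the paper for the main step. The paper constructs the comparison equivalences $\mu^{\Res_F}_{\cM,\cN}\colon \Res_F(\cM)\boxtimes_\cC\Res_F(\cN)\isomorph\Res_F(\cM\boxtimes_\cD\cN)$ through the internal-algebra realization: it fixes a braided monoidal quasi-inverse $G$, observes that $(\mu^G_{A,B})^{-1}\colon G(A\otimes B)\to G(A)\otimes G(B)$ is an isomorphism of algebras (this is where braidedness of $G$ enters, since $\otimes=\otimes^\psi$ on algebras), and then invokes \Cref{prop:rel-Del-modules} to translate these algebra isomorphisms into $\cC$-module equivalences; the unit comparison $\mu_0$ comes from the algebra isomorphism $G(\one_\cD)\cong\one_\cC$. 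You instead work intrinsically with the universal property of \Cref{def:rel-Del}, arguing that for an equivalence $F$ the $\cC$-balanced functors out of $\Res_F(\cM)\times\Res_F(\cN)$ coincide with the $\cD$-balanced functors out of $\cM\times\cN$, so the two universal objects agree; and you get the unit comparison from \Cref{lem:F-equiv}, which is a clean shortcut. Both approaches are valid, and both correctly locate where braidedness of $F$ is used (you via the compatibility of $\rB$ and the coherences \eqref{eq:right-coherence}--\eqref{eq:bimod-coherence} with $F(\psi^\cC_{X,Y})$, the paper via the algebra structure on $G(A\otimes B)$). The trade-off: the paper's route reuses already-established machinery and gives explicit formulas, at the cost of choosing algebra presentations; your route avoids those choices but leans on the extension of balancings from objects $F(X)$ to all of $\cD$, which you only gesture at --- to make it airtight you need $F$ fully faithful and essentially surjective (not merely "essentially surjective and full" as written) so that the balancing isomorphisms and their coherence \eqref{eq:balancing-coherence} transport uniquely along chosen isomorphisms $Y\cong F(X)$, and you should state both factorization directions (each universal balanced functor factors through the other) to conclude the comparison functor is an equivalence. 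Your handling of the $2$-equivalence claim, the preservation of exactness, and the coherence bookkeeping via uniqueness up to canonical isomorphism all match the paper.
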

\begin{proof}
Clearly, if $F$ is an equivalence of braided tensor categories, then  $\Res_F$ is an equivalence of $2$-categories. Since $F$ is an equivalence, it preserves projective objects and $\Res_F$ preserves exact module categories by \Cref{lem:restriction-2-functor}. 

It remains to check that $\Res_F$ is compatible with the monoidal $2$-category structures. For this, we fix an inverse monoidal equivalence $G\colon \cD\to \cC$ of braided tensor categories. Such an equivalence can be chosen such that the natural isomorphisms $\epsilon \colon \id\to FG$, $\eta\colon GF\to \id$ are monoidal natural isomorphisms \cite{TV}*{Exercise~1.4.6}. This induces a $2$-equivalence $\Mor_G\colon \Mor_\cD\to \Mor_\cC$ that sends an algebra $A$ in $\cD$ to the algebra $G(A)$ in $\cD$, using the monoidal structure of $G$. Moreover, for a fixed algebra $A$, the monoidal functor $F$ induces an equivalence of left $\cC$-module categories 
\begin{gather*}
F_A\colon \rmodint{\cC}{G(A)}\isomorph  \Res_F(\rmodint{\cD}{A}), \qquad 
(V,a_V\colon V\otimes G(A)\to V)\mapsto (F(V), a_{F(V)}),\\
    a_{F(V)}:=\Big(F(V)\otimes A \xrightarrow{\id_{F(V)}\otimes \epsilon_A}F(V)\otimes FG(A)\xrightarrow{\mu^F_{V,G(A)}}F(V\otimes G(A))\xrightarrow{F(a_V)}F(V) \Big).
\end{gather*}
One checks that $(F(V), a_{F(V)})$ is indeed a right $A$-module using that $\epsilon$ is a monoidal transformation. As $F$ is an equivalence, this functor $F_A$ is an equivalence. The $\cC$-linear structure for $F_A$ is defined by 
$$s^{F_{A}}_{X,V}\colon  F_A(X\lact V)=F(X\otimes V)\xrightarrow{(\mu^F_{X,V})^{-1}} F(X)\otimes F(V)=X\lact F_A(V).$$
Denoting $\cM=\rmodint{\cD}{A}$, the functor $F_A$ gives an equivalence of $\cC$-module categories $\rmodint{\cC}{G(A)}\isomorph \Res_F(\cM)$.
In particular, with $A=\one_\cD$, we obtain an equivalence $\mu_0\colon \cC\isomorph \Res_F(\cD)$ combining $F_\one$ with the equivalence induced by the isomorphism of algebras $G(\one_\cD)\cong \one_\cC$.

Now, since the monoidal structure of $G$ is compatible with the braidings, we have that $$(\mu^G_{A,B})^{-1}\colon G(A\otimes B) \isomorph G(A)\otimes G(B)$$
is an isomorphism of algebras in $\cC$. This isomorphism of algebra induces an equivalence of left $\cC$-module categories 
$$\rmodint{\cC}{G(A)\otimes G(B)}\isomorph \rmodint{\cC}{G(A\otimes B)}\isomorph \Res_F\big(\rmodint{\cC}{G(A\otimes B)}\big),$$
where the second equivalence is the equivalence $F_{A\otimes B}$ from above. Hence, by \Cref{prop:rel-Del-modules}, we have an equivalence of left $\cC$-module categories 
$$\mu^{\Res_F}_{\cM,\cN}\colon \Res_F(\cM)\boxtimes_\cC\Res_F(\cN)\isomorph \Res_F(\cM\boxtimes_\cC\cN),$$
for $\cM=\rmodint{\cD}{A}$ and $\cN=\rmodint{\cD}{B}$. 
Now, the relative Deligne product is defined by a universal property which induces the other structural data (associators, unitors, pentagonators, \ldots) as explained in \Cref{sec:rel-Del-props}. This way, there is a natural isomorphism 
$$
\xymatrix@R=8pt{
&
\ar@{=>}[dd]|-{\mu^F_{\cM,\cN,\cP}}&\\
\Res_F(\cM)\boxtimes_\cC \big(\Res_F(\cN)\boxtimes_\cC \Res_F(\cP)\big)\ar@/^1.8pc/[rr]^-{\Res_F(A_{\cM,\cN,\cP})\circ\mu^{\Res_F}_{\cM,\cN\boxtimes_\cD \cP}\circ\big(\Res_F(\cM)\boxtimes_\cC \mu^{\Res_F}_{\cN,\cP}\big)}
\ar@/_1.8pc/[rr]_{\mu^{\Res_F}_{\cM\boxtimes_\cD \cN,\cP}\circ\big( \mu^{\Res_F}_{\cM,\cN}\boxtimes_\cC\Res_F(\cP)\big)\circ A_{\Res_F(\cM),\Res_F(\cN),\Res_F(\cP)}}
&&\Res_F\big( (\cM\boxtimes_\cD\cN\boxtimes_\cD) \cP\big),\\
&&
}
$$
witnessing the compatibility of $\mu^{\Res_F}$ with the associator $A_{\cM,\cN,\cP}$ for the relative Deligne product from \eqref{eq:2associator}. Further, there are natural isomorphisms 
$$
\xymatrix@R=8pt{
&
\ar@{=>}[dd]|-{\rho^F_{\cM}}&\\
\Res_F(\cM)\boxtimes_\cC \cC \ar@/^1.8pc/[rr]^-{\Res_F(R_\cM)\circ\mu^{\Res_F}_{\cM,\cC}\circ\big(\Res_F(\cM)\boxtimes_\cC \mu_0\big)}\ar@/_1.8pc/[rr]_-{R_{\Res_F(\cM)}} && \Res_F(\cM),\\
&&
} \quad 
\xymatrix@R=8pt{
&
\ar@{=>}[dd]|-{\lambda^F_{\cM}}&\\
\cC\boxtimes_\cC\Res_F(\cM) \ar@/^1.8pc/[rr]^-{\Res_F(L_\cM)\circ\mu^{\Res_F}_{\cC,\cM}\circ\big(\mu_0\boxtimes_\cC \Res_F(\cM)\big)}\ar@/_1.8pc/[rr]_-{L_{\Res_F(\cM)}} && \Res_F(\cM),\\
&&
}
$$
witnessing the compatibility with the unitors from \eqref{eq:2unitors}. The $2$-natural transformations $\mu^F, \rho^F$, and $\lambda^F_\cM$ obtained this way are unique and hence satisfy all required coherences. This shows that $\Res_F$ has the structure of a monoidal $2$-functor.\footnote{Axioms for a monoidal $2$-functor were given in \cite{BN}*{Def.\,16}, see also \cite{DN2}*{Def.\,2.10}. The concrete list of coherence conditions is not important for this paper.} \end{proof}

\begin{proposition}\label{prop:Cop}
    The monoidal $2$-categories $\lmod{\cCop}$ and $\lmod{\cC}$ are equivalent as monoidal $2$-categories. The equivalence preserves exact module categories.  
\end{proposition}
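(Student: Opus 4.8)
The plan is to deduce this from \Cref{prop:res} by exhibiting a braided tensor equivalence between $\cC$ and a category to which $\cCop$ is manifestly identified. However, the cleanest route is to construct the equivalence directly and verify it is monoidal using the same strategy as in the proof of \Cref{prop:res}, replacing the role of $F$ by the braided monoidal equivalence $I^+\colon \cCop \to \cC$ from \eqref{eq:I+-}. Recall that $I^+=(\id_\cC,\psi)$ is an equivalence of braided monoidal categories; so by the first sentence of the proof of \Cref{prop:res}, the induced restriction functor $\Res_{I^+}\colon \lmod{\cC}\to \lmod{\cCop}$ is an equivalence of $2$-categories, and it preserves exact module categories because $I^+$ preserves projective objects (its underlying functor is the identity). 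It remains only to upgrade $\Res_{I^+}$ to a monoidal $2$-functor.

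First I would set up the internal-algebra picture exactly as in \Cref{prop:res}: for an algebra $A$ in $\cC$, the functor $I^+$ sends it to an algebra $G(A)$ in $\cCop$, and one obtains $\cC$-module (here $\cCop$-module) equivalences $\rmodint{\cCop}{G(A)}\isomorph \Res_{I^+}(\rmodint{\cC}{A})$ using the monoidal structure $\psi$. The key point is that the monoidal structure on $I^+$ is built from the braiding, so that the algebra isomorphisms $\psi^{-1}_{A,B}\colon I^+(A\otimes^{\psi} B)\isomorph I^+(A)\otimes^{\psi^{\cCop}} I^+(B)$ of \Cref{lem:tensor-ops} and \Cref{lem:tensor-ops2} are available and compatible. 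Concretely, in $\cCop$ the tensor product of algebras uses the braiding $\psi^{\cCop}_{X,Y}=\psi_{Y,X}$, so $\psi_{A,B}$ provides an isomorphism of algebras $I^+(A\otimes^\psi B)\cong I^+(A)\otimes^{\psi^{\cCop}}I^+(B)$, giving, via \Cref{prop:rel-Del-modules}, an equivalence of left $\cCop$-module categories
$$\mu^{\Res_{I^+}}_{\cM,\cN}\colon \Res_{I^+}(\cM)\boxtimes_{\cCop}\Res_{I^+}(\cN)\isomorph \Res_{I^+}(\cM\boxtimes_\cC \cN).$$
Then, as in \Cref{prop:res}, since both relative Deligne products are defined by the universal property of \Cref{def:rel-Del}, this datum induces all the higher coherences (compatibility of $\mu^{\Res_{I^+}}$ with associators, unitors, pentagonators) uniquely, and uniqueness forces the monoidal $2$-functor axioms to hold. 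This reduces the verification to checking a single algebra isomorphism is compatible, with everything else being formal; and $\Res_{I^+}$ being an equivalence of the underlying $2$-categories then makes it a monoidal $2$-equivalence.

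The main obstacle I anticipate is purely bookkeeping: carefully matching the sign/direction conventions in \Cref{def:rel-Del-Cmod} (which uses the braiding, not its inverse, in \eqref{eq:right-coherence}) against the corresponding convention in $\lmod{\cCop}$, where the ambient braiding is $\psi^{\cCop}$. One must check that the choice of $I^+$ (rather than $I^-$) is the one that makes the diagram \eqref{diag:associator}-style coherence commute; the alternative $I^-$ would correspond instead to an equivalence $\lmod{\cCrev}\simeq \lmod{\cC}$, cf.\ the remark after \Cref{def:rel-Del-Cmod}. Once the correct choice is pinned down, the argument is a near-verbatim repetition of the proof of \Cref{prop:res}, so I would phrase the proof as "the same argument as in \Cref{prop:res}, applied to the braided monoidal equivalence $I^+\colon \cCop\to\cC$ of \eqref{eq:I+-}, yields the claim," and only spell out the algebra isomorphism $\psi_{A,B}\colon I^+(A\otimes^\psi B)\cong I^+(A)\otimes^{\psi^{\cCop}}I^+(B)$ explicitly.
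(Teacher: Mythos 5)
Your proposal is correct and is essentially the paper's own argument: the paper's proof consists of the single observation that $I^{\pm}\colon \cCop\to\cC$ from \eqref{eq:I+-} are equivalences of braided tensor categories, so \Cref{prop:res} applies directly. Your worry about choosing $I^+$ versus $I^-$ is moot, since \Cref{prop:res} holds for any braided tensor equivalence and the paper invokes it for either choice without further checking.
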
 
\begin{proof}
   Recall the monoidal equivalences
    $I^{\pm }=(\id_\cC,\mu^\pm) \colon \cCop\to \cC$ from \eqref{eq:I+-}. Now, applying \Cref{prop:res} to these equivalences induces equivalences of monoidal $2$-categories as required.
\end{proof}

\begin{remark}
\cite{DN2}*{Prop.\,3.7} shows that for tensor subcategories, induction is a monoidal $2$-functor. As restriction and induction are adjoint $2$-functors in an appropriate sense, we  expect that $\Res_F$ is only lax monoidal when $F$ is not an equivalence.  
\end{remark}

\bibliography{biblio}
\bibliographystyle{amsrefs}

\end{document}